\newcommand{\tikzdiagh}[2][]{\tikz[#1,very thick,baseline={([yshift=1ex+#2]current bounding box.center)}]}
\tikzstyle{tikzdot}=[fill, circle, inner sep=2pt]
\newcommand{\fdot}[3][]{ \node  [anchor = center, fill=white, draw=black,circle,inner sep=2pt] at (#3) {} ; \node[xshift=-.065cm, yshift=-.035cm,anchor = south west] at (#3){\small $#1$}; \node[xshift=-.065cm, yshift=.065cm,anchor = north west] at (#3){\small $#2$}; }
\newcommand{\plusspacing}{\llap{\phantom{\small ${+}1$}}}
\theoremstyle{plain}
\newtheorem{thm}{Theorem}[section]
\newtheorem*{thm*}{Theorem}
\newtheorem{lem}[thm]{Lemma}
\newtheorem{prop}[thm]{Proposition}
\newtheorem{coro}[thm]{Corollary}
\newtheorem{belief}[thm]{Belief}
\theoremstyle{definition}
\theoremstyle{remark}
\newtheorem{term}[thm]{Terminology}
\newtheorem{rk}[thm]{Remark}
\newtheorem{df}[thm]{Definition}
\newtheorem{ex}[thm]{Example}
\newtheorem{nota}[thm]{Notation}
\numberwithin{equation}{section}
\def\Pol{\mathrm{Pol}}
\def\hPol{{\mathrm{EPol}}}
\def\NH{\mathrm{NH}}
\def\hNH{\mathrm{ENH}}
\def\hhNH{\mathrm{\mathbb{E}NH}}
\def\hR{\mathrm{ER}}
\def\1c{\mathrm{1c}}
\def\ui{{\mathbf{i}}}
\def\uj{{\mathbf{j}}}
\def\ut{{\mathbf{t}}}
\def\bbC{\mathbb{C}}
\def\bbR{\mathbb{R}}
\def\bbV{\mathbb{V}}
\def\bbZ{\mathbb{Z}}
\def\frakS{\mathcal{S}}
\def\calA{\mathcal{A}}
\def\calB{\mathcal{B}}
\def\calF{\mathcal{F}}
\def\calG{\mathcal{G}}
\def\calK{\mathcal{K}}
\def\calL{\mathcal{L}}
\def\calO{\mathcal{O}}
\def\calU{\mathcal{U}}
\def\calV{\mathcal{V}}
\def\calY{\mathcal{Y}}
\def\calZ{\mathcal{Z}}
\def\ccT{T}
\def\frakb{\mathfrak{b}}
\def\frakg{\mathfrak{g}}
\def\frakm{\mathfrak{m}}
\def\frakn{\mathfrak{n}}
\def\frakp{\mathfrak{p}}
\def\bfa{\mathbf{a}}
\def\bfd{\mathbf{d}}
\def\bfk{\mathbf{k}} 
\def\bfn{\mathbf{n}}
\def\bfZ{\mathbf{Z}}
\def\bfZ{\mathbf{Z}}
\def\point{{\{\rm pt\}}}
\def\WW{\frakS}
\newcommand\restr[2]{{
  \left.\kern-\nulldelimiterspace 
  #1 
  \vphantom{\big|} 
  \right|_{#2} 
  }}
\def\grdim{\operatorname{grdim}}
\def\homo{\operatorname{\it \mathscr{H}\kern-.25em om}}
\def\ext{\operatorname{\it \mathscr{E}\kern-.25em xt}}
\def\edo{\operatorname{\it \mathscr{E}\kern-.25em nd}}
\def\der{\operatorname{\it \mathscr{D}\kern-.25em er}}
\def\Hom{\mathrm{Hom}}
\def\End{\mathrm{End}}
\def\Ker{\mathrm{Ker}}
\def\Id{\mathrm{Id}}
\def\loc{\mathrm{loc}}
\def\p{\overrightarrow{p}}
\def\op{\operatorname}
\def\rmZ{\mathrm{Z}}
\def\rmT{\mathrm{T}}
\def\rmU{\mathrm{U}}
\def\rmG{\mathrm{G}}
\def\rmB{\mathrm{B}}
\def\rmP{\mathrm{P}}
\def\oX{\overline{X}}
\def\oY{\overline{Y}}
\def\bV{\mathbb{V}}
\def\bU{\mathbb{U}}
\def\rel{\mathrm{rel}}
\def\IC{\mathrm{IC}}
\def\inv{\mathrm{inv}}
\def\tF{\widetilde{\calF}}
\tikzset{wei/.style={draw=red,double=red!40!white,double distance=1.5pt,thin}}
\tikzset{bdot/.style={fill,circle,color=blue,inner sep=3pt,outer sep=0}}
\tikzset{dir/.style={postaction={decorate,decoration={markings,mark=at position .8 with {\arrow[scale=1.3]{>}}}}}}
\newcommand{\mZ}{\mathbb{Z}}
\newcommand{\cB}{\mathcal{B}}
\newcommand{\cP}{\mathcal{P}}
\newcommand{\cQ}{\mathcal{Q}}
\newcommand{\cA}{\mathcal{A}}
\newcommand{\cF}{\mathcal{F}}
\newcommand{\bi}{\mathbf{i}}
\newcommand{\la}{\lambda}
\newcommand{\mywedge}{\@ifnextchar^\@extp{\@extp^{\,}}}
\def\@extp^#1{\mathop{\bigwedge\nolimits^{\!\!#1}}}
\renewcommand*\labelenumi{\rm{\theenumi.)}}
\begin{document}
\title[Geometric categorification of Verma modules]{Geometric categorifications of Verma modules: 
Grassmannian Quiver Hecke algebras}
\author[R. Maksimau]{Ruslan Maksimau}
\author[C. Stroppel]{Catharina Stroppel}

\begin{abstract}
Naisse and Vaz defined an extension of KLR algebras to categorify Verma modules. We realise these algebras geometrically as convolution algebras in Borel--Moore homology. For this we introduce \emph{Grassmannian--Steinberg quiver flag varieties}. They generalize Steinberg quiver flag varieties in a non-obvious way, reflecting the diagrammatics from the Naisse--Vaz construction.  Using different kind of stratifications we provide geometric explanations of the rather mysterious algebraic and diagrammatic basis theorems. 

A geometric categorification of Verma modules was recently found in the special case of $\mathfrak{sl}_2$  by Rouquier. Rouquier's construction uses coherent sheaves on certain quasi-map spaces to flag varieties (zastavas), whereas our construction is implicitly based on perverse sheaves. Both should be seen as parts (on dual sides) of a general geometric framework for the Naisse--Vaz approach. 

We first treat the (substantially easier) $\mathfrak{sl}_2$ case in detail and construct as a byproduct a geometric dg-model of the nil-Hecke algebras. The extra difficulties we encounter in general require the use of more complicated Grassmannian--Steinberg quiver flag varieties. Their definition arises from combinatorially defined \emph{diagram varieties} which we assign to each Naisse--Vaz basis diagram. Our explicit analysis here might shed some light on categories of coherent sheaves on more general zastava spaces studied by Feigin--Finkelberg--Kuznetsov--Mirkovi{\'c} and Braverman,  which we expect to occur in a generalization of Rouquier's construction away from $\mathfrak{sl}_2$.
\end{abstract}

\maketitle
\def\ccX{{X}}
\def\ccT{\mathtt{T}}
\newcommand{\Zr}{\Delta^r} 
\newcommand{\x}{\iota}
\newcommand{\y}{\gamma}
\newcommand{\z}{\pi}
\newcommand{\catP}{Q}

\newcommand{\mA}{{\mathbb{A}}}
\newcommand{\mI}{{\mathbb{I}}}
\newcommand{\hh}{\hat{h}}
\newcommand{\hF}{\hat\calF}
\newcommand{\VWp}{\bV^{\perp W}}
\renewcommand{\bfn}{{\underline{n}}}
\renewcommand{\bfk}{{\underline{k}}}
\newcommand{\ccF}{\calF}
\newcommand{\Jbfn}{J_\bfn}
\newcommand{\Xequal}{\tikz[scale=0.5,baseline={([yshift=-0.5ex]current bounding box.center)}]{
		\draw   (-.35,-.35)-- (.35,.35);
		\draw   (.35,-.35)-- (-.35,.35);
		\draw   (-0.1,-.3)-- (.1,-0.3);
		\draw   (-0.1,-.2)-- (.1,-0.2);
	}  }
	\newcommand{\Xright}{\tikz[scale=0.5,baseline={([yshift=-0.5ex]current bounding box.center)}]{
		\draw   (-.35,-.35)-- (.35,.35);
		\draw   (.35,-.35)-- (-.35,.35);
		\draw [->]   (-0.2,-.3)-- (.2,-0.3);
	}  }
		\newcommand{\Xleft}{\tikz[scale=0.5,baseline={([yshift=-0.5ex]current bounding box.center)}]{
		\draw   (-.35,-.35)-- (.35,.35);
		\draw   (.35,-.35)-- (-.35,.35);
		\draw [<-]   (-0.2,-.3)-- (.2,-0.3);
	}  }

\stackMath
\newcommand\tsup[2][2]{%
 \def\useanchorwidth{T}%
  \ifnum#1>1%
    \stackon[-.5pt]{\tsup[\numexpr#1-1\relax]{#2}}{\scriptscriptstyle\sim}%
  \else%
    \stackon[.5pt]{#2}{\scriptscriptstyle\sim}%
  \fi%
}
\def\tV{\tsup[1]{\mathbb{V}}}
\def\ttV{\tsup{\mathbb{V}}}
\def\tW{\tsup[1]{W}}
\newcommand{\J}{\calF}
\newcommand{\JJ}{\tilde\calF}

\newcommand{\tA}{\mathtt{A}}
\newcommand{\tB}{\mathtt{B}}
\renewcommand{\oX}{\mathtt{X}}
\renewcommand{\oY}{\mathtt{Y}}
\newcommand{\tR}{\mathtt{R}}
\newcommand{\tZ}{\mathtt{Z}}
\newcommand{\Zdouble}{Steinberg extended flag variety }
\newcommand{\Zone}{extended Steinberg quiver flag variety }
\newcommand{\ovS}{\mathbf{S}}
\newcommand{\tS}{\mathtt{S}}

\makeatletter
\DeclareRobustCommand{\cev}[1]{%
  {\mathpalette\do@cev{#1}}%
}
\newcommand{\do@cev}[2]{%
  \vbox{\offinterlineskip
    \sbox\z@{$\m@th#1 x$}%
    \ialign{##\cr
      \hidewidth\reflectbox{$\m@th#1\vec{}\mkern4mu$}\hidewidth\cr
      \noalign{\kern-\ht\z@}
      $\m@th#1#2$\cr
    }%
  }%
}
\makeatother
\newcommand{\vechR}{\vec{\mathrm{E}}\mathrm{R}}
\newcommand{\vecI}{\vv{I}}
\newcommand{\cevI}{\cev{I}}
\newcommand{\cevhR}{\cev{\mathrm{E}}\mathrm{R}}
\newcommand{\cevhPol}{\cev{\mathrm{E}}\mathrm{Pol}}
\newcommand{\cevGGZ}{\cev{\ddot{\rmZ}}} 
\newcommand{\cevZnaiv}{\cev{\Znaiv}}

\newcommand\up{\mathrm{up}}
\newcommand\down{\mathrm{low}}

\newcommand{\colJ}{\mathcal{O}} 
\newcommand{\plusJ}{\mathbf{O}} 
\newcommand{\naivJ}{O} 

\newcommand{\dvp}{\op{pr}} 

\newcommand{\compT}{{{\underline\rmT}}} 
\newcommand{\St}{{\mathrm{St}}} 

\newcommand{\ccY}{\calY} 
\def\hY{\widehat{\ccY}} 

\def\rmY{\calY} 
\def\bfY{\mathbf{Y}} 
\newcommand{\Ytw}{\bfY^\phi} 


\newcommand{\ccZ}{\ddot{{\calZ}}} 
\newcommand{\ccZplus}{\dot{{\calZ}}} 
\def\hZ{\widehat{\ccZ}}
\newcommand{\hZplus}{\widehat{\ccZplus}}

\newcommand{\GGZ}{\ddot{\rmZ}} 
\newcommand{\Znaiv}{{\dot{\rmZ}}} 

\newcommand{\Ztw}{{\GGZ^\phi}} 
\newcommand{\Znaivtw}{{\Znaiv^\phi}} 

\newcommand{\Zplus}{\dot{\calZ}}
\newcommand{\ZplusGG}{\ddot{\calZ}} 
\newcommand{\ZplusGGpr}{\ddot{\calZ}} 

\newcommand{\cevYtw}{\cev{\bfY}{}^\phi}
\newcommand{\cevZtw}{\cev{\GGZ}{}^\phi}

\newcommand{\abZ}{Z}
\newcommand{\abY}{Y}
\newcommand{\abX}{X}

\newcommand{\Za}{WRONGZa}
\newcommand{\Zcolor}{ZCOLOR}
%

\newcommand{\bfZplus}{{WRONG}}



\newcommand{\subse}{%
	\mathrel{\ooalign{$\subset$\cr\hss\raisebox{0.2ex}{\hspace{0.2ex}\scalebox{0.6}{$=$}}\hss}}%
}
\newcommand{\supse}{%
	\mathrel{\ooalign{$\supset$\cr\hss\raisebox{0.2ex}{\hspace{-0.2ex}\scalebox{0.6}{$=$}}\hss}}%
}

\vspace{-5mm}

\tableofcontents

\section*{Introduction}
Let $\Gamma$ be a quiver without loops, and let $\frakg$ be the associated Kac-Moody Lie algebra with corresponding quantum group $U_q(\frakg)$.  To categorify $U_q(\frakg)$, or more precisely its negative part $U_q(\frakg)^-$,
Khovanov--Lauda \cite{KL1} and Rouquier \cite{Rou2KM} introduced in their seminal works important algebras which are now called \emph{KLR algebras}. These algebras depend on the quiver $\Gamma$ and a dimension vector $\bfn$ (which encodes the weight space which gets categorified). Given moreover a dominant weight $\Lambda$ of  $\frakg$, the algebra $R_\bfn$ has a remarkable finite-dimensional quotient $R_\bfn^\Lambda$, called the \emph{cyclotomic quotient} attached to $\Lambda$. It was proven by Kang--Kashiwara \cite{KangKash}, that the cyclotomic quotients $R_\bfn^\Lambda$ yield a categorification of the simple $U_q(\frakg)$-module $L_q(\Lambda)$ of highest weight $\Lambda$.  This result finally settled the \emph{cyclotomic categorification conjecture} from \cite{KL1} in full generality; only special cases of this conjecture had been proven before, \cite{BS3}, \cite{BKdecnumb}, \cite{Webster}. The simple module $L_q(\Lambda)$ is the unique simple quotient of the Verma module $V_q(\Lambda)$. It is therefore natural to ask how to categorify the Verma modules themselves and not only $U_q(\frakg)^-$. An answer to this question is interesting from a structural point of view, but also in the context of categorified link invariants,  see \cite{NaisseVaz2}. Finding such a categorification turned out to be quite difficult (merely because of the asymmetry in the behaviour of the positive versus negative Chevalley generators) to find such a categorification. It was achieved by Naisse and Vaz in \cite{NVsl2} for $\mathfrak{sl}_2$ and in \cite{NaisseVaz} in general. Let us briefly sketch their construction.
\subsection*{The Naisse--Vaz algebras}
Depending still on $\Gamma$ and $\bfn$, Naisse and Vaz introduced, see \cite{NVapproach}, \cite{NVsl2},  \cite{NaisseVaz}, a new $(\bbZ,\bbZ_{\geqslant 0})$-bigraded algebra $\hR_\bfn$, whose degree zero component for the second grading is the $\bbZ$-graded KLR algebra $R_\bfn$,  see \cite{Vazsurvey} for a survey.  They then show that categories of modules for $\hR_\bfn$ can be used to categorify the \emph{universal Verma module} $V_q$ (with generic highest weight). Moreover, for each $\Lambda$ as above, they construct a special operator $\bfd_\Lambda\colon\hR_\bfn\to \hR_\bfn$ of degree $-1$ (for the second grading) turning the algebra $\hR_\bfn$ into a dg-algebra. They prove that the dg-algebra $(\hR_\bfn,\bfd_\Lambda)$ has homology concentrated in degree zero and that this homology is isomorphic to $R_\bfn^\Lambda$. This means that $(\hR_\bfn,\bfd_\Lambda)$ is a dg-model, or a sort of resolution, of the cyclotomic quotient $R_\bfn^\Lambda$. It is a particularly nice model, since every term of this resolution is free over $R_\bfn$. The isomorphism $H_*(\hR_\bfn,\bfd_\Lambda)\cong R_\bfn^\Lambda$ is also of interest, since it categorifies the following two procedures simultaneously: the specialization $V_q\to V_q(\Lambda)$ of the generic weight to $\Lambda$, and the procedure of taking the simple quotient $V_q(\Lambda)\to L_q(\Lambda)$.

The Naisse--Vaz algebra $\hR_\bfn$ is defined algebraically and in terms of diagrams which extend the diagrammatic description of  \cite{KL1} in a subtle and intriguing way. The main new ingredients, the \emph{floating dots}, are key to the construction. A conceptual explanation of their relations and their twists seems however so far missing.  
\subsection*{Geometric construction?}
In case of the KLR algebra $R_\bfn$, the generators and relations as well as the grading have natural geometric explanations. In the geometric construction due to Rouquier \cite{Rou2Lie} and Varagnolo-Vasserot  \cite{VV}, which was extended to positive characteristics in \cite{Mak1}, the KLR algebra $R_\bfn$ is realised as a \emph{quiver Hecke algebra} that is the equivariant Borel--Moore homology of the corresponding Steinberg quiver flag variety, equipped with the convolution product. 

The natural question which arises now is whether this geometric construction can be extended to give a geometric construction of the Naisse--Vaz algebra in terms of some equivariant Borel--Moore homology. This question is the main motivation of our paper, and we will give an affirmative answer. We will introduce  a \emph{Grassmannian Quiver Steinberg variety} $\Zplus$ with a natural group action $\rmG$ and define  
a (non-obvious!) convolution product on $H_*^\rmG(\Zplus)$. For the resulting  \emph{Grassmannian quiver Hecke algebra} $H_*^\rmG(\Zplus)$ (attached to $\Gamma$ and $\bfn$) we show then our main result: 
\begin{thm*}
There is an isomorphism of graded algebras $H_*^\rmG(\Zplus)\cong\hR_\bfn$.
\end{thm*}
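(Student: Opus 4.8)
The plan is to adapt the Varagnolo--Vasserot--Rouquier realisation of the quiver Hecke algebra to the extended setting, upgrading every step so that it records the additional $\bbZ_{\geqslant 0}$-grading carried by the Grassmannian factors of $\Zplus$. The first step is to produce a faithful \emph{extended polynomial representation} of the convolution algebra. Decomposing $\Zplus$ along its $\rmG$-fixed complete flags, indexed by the words $\bi$ compatible with $\bfn$, one obtains for each such $\bi$ a smooth piece whose equivariant Borel--Moore homology is the extended polynomial ring $\hPol_\bi=\Pol_\bi\otimes\Lambda_\bi$, where $\Pol_\bi$ is the usual polynomial ring in the Chern roots of the tautological bundles and $\Lambda_\bi$ is the exterior-type algebra contributed by the cohomology of the Grassmannian factors. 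The (non-obvious!) convolution product then makes $H_*^\rmG(\Zplus)$ act on $\bigoplus_\bi\hPol_\bi$, and one checks faithfulness by exhibiting an affine paving of $\Zplus$ --- so that Borel--Moore homology is free over the base and the pullback/pushforward calculus is transparent --- together with the dimension bound forcing the representation to be injective.

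Next I would write down geometric lifts of the Naisse--Vaz generators inside $H_*^\rmG(\Zplus)$ and verify the defining relations. The dots $x_k$ are first equivariant Chern classes of the tautological line subquotients, and the KLR crossings come from the adjacent-flag correspondences; since the ``KLR part'' of $\Zplus$ is fibred over the ordinary Steinberg quiver flag variety, all Khovanov--Lauda--Rouquier relations among dots and crossings transfer verbatim from the classical computations. The new input is the \emph{floating dots}: these are realised by Euler-class-type classes attached to the Grassmannian factors, and their relations --- squaring to zero, sliding past dots and crossings, and above all the mysterious twists by the weight data --- must be verified through an explicit analysis of one- and two-strand Grassmannian--Steinberg correspondences, which is exactly the role played by the combinatorially defined diagram varieties. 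I expect this to be the main obstacle: it is simultaneously the heart of the non-obvious convolution product and the point at which the twists of \cite{NaisseVaz}, for which a conceptual explanation has so far been missing, finally acquire a geometric origin. The $\mathfrak{sl}_2$ case, treated first in the paper, supplies the template, the relevant correspondences there being precisely those underlying the geometric dg-model of the nil-Hecke algebra.

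These relations assemble into an algebra homomorphism $\Phi\colon\hR_\bfn\to H_*^\rmG(\Zplus)$, compatible with the extended polynomial representations on both sides; since the geometric representation on $\bigoplus_\bi\hPol_\bi$ is faithful, $\Phi$ is injective, and it respects the $(\bbZ,\bbZ_{\geqslant 0})$-grading by construction. Surjectivity follows once $H_*^\rmG(\Zplus)$ is known to be generated by the classes of dots, crossings and floating dots, which is read off from the affine paving of $\Zplus$, whose cells are indexed by Naisse--Vaz basis diagrams; equivalently, the associated stratification (the ``different kind of stratifications'' of the abstract) produces a basis of $H_*^\rmG(\Zplus)$ over $\bigoplus_\bi\hPol_\bi$ whose graded rank matches the Naisse--Vaz basis of $\hR_\bfn$. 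As a byproduct this simultaneously furnishes the promised geometric explanation of the algebraic and diagrammatic basis theorems, the geometric basis being literally the one indexed by the diagram varieties attached to Naisse--Vaz diagrams.
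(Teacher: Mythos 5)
Your plan captures the broad template (faithful polynomial representation, lift the generators, check relations, compare graded ranks via a stratification), which is indeed the shape of the argument in the paper. But there is a genuine gap at the very start that your outline glosses over and that the paper has to work hard to close: you treat $H_*^\rmG(\Zplus)$ as if it carried an evident convolution product and acted on the extended polynomial representation by a routine pullback/pushforward calculus. It does not. The variety $\Zplus$ is a proper subvariety of the asymmetric fibre product $\Znaivtw=\Ytw\times_{E_\bfn\oplus E_\bfn}\Ytw_0$, not of the form $\abY\times_{\abX}\abY$, so there is no Chriss--Ginzburg product on it out of the box. The paper defines the product on $H_*^\rmG(\Zplus)$ only indirectly, through the chain $H_*^{\rmG_\bfn}(\Zplus)\to H_*^{\rmG_\bfn}(\Znaivtw)\to H_*^{\rmG_\bfn}(\Ztw)\to H_*^{\rmT}(\Ztw)_{\rm loc}$, using the Grassmannian-bundle maps $\iota_{\bfk_1,\bfk_2}$ built from the $\rmU$-equivariant diffeomorphism $\gamma$ (which is not even algebraic). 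Showing that the image of this composition is a subalgebra is precisely the content of the paper's \Cref{mainprop} and is not free; moreover, the intermediate spaces $\Znaivtw$ and $\Ztw$ are not known to have nice pavings, so their equivariant homology need not inject into the localisation, which forces the detour through the quotients $\overline H_*^{\rmG_\bfn}(\cdot)$ in diagram \eqref{eq:CD1-Zplus-loc}. Your plan asserts faithfulness of the geometric representation and injectivity of $\Phi$ as if these followed from an affine paving of $\Zplus$ alone, but the obstruction sits upstream, in the varieties one is forced to pass through before $\Zplus$ ever enters.

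Relatedly, you never say what $\Zplus$ is. The phrase ``Euler-class-type classes attached to the Grassmannian factors, and their relations ... must be verified through an explicit analysis of ... Grassmannian--Steinberg correspondences'' is accurate as a description of where the work lies, but it misses that the naive construction (adding Grassmannian factors to the Steinberg variety with a kernel condition on $\alpha$) fails for vertices that are not sources --- \Cref{ex:contr-target} is a one-arrow counterexample where no subvariety of the naive $\Znaiv$ can produce the correct floating-dot operator because $\Znaiv$ is already a point. Fixing this requires duplicating $E_\bfn$ and introducing the $\calG$-twist $\phi_U$ and the twisted Springer conditions, after which the Grassmannian--Steinberg conditions \eqref{eq:cond-bfZ'-gen} carve out $\Zplus$ inside $\Znaivtw$. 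The need for this second copy of $E_\bfn$ and the flip is the central new idea beyond the $\mathfrak{sl}_2$ case, and without acknowledging it a proof along your lines cannot get off the ground for a general quiver. The dimension-matching step you describe is correct in spirit (it is the weakly-adapted Gell paving and \Cref{lem:dim-deg-gen}), but it only serves its purpose once the injection $\hR_\bfn\hookrightarrow H_*^\rmT(\Ztw)_{\rm loc}$ and the factorisation through $H_*^\rmG(\Zplus)$ (\Cref{lem:fact-thr-bfZ'}, via the composition-of-correspondences argument) have been established.
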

The idea to incorporate extra Grassmannians into the Quiver Steinberg variety construction came from the fact that categorified universal Verma modules should contain at least the data of all categorified finite dimensional irreducible modules. For the easiest case of quantum $\mathfrak{sl}_2$, the (already very powerful) theory of categorified finite dimensional irreducible representations goes back to \cite{CR04}, \cite{FKS} and is by now well established, see e.g. for some context  \cite{Rou2Lie},  \cite{ICMStr}, \cite{Webster}. Importantly, Grassmannians appear hereby naturally;  each weight space is categorified via the category of graded modules over the cohomology ring of some Grassmannians. For further motivation we refer to   \cite{Vazsurvey}.

\subsection*{Outline of the construction for the quiver with one vertex}
We start our outline of the construction with the case $\frakg=\mathfrak{sl}_2$ (i.e., $\Gamma$ has one vertex and no arrows). In this case, the KLR algebra $R_\bfn$ is the nil-Hecke algebra $\NH_n$ which, by definition, acts faithfully on $\Pol_n=\Bbbk[X_1,\ldots,X_n]$. This action can be extended to an $\NH_n$-action on $\hPol_n=\Pol_n\otimes \mywedge^\bullet(\omega_1,\ldots,\omega_n)$, see \S\ref{subs:hNH}. The algebra $\hR_\bfn$ is in case of $\frakg=\mathfrak{sl}_2$ easy to describe as what we call the \emph{exterior power extended nil-Hecke algebra} $\hNH_n$, see Definition~\ref{df:hNH}. It is the subalgebra  of linear endomorphisms of $\hPol_n$, generated by the operators coming from $\NH_n$ together with the multiplications by $\omega_1,\ldots,\omega_n$. (It is in fact enough to only take  $\omega_1$, the other $\omega$'s will then be included). For a presentation via generators and relations see \Cref{lem:gen-rel-hhNH}.

The Grassmannian--Steinberg variety $\ccZplus$ is also easy to describe here: 
Let $\calF$ be the variety of full flags in $\bbC^n$ and set $\rmG=\op{GL}_n(\bbC)$. Then the algebra $\NH_n$ can be identified with the equivariant Borel-Moore homology $H_*^\rmG(\calF\times\calF)$ (as the Steinberg quiver flag variety is $\calF\times\calF$) and the representation $\Pol_n$ can be identified with $H_*^\rmG(\calF)$, \cite{VV}. To upgrade $\Pol_n$ to $\hPol_n$ with a geometric action of $\hNH_n$, we add some Grassmannian varieties  $\op{Gr}_k(\bbC^n)$ to the construction, schematically,
\begin{equation*}
\Pol_n\leadsto \hPol_n\text{ corresponds geometrically to } \calF\leadsto \calF\times\calG:= \calF\times\coprod_{k=0}^n \op{Gr}_k(\bbC^n).
\end{equation*}
    Indeed, since we can identify $H_*^\rmG(\calF\times\calG)$ with $\hPol_n$ as vector space, adding the direct factor $\calG$ results in adding $\mywedge^\bullet(\omega_1,\ldots,\omega_n)$ to the vector space $\Pol_n$. 

To include now the multiplications by $\omega_1,\ldots,\omega_n$ into the algebra $\NH_n\cong H_*^\rmG(\calF\times\calF)$, one might naively add a copy of $\calG$.  But then one gets stuck, since there is no apparent convolution product on $H_*^\rmG(\calF\times\calF\times\calG)$.
     Alternatively, we can add two copies of $\calG$ to obtain an algebra $\hhNH_n:=H_*^\rmG(\calF\times\calF\times \calG\times\calG)$ which indeed has a convolution product and acts faithfully on $\hPol_n\cong H_*^\rmG(\calF\times\calG)$.  This algebra $\hhNH_n$ is however, bigger than the algebra $\hNH_n$ we want to reconstruct geometrically.  We call this algebra the \emph{doubly extended nil-Hecke algebra}, see ~\Cref{df:hhNH}, since
    $\hhNH_n$ not only contains the desired multiplications by $\omega_1,\ldots,\omega_n$, but additionally some annihilation operators $\omega^-_1,\ldots,\omega^-_n$. (This observation touches an intriguing aspect at the core of the Naisse--Vaz construction: having the creation operators and not the annihilation operators creates an asymmetry which is designed to address the aforementioned asymmetry for Verma modules.)  
    
    To get a geometric construction of precisely the algebra $\hNH_n$, we combine the two naive approaches. Namely, we extend the Steinberg quiver flag variety $\calF\times\calF$ by adding the variety $\calG$ of Grassmannians, and then define the \emph{Grassmannian--Steinberg quiver flag variety} $\ccZplus:=\calF\times\calF\times \calG$, see \Cref{Zplus}.  In \S\ref{subs:geom-hNH}, we identify the vector space $H_*^\rmG(\calF\times\calF\times \calG)$ with a sub\emph{space} of the algebra $\hhNH_n=H_*^\rmG(\calF\times\calF\times \calG\times\calG)$. We show that this sub\emph{space} is in fact a sub\emph{algebra} and that there is an isomorphism of algebras $\hNH_n\cong H_*^\rmG(\calF\times\calF\times \calG).$ 
 The crucial insight hereby is that $H_*^\rmG(\calF\times\calF\times \calG)$ has no natural convolution product on itself, but via an embedding into the algebra $\hhNH_n$ it inherits a (non-obvious) product. 
 \subsection*{Cyclotomic quotients via equivariant intersection cohomology}
The doubly extended nil-Hecke algebra $\hhNH_n$ encodes moreover the differential on $\hNH_n$. Given $N\in\bbZ_{\geqslant 0}$, there is an operator $\bfd_N\colon\hNH_n\to \hNH_n$ turning $\hNH_n$ into a dg-algebra whose homology is concentrated in degree zero and isomorphic to the cyclotomic quotient $\NH^N_n$ of $\NH_n$, see \S\ref{subs:DG-on-hNH}. We show in \Cref{lem:dg} that there is an element $\mathbb{d}_N\in\hhNH_n$ such that the differential $\bfd_N$ is the (super) commutator with $\mathbb{d}_N$. The action of $\mathbb{d}_N$ on the faithful representation induces an operator $d_N\colon \hPol_n\to \hPol_n$ which has a nice geometric meaning, see \Cref{differentials}.  We realise $\NH^N_n$ in geometric terms in \Cref{cyclogeo}, and explain how the resolution $(\hNH_n,\bfd_N)$ of $\NH_n^N$ relates to the resolution of $H^*(\op{Gr}_n(\bbC^N))$ by the equivariant intersection cohomology of orbit closures in $\Hom(\bbC^n,\bbC^N)$, see \Cref{IC}.

The case $\frakg=\mathfrak{sl}_2$ is not only interesting on its own, but also contains key ideas of the general construction which we describe next. 
 We want to stress however that the general case appears to be substantially more involved.  Since however a precise description of the difficulties requires a good understanding of the basic $\mathfrak{sl}_2$ case, we treat this first in detail, often with different proofs as in the general case. 
  
 This phenomenon of substantially increasing difficulty should be compared with the situation on the coherent sheaves side. There, a geometric reconstruction of the Naisse--Vaz categorified Verma module already exists in case of $\frakg=\mathfrak{sl}_2$, \cite{Rou2Verma}. Rouquier used in \cite{Rou2Verma} (categories of coherent sheaves on) spaces (so-called zastavas) of quasi-maps  to flag varieties, \cite{FFKM}, to provide a geometric categorification with the action of Chevalley generators given by correspondences.  Adding a superpotential and passing to matrix factorizations allows in this framework to construct a simple quotient of the categorified Verma module, see \cite{Rou2Verma} for details.  In the special case of  $\frakg=\mathfrak{sl}_2$, these spaces are particularly nice and smooth. In  general, these zastavas have a rich complicated structure, \cite{FFKM}, \cite{Rou2Verma} which was used in \cite{FFKM} to give a geometric construction of the (not categorified) Verma modules themselves. 

\subsection*{Outline of the construction for general quivers}
Now, let $\Gamma$ be an arbitrary finite quiver without loops. We like to upgrade the geometric construction of the KLR-algebra $R_\bfn$ to $\hR_\bfn$. The case $\frakg=\mathfrak{sl}_2$ suggests the following naive strategy which is considered in \S\ref{subs:Z'-subalg-Z}. We just add the variety $\calG$ of Grassmannians to each vertex of the quiver.  We then take this enhancement of the geometric construction of $R_\bfn$,  form the corresponding generalized Steinberg variety  and take its equivariant homology. (The sizes of the Grassmannian space is encoded by an idempotent in $\hR_\bfn$). Inside this doubly extended algebra we might hope to find exactly as before a subalgebra isomorphic to  $\hR_\bfn$. Naively mimicking this approach  has however a problem that did  not show up for $\frakg=\mathfrak{sl}_2$: the Grassmannians interact with the maps given by the representation of the quiver. Adding Grassmannians naively creates an algebra which is by far too large. We have to make additional modification of our variety to get exactly the algebra $\hR_\bfn$ that we need (and not just a bigger algebra). Solving this problem is the most complicated part of the paper. 

For the vertices that are sources of the quiver we see in \S\ref{subs:source}, that there is however an easy solution: it suffices to add the additional \emph{Grassmannian--Steinberg condition}  that the Grassmannian spaces that we have added are in the kernel of the quiver representation maps, see \Cref{sinkcondition}. We obtain in \Cref{geomsourcealgebra} a geometric construction of a natural subalgebra of $\hR_\bfn$, namely the \emph{source algebra} where the Grassmannian dimension vectors are supported at sources of the quiver. However, as shown in  \Cref{ex:contr-target}, it is in general not possible to reduce the size of this algebra far enough by adding extra conditions on the Grassmannian spaces. In general, we have to do something more sophisticated. 

In general, we add a second copy of the quiver representation space and do a twist mixing the two copies, \Cref{defWtwist}, \Cref{exWtwist}. This trick is motivated by the \emph{diagram varieties} discussed in \Cref{app:diag-var}. The idea behind this is that the mysterious Naisse-Vaz basis elements, see \Cref{prop:NVbasis-gen}, in $\hR_\bfn$ should be realised in terms of fundamental classes of certain varieties whose definition is predicted by the diagrammatics. For $\mathfrak{sl}_2$ this works very well, in full general the principal  idea is still correct, but has to be adapted, mainly due to some non-transversality problems. We believe however that they do not to occur for the source algebra. We introduce the aforementioned twists to resolve these non-transversality issues, see \Cref{app:diag-var} for a detailed treatment and examples.

Incorporating the crucial \emph{twisted Springer conditions} from \Cref{crucialdef} to the naive construction, we obtain in \Cref{geomsinkalgebra} a geometric construction of another natural subalgebra of $\hR_\bfn$, namely the \emph{sink algebra} where the Grassmannian dimension vectors are supported at sinks of the quiver. In \S\ref{sec:isom-thm-GqHecke-gen} we are  finally able to define the general Grassmannian--Steinberg varieties  $\Zplus$ in \Cref{DefGrassSt}. As in the geometric construction of KLR, \cite{VV}, we consider now coloured (with colours being the vertices of $\Gamma$)  versions of $\calF$, $\calG$, see \S\ref{sec:coloured}.  

For the construction of $\Zplus$  we then proceed in steps, see \S\ref{111}, \begin{enumerate}[(i)]
\item \label{szero} We add the variety of Grassmannians to the usual quiver flag variety and obtain the \emph{Grassmannian Springer variety}, see \Cref{Grassmannian Springer variety}, 
\begin{equation*}
\quad\quad\quad\rmY=\tF\times \calG\subset E_\bfn\times\calF\times\calG, \;\text{where}\;\tF=\{(\alpha,\bV)\mid \text{$\alpha$ preserves $\bV$}\}\subset E_\bfn\times\calF.
\end{equation*}
\item\label{sone} We add a second copy of the representation space and twist to obtain $\Ytw$, see \Cref{crucialdef}.
\item\label{stwo} We build the Steinberg variety $\Ztw=\Ytw\times _{E_\bfn\oplus E_\bfn}\Ytw$.
\item\label{sthree} We consider a (natural) asymmetric subvariety $\Znaivtw=\Ytw\times _{E_\bfn\oplus E_\bfn}\Ytw_0\subset \Ztw.$
\item We define the \emph{Grassmannian--Steinberg variety} as the subvariety $\Zplus\subset\Znaivtw$ 
of points $(\alpha,\beta,\bV,\tV,W)$ satisfying the \emph{Grassmannian--Steinberg conditions} 
\begin{equation*} 
\alpha_h((\bV^r\cap W_i)^\perp\cap W_i)\subset (\bV^r\cap W_j)^\perp\cap W_j,
\end{equation*}   
for any arrow $h\colon i\to j$ in $\Gamma$ and any $r\in [0;n]$, see \eqref{eq:cond-bfZ'-gen}.
\end{enumerate}
In case of  $\mathfrak{sl}_2$, step \ref{szero}  is the passage from $\calF$ to $\calF\times\calG$, step \ref{sone} does nothing. Step \ref{stwo} gives then $\calF\times\calF\times\calG\times\calG$ with the subvariety  $\calF\times\calF\times\calG$ obtained in step \ref{sthree}. The last step does again nothing and we obtain $\Zplus=\calF\times\calF\times\calG$. 

Apart from constructing a variety and showing that its homology has the correct size, the mayor challenge is to show that it inherits a convolution product. This is done similarly to $\mathfrak{sl}_2$, but is technically more involved, since we do not always have nice pavings for all the involved varieties in general. 

We finish this introduction with remarks putting our construction in context.
\subsection*{Further results and applications}
\subsubsection*{Pavings and Gells} 
To control the size and the grading of the involved Borel--Moore homologies we introduce several types of pavings. In case of $\mathfrak{sl}_2$ one can work with well-known pavings coming from Schubert cells, and in fact many decompositions would do the job for us. In the general case, the situation is much more tricky. There, we need to have a paving which is adapted to both, the asymmetric subvariety we define and to the mysterious algebraic and diagrammatic bases \cite[Thm.~3.16]{NaisseVaz}, \cite[Prop.~3.11]{Vazsurvey}. We therefore introduce first for  $\mathfrak{sl}_2$, see \S\ref{sec:flags-cells},  and then in general in \S\ref{subs:col-cells}, more clever decompositions of the varieties into \emph{Gells}. These are analogs of the Schubert cell decomposition of flag varieties, but they in addition incorporate some interaction with the added Grassmannian varieties  and with the quiver representations in a more clever way. We believe that the combinatorics of these Gells is interesting on its own. For some explicit examples see \Cref{app:diag-var}.
\subsubsection*{Dg-model of cyclotomic quotients} 
A geometric construction of cyclotomic quotients of  $R_\bfn^\Lambda$  is often related to adding shadow vertices to quiver representations, possibly with stability conditions, see e.g. \cite{Webster}, \cite{stroppel2014quiver}.
Building on this idea, see \Cref{shadow}, we construct in case of $\frakg=\mathfrak{sl}_2$ in \S\ref{subs:geom-mean-resol} not only the cyclotomic quotient, but instead the whole resolution $(\hNH_n,\bfd_N)$ of $R_\bfn^\Lambda$ geometrically and provide  as a byproduct a new geometric perspective on the cyclotomic KLR algebras. 
\subsubsection*{Diagram varieties} 
To understand the rational behind our definitions it might be helpful to have a look at \Cref{app:diag-var}, where we indicate how the diagrams from  \cite{NaisseVaz} encode the varieties. We believe that the notion of diagram varieties is a powerful notion and interesting to study further.
\subsubsection*{Higher floating dots}
In \S\ref{Higherfloatingdots} we treat the higher floating dots and in particular realise in \Cref{highdot} their action on the faithful representation geometrically. The construction also provides a geometric interpretation of the recursion formulas.
\subsubsection*{Geometric Naisse-Vaz program} 
As mentioned already, another approach towards a geometric realisation of the Naisse--Vaz constructions is given in  \cite{Rou2Verma} by the construction of a 
  categorification of Verma modules for $\frakg=\mathfrak{sl}_2$. The two constructions are quite different and from our point of view complementary (and most likely dual to each other).  
  In the paper \cite{Rou2Verma}, Rouquier provides directly a geometric categorification of Verma modules using the geometry of zastavas. 
  What we do, is the geometric construction of the algebra $\hR_\bfn$ itself,  without doing the categorification explicitly. It would be nice to have a common framework for both constructions. Our approach might help thereby to make the explicit translation to the original (algebraic and diagrammatic) construction.

 \subsection*{\it Acknowledgments}
We thank Pedro Vaz for his explanation of how the Naisse-Vaz algebras work and Alexandre Minets for useful discussions on how to do computations in convolution algebras, the main steps in the generalization to positive characteristics are due to him.  
R. M.  is grateful for the support and hospitality of the MPI for Mathematics in Bonn, where a big part of this work was done. C. S. is supported by Gottfried Wilhelm Leibniz-Preis of the DFG.

\subsection*{Conventions and notation} We fix throughout the paper a ground field $\Bbbk$ which we assume for simplicity to be of characteristic $0$. 
If not stated otherwise, tensor products, homomorphisms etc. are taken over $\Bbbk$; for instance $\otimes=\otimes_\Bbbk$, $\End(V)=\End_\Bbbk(V)$. By an \emph{algebra} we mean an associative algebra (over $\Bbbk$, if not specified otherwise) with unit $1$. For $a,b\in\bbZ$ such that $a\leqslant b$, we abbreviate $[a;b]=\{a,a+1,a+2,\ldots,b\}$ and denote by $\frakS_n$ the symmetric group generated by the simple transpositions $s_i=(i,i+1)$, $1\leq n-1$ with composition e.g. $(2,3)(1,2)=(1,3,2)$. For $1\leq k\leq n$ we denote by $w_{0,k}$ the longest element in $\frakS_k\subset \frakS_n$. If not stated otherwise, \emph{grading} always means \emph{$\mZ$-grading}.

\addtocontents{toc}{{\textbf{Preliminaries: The main players}}}
\begingroup
\section{KLR algebras and their extended versions}
\label{sec:KLR}
\endgroup

Let $\Gamma=(I, A)$ be a finite quiver without loops.  We denote by $I$ the set of vertices and by $A$ the set of arrows of $\Gamma$. For $a\in A$ let $s(a)\in I$ be its source and $t(a)\in I$ its target. For $i, j \in I$ let $h_{i,j}$ be the number of arrows from $i$ to $j$ in $\Gamma$.  For $i\ne j$ define the polynomials $\cQ_{i,j}(u,v)=(u-v)^{h_{i,j}}(v-u)^{h_{j,i}}$. A \emph{word} in $I$ is a finite sequence (including the empty sequence) of elements in $I$.  We will define  a category with objects words in $I$ and morphisms between words given by diagrams with $I$-labelled strands built from 
\begin{equation}\label{KLRgeneratorsdiag}
	\tikz[very thick,baseline={([yshift=1.0ex]current bounding box.center)}]{
	          \draw (-2,-.5) node[below] {\small $i$} -- (-1,.5);
	          \draw (-1,-.5) node[below] {\small $j$} -- (-2,.5) 
	} 
	 \quad\quad\text{and} \quad\quad
	\tikz[very thick,baseline={([yshift=1.0ex]current bounding box.center)}]{
	          \draw (0,-.5) node[below] {\small $i$} -- (0,.5)   node [midway,fill=black,circle,inner sep=2pt]{};;
	   } \quad\quad
\text{and} \quad\quad
	\tikz[very thick,baseline={([yshift=1.0ex]current bounding box.center)}]{
	          \draw (0,-.5) node[below] {\small $i$} -- (0,.5) ;
	   }
\end{equation}
We always read such diagrams from bottom to top. 

\subsection{KLR algebras}
\label{subs:KLR-def}
Consider the  $\Bbbk$-linear strict monoidal category  $\mathcal{R}^{\op{free}}$ defined as follows: on the level of objects it is generated by the elements in $I$, i.e., objects are just word in $I$ and the tensor product on objects is concatenation of words, with the empty word being the unit. On the level of morphisms $\mathcal{R}^{\op{free}}$  is generated by morphisms $ij\rightarrow ji$, $ i\rightarrow i$ (and the identity morphisms $i\rightarrow i$) for any $i,j\in I$ given by the diagrams displayed in \eqref{KLRgeneratorsdiag}. The label at the bottom indicates the source (and determines the target). We interpret the label as a  \emph{colour} on each strand. The tensor product of morphisms is given by placing diagrams horizontally next to each other.  Composition of morphisms means stacking diagrams vertically (the top morphisms follows the bottom morphism, and the composition is defined whenever the colours of the strands match, and is by convention zero otherwise.) Thus an arbitrary morphism is a linear combination of diagrams obtained by stacking vertically and horizontally the generating diagrams \eqref{KLRgeneratorsdiag} such that colours match. 

\smallskip
We use the usual diagrammatical abbreviations, such as 
\begin{equation*}
	\tikz[very thick,baseline={([yshift=1.0ex]current bounding box.center)}]{  
	          \draw (-2,-.5) node[below] {\small $i$} -- (-1,.5); 	         
	           \draw (-1,-.5) node[below] {\small $j$} -- (-2,.5)
	           node [near end,fill=black,circle,inner sep=2pt]{};}
	           \quad:=\quad
\tikz[very thick,baseline={([yshift=1.0ex]current bounding box.center)}]{  
	          \draw (-2,-.5) node[below] {\small $i$} -- (-1,.5);
	          \draw (-1,-.5) node[below] {\small $j$} -- (-2,.5); 
                  \draw (-1,.5) -- (-1,1.5) ;
	          \draw (-2,0.5)-- (-2,1.5)  node [midway,fill=black,circle,inner sep=2pt]{};
} \quad=\quad
\tikz[very thick,baseline={([yshift=1.0ex]current bounding box.center)}]{  
 \draw (-1, -0.5) node[below] {\small $i$} -- (-1,.5)   node at (-0.5,0){$\circ$};
	          \draw (-2,-0.5) node[below] {\small $j$} -- (-2,.5) node [midway,fill=black,circle,inner sep=2pt]{};
	          \draw (0,-.5) node[below] {\small $i$} -- (1,.5);
	          \draw (1,-.5) node[below] {\small $j$} -- (0,.5); 
	      
} 
\quad and\quad 
\tikz[very thick,baseline={([yshift=1.0ex]current bounding box.center)}]{
\node at (0.5,0) {$p(X_1,X_2)$};
          \draw (-.55,-.4) -- (-.55,.4); \draw (1.55,-.4) -- (1.55,.4);
          \draw (-.55,-.4) -- (1.55,-.4); \draw (-.55,.4) -- (1.55,.4); 
          \draw (0,-.75) node[below] {\small $i$} -- (0,-.4);\draw (0,.75) -- (0,.4);
          \draw (1,-.75) node[below] {\small $j$} -- (1,-.4);\draw (1,.75) -- (1,.4);
	}
\end{equation*}
where $p(X_1,X_2)\in \Bbbk[X_1,X_2]$.  This polynomial should be viewed as a $\Bbbk$-linear combination of monomials and $X_1^aX_2^b$ is interpreted as the identity diagram with $a$ dots on the first strand (coloured $i$) and $b$ dots on the second strand (labelled $j$).
\begin{df} The \emph{diagrammatic KLR category}  $\mathcal{R}$ is the quotient of the $\Bbbk$-linear strict monoidal category  $\mathcal{R}^{\op{free}}$ by the local relations \eqref{eq:KLRR2} - \eqref{eq:R3_2} on morphisms.

\begin{equation}\label{eq:KLRR2}
	\tikz[very thick,baseline={([yshift=1.0ex]current bounding box.center)}]{
	      \draw  +(0,-.75) node[below] {\small $i$} .. controls (1,0) ..  +(0,.75);
	      \draw  +(1,-.75) node[below] {\small $i$} .. controls (0,0) ..  +(1,.75); 
	}  \mspace{2mu} = \mspace{5mu} 0 \mspace{40mu}\text{and}\mspace{40mu}
        \tikz[very thick,baseline={([yshift=1.0ex]current bounding box.center)}]{
	      \draw  +(0,-.75) node[below] {\small $i$} .. controls (1,0) ..  +(0,.75);
	      \draw  +(1,-.75) node[below] {\small $j$} .. controls (0,0) ..  +(1,.75); 
	}  \mspace{2mu} = \mspace{5mu} 
	\tikz[very thick,baseline={([yshift=1.0ex]current bounding box.center)}]{
\node at (0.5,0) {$\cQ_{i,j}(X_1,X_2)$};
          \draw (-.55,-.4) -- (-.55,.4); \draw (1.55,-.4) -- (1.55,.4);
          \draw (-.55,-.4) -- (1.55,-.4); \draw (-.55,.4) -- (1.55,.4); 
          \draw (0,-.75) node[below] {\small $i$} -- (0,-.4);\draw (0,.75) -- (0,.4);
          \draw (1,-.75) node[below] {\small $j$} -- (1,-.4);\draw (1,.75) -- (1,.4);
	}\mspace{40mu}\text{if }i\neq j
\end{equation}
        \begin{align}
	\tikz[very thick,baseline={([yshift=1.0ex]current bounding box.center)}]{
	          \draw (0,-.5) node[below] {\small $i$} -- (1,.5);
	          \draw (1,-.5) node[below] {\small $j$} -- (0,.5) node [near start,fill=black,circle,inner sep=2pt]{};
	} 
	&\quad = \quad
	\tikz[very thick,baseline={([yshift=1.0ex]current bounding box.center)}]{
	          \draw (0,-.5) node[below] {\small $i$} -- (1,.5) ;
	          \draw (1,-.5) node[below] {\small $j$} -- (0,.5)  node [near end,fill=black,circle,inner sep=2pt]{};
	}  \mspace{10mu}
&
	\tikz[very thick,baseline={([yshift=1.0ex]current bounding box.center)}]{
	          \draw (0,-.5) node[below] {\small $i$} -- (1,.5) node [near start,fill=black,circle,inner sep=2pt]{};
	          \draw (1,-.5) node[below] {\small $j$} -- (0,.5);
	} 
	&\quad = \quad
	\tikz[very thick,baseline={([yshift=1.0ex]current bounding box.center)}]{
	          \draw (0,-.5) node[below] {\small $i$} -- (1,.5)node [near end,fill=black,circle,inner sep=2pt]{};
	          \draw (1,-.5) node[below] {\small $j$} -- (0,.5);
	}  \mspace{10mu}
	 & \text{ if }i&\ne j,
\end{align}

\begin{equation}\label{eq:nh1}
	\tikz[very thick,baseline={([yshift=1.0ex]current bounding box.center)}]{
	          \draw (0,-.5) node[below] {\small $i$} -- (1,.5);
	          \draw (1,-.5) node[below] {\small $i$} -- (0,.5)  node [near end,fill=black,circle,inner sep=2pt]{};
	}  
	\quad - \quad
	\tikz[very thick,baseline={([yshift=1.0ex]current bounding box.center)}]{
	          \draw (0,-.5) node[below] {\small $i$} -- (1,.5); 
	          \draw (1,-.5) node[below] {\small $i$} -- (0,.5) node [near start,fill=black,circle,inner sep=2pt]{};
	} \quad = \quad 
	\tikz[very thick,baseline={([yshift=1.0ex]current bounding box.center)}]{
	          \draw (0,-.5) node[below] {\small $i$} -- (0,.5);
	          \draw (1,-.5) node[below] {\small $i$} -- (1,.5);
	}     \quad = \quad
        	\tikz[very thick,baseline={([yshift=1.0ex]current bounding box.center)}]{
	          \draw (0,-.5) node[below] {\small $i$} -- (1,.5) node [near start,fill=black,circle,inner sep=2pt]{};
	          \draw (1,-.5) node[below] {\small $i$} -- (0,.5);
	}  	 \quad - \quad
	\tikz[very thick,baseline={([yshift=1.0ex]current bounding box.center)}]{
	          \draw (0,-.5) node[below] {\small $i$} -- (1,.5)node [near end,fill=black,circle,inner sep=2pt]{};
	          \draw (1,-.5) node[below] {\small $i$} -- (0,.5);
		}
\end{equation}

\begin{align} \label{eq:R3_1}
	\tikz[very thick,scale=.75,baseline={([yshift=1.0ex]current bounding box.center)}]{
		     	 \draw  +(1,-1) node[below] {\small $j$}  .. controls (0,0) ..  +(1,1);
		 \draw (0,-1) node[below] {\small $i$} -- (2,1); 
	          \draw (0,1)-- (2,-1) node[below] {\small $k$} ; 
	 } 
&\quad = \quad
	\tikz[very thick,scale=.75,baseline={([yshift=1.0ex]current bounding box.center)}]{
		     	 \draw  +(1,-1) node[below] {\small $j$}  .. controls (2,0) ..  +(1,1);
		 \draw (0,-1) node[below] {\small $i$} -- (2,1);
	          \draw (0,1)-- (2,-1) node[below] {\small $k$};
	 } 
&\text{unless $i=k\neq j$,} 
\end{align}
\begin{align} \label{eq:R3_2} 
	\tikz[very thick,scale=.75,baseline={([yshift=1.0ex]current bounding box.center)}]{
		     	 \draw  +(1,-1) node[below] {\small $j$}  .. controls (0,0) ..  +(1,1);
		 \draw (0,-1) node[below] {\small $i$} -- (2,1);
	          \draw (0,1)-- (2,-1) node[below] {\small $i$} ; 
	 }  \  - \  
	 \tikz[very thick,scale=.75,baseline={([yshift=1.0ex]current bounding box.center)}]{
		     	 \draw  +(1,-1) node[below] {\small $j$}  .. controls (2,0) ..  +(1,1);
		 \draw (0,-1) node[below] {\small $i$} -- (2,1); 
	          \draw (0,1)-- (2,-1) node[below] {\small $i$};
	 }
 &= \mspace{15mu}
	\tikz[very thick,scale=.75,baseline={([yshift=1.0ex]current bounding box.center)}]{
\node at (1,0) {\small $\dfrac{\cQ_{i,j}(X_3,X_2)-\cQ_{i,j}(X_1,X_2)}{X_3-X_1}$};
          \draw (-2,-.75) -- (-2,.75); \draw (3.9,-.75) -- (3.9,.75);
          \draw (-2,-.75) -- (3.9,-.75); \draw (-2,.75) -- (3.9,.75); 
          \draw (0,-1) node[below] {\small $i$} -- (0,-.75);\draw (0,.75) -- (0,1);
          \draw (1,-1) node[below] {\small $j$} -- (1,-.75);\draw (1,.75) -- (1,1);
          \draw (2,-1) node[below] {\small $i$} -- (2,-.75);\draw (2,.75) -- (2,1);
	}
& \text{if $i \neq j $.} 
\end{align}
Here, a monomial $X_1^{a_1}\cdots X_n^{a_n}$ in a box above $n$ strands abbreviates the configuration of $a_i$ dots on the $i$th strands; a polynomial is a linear combination of such.
\end{df}
Let now  $\bfn\in \bbZ_{\geqslant 0}I$ be a dimension vector of $\Gamma$. We can view $\bfn$ as a formal $\bbZ_{\geqslant 0}$-linear combination of the vertices $i\in I$, and  write $\bfn=\sum_{i\in I}n_i\cdot i$, where $n_i$ is the dimension at the vertex $i$.  We set $n=|\bfn|=\sum_{i\in I}n_i$.   We denote by  $I^\bfn$ the subset of $I^n$ consisting all ordered sequences \emph{having dimension vector}  $\bfn$, that is all
$\ui = (i_1,i_2, \dots, i_n)$ with $i_r \in I$ such that $i$ appears $n_i$ times in the sequence. Interpreting sequences as objects in $\mathcal{R}$ we have that $\op{Hom}_{\mathcal{R}}(\ui,\uj)=\{0\}$, if $\ui$ and $\uj$ have not the same  dimension vector.
\begin{df}
The \emph{KLR algebra} $R_\bfn=R_\bfn(\Gamma)$ is the $\Bbbk$-algebra   $\bigoplus_{\ui,\uj\in I^\bfn}\op{Hom}_{\mathcal{R}}(\ui,\uj)$.
\end{df}
For each $\ui=(i_1,\ldots, i_n)\in I^\bfn$  we have in $R_\bfn$  the idempotent 
\begin{equation}\label{idempotent}
1_{\ui}\, = 
        \tikz[very thick,xscale=1.5,baseline={([yshift=.8ex]current bounding box.center)}]{
          \draw (-.5,-.5) node[below] {\small $i_1$} -- (-.5,.5); 
          \draw (0,-.5) node[below] {\small $i_2$} -- (0,.5); 
          \node at (0.75,0) {$\dotsm$};
          \draw (1.5,-.5) node[below] {\small $i_n$} -- (1.5,.5); 
        } .
\end{equation}

\begin{rk}
More concretely, see \cite{KL1}, elements in the algebra  $R_\bfn$ are linear combinations of \emph{KLR diagrams} of weight $\bfn$, that is of planar diagrams containing $|\bfn|$ strands such that the following holds: the strands connect $|\bfn|$ points on a horizontal line at the bottom with $|\bfn|$ points on another horizontal line located above. Each strand is labelled by an element of $I$ and there are $n_i$ strands with label $i$. Two strands are allowed to cross transversally, but there are no triple-intersections. Strands are allowed to carry dots, but only away from crossing. These diagrams are considered modulo isotopies (in particular, a dot is allowed to move along a strand, but not slide past a crossing) and modulo the defining relations \eqref{eq:KLRR2} -- \eqref{eq:R3_2} of $\mathcal{R}$.  
\end{rk}
We name some morphisms from  $R_\bfn$, since they will play an important role: 
\begin{equation}\label{diagdotcross}
X_{r}1_{\bi}:=\tikz[very thick,xscale=1,baseline={([yshift=.8ex]current bounding box.center)}]{
          \draw (-.4,-.5) node[below] {\small $i_1$} -- (-.4,.5); 
           \draw (0.4,-.5)  node[below] {\small $i_{r-1}$} -- (0.4,.5); 
          \node at (0,0) {$\dotsm$};
          \draw (1,-.5)  node[below] {\small $i_r$} -- (1,.5); 
          \node at (1,0){\textbullet};
          \node at (2,0) {$\dotsm$};
              \draw (1.6,-.5)  node[below] {\small $i_{r+1}$} -- (1.6,.5); 
          \draw (2.4,-.5) node[below] {\small $i_n$} -- (2.4,.5); 
        } ,
\mspace{20mu}
\tau_{r}1_{\bi} :=\tikz[very thick,baseline={([yshift=+.6ex]current bounding box.center)}]{
    \draw (-1.4,-.5) node[below] {\small $i_1$} -- (-1.4,.5);
    \draw   (0,-.5)-- (1,.5);
    \draw   (1,-.5)-- (0,.5);
     \draw   (1.5,-.5)-- (1.5,.5);
 \draw   (-0.5,-.5)-- (-0.5,.5);
    \node at (-0.5,-.75){$i_{r-1}$};
    \node at (0,-.75){$i_r$};
    \node at (0.9,-.75){$i_{r+1}$};
    \node at (1.6,-.75){$i_{r+2}$};
    \draw (2.5,-.5) node[below] {\small $i_n$} -- (2.5,.5);
    \node at (-.9,0) {$\dotsm$};\node at (2,0) {$\dotsm$};
} .
\end{equation}

\begin{df}\label{gradingKLR} We consider $R_\bfn$ as a graded algebra by setting, for $r\in[1;n-1]$, $\op{deg}(X_{r}1_{\bi})=2$ and $\op{deg}(\tau_{r}1_{\bi} )=2h_{i_r,i_{r+1}}-2\delta_{i_r,i_{r+1}}$, 	where $\delta$ is the Kronecker delta.
\end{df}
\begin{rk}
The convention for $\op{\deg}(\tau_r1_\ui)$ is taken here to have the degree equal to the degree of the operator acting on the polynomial representation. 
\end{rk}
\begin{df}\label{tau}
Given a reduced expression $w=s_{r_1}\ldots s_{r_t}$ in $\frakS_n$, let $\tau_w=\tau_{r_1}\ldots \tau_{r_t}$. Whenever we write $\tau_w$ we assume a reduced expression has been chosen for $w\in\frakS_n$.
\end{df}

\subsection{Naisse-Vaz extensions of KLR algebras}
\label{subs:des-KLR-NV}
To define the algebras $\hR_\bfn=\hR_\bfn(\Gamma)$ from~\cite[\S3]{NaisseVaz} we use an extension of the monoidal category $\mathcal{R}^{\op{free}}$ from \S\ref{subs:KLR-def}.

Consider the $\Bbbk$-linear strict monoidal supercategory\footnote{That is a monoidal category enriched in vector superspaces, i.e. each morphism space is a superspace and composition induces an even linear map, see e.g. \cite[Def. 1.4]{BrundanEllis} for a definition. We also use the convention that when we work over a field of characteristic $2$, then we assume additionally that each odd endomorphism of the tensor unit squares to zero.}  $\mathcal{C}$ defined as $\mathcal{R}^{\op{free}}$
but with additional odd endomorphisms  $\Huge{\circ_i^a}$ of the tensor unit for $i\in I$, $a\in \bbZ_{\geqslant 0}$. These morphisms are taken modulo the relations \eqref{eq:KLRR2}-\eqref{eq:R3_2} from $\mathcal{R}$ and additionally 
\begin{align}
\label{eq:fdots}
	\tikzdiagh{0}{
	          \draw (0,-.5) node[below] {\small $i$} -- (0,.5);
		  \fdot[a]{j}{.5,0};
	} 
	& \ = \ 
\begin{cases}
	\tikzdiagh{0}{
	          \draw (0,-.5) node[below] {\small $i$} -- (0,.5);
		 \fdot[a-1]{i}{-1.1,0};
	}
	\quad  - \quad
	\tikzdiagh{0}{
	          \draw (0,-.5) node[below] {\small $i$} -- (0,.5) node [midway,tikzdot]{};
		 \fdot[a-1]{i}{.5,0};
	}
&\text{ if $i=j$ and $a > 0$, } \\
	\sum\limits_{t,r} (-1)^r
	q_{ij}^{tr}\quad 
	\tikzdiagh{0}{
	          \draw (0,-.5) node[below] {\small $i$} -- (0,.5) node [midway,tikzdot]{} node [midway, xshift=1.5ex, yshift=.75ex] {\small $t$};
		 \fdot[a+r]{j}{-1.25,0};
	}  
	 & \text{ if $ i \neq j$, }
\end{cases}
\end{align}
\begin{align}\label{eq:ExtR2}
	\tikzdiagh[scale=1]{0}{
	      	\draw  (0,-.75) node[below] {\small $i$} .. controls (0,-.5) and (1,-.5) .. (1,0) .. controls (1,.5) and (0, .5) .. (0,.75);
 	 	\draw  (1,-.75) node[below] {\small $j$} .. controls (1,-.5) and (0,-.5) .. (0,0) .. controls (0,.5) and (1, .5) .. (1,.75);
		\fdot[a]{j}{.4,.05};
	} 
	\  &= \  
	 \tikzdiagh{0}{
	      \draw  (0,-.75) node[below] {\small $i$} -- (0,.75);
	      \draw  (1,-.75) node[below] {\small $j$} -- (1,.75);
		\fdot[a]{j}{1.5,0}
	}
	 \  +  \sum_{t,r} q_{ij}^{tr} \sum_{\substack{h+\ell=\\r-1}} (-1)^h\ 
	\tikzdiagh{0}{
	      \draw  (0,-.75) node[below] {\small $i$} -- (0,.75)  node [midway,tikzdot]{} node[midway,xshift=1.5ex,yshift=.75ex] {\small $t$};
	      \draw  (1,-.75) node[below] {\small $j$} -- (1,.75)  node [midway,tikzdot]{} node[midway,xshift=1.5ex,yshift=.75ex] {\small $\ell$};	
		\fdot[a+h]{j}{-1.2,0};
	}& 
	 \quad \text{if } i \ne j,
\end{align}
where the $q_{ij}^{tr}\in\Bbbk$ are defined by the expansion $\cQ_{i,j}(u,v)=\sum_{t,r}q_{ij}^{tr}u^tv^r$. 
\begin{df}
The  {$\circ_i^a$} are called \emph{floating dots}. We call $i$ its \emph{colour} and $a$ its \emph{twist}.
We moreover use the abbreviation $\circ_i=\circ_i^0$  for floating dots with twist zero. Denote
\begin{equation}\label{diagfloatdot}
 \Omega_{r,i}^a 1_{\ui}:=\tikz[very thick,baseline={([yshift=.5ex]current bounding box.center)},
  decoration={markings, mark=at position 0.5 with {\arrow{>}}}] {
   \draw (-2,-.5) node[below] {\small $i_1$} -- (-2,.5);
     \node at (-1.5,0) {$\dotsm$};
  \draw (-1,-.5) node[below] {\small $i_r$} -- (-1,.5);
  \fdot[a]{i}{-.5,0};
  \draw (0,-.5) node[below] {\small $i_{r+1}$} -- (0,.5);
  \node at (.5,0) {$\dotsm$};
  \draw (1,-.5) node[below] {\small $i_n$} -- (1,.5);
} ,
\quad\quad
\Omega 1_\ui:= \Omega_{1,i_1}^0 1_{\ui}:=\tikz[very thick,baseline={([yshift=.5ex]current bounding box.center)},
decoration={markings, mark=at position 0.5 with {\arrow{>}}}] {
	\draw (-2,-.5) node[below] {\small $i_1$} -- (-2,.5);
	\fdot[]{i_1}{-1.5,0};
	\draw (-1,-.5) node[below] {\small $i_2$} -- (-1,.5);
	\node at (-.5,0) {$\dotsm$};
	\draw (0,-.5) node[below] {\small $i_n$} -- (0,.5);
} 
\end{equation}
\end{df}
\begin{df}
\label{df:ext-KLR-cated}

 The \emph{diagrammatic Naisse--Vaz category} $\mathcal{ER}$  is the quotient of  $\mathcal{C}$ by the ideal given by the additional relation  that a floating dot on the left is zero:
\begin{align*}
\tikzdiagh[xscale=.75]{0}{
	\fdot[a]{i}{-0.5,0.5};
	\draw (0,0) node[below] {\plusspacing \small $i_1$} -- (0,1);
	\draw (1,0) node[below] {\plusspacing \small $i_2$} -- (1,1);
	\node at(2,.5) {$\dots$};
	\draw (3,0) node[below] {\plusspacing \small $i_n$} -- (3,1);
}
\ = \ 
0.
\end{align*}
\end{df}

\begin{df}
\label{df:ext-KLR-alg}
We denote by $\hR_\bfn$ the $\Bbbk$-algebra $\bigoplus_{\ui,\uj\in I^\bfn}\op{Hom}_{\mathcal{ER}}(\ui,\uj)$.
\end{df}

Observe that, as for $\mathcal{R}$, nonzero morphisms only exist between objects having the same dimension vector. By definition of a monoidal supercategory, the floating dots anti-commute; in particular the $\Omega_{r,i}^a 1_{\ui}$ square to zero, diagrammatically \eqref{eq:ExtR2-tight}.

Elements in $\hR_\bfn$  are linear combinations of diagrams like the KLR diagrams from $R_\bfn$ except that they might have some additional small circles arising from the floating dot generators. 
Finitely many floating dots can appear in each diagram, but \eqref{eq:fdots} and \eqref{eq:ExtR2} allow to inductively move floating dots to the left. Namely, one can verify, see \cite[Lemma 3.11]{NaisseVaz}, that the following relation holds:
\begin{equation}
\label{eq:extra-rel}
\tikzdiagh{0}{
	\draw (0,0) node[below]{\small $i$} -- (0,1);
	\draw (1,0) node[below]{\small $i$} -- (1,1);
	\fdot[a]{i}{1.35,.5};
}
\ = \ 
\tikzdiagh[yscale=1.5]{0}{
	\draw (0,0) node[below]{\small $i$} ..controls (0,.15) and (1,.15) .. (1,.5)
		 ..controls (1,.85) and (0,.85) .. (0,1) ;
	\draw (1,0) node[below]{\small $i$} ..controls (1,.15) and (0,.15) .. (0,.5)
		..controls (0,.85) and (1,.85) .. (1,1)  node [near end,tikzdot]{};
	\fdot[a]{i}{0.4,0.5};
} 
\ - \ 
\tikzdiagh[yscale=1.5]{0}{
	\draw (0,0) node[below]{\small $i$} ..controls (0,.15) and (1,.15) .. (1,.5) node [near start,tikzdot]{}
		 ..controls (1,.85) and (0,.85) .. (0,1);
	\draw (1,0) node[below]{\small $i$} ..controls (1,.15) and (0,.15) .. (0,.5)
		..controls (0,.85) and (1,.85) .. (1,1);
	\fdot[a]{i}{0.4,0.5};
} 
\end{equation}

 In the resulting nonzero diagrams, floating dots only occur directly to the right of the leftmost strand and carry the same colour as this strand. We often omit indicating this unique interesting colour, see e.g. \eqref{eq:ExtR2-tight}, \eqref{eq:extrarel-tight}. By the super relations one can reduce to diagrams with at most one circle in each region and obtain a   more minimalistic (and concrete) presentation of  $\hR_\bfn$, see \cite[Cor.~3.17]{NaisseVaz}:
\begin{lem} \label{onlyomega1}As an (ordinary) algebra,  $\hR_\bfn$ is isomorphic to the algebra generated by $R_\bfn$ and $\Omega1_{\ui}$ for $\ui\in I^\bfn$ modulo the relations \eqref{eq:ExtR2-tight} and \eqref{eq:extrarel-tight}. 
\end{lem}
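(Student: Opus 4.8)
The plan is to establish the isomorphism in Lemma~\ref{onlyomega1} by exhibiting an explicit surjection from the abstract algebra generated by $R_\bfn$ together with symbols $\Omega 1_\ui$ (subject to relations \eqref{eq:ExtR2-tight} and \eqref{eq:extrarel-tight}) onto $\hR_\bfn$, and then showing it is injective by a dimension count against the diagrammatic normal form. First I would observe that $\Omega 1_\ui = \Omega_{1,i_1}^0 1_\ui$ and all the floating dots $\Omega_{r,i}^a 1_\ui$ of arbitrary position $r$ and twist $a$ can be expressed in terms of the leftmost twist-zero floating dots and elements of $R_\bfn$: moving a floating dot rightward past a strand is governed by \eqref{eq:fdots} (which trades a floating dot $\circ_i^a$ to the right of an $i$-strand for floating dots $\circ_i^{a-1}$ on the left, plus dotted terms, when $i=j$, and similarly with a sum of dotted terms when $i\neq j$), while moving it leftward past a crossing is governed by \eqref{eq:ExtR2}; and the twist can be lowered using the case $i=j$, $a>0$ of \eqref{eq:fdots}. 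Iterating these reductions, any diagram with floating dots in arbitrary positions and twists can be rewritten as a $\Bbbk$-linear combination of diagrams in which every floating dot sits immediately to the right of the leftmost strand, carries the colour of that strand, and has twist zero; this is exactly the content of \eqref{eq:extra-rel} together with the super-relation that lets one collapse multiple circles in one region. Hence the subalgebra of $\hR_\bfn$ generated by $R_\bfn$ and the $\Omega 1_\ui$ is all of $\hR_\bfn$, giving surjectivity of the natural map from the abstractly presented algebra.

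For injectivity I would appeal to the Naisse--Vaz basis theorem. By \cite[Thm.~3.16]{NaisseVaz} (equivalently \cite[Prop.~3.11]{Vazsurvey}; referenced later as \Cref{prop:NVbasis-gen} in the paper), $\hR_\bfn$ has a basis consisting of diagrams of the form: a KLR basis diagram for $R_\bfn$ (a choice of $\tau_w$ composed with a monomial in the $X_r$, for $w$ ranging over a transversal and the monomial ranging freely) with at most one twist-zero floating dot inserted just right of the leftmost strand, equivalently an expression $p(X)\cdot\tau_w\cdot 1_\ui$ or $p(X)\cdot\tau_w\cdot\Omega 1_\ui$. This says precisely that $\hR_\bfn$ is free as a left $R_\bfn$-module (equivalently as a right $R_\bfn$-module) of rank $2$ on each idempotent $1_\ui$ whose leftmost colour is fixed — more carefully, $\hR_\bfn = R_\bfn \oplus R_\bfn\cdot\Omega$ where $\Omega = \sum_\ui \Omega 1_\ui$, with $\Omega^2 = 0$ by the super-relation \eqref{eq:ExtR2-tight}. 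The abstractly presented algebra $A$ generated by $R_\bfn$ and $\Omega1_\ui$ modulo \eqref{eq:ExtR2-tight} and \eqref{eq:extrarel-tight} also satisfies $A = R_\bfn + R_\bfn\Omega$ as a spanning statement — here \eqref{eq:extrarel-tight} (the tight form of \eqref{eq:extra-rel}) is exactly what is needed to push a floating dot from the right of the second strand to the left of the first, so that no new generators are needed, and \eqref{eq:ExtR2-tight} gives $\Omega^21_\ui = 0$ and the commutation of $\Omega$ with dots on the leftmost strand up to lower terms. Therefore $\dim A \le \dim R_\bfn + \dim(R_\bfn\Omega) \le 2\dim R_\bfn$ (in each graded piece, with the appropriate grading shift on the $\Omega$-part), while the basis theorem gives $\dim\hR_\bfn = 2\dim R_\bfn$ in the same sense; since the surjection $A \twoheadrightarrow \hR_\bfn$ cannot strictly decrease dimension, it is an isomorphism.

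The key steps in order: (1) record the reduction rules \eqref{eq:fdots}, \eqref{eq:ExtR2}, \eqref{eq:extra-rel} and deduce that $R_\bfn$ and the $\Omega1_\ui$ generate $\hR_\bfn$; (2) check that every defining relation among these generators in $\hR_\bfn$ is a consequence of \eqref{eq:ExtR2-tight} and \eqref{eq:extrarel-tight} — this produces the algebra homomorphism $\varphi\colon A\to\hR_\bfn$ and its surjectivity; (3) in $A$, use \eqref{eq:extrarel-tight} to show $A$ is spanned by $R_\bfn$ and $R_\bfn\Omega$ (i.e. one never needs a floating dot anywhere except to the immediate right of the leftmost strand, and never with a twist), and use \eqref{eq:ExtR2-tight} to show $\Omega^21_\ui=0$ so that this spanning set has size at most $2\dim R_\bfn$; (4) invoke the Naisse--Vaz basis \cite[Cor.~3.17]{NaisseVaz} to conclude $\dim\hR_\bfn = 2\dim R_\bfn$ and hence $\varphi$ is injective.

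I expect step (3) — the spanning statement for $A$ — to be the main obstacle, since it is the step where one must verify that the two chosen relations are genuinely \emph{sufficient} to put an arbitrary word in the generators into the normal form $p(X)\tau_w$ or $p(X)\tau_w\Omega$, with no residual ambiguity and no need for further relations; in particular one must confirm that \eqref{eq:extrarel-tight} suffices to handle a floating dot encountered after \emph{any} crossing, not just a single transposition of the two leftmost strands, and that the interaction of $\Omega$ with dots and crossings on non-leftmost strands introduces only terms already in $R_\bfn + R_\bfn\Omega$. Everything else is either bookkeeping with the diagrammatic calculus (steps (1)--(2)) or a direct citation (step (4)). In the write-up this is essentially \cite[Cor.~3.17]{NaisseVaz}, so I would present the argument compactly, emphasizing the two relations \eqref{eq:ExtR2-tight}, \eqref{eq:extrarel-tight} as exactly the minimal data surviving the reduction, and refer to \emph{loc.\ cit.} for the detailed verification of the basis.
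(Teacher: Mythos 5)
Your surjectivity argument (step 1) is fine and matches the paper's own sentence immediately preceding the lemma. But the paper gives no proof of injectivity beyond citing \cite[Cor.~3.17]{NaisseVaz}, so you are on your own there, and that is where the proposal breaks: your dimension count is wrong.

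You assert that the Naisse--Vaz basis theorem gives $\hR_\bfn = R_\bfn \oplus R_\bfn\cdot\Omega$, so that $\hR_\bfn$ is free of rank $2$ over $R_\bfn$ and $\dim\hR_\bfn = 2\dim R_\bfn$ in each graded piece. This is not what the basis theorem says, and it is false. Look at \Cref{prop:NVbasis-gen}: the $\Pol_n$-basis of $\hR_\bfn$ consists of elements $\tau_x\Omega_{k,n}\tau_y\tau_z 1_\uj$ where $k$ ranges over $[0;n]$ and $\Omega_{k,n}$ (defined in \eqref{omegakn}) is a product of $k$ copies of $\Omega$ interleaved with crossings. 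In the $\mathfrak{sl}_2$ case \Cref{coro:basis-hNH} makes the count explicit: $\hNH_n \cong \NH_n \otimes \mywedge^\bullet(\omega_1,\ldots,\omega_n)$, so the $\NH_n$-rank is $2^n$, not $2$. Elements such as $\Omega\tau_1\Omega 1_\ui$ (corresponding to $\omega_1\wedge\omega_2$ modulo lower terms) are nonzero in $\hR_\bfn$ and do not lie in $R_\bfn + R_\bfn\Omega$; the relation \eqref{eq:ExtR2-tight} says two floating dots \emph{in the same region} anticommute and kill each other, not that the algebra stops growing after one floating dot.

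The same error infects your upper bound on $A$. The relation \eqref{eq:extrarel-tight} does not let you collapse $\Omega\tau_1\Omega$ to something in $R_\bfn + R_\bfn\Omega$; it only tells you how a floating dot interacts with a double crossing (it moves a circle from the middle region of a double crossing to outside of it, with a sign). So the spanning claim ``$A = R_\bfn + R_\bfn\Omega$'' fails on the source side too, and there is no finite dimension count of the form you wrote. If you want an honest injectivity argument you would need to reproduce the triangularity/straightening argument of \cite[Cor.~3.17]{NaisseVaz}: show that diagrams reduce to the Naisse--Vaz normal form $\tau_x\Omega_{k,n}\tau_y\tau_z$ using only \eqref{eq:ExtR2-tight} and \eqref{eq:extrarel-tight} (plus the implicit $R_\bfn$-relations and locality), and then invoke the basis theorem to see these normal forms are already linearly independent in $\hR_\bfn$. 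In the present paper this is precisely what is being delegated to \emph{loc.\ cit.}, so your write-up should either cite it for the whole lemma, as the paper does, or redo the full straightening argument; the rank-$2$ shortcut does not exist.
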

In \Cref{onlyomega1}, products are set to be zero in case the labels do not match.  
   \begin{equation}\label{eq:ExtR2-tight}
	\tikz[very thick,baseline={([yshift=1.5ex]current bounding box.center)}]{
		\draw  (1,-.5) node[below] {\small $i_1$} -- (1,1);
		\fdot[]{}{1.5,0} \fdot[]{}{1.5,0.5} 
		\draw  (2,-.5) node[below] {\small $i_2$} -- (2,1);
		\node at (2.5,0.25) {$\dotsm$};
		\draw  (3,-.5) node[below] {\small $i_n$} -- (3,1);
	}
	\  = \ 0 , 
	\end{equation}
	\begin{equation} \label{eq:extrarel-tight}
	\tikz[very thick,baseline={([yshift=0ex]current bounding box.center)}]{
		\draw  (0,-.75) node[below] {\small $i$} .. controls (0,-.5) and (1,-.5) .. (1,0) .. controls (1,.5) and (0, .5) .. (0,.75);
		\draw  (1,-.75) node[below] {\small $j$} .. controls (1,-.5) and (0,-.5) .. (0,0) .. controls (0,.5) and (1, .5) .. (1,.75);
		\fdot{}{.4,0.1}; 
		\fdot{}{.45,.95}; 
		\node at (1.3,0) {$\dotsm$};
	} = - \quad 
	\tikz[very thick,baseline={([yshift=1ex]current bounding box.center)}]{
		\draw  (0,-.75) node[below] {\small $i$} .. controls (0,-.5) and (1,-.5) .. (1,0) .. controls (1,.5) and (0, .5) .. (0,.75);
		\draw  (1,-.75) node[below] {\small $j$} .. controls (1,-.5) and (0,-.5) .. (0,0) .. controls (0,.5) and (1, .5) .. (1,.75);
		\fdot{}{.4,0.1}; 
		\fdot{}{.45,-.75}; 	 
		\node at (1.3,0) {$\dotsm$};
	} .
	\end{equation}

\begin{df}\label{DefgradhR} 
	We consider $\hR_\bfn$ as a $\bbZ$-graded algebra by putting
	$$
	\op{deg}(1_\ui)=0,\; \op{deg}(X_r)=2,\; \op{deg}(\tau_r1_\ui)=2h_{i_r,i_{r+1}}-2\delta_{i_r,i_{r+1}},\; \op{deg}(\Omega1_\ui)=2(n_{i_1}-1).
	$$ 
\end{df}
\begin{df}
We refer to $\hR_\bfn$, viewed as graded algebra via \Cref{DefgradhR}, as the \emph{Naisse--Vaz-algebra}  (associated with $\bfn$). 
\end{df}

Our goal is to provide a geometric construction of the Naisse--Vaz algebras.


\section{Convolution algebras}
\label{subs:conv}
We pass now to geometry and recall here the fundamental notion of a convolution algebra from \cite[\S 2.7]{CG97}. We do an equivariant version, \cite{EdidinGraham}, \cite{Graham}.
	
Let $\abY$ be a smooth complex manifold and let $\abX$ be a complex algebraic variety (possibly singular). Let $L$ be a complex algebraic group acting smoothly on $\abY$ and algebraically on $\abX$. 
Denote by $H_*^L(\abX)$ the $L$-equivariant Borel--Moore homology of $\abX$. By the assumption on $\abY$, its (equivariant) Borel--Moore homology $H_*^L(\abY)$ can be identified with its (equivariant) cohomology  $H^*_L(\abY)$. Let $\tR_L=H^*_L(\point)$ be the $L$-equivariant cohomology of a point.

\begin{df}
Assume that there is an $L$-invariant proper map $p\colon \abY\to \abX$. Set
\begin{equation}\label{eq:Z-abstract-fibre}
Z=\abY \times_X \abY.
\end{equation}
Let $p_1,p_2\colon \abY\times \abY\to \abY$ be the projections to the first and the second component respectively. Consider the maps $p_{1,2}$, $p_{1,3}$ and $p_{2,3}$ from $\abY\times \abY\times \abY$ to $\abY\times \abY$ forgetting respectively the third, the second and the first factor. 
\end{df}
\begin{lem}\label{lem:conv}Assume the setup of the opening paragraph of this section.
\begin{enumerate}
\item There is an algebra structure on $H_*^L(Z)$ with multiplication given by 
\begin{eqnarray}\label{conv}
\qquad a \star b&=&(p_{1,3})_*(p_{1,2}^*(a)\cap p_{2,3}^*(b)), \qquad \mbox{for  }a,b\in H_*^L(Z).\qquad
\end{eqnarray}
The unit is the fundamental class of the diagonal (isomorphic to $\abY$) in $Z$.
\item The algebra $H_*^L(Z)$ acts on $H_*^L(\abY)$ by 
\begin{eqnarray}\label{conv2}
\qquad a \star b&=&(p_{1})_*(a\cap p_{2}^*(b)),\qquad \mbox{for } a\in H_*^L(Z),~b\in H_*^L(\abY).\qquad
\end{eqnarray}
\end{enumerate}
\end{lem}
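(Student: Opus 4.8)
The plan is to follow the non-equivariant template of \cite[\S2.7]{CG97} and then upgrade it to the $L$-equivariant setting by the Borel construction, following \cite{EdidinGraham}, \cite{Graham}. First I would record the geometric input that makes \eqref{conv} and \eqref{conv2} well defined. Since $\abX$ is separated, $Z=\abY\times_{\abX}\abY=(p\times p)^{-1}(\Delta_{\abX})$ is a closed subvariety of the smooth manifold $\abY\times\abY$; likewise $Z_{1,2}:=p_{1,2}^{-1}(Z)$, $Z_{2,3}:=p_{2,3}^{-1}(Z)$ and $Z_{1,2}\cap Z_{2,3}=\{(y_1,y_2,y_3)\mid p(y_1)=p(y_2)=p(y_3)\}=:Z_{1,2,3}$ are closed in the smooth manifold $\abY\times\abY\times\abY$. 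On a closed subset of a smooth ambient manifold one has a refined intersection product $\cap\colon H_*^L(A)\otimes H_*^L(B)\to H_*^L(A\cap B)$ (here one uses, as already observed, that the Borel--Moore homology of a smooth space agrees with its cohomology, so that cup and cap products are at hand); the projections $p_{1,2},p_{2,3}\colon\abY^3\to\abY^2$ are smooth, hence carry pullbacks $p_{1,2}^*,p_{2,3}^*$ on Borel--Moore homology; and $p_{1,3}|_{Z_{1,2,3}}\colon Z_{1,2,3}\to Z$ is proper, being the base change of the proper map $p\colon\abY\to\abX$ along $Z\to\abX$, $(y_1,y_3)\mapsto p(y_1)$, so $(p_{1,3})_*$ is defined. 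Composing these three operations as in \eqref{conv} therefore gives a bilinear map $H_*^L(Z)\otimes H_*^L(Z)\to H_*^L(Z)$, and the same discussion, with the proper map $p_{1}|_Z\colon Z\to\abY$ (again a base change of $p$), makes \eqref{conv2} well defined.

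Next I would prove associativity. Working in $\abY^4$, put $Z_{i,j}=p_{i,j}^{-1}(Z)$ and $Z_{1,2,3,4}=\bigcap_{i<j}Z_{i,j}=\{(y_1,\dots,y_4)\mid p(y_1)=\dots=p(y_4)\}$, which is proper over $Z$ via $p_{1,4}$. Using the projection formula and base change for the pullback maps, together with the associativity and commutativity of the refined intersection product in the smooth manifold $\abY^4$, one checks that both $(a\star b)\star c$ and $a\star(b\star c)$ equal $(p_{1,4})_*\bigl(p_{1,2}^*(a)\cap p_{2,3}^*(b)\cap p_{3,4}^*(c)\bigr)$. This is precisely the computation carried out in \cite[\S2.7]{CG97}, and it goes through verbatim here since $\abY$ is smooth.

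For the unit, I would note that the fundamental class of the diagonal in $Z$ is $\iota_*[\abY]$, where $\iota\colon\abY\hookrightarrow Z$, $y\mapsto(y,y)$, is a closed embedding; substituting $a=\iota_*[\abY]$ in \eqref{conv} and simplifying by base change along the Cartesian square cutting out the diagonal inside $\abY\times\abY$ collapses the convolution to the identity of $H_*^L(Z)$, both as a left and as a right unit, and the same computation shows that $\iota_*[\abY]$ acts as the identity on $H_*^L(\abY)$ in \eqref{conv2}. The mixed associativity $(a\star b)\star v=a\star(b\star v)$ for $a,b\in H_*^L(Z)$ and $v\in H_*^L(\abY)$ is the identical diagram chase with $\abY^4$ replaced by $\abY^3$ and the last factor carrying the class $v$; this is the bimodule counterpart of \cite[\S2.7]{CG97}. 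Finally, the passage from ordinary to $L$-equivariant homology is the standard one of \cite{EdidinGraham}, \cite{Graham}: replace each space $T$ above by $T\times_L U$ for $U$ an $L$-stable open in a finite-dimensional $L$-representation $\mathbb{V}$ on which $L$ acts freely and whose complement has codimension exceeding the degrees in play; all the smoothness, closedness and properness facts used above survive this replacement, and the resulting convolution operations are, in the relevant range of degrees, independent of $(\mathbb{V},U)$, so one obtains the statements for $H_*^L$ in the limit.

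The step I expect to be the main obstacle is the associativity diagram chase: its genuine content is the compatibility of smooth pullback with proper pushforward through the tower of Cartesian squares relating $\abY^2$, $\abY^3$ and $\abY^4$, everything else being formal. As this is exactly \cite[\S2.7]{CG97}, in practice I would invoke that reference once I have verified that its hypotheses hold in our situation -- $\abY$ smooth, $\abX$ separated, $p$ proper, the $L$-action algebraic -- both non-equivariantly and in the equivariant refinement.
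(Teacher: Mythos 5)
Your proposal is correct and is exactly the standard Chriss--Ginzburg construction that the paper itself relies on: the paper gives no proof of this lemma, simply citing \cite[\S 2.7]{CG97} for the convolution product and \cite{EdidinGraham}, \cite{Graham} for the equivariant upgrade via finite-dimensional approximations, which is precisely the route you take. All the points you flag (properness of $p_{1,3}$ restricted to $Z_{1,2,3}$ by base change of $p$, the refined intersection product inside the smooth ambient $\abY^{k}$, associativity via the $\abY^4$ diagram, the diagonal class as unit) are the intended content of that reference.
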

\begin{ex}
	
One might pick $\abX=\point$ and $\abY$ any complex projective variety. Then $Z=\abY\times \abY$ and  $H_*^L(Z)$ can be identified with $\End_{\tR_L}(H^*_L(\abY))$. The $H_*^L(Z)$-action \eqref{conv} on $H_*^L(\abY)\cong H^*_L(\abY)$ is the natural action of $\End_{\tR_L}(H^*_L(\abY))$ on $H^*_L(\abY)$.
\end{ex}

\subsection{Torus fixed points localisation}
Assume that $G$ is a reductive complex algebraic group and $\rmT\subset G$ is a maximal torus. 
Let $\abX$ be a $G$-variety.
Then by \cite[\S III, Prop. 1]{Hsiang}, there is an action of the Weyl group $W$ on $H_\rmT^*(\abX)$ and we have\footnote{Here, the assumption that the characteristic of $\Bbbk$ is zero is crucial!}
  $H^G_*(\abX)\simeq (H^\rmT_*(\abX))^W$. This allows us to include $H^G_*(\abX)$ into $H^\rmT_*(\abX)$. 
  
Now, assume that $\abX$ has an action of $\rmT$ (not obligatory of $G$). We consider $\rmT$-equivariant Borel--Moore homology, see e.g. \cite{AndersonFulton} for details.

View $H^\rmT_*(\abX)$ as $\tR_\rmT$-module and let $H^\rmT_*(\abX)_{\rm loc}={\rm Frac}(\tR_\rmT)\otimes_{\tR_\rmT}H^\rmT_*(\abX)$ be its localisation. Assume that the set $\abX^\rmT$ of $\rmT$-fixed points in $\abX$ is non-empty and finite. By the localisation theorem, the push-forward of the natural inclusion $\abX^\rmT\subset \abX$ induces an isomorphism $H^\rmT_*(\abX)_{\rm loc}\cong H^\rmT_*(\abX^\rmT)_{\rm loc}$. 

\begin{df}  \label{locmap} 
We call the  obvious map $\op{loc}\colon H^\rmT_*(\abX)\hookrightarrow H^\rmT_*(\abX)_{\rm loc}\simeq H^\rmT_*(\abX^\rmT)_{\rm loc}$ the \emph{localisation map}. If  $H^\rmT_*(\abX)$ is free over $\tR_\rmT$, then this map is in fact injective. 
\end{df}
The localisation map allows us to do computation using $\rmT$-fixed points. For $x\in \abX ^\rmT$, let $[x]\in H_*^\rmT(\abX)$ be the push-forward of the fundamental class of the point $x$ to $\abX$. Then $\{[x],\,x\in\abX ^\rmT\}$ is a basis of $H^\rmT_*(\abX)_{\rm loc}$ over ${\rm Frac}(\tR_\rmT)$. We will often do homology calculations using this basis.
Now, assume that $\abX$ and $\abY$ are $G$-varieties and that $\abY$ is smooth. As in \eqref{eq:Z-abstract-fibre}, set $Z=\abY\times_{\abX}\abY$. Assuming that $\abY^\rmT$ is non-empty and finite, we have $Z^\rmT\subset \abY^\rmT\times \abY^\rmT$, and we can label $\rmT$-fixed points of $Z$ by pairs $(x,y)$ with $x,y\in\abY^\rmT$. 

For a finite-dimensional representation $M$ of $\rmT$ we denote by $\op{eu}(M)\in \tR_\rmT$ the character of $\Lambda^{\dim M}M$. For $x\in\abY^\rmT$ we set $\op{eu}(\abY,x)=\op{eu}(T_x\abY)$.

The following lemma is crucial for doing computation on the $\rmT$-fixed points, see  e.g. \cite[Prop. 2.10, 2.11]{MakMin} and \cite[Lemma 2.19 $(b)$]{VV}.

\begin{lem}\label{lem:comp-in-loc}
The localisation maps satisfy the following for $x,y,z\in \abY^\rmT$.
\begin{enumerate} 
	\item \label{1} The inclusion $H^\rmT_*(Z)\hookrightarrow H^\rmT_*(Z^\rmT)_{\rm loc}$ is an algebra homomorphism for the product $[(x,y)]\star [(y',z)]=\delta_{y,y'}\op{eu}(\abY,y)[(x,z)]$ on $H_*^\rmT(Z^\rmT)_{\rm loc}$. 	

\item \label{2} With the action map from \Cref{lem:conv} the following diagram commutes	
\[
	\begin{CD}
	H_*^\rmT(Z)\otimes H_*^\rmT(\abY) @>>> H_*^\rmT(\abY)\\
	@VVV @VVV\\
	H_*^\rmT(Z^\rmT)_{\rm loc}\otimes H_*^\rmT(\abY^\rmT)_{\rm loc} @>>> H_*^\rmT(\abY^\rmT)_{\rm loc},
	\end{CD}
	\]
	where $H_*^\rmT(Z^\rmT))_{\rm loc}$ acts on  $H_*^\rmT(\abY^\rmT)_{\rm loc}$ by $[(x,y)]\star [y']=\delta_{y,y'}\op{eu}(\abY,y)[x]$. 
	\end{enumerate}
In particular, the $H_*^{G}(Z)$-action on $H_*^{G}(\abY)$ is faithful. 
\end{lem}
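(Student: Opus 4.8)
The plan is to reduce both parts, and then the faithfulness statement, to a computation on $\rmT$-fixed points. First I would observe, using $H^G_*(-)=(H^\rmT_*(-))^W$ from \cite[\S III, Prop.~1]{Hsiang} together with the naturality of $p^*$, $\cap$ and proper $p_*$ under restriction of the equivariance group from $G$ to $\rmT$, that the $G$-convolution product on $H^G_*(Z)$ and the $G$-action on $H^G_*(\abY)$ are simply the restrictions of the corresponding $\rmT$-structures to the subspaces of $W$-invariants; hence it suffices to work $\rmT$-equivariantly throughout. The one external input I would invoke is the standard fact that the localisation map is compatible with equivariant Gysin pull-backs, with $\cap$-products, and with proper push-forwards, see \cite[\S5.11]{CG97}, \cite{MakMin}, \cite[Lem.~2.19]{VV}; granting this, parts \emph{(1)} and \emph{(2)} reduce to evaluating the operations in \eqref{conv} and \eqref{conv2} on the Atiyah--Bott fixed-point bases $\{[x]\}$.

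For part \emph{(1)} I would first record that, since $\rmT$ acts diagonally on $Z$ as in \eqref{eq:Z-abstract-fibre}, $Z^\rmT=\{(x,y)\in\abY^\rmT\times\abY^\rmT\mid p(x)=p(y)\}$ is finite, contains the diagonal, and is closed under the composition law ($(x,y),(y,z)\in Z^\rmT$ forces $(x,z)\in Z^\rmT$); hence $[(x,y)]\star[(y',z)]=\delta_{y,y'}\op{eu}(\abY,y)[(x,z)]$ does define an associative $\op{Frac}(\tR_\rmT)$-algebra structure on $H^\rmT_*(Z^\rmT)_{\rm loc}=\bigoplus_{(x,y)\in Z^\rmT}\op{Frac}(\tR_\rmT)[(x,y)]$. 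Then I would carry out the localised computation: for a projection $f\colon M\to N$ of smooth varieties one has $\op{loc}(f^*[n])=\sum_{f(m)=n}\op{eu}(F_n,m)^{-1}[m]$ with $F_n$ the fibre, so $\op{loc}(p_{12}^*[(x,y)])=\sum_{z}\op{eu}(\abY,z)^{-1}[(x,y,z)]$ and likewise $\op{loc}(p_{23}^*[(y',z)])=\sum_{x'}\op{eu}(\abY,x')^{-1}[(x',y',z)]$; their $\cap$-product in $H^\rmT_*((\abY^3)^\rmT)_{\rm loc}$ is diagonal, with $\op{eu}(\abY^3,(x',y'',z'))=\op{eu}(\abY,x')\op{eu}(\abY,y'')\op{eu}(\abY,z')$, so it vanishes unless $y=y'$ (this is the $\delta_{y,y'}$) and otherwise equals $\op{eu}(\abY,y)[(x,y,z)]$, the remaining Euler factors telescoping; finally $p_{13}$ (proper on the relevant subvariety by the construction of $Z$) carries $[(x,y,z)]$ to $[(x,z)]$ with no Euler correction. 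This produces exactly $\delta_{y,y'}\op{eu}(\abY,y)[(x,z)]$, and since $\op{loc}$ is $\op{Frac}(\tR_\rmT)$-linear and the classes $[(x,y)]$ span $H^\rmT_*(Z)_{\rm loc}$ it intertwines the two products; as a sanity check, the diagonal class maps to $\sum_x\op{eu}(\abY,x)^{-1}[(x,x)]$, which is the unit on the right.

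Part \emph{(2)} I would prove by the identical computation with the third copy of $Z$ in \eqref{conv} replaced by $\abY$: $\op{loc}(p_2^*[y'])=\sum_{x}\op{eu}(\abY,x)^{-1}[(x,y')]$, its $\cap$-product with $[(x_0,y_0)]$ is $\delta_{y_0,y'}\op{eu}(\abY,y_0)[(x_0,y_0)]$, and $p_{1*}$ sends this to $[x_0]$; the stated square commutes by the same naturality of $\op{loc}$, and one checks directly that the resulting operation is a module action over the algebra of part \emph{(1)}. For the concluding faithfulness assertion, suppose $a\in H^G_*(Z)$ acts as $0$ on $H^G_*(\abY)$. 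Since $\op{Frac}(\tR_\rmT)/\op{Frac}(\tR_G)$ is Galois with group $W$ and the action is $\tR_\rmT$-linear, Galois descent gives $H^\rmT_*(\abY^\rmT)_{\rm loc}\cong\op{Frac}(\tR_\rmT)\otimes_{\op{Frac}(\tR_G)}H^G_*(\abY)_{\rm loc}$, so $\op{loc}(a)$ must act as $0$ on all of $H^\rmT_*(\abY^\rmT)_{\rm loc}$. Writing $\op{loc}(a)=\sum_{(x,y)\in Z^\rmT}a_{x,y}[(x,y)]$ and feeding in the basis vectors $[y_0]$, part \emph{(2)} forces $\op{eu}(\abY,y_0)\sum_x a_{x,y_0}[x]=0$ for every $y_0$; since an isolated fixed point has $(T_{y_0}\abY)^\rmT=0$, hence $\op{eu}(\abY,y_0)\neq 0$, and the $[x]$ form a $\op{Frac}(\tR_\rmT)$-basis, all $a_{x,y_0}$ vanish, i.e.\ $\op{loc}(a)=0$; as $\op{loc}$ is injective on $H^\rmT_*(Z)$ in our situation (the relevant Borel--Moore homologies being free, hence torsion-free, over $\tR_\rmT$), we conclude $a=0$.

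I expect the only real obstacle to be bookkeeping: checking carefully that localisation commutes with the equivariant Gysin maps and the intersection product, and keeping track of Euler classes and relative dimensions so that all factors cancel to leave precisely $\op{eu}(\abY,y)$. All of this is standard and contained in the cited references; once it is in place the argument is just the short fixed-point calculation above.
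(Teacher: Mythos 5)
Your proof is correct, and it is essentially the standard localisation argument: the paper itself gives no proof of this lemma but delegates it to \cite[Prop.\ 2.10, 2.11]{MakMin} and \cite[Lemma 2.19(b)]{VV}, where precisely this fixed-point computation (pull back point classes along the projections, use the self-intersection formula so the Euler factors telescope to $\op{eu}(\abY,y)$, push forward along $p_{13}$, and deduce faithfulness from injectivity of the localisation map) is carried out. Your only implicit hypotheses — injectivity of the localisation maps and finiteness of the fixed-point sets — are exactly the standing assumptions the paper records in \Cref{locmap} and \Cref{rk:faith-from-strat}, so nothing is missing.
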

\begin{rk}
\label{rk:faith-from-strat}
\Cref{lem:comp-in-loc} holds in more general settings, where for instance the space is not algebraic, as long as 
we have $H_*^{\rmT}(\abY) \subset H_*^{\rmT}(\abY)_{\rm loc}\cong H_*^{\rmT}(\abY ^\rmT)_{\rm loc}$  
and $H_*^{\rmT}(Z) \subset H_*^{\rmT}(Z)_{\rm loc}\cong H_*^{\rmT}(Z ^\rmT)_{\rm loc}$.
 In practice,  this can often be achieved by constructing nice pavings of $\abY$ and $Z$. 
\end{rk}

Assuming that $\bfZ\subset Z$ is closed and that
\begin{eqnarray*}
\bfZ\circ\bfZ:=\{(y_1,y_3)\in \abY\times\abY\mid (y_1,y_2), (y_2,y_3)\in \bfZ  \mbox{ for some } y_2\in \abY  \}\subset \bfZ,
\end{eqnarray*}
the following is a consequence of the definitions.
\begin{lem}\label{convcirc}
The rule \eqref{conv} turns $H_*^{G}(\bfZ)$ into an algebra, and the push-forward of the inclusion $\bfZ\subset Z$ induces an algebra homomorphism $H_*^{G}(\bfZ)\to H_*^{G}(Z)$.
\end{lem}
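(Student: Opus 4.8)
The plan is to carry out the construction of the convolution product \eqref{conv} verbatim, but with $\bfZ$ in place of $Z$, and to observe that the only inputs that construction needs are exactly the two hypotheses: $\bfZ$ closed and $\bfZ\circ\bfZ\subset\bfZ$. Concretely, I would put $\bfZ_{13}:=p_{1,2}^{-1}(\bfZ)\cap p_{2,3}^{-1}(\bfZ)$, a closed subvariety of $\abY\times\abY\times\abY$ (note that $\bfZ$, being closed in $Z$ and $\abX$ being separated, is closed in $\abY\times\abY$). Since the three $\abY$-coordinates of any point of $\bfZ_{13}$ have a common image in $\abX$, it is even closed inside the threefold fibre product $\abY\times_{\abX}\abY\times_{\abX}\abY$. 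The ``forget the middle factor'' map $\abY\times_{\abX}\abY\times_{\abX}\abY\to Z$ is obtained from $p$ by base change, hence proper, and by the hypothesis $\bfZ\circ\bfZ\subset\bfZ$ its restriction to $\bfZ_{13}$ has image inside $\bfZ$; this yields a proper map $\pi\colon\bfZ_{13}\to\bfZ$. Then I would define, for $a,b\in H_*^G(\bfZ)$, the class $a\star b:=\pi_*\bigl(p_{1,2}^*(a)\cap p_{2,3}^*(b)\bigr)$, where $p_{1,2}^*,p_{2,3}^*$ are the Gysin pull-backs along the smooth projections $\abY\times\abY\times\abY\to\abY\times\abY$ (so $p_{1,2}^*(a)\in H_*^G(p_{1,2}^{-1}(\bfZ))$ and $p_{2,3}^*(b)\in H_*^G(p_{2,3}^{-1}(\bfZ))$) and $\cap$ is the refined intersection product in the smooth manifold $\abY\times\abY\times\abY$, which takes values in $H_*^G(p_{1,2}^{-1}(\bfZ)\cap p_{2,3}^{-1}(\bfZ))=H_*^G(\bfZ_{13})$. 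In the $G$-equivariant setting this is run on finite-dimensional Borel approximations, cf. \cite{EdidinGraham}, with no change. This is literally formula \eqref{conv} for $\bfZ$.

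For the algebra axioms: associativity reduces, as usual (see \cite[\S 2.7]{CG97}), to a computation on a fourfold composition variety, and the only point to notice is that $\bfZ\circ\bfZ\subset\bfZ$ forces $\bfZ^{\circ n}\subset\bfZ$ for all $n$ (compose one pair at a time), so that all the forgetful maps landing in $\bfZ$ that occur in that computation are again proper; the standard argument then applies unchanged. The unit is the fundamental class of the diagonal $\Delta_\abY\cong\abY$, which lies in $\bfZ$ in all the situations where we apply this lemma (if it does not, one obtains a non-unital associative algebra). For the homomorphism statement, let $j\colon\bfZ\hookrightarrow Z$ be the closed embedding and let $Z_{13}$ be the analogue of $\bfZ_{13}$ with $Z$ in place of $\bfZ$; then $\bfZ_{13}\hookrightarrow Z_{13}$, $\bfZ\hookrightarrow Z$, $p_{1,2}^{-1}(\bfZ)\hookrightarrow p_{1,2}^{-1}(Z)$, $p_{2,3}^{-1}(\bfZ)\hookrightarrow p_{2,3}^{-1}(Z)$ are closed embeddings compatible with the forgetful maps. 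Since proper push-forward is functorial, since the smooth pull-backs $p_{1,2}^*,p_{2,3}^*$ commute with these closed push-forwards, and since the refined intersection product in the fixed smooth space $\abY\times\abY\times\abY$ is functorial with respect to closed embeddings of the supports (it is assembled from external products and Gysin pull-back along the diagonal of $\abY\times\abY\times\abY$, both functorial), a diagram chase gives $j_*(a\star b)=j_*(a)\star j_*(b)$.

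I do not expect a genuine obstacle here. The only content is (i) the properness of $\pi$, where one uses $p$ proper and $\bfZ$ closed via the base-change description of the ``forget a factor'' maps, and (ii) the functoriality of the refined intersection product for closed embeddings of supports, which is immediate from its construction. The only real (and mild) nuisance is bookkeeping: identifying, for each forgetful map occurring in the associativity argument, the reindexed iterated fibre product over $\abX$ that exhibits it as a base change of $p$.
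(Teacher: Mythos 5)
Your argument is correct and is precisely the standard Chriss--Ginzburg construction that the paper invokes without proof (the paper simply declares the lemma ``a consequence of the definitions''): the closedness of $\bfZ$ and the condition $\bfZ\circ\bfZ\subset\bfZ$ are exactly what make the forgetful map $\bfZ_{13}\to\bfZ$ proper with the right target, and the homomorphism claim follows from base change and functoriality of the refined intersection product, as you say. Your caveat about the unit is also well taken: the hypotheses as stated only guarantee a (possibly non-unital) associative product unless $\Delta_\abY\subset\bfZ$, which does hold in every instance where the lemma is applied in the paper.
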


\begin{rk}
	The push-forward $H_*^{G}(\bfZ)\to H_*^{G}(Z)$ may not be injective in general. Thanks to the following commutative diagram this map is however injective if the localisation map $H_*^{G}(\bfZ)\to H_*^{T}(\bfZ ^\rmT)_{\rm loc}$ is injective, see \Cref{locmap} (to ensure that, it suffices for instance to show the existence of a nice paving of $\bfZ$). 
	$$
	\begin{CD}
	H_*^{G}(\bfZ)@>>> H_*^{G}(Z)\\
	@VVV                   @VVV\\
	H_*^{T}(\bfZ ^\rmT)_{\rm loc}@>>> H_*^{T}(Z ^\rmT)_{\rm loc}.
	\end{CD}
	$$
\end{rk}

\addtocontents{toc}{{\textbf{Part I: The case of $\mathfrak{sl}_2$} (the quiver with one vertex and no arrows)}}
\section{Nil-Hecke algebra and its extended versions}
\label{sec:hNH}
We start with the first part of the paper, namely the treatment of the $\mathfrak{sl}_2$ case, i.e., we assume that our quiver $\Gamma$ has only one vertex and no arrows. The dimension vector $\bfn$ is then just a positive integer $n$. The KLR algebra $R_\bfn$ is in this case the nil-Hecke algebra $\NH_n$. We briefly recall this algebra and its extension $\hNH_n$.

We will define several  $\Bbbk$-algebras via some faithful representations. Equivalent definitions in terms of generators and relations can by found in \Cref{app:gen-rel}.

\subsection{Nil-Hecke algebra}
\label{sec:Pol}
First, we recall some generalities about the nil-Hecke algebras, see \cite[\S 4]{KostKum} and \cite[\S 2.2]{KL} for more details.

Let $n$ be a positive integer. Set $\Pol_n=\Bbbk[X_1,\ldots,X_n]$. The symmetric group $\frakS_n$ acts (from the left) on $\Pol_n$ such that $s_r$ exchanges $X_r$ with $X_{r+1}$. 

\begin{df}\label{actionNH}
The \emph{nil-Hecke algebra} $\NH_n$ is the subalgebra
of $\End(\Pol_n)$ generated by the following endomorphism of  $\Pol_n$ for $i\in[1;n]$ and $r\in[1;n-1]$: 
\begin{itemize}
\item the element $X_i\in \NH_n$ acting as multiplication with $X_i\in \Pol_n$ on  $\Pol_n$,
\item the element $T_r\in \NH_n$ acting by the Demazure operator $\partial_r=\frac{1-s_r}{X_r-X_{r+1}}$. 
\end{itemize}
\end{df}
We view $\Pol_n$ as a graded algebra by putting the generators in degree $2$. Note that this induces a grading on $\NH_n$  with $X_i$ of degree $2$ and  $T_r$ of degree $-2$.

\subsection{The extended nil-Hecke algebra $\hNH_n$}
\label{subs:hNH}

Set $\hPol_n=\Pol_n\otimes \mywedge_n$, where $\mywedge_n=\mywedge^\bullet(\omega_1,\ldots,\omega_n)$ is the exterior algebra in $n$ generators.
The $\frakS_n$-action on $\Pol_n$ can be extended to an $\frakS_n$-action on $\hPol_n$ by algebra automorphisms such that
\begin{eqnarray}\label{Esr}
s_r(\omega_i)&=&
\begin{cases}
\omega_i& \mbox{ if }i\ne r,\\
\omega_r+(X_r-X_{r+1})\omega_{r+1} &\mbox{ if }i=r
\end{cases}
\end{eqnarray}
and we obtain well-defined \emph{Demazure operators} $\partial_r=\frac{1-s_r}{X_r-X_{r+1}}$ on $\hPol_n$. Note that $\partial_r(\omega_r)=-\omega_{r+1}$, and $\partial_r(\omega_i)=0$ if $i\ne r$, and thus the $\NH_n$-action on $\Pol_n$ extends to $\hPol_n$. (This can be verified by checking the relations in \Cref{lem:gen-rel-NH}.)

\begin{nota}\label{notnew}
Consider the set $\Lambda(n)=\cup_{k=0}^n\Lambda_k(n)$ with
$$
\Lambda_k(n)=\{\lambda=(\lambda_1,\lambda_2,\ldots,\lambda_n)\in \{0,1\}^n\mid\sum_{r=1}^n\lambda_r=k\}. 
$$
For each $\lambda\in\Lambda(n)$ we set $\omega_\lambda=\omega_{i_k}\wedge \omega_{i_{k-1}}\wedge\ldots\wedge\omega_{i_1}$ where $i_1<\ldots<i_k$ are the numbers of the positions $i$ such that $\lambda_i=1$. 
We denote $|\lambda|=\sum_{r=1}^n\lambda_r$ and often identify $\lambda$ with the set $\{i_1,i_2,\ldots,i_k\}$ of cardinality $k$. 
\end{nota}
\begin{rk}
The inversion in the indices of $\omega_\lambda$ may look strange. We chose this convention to avoid signs in the geometric construction of the creation operators in \Cref{lem:creation+n} and \Cref{lem:creation+r}. The choice creates signs for the annihilation operators, but they are less important for this paper. Viewing the elements  of $\Lambda(n)$ as subsets $\lambda$ of $\{1,2,\ldots,n\}$  is useful, because it allows to use set-theoretical operations auch as unions, intersections, etc. as we tacitly did already in \Cref{notnew}. For instance, $|\lambda|$ is the cardinality of $\lambda$. 
\end{rk}

\begin{df}\label{df:hNH}
The \emph{extended nil-Hecke algebra $\hNH_n$} is the subalgebra of $\End(\hPol_n)$ generated by the following endomorphisms of $\hPol_n$ for $i\in[1;n]$ and $r\in[1;n-1]$: 

\begin{itemize}
\item the element $X_i\in \hNH_n$ acting as left multiplication with $X_i\in \hPol_n$,
\item the element $\omega_i\in \hNH_n$ acting as left multiplication with $\omega_i\in \hPol_n$,
\item the element $T_r\in \hNH_n$ acting by the Demazure operator $\partial_r$ from \eqref{Esr}. 
\end{itemize}
\end{df}

\begin{rk}
The extended nil-Hecke algebra is  the algebra from \Cref{df:ext-KLR-alg} in case the quiver has one vertex and no arrows. 
In this case $\omega_r\in\hNH_n$ equals $\Omega_r\:=\Omega 1_\ui\in \hR_\bfn= \hR_n$. For general quivers the two elements differ and  $\Omega_r1_\ui$ acts on the polynomial representation by a more complicated operator, see \Cref{Rkonembedding}.
\end{rk}

\begin{rk}
	\label{rk:gr-hNH}
	Viewing the exterior algebra as a graded algebra with $\omega_i$ in degree $2(n-i)$, the algebra $\hNH_n$ inherits a $\bbZ$-grading:
	$$
	\deg(X_i)=2,\quad \deg(\omega_i)=2(n-i),\quad \deg(T_r)=-2.
	$$ 
We pick $\deg(\omega_i)$ different from \cite{NaisseVaz} to fit it better with the geometry. This is just an unimportant renormalisation which could have been chosen already in \cite{NaisseVaz}.  
\end{rk}

Elements from $\hNH_n$ are in fact already determined by their action on $\Pol_n$:

\begin{lem}
\label{pol-rep-plus-hNH}
Given a nonzero element $h$ of the extended nil-Hecke algebra $\hNH_n$, there exists a polynomial $P\in \Pol_n\subset\hPol_n$ such that $hP\ne 0$.
\end{lem}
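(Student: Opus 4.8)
The plan is to deduce the statement from the known faithfulness of the ordinary nil-Hecke algebra $\NH_n$ on $\Pol_n$ (Definition~\ref{actionNH}), by putting an arbitrary element of $\hNH_n$ into a normal form in which the multiplications by the $\omega_i$ are collected on the left.

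First I would establish the normal form. Inside $\End(\hPol_n)$, consider the subspace $\sum_{\lambda\subseteq[1;n]}\omega_\lambda\cdot\NH_n$, where $\omega_\lambda$ denotes left multiplication by $\omega_\lambda\in\mywedge_n$ and $\NH_n\subseteq\End(\hPol_n)$ is the subalgebra generated by the $X_i$ and the $T_r$ (these preserve $\Pol_n\otimes 1$ and restrict there to the operators of Definition~\ref{actionNH}). I claim $\hNH_n\subseteq\sum_\lambda\omega_\lambda\NH_n$. Since this subspace contains all the generators $X_i,\omega_i,T_r$ of $\hNH_n$, it suffices to show it is closed under multiplication, and for that it is enough to see that every $\omega_j$ can be moved to the left past an element of $\NH_n$. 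By induction on word length this reduces to the relations $X_i\omega_j=\omega_j X_i$ and $T_r\omega_j=\omega_j T_r$ for $j\neq r$, together with
\[
T_r\omega_r \ =\ \omega_r T_r+\omega_{r+1}\bigl((X_r-X_{r+1})T_r-1\bigr),
\]
all of which follow at once from the twisted Leibniz rule $\partial_r(ab)=\partial_r(a)b+s_r(a)\partial_r(b)$, the formulas \eqref{Esr}, and the identity $\partial_r(\omega_r)=-\omega_{r+1}$.

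Next I would show that the normal form detects nonvanishing: if $\sum_\lambda\omega_\lambda a_\lambda=0$ in $\End(\hPol_n)$ with each $a_\lambda\in\NH_n$, then all $a_\lambda=0$. Indeed, evaluating on an arbitrary $P\in\Pol_n$ and using that $a_\lambda$ preserves $\Pol_n\otimes 1$ and that $\omega_\lambda\cdot(Q\otimes 1)=Q\otimes\omega_\lambda$, one gets $\sum_\lambda a_\lambda(P)\otimes\omega_\lambda=0$ in $\Pol_n\otimes\mywedge_n$; since $\{\omega_\lambda\}_{\lambda\subseteq[1;n]}$ is a $\Bbbk$-basis of $\mywedge_n$, this forces $a_\lambda(P)=0$ for all $P$, i.e.\ $a_\lambda=0$ in $\NH_n\subseteq\End(\Pol_n)$.

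Finally I would conclude. Given $0\neq h\in\hNH_n$, write $h=\sum_\lambda\omega_\lambda a_\lambda$ with $a_\lambda\in\NH_n$; by the previous step some $a_{\lambda_0}\neq 0$, and since $\NH_n$ acts faithfully on $\Pol_n$ there is $P\in\Pol_n$ with $a_{\lambda_0}(P)\neq 0$. Then $hP=\sum_\lambda a_\lambda(P)\otimes\omega_\lambda$ has nonzero $\omega_{\lambda_0}$-component, hence $hP\neq 0$, as required. I expect the first step --- making the ``push all $\omega$'s to the left'' normal form precise --- to be the only real work; it is an elementary but slightly bookkeeping-heavy induction, and it could be shortened by invoking a generators-and-relations presentation of $\hNH_n$ such as the ones in Appendix~\ref{app:gen-rel}. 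Everything after that step is immediate.
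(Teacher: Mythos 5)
Your argument is correct and follows essentially the same route as the paper: write $h=\sum_{\lambda}\omega_\lambda h_\lambda$ with $h_\lambda\in\NH_n$, evaluate on $P\in\Pol_n$, and conclude from the linear independence of the $\omega_\lambda$ together with the faithfulness of $\NH_n$ on $\Pol_n$. The only difference is that you derive the existence of this normal form by a direct commutation argument, whereas the paper obtains it from the presentation of $\hNH_n$ via the cited result \cite[Prop.~8.1]{NVapproach}.
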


The subalgebras $\NH_n$ and $\mywedge_n$ of $\hNH_n$ provide a PBW-type decomposition:
\begin{lem}
\label{coro:basis-hNH}
There is an isomorphism $\hNH_n\cong \NH_n\otimes \mywedge_n$  of vector spaces.
\end{lem}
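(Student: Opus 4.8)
The plan is to exhibit the multiplication map $\mu\colon \NH_n\otimes\mywedge_n\to\hNH_n$, $h\otimes\omega_\lambda\mapsto h\cdot\omega_\lambda$ (where $\omega_\lambda\in\hNH_n$ is the operator of left multiplication by $\omega_\lambda\in\hPol_n$, and $h$ runs over $\NH_n\subset\hNH_n$), and to show it is a linear isomorphism. That $\mu$ is well defined is clear since both $\NH_n$ and $\mywedge_n$ sit inside $\hNH_n$ as subalgebras. Surjectivity is the easy half: $\hNH_n$ is generated by the $X_i$, the $T_r$ and the $\omega_i$; using the relations of $\hNH_n$ (see \Cref{lem:gen-rel-hhNH} / the relations recorded in \Cref{app:gen-rel}) one moves every $\omega_i$ factor to the right past the $X_j$ and $T_r$ generators. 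Concretely, $X_j\omega_i=\omega_iX_j$ always, while $T_r\omega_i$ is rewritten via \eqref{Esr}: since $\partial_r(\omega_r)=-\omega_{r+1}$, $\partial_r(\omega_i)=0$ for $i\ne r$, and $\partial_r$ is an $\Pol_n$-derivation-like operator, one gets an identity expressing $T_r\omega_i$ as a $\Bbbk[X_\bullet]$-linear combination of terms $\omega_jT_r$ and $\omega_j$. Iterating, any word in the generators equals a sum $\sum_\lambda h_\lambda\,\omega_\lambda$ with $h_\lambda\in\NH_n$; hence $\mu$ is onto.

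For injectivity one uses the faithfulness of the defining representation on $\hPol_n$. Suppose $\sum_\lambda h_\lambda\,\omega_\lambda=0$ in $\hNH_n$, with $h_\lambda\in\NH_n$ not all zero; we must derive a contradiction. The key observation is that $\NH_n$ acts on $\hPol_n=\Pol_n\otimes\mywedge_n$ ``diagonally'' with respect to a suitable filtration of $\mywedge_n$: from \eqref{Esr}, $s_r$ (hence $\partial_r$, hence every element of $\NH_n$) sends $\omega_i$ into $\Pol_n$-span of $\{\omega_i,\omega_{i+1},\dots,\omega_n\}$, so $\NH_n$ preserves the decreasing filtration of $\mywedge_n$ by $\mywedge_n^{\ge k}:=\mathrm{span}\{\omega_\mu : \min\mu\ge k\}$-type pieces; more usefully, order the monomials $\omega_\lambda$ by, say, the reverse-lexicographic order on the subsets $\lambda$, and check that for $h\in\NH_n$ and $P\in\Pol_n$ one has $h(P\otimes\omega_\lambda)=(hP)\otimes\omega_\lambda+(\text{terms }Q\otimes\omega_\mu\text{ with }\mu>\lambda)$. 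Granting this triangularity, pick $\lambda_0$ minimal among the $\lambda$ with $h_{\lambda_0}\ne 0$. By \Cref{pol-rep-plus-hNH} choose $P\in\Pol_n$ with $h_{\lambda_0}P\ne 0$. Evaluating $\sum_\lambda h_\lambda\omega_\lambda$ on $P\otimes 1\in\hPol_n$ and extracting the coefficient of $\omega_{\lambda_0}$ (the smallest monomial that can appear, by triangularity, coming only from the $\lambda=\lambda_0$ term) gives $h_{\lambda_0}P\ne 0$, contradicting $\sum_\lambda h_\lambda\omega_\lambda=0$. Hence all $h_\lambda=0$ and $\mu$ is injective.

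The main obstacle is establishing the triangularity statement cleanly, i.e.\ pinning down the right partial order on the $\omega_\lambda$ for which the $\NH_n$-action is upper-triangular with the ``expected'' diagonal $h\otimes\omega_\lambda\mapsto hP\otimes\omega_\lambda$. One must check this on the algebra generators $X_i$ and $T_r$ of $\NH_n$ and on single exterior generators $\omega_i$, then propagate to products $\omega_\lambda=\omega_{i_k}\wedge\cdots\wedge\omega_{i_1}$; here the sign conventions of \Cref{notnew} and the Leibniz-type behaviour of $\partial_r$ on $\mywedge_n$ must be handled carefully, but no genuinely hard input is needed beyond \eqref{Esr}. An alternative, slightly slicker route avoids choosing an order: show directly that $\hPol_n$ is free as a module over the image of $\mu$, or compare graded dimensions --- $\gdim\hNH_n$ is bounded above by $\gdim(\NH_n\otimes\mywedge_n)$ by the surjectivity argument, and bounded below by the same quantity because the $\NH_n\otimes\mywedge_n$-action on $\hPol_n\cong\Pol_n\otimes\mywedge_n$ is faithful (each $\NH_n$-isotypic ``row'' $\NH_n\cdot(1\otimes\omega_\lambda)$ already acts faithfully on $\Pol_n\otimes\mywedge_n^{\ge\lambda}/\Pol_n\otimes\mywedge_n^{>\lambda}\cong\Pol_n$ by \Cref{pol-rep-plus-hNH}), forcing the surjection $\mu$ to be an isomorphism. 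Either way the proof is short once the filtration/ordering bookkeeping is in place.
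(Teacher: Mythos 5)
Your proposal is correct in substance, but it takes a genuinely different (and somewhat heavier) route than the paper. The paper's proof (in Appendix~\ref{app:gen-rel}, given together with Lemmas~\ref{lem:gen-rel-NH}--\ref{pol-rep-plus-hNH}) first establishes the \emph{opposite-side} decomposition $\hNH_n\cong\mywedge_n\otimes\NH_n$ by citing \cite[Prop.\ 8.1]{NVapproach}, and then obtains $\NH_n\otimes\mywedge_n$ from the anti-automorphism of the presentation $\hNH_n'$ that fixes each generator (the relations in Lemma~\ref{lem:gen-rel-hNH} are invariant under reversing all products). The reason the left-handed decomposition is the easier one to verify directly is exactly the point your plan treats as ``the main obstacle'': if you write $h=\sum_\lambda\omega_\lambda h_\lambda$ with the $\omega_\lambda$ on the \emph{left}, then evaluating on $P\in\Pol_n\subset\hPol_n$ gives $\sum_\lambda (h_\lambda P)\otimes\omega_\lambda$ with \emph{no} mixing between the $\omega_\mu$-components, because $h_\lambda P$ lands back in $\Pol_n$ and $\omega_\lambda$ is just multiplication afterwards. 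With the $\omega_\lambda$ on the right, as in your proposal, $h_\lambda$ is applied to $P\otimes\omega_\lambda$ and one does need the filtration/triangularity bookkeeping you describe. Your triangularity claim is right (the mixing in $\partial_r(P\omega_\lambda)$ is precisely to $\omega_{s_r(\lambda)}$ where the $1$ at position $r$ is pushed to $r+1$, which is strictly larger in reverse-lexicographic order on subsets, cf.\ \eqref{sraction}), and your leading-term extraction argument using the faithfulness of $\NH_n$ on $\Pol_n$ does go through, so the approach is sound; it is simply more work than the anti-automorphism shortcut. Your version is arguably more self-contained since it avoids the external citation. Two small slips worth fixing: in the surjectivity step you want to straighten $\omega_i T_r$ (not $T_r\omega_i$, which already has $\omega_i$ on the right); and the relevant relation list is Lemma~\ref{lem:gen-rel-hNH} (or~\ref{lem:gen-rel-hNH-omega1}), not Lemma~\ref{lem:gen-rel-hhNH}, which is the doubly-extended algebra.
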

This isomorphism is induced from the multiplication. A proof of \Cref{pol-rep-plus-hNH} and \Cref{coro:basis-hNH} is given in \Cref{app:gen-rel} where we also give a presentation of $\hNH_n$.

\subsection{The doubly extended nil-Hecke algebra $\hhNH_n$}
We extend the action of the algebra $\hNH_n$ on $\hPol_n$ to an action of a bigger algebra $\hhNH_n$ on the same vector space. 
We do this by viewing multiplication on the left by $\omega_i$ as \emph{creation operator} $\omega^+_i\in\End(\hPol_n)$
and additionally define the \emph{annihilation operator} $\omega^-_i$  as 
\begin{equation*}
\omega^-_i(Q)=0\quad\text{and}\quad  \omega^-_i(\omega_i Q)=Q
\end{equation*}
for any $Q=P\omega_{i_1}\ldots\omega_{i_k}$ with $P\in \Pol_n$ and distinct $i_1,\ldots,i_k$ not equal to $i$. 
\begin{df}
\label{df:hhNH}
The \emph{doubly extended nil-Hecke algebra} $\hhNH_n$ is the subalgebra of $\End(\hPol_n)$, generated by the following endomorphisms:
\begin{itemize}
\item the element $X_i\in \hhNH_n$ acting as left multiplication by $X_i\in \hPol_n$,
\item the element $\omega^+_i\in \hhNH_n$ acting by the creation operator $\omega^+_i$,
\item the element $\omega^-_i\in \hhNH_n$ acting by the annihilation operator $\omega^-_i$,
\item the element $T_r\in \hhNH_n$ acting by the Demazure operator $\partial_r$. 
\end{itemize}
\end{df}

Note that $\Pol_n\subset \hhNH_n$ commutes with $\omega^+_i$ and with $\omega^-_i$.

\begin{nota}
Similarly to the definition of the monomial $\omega_\lambda\in\hNH_n$ in \Cref{notnew}, we can define monomials $\omega^\pm_\lambda\in\hhNH_n$ in the following way:
$$
\omega^+_\lambda=\omega^+_{i_k}\wedge \omega^+_{i_{k-1}}\wedge\ldots\wedge\omega^+_{i_1},\qquad \omega^-_\lambda=\omega^-_{i_1}\wedge \omega^-_{i_{2}}\wedge\ldots\wedge\omega^-_{i_k}.
$$
The opposite conventions ensure $\omega^+_\lambda(1)=\omega_\lambda$ and $\omega^-_\lambda(\omega_\lambda)=1$ without signs.
\end{nota}

The following justifies the name {\it doubly extended nil-Hecke algebra}.
\begin{prop}
\label{lem:basis-hhNH}
The following set is a basis of $\hhNH_n$:
$$
\{T_wX_1^{a_1}\ldots X_n^{a_n}\omega^+_\lambda\omega^-_\mu ;~w\in\frakS_n,a_i\in \bbZ_{\geqslant 0}, \lambda,\mu\in \Lambda(n) \}.
$$
\end{prop}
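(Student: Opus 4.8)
The strategy is the standard one for establishing a PBW-type basis: first show that the proposed set spans $\hhNH_n$, then show that it is linearly independent, using the faithful representation on $\hPol_n$ to detect linear independence.

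\emph{Spanning.} By \Cref{df:hhNH}, $\hhNH_n$ is generated by the $X_i$, the $\omega^+_i$, the $\omega^-_i$ and the $T_r$. The plan is to show that any product of these generators can be rewritten as a linear combination of elements of the given form $T_wX_1^{a_1}\cdots X_n^{a_n}\omega^+_\lambda\omega^-_\mu$. For this one needs straightening relations among the generators:
\begin{itemize}
\item The subalgebra generated by the $X_i$ and the $T_r$ is (a quotient of, hence equal to) $\NH_n$, which already has the PBW basis $\{T_wX_1^{a_1}\cdots X_n^{a_n}\}$, so products of $X$'s and $T$'s are straightened by the classical nil-Hecke relations (\Cref{lem:gen-rel-NH}).
\item The $X_i$ commute with all $\omega^\pm_j$ (noted right after \Cref{df:hhNH}), so the polynomial part can always be moved to sit between the $T$'s and the $\omega$'s.
\item The $\omega^+_i$ anticommute among themselves, the $\omega^-_i$ anticommute among themselves, and $\omega^-_i\omega^+_j+\omega^+_j\omega^-_i=\delta_{ij}$ (a Clifford-type relation, immediate from the definition of creation/annihilation operators on the exterior algebra). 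These relations let one straighten any word in the $\omega^\pm$'s into the normal form $\omega^+_\lambda\omega^-_\mu$ (all creations to the left, all annihilations to the right, indices increasing/decreasing as in the \Cref{nota}), at the cost of lower-order terms in which some $\omega^+_i\omega^-_i$ pairs have been contracted.
\item The only genuinely new point is commuting a $T_r$ past an $\omega^\pm_i$. Using \eqref{Esr} and $\partial_r=\frac{1-s_r}{X_r-X_{r+1}}$ one computes, e.g.\ on the level of operators, that $T_r\omega^+_i=\omega^+_i T_r$ for $i\notin\{r,r+1\}$, $T_r\omega^+_r = \omega^+_{r+1} + (\text{operator with coefficients in }\Pol_n)\cdot(\cdots)$, and similarly for $\omega^+_{r+1}$ and for the $\omega^-$'s; in every case the right-hand side is already a $\Bbbk[X]$-linear combination of terms with the $\omega^\pm$'s further to the right than the $T$. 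Feeding these straightening moves into an induction on word length shows the claimed set spans.
\end{itemize}

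\emph{Linear independence.} Since $\hhNH_n\subset\End(\hPol_n)$ by definition, it suffices to show that the operators $T_wX^a\omega^+_\lambda\omega^-_\mu$ act $\Bbbk$-linearly independently on $\hPol_n=\Pol_n\otimes\mywedge_n$. Suppose $\sum_{w,a,\lambda,\mu} c_{w,a,\lambda,\mu}\, T_wX^a\omega^+_\lambda\omega^-_\mu = 0$ in $\End(\hPol_n)$. Apply this operator to test vectors of the form $P\cdot\omega_\nu$ with $P\in\Pol_n$ and $\nu\in\Lambda(n)$. Note $\omega^-_\mu(\omega_\nu)=\pm\omega_{\nu\setminus\mu}$ if $\mu\subseteq\nu$ and $0$ otherwise, and then $\omega^+_\lambda(\omega_{\nu\setminus\mu})=\pm\omega_{\lambda\cup(\nu\setminus\mu)}$ if $\lambda\cap(\nu\setminus\mu)=\emptyset$ and $0$ otherwise; meanwhile $T_wX^a$ acts only on the $\Pol_n$-factor $P$. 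Fixing $\nu$ and collecting, for each final exterior monomial $\omega_\eta$ one gets that $\big(\sum c_{w,a,\lambda,\mu} T_wX^a\big)(P)=0$ in $\Pol_n$, where the inner sum runs over the $(\lambda,\mu)$ with $\mu\subseteq\nu$, $\eta=\lambda\sqcup(\nu\setminus\mu)$. Choosing $\nu$ appropriately (e.g.\ ranging $\nu$ over all of $\Lambda(n)$) lets one isolate each pair $(\lambda,\mu)$: the pair $(\lambda,\mu)$ is recovered from $(\nu,\eta)$ as $\mu=\nu\setminus\eta$-part, $\lambda=\eta\setminus\nu$-part, so distinct $(\lambda,\mu)$ contribute to distinct $(\nu,\eta)$-coordinates. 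Hence for every fixed $(\lambda,\mu)$ the operator $\sum_{w,a} c_{w,a,\lambda,\mu} T_wX^a$ annihilates all of $\Pol_n$; by faithfulness of $\NH_n$ on $\Pol_n$ together with the classical linear independence of $\{T_wX^a\}$ in $\NH_n$ (\Cref{app:gen-rel}), all $c_{w,a,\lambda,\mu}=0$.

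\emph{Main obstacle.} The spanning step is where the real work lies: one must verify the commutation relations between the $T_r$ and the $\omega^\pm_i$ carefully from \eqref{Esr}, including the precise polynomial coefficients that appear (these are exactly the ``extra'' terms responsible for $\hhNH_n$ being larger than $\hNH_n$), and then organize these into a terminating rewriting system so that the induction on word length actually closes. The independence step, by contrast, is essentially bookkeeping once one observes that the exterior-algebra action cleanly separates the $\omega^\pm$-data from the $\NH_n$-data. An alternative to the hands-on spanning argument would be to combine \Cref{coro:basis-hNH} (the PBW decomposition $\hNH_n\cong\NH_n\otimes\mywedge_n$) with a parallel decomposition accounting for the annihilation operators, i.e.\ realize $\hhNH_n$ as $\NH_n\otimes\mathrm{Cl}$ for a Clifford-type algebra $\mathrm{Cl}$ on the $\omega^\pm$, and read off the basis; but the direct computation is more transparent and self-contained.
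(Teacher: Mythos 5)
Your overall strategy coincides with the paper's: show spanning by straightening words in the generators, then detect linear independence through the faithful action on $\hPol_n$. The spanning half is fine and is essentially what the paper does (via the relations recorded in \Cref{lem:gen-rel-hhNH}). The problem is in your linear independence step.

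The claim that ``the pair $(\lambda,\mu)$ is recovered from $(\nu,\eta)$, so distinct $(\lambda,\mu)$ contribute to distinct $(\nu,\eta)$-coordinates'' is false. Take $n=1$, $\nu=\{1\}$: both $(\lambda,\mu)=(\emptyset,\emptyset)$ and $(\lambda,\mu)=(\{1\},\{1\})$ send $\omega_\nu$ to $\pm\omega_\nu$, so they land in the same $(\nu,\eta)$-coordinate with $\eta=\nu$. In general, for fixed $(\nu,\eta)$ the contributing pairs are parametrized by an arbitrary subset of $\eta\cap\nu$ added to $\mu$ (indices that are annihilated and then re-created), so $\mu$ is \emph{not} determined by $\nu\setminus\eta$. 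Consequently you cannot conclude coordinate-by-coordinate that each $\sum_{w,a}c_{w,a,\lambda,\mu}T_wX^a$ kills $\Pol_n$; what you get for each $(\nu,\eta)$ is a signed sum over several $(\lambda,\mu)$. The gap is repairable, and the repair is exactly the paper's argument: write a putative relation as $x=\sum_{\mu}x_\mu\,\omega^-_\mu$ with $x_\mu\in\hNH_n$ (this uses the spanning/PBW structure of $\hNH_n$ from \Cref{coro:basis-hNH}), choose $\mu_0$ with $|\mu_0|$ \emph{minimal} among those with $x_{\mu_0}\neq 0$, and evaluate on $P\omega_{\mu_0}$: since $\omega^-_\mu(P\omega_{\mu_0})=0$ unless $\mu\subseteq\mu_0$, minimality forces only $\mu=\mu_0$ to survive, giving $x_{\mu_0}P\neq 0$ for suitable $P$ by \Cref{pol-rep-plus-hNH}. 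Your ``ranging $\nu$ over all of $\Lambda(n)$'' gestures at such a triangularity, but as written the isolation step does not hold and the induction it would require is missing.
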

A proof of this result is given in \Cref{app:gen-rel}. It directly implies the following.
\begin{coro}
\label{coro:decomp-hhNH}
Multiplication induces isomorphisms of vector spaces 
\begin{center}
$
\hhNH_n\cong \hNH_n\otimes \mywedge_n \cong \NH_n\otimes \mywedge_n\otimes \mywedge_n.
$
\end{center}
\end{coro}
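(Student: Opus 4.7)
The plan is that both isomorphisms are essentially a direct repackaging of \Cref{lem:basis-hhNH}. Recall that $\NH_n$ has the standard PBW basis $\{T_w X_1^{a_1}\cdots X_n^{a_n} \mid w\in\frakS_n,\ a_i\in\bbZ_{\geqslant 0}\}$ and the exterior algebra $\mywedge_n$ has the monomial basis $\{\omega_\lambda\mid\lambda\in\Lambda(n)\}$.

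For the second isomorphism $\hhNH_n\cong \NH_n\otimes \mywedge_n\otimes \mywedge_n$, I would consider the multiplication map
$$
m_2 : \NH_n\otimes \mywedge_n\otimes \mywedge_n \longrightarrow \hhNH_n, \qquad T_w X^a \otimes \omega_\lambda\otimes\omega_\mu \longmapsto T_w X^a\,\omega^+_\lambda\,\omega^-_\mu,
$$
where $\omega_\lambda$ in the first exterior factor is interpreted as $\omega^+_\lambda\in\hhNH_n$ and $\omega_\mu$ in the second as $\omega^-_\mu\in\hhNH_n$. By \Cref{lem:basis-hhNH}, the image of the tensor-product basis of $\NH_n\otimes \mywedge_n\otimes \mywedge_n$ under $m_2$ is precisely a basis of $\hhNH_n$, hence $m_2$ is a linear isomorphism.

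For the first isomorphism $\hhNH_n\cong \hNH_n\otimes \mywedge_n$, I would proceed analogously: by \Cref{coro:basis-hNH}, the map $\omega_\lambda\mapsto\omega^+_\lambda$ identifies $\hNH_n$ with $\NH_n\otimes\mywedge_n$, giving a basis $\{T_wX^a\omega^+_\lambda\}$ of $\hNH_n$. The multiplication map $m_1 : \hNH_n\otimes\mywedge_n\to\hhNH_n$ sending $h\otimes\omega_\mu$ to $h\,\omega^-_\mu$ then sends a basis bijectively to the same basis of $\hhNH_n$ as in \Cref{lem:basis-hhNH}, so it is also a linear isomorphism. The two isomorphisms are compatible in the sense that $m_2 = m_1\circ(\text{mult}\otimes\mathrm{id}_{\mywedge_n})$ after using \Cref{coro:basis-hNH} on the first two factors.

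There is really no obstacle here, since the content is already captured in \Cref{lem:basis-hhNH}; the only point requiring care is to distinguish the roles of the two exterior factors (one records creation operators $\omega^+$, the other annihilation operators $\omega^-$). Note these are isomorphisms of vector spaces only: inside $\hhNH_n$ the creation and annihilation operators do not commute (they satisfy Clifford-type relations), so the algebra structure does not split as a tensor product.
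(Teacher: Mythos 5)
Your argument is correct and matches the paper's approach: the paper derives this corollary directly from the basis of Proposition~\ref{lem:basis-hhNH} (together with Lemma~\ref{coro:basis-hNH} for the intermediate factorisation), exactly as you do. Your additional remarks on distinguishing the $\omega^+$/$\omega^-$ factors and on the isomorphisms being only linear, not of algebras, are accurate but not needed.
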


\begin{nota}
The algebra $\hhNH_n$ contains idempotents $1_k$, $k\in[0;n]$ projecting to the components of $\hPol_n$ given by the exterior algebra degree $k$, in formulas
\begin{eqnarray*}
1_0=\omega^-_1\ldots\omega^-_n\omega^+_n\ldots\omega^+_1,
&&
1_k=\sum_{\lambda\in \Lambda_k(n)}\omega^+_\lambda 1_0 \omega^-_\lambda.
\end{eqnarray*}
\end{nota}

\subsection{Application: A dg-enrichment of $\hNH_n$ via the superalgebra $\hhNH_n$}
\label{subs:DG-on-hNH}
Throughout this section we fix a nonnegative integer $N$. 

\begin{nota}\label{DefbfnN}
Let $\bfd_N\colon \hNH_n\to \hNH_n$ be the unique  linear operator which satisfies $\bfd_N(X_i)=0$, $\bfd_N(T_r)=0$, $\bfd_N(\omega_1)=X_1^N$ and the \emph{graded Leibniz rule} $\bfd_N(PQ)=\bfd_N(P)Q+(-1)^{\deg P}P\bfd_N(Q)$ for  homogeneous $P,Q\in \NH_n$, see \Cref{lem:gen-rel-hNH-omega1}.
\end{nota}
If we equip (for a moment) the algebra $\hNH_n$ with the  $\bbZ$-grading  given by $\deg(X_i)=\deg (T_r)=0$ and $\deg(\omega_i)=1$, then $\bfd_N$ turns $\hNH_n$ into a dg-algebra $(\hNH_n, \bfd_N)$ with differential $\bfd_N$ of degree $-1$. By \cite[Prop. 4.14]{NaisseVaz}, its homology is particularly nice, recovering an important algebra:
\begin{prop}
	\label{prop:resol-cycl-NH}
	Let $r\in \bbZ$. 
	Then   
	$$
	H_r(\hNH_n,\bfd_N)=
	\begin{cases}
	  \NH_n^N & \mbox{ if }r= 0,\\
	  0 & \mbox{ if }r\ne 0.
	\end{cases}
	$$
where $\NH^N_n$ denotes the \emph{cyclotomic nil-Hecke algebra} corresponding to $N$, that is the quotient of $\NH_n$ by the ideal generated by $X_1^N$.
\end{prop}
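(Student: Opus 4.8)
The plan is to recognise $(\hNH_n,\bfd_N)$ as a Koszul complex over the polynomial ring $\Pol_n$ and then to read off its homology by standard commutative algebra.

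\emph{Step 1 (the differential on the exterior generators).} First I would compute $\bfd_N(\omega_i)$ for all $i$. From the $\frakS_n$-action \eqref{Esr} one has $\partial_r(\omega_r)=-\omega_{r+1}$; rewriting this as operators on $\hPol_n$ (using $(X_r-X_{r+1})\partial_r=1-s_r$ and $s_r=1-(X_r-X_{r+1})T_r\in\NH_n$) yields the identity $\omega_{r+1}=-[T_r,\omega_r]\,s_r$ in $\hNH_n$. Since $\bfd_N$ annihilates $X_i$ and $T_r$, hence $s_r$, and obeys the graded Leibniz rule, this gives $\bfd_N(\omega_{r+1})=-[T_r,\bfd_N(\omega_r)]\,s_r$. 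Combining $\bfd_N(\omega_1)=X_1^N$ with the elementary identity $[T_r,h_m(X_1,\dots,X_r)]=h_{m-1}(X_1,\dots,X_{r+1})\,s_r$ (where $h_m$ is the complete homogeneous symmetric polynomial, $h_0=1$, $h_m=0$ for $m<0$), an induction produces the closed form
\begin{equation*}
\bfd_N(\omega_i)=(-1)^{i-1}h_{N-i+1}(X_1,\dots,X_i)=:f_i\ \in\ \Pol_n\subset\NH_n .
\end{equation*}

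\emph{Step 2 (the Koszul identification).} By \Cref{coro:basis-hNH}, multiplication gives an isomorphism of left $\NH_n$-modules $\NH_n\otimes_{\Pol_n}\hPol_n\cong\hNH_n$, where $\hPol_n=\Pol_n\otimes\mywedge_n$ is viewed as the (commutative-super) subalgebra of $\hNH_n$ generated by the $X_i$ and the $\omega_i$. Because $\bfd_N$ vanishes on $\NH_n$, the graded Leibniz rule makes $\bfd_N$ an $\NH_n$-bimodule map; and restricted to the subalgebra $\hPol_n$ it is, by Step 1 and the Leibniz rule, exactly the Koszul differential contracting against $(f_1,\dots,f_n)$. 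Hence, as complexes of left $\NH_n$-modules, $(\hNH_n,\bfd_N)\cong\NH_n\otimes_{\Pol_n}\mathrm{Kos}_{\Pol_n}(f_1,\dots,f_n)$. Now $\NH_n$ is free as a right $\Pol_n$-module (with basis $\{T_w\}_{w\in\frakS_n}$, since the transition between $\{pT_w\}$ and $\{T_wp\}$ is unitriangular), so taking homology commutes with $\NH_n\otimes_{\Pol_n}(-)$. If $n\le N$ the $f_i$ are homogeneous of positive degrees and form a regular sequence in $\Pol_n$ (modulo $(X_1,\dots,X_{i-1})$ the class of $f_i$ is a power of $X_i$, so the quotient is finite-dimensional); if $n>N$ then $f_{N+1}=\pm h_0=\pm 1$ is a unit and $\mathrm{Kos}_{\Pol_n}(f_\bullet)$ is contractible. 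In either case $H_r(\mathrm{Kos}_{\Pol_n}(f_\bullet))=0$ for $r\ne 0$, whence $H_r(\hNH_n,\bfd_N)=0$ for $r\ne 0$ and $H_0(\hNH_n,\bfd_N)=\NH_n\otimes_{\Pol_n}\bigl(\Pol_n/(f_1,\dots,f_n)\bigr)=\NH_n/\NH_n(f_1,\dots,f_n)$.

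\emph{Step 3 (identifying $H_0$ with $\NH_n^N$).} Finally I would show $\NH_n(f_1,\dots,f_n)=(X_1^N)$ as two-sided ideals of $\NH_n$. The ideal $(f_1,\dots,f_n)\subset\Pol_n$ is stable under every $s_r$ and every $\partial_r$: since $f_i$ is symmetric in $X_1,\dots,X_i$ one has $\partial_r(f_i)\in\{0,\pm f_{i+1}\}$ and $s_r(f_i)\in\{f_i,\,f_i\mp(X_i-X_{i+1})f_{i+1}\}$. Using $pT_r=T_rs_r(p)+\partial_r(p)$ for $p\in\Pol_n$, this stability shows that the left ideal $\NH_n(f_1,\dots,f_n)$ is also a right ideal, hence two-sided. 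It contains $f_1=X_1^N$; conversely $f_{i+1}=\pm\partial_i(f_i)=\pm(T_if_i-s_i(f_i)T_i)$ lies in the two-sided ideal generated by $f_i$, so $\NH_n(f_1,\dots,f_n)$ coincides with the two-sided ideal generated by $X_1^N$. Therefore $H_0(\hNH_n,\bfd_N)=\NH_n/(X_1^N)=\NH_n^N$, as claimed. The step I expect to be the main obstacle is Step 1: extracting the correct closed form $\bfd_N(\omega_i)=\pm h_{N-i+1}(X_1,\dots,X_i)$ (which rests on knowing how the higher $\omega_i$ sit inside the algebra generated by $\NH_n$ and $\omega_1$) and, on the strength of it, recognising the Koszul differential; once the isomorphism $(\hNH_n,\bfd_N)\cong\NH_n\otimes_{\Pol_n}\mathrm{Kos}_{\Pol_n}(f_\bullet)$ is in place, freeness of $\NH_n$ over $\Pol_n$, regularity of $(f_i)$, and the $\frakS_n$- and Demazure-stability of the ideal are routine.
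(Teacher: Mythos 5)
Your proof is correct, and it takes a genuinely different route than the paper. The paper does not give its own proof of this proposition: it simply cites \cite[Prop.~4.14]{NaisseVaz}, where the argument is carried out diagrammatically/combinatorially using the Naisse--Vaz bases. What you do instead is recognise $(\hNH_n,\bfd_N)$ structurally as a Koszul complex $\NH_n\otimes_{\Pol_n}\mathrm{Kos}_{\Pol_n}(f_1,\ldots,f_n)$ and then invoke standard commutative algebra (flatness of $\NH_n$ over $\Pol_n$, regularity of the sequence when $n\le N$, contractibility when some $f_i$ is a unit). Your $f_i=(-1)^{i-1}h_{N-i+1}(X_1,\ldots,X_i)$ agree with the polynomials $P_i$ that the paper introduces a few lines later in \Cref{Ps} (via $P_1=X_1^N$, $P_{r+1}=-\partial_r(P_r)$) and uses for the element $\mathbb{d}_N=\sum P_i\omega_i^-$ in \Cref{lem:dg}; the Koszul viewpoint explains conceptually why those $P_i$ appear and why $[\mathbb{d}_N,-]$ squares to zero. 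Two small points worth knowing: your derivation of $\bfd_N(\omega_{r+1})=-[T_r,\bfd_N(\omega_r)]s_r$ from $\omega_{r+1}=-[T_r,\omega_r]s_r$ is exactly the inductive step hidden in the paper's recursion $P_{r+1}=-\partial_r(P_r)$, so your ``main obstacle'' in Step 1 is already latent in the paper's notation; and in Step 3 the identification $H_0(\mathrm{Kos})=\Pol_n/(f_1,\ldots,f_n)$ together with the two-sidedness argument gives $\NH_n/(X_1^N)$ directly, which matches the size count $\dim\NH_n^N=n!\,N!/(N-n)!$ used in \Cref{cycHecke}. What your approach buys is a short, characteristic-independent, and conceptually transparent proof; what the Naisse--Vaz argument buys is that it works uniformly for all quivers $\Gamma$ (not just $\mathfrak{sl}_2$), whereas the clean Koszul structure you exploit is special to the one-vertex case where $\hPol_n$ is a commutative superalgebra.
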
 

\begin{rk}
If $n>N$, then the algebra $\NH_n^N$ is zero, see \cite[Thm. 2.34]{HuLiang}. 
\end{rk}

We get a new point of view on the differential  $\bfd_N$ when we realize $\hNH_n$ as a subalgebra of $\hhNH_n$, but with $\hhNH_n$ viewed as a \emph{super}algebra with even generators $X_i,T_r$ and odd generators $\omega_i^{\pm}$ (this is well-defined by \S\ref{sec:KLR} or more explicitly by \Cref{lem:gen-rel-hhNH}). Let $[\bullet,\bullet]$ denote the supercommutator in $\hhNH_n$.
\begin{df}\label{Ps}
Define inductively polynomials $P_1,P_2,\ldots,P_n\in \Pol_n$ by $P_1=X_1^N$ and $P_{r+1}=-\partial_r(P_r)$ for $r\in [1;n-1]$, and set $\mathbb{d}_N=\sum_{i=1}^n P_i\omega_i^-\in\hhNH_n$.
\end{df}
\begin{prop}\label{lem:dg}
Let $h\in \hNH_n$. Then $\bfd_N(h)=[\mathbb{d}_N,h]$ as elements in  $\hhNH_n$.	
\end{prop}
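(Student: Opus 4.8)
The strategy is to verify the identity $\bfd_N(h) = [\mathbb{d}_N, h]$ first on the algebra generators $X_i, T_r, \omega_1$ of $\hNH_n$ (using \Cref{lem:gen-rel-hNH-omega1}, which presents $\hNH_n$ by these generators and relations), and then to extend it to all of $\hNH_n$ by showing that the map $h \mapsto [\mathbb{d}_N, h]$ satisfies the same graded Leibniz rule that defines $\bfd_N$. For the latter, I would use that $\mathbb{d}_N$ is an \emph{odd} element of the superalgebra $\hhNH_n$: the super Jacobi identity gives $[\mathbb{d}_N, hh'] = [\mathbb{d}_N, h]h' + (-1)^{\deg h} h [\mathbb{d}_N, h']$, where $\deg$ is the super-degree that counts the $\omega$'s. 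This is exactly the graded Leibniz rule from \Cref{DefbfnN}, so once the identity holds on generators it holds everywhere, provided both sides actually land in the subalgebra $\hNH_n \subset \hhNH_n$ — which needs the separate remark that $[\mathbb{d}_N, h] \in \hNH_n$ whenever $h \in \hNH_n$ (a priori the supercommutator lives in the larger algebra $\hhNH_n$).

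The generator checks break into three cases. For $h = X_i$: since $X_i$ commutes with every $\omega_j^-$ and with the polynomials $P_r \in \Pol_n$, we get $[\mathbb{d}_N, X_i] = 0 = \bfd_N(X_i)$, using that $X_i$ is even so the supercommutator is the ordinary commutator. For $h = \omega_1 = \omega_1^+$: one computes $[\mathbb{d}_N, \omega_1^+] = \sum_i P_i [\omega_i^-, \omega_1^+]_+$ (an anticommutator, as both factors are odd), and $[\omega_i^-, \omega_1^+]_+ = \delta_{i,1}\,\mathrm{id}$ by the defining action of creation/annihilation operators on $\hPol_n$; this yields $P_1 = X_1^N = \bfd_N(\omega_1)$. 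The substantive case is $h = T_r$, acting by the Demazure operator $\partial_r$. Here I expect the main obstacle: one must compute $[\mathbb{d}_N, T_r] = \sum_i (P_i\, \omega_i^- T_r - T_r\, P_i\, \omega_i^-)$ inside $\End(\hPol_n)$ and show it vanishes. This is where the recursion $P_{r+1} = -\partial_r(P_r)$ enters, balancing against the commutator of $T_r$ with multiplication by $P_i$ (governed by the twisted Leibniz rule $\partial_r \circ P = (s_r P)\circ \partial_r + \partial_r(P)$) and against the $\frakS_n$-action \eqref{Esr} on the $\omega_i$, which is precisely why $\partial_r(\omega_r) = -\omega_{r+1}$ and $\partial_r(\omega_i)=0$ for $i \ne r$ on the "co-side" force the $P_i$'s to satisfy that same recursion on the "contra-side". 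I would organize this by evaluating both sides on an arbitrary element $Q\,\omega_\lambda$ with $Q \in \Pol_n$, splitting according to whether $r, r+1 \in \lambda$, and matching terms; the only indices contributing are $i = r$ and $i = r+1$, and the identity reduces to $\partial_r \circ (P_r \omega_r^- + P_{r+1}\omega_{r+1}^-) = (P_r\omega_r^- + P_{r+1}\omega_{r+1}^-)\circ \partial_r$ on $\hPol_n$, which unwinds to the scalar recursion $P_{r+1} = -\partial_r(P_r)$ together with $\partial_r^2 = 0$ and $s_r$-invariance bookkeeping.

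Finally, I would record the closure statement $[\mathbb{d}_N, \hNH_n] \subseteq \hNH_n$ either as a consequence of the generator computation (the supercommutator with $X_i$, $T_r$, $\omega_1$ all lie in $\hNH_n$, and then Leibniz propagates closure) or directly from the PBW decomposition $\hhNH_n \cong \NH_n \otimes \mywedge_n \otimes \mywedge_n$ of \Cref{coro:decomp-hhNH} by tracking exterior-degree components. With closure in hand, the Leibniz argument of the first paragraph completes the proof. The one point demanding genuine care is the sign bookkeeping in the $T_r$ case, since $T_r$ is even but it is being commuted past the odd $\omega_i^-$ sitting in $\mathbb{d}_N$; keeping the supercommutator conventions straight there is the crux.
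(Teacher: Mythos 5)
Your proposal is correct and follows essentially the same route as the paper: both sides satisfy the graded Leibniz rule (by definition resp.\ the super Jacobi identity), so one reduces to the generators $X_i$, $T_r$, $\omega_1$ of \Cref{lem:gen-rel-hNH-omega1}, where the cases $X_i$ and $\omega_1$ are immediate and the case $T_r$ comes down to the recursion $P_{r+1}=-\partial_r(P_r)$ combined with the twisted Leibniz rule for $\partial_r$ and the relation tying $\omega_{r+1}^-$ to $X_r\omega_r^-$ under $T_r$ — exactly the ingredients you name. The only cosmetic difference is that the paper carries out the $T_r$ computation purely with the relations of $\hhNH_n$ rather than by evaluating on $Q\,\omega_\lambda$, but since $\hhNH_n$ is by definition a subalgebra of $\End(\hPol_n)$ these amount to the same check.
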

\begin{proof}
Since both, $\bfd_N$ and the super commutator $[\mathbb{d}_N,\bullet]$, satisfy the graded Leibniz rule (by definition respectively thanks to the super Jacobi identity) it is enough to check the statement on the generators $h\in\{X_i,T_r,\omega_i\}$ of $\hNH_n$.  

\noindent\emph{Case $h=X_i$}: Since the $\omega_r^-$ commute with polynomials,  
$[\mathbb{d}_N,X_i]=0=\bfd_N(X_i)$. \hfill\\
\emph{Case $h=T_r$}: Note that $T_r$ commutes with each summand in $\mathbb{d}_N$ except of $P_r\omega_r^-$ and $P_{r+1}\omega_{r+1}^-$, and also with $\omega_r^-$ and $P_{r+1}$. Thus $[\mathbb{d}_N,T_r]=[P_r,T_r]\omega_r^-+P_{r+1}[\omega_{r+1}^-,T_r]$. For $P\in \Pol_n$, we have $[P,T_r]=\partial_r(P)((X_{r+1}-X_r)T_r-1)$. We get in particular $[P,T_r]\stackrel{\ref{Ps}}=P_{r+1}((X_{r}-X_{r+1})T_r+1)$ and 
$
[\omega_{r+1}^-,T_r]\stackrel{\ref{lem:gen-rel-hhNH}}=[X_r\omega_r^-,T_r]=[X_r,T_r]\omega_r^-=((X_{r}-X_{r+1})T_r-1)\omega_r^-.
$
Thus, $[\mathbb{d}_N,T_r]=0=\bfd_N(T_r).$\hfill\\
\emph{Case $h=\omega_1$}: Only the first summand $X_1^N\omega_1^-$ in $\mathbb{d}_N$ does not commute with $\omega_1^+$. But then we have
$
[\mathbb{d}_N,\omega^+_1]=[X_1^N\omega_1^-,\omega^+_1]=X_1^N[\omega_1^-,\omega^+_1]=X_1^N=\bfd_N(\omega_1).
$
\end{proof}
\begin{nota}\label{defdN}
Denote by $d_N\colon \hPol_n\to \hPol_n$ the linear map given by the action of $\mathbb{d}_N\in\hhNH_n$. It is characterized  by obeying the graded Leibniz rule, commuting with the action of $\NH_n$ and satisfying $\bfd_N(\omega_1)=X_1^N$. 
\end{nota}
\Cref{lem:dg} provides a dg-enrichment of the $\hNH_n$-module $\hPol_n$:
\begin{coro} \label{dg} 
$(\hNH_n,\bfd_N)$ is a dg-algebra and $(\hPol_n, d_N)$ a dg-module.
\end{coro}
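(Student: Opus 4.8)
The plan is to deduce everything from Proposition~\ref{lem:dg}, which realises $\bfd_N$ as the inner super-derivation $[\mathbb{d}_N,-]$ of the superalgebra $\hhNH_n$ (with even generators $X_i,T_r$ and odd generators $\omega_i^{\pm}$), where $\mathbb{d}_N=\sum_{i=1}^n P_i\omega_i^-$ with the $P_i\in\Pol_n$ even; since $P_i$ is even and $\omega_i^-$ is odd, $\mathbb{d}_N$ is an odd element of $\hhNH_n$. The corollary will then be a formal consequence of this together with the standard fact that an inner super-derivation by an odd square-zero element squares to zero.

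For the dg-algebra claim I would first record the purely formal part: the graded Leibniz rule and the fact that $\bfd_N$ has homological degree $-1$ (with respect to $\deg X_i=\deg T_r=0$, $\deg\omega_i=1$, so that $\bfd_N(\omega_1)=X_1^N$ sits in degree $0$) both hold by the very definition of $\bfd_N$ in \Cref{DefbfnN}, its well-definedness being \Cref{lem:gen-rel-hNH-omega1}. So the only thing to prove is $\bfd_N\circ\bfd_N=0$. Since $\bfd_N$ preserves $\hNH_n$, I may iterate it inside $\hNH_n$ and, via \Cref{lem:dg}, compute for homogeneous $h$ that $\bfd_N^2(h)=[\mathbb{d}_N,[\mathbb{d}_N,h]]$. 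By the super Jacobi identity, applied with the first two arguments both equal to the odd element $\mathbb{d}_N$, this equals $[\tfrac12[\mathbb{d}_N,\mathbb{d}_N],h]=[\mathbb{d}_N^2,h]$, using $[\mathbb{d}_N,\mathbb{d}_N]=2\mathbb{d}_N^2$. It therefore suffices to check $\mathbb{d}_N^2=0$ in $\hhNH_n$. Expanding and using that polynomials commute with the $\omega_j^-$ (noted after \Cref{df:hhNH}), one gets
\[
\mathbb{d}_N^2=\sum_{i,j}P_iP_j\,\omega_i^-\omega_j^-,
\]
in which the diagonal terms vanish because $(\omega_i^-)^2=0$ and the off-diagonal terms cancel in pairs because the $\omega_i^-$ anti-commute while the $P_i$ commute with one another. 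Hence $\mathbb{d}_N^2=0$ and $\bfd_N^2=0$.

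For the dg-module claim, $d_N$ is by \Cref{defdN} literally the action of $\mathbb{d}_N\in\hhNH_n\subset\End(\hPol_n)$, so $d_N^2$ is the action of $\mathbb{d}_N^2=0$; thus $d_N^2=0$, and $d_N$ has homological degree $-1$ because $\mathbb{d}_N$ does. For compatibility with the $\hNH_n$-action I would write, for homogeneous $h\in\hNH_n$ and $v\in\hPol_n$, $d_N(h\cdot v)=(\mathbb{d}_N h)\cdot v$, and use the identity $\mathbb{d}_N h=[\mathbb{d}_N,h]+(-1)^{|h|}h\,\mathbb{d}_N=\bfd_N(h)+(-1)^{|h|}h\,\mathbb{d}_N$ holding in $\hhNH_n$; applying this to $v$ yields the dg-module Leibniz rule $d_N(h\cdot v)=\bfd_N(h)\cdot v+(-1)^{|h|}h\cdot d_N(v)$. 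Here one needs that the parity entering the supercommutator of $\hhNH_n$ is the mod-$2$ reduction of the homological grading: $X_i,T_r$ are even of homological degree $0$, and the $\omega_i$ (hence their images $\omega_i^+$ under $\hNH_n\hookrightarrow\hhNH_n$) are odd of homological degree $1$, so the two notions of sign agree.

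I do not expect a genuine obstacle. The heavy lifting is done by \Cref{lem:dg}; the only non-formal point is $\mathbb{d}_N^2=0$, which is a one-line anti-commutativity computation, and the remaining verifications (degree $-1$, Leibniz, module compatibility) are bookkeeping with the superalgebra signs. If anything deserves care it is making the parity match-up of the previous paragraph explicit, so that the signs in the super Jacobi identity and in the module Leibniz rule are the ones coming from the homological grading.
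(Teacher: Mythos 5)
Your proposal is correct and is exactly the argument the paper leaves implicit: \Cref{dg} is stated as an immediate consequence of \Cref{lem:dg}, with the only non-formal ingredient being $\mathbb{d}_N^2=0$, which you verify correctly via the anti-commutativity of the $\omega_i^-$ and their commutation with polynomials (and the diagonal terms vanishing since $(\omega_i^-)^2=0$ in characteristic $0$). The remaining points — the super Jacobi identity giving $\bfd_N^2=[\mathbb{d}_N^2,-]=0$, the module Leibniz rule from $\mathbb{d}_N h=[\mathbb{d}_N,h]+(-1)^{|h|}h\,\mathbb{d}_N$, and the matching of the superalgebra parity with the mod-$2$ homological degree — are all handled as intended.
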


\section{Construction of the Grassmannian quiver Hecke algebra }
\label{sec:sl2-geom}
In this section we discuss geometric constructions of the nil-Hecke algebra and their (doubly) extended versions. For $\NH_n$ this is well-known and will be recalled first. For the extended versions, the constructions are new and provide in particular geometric versions of creation and annihilation operators.
\subsection{The setup and important varieties}
\label{setupsl2}
Let $V$ be an $n$-dimensional complex vector space. Let $\rmG=\mathrm{GL}(V)$ with a choice $\rmT\subset \rmB\subset\rmG$ of a maximal torus and a Borel subgroup. Consider the variety $\calF=\rmG/\rmB$ of full flags in $V$. We will usually denote an element of $\calF$ by $\bV=\bV^\bullet=(0\subset \bV^1\subset \bV^2\subset\ldots\subset \bV^{n-1}\subset V)$. 

There is an isomorphism of algebras $H^*_\rmG(\calF)\cong\Bbbk[X_1,\ldots,X_n]$, where $X_k$ is the equivariant Chern class of the line bundle $\bV^k/\bV^{k-1}$. We also have an isomorphism of algebras $\tR_\rmT=H^*_\rmT(\point)\cong \Bbbk[\ccT_1,\ldots,\ccT_n]$, where $\ccT_1,\ldots,\ccT_n$ are the basic characters of $\rmT$.
We consider the usual $\frakS_n$-action, denoted by $w(P)(\ccT_1,\ldots,\ccT_n)=P(\ccT_{w(1)},\ldots,\ccT_{w(n)})$ for $w\in\frakS_n$,  on $\tR_\rmT=\Bbbk[\ccT_1,\ldots,\ccT_n]$.

For $k\in [0;n]$ let $\calG_k=\op{Gr}_k(V)$ be the Grassmannian variety of $k$-dimensional vector subspaces in $V$ 
and let $\ccY_k=\calF\times \calG_k$.  

\begin{df}\label{extendedflagvariety}
We set $\calG=\coprod_{k=0}^n \calG_k$ and call it the \emph{variety of Grassmannians}. We set  $\ccY=\calF\times \calG=\coprod_{k=0}^n\ccY_k$ and call it the \emph{(Grassmannian) extended flag variety}. These varieties come equipped with the obvious (diagonal) $\rmG$-action. \end{df}
The following geometric construction of the nil-Hecke algebra is well-known connecting  \Cref{lem:conv}  with \Cref{actionNH}.
\begin{prop}
\label{prop:geom-NH}
There is an isomorphism of algebras $H_*^\rmG(\calF\times\calF)\cong\NH_n$ identifying the $H_*^\rmG(\calF\times\calF)$-action with the $\NH_n$-action on $H_*^\rmG(\calF)=\Pol_n$ from \S\ref{sec:Pol}.
\end{prop}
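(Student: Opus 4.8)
The plan is to identify both sides of the claimed isomorphism $H_*^\rmG(\calF\times\calF)\cong\NH_n$ with a common overalgebra, namely the localised torus-equivariant homology, and then check that the convolution product matches the nil-Hecke relations. This is the classical argument of Varagnolo--Vasserot, which I would reproduce here since the paper explicitly invokes \Cref{actionNH} and \Cref{lem:conv} and wants this as the base case of the whole construction.

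First I would set $\abY=\calF$, $\abX=\point$, $L=\rmG$ in \Cref{lem:conv}, so that $Z=\calF\times\calF$ carries the convolution product and acts on $H_*^\rmG(\calF)=H^*_\rmG(\calF)\cong\Pol_n$. Next I would pass to the maximal torus $\rmT$: the fixed points $\calF^\rmT$ are the $n!$ coordinate flags indexed by $w\in\frakS_n$, and $(\calF\times\calF)^\rmT=\calF^\rmT\times\calF^\rmT$ is finite, so \Cref{lem:comp-in-loc} applies. Concretely $H_*^\rmT(\calF\times\calF)\hookrightarrow H_*^\rmT((\calF\times\calF)^\rmT)_{\rm loc}$ is an algebra map for the twisted matrix product $[(x,y)]\star[(y',z)]=\delta_{y,y'}\op{eu}(\calF,y)[(x,z)]$, and on $\rmT$-fixed points the action on $H_*^\rmT(\calF^\rmT)_{\rm loc}$ is the analogous formula. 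Since $\calF$ is paved by Schubert cells, $H_*^\rmT(\calF)$ is free over $\tR_\rmT$ and $H_*^\rmT(\calF\times\calF)$ is free over $\tR_\rmT$ as well (Schubert cell paving of the product), so all the localisation maps are injective, and by the $W$-invariance statement $H_*^\rmG(-)=(H_*^\rmT(-))^W$ we may work $\rmT$-equivariantly and then take $W$-invariants.

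Then I would construct the two ring maps. Define elements $X_i\in H_*^\rmG(\calF\times\calF)$ as the class of the diagonal multiplied by the Chern class $X_i=c_1^\rmT(\bV^i/\bV^{i-1})$ pulled back from the first factor, and define $T_r\in H_*^\rmG(\calF\times\calF)$ as the fundamental class $[\calF\times_{\calF_r}\calF]$ of the correspondence where the two flags agree except possibly in step $r$ (here $\calF_r$ is the partial flag variety omitting $\bV^r$). Computing the localisation of these classes at $\rmT$-fixed points (the diagonal class $[(x,y)]$ has $\op{loc}$-coefficient $\delta_{x,y}\op{eu}(\calF,x)$, and $T_r$ is supported on pairs $(x,xs_r)$), one checks using the twisted product of \Cref{lem:comp-in-loc}\eqref{1} that $X_i$, $T_r$ satisfy the nil-Hecke relations of \Cref{lem:gen-rel-NH} (i.e. $T_r^2=0$, the braid relations, and $T_rX_r-X_{r+1}T_r=1=X_rT_r-T_rX_{r+1}$); this is a finite computation with the Euler-class weights $\ccT_a-\ccT_b$. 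This gives an algebra map $\NH_n\to H_*^\rmG(\calF\times\calF)$. Compatibility with the action on $H_*^\rmG(\calF)=\Pol_n$ is checked on the same $\rmT$-fixed-point model via \Cref{lem:comp-in-loc}\eqref{2}: $X_i$ acts by multiplication and $T_r$ acts by the Demazure operator $\partial_r$, because applying $\star$ in the localised basis $\{[w]\}$ reproduces the divided-difference formula. For surjectivity I would use that $\Pol_n$ is a faithful $\NH_n$-module of the right graded dimension and that $H_*^\rmG(\calF\times\calF)\cong\End_{\tR_\rmG}(H^*_\rmG(\calF))\cong\End_{\Pol_n^{\frakS_n}}(\Pol_n)$ as a free module of rank $(n!)^2$ over $\tR_\rmG$ (Schubert paving), matching $\dim$ of $\NH_n$ over its centre; injectivity follows since $\NH_n$ acts faithfully on $\Pol_n$ and the action factors through $H_*^\rmG(\calF\times\calF)$, which acts faithfully by \Cref{lem:comp-in-loc}.

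The main obstacle is the explicit $\rmT$-fixed-point verification that the geometric classes $T_r$ (and the diagonal-with-Chern-class elements $X_i$) multiply according to the nil-Hecke relations under the \emph{twisted} convolution product $[(x,y)]\star[(y',z)]=\delta_{y,y'}\op{eu}(\calF,y)[(x,z)]$ — the Euler classes $\op{eu}(\calF,y)=\prod_{a<b}$ (sign-appropriate) $(\ccT_{y(a)}-\ccT_{y(b)})$ must be bookkept carefully so that the relation $T_rX_r-X_{r+1}T_r=\mathrm{id}$ comes out with coefficient exactly $1$ rather than some nonzero weight. Everything else (freeness from the Schubert paving, the $W$-invariance reduction, the dimension count for surjectivity) is routine and already packaged in \Cref{lem:conv} and \Cref{lem:comp-in-loc}.
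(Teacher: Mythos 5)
The paper does not actually prove this proposition; it states it as ``well-known'' and attributes the geometric construction to Varagnolo--Vasserot and Rouquier. Your argument is a correct reconstruction of that standard proof, and the technical steps you outline (paving $\calF$ and $\calF\times\calF$ by Schubert cells to get freeness over $\tR_\rmT$ and injective localisation, reducing to $\rmT$-fixed points via $H_*^\rmG(-)=(H_*^\rmT(-))^W$, computing the twisted product of \Cref{lem:comp-in-loc}\eqref{1} on the basis $\{[(x,y)]\}$) are exactly the tools the paper itself deploys in \S4 for the extended version.

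Where your route genuinely diverges from the paper's preferred strategy is in \emph{how} the homomorphism $\NH_n\to H_*^\rmG(\calF\times\calF)$ is produced. You propose to verify the nil-Hecke relations of \Cref{lem:gen-rel-NH} directly on the geometric classes $X_i,T_r$ via the localised structure constants, which is the classical Chriss--Ginzburg/Lusztig approach. The paper, judging from the sketch of \Cref{prop:KLR-geom} and the proofs of Theorems~\ref{doubly}, \ref{thm:main-sl2} and \ref{mainthm}, bypasses the relations check entirely: since $\NH_n$ is \emph{defined} as the subalgebra of $\End(\Pol_n)$ generated by the $X_i$ and $\partial_r$, and the geometric algebra also acts faithfully on $\Pol_n\cong H_*^\rmG(\calF)$, once one exhibits geometric classes acting by $X_i$ and $\partial_r$ the inclusion $\NH_n\hookrightarrow H_*^\rmG(\calF\times\calF)$ inside $\End(\Pol_n)$ is automatic; one then compares a $\Pol_n$-basis (weakly adapted paving, \Cref{checkbasis}) rather than verifying relations. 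Your approach buys a self-contained algebraic statement; the paper's approach scales much better to the extended/coloured case where explicit relation-checks are painful. Both are fine here.

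One concrete thing you flagged but should resolve: the fundamental class $[\calF\times_{\calF_r}\calF]$ acts on $\Pol_n$ by $-\partial_r$, not by $\partial_r$ (the paper makes this explicit in \eqref{eq:Zr-loc}, where $T_r:=-[\Zr]$, and in the proof of \Cref{TrasDemazure}). With your unsigned $T_r$ the relation $T_rX_r-X_{r+1}T_r=1$ comes out as $-1$, and there is no algebra automorphism of $\NH_n$ negating $T_r$ alone. So you must build the sign into the definition of $T_r$. This is a bookkeeping correction, not a flaw in the strategy.
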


We consider the \emph{Steinberg extended flag variety}  $\ccZ=\ccY\times \ccY$, $\ccZ_{k_1,k_2}=\ccY_{k_1}\times \ccY_{k_2}$. We would like to describe the convolution algebra $H_*^\rmG(\ccZ)$ with its action on $H_*^\rmG(\ccY)$.

\begin{nota} Whenever we write $\bV$, $\tV$ or $W, \tW$ without specification we mean a point in $\cF$ respectively in $\calG$. A point in $\ccZ$ is denoted as $(\bV,\tV,W,\tW)$. Hereby $\bV$ always refers to the first and $\tV$ to the second flag, similar for $W$ and $\tW.$  
\end{nota}

\subsection{The representation $H_*^\rmG(\ccY)$}
\label{subs:rep-H(FG)}

\begin{lem}
\label{lem:isom-FG/G-G/T}
For any $\rmG$-variety $X$, there is a canonical isomorphism
\begin{equation}\label{iso-FG/G-G/T}
H^*_\rmG(\calF\times X)\cong H^*_\rmT(X).
\end{equation}
\end{lem}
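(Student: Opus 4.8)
The plan is to reduce the statement to two standard facts about equivariant cohomology: a shifting (induction) isomorphism and the homotopy invariance of equivariant cohomology under replacing a group by a subgroup onto which it deformation retracts. All maps produced will be pullbacks or the shifting isomorphism, hence ring homomorphisms, so the resulting identification is automatically an isomorphism of graded algebras, compatible with the actions of $\tR_\rmT$ and $\tR_\rmG$.

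First I would recall the $\rmG$-equivariant isomorphism of varieties $\rmG\times_\rmB X\;\xrightarrow{\;\sim\;}\;\calF\times X$, $[g,x]\mapsto(g\rmB,gx)$, with inverse $(g\rmB,y)\mapsto[g,g^{-1}y]$, which is well defined because $[gb,z]=[g,bz]$ in the balanced product (here $\rmB$ acts on $X$ by restriction of the $\rmG$-action). Applying the Borel construction and using that (a finite-dimensional approximation of) $E\rmG$, viewed as a $\rmB$-space by restriction, is again a free and highly connected space, hence a model for $E\rmB$, one obtains
\[
H^*_\rmG(\calF\times X)\;\cong\;H^*_\rmG(\rmG\times_\rmB X)\;=\;H^*\bigl(E\rmG\times_\rmB X\bigr)\;=\;H^*_\rmB(X).
\]
Concretely this isomorphism is the pullback along the $\rmB$-equivariant closed embedding $\{e\rmB\}\times X\hookrightarrow\calF\times X$ (the point $e\rmB$ has stabiliser $\rmB$), which makes it manifestly natural in the $\rmG$-variety $X$. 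To keep the argument rigorous one carries out the above with finite-dimensional approximations $E\rmG^{(m)}$ and passes to the limit; the key observation is only that the restriction of a free $\rmG$-action to $\rmB$ stays free while the connectivity still grows with $m$.

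Second I would identify $H^*_\rmB(X)$ with $H^*_\rmT(X)$. Write $\rmB=\rmT\ltimes\rmU$ with $\rmU$ the unipotent radical; as a variety $\rmB/\rmT\cong\rmU\cong\mathbb{A}^{N}$ is contractible, so the inclusion $\rmT\hookrightarrow\rmB$ is a homotopy equivalence of topological groups. Hence $E\rmB$ restricted to $\rmT$ is a model for $E\rmT$, and the fibre bundle
\[
E\rmB\times_\rmT X\;\longrightarrow\;E\rmB\times_\rmB X
\]
has contractible fibre $\rmB/\rmT$, so it induces an isomorphism $H^*_\rmT(X)=H^*(E\rmB\times_\rmT X)\xrightarrow{\;\sim\;}H^*(E\rmB\times_\rmB X)=H^*_\rmB(X)$, again natural in $X$. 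Composing the two isomorphisms yields the desired canonical identification $H^*_\rmG(\calF\times X)\cong H^*_\rmT(X)$.

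Alternatively — and this is how I would actually present it to shorten the write-up — one bypasses $\rmB$ entirely: the projection $\rmG/\rmT\to\rmG/\rmB=\calF$ is a $\rmG$-equivariant fibre bundle with contractible fibre $\rmU$, so $\calF\times X$ and $(\rmG/\rmT)\times X\cong\rmG\times_\rmT X$ have canonically isomorphic $\rmG$-equivariant cohomology, and the shifting isomorphism gives $H^*_\rmG(\rmG\times_\rmT X)\cong H^*_\rmT(X)$ in one step. I do not expect a genuine obstacle here, as the lemma is standard; the only points deserving a line of care are the bookkeeping with finite-dimensional approximations of the classifying spaces and the remark that restriction of a free action to a subgroup remains free, both of which are routine.
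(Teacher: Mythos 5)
Your proof is correct and follows essentially the same route as the paper: the $\rmG$-equivariant identification $\rmG\times_\rmB X\cong\calF\times X$, the induction isomorphism $H^*_\rmG(\rmG\times_\rmB X)\cong H^*_\rmB(X)$, and then $H^*_\rmB(X)\cong H^*_\rmT(X)$. The paper states this in three lines; your extra care with classifying-space approximations and the alternative via $\rmG/\rmT$ are fine but not needed.
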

\begin{rk}
The left hand side of \eqref{iso-FG/G-G/T} has an action of $H^*_\rmG(\calF)=\Bbbk[X_1,\ldots,X_n]$ and the right have side has an action of $\tR_\rmT=H^*_\rmT(\point)=\Bbbk[\ccT_1,\ldots,\ccT_n]$. The isomorphism identifies the multiplications by $X_r$ on the left and by $\ccT_r$ on the right.
\end{rk}
\begin{proof}
The isomorphism of varieties
\begin{equation}
\label{eq:isom-G/B*Gr}
\rmG\times_\rmB X\cong \rmG/\rmB\times X,\qquad (g,x)\mapsto (gB, gx).
\end{equation}
induces 
$
H^*_\rmG(\calF\times X)\cong H^*_\rmG(\rmG\times_\rmB X)=H^*_\rmB(X)=H^*_\rmT(X).
$
\end{proof}

\begin{df}
For $\lambda\in \Lambda_k(n)$, see \Cref{notnew}, let 
$$
C_\lambda=\{(\bV,W)\in \ccY_k\mid \dim((\bV^r\cap W)/(\bV^{r-1}\cap W))=\lambda_r,~\forall r\in[1;n]\}.
$$
Denote by $\tS_\lambda$ the class $[\overline{C_\lambda}]\in H_*^G(\ccY_k)$, the \emph{equivariant Schubert class}\footnote{Note that $\tS_\lambda$  corresponds via \eqref{iso-FG/G-G/T} to the usual,  see e.g. \cite{Graham},  equivariant Schubert class on $\calG$.} for $\lambda$.
\end{df}

Setting $X=\calG$ in \eqref{eq:isom-G/B*Gr} gives the following (see also \cite[Prop. 4.12]{GKS}):

\begin{coro}\label{Schurandomega}
There is an isomorphism $H_*^\rmG(\ccY)\cong\Bbbk[X_1,\ldots,X_n]\otimes \mywedge^\bullet(\omega_1,\ldots,\omega_n)$ 
of $H_{\rmG}^*(\calF)=\Bbbk[X_1,\ldots,X_n]$-modules sending $\tS_\lambda$ to $\omega_\lambda$ for each $\lambda\in\Lambda_k(n)$.
\end{coro}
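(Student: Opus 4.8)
The plan is to combine the identification of $H_*^\rmG(\ccY)$ as a module over $H^*_\rmG(\calF)=\Bbbk[X_1,\ldots,X_n]$ coming from \Cref{lem:isom-FG/G-G/T} with the classical description of the equivariant cohomology of a Grassmannian. First I would apply \eqref{iso-FG/G-G/T} with $X=\calG=\coprod_{k=0}^n\calG_k$, obtaining $H_*^\rmG(\ccY)=H^*_\rmG(\calF\times\calG)\cong H^*_\rmT(\calG)=\bigoplus_{k=0}^n H^*_\rmT(\op{Gr}_k(V))$ as modules over $\tR_\rmT=\Bbbk[\ccT_1,\ldots,\ccT_n]$, where by the Remark following \Cref{lem:isom-FG/G-G/T} the class $X_r$ acts as $\ccT_r$. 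The equivariant cohomology of the Grassmannian $\op{Gr}_k(V)$, with the action of the same maximal torus $\rmT\subset\rmG=\op{GL}(V)$, is free over $\tR_\rmT$ with basis the equivariant Schubert classes; under the forgetful map $\calF\times\calG\to\calG$ the class $\tS_\lambda=[\overline{C_\lambda}]$ is exactly the pullback of the usual equivariant Schubert class indexed by $\lambda\in\Lambda_k(n)$, as noted in the footnote to the definition of $C_\lambda$. Thus $\{\tS_\lambda : \lambda\in\Lambda(n)\}$ is an $\tR_\rmT$-basis of the localisation, and in fact a $\Bbbk[X_1,\ldots,X_n]$-basis of $H_*^\rmG(\ccY)$ after restricting along the inclusion $\Bbbk[X_1,\ldots,X_n]=H^*_\rmG(\calF)\hookrightarrow H^*_\rmT(\point)$ — more precisely, $H^*_\rmG(\calF\times\calG)$ is free over $H^*_\rmG(\calF)$ with basis $\{\tS_\lambda\}$, because under \eqref{iso-FG/G-G/T} this becomes the statement that $H^*_\rmT(\calG)$ is free over $\tR_\rmT=H^*_\rmT(\point)$ on the equivariant Schubert classes, which is standard (cf. \cite{Graham}).

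Next I would match this with the target $\Bbbk[X_1,\ldots,X_n]\otimes\mywedge^\bullet(\omega_1,\ldots,\omega_n)$. The exterior algebra $\mywedge^\bullet(\omega_1,\ldots,\omega_n)$ has $\Bbbk$-basis $\{\omega_\lambda : \lambda\in\Lambda(n)\}$ by \Cref{notnew}, graded so that $\omega_\lambda$ sits in exterior degree $|\lambda|=k$ precisely when $\lambda\in\Lambda_k(n)$. Since $H_*^\rmG(\ccY_k)=H_*^\rmG(\calF\times\calG_k)$ contributes exactly the span of $\{\tS_\lambda : \lambda\in\Lambda_k(n)\}$, the assignment $\tS_\lambda\mapsto\omega_\lambda$ is a well-defined bijection on basis elements respecting the decomposition by $k$, and extends uniquely to an isomorphism of $\Bbbk[X_1,\ldots,X_n]$-modules. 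One should also check the grading conventions are compatible: with $\deg(\omega_i)=2(n-i)$ (as in \Cref{rk:gr-hNH}) the class $\omega_\lambda$ has the degree of the Schubert variety $\overline{C_\lambda}$, which matches the cohomological degree of $\tS_\lambda$ in $H_*^\rmG(\ccY_k)$; this is the reason for the index-reversal convention in the definition of $\omega_\lambda$, as flagged in the Remark after \Cref{notnew}.

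The main obstacle, such as it is, is purely bookkeeping rather than conceptual: one must verify that the combinatorial indexing set $\Lambda_k(n)$ used in the definition of $C_\lambda$ (recording the jumps $\dim((\bV^r\cap W)/(\bV^{r-1}\cap W))=\lambda_r$) genuinely coincides with the standard indexing of $\rmT$-fixed points / Schubert cells on $\op{Gr}_k(V)$ for the chosen torus, so that the closures $\overline{C_\lambda}$ really are the equivariant Schubert varieties and their classes form a basis. This is the content of the footnote; making it precise amounts to observing that a $\rmT$-fixed point of $\calG_k$ is a coordinate subspace $W$, that its relative position to the reference flag $\bV$ is encoded exactly by the $0/1$-vector $\lambda$, and that $C_\lambda$ is the $\rmB$-orbit through the corresponding fixed point in $\calF\times\calG_k$. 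Once this dictionary is in place, freeness over $\tR_\rmT$ and hence over $\Bbbk[X_1,\ldots,X_n]$, together with the degree count, finishes the proof.
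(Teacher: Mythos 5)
Your approach is the same as the paper's: the paper presents Corollary \ref{Schurandomega} as an immediate consequence of \Cref{lem:isom-FG/G-G/T} (taking $X=\calG$ in \eqref{eq:isom-G/B*Gr}) together with the standard freeness of $H^*_\rmT(\calG)$ over $\tR_\rmT$ on equivariant Schubert classes, citing [GKS, Prop.~4.12]; your proof is exactly this argument spelled out.

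One small inaccuracy in your grading aside at the end: with $\deg(\omega_i)=2(n-i)$ from \Cref{rk:gr-hNH}, for $\lambda\in\Lambda_k(n)$ with ones at positions $i_1<\cdots<i_k$ one has $\deg(\omega_\lambda)=2\bigl(kn-\sum_j i_j\bigr)$, whereas the cohomological degree of $\tS_\lambda$ (twice the codimension of $\overline{C_\lambda}$) is $2\#\inv(\lambda)=2\bigl(k(n-k)-\sum_j i_j+\tbinom{k+1}{2}\bigr)$. These differ by $k(k-1)$, which is exactly the correction the paper introduces in \Cref{shiftedgrading}. Since the corollary asserts only an isomorphism of $\Bbbk[X_1,\ldots,X_n]$-modules and not a graded one, your core argument is unaffected, but the claim that $\omega_\lambda$ ``has the degree of the Schubert variety $\overline{C_\lambda}$'' on the nose is not correct.
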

From now on we identify $H_*^\rmG(\ccY)$ with $\Bbbk[X_1,\ldots,X_n]\otimes \mywedge^\bullet(\omega_1,\ldots,\omega_n)$ as a module over $H_{\rmG}^*(\calF)=\Bbbk[X_1,\ldots,X_n]$ such that the element $\tS_\lambda$ corresponds to $\omega_\lambda$.

\subsection{Euler classes in (extended) flag varieties}
We will now do explicit calculations with fixed points and classes in ${\rm Frac}(\tR_\rmT)=\Bbbk(\ccT_1,\ldots, \ccT_n)$, the fraction field of 
 $\tR_\rmT=\Bbbk[\ccT_1,\ldots, \ccT_n]$, in particular with Euler classes in extended flag varieties.
 
 For this fix a basis $e_1, e_2,\ldots,e_n$ in $V$ such that $\rmB$ and $\rmT$ are the upper triangular respectively the diagonal matrices. Let $\bU=(0\subset \bU^1\subset \bU^2\subset\cdots\subset \bU^{n-1}\subset V)$ be the full flag in $V$ whose stabilizer is $\rmB$, i.e., $\bU^{j}=\langle e_1,e_2,\ldots,e_{j}\rangle$.

Each permutation $w\in \frakS_n$ can be seen as permutation matrix $(a_{ij})\in \rmG$, where $(a_{ij})=\delta_{i,w(j)}$. Then $\calF^\rmT=\{f_w\mid w\in\frakS_n\}$, where $f_w=w(\bU)$. 

Let $\rmB_w=w\rmB w^{-1}$ be the stabilizer of the flag $f_w\in\calF$ and $\frakb_w$ its Lie algebra. Let $\frakn_w$ be the nilpotent radical of $\frakb_w$. Set also $\frakn_{w_1,w_2}=\frakn_{w_1}\cap \frakn_{w_2}$ and $\frakm_{w_1,w_2}=\frakn_{w_1}/\frakn_{w_1,w_2}$. Let $\rmP_{w_1,w_2}$ be the minimal parabolic subgroup of $\rmG$ containing both $\rmB_{w_1}$ and $\rmB_{w_2}$, and let $\frakp_{w_1,w_2}$ be its Lie algebra. We record a direct consequence:
\begin{lem} We have
$
\frakp_{w,ws_r}/\frakb_w\cong \frakm_{ws_r,w}$ and $\frakm_{w,ws_r}^*\cong \frakm_{ws_r,w}
$
for any $w\in \frakS_n$, $r\in [1;n-1]$. Moreover, $\op{eu}(\frakm_{w,ws_r})=\ccT_{w(r)}-\ccT_{w(r+1)}$.
\end{lem}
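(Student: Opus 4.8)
The plan is to reduce everything to a root-space computation in $\frakg=\gl(V)$. Write $\frakg=\frakt\oplus\bigoplus_\alpha\frakg_\alpha$, the sum running over the roots $\alpha=\ccT_i-\ccT_j$ with $i\ne j$, where $\rmT$ acts on the root space $\frakg_\alpha$ through $\alpha$; let $\Phi^+=\{\ccT_i-\ccT_j\mid i<j\}$ be the positive system for which $\rmB$ is the group of upper triangular matrices, so that $\frakn=\bigoplus_{\alpha\in\Phi^+}\frakg_\alpha$ and, conjugating by the permutation matrix $w$, $\frakn_w=\bigoplus_{\alpha\in w\Phi^+}\frakg_\alpha$. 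Write $\alpha=\ccT_r-\ccT_{r+1}$ for the simple root attached to $s_r$. Then $s_r\Phi^+=(\Phi^+\setminus\{\alpha\})\cup\{-\alpha\}$, hence $ws_r\Phi^+=(w\Phi^+\setminus\{w\alpha\})\cup\{-w\alpha\}$, so $\frakn_w$ and $\frakn_{ws_r}$ coincide except that $\frakn_w\supset\frakg_{w\alpha}$ while $\frakn_{ws_r}\supset\frakg_{-w\alpha}$.

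Consequently $\frakn_{w,ws_r}=\frakn_w\cap\frakn_{ws_r}=\bigoplus_{\beta\in w\Phi^+,\,\beta\ne w\alpha}\frakg_\beta$, and as $\rmT$-modules (indeed as $(\rmB_w\cap\rmB_{ws_r})$-modules) one gets $\frakm_{w,ws_r}=\frakn_w/\frakn_{w,ws_r}\cong\frakg_{w\alpha}$ and $\frakm_{ws_r,w}=\frakn_{ws_r}/\frakn_{w,ws_r}\cong\frakg_{-w\alpha}$, where $w\alpha=\ccT_{w(r)}-\ccT_{w(r+1)}$ by the $\frakS_n$-action convention fixed just above the lemma. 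Since $\frakg_{w\alpha}$ is one-dimensional of $\rmT$-weight $w\alpha$, the definition of the Euler class gives immediately $\op{eu}(\frakm_{w,ws_r})=\ccT_{w(r)}-\ccT_{w(r+1)}$, which is the last assertion; and the duality of opposite root spaces $\frakg_\gamma^*\cong\frakg_{-\gamma}$ yields $\frakm_{w,ws_r}^*\cong\frakg_{-w\alpha}\cong\frakm_{ws_r,w}$, the second assertion.

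For the first assertion I would identify $\rmP_{w,ws_r}$ explicitly: the standard minimal parabolic $\rmP_r\supset\rmB$ has Lie algebra $\frakb\oplus\frakg_{-\alpha}$ and contains a representative of $s_r$, so $w\rmP_rw^{-1}$ contains both $\rmB_w$ and $\rmB_{ws_r}=ws_r\rmB(ws_r)^{-1}$ and is a parabolic; by minimality it equals $\rmP_{w,ws_r}$. Its Lie algebra is $\frakb_w\oplus\frakg_{-w\alpha}$, whence $\frakp_{w,ws_r}/\frakb_w\cong\frakg_{-w\alpha}\cong\frakm_{ws_r,w}$ as $\rmT$-modules.

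None of this is hard; the only point requiring care is the bookkeeping of conventions — the precise $\frakS_n$-action on $\tR_\rmT$, the choice of positive system making $\rmB$ upper triangular, and the sign of $w\alpha$ — together with checking that the displayed isomorphisms are genuinely $\rmT$-equivariant, which is needed for the Euler-class identity. One should also verify that $\rmP_{w,ws_r}$ really is $w\rmP_rw^{-1}$ and nothing smaller, which follows since a parabolic lying between $\rmB_w$ and $\rmB_{ws_r}$ must contain the root space $\frakg_{-w\alpha}$ with $\alpha$ simple, forcing it to be exactly $w\rmP_rw^{-1}$.
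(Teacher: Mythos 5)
Your proof is correct. The paper records this lemma as ``a direct consequence'' of the definitions and gives no proof at all, and your root-space computation ($\frakn_w=\bigoplus_{\beta\in w\Phi^+}\frakg_\beta$, the two positive systems $w\Phi^+$ and $ws_r\Phi^+$ differing only in $\pm w\alpha$, hence $\frakm_{w,ws_r}\cong\frakg_{w\alpha}$ and $\frakm_{ws_r,w}\cong\frakg_{-w\alpha}$, and $\rmP_{w,ws_r}=w\rmP_rw^{-1}$ by minimality) is exactly the intended verification, with the conventions handled consistently — in particular your value $\op{eu}(\frakm_{w,ws_r})=\ccT_{w(r)}-\ccT_{w(r+1)}$ matches how the lemma is used later in the computation of $\tA_{w_1,w_2,\mu_1,\mu_2}$.
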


Via the inclusion $H^*_\rmG(\calF)\subset H^*_\rmT(\calF)_{\loc}$ from \S\ref{subs:conv}, any $P\in \Bbbk[X_1,\ldots,X_n]=H^*_\rmG(\calF)$ can be viewed as an element in $H^*_\rmT(\calF)_{\loc}$ and then be written as 
\begin{equation}\label{Pinfpts}
P=\sum_{w\in\WW_n}P_w\tA^{-1}_w[f_w],\quad \text{where}\quad \tA_w=\op{eu}(\calF,f_w).
\end{equation} 

\begin{lem}
For any $w\in\frakS_n$, the Euler class $\tA_w=\op{eu}(\calF,f_w)$ equals
\begin{equation}\label{Aw}
\tA_w=\prod_{1\leqslant i<j\leqslant n}(\ccT_{w(j)}-\ccT_{w(i)}).
\end{equation}
\end{lem}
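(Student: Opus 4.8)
The plan is to compute the $\rmT$-module structure of the tangent space $T_{f_w}\calF$ and to read off $\tA_w=\op{eu}(\calF,f_w)$ as the product of its weights. Under the identification $\calF\cong\rmG/\rmB$ the fixed point $f_w$ corresponds to the coset $w\rmB$, hence $T_{f_w}\calF\cong\frakg/\frakb_w$ as a $\rmT$-module, where $\frakg=\mathfrak{gl}(V)$ and $\frakb_w=\mathrm{Ad}(w)\frakb=\mathrm{Lie}(\rmB_w)$. Using the triangular decomposition $\frakg=\frakh\oplus\frakn_w\oplus\overline{\frakn_w}$, this gives a $\rmT$-equivariant isomorphism $T_{f_w}\calF\cong\overline{\frakn_w}$, the opposite nilpotent radical of $\frakb_w$.

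It then remains to list the $\rmT$-weights of $\overline{\frakn_w}$. Since $\rmB$ is upper triangular, $\frakg$ decomposes under $\rmT$ as $\frakh\oplus\bigoplus_{i\ne j}\Bbbk E_{ij}$ with $E_{ij}$ of weight $\ccT_i-\ccT_j$, and $\frakn^+=\bigoplus_{i<j}\Bbbk E_{ij}$. As $w$ normalises $\rmT$ with $w(\ccT_k)=\ccT_{w(k)}$ in the convention fixed in \S\ref{setupsl2}, conjugation by $w$ sends a weight vector of weight $\mu$ to one of weight $w(\mu)$; hence $\frakn_w=\mathrm{Ad}(w)\frakn^+$ has weights $\{\ccT_{w(i)}-\ccT_{w(j)}:i<j\}$ and $\overline{\frakn_w}$ has weights $\{\ccT_{w(j)}-\ccT_{w(i)}:1\leqslant i<j\leqslant n\}$, each with multiplicity one. (This is consistent with the preceding lemma, which identifies the one-dimensional pieces $\frakm_{w,ws_r}$; alternatively one may build $\overline{\frakn_w}$ out of these pieces along a reduced word for $w$.)

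Finally, the Euler class of a finite-dimensional $\rmT$-representation is the product of its weights, so
\[
\tA_w=\op{eu}(\calF,f_w)=\op{eu}\bigl(T_{f_w}\calF\bigr)=\prod_{1\leqslant i<j\leqslant n}\bigl(\ccT_{w(j)}-\ccT_{w(i)}\bigr),
\]
which is \eqref{Aw}. There is no genuine difficulty here; the only point requiring care is the bookkeeping of conventions — the choice of $\rmB$ as the upper-triangular Borel, the meaning of the $\frakS_n$-action $w(\ccT_k)=\ccT_{w(k)}$, and the relabelling $i\leftrightarrow j$ turning $\prod_{i>j}$ into $\prod_{i<j}$ — so that the order and signs of the factors come out precisely as in \eqref{Aw}.
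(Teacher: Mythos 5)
Your proof is correct and follows essentially the same route as the paper: the paper identifies $T_{f_w}\calF\cong\frakg/\frakb_w\cong(\frakn_w)^*$ and reads off the Euler class from the weights, while you identify $\frakg/\frakb_w$ with the complementary $\rmT$-submodule $\overline{\frakn_w}$; these are isomorphic as $\rmT$-modules, so the weight bookkeeping and the resulting formula coincide.
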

\begin{proof}
Via the isomorphism $\rmG/\rmB_w\cong \calF$, $g\rmB_w\mapsto gf_w$, the tangent space $T_{f_w}(\calF)$ is isomorphic to $\frakg/\frakb_w\cong (\frakn_w)^*$ and clearly $\op{eu}(\frakn_w)=\prod_{1\leqslant i<j\leqslant n}(\ccT_w(i)-\ccT_w(j))$.
\end{proof}

The $\rmT$-fixed points in $\calF\times\calF$ are the pairs $f_{w_1,w_2}:=(f_{w_1},f_{w_2})$ of fixed points in $\calF$. For $w\in \frakS_n$ let $O^w$ be the diagonal $\rmG$-orbit of $f_{\Id,w}$ in $\calF\times \calF$ and $\overline{O^w}$ its closure.

\begin{lem}\label{eunm}
For any simple transposition $s\in \frakS_n$ we have
\begin{equation*}
\op{eu}(\overline{O^s},f_{w,w})=\op{eu}((\frakn_w)^*\oplus \frakm_{ws,w})\quad\text{and}\quad
\op{eu}(\overline{O^s},f_{w,ws})=\op{eu}((\frakn_w)^*\oplus \frakm_{w,ws}).
\end{equation*}
\end{lem}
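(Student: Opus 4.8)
The plan is to use the standard description of $\overline{O^{s}}$, for $s=s_r$ a simple transposition, as a $\mathbb{P}^1$-bundle over $\calF$, so that the Euler class at a $\rmT$-fixed point splits as a product of a base contribution and a fibre contribution.

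First I would recall (and, if needed, quickly verify) that $\overline{O^{s_r}}$ is exactly the set $\{(\bV,\tV)\in\calF\times\calF\mid \bV^j=\tV^j\text{ for all }j\neq r\}$: this is $\rmG$-stable, smooth, irreducible, of dimension $\dim\calF+1=\dim O^{s_r}$, and contains $O^{s_r}$ as a dense open orbit. The first projection $\pi\colon\overline{O^{s}}\to\calF$ is then a $\mathbb{P}^1$-bundle, with fibre over $\bV$ the projective line $\mathbb{P}(\bV^{r+1}/\bV^{r-1})$, where $(\bV,\tV)$ corresponds to the line $\tV^r/\bV^{r-1}$. Over $f_w$ this fibre has exactly the two $\rmT$-fixed points $f_{w,w}$ and $f_{w,ws}$, namely the coordinate lines spanned by the images of $e_{w(r)}$ and of $e_{w(r+1)}$ in $\bV^{r+1}/\bV^{r-1}=\langle \bar e_{w(r)},\bar e_{w(r+1)}\rangle$.

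Next, since $\pi$ is a smooth morphism of smooth varieties, at each such $\rmT$-fixed point $x$ there is a short exact sequence of $\rmT$-modules $0\to T_x\pi^{-1}(f_w)\to T_x\overline{O^{s}}\to T_{f_w}\calF\to 0$, and it splits because every finite-dimensional $\rmT$-module over $\Bbbk$ is semisimple. Hence $\op{eu}(\overline{O^{s}},x)=\op{eu}(T_x\pi^{-1}(f_w))\cdot\op{eu}(T_{f_w}\calF)$. The base factor is $\op{eu}((\frakn_w)^*)$ by the identification $T_{f_w}\calF\cong(\frakn_w)^*$ recorded in the proof of the preceding lemma. The fibre factor at $x$ is the tangent line $\Hom(\ell,U/\ell)$ to $\mathbb{P}(U)$ at the relevant coordinate line $\ell\subset U=\bV^{r+1}/\bV^{r-1}$, whose $\rmT$-weight is $\ccT_{w(r+1)}-\ccT_{w(r)}$ at $f_{w,w}$ and $\ccT_{w(r)}-\ccT_{w(r+1)}$ at $f_{w,ws}$. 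Comparing with the preceding lemma, which gives $\op{eu}(\frakm_{w,ws})=\ccT_{w(r)}-\ccT_{w(r+1)}$ and $\frakm_{ws,w}\cong\frakm_{w,ws}^*$, these one-dimensional modules are $\frakm_{ws,w}$ and $\frakm_{w,ws}$ respectively, giving the two formulas.

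The only delicate point — and where I would be most careful — is the bookkeeping of conventions: the direction of $T_{f_w}\calF\cong(\frakn_w)^*$, and matching the vertical tangent weight at $f_{w,w}$ with $\frakm_{ws,w}$ and at $f_{w,ws}$ with $\frakm_{w,ws}$ rather than the other way around. As an alternative that sidesteps the explicit weight computation, one can instead identify the vertical tangent space at $f_{w,w}$ with $\frakp_{w,ws}/\frakb_w\cong\frakm_{ws,w}$ directly via the preceding lemma, and then use $\frakm_{ws,w}\cong\frakm_{w,ws}^*$ for the fixed point $f_{w,ws}$ on the other chart of the $\mathbb{P}^1$. Beyond this sign check I do not anticipate any real obstacle.
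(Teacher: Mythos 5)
Your proof is correct, and it is essentially the same geometric argument as the paper's: both realize $\overline{O^{s}}$ as a $\mathbb{P}^1$-bundle over $\calF$, attribute $\op{eu}((\frakn_w)^*)$ to the base, and identify the vertical tangent spaces at $f_{w,w}$ and $f_{w,ws}$ with $\frakm_{ws,w}$ and $\frakm_{w,ws}$. The paper does this by exhibiting the explicit associated-bundle isomorphism $\rmG\times_{\rmB_w}\rmP_{w,ws}/\rmB_w\to\overline{O^{s}}$, $(g,p)\mapsto(gf_w,gpf_w)$, so the fibre is $\rmP_{w,ws}/\rmB_w$ and its tangent spaces at the two fixed points are read off directly from the preceding lemma, whereas you compute the vertical weight on $\mathbb{P}(\bV^{r+1}/\bV^{r-1})$ as $\Hom(\ell,U/\ell)$ and then match weights; your closing remark about identifying the vertical tangent space with $\frakp_{w,ws}/\frakb_w\cong\frakm_{ws,w}$ is precisely the paper's route. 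One small remark: the semisimplicity-splitting you invoke is not actually needed, since Euler classes are additive across any short exact sequence of $\rmT$-modules (they depend only on the character).
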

\begin{proof}
Consider the isomorphism
$
\rmG\times_{\rmB_w} \rmP_{w,ws}/\rmB_
w\to \overline{O^s}, (g,p)\mapsto (gf_w,gpf_{w}).
$
It realizes $\overline{O^s}$ as a fibration over $\rmG/\rmB_w$ with fibre $\rmP_{w,ws}/\rmB$. The tangent spaces to $\rmP_{w,ws}/\rmB_w$ at $f_w\rmB_w$ and $f_{ws}\rmB_w$ are isomorphic to $\frakm_{ws,w}$ respectively $\frakm_{w,ws}$.
\end{proof}
\label{subs:more-Euler}
The $\rmT$-fixed points in $\calG_k$ are canonically parametrized by $\Lambda_k(n)$. Namely, the fixed point corresponding to $\lambda\in\Lambda_k(n)$ is the vector subspace $g_\lambda$ of $V$ spanned by the basis vectors  $e_i$ such that $\lambda_i=1$. We obtain directly:
\begin{lem}
The Euler class $\tA_\lambda=\op{eu}(\calG_k,g_\lambda)$ is explicitly 
\begin{equation}\label{Alambda}
\tA_\lambda=\prod_{i\in\lambda,j\not\in\lambda}(\ccT_j-\ccT_i).
\end{equation}
\end{lem}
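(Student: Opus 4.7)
The plan is to identify the tangent space $T_{g_\lambda}\calG_k$ as a $\rmT$-representation and then compute its Euler class as the product of its characters. Recall that for any point $W\in\calG_k=\op{Gr}_k(V)$, the tangent space has the standard description
\[
T_W\calG_k\;\cong\;\op{Hom}(W,V/W),
\]
either via the description of $\calG_k$ as a homogeneous space $\rmG/\rmP$ (with $T_W\calG_k\cong\frakg/\frakp$), or directly from the functor of points of the Grassmannian. This isomorphism is equivariant for the stabilizer of $W$ in $\rmG$, in particular for any torus fixing $W$.

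At the fixed point $g_\lambda=\langle e_i\mid i\in\lambda\rangle$, the torus $\rmT$ acts diagonally on both $g_\lambda$ and on the complement $V/g_\lambda\cong\langle e_j\mid j\notin\lambda\rangle$. Hence
\[
T_{g_\lambda}\calG_k\;\cong\;\op{Hom}(g_\lambda,V/g_\lambda)\;\cong\;\bigoplus_{i\in\lambda,\,j\notin\lambda}\op{Hom}(\Bbbk e_i,\Bbbk e_j)
\]
as $\rmT$-modules, and $\op{Hom}(\Bbbk e_i,\Bbbk e_j)$ is a one-dimensional $\rmT$-module of weight $\ccT_j-\ccT_i$ (since the basic characters are normalised so that $\rmT$ acts on $\Bbbk e_r$ by $\ccT_r$).

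The Euler class is by definition the product of the $\rmT$-characters on the top exterior power, which for a split torus representation is the product of the characters on the weight-line summands. This yields immediately
\[
\tA_\lambda\;=\;\op{eu}(T_{g_\lambda}\calG_k)\;=\;\prod_{i\in\lambda,\,j\notin\lambda}(\ccT_j-\ccT_i),
\]
as claimed. The only step that requires care is fixing sign conventions consistently with the earlier formula \eqref{Aw} (where the character on $\Bbbk e_r$ is indeed $\ccT_r$, so that Euler classes of weight-$\alpha$ lines contribute a factor $\alpha$, and no extra sign appears). There is no serious obstacle; the entire computation reduces to the weight decomposition of $\op{Hom}(g_\lambda,V/g_\lambda)$.
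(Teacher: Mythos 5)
Your proof is correct, and it is exactly the standard tangent-space weight computation that the paper has in mind: the paper in fact gives no written proof for this lemma (it is introduced with ``We obtain directly''), and your identification $T_{g_\lambda}\calG_k\cong\op{Hom}(g_\lambda,V/g_\lambda)$ with weight lines of weight $\ccT_j-\ccT_i$ for $i\in\lambda$, $j\notin\lambda$ is the intended argument, consistent with the sign conventions used in the preceding computation of $\tA_w$ via $\frakg/\frakb_w$.
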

We parametrise the  $\rmT$-fixed points in $\ccY_k=\calF\times \calG_k$ by $\frakS_n\times \Lambda_k(n)$ by denoting
$x_{w,\lambda}=(f_w,g_{w(\lambda)})$. Here, the twist of $\lambda$ by $w$ is chosen such that $x_{w,\lambda}\in C_\lambda$. Denote
\begin{equation}\label{DefAwmu}
\tA_{w,\mu}:=\tA_w\cdot \tA_{w(\mu)}=\op{eu}(\ccY_k,x_{w,\mu})\in \Bbbk[\ccT_1,\ldots,\ccT_n].
\end{equation}
We will later compute the $H_*^\rmG(\ccY\times\ccY)$-action on $H_*^\rmG(\ccY)$ using the localisation theorem and repeatedly the following lemma, which is a special case of \Cref{lem:comp-in-loc}~\ref{2}.). We denote by  $x_{w_1,w_2,\mu_1,\mu_2}=(x_{w_1,\mu_1},x_{w_2,\mu_2})$ the $\rmT$-fixed points of $\ccY\times\ccY$.
\begin{lem}
The $H_*^\rmG(\ccY\times\ccY)$-action on $H_*^\rmG(\ccY)$ satisfies
\begin{equation}\label{unclear}
[x_{w_1,w_2,\mu_1,\mu_2}] * [x_{w,\mu}]=
\begin{cases}
\tA_{w,\mu}[x_{w_1,\mu_1}] & \mbox{ if } w_2=w, \mu_2=\mu,\\
0& \mbox{ otherwise.}
\end{cases}
\end{equation}
\end{lem}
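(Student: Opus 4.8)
The plan is to recognize the statement as the special case $\abX=\point$, $\abY=\ccY$ of the general convolution setup of \S\ref{subs:conv}, and then to read off the answer from \Cref{lem:comp-in-loc}~\ref{2}. In this special case the proper map $p\colon\ccY\to\point$ is the structure map, so $Z=\ccY\times_\point\ccY=\ccY\times\ccY$ and the action \eqref{conv2} is precisely the action of $H_*^\rmG(\ccY\times\ccY)$ on $H_*^\rmG(\ccY)$ under consideration. First I would check that the hypotheses of \Cref{lem:comp-in-loc} hold here: $\ccY=\calF\times\calG$ is a smooth projective $\rmG$-variety (a disjoint union of the smooth $\ccY_k=\calF\times\calG_k$), its set of $\rmT$-fixed points $\ccY^\rmT=\{x_{w,\mu}\mid w\in\frakS_n,\ \mu\in\Lambda(n)\}$ is non-empty and finite, and correspondingly $(\ccY\times\ccY)^\rmT=\{x_{w_1,w_2,\mu_1,\mu_2}\}$. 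To be in the setting of \Cref{lem:comp-in-loc} I would pass from the $\rmG$-equivariant to the $\rmT$-equivariant picture via $H_*^\rmG(-)\cong(H_*^\rmT(-))^W$ from \S\ref{subs:conv}, so that it suffices to prove the identity inside $H_*^\rmT(\ccY)$; injectivity of the localisation maps for $\ccY$ and for $\ccY\times\ccY$ follows from the affine pavings of $\ccY$ (and its square) by products of Schubert cells of $\calF$ with Schubert cells of the Grassmannians $\calG_k$, which make $H_*^\rmT(\ccY)$ and $H_*^\rmT(\ccY\times\ccY)$ free over $\tR_\rmT$.

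With this in place, \Cref{lem:comp-in-loc}~\ref{2} gives $[(x,y)]\star[y']=\delta_{y,y'}\op{eu}(\ccY,y)[x]$ on $H_*^\rmT(\ccY^\rmT)_{\loc}$. Substituting $x=x_{w_1,\mu_1}$, $y=x_{w_2,\mu_2}$, $y'=x_{w,\mu}$ yields $\delta_{x_{w_2,\mu_2},x_{w,\mu}}\,\op{eu}(\ccY,x_{w,\mu})\,[x_{w_1,\mu_1}]$. It then only remains to translate the two ingredients into the claimed form. Since the parametrisation $(w,\mu)\mapsto x_{w,\mu}$ is injective, the Kronecker delta equals $1$ exactly when $w_2=w$ and $\mu_2=\mu$; and since the tangent space of $\ccY$ at $x_{w,\mu}$ is the tangent space of $\ccY_{|\mu|}$ there, the definition \eqref{DefAwmu} of $\tA_{w,\mu}$ as $\op{eu}(\ccY_{|\mu|},x_{w,\mu})$ identifies the Euler class with $\tA_{w,\mu}$. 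Because all classes in question actually lie in $H_*^\rmG(\ccY)\subset H_*^\rmT(\ccY)$ and the localisation map is injective there, the equality of fixed-point expressions upgrades to the asserted equality in $H_*^\rmG(\ccY)$.

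I do not expect any serious obstacle: the content is entirely contained in \Cref{lem:comp-in-loc}, and the only points requiring care are the bookkeeping reductions just noted — that one may work over the torus, and that the relevant Borel--Moore homologies are free over $\tR_\rmT$ so that the localisation maps are injective — both standard consequences of the affine pavings of $\calF$ and $\calG$.
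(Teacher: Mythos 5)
Your proposal is correct and matches the paper's own treatment: the paper gives no separate proof but states immediately before the lemma that it is a special case of \Cref{lem:comp-in-loc}~(2) applied with $\abX=\point$ and $\abY=\ccY$, which is exactly your argument. The extra bookkeeping you supply (finiteness of $\ccY^\rmT$, freeness over $\tR_\rmT$ via pavings, injectivity of localisation) is the standard verification the paper leaves implicit.
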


Consider the polynomial $\tS_\lambda^w(\mu)\in \Bbbk[\ccT_1,\ldots,\ccT_d]\cong H^*_\rmT(\{x_{w,\mu}\})$ defined as the pull-back of $\tS_\lambda\in H_*^\rmG(\ccY_k)\subset  H_*^\rmT(\ccY_k)$ along the inclusion $\{x_{w,\mu}\}\to\ccY_k$. In the basis of $\rmT$-fixed points,  $\tS_\lambda=\sum_{w,\mu} \tS^w_\lambda(\mu) \tA_{w,\mu}^{-1}\cdot[x_{w,\mu}]$. 

The following characterization of the $\tS^w_\lambda(\mu)$ is a reformulation of \cite[Lemma 3.8]{GKS}. To state it consider a partial order on $\Lambda_k(n)$ given by $\lambda\geqslant\mu$ if $\sum_{i=1}^r\lambda_i\geqslant \sum_{i=1}^r\mu_i$ for each $r\in [1;n]$. Set $\inv(\lambda)=\{(i,j)|\mbox{ }1\leqslant i<j\leqslant n,\lambda_i>\lambda_j\}$. 

\begin{lem}
\label{lem:caract-S}
Fix $\lambda\in \Lambda_k(n)$ and $w\in\frakS_n$. There is a unique family of polynomials $\tS^w_\lambda(\mu)\in \Bbbk[\ccT_1,\ldots,\ccT_n]$, parametrized by $\mu\in \Lambda_k(n)$, such that $\tS^w_\lambda(\mu)\in H^*_\rmT(\{x_{w,\mu}\})$ is the  restriction of some class $x\in H_{G}^*(\ccY_k)\subset H_{\rmT}^*(\ccY_k)$ to the $\rmT$-fixed point $x_{w,\mu}$, and the following properties hold: 
\begin{enumerate}[\rm{(}i\rm{)}]
\item The support of $\tS^w_\lambda$ lies above $\lambda$.
\item $\tS^w_\lambda(\lambda)=\tA_w\cdot w(\prod_{(i,j)\in \inv(\lambda)}(\ccT_{j}-\ccT_{i}))$.
\item For each $\mu\geqslant\lambda$, the degree of $\tS^w_\lambda(\mu)$ is $\#(\inv(\lambda))+\binom{n}{2}$.
\end{enumerate}
\end{lem}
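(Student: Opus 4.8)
The plan is to prove this by an induction/uniqueness argument that closely mirrors the proof of \cite[Lemma 3.8]{GKS}, adapted to the $\rmT$-fixed twist by $w$. First I would fix $w$ and observe that, via the automorphism of $\tR_\rmT=\Bbbk[\ccT_1,\dots,\ccT_n]$ induced by $w$ and the identification $\ccY_k=\calF\times\calG_k$, the restriction pattern of classes in $H^*_G(\ccY_k)\subset H^*_\rmT(\ccY_k)$ to the fixed points $x_{w,\mu}$ is obtained from the $w=\Id$ case by applying $w$ to all the relevant weights. Concretely, the image of $H^*_G(\ccY_k)$ in $\prod_{\mu}H^*_\rmT(\{x_{w,\mu}\})$ is characterised by the usual GKM-type conditions: a tuple $(f_\mu)_{\mu\in\Lambda_k(n)}$ lies in the image if and only if, for every edge of the moment graph of $\ccY_k$ (equivalently, every pair $\mu,\mu'$ related by an elementary move together with a choice of flag position), the difference $f_\mu-f_{\mu'}$ is divisible by the corresponding weight, which after the $w$-twist is of the form $w(\ccT_a-\ccT_b)$. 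I would then note that the three listed properties (support above $\lambda$, prescribed value at $\lambda$, prescribed degree for $\mu\geqslant\lambda$) are exactly enough to pin down such a tuple uniquely.

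The key steps, in order, would be: (1) \emph{Existence.} Exhibit a class: take $\tS_\lambda\in H_*^G(\ccY_k)$, which by \Cref{Schurandomega} corresponds to $\omega_\lambda$, and verify by a direct computation (in the $w=\Id$ case, then twist) that its restriction to $x_{w,\mu}$ satisfies (i), (ii), (iii). For (ii) this is the standard formula that the self-intersection-type restriction of a Schubert class at its ``lowest'' fixed point is the product of the weights in $\inv(\lambda)$, multiplied here by the Euler class $\tA_w$ coming from the $\calF$-factor; for (iii) it is the degree count $\#\inv(\lambda)+\binom n2$, where $\binom n2$ is the (equivariant) dimension contribution of $\calF$. (2) \emph{Uniqueness.} Suppose $(f_\mu)$ and $(f'_\mu)$ both satisfy (i)--(iii); set $g_\mu=f_\mu-f'_\mu$. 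Then $g$ is supported above $\lambda$, vanishes at $\mu=\lambda$, and has degree $<\#\inv(\lambda)+\binom n2$ at every $\mu\geqslant\lambda$ in its support (strict, since the top-degree parts agree... in fact I would argue the degree is exactly the stated one for a nonzero GKM class supported above $\lambda$ with value $0$ at $\lambda$, forcing $g=0$). The argument is an induction on $\mu$ in the partial order, upward from $\lambda$: using the GKM divisibility conditions across the edges from already-known-to-vanish fixed points, together with the degree bound, one shows $g_\mu=0$ at each successive $\mu$. This is precisely the mechanism of \cite[Lemma 3.8]{GKS}, and I would cite that lemma and indicate the (routine) modifications: replacing the weights $\ccT_j-\ccT_i$ by their $w$-twists $\ccT_{w(j)}-\ccT_{w(i)}$ and inserting the overall factor $\tA_w$ and the shift $\binom n2$ in degree coming from the extra $\calF$-factor.

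The main obstacle I anticipate is not conceptual but bookkeeping: carefully setting up the moment-graph/GKM description of $\ccY_k=\calF\times\calG_k$ and checking that the edges and their weights interact correctly with the $w$-twist of the fixed points $x_{w,\mu}=(f_w,g_{w(\mu)})$, so that the divisibility conditions really do read off as $w(\ccT_a-\ccT_b)$. One has to be a little careful that the twist by $w$ is applied consistently to both the labelling of the fixed points (via $g_{w(\mu)}$) and to the weights, and that the value at $\lambda$ in (ii) really picks up $\tA_w$ rather than $\tA_w$ times some permuted variant. Once the dictionary is fixed, existence is a direct substitution and uniqueness is the verbatim inductive argument of \cite[Lemma~3.8]{GKS}; so I would keep the exposition short and defer the combinatorial identities to that reference.
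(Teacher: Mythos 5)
Your proposal is substantively correct, and it correctly identifies that the heart of the matter is a reduction to \cite[Lemma~3.8]{GKS} in a $w$-twisted setting. But the route you outline is longer and more error-prone than the one the paper actually uses, and the "main obstacle" you worried about is precisely what the paper's trick dissolves.

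The paper does not set up a GKM/moment-graph characterisation from scratch. Instead it applies the fibration trick: via the chain of identifications
$\ccY_k=\calF\times\calG_k\simeq \rmG/\rmB_w\times \calG_k\simeq \rmG\times_{\rmB_w}\calG_k$, the subvariety $\overline{C_\lambda}$ is identified with $\rmG\times_{\rmB_w}\overline{\rmB_w g_{w(\lambda)}}$, i.e.\ a fibration over $\rmG/\rmB_w$ with fibre a Schubert variety in $\calG_k$. Since the tangent space at a fixed point of such a fibration splits into base and fibre directions, one reads off at once the identity
$\tS^w_\lambda(\mu)=\op{eu}(\overline{C_{\lambda}},x_{w,\mu})=\op{eu}(\rmG/\rmB_w,f_w)\cdot \op{eu}(\overline{\rmB_wg_{w(\lambda)}},g_{w(\mu)})=\tA_w\,\widetilde\tS^w_{w(\lambda)}(w(\mu))$.
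This one-line Euler class computation is exactly what produces the overall factor $\tA_w$, the degree shift $\binom n2$ in (iii), and the $w$-twist in (ii); after that, the lemma is literally \cite[Lemma~3.8]{GKS} applied to $\tS^w_\lambda(\mu)\tA_w^{-1}$. No GKM argument, no fresh induction, no renormalisation bookkeeping.

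In contrast, your plan proposes to re-derive the characterisation by describing the image of $H^*_\rmG(\ccY_k)$ in $\prod_\mu H^*_\rmT(\{x_{w,\mu}\})$ via "usual GKM-type conditions" and then running the upward induction. Be careful here: for a \emph{fixed} $w$ you are not looking at the full set of $\rmT$-fixed points of $\ccY_k$, so the GKM description of $\ccY_k$ does not directly apply. What is true is that $H^*_\rmG(\ccY_k)\cong H^*_\rmT(\calG_k)$, and the image of the restriction to the slice $\{x_{w,\mu}\}_\mu$ is (up to the factor $\tA_w$ and a $w$-twist) the image of $H^*_\rmT(\calG_k)$ in its fixed-point ring. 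If you go the GKM route you therefore need the $\calG_k$-moment-graph conditions, $w$-twisted, with the extra global divisibility by $\tA_w$ built in — exactly the delicate bookkeeping you flagged. The fibration identification makes this automatic and is the reason the paper can keep the proof to four lines. I would suggest you replace your ``characterise the image, then induct'' step by the explicit Euler-class factorisation; after that you can (and should) still cite \cite[Lemma~3.8]{GKS} for the uniqueness statement, exactly as you planned.
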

\begin{rk}
\label{rk-Sw}
Note that by convention, (iii) allows $\tS^w_\lambda(\mu)$ to be zero, since zero is of any degree. \Cref{lem:caract-S} directly implies that $\tS^w_\lambda(\mu)=w(\tS^\Id_\lambda(\mu))$. 
\end{rk}
\begin{proof}
We claim that $\tS^w_\lambda(\mu)=\tA_w\widetilde \tS^w_{w(\lambda)}(w(\mu))$, where $\widetilde \tS^w_{\lambda}(\mu)=\op{eu}(\overline{\rmB_wg_\lambda},g_\mu)$ is the polynomial denoted $\tS^w_\lambda(\mu)$ in \cite{GKS}. Consider, similar to \eqref{eq:isom-G/B*Gr}, isomorphisms
\begin{equation}\label{GKSclaim}
\ccY_k=\calF\times\calG_k\simeq \rmG/\rmB_w\times \calG_k\simeq \rmG\times_{\rmB_w}\calG_k,
\end{equation}
where the last one is $(g\rmB_w,W)\mapsto (g,g^{-1}(W))$.
The composition \eqref{GKSclaim} identifies the subvariety $\overline{C_{\lambda}}\subset \ccY_k$ with  $\rmG\times _{\rmB_w}\overline{\rmB_wg_{w(\lambda)}}\subset \rmG\times _{\rmB_w}\calG_k$. We obtain then 
\begin{equation}
\label{eq:S-vs-Stilda}
\tS^w_\lambda(\mu)=\op{eu}(\overline{C_{\lambda}},x_{w,\mu})=\op{eu}(\rmG/\rmB_w,f_w)\cdot \op{eu}(\overline{\rmB_wg_{w(\lambda)}},g_{w(\mu)})=\tA_w\widetilde \tS^w_{w(\lambda)}(w(\mu)).
\end{equation}
Now we apply \cite[Lemma 3.8]{GKS} to the polynomials $\tS^w_\lambda(\mu)\tA_w^{-1}$ (noting that $\tS^w_\lambda(\mu)$ is automatically divisible by $\tA_w$) and the lemma follows.
\end{proof}

The following "wall-crossing formula" is proved in \cite[Prop.~3.12]{GKS}.
\begin{lem}
\label{lem:Schubert-change-w}
Let $w\in\frakS_n$ and $r\in[1;n-1]$. Then the following holds
$$
\tS^{ws_r}_\lambda(\mu)=
\begin{cases}
\tS^w_\lambda(s_r(\mu))+(\ccT_{w(r)}-\ccT_{w(r+1)})\tS^w_{s_r(\lambda)}(s_r(\mu)) &\mbox{ if }\lambda_r>\lambda_{r+1},\\
\tS^w_\lambda(s_r(\mu)) &\mbox{ otherwise.}
\end{cases}
$$
\end{lem}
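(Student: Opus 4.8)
The plan is to reduce the identity to its $w=\Id$ instance, and then — after rewriting the classes $\tS^w_\lambda$ in terms of the Grassmannian Schubert polynomials $\widetilde\tS$ via \eqref{eq:S-vs-Stilda} — to recognise the result as the wall-crossing formula \cite[Prop.~3.12]{GKS}.

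First I would reduce to $w=\Id$. By \Cref{rk-Sw} one has $\tS^{v}_\nu(\kappa)=v\bigl(\tS^{\Id}_\nu(\kappa)\bigr)$ for every $v\in\frakS_n$ and all $\nu,\kappa$. Applying this with $v=ws_r$ to the left-hand side and with $v=w$ to both summands on the right-hand side, and using $\ccT_{w(r)}-\ccT_{w(r+1)}=w(\ccT_r-\ccT_{r+1})$, the asserted equality is the image under the algebra automorphism $w$ of $\Bbbk[\ccT_1,\ldots,\ccT_n]$ of its $w=\Id$ case. So it suffices to prove
\begin{equation*}
s_r\bigl(\tS^{\Id}_\lambda(\mu)\bigr)=
\begin{cases}
\tS^{\Id}_\lambda(s_r\mu)+(\ccT_r-\ccT_{r+1})\,\tS^{\Id}_{s_r\lambda}(s_r\mu) & \text{if }\lambda_r>\lambda_{r+1},\\
\tS^{\Id}_\lambda(s_r\mu) & \text{otherwise.}
\end{cases}
\end{equation*}

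Next I would pass to the polynomials of \cite{GKS}. By \eqref{eq:S-vs-Stilda}, $\tS^{\Id}_\lambda(\mu)=\tA_{\Id}\,\widetilde\tS^{\Id}_\lambda(\mu)$ with $\widetilde\tS^{\Id}_\lambda(\mu)=\op{eu}(\overline{\rmB g_\lambda},g_\mu)$ and $\tA_{\Id}=\prod_{i<j}(\ccT_j-\ccT_i)$ by \eqref{Aw}. As $\tA_{\Id}$ is anti-invariant under $s_r$, cancelling it turns the displayed identity into
\begin{equation*}
-\,s_r\bigl(\widetilde\tS^{\Id}_\lambda(\mu)\bigr)=\widetilde\tS^{\Id}_\lambda(s_r\mu)+(\ccT_r-\ccT_{r+1})\,\widetilde\tS^{\Id}_{s_r\lambda}(s_r\mu)
\end{equation*}
when $\lambda_r>\lambda_{r+1}$, and into $-\,s_r\bigl(\widetilde\tS^{\Id}_\lambda(\mu)\bigr)=\widetilde\tS^{\Id}_\lambda(s_r\mu)$ otherwise; this is \cite[Prop.~3.12]{GKS} (for the Borels $\rmB$ and $\rmB_{s_r}$) once the sign and normalisation conventions are matched. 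I expect that convention-matching — the $s_r$-twists of $\lambda$ and $\mu$ built into the fixed-point labels $x_{w,\mu}$, together with the sign from $\tA_{\Id}$ — to be the only genuinely delicate point; everything else is formal.

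If one prefers not to invoke \cite{GKS}, the $w=\Id$ identity can be proved intrinsically from the characterisation \Cref{lem:caract-S}: it suffices to check that the right-hand side is the family of restrictions to the points $x_{s_r,\mu}$ of some single class in $H^*_\rmG(\ccY_k)$, that its support lies above $\lambda$, and that it has degree $\#\inv(\lambda)+\binom{n}{2}$. The degree is immediate from $\deg(\ccT_r-\ccT_{r+1})=1$ and the identity $\#\inv(s_r\lambda)=\#\inv(\lambda)-1$, which holds when $\lambda_r>\lambda_{r+1}$; the support statement is a short check with the dominance order under $\mu\mapsto s_r\mu$; and the ``global class'' statement reduces to the standard comparison of the restrictions of an equivariant class at the two $\rmT$-fixed points of the $\mathbb{P}^1$-fibre $\rmP_{\Id,s_r}/\rmB$ of the fibration $\ccY_k\simeq\rmG\times_\rmB\calG_k$, whose Euler data is recorded in \Cref{eunm}; the correction term there is divisible by the weight $\ccT_r-\ccT_{r+1}$ and is nonzero exactly when $\overline{C_\lambda}$ fails to be pulled back from $\rmG/\rmP_r$, i.e.\ exactly when $\lambda_r>\lambda_{r+1}$, which is the source of the case distinction. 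In either route, the real work is the bookkeeping of the $w$- and $s_r$-twists in the fixed-point indexing; the geometry itself is routine.
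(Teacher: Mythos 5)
Your main route is essentially the paper's own "proof": the paper gives no argument for this lemma at all, it simply cites \cite[Prop.~3.12]{GKS}, and your reduction to $w=\Id$ via \Cref{rk-Sw} followed by the renormalisation $\tS^{\Id}_\lambda(\mu)=\tA_{\Id}\widetilde\tS^{\Id}_\lambda(\mu)$ is exactly the (unstated) translation needed to match the paper's conventions to those of \cite{GKS}. You are right that the sign and twist bookkeeping is the only delicate point, and making it explicit is a genuine service; I would not call this a different proof.

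One caution about your fallback argument via \Cref{lem:caract-S}: the support verification is not the "short check with the dominance order" you claim. If $\lambda_r>\lambda_{r+1}$, it is \emph{not} true that $s_r\mu\geqslant\lambda$ (or $s_r\mu\geqslant s_r\lambda$) forces $\mu\geqslant\lambda$; already for $n=2$, $r=1$, $\lambda=(1,0)$, $\mu=(0,1)$ one has $s_r\mu=(1,0)\geqslant\lambda$ while $\mu\not\geqslant\lambda$, so each of the two terms on the right-hand side can individually be nonzero at a $\mu$ outside the required support. The support condition (i) holds only because the two terms cancel at such $\mu$ (in the example above, $\tS^{\Id}_{(1,0)}((1,0))+(\ccT_1-\ccT_2)\tS^{\Id}_{(0,1)}((1,0))=0$), and establishing that cancellation is essentially the content of the identity itself. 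So if you want the self-contained route, you must argue the support condition globally — e.g.\ by exhibiting the right-hand side as the restriction of a single class obtained from $\tS_\lambda$ by the $\mathbb{P}^1$-fibration of \Cref{eunm} and controlling its support geometrically — rather than term by term.
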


\begin{rk}\label{rk:loc-X-vs-FX}
The proof of \Cref{lem:caract-S} and also its relation to ordinary equivariant Schubert classes might become more transparent when we have a closer look at the isomorphism $H^*_\rmG(\calF\times X)\cong H^*_\rmT(X)$ in \Cref{lem:isom-FG/G-G/T} under localisation. Assume for this that $X$ is a smooth $\rmG$-variety such that  $X^\rmT=\{x_\nu\}_\nu$ is finite. Note that the Weyl group $\frakS_n$ of $\rmG$ acts on $X^\rmT$. Assume moreover that the localisation map $H^*_\rmT(X)\subset H^*_\rmT(X)_{\rm loc}$ is injective. 

Let now $h\in H^*_\rmT(X)$. Then we can write $h=\sum_{\nu}h_\nu[x_\nu]$ for some $h_\nu\in {\rm Frac}(\tR_\rmT)$. On the other hand, we can view $h$ as an element of $H^*_\rmG(\calF\times X)$ and do the localisation for $\calF\times X$. We consider $H^*_\rmG(\calF\times X)\subset H^*_\rmT(\calF\times X)\subset H^*_\rmT(\calF\times X)_{\rm loc}$. In the basis of fixed points $x_{w,\nu}:=(f_w,x_{w(\nu)})$ of $\calF\times X$, the element $h$ becomes
$$
h=\sum_{w,\nu}w(h_\nu)\tA^{-1}_w[x_{w,\nu}]\in H^*_\rmT(\calF\times X)_{\rm loc}.
$$
Now let's specialise to $X=\calG$ and $h=\tS_\lambda\in H^*_\rmT(\calG)$. Its localisation in $H^*_\rmT(\calG)_{\rm loc}$ looks like $\tS_\lambda=\sum_{\mu}\widetilde{\tS}^\Id_\lambda(\mu)\tA^{-1}_\mu[g_\mu]$, where $\widetilde{\tS}$ is as in the proof of \Cref{lem:caract-S}. We then get indeed  $\tS^w_\lambda(\mu)=w(\widetilde{\tS}^\Id_\lambda(\mu))\tA_w$, since for $\tS_\lambda\in H^*_\rmG(\calF\times \calG)$ we have 
$$
\tS_\lambda=\sum_{w,\mu}\tS^w_\lambda(\mu)\tA^{-1}_{w,\mu}[x_{w,\mu}].
$$
This is also compatible with \eqref{eq:S-vs-Stilda} and \Cref{rk-Sw}.
\end{rk}

\subsection{The $\WW_n$-action on $H_*^\rmG(\ccY)$}
Recall that $H^*_\rmG(\calG)=H^*_\rmT(\calG)^{\frakS_n}$. The $\frakS_n$-action here extends to an action on $H^*_\rmT(\calG)_{\loc}\supset H^*_\rmT(\calG)$, explicitly $w(P[g_\mu])=w(P)[g_{w(\mu)}]$ for $w\in\frakS_n$, $\mu\in \Lambda(n)$ and $P\in {\rm Frac}(\tR_\rmT)$, see \Cref{notnew}. Using the isomorphism $H^*_\rmT(\calG)=H^*_\rmG(\ccY)$ from \eqref{iso-FG/G-G/T} we get an induced $\frakS_n$-action which extends to $H_*^\rmT(\ccY)_{\loc}$. It satisfies \begin{equation}\label{inducedSn}
z(\tA^{-1}_{w,\mu}[x_{w,\mu}])=\tA^{-1}_{wz^{-1},z(\mu)}[x_{wz^{-1},z(\mu)}].
\end{equation}
To describe the action on $H^*_\rmG(\ccY)=\Bbbk[X_1,\ldots,X_n]\otimes \mywedge^\bullet(\omega_1,\ldots,\omega_n)$ concretely, write  
\begin{equation}\label{PinEuler}
P=\sum_{w,\mu}P_w\tA^{-1}_{w,\mu}[x_{w,\mu}], \quad\text{ in particular }\quad[\ccY]=\sum_{w,\mu}\tA^{-1}_{w,\mu}[x_{w,\mu}], 
\end{equation}
for $P\in \Bbbk[X_1,\ldots, X_n]$,  and then write $P\omega_\lambda$, for $\lambda\in\Lambda(n)$, as

\begin{equation}
\label{eq:pol-wedge-loc}
P\omega_\lambda=\sum_{w,\mu}P_w\tS^w_\lambda(\mu)\tA_{w,\mu}^{-1}[x_{w,\mu}].
\end{equation} 

\begin{lem}\label{problematic}
The group $\WW_n$ acts on $H_*^\rmG(\ccY)=\Bbbk[X_1,\ldots,X_n]\otimes \mywedge^\bullet(\omega_1,\ldots,\omega_n)$ by 
\begin{equation}\label{sraction}
s_r(P\omega_\lambda)=
\begin{cases}
s_r(P)\left(\omega_\lambda+(X_{r}-X_{r+1})\omega_{s_r(\lambda)}\right) &\mbox{ if }\lambda_r>\lambda_{r+1},\\
s_r(P)\omega_\lambda &\mbox{ otherwise}.
\end{cases}
\end{equation}
\end{lem}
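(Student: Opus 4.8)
The plan is to reduce the claim to the case $P=1$ by a ring-theoretic argument, and then to compute $s_r(\omega_\lambda)$ on $\rmT$-fixed points using the wall-crossing formula \Cref{lem:Schubert-change-w}.

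First I would observe that the $\frakS_n$-action in question is, by construction, transported along the ring isomorphism $H_*^\rmG(\ccY)\cong H^*_\rmT(\calG)$ from \eqref{iso-FG/G-G/T} out of the Weyl group action on $H^*_\rmT(\calG)$. That action is by ring automorphisms, and it acts on $\tR_\rmT=\Bbbk[\ccT_1,\ldots,\ccT_n]$ by the standard permutation of the $\ccT_i$; since \eqref{iso-FG/G-G/T} identifies multiplication by $X_r$ with multiplication by $\ccT_r$, the resulting action on $H_*^\rmG(\ccY)=\Bbbk[X_1,\ldots,X_n]\otimes\mywedge^\bullet(\omega_1,\ldots,\omega_n)$ is again by ring automorphisms with $s_r(X_i)=X_{s_r(i)}$. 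Hence $s_r(P\omega_\lambda)=s_r(P)\,s_r(\omega_\lambda)$, and it suffices to establish the formula for $s_r(\omega_\lambda)=s_r(\tS_\lambda)$.

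Next I would pass to $\rmT$-fixed points. Since $H_*^\rmG(\ccY)$ is free over $\tR_\rmT$, the localisation map embeds it into $H_*^\rmT(\ccY)_\loc$, where the extended action is $\op{Frac}(\tR_\rmT)$-linear and given by \eqref{inducedSn}. Starting from the expansion \eqref{eq:pol-wedge-loc} of $\tS_\lambda$ (taking $P=1$ there) in the basis $\{[x_{w,\mu}]\}$, applying $s_r$ via \eqref{inducedSn} and re-indexing the sum by $v=ws_r$, $\nu=s_r(\mu)$ replaces the coefficient $\tS^w_\lambda(\mu)$ of $\tA^{-1}_{w,\mu}[x_{w,\mu}]$ by the coefficient $\tS^{vs_r}_\lambda(s_r(\nu))$ of $\tA^{-1}_{v,\nu}[x_{v,\nu}]$. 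I would then rewrite $\tS^{vs_r}_\lambda(s_r(\nu))$ by \Cref{lem:Schubert-change-w}, applied after substituting $s_r(\nu)$ for the $\mu$ appearing there: for $\lambda_r>\lambda_{r+1}$ it becomes $\tS^v_\lambda(\nu)+(\ccT_{v(r)}-\ccT_{v(r+1)})\tS^v_{s_r(\lambda)}(\nu)$, and otherwise just $\tS^v_\lambda(\nu)$. In the first case the sum splits: the $\tS^v_\lambda(\nu)$-part reassembles, again via \eqref{eq:pol-wedge-loc} with $P=1$, into $\tS_\lambda=\omega_\lambda$, while — because $\ccT_{v(r)}-\ccT_{v(r+1)}$ is precisely the restriction of $X_r-X_{r+1}$ to $f_v$ — the remaining part reassembles, via \eqref{eq:pol-wedge-loc} with $P=X_r-X_{r+1}$, into $(X_r-X_{r+1})\omega_{s_r(\lambda)}$. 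This gives $s_r(\tS_\lambda)=\omega_\lambda+(X_r-X_{r+1})\omega_{s_r(\lambda)}$ when $\lambda_r>\lambda_{r+1}$ and $s_r(\tS_\lambda)=\omega_\lambda$ otherwise; together with the reduction of the previous paragraph this is \eqref{sraction}.

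The hard part is not the geometry — there is no new input beyond \eqref{eq:pol-wedge-loc}, \eqref{inducedSn} and \Cref{lem:Schubert-change-w} — but the bookkeeping with \eqref{inducedSn}: getting the direction of the twist right (it is $ws_r$, not $s_rw$), confirming that the extension to $H_*^\rmT(\ccY)_\loc$ is $\op{Frac}(\tR_\rmT)$-linear rather than semilinear, and matching the re-indexing $v=ws_r$ with the precise shape in which \Cref{lem:Schubert-change-w} is stated. Once those are pinned down, the rest is a routine rearrangement of sums.
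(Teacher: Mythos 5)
Your argument is correct and follows essentially the same route as the paper's proof: expand in the $\rmT$-fixed point basis via \eqref{eq:pol-wedge-loc}, apply \eqref{inducedSn}, re-index by $v=ws_r$, $\nu=s_r(\mu)$, and invoke the wall-crossing formula of \Cref{lem:Schubert-change-w}. The only cosmetic difference is your preliminary reduction to $P=1$ via multiplicativity of the Weyl-group action; the paper instead carries the coefficients $P_w$ through the same computation (which is why it can afterwards \emph{deduce} that the action is by algebra automorphisms rather than assuming it).
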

\begin{rk}
Note that  \eqref{sraction} uses on the right hand side the usual action of $s_r$ on $P$. The lemma in fact implies that the action \eqref{sraction} is by algebra automorphisms which then justifies the notation on the left hand side of  \eqref{sraction}.
\end{rk}
\begin{proof}
We compute for $z\in\frakS_n$.
\begin{eqnarray*}
z(P\omega_\lambda)&\stackrel{\eqref{eq:pol-wedge-loc}}=&z(\sum_{w,\mu}P_w \tS^w_\lambda(\mu)\tA^{-1}_{w,\mu}[x_{w,\mu}])
\stackrel{\eqref{inducedSn}}{=}\sum_{w,\mu}P_w \tS^w_\lambda(\mu)\tA^{-1}_{wz^{-1},z(\mu)}[x_{wz^{-1},z(\mu)}]\\
&=&\sum_{w,\mu}P_{wz} \tS^{wz}_\lambda(z^{-1}(\mu))\tA^{-1}_{w,\mu}[x_{w,\mu}].
\end{eqnarray*}
Let us see what this gives for $z=s_r$. By \Cref{lem:Schubert-change-w} we have the following:

If $\lambda_r\leqslant \lambda_{r+1}$ then $\tS^{ws_r}_\lambda(s_r(\mu))=\tS^w_\lambda(\mu)$, thus $s_r(P\omega_\lambda)=P_{s_r}\omega_\lambda$.

If $\lambda_r> \lambda_{r+1}$, then $\tS^{ws_r}_\lambda(s_r(\mu))=\tS^w_\lambda(\mu)+(\ccT_{w(r)}-\ccT_{w(r+1)})\tS^w_{s_r(\lambda)}(\mu).$\qedhere
\end{proof}

Importantly, the $\WW_n$-action on \Cref{problematic} agrees with the action in \cite[\S 3.2.1]{NaisseVaz} and is the starting point of our geometric approach to the Naisse--Vaz algebra. We show later in this section that $\hhNH_n\cong H^G_*(\ccZ)$ by comparing the faithful actions of the two algebras on $\hPol_n\cong H^G_*(\ccY)$. For each generator $x$ of the algebra $\hhNH_n$ we will find some $x\in H^G_*(\ccZ)$ which acts by the same endomorphism.

\subsection{Demazure operators}
For $r\in[1;n-1]$, consider the subvariety $\Zr\subset \ccY\times \ccY$ defined by the condition that the  two Grassmannian spaces agree and the two flags are the same except maybe at the $r$th component. Define 
\begin{equation}
\label{eq:Zr-loc}
T_r=-[\Zr]\in H^G_*(\ccZ),\quad \text{and}\quad
[\Zr]=\sum_{w_1,w_2,\mu_1,\mu_2}\tA_{w_1,w_2,\mu_1,\mu_2}[x_{w_1,w_2,\mu_1,\mu_2}].
\end{equation}
\begin{lem}
Explicitly, $T_r$ is given by the formulas
\begin{equation}
\label{eq:Zr-coeff}
\tA_{w_1,w_2,\mu_1,\mu_2}=
\begin{cases}
\tA^{-1}_{w_1,\mu_1}(\ccT_{w_2(r)}-\ccT_{w_2(r+1)})^{-1} &\mbox{if } (w_1,\mu_1)\sim_r(w_2,\mu_2),\\
0 & \mbox{otherwise},
\end{cases}
\end{equation}
where $(w_1,\mu_1)\sim_r (w_2,\mu_2)$ if $(w_1,\mu_1)=(w_2,\mu_2)$ or $(w_1,\mu_1)=(w_2s_r,s_r(\mu_2))$.
\end{lem}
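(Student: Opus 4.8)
The plan is to compute $[\Delta^r]$ directly in the localised Borel--Moore homology $H_*^\rmT(\ccZ)_{\rm loc}\cong H_*^\rmT(\ccZ^\rmT)_{\rm loc}$, where classes are determined by Euler classes at $\rmT$-fixed points. Since $\ccZ=\ccY\times\ccY$ is a product of flag varieties and Grassmannians it is paved by affine cells, so the localisation map of \Cref{locmap} is injective and it suffices to determine the coefficients of $[\Delta^r]$ in the fixed-point basis $\{[x_{w_1,w_2,\mu_1,\mu_2}]\}$.

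The next step is to describe $\Delta^r$ geometrically. A point $(\bV,\tV,W,\tW)$ lies in $\Delta^r$ iff $W=\tW$ and the flags $\bV,\tV$ coincide away from step $r$, i.e.\ $(\bV,\tV)\in\overline{O^{s_r}}$; hence the evident projection is an isomorphism $\Delta^r\cong\overline{O^{s_r}}\times\calG$. In particular $\Delta^r$ is smooth and $\rmG$-stable, and $\overline{O^{s_r}}$ is the familiar $\bbP^1$-bundle over $\calF$. Consequently its fundamental class satisfies
\[
[\Delta^r]\;=\;\sum_{x\in(\Delta^r)^\rmT}\frac{1}{\op{eu}(\Delta^r,x)}\,[x]\qquad\text{in }H_*^\rmT(\ccZ)_{\rm loc},
\]
by applying the fixed-point formula (cf.\ \eqref{PinEuler}) to the smooth variety $\Delta^r$ and pushing forward along $\Delta^r\hookrightarrow\ccZ$. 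I would then identify $(\Delta^r)^\rmT$: the fixed point $x_{w_1,w_2,\mu_1,\mu_2}=(f_{w_1},g_{w_1(\mu_1)},f_{w_2},g_{w_2(\mu_2)})$ lies in $\Delta^r$ exactly when $f_{w_1},f_{w_2}$ coincide away from step $r$ (equivalently $w_2\in\{w_1,w_1s_r\}$) and $w_1(\mu_1)=w_2(\mu_2)$ as subsets of $[1;n]$; a short check shows this amounts precisely to $(w_1,\mu_1)\sim_r(w_2,\mu_2)$, which accounts for the ``$0$ otherwise'' branch of \eqref{eq:Zr-coeff}.

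It then remains to compute $\op{eu}(\Delta^r,x)$ for $x=x_{w_1,w_2,\mu_1,\mu_2}$ with $(w_1,\mu_1)\sim_r(w_2,\mu_2)$. Under $\Delta^r\cong\overline{O^{s_r}}\times\calG$ this factors as $\op{eu}(\overline{O^{s_r}},f_{w_1,w_2})\cdot\op{eu}(\calG,g_{w_1(\mu_1)})$, and the second factor is $\tA_{w_1(\mu_1)}$ by \eqref{Alambda}. For the first factor I would feed \Cref{eunm} into the computation: it gives $\op{eu}((\frakn_{w_1})^*\oplus\frakm_{w_1s_r,w_1})$ when $w_2=w_1$ and $\op{eu}((\frakn_{w_1})^*\oplus\frakm_{w_1,w_1s_r})$ when $w_2=w_1s_r$. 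Using $\op{eu}((\frakn_{w_1})^*)=\tA_{w_1}$ from \eqref{Aw}, the identity $\op{eu}(\frakm_{w,ws_r})=\ccT_{w(r)}-\ccT_{w(r+1)}$, the isomorphism $\frakm_{ws_r,w}\cong\frakm_{w,ws_r}^*$ (which flips the sign in the first case), and the substitution $w_2(r)=w_1(r+1)$, $w_2(r+1)=w_1(r)$ in the second case, both cases collapse to $\op{eu}(\overline{O^{s_r}},f_{w_1,w_2})=-\tA_{w_1}(\ccT_{w_2(r)}-\ccT_{w_2(r+1)})$. Hence $\op{eu}(\Delta^r,x)=-\tA_{w_1,\mu_1}(\ccT_{w_2(r)}-\ccT_{w_2(r+1)})$ by \eqref{DefAwmu}, and since $T_r=-[\Delta^r]$ we obtain exactly the coefficients of \eqref{eq:Zr-coeff}. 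The one genuinely delicate part is this last sign bookkeeping: making sure that the choice of $\frakm_{ws_r,w}$ versus $\frakm_{w,ws_r}$, the sign picked up by dualising, and the reindexing of the $\ccT$'s all conspire so that the two branches yield the same clean expression; the rest is a routine unwinding of the Euler class formulas already established.
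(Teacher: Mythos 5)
Your proposal is correct and follows essentially the same route as the paper: localise to $\rmT$-fixed points, observe that $\Zr\cong\overline{O^{s_r}}\times\calG$ so its fixed points are exactly those with $(w_1,\mu_1)\sim_r(w_2,\mu_2)$, and compute the Euler classes via \Cref{eunm}, \eqref{Aw} and \eqref{Alambda}. Your careful sign bookkeeping in fact reproduces the paper's own computation, whose final expression $(\ccT_{w_2(r+1)}-\ccT_{w_2(r)})^{-1}$ differs by a sign from the displayed formula \eqref{eq:Zr-coeff} (an inconsistency already present in the paper between \eqref{eq:Zr-loc} and the statement); your resolution via $T_r=-[\Zr]$ is a reasonable reading of "$T_r$ is given by".
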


\begin{proof}
Let $\mu_1=s_r(\mu_2)$ and $w_1=w_2s_r$. Abbreviating $w=w_1$, $\mu=\mu_1$ we have 
\begin{align*}
\tA_{w_1,w_2,\mu_1,\mu_2}&\,\stackrel{\op{def}}=\,\;\op{eu}(\Zr,x_{w, ws_r, \mu, s_r(\mu)})^{-1}
&&=\;\op{eu}(\overline{O^{s_r}},f_{w, ws_r})^{-1}\cdot\tA^{-1}_{w(\mu)}
\\
&\stackrel{\ref{eunm}}=\;\op{eu}(\frakn^*_{w}\oplus \frakm_{w, ws_r})^{-1}\cdot \tA^{-1}_{w(\mu)}
&&\stackrel{\ref{Aw}}=\; \tA^{-1}_{w}\tA^{-1}_{w(\mu)}\op{eu}(\frakm_{w,ws_r})^{-1}
\\
&\,\,=\,\; \tA_{w,\mu}^{-1}(\ccT_{w(r)}-\ccT_{w(r+1)})^{-1}
&&\stackrel{\op{ass}}=\; \tA_{w_1,\mu_1}^{-1}(\ccT_{w_2(r+1)}-\ccT_{w_2(r)})^{-1}.
\end{align*}
Let $\mu=\mu_2$ and $w_1=w_2$. Abbreviating $w=w_1$, $\mu=\mu$ we have 
\begin{align*}
\tA_{w_1,w_2,\mu,\mu_2}
&\,\stackrel{\op{def}}=\,\;\op{eu}(\Zr,x_{w, w, \mu,\mu})^{-1}
&&=\;\op{eu}(\overline{O^{s_r}},f_{w,w})^{-1}\cdot \tA^{-1}_{w(\mu)}\\
&\stackrel{\ref{eunm}}=\;\op{eu}(\frakn^*_{w}\oplus \frakm_{ws_r,w})^{-1}\cdot \tA^{-1}_{w(\mu)}
&&\stackrel{\ref{Aw}}=\; \tA^{-1}_{w}\tA^{-1}_{w(\mu)}\op{eu}(\frakm_{ws_r,w})^{-1}\\
&\,\,=\,\; \tA_{w,\mu}^{-1}(\ccT_{w(r+1)}-\ccT_{w(r)})^{-1}
&&\stackrel{\op{ass}}=\; \tA_{w,\mu}^{-1}(\ccT_{w_2(r+1)}-\ccT_{w_2(r)})^{-1}.
\end{align*}
This shows the first case in \eqref{eq:Zr-coeff}. Clearly, we get zero otherwise. 
\end{proof}
We recover the action of the Demazure operators $\partial_r$, see \Cref{actionNH}:
\begin{prop}\label{TrasDemazure}
The action of $T_r\in H^G_*(\ccZ)$ on $H_*^\rmG(\ccY)$ agrees with  $\partial_r$.
\end{prop}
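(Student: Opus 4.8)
The plan is to verify the identity after applying the localisation map $H^\rmG_*(\ccY)\hookrightarrow H^\rmT_*(\ccY^\rmT)_{\loc}$ of \Cref{locmap}; this map is injective once we fix a paving of $\ccY=\calF\times\calG$ by products of Schubert cells in $\calF$ and in $\calG$. First I would compute how $T_r=-[\Zr]$ from \eqref{eq:Zr-loc} acts on the fixed-point basis, using \Cref{lem:comp-in-loc}\,\ref{2} together with the explicit coefficients \eqref{eq:Zr-coeff}: the only fixed points contributing to $T_r\star[x_{w,\mu}]$ are $x_{w,\mu}$ and $x_{ws_r,s_r(\mu)}$. Simplifying the Euler classes via \eqref{Aw} and \eqref{DefAwmu} — the key point being $\tA_{ws_r,s_r(\mu)}=-\tA_{w,\mu}$, since passing from $w$ to $ws_r$ flips precisely the factor of $\tA_w$ indexed by $\{r,r+1\}$ (the two factors pairing an index from $\{r,r+1\}$ with an outside index merely get swapped) — one should arrive at
\[
T_r\star[x_{w,\mu}]=\frac{[x_{w,\mu}]-[x_{ws_r,s_r(\mu)}]}{\ccT_{w(r)}-\ccT_{w(r+1)}}.
\]

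Since the convolution action is $\tR_\rmT$-linear, this formula extends ${\rm Frac}(\tR_\rmT)$-linearly to all of $H^\rmT_*(\ccY^\rmT)_{\loc}$, so I would feed in the localised expression \eqref{eq:pol-wedge-loc} for $P\omega_\lambda$ and collect terms; the coefficient of $\tA^{-1}_{w,\mu}[x_{w,\mu}]$ in $T_r\star(P\omega_\lambda)$ should come out as $\bigl(P_w\,\tS^w_\lambda(\mu)-P_{ws_r}\,\tS^{ws_r}_\lambda(s_r(\mu))\bigr)/(\ccT_{w(r)}-\ccT_{w(r+1)})$, again using $\tA_{ws_r,s_r(\mu)}=-\tA_{w,\mu}$ and $\ccT_{ws_r(r)}-\ccT_{ws_r(r+1)}=-(\ccT_{w(r)}-\ccT_{w(r+1)})$. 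On the other side, I would compute $\partial_r(P\omega_\lambda)=\bigl(P\omega_\lambda-s_r(P\omega_\lambda)\bigr)/(X_r-X_{r+1})$ directly from \Cref{problematic}, obtaining $\partial_r(P)\,\omega_\lambda$ when $\lambda_r\le\lambda_{r+1}$ and $\partial_r(P)\,\omega_\lambda-s_r(P)\,\omega_{s_r(\lambda)}$ when $\lambda_r>\lambda_{r+1}$, with $\partial_r$ on the right the ordinary Demazure operator on $\Pol_n$. Re-expressing this in the fixed-point basis via \eqref{eq:pol-wedge-loc} (the fixed-point value of $\partial_r(P)$ at $x_{w,\mu}$ being $(P_w-P_{ws_r})/(\ccT_{w(r)}-\ccT_{w(r+1)})$ and that of $s_r(P)$ being $P_{ws_r}$), the wall-crossing formula \Cref{lem:Schubert-change-w} applied at the argument $s_r(\mu)$ is exactly what collapses $\tS^w_\lambda(\mu)+(\ccT_{w(r)}-\ccT_{w(r+1)})\tS^w_{s_r(\lambda)}(\mu)$ into $\tS^{ws_r}_\lambda(s_r(\mu))$, and the two coefficients then match case by case. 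Injectivity of the localisation map finishes the argument.

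The main obstacle I anticipate is bookkeeping. On one hand, a chain of sign cancellations has to be tracked carefully: the sign in $T_r=-[\Zr]$, the identity $\tA_{ws_r,s_r(\mu)}=-\tA_{w,\mu}$, and the flip $\ccT_{ws_r(r)}-\ccT_{ws_r(r+1)}=-(\ccT_{w(r)}-\ccT_{w(r+1)})$ all have to conspire, and this is where the chosen normalisations of the fixed-point classes and of $\omega_\lambda$ versus the Schubert class $\tS_\lambda$ really matter. On the other hand — and this is the conceptual subtlety — one cannot simply ``check on the fixed-point basis'' for $\partial_r$: unlike $T_r\star(-)$, the Demazure operator does not extend ${\rm Frac}(\tR_\rmT)$-linearly to $H^\rmT_*(\ccY^\rmT)_{\loc}$, so the comparison must be made with the honest elements $P\omega_\lambda\in H^\rmG_*(\ccY)$, computing $\partial_r(P\omega_\lambda)$ algebraically via \Cref{problematic} first and only then localising. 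Everything else is routine manipulation of localised classes.
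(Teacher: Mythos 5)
Your proposal is essentially correct, and its fixed-point calculation of $T_r\star[x_{w,\mu}]$ is right, but it takes a genuinely longer route than the paper's proof, and the stated motivation for the detour is off-target.

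The paper's proof is shorter than yours: after computing the coefficient of $[x_{z,\nu}]$ in $[\Zr]\star P\omega_\lambda$, it simply invokes \eqref{inducedSn} to write $s_r(P\omega_\lambda)$ in the fixed-point basis as $\sum_{w,\mu}P_{ws_r}\tS^{ws_r}_\lambda(s_r(\mu))\tA^{-1}_{w,\mu}[x_{w,\mu}]$ and then divides pointwise by $X_{r+1}-X_r$, landing directly on the same expression. No return to the algebraic side is needed, and neither \Cref{problematic} nor \Cref{lem:Schubert-change-w} enters the proof. Your detour — unpacking $\partial_r(P\omega_\lambda)$ into $\partial_r(P)\omega_\lambda-s_r(P)\omega_{s_r(\lambda)}$ via \Cref{problematic}, relocalising, and then using the wall-crossing formula at the argument $s_r(\mu)$ to collapse back — is algebraically correct and closes the same circle, but notice that \Cref{problematic} is itself proved by exactly the \eqref{inducedSn}-plus-wall-crossing computation, so you are partially unwinding and then rewinding. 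This buys nothing except transparency of the route through the algebraic $\frakS_n$-action.

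Your ``conceptual subtlety'' is in fact a misconception. In the fixed-point picture of $\ccY=\calF\times\calG$ which the paper uses, the $\frakS_n$-action of \eqref{inducedSn} treats the ${\rm Frac}(\tR_\rmT)$-coefficient in front of $\tA^{-1}_{w,\mu}[x_{w,\mu}]$ as a scalar — this is exactly what the computation in the proof of \Cref{problematic} does. Since multiplication by $X_r-X_{r+1}$ is the diagonal operator with eigenvalue $\ccT_{w(r)}-\ccT_{w(r+1)}$ at $[x_{w,\mu}]$, which is invertible in ${\rm Frac}(\tR_\rmT)$ and obviously commutes with scalars, both $s_r$ and division by $X_r-X_{r+1}$, hence $\partial_r$, do extend ${\rm Frac}(\tR_\rmT)$-linearly to $H^\rmT_*(\ccY^\rmT)_{\rm loc}$. (You are likely conflating the $\tR_\rmT$-module structure with the $\Pol_n=\Bbbk[X_1,\ldots,X_n]$-module structure: the latter is the one $\partial_r$ fails to respect, and in the localisation they are distinct.) This is why the paper's direct route is legitimate.

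One further remark on signs that you should be aware of: the closed formula stated in \eqref{eq:Zr-coeff} has $(\ccT_{w_2(r)}-\ccT_{w_2(r+1)})^{-1}$, whereas the proof of that very lemma actually establishes $(\ccT_{w_2(r+1)}-\ccT_{w_2(r)})^{-1}$, so there is an internal sign discrepancy in the paper. Your formula $T_r\star[x_{w,\mu}]=([x_{w,\mu}]-[x_{ws_r,s_r(\mu)}])/(\ccT_{w(r)}-\ccT_{w(r+1)})$ is the correct one, i.e. the one consistent with the \emph{proof} of \eqref{eq:Zr-coeff} and with the final conclusion $T_r\leftrightarrow\partial_r$ — but if you had plugged the stated \eqref{eq:Zr-coeff} in literally you would have picked up a spurious minus sign. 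So the ``chain of sign cancellations'' you worried about is indeed delicate; just note the source is a typo in the paper, not a genuine difficulty.
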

\begin{proof}
Abbreviate $s=s_r$. By \eqref{eq:pol-wedge-loc}, \eqref{eq:Zr-loc},  the action of $T_r$ is given by 
\begin{align*}
[\Zr]\star P\omega_\lambda&\;=(\sum_{w_1,w_2,\mu_1,\mu_2}\tA_{w_1,w_2,\mu_1,\mu_2}[x_{w_1,w_2,\mu_1,\mu_2}])\star (\sum_{w,\mu}P_w\tS^w_\lambda(\mu)\tA^{-1}_{w,\mu}[x_{w,\mu}])\\
&\stackrel{\eqref{unclear}}=
\sum_{w,w_1,\mu,\mu_1}\tA_{w_1,w,\mu_1,\mu}P_w\tS^w_\lambda(\mu)[x_{w_1,\mu_1}].
\end{align*}
By \eqref{eq:Zr-coeff}, the coefficient of $[x_{z,\nu}]$ is here equal to 
\begin{equation*}
\tA^{-1}_{z,\nu}\frac{P_z\tS^z_\lambda(\nu)}{(\ccT_{z(r)}-\ccT_{z(r+1)})}+\tA^{-1}_{z,\nu}\frac{P_{zs}\tS^{zs}_\lambda(s(\nu))}{(\ccT_{zs(r)}-\ccT_{zs(r+1)})}
=\tA^{-1}_{z,\nu} \frac{P_z\tS^z_\lambda(\nu)-P_{zs}\tS^{zs}_\lambda(s(\nu))}{\ccT_{z(r)}-\ccT_{z(r+1)}}.
\end{equation*}
Using \eqref{eq:pol-wedge-loc} and  \eqref{inducedSn} we deduce that this agrees with the coefficient of $[x_{w,\mu}]$ in the decomposition of $-\partial_r(P\omega_\lambda)=\frac{P\omega_\lambda-s_r(P\omega_{\lambda})}{X_{r+1}-X_{r}}$ in the basis of $\rmT$-fixed points. 
\end{proof}

\subsection{Creation operators}
\label{subs:creation}
For $k\in[1;n]$ we consider the following \emph{Grassmannian inclusion subvariety} of $\ccY_{k+1}\times\ccY_k$,
\begin{equation}\label{Grassind}
\ccZ_{\supse,k}=\{((\bV,W),(\tV,\tW))\in\ccY_{k+1}\times\ccY_k\mid W\supset \tW, \bV=\tV\}.
\end{equation}
If $\epsilon_r$ denotes the element $\lambda\in\Lambda_1(n)$ satisfying $\lambda_i=\delta_{i,r}$, the   
fundamental class $[\ccZ_{\supse,k}]$ can be written as $[\ccZ_{\supse,k}]=\sum_{\mu_1,\mu_2,w_1,w_2}\tB^+_{w_1,w_2,\mu_1,\mu_2}[x_{w_1,w_2,\mu_1,\mu_2}]$, where
\begin{equation*}
\tB^+_{w_1,w_2,\mu_1,\mu_2}=
\begin{cases}
\tA_{w_1,\mu_1}^{-1}\prod_{i\in \mu_2}(\ccT_{w_1(t)}-\ccT_{w_1(i)})^{-1} &\mbox{ if }w_1=w_2, ~\mu_1=\mu_2+\epsilon_t,\\
0 & \mbox{ otherwise}.
\end{cases}
\end{equation*}

\begin{prop}
\label{lem:creation+n}
Convolution with the fundamental class $[\ccZ_{\supse,k}]$ defines a map  
 which, via \Cref{Schurandomega}, is the creation operator $\omega_n^+\colon H_*^\rmG(\ccY_k)\to H_*^\rmG(\ccY_{k+1})$.
\end{prop}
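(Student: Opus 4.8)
The plan is to pass to $\rmT$-fixed points, where the formula for the coefficients $\tB^{+}$ just obtained reduces the claim to an identity among the equivariant Schubert polynomials $\tS^{w}_{\lambda}(\mu)$ from \Cref{lem:caract-S}, and then to verify that identity by means of the characterisation in that lemma. By \Cref{lem:comp-in-loc} it suffices to compare the two induced endomorphisms of $H^{\rmT}_{*}(\ccY)_{\rm loc}$ on the fixed-point basis. Writing a general element of $H_{*}^{\rmG}(\ccY_{k})$ as $P\omega_{\lambda}$ with $P\in\Bbbk[X_{1},\ldots,X_{n}]$ and $\lambda\in\Lambda_{k}(n)$, I would expand it as in \eqref{eq:pol-wedge-loc} and feed the coefficients $\tB^{+}_{w_{1},w_{2},\mu_{1},\mu_{2}}$ — which vanish unless $w_{1}=w_{2}=:w$ and $\mu_{1}=\mu_{2}+\epsilon_{t}$ for some $t$ — into \eqref{unclear}. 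A short bookkeeping then gives that the coefficient of $\tA_{w,\nu}^{-1}[x_{w,\nu}]$ in $[\ccZ_{\supse,k}]\star(P\omega_{\lambda})$ equals $P_{w}\sum_{t\in\nu}\tS^{w}_{\lambda}(\nu\setminus\{t\})\big/\prod_{i\in\nu,\,i\ne t}(\ccT_{w(t)}-\ccT_{w(i)})$, while the coefficient of the same fixed point in $\omega_{n}^{+}(P\omega_{\lambda})=P\omega_{\lambda\cup\{n\}}$ is $P_{w}\tS^{w}_{\lambda\cup\{n\}}(\nu)$, with the convention that this is $0$ when $n\in\lambda$. Since $\tS^{w}_{\lambda}=w(\tS^{\Id}_{\lambda})$ by \Cref{rk-Sw}, everything reduces to the case $w=\Id$, i.e.\ to the identities
$$\tS^{\Id}_{\lambda\cup\{n\}}(\nu)=\sum_{t\in\nu}\frac{\tS^{\Id}_{\lambda}(\nu\setminus\{t\})}{\prod_{i\in\nu,\,i\ne t}(\ccT_{t}-\ccT_{i})}\ \ (n\notin\lambda),\qquad 0=\sum_{t\in\nu}\frac{\tS^{\Id}_{\lambda}(\nu\setminus\{t\})}{\prod_{i\in\nu,\,i\ne t}(\ccT_{t}-\ccT_{i})}\ \ (n\in\lambda),$$
for every $\nu\in\Lambda_{k+1}(n)$.

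To prove these I would invoke the uniqueness part of \Cref{lem:caract-S}: the right-hand side is, by construction, the fixed-point restriction of the honest class $[\ccZ_{\supse,k}]\star\tS_{\lambda}\in H^{*}_{\rmG}(\ccY_{k+1})$, so in the case $n\notin\lambda$ it remains only to check the three numerical conditions of that lemma for $\lambda\cup\{n\}$. The support condition rests on the elementary order-theoretic fact that, for $n\notin\lambda$, one has $\nu\setminus\{t\}\geqslant\lambda$ for some $t\in\nu$ if and only if $\nu\geqslant\lambda\cup\{n\}$. The value at $\nu=\lambda\cup\{n\}$ is obtained by noting that among $t\in\lambda\cup\{n\}$ only $t=n$ contributes (for $t\ne n$ we have $(\lambda\cup\{n\})\setminus\{t\}\not\geqslant\lambda$, so $\tS^{\Id}_{\lambda}$ vanishes there), and then combining \Cref{lem:caract-S}\,(ii) with $\inv(\lambda\cup\{n\})=\inv(\lambda)\setminus\{(i,n):i\in\lambda\}$ to get $\tS^{\Id}_{\lambda}(\lambda)/\prod_{i\in\lambda}(\ccT_{n}-\ccT_{i})=\tS^{\Id}_{\lambda\cup\{n\}}(\lambda\cup\{n\})$. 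The degree condition is immediate: each summand has degree $\deg\tS^{\Id}_{\lambda}(\nu\setminus\{t\})-k=\#\inv(\lambda)+\binom{n}{2}-k=\#\inv(\lambda\cup\{n\})+\binom{n}{2}$. For $n\in\lambda$ there is no $\lambda\cup\{n\}$, and the same support/degree analysis (together with the partial-fraction structure of the sum over $t$) forces the sum to vanish identically, matching $\omega_{n}^{+}\omega_{\lambda}=0$. Finally one transports the result through the identification of \Cref{Schurandomega}.

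The main obstacle is the combinatorial step: establishing the support lemma for the dominance order on $\{0,1\}$-vectors and, especially, the vanishing when $n\in\lambda$. A cleaner alternative that avoids the Schubert-polynomial combinatorics is a purely geometric computation. One checks that $[\ccZ_{\supse,k}]\star(-)=(q_{1})_{*}\,q_{2}^{*}$, where $q_{1},q_{2}$ are the two projections of the smooth irreducible variety $\ccZ_{\supse,k}$ — a $\mathbb{P}^{n-k-1}$-bundle over $\ccY_{k}$ via $q_{2}$ — onto $\ccY_{k+1}$ and $\ccY_{k}$; since $q_{1},q_{2}$ preserve the flag $\bV$, this operator is $\Bbbk[X_{1},\ldots,X_{n}]$-linear by the projection formula, so it suffices to evaluate it on the basis $\tS_{\lambda}=[\overline{C_{\lambda}}]$. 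For $n\notin\lambda$ one shows that $q_{1}$ restricts to a birational morphism $q_{2}^{-1}(\overline{C_{\lambda}})\to\overline{C_{\lambda\cup\{n\}}}$ — the point being that over $(\bV,W)\in C_{\lambda\cup\{n\}}$ the only admissible $\tW$ is $\tW=W\cap\bV^{n-1}$, while a dimension count gives $\dim q_{2}^{-1}(\overline{C_{\lambda}})=\dim\overline{C_{\lambda\cup\{n\}}}$ — so that the pushforward is $\tS_{\lambda\cup\{n\}}$; for $n\in\lambda$ the fibres of $q_{1}$ over its image are positive-dimensional (the hyperplane $\tW$ of $W$ of prescribed relative position to $\bV$ is not determined), so the pushforward vanishes. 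In either formulation the identification with $\omega_{n}^{+}$ via \Cref{Schurandomega} is then immediate.
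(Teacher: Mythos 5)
Your first route is exactly the paper's proof: localise, observe that the coefficient of $[x_{z,\nu}]$ in $[\ccZ_{\supse,k}]\star P\omega_\lambda$ is $\tA^{-1}_{z,\nu}P_z\sum_{t\in\nu}\tS^z_\lambda(\nu\backslash\{t\})/\prod_{i\in\nu\backslash\{t\}}(\ccT_{z(t)}-\ccT_{z(i)})$, reduce to $w=\Id$ via \Cref{rk-Sw}, and verify the resulting Schubert identity (\Cref{lem:S-lambda+n}) by checking the characterising properties (i)--(iii) of \Cref{lem:caract-S}; your treatment of the support, the value at $\lambda\cup\{n\}$, and the degree matches the paper's. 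The one place you are too quick is the vanishing for $n\in\lambda$: this does \emph{not} follow from a support/degree analysis (there is no class $\tS_{\lambda\cup\{n\}}$ to compare against, and degree alone does not force a class to be zero). The paper's mechanism is different: it substitutes the already-proven case (applied to $\lambda\backslash\{n\}$) into each $\tS_\lambda(\mu\backslash\{t\})$, producing a double sum over pairs $(t,t')$ whose summands are antisymmetric under $(t,t')\leftrightarrow(t',t)$, hence cancel. Your parenthetical "partial-fraction structure" gestures at this but you should actually carry out that cancellation.

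Your second, purely geometric route is genuinely different from what the paper does and is attractive: identifying $[\ccZ_{\supse,k}]\star(-)$ with $(q_1)_*q_2^*$ for the $\mathbb P^{n-k-1}$-bundle $q_2$, using the projection formula for $\Pol_n$-linearity, and then computing $(q_1)_*[q_2^{-1}(\overline{C_\lambda})]$ by a fibre analysis. The birationality onto $\overline{C_{\lambda\cup\{n\}}}$ when $n\notin\lambda$ is correct (over $(\bV,W)\in C_{\lambda\cup\{n\}}$ the closure conditions force $\tW=W\cap\bV^m$ for $m=\max\lambda$, which has the right dimension $k$), and positive-dimensional fibres kill the pushforward when $n\in\lambda$. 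What this buys is a proof that avoids the Schubert-polynomial combinatorics of \Cref{lem:caract-S} entirely; what it loses is the explicit fixed-point formula for $[\ccZ_{\supse,k}]$, which the paper reuses immediately afterwards (e.g.\ in \Cref{lem:creation+r}, where the correspondence is twisted by Chern classes and the pushforward is no longer a single fundamental class, so the localisation machinery is needed anyway).
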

\begin{proof}
Again by \eqref{unclear} and we get
$[\ccZ_{\supse,k}]\star P\omega_\lambda=
\sum_{w,w_1,\mu,\mu_1}\tB^+_{w_1,w,\mu_1,\mu}P_w\tS^w_\lambda(\mu)[x_{w_1,\mu_1}]$.
The coefficient of $[x_{z,\nu}]$ in this expression is
\begin{equation*}
\tA^{-1}_{z,\nu}P_z\sum_{t\in\nu}\frac{\tS^z_\lambda(\nu\backslash\{t\})}{\prod_{i\in \nu\backslash\{t\}}(\ccT_{z(t)}-\ccT_{z(i)})}.
\end{equation*}

Now, the statement follows from \Cref{lem:S-lambda+n} below and \Cref{rk-Sw}.
\end{proof}

\begin{lem} 
\label{lem:S-lambda+n}
The equivariant Schubert classes $\tS_\lambda:=\tS^\Id_\lambda$ satisfy the formula 
\begin{equation}\label{Schursforcreation}
\sum_{t\in \mu}\frac{\tS_\lambda(\mu\backslash\{t\})}{\prod_{i\in\mu\backslash\{t\}}(\ccT_t-\ccT_i)}=
\begin{cases}
\tS_{\lambda\cup \{n\}}(\mu) &\mbox{ if }n\not\in\lambda,\\
0 &\mbox{ if }n\in\lambda.
\end{cases}
\end{equation}
\end{lem}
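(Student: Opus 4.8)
The plan is to recognise the left-hand side, read as a function of $\mu\in\Lambda_{k+1}(n)$, as the equivariant Schubert class $\tS_{\lambda\cup\{n\}}$ when $n\notin\lambda$, resp. as $0$ when $n\in\lambda$, and to conclude by the uniqueness in \Cref{lem:caract-S}. Throughout write $G(\mu):=\sum_{t\in\mu}\tS_\lambda(\mu\setminus\{t\})/\prod_{i\in\mu\setminus\{t\}}(\ccT_t-\ccT_i)$ for the left-hand side; note that $|\mu\setminus\{t\}|=|\lambda|$ forces $\mu\in\Lambda_{k+1}(n)$, so this is the natural indexing set.

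First I would check that $G(\mu)$ is, for varying $\mu$, the family of restrictions to the $\rmT$-fixed points $x_{\Id,\mu}$ of a single $\rmG$-equivariant class on $\ccY_{k+1}$ — the standing hypothesis of \Cref{lem:caract-S}. This is where the convolution machinery enters: by \Cref{lem:conv} the fundamental class $[\ccZ_{\supse,k}]\in H^\rmG_*(\ccZ)$ acts on $H^\rmG_*(\ccY)$, and since $\ccZ_{\supse,k}$ maps onto $\ccY_{k+1}$ under the first projection, $[\ccZ_{\supse,k}]\star\tS_\lambda$ lies in the summand $H^\rmG_*(\ccY_{k+1})$. Expanding this class in the $\rmT$-fixed point basis by \Cref{lem:comp-in-loc}, using the localisation description of $[\ccZ_{\supse,k}]$ (its coefficients $\tB^+$ computed above) together with $\tS_\lambda=\sum_{w,\mu}\tS^w_\lambda(\mu)\tA^{-1}_{w,\mu}[x_{w,\mu}]$, one finds that the coefficient of $[x_{\Id,\mu}]$ equals $\tA^{-1}_{\Id,\mu}G(\mu)$, hence the restriction of $[\ccZ_{\supse,k}]\star\tS_\lambda$ to $x_{\Id,\mu}$ is exactly $G(\mu)$. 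Thus $G$ is the fixed-point restriction pattern of an honest class in $H^\rmG_*(\ccY_{k+1})$.

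Next come the three combinatorial properties. The elementary tool is the reformulation of the order: for two equal-size subsets, $\nu\geqslant\lambda$ iff the $j$-th smallest element of $\nu$ is $\leqslant$ the $j$-th smallest element of $\lambda$ for every $j$. From this one checks that if $\mu\setminus\{t\}\geqslant\lambda$ for some $t\in\mu$, then $\mu_j\leqslant\lambda_j$ for all $j\in[1;k]$; when $n\notin\lambda$ this is precisely $\mu\geqslant\lambda\cup\{n\}$. Combined with the support property \Cref{lem:caract-S}(i) for $\tS_\lambda$ this gives $G(\mu)=0$ unless $\mu\geqslant\lambda\cup\{n\}$. The same count shows that for $\mu=\lambda\cup\{n\}$ the only surviving term of $G$ is $t=n$, so $G(\lambda\cup\{n\})=\tS_\lambda(\lambda)/\prod_{i\in\lambda}(\ccT_n-\ccT_i)$; inserting the value of $\tS_\lambda(\lambda)$ from \Cref{lem:caract-S}(ii) together with the identity $\inv(\lambda)=\inv(\lambda\cup\{n\})\sqcup\{(i,n):i\in\lambda\}$ identifies this with $\tS_{\lambda\cup\{n\}}(\lambda\cup\{n\})$. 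Finally every nonzero summand of $G(\mu)$ is homogeneous of degree $\#\inv(\lambda)+\binom{n}{2}-k=\#\inv(\lambda\cup\{n\})+\binom{n}{2}$, so $G(\mu)$, being a polynomial, is homogeneous of that degree (or zero) — matching \Cref{lem:caract-S}(iii). Uniqueness in \Cref{lem:caract-S} then yields $G=\tS_{\lambda\cup\{n\}}$ for $n\notin\lambda$.

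The case $n\in\lambda$ is the delicate one and the main obstacle, since then $\lambda\cup\{n\}=\lambda$ has only $k$ elements and there is no class to match $G$ against. Here I would argue by a degree estimate rather than by exhibiting the cancellation. The support analysis still applies and shows $G$ is supported above $\mu_0:=\lambda\cup\{t_{\max}\}$, where $t_{\max}:=\max([1;n]\setminus\lambda)<n$, while $G$ is homogeneous of degree $\#\inv(\lambda)+\binom{n}{2}-k$. On the other hand, writing any class in $H^\rmG_*(\ccY_{k+1})$ in the $\Pol_n$-basis $\{\tS_\nu\}_\nu$ of \Cref{Schurandomega}, the triangularity in \Cref{lem:caract-S}(i)--(ii) shows that a nonzero $\rmG$-class supported above $\mu_0$ is a $\Pol_n$-combination of the $\tS_\nu$ with $\nu\geqslant\mu_0$, hence — using $\nu\geqslant\mu_0\Rightarrow\#\inv(\nu)\geqslant\#\inv(\mu_0)$ — has degree at least $\#\inv(\mu_0)+\binom{n}{2}$; since $\#\inv(\mu_0)=\#\inv(\lambda)-k+(n-t_{\max})>\#\inv(\lambda)-k$, the class $G$ lies strictly below this bound and must vanish. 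What remains is routine: the monotonicity of $\#\inv$ along the dominance order and its change under adding an element to a $\{0,1\}$-vector, the triangularity of the Schubert basis, and the localisation computation of the coefficients $\tB^+$ already invoked.
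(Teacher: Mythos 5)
Your proof is correct. The reduction to \Cref{lem:caract-S} via the convolution $[\ccZ_{\supse,k}]\star\tS_\lambda$ and the entire case $n\notin\lambda$ follow the paper's argument essentially verbatim (same verification of (i)--(iii), same identification of the value at $\lambda\cup\{n\}$, same appeal to uniqueness). You genuinely diverge in the case $n\in\lambda$: the paper feeds the already-proven case back into $\tS_\lambda(\mu\backslash\{t\})$, producing a double sum over $(t,t')$ that vanishes by antisymmetry, whereas you squeeze the class between two degree bounds — its restrictions are homogeneous of degree $\#\inv(\lambda)+\binom{n}{2}-k$, while support above $\mu_0=\lambda\cup\{t_{\max}\}$ and triangularity of the Schubert basis force degree at least $\#\inv(\mu_0)+\binom{n}{2}=\#\inv(\lambda)-k+(n-t_{\max})+\binom{n}{2}$, strictly larger since $t_{\max}<n$. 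I checked the two steps you leave as routine and they hold: the support bound follows because $(\mu_0)_{(j)}$ equals $\lambda_{(j)}$ for $j\leqslant k-(n-t_{\max})$ and the maximal admissible value $n-(k+1)+j$ for the remaining $j$, and $\#\inv(\nu)$ is weakly increasing along the order because it equals a constant minus $\sum_j\nu_{(j)}$. The trade-off: the paper's cancellation is self-contained (no basis or triangularity input beyond \Cref{lem:caract-S}) and is recycled unchanged in \Cref{lem:S-lambda+r}, \Cref{lem:S-lambda-1} and \Cref{lem:S-lambda-r}, whereas your estimate avoids the somewhat magical cancellation at the cost of redoing the degree bookkeeping for the variants with extra numerator factors. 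Both yield valid proofs of the statement as given.
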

\begin{proof}
Abbreviate the left hand side of \eqref{Schursforcreation} as $D_\lambda(\mu)$. Note that they are polynomials by \Cref{lem:caract-S}(ii) and, by the computation in the proof of \Cref{lem:creation+n}, they arise as restrictions of the class $x=[\ccZ_{\supse,k}]\star\omega_\lambda\in H^*_\rmG(\calF\times \calG_k)\cong H^*_\rmT(\calF_k).$ To show \eqref{Schursforcreation} in case  $n\not\in \lambda$, it therefore suffices to verify that $D_\lambda(\mu)$ satisfies the characterising properties (i)-(iii) from \Cref{lem:caract-S}. 

To see (i), assume $D_\lambda(\mu)\ne 0$ for some $\mu$. Then there exists $t\in \mu$ such that $\tS_\lambda(\mu\backslash\{t\})\ne 0$. Then, by  \Cref{lem:caract-S}(i), $\mu\backslash\{t\}\geqslant \lambda$ which implies $\mu\geqslant \lambda \cup\{n\}$ by definition of the ordering. Thus (i) holds.

To see (ii), note that 
\begin{equation}
D_\lambda(\lambda\cup\{n\})=\frac{\tS_\lambda(\lambda)}{\prod_{i\in \lambda}(\ccT_n-\ccT_i)}=
\tS_{\lambda\cup\{n\}}(\lambda\cup\{n\}).
\end{equation}
Here the second equality holds by \Cref{lem:caract-S}(ii), and the  first equality holds, since in the definition of $D_\lambda(\lambda\cup\{n\})$ only the summand for $t=n$ is nonzero. (The other terms vanish due to the support condition for the equivariant Schubert classes in \Cref{lem:caract-S}(i), because $\lambda\cup\{n\}\backslash\{t\}<\lambda$).  Thus (ii) holds.

Since (iii) is obviously true, we proved \eqref{Schursforcreation} in case  $n\not\in \lambda$.

Assume now $n\in \lambda$. Using the already proved case we can write
\begin{equation*}
\tS_\lambda(\mu\backslash\{t\})=\sum_{t'\in\mu\backslash\{t\}}\frac{\tS_{\lambda\backslash \{n\}}(\mu\backslash\{t,t'\})}{\prod_{i\in\mu\backslash\{t,t'\}}(\ccT_{t'}-\ccT_{i})}.
\end{equation*}
Plugging this into the definition of $D_\lambda(\mu)$ we obtain 
\begin{equation*}
D_\lambda(\mu)=\sum_{t,t'\in\mu,t\ne t'}\frac{\tS_{\lambda\backslash \{n\}}(\mu\backslash\{t,t'\})}{\prod_{i\in\mu\backslash\{t\}}(\ccT_t-\ccT_i)\prod_{i\in\mu\backslash\{t,t'\}}(\ccT_{t'}-\ccT_{i})}=0,
\end{equation*}
because the summand for $(t,t')$ cancels with the summand for $(t',t)$. 
\end{proof}

For the Grassmannian inclusion variety \eqref{Grassind} let  $\xi\in  H_*^\rmG(\ccZ_{\supse,k})$ be the Chern class of the line bundle given by $W/\tW$ and let $X_r\in H_*^\rmG(\ccZ_{\subse,k})$, for $r\in [1;n]$, be the Chern class of the line bundle corresponding to the tautological quotient $\bV^r/\bV^{r-1}$.

\begin{prop}
\label{lem:creation+r}
Let $r\in[1;n-1]$. The push-forward to $H_*^\rmG(\ccY\times\ccY)$ of 
\begin{equation}\label{fancycreators}
\prod_{p=r+1}^{n}(X_p-\xi)\in H_*^\rmG(\ccZ_{\supse,k})
\end{equation}
acts on $H_*^\rmG(\ccY)$ by the creation operator $\omega^+_r\colon H_*^\rmG(\ccY_k)\to H_*^\rmG(\ccY_{k+1})$.
\end{prop}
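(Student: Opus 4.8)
The plan is to mimic the proof of \Cref{lem:creation+n}, working throughout in the fixed-point basis of $H_*^\rmT(-)_{\loc}$. First I would push the class $\prod_{p=r+1}^{n}(X_p-\xi)\cap[\ccZ_{\supse,k}]$ forward to $H_*^\rmG(\ccY\times\ccY)$ and compute its localisation: since $[\ccZ_{\supse,k}]=\sum\tB^+_{w_1,w_2,\mu_1,\mu_2}[x_{w_1,w_2,\mu_1,\mu_2}]$ and multiplying by a cohomology class multiplies the coefficient of $[x]$ by its restriction to $x$, the only thing to identify is the restriction of $\prod_{p=r+1}^n(X_p-\xi)$ at a fixed point $x_{w_1,w_2,\mu_1,\mu_2}$ occurring with nonzero $\tB^+$, i.e.\ with $w_1=w_2=:w$ and $\mu_1=\mu_2+\epsilon_t$. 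There $X_p$ restricts to $\ccT_{w(p)}$ (computed via the tautological flag $\bV=\tV$) and the line bundle $W/\tW$ has weight $\ccT_{w(t)}$, so the restriction equals $\prod_{p=r+1}^n(\ccT_{w(p)}-\ccT_{w(t)})$. Plugging this into the convolution formula \eqref{unclear} exactly as in \Cref{lem:creation+n}, the coefficient of $[x_{z,\nu}]$ in $\bigl(\prod_{p=r+1}^n(X_p-\xi)\cap[\ccZ_{\supse,k}]\bigr)\star P\omega_\lambda$ becomes
\[
\tA^{-1}_{z,\nu}\,P_z\sum_{t\in\nu}\frac{\prod_{p=r+1}^n(\ccT_{z(p)}-\ccT_{z(t)})}{\prod_{i\in\nu\backslash\{t\}}(\ccT_{z(t)}-\ccT_{z(i)})}\,\tS^z_\lambda(\nu\backslash\{t\}),
\]
and by \Cref{rk-Sw} it suffices to treat $z=\Id$.

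So the heart of the argument is the following analogue of \Cref{lem:S-lambda+n}: for $r\in[1;n-1]$, $\lambda\in\Lambda_k(n)$ and $\mu\in\Lambda_{k+1}(n)$,
\[
\sum_{t\in\mu}\frac{\prod_{p=r+1}^n(\ccT_p-\ccT_t)}{\prod_{i\in\mu\backslash\{t\}}(\ccT_t-\ccT_i)}\,\tS_\lambda(\mu\backslash\{t\})=
\begin{cases}(-1)^{\#\{i\in\lambda\,:\,i>r\}}\,\tS_{\lambda\cup\{r\}}(\mu)&\text{if }r\notin\lambda,\\ 0&\text{if }r\in\lambda.\end{cases}
\]
I would prove this by the strategy of \Cref{lem:S-lambda+n}: its left-hand side, call it $D^r_\lambda(\mu)$, is automatically the restriction of a class in $H^*_\rmG(\ccY_{k+1})$ (being built from $[\ccZ_{\supse,k}]$ and $\omega_\lambda$), so it is enough to verify the characterising properties (i)--(iii) of \Cref{lem:caract-S} for $D^r_\lambda$ with respect to $\lambda\cup\{r\}$. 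The key new observation is that $\prod_{p=r+1}^n(\ccT_p-\ccT_t)$ vanishes whenever $t\in\{r+1,\dots,n\}$ (it contains the factor $\ccT_t-\ccT_t$), so effectively only the terms with $t\leqslant r$ survive. Given this: for (i), if $D^r_\lambda(\mu)\neq0$ there is $t\in\mu$ with $t\leqslant r$ and $\mu\backslash\{t\}\geqslant\lambda$, and an elementary comparison of partial sums (using $r\notin\lambda$) then gives $\mu\geqslant\lambda\cup\{r\}$; for (iii), each surviving summand is homogeneous of the expected degree $\#\inv(\lambda\cup\{r\})+\binom{n}{2}$ since $\deg\tS_\lambda(\mu\backslash\{t\})=\#\inv(\lambda)+\binom{n}{2}$ by \Cref{lem:caract-S}(iii) and $\#\inv(\lambda\cup\{r\})=\#\inv(\lambda)+(n-r)-k$; for (ii), evaluating at $\mu=\lambda\cup\{r\}$ all summands with $t\in\lambda$, $t<r$ vanish because $(\lambda\backslash\{t\})\cup\{r\}\not\geqslant\lambda$, leaving only $t=r$, which yields $\frac{\prod_{p=r+1}^n(\ccT_p-\ccT_r)}{\prod_{i\in\lambda}(\ccT_r-\ccT_i)}\tS_\lambda(\lambda)$, and a direct product manipulation using \Cref{lem:caract-S}(ii) together with the explicit description of $\inv(\lambda\cup\{r\})\setminus\inv(\lambda)$ shows this equals $(-1)^{\#\{i\in\lambda:i>r\}}\tS_{\lambda\cup\{r\}}(\lambda\cup\{r\})$. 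Uniqueness in \Cref{lem:caract-S} then gives $D^r_\lambda=(-1)^{\#\{i\in\lambda:i>r\}}\tS_{\lambda\cup\{r\}}$. The case $r\in\lambda$ follows, exactly as in \Cref{lem:S-lambda+n}, by substituting the already proved case (applied to $\lambda\backslash\{r\}$) into $\tS_\lambda(\mu\backslash\{t\})$ and observing that the resulting double sum cancels in pairs $(t,t')$.

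Combining the two steps, convolution with the push-forward of \eqref{fancycreators} sends $\omega_\lambda\mapsto(-1)^{\#\{i\in\lambda:i>r\}}\omega_{\lambda\cup\{r\}}$, which is precisely the effect of left multiplication by $\omega_r$ on the exterior-algebra basis $\{\omega_\lambda\}$ of \Cref{notnew} (the sign being the number of generators $\omega_i$ with $i>r$ that $\omega_r$ must be commuted past), i.e.\ the creation operator $\omega_r^+$; this proves the proposition. I expect the main obstacle to be the combinatorial identity above, and within it the evaluation at the bottom cell $\mu=\lambda\cup\{r\}$ together with the sign bookkeeping. A secondary technical point is that $\xi=c_1(W/\tW)$ is defined only on $\ccZ_{\supse,k}$ and not on all of $\ccZ$, so the localisation argument must be phrased as restriction to those $\rmT$-fixed points lying on $\ccZ_{\supse,k}$ rather than through a naive projection formula.
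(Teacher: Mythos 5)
Your proof is correct and follows the paper's own argument closely: you localise to torus fixed points, identify the coefficient in the fixed-point basis, reduce to a combinatorial identity for equivariant Schubert classes (the exact analogue of the paper's Lemma~\ref{lem:S-lambda+r}), and establish it by verifying the three characterising properties of Lemma~\ref{lem:caract-S}. You are in a few places slightly more explicit than the paper: you spell out that $\prod_{p=r+1}^{n}(\ccT_p-\ccT_t)$ kills all summands with $t>r$ (the paper's parenthetical in the proof of Lemma~\ref{lem:S-lambda+r} only addresses $t<r$ when proving (ii)), you actually carry out the degree count for property (iii) (the paper dismisses it as obvious), and you verify the sign $(-1)^{|\lambda_{>r}|}$ against the decreasing-index convention of Notation~\ref{notnew}. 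These are the same argument filled in, not a different route.
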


\begin{proof}
We can write the push-forward $\oX\in H_*^\rmG(\ccY\times\ccY)$ of the class \eqref{fancycreators} in the fixed point basis as
$
\oX=\sum_{\mu_1,\mu_2,w_1,w_2}\tB^{+,r}_{w_1,w_2,\mu_1,\mu_2}[x_{w_1,w_2,\mu_1,\mu_2}],  
$ where

\begin{equation*}
\tB^{+,r}_{w_1,w_2,\mu_1,\mu_2}=
\begin{cases}
\tA_{w_1,\mu_1}^{-1}\frac{\prod_{p=r+1}^{n}(\ccT_{w_1(p)}-\ccT_{w_1(t)}))}{\prod_{i\in \mu_2}(\ccT_{w_1(t)}-\ccT_{w_1(i)})} &\mbox{ if }w_1=w_2, ~\mu_1=\mu_2+\epsilon_t,\\
0 & \mbox{ otherwise}.
\end{cases}
\end{equation*}

The coefficient of $[x_{z,\nu}]$ in $\oX\star P\omega_\lambda$ is then, again by \eqref{unclear} and \eqref{eq:pol-wedge-loc}  equal to 
\begin{equation*}
\tA^{-1}_{w,\mu}P_w\sum_{t\in\mu}\frac{\prod_{p=r+1}^{n}(\ccT_{w(p)}-\ccT_{w(t)})\tS^w_\lambda(\mu\backslash\{t\})}{\prod_{i\in \mu\backslash\{t\}}(\ccT_{w(t)}-\ccT_{w(i)})}.
\end{equation*}
Now, the statement follows from \Cref{lem:S-lambda+r} below and \Cref{rk-Sw}.
\end{proof}

\begin{lem}
\label{lem:S-lambda+r}
Let $r\in[1;n-1]$. The equivariant Schubert classes $\tS_\lambda:=\tS^\Id_\lambda$ satisfy
\begin{equation}\label{formulaforfancycreator}
\sum_{t\in \mu}\frac{\prod_{p=r+1}^{n}(\ccT_p-\ccT_t)\tS_\lambda(\mu\backslash\{t\})}{\prod_{i\in\mu\backslash\{t\}}(\ccT_t-\ccT_i)}=
\begin{cases}
(-1)^{|\lambda_{>r}|}\tS_{\lambda\cup \{r\}}(\mu) &\mbox{ if }r\not\in\lambda,\\
0 &\mbox{ if }r\in\lambda.
\end{cases}
\end{equation}
\end{lem}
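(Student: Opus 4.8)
The plan is to mimic the proof of Lemma~\ref{lem:S-lambda+n}, using the characterisation of equivariant Schubert classes in Lemma~\ref{lem:caract-S}. Write $D^r_\lambda(\mu)$ for the left-hand side of \eqref{formulaforfancycreator}. As observed in the proof of Lemma~\ref{lem:creation+r}, the family $\{D^r_\lambda(\mu)\}_\mu$ is the family of restrictions to the $\rmT$-fixed points $x_{\Id,\mu}$ of the homogeneous class $\oX\star\omega_\lambda\in H^*_\rmG(\ccY)$, where $\oX\in H^*_\rmG(\ccY\times\ccY)$ is the push-forward of $\prod_{p=r+1}^n(X_p-\xi)$; in particular $(-1)^{|\lambda_{>r}|}D^r_\lambda$ is also such a restriction-family (of $(-1)^{|\lambda_{>r}|}\oX\star\omega_\lambda$). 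Hence, by the uniqueness statement in Lemma~\ref{lem:caract-S}, in the case $r\notin\lambda$ it suffices to check that $(-1)^{|\lambda_{>r}|}D^r_\lambda$ satisfies the three characterising properties of $\tS_{\lambda\cup\{r\}}$ (for $w=\Id$); the case $r\in\lambda$ will then follow by a pairwise cancellation argument as in Lemma~\ref{lem:S-lambda+n}.

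A first remark, used throughout the case $r\notin\lambda$, is that only the summands with $t\in\mu$ and $t\leqslant r$ contribute: for $t>r$ the factor with $p=t$ in $\prod_{p=r+1}^n(\ccT_p-\ccT_t)$ vanishes. For property~(i) (support above $\lambda\cup\{r\}$): if $D^r_\lambda(\mu)\ne 0$, choose $t\in\mu$ with $t\leqslant r$ and $\tS_\lambda(\mu\backslash\{t\})\ne 0$; then $\mu\backslash\{t\}\geqslant\lambda$ by Lemma~\ref{lem:caract-S}(i), and comparing partial sums (using $t\leqslant r$) gives $\mu=(\mu\backslash\{t\})+\epsilon_t\geqslant\lambda+\epsilon_r=\lambda\cup\{r\}$. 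For property~(iii): by Lemma~\ref{lem:caract-S}(iii) each nonzero summand is a rational function of degree $(n-r)+\#\inv(\lambda)+\binom n2-|\lambda|$, and tracking which inversions through position $r$ are created and which destroyed when $r$ is added to $\lambda$ yields the identity $\#\inv(\lambda\cup\{r\})=\#\inv(\lambda)+(n-r)-|\lambda|$, turning the degree into $\#\inv(\lambda\cup\{r\})+\binom n2$. For property~(ii): at $\mu=\lambda\cup\{r\}$ only the $t=r$ summand survives (for $t\in\lambda$ with $t<r$ one has $(\lambda\cup\{r\})\backslash\{t\}\not\geqslant\lambda$, so $\tS_\lambda$ vanishes there), so $D^r_\lambda(\lambda\cup\{r\})=\tS_\lambda(\lambda)\prod_{p>r}(\ccT_p-\ccT_r)/\prod_{i\in\lambda}(\ccT_r-\ccT_i)$; substituting the formulas for $\tS_\lambda(\lambda)$ and $\tS_{\lambda\cup\{r\}}(\lambda\cup\{r\})$ from Lemma~\ref{lem:caract-S}(ii) (the $\tA_\Id$ factors cancel) together with the explicit change-of-inversions formula, the quotient $D^r_\lambda(\lambda\cup\{r\})/\tS_{\lambda\cup\{r\}}(\lambda\cup\{r\})$ reduces to $\prod_{p>r,\,p\in\lambda}\tfrac{\ccT_p-\ccT_r}{\ccT_r-\ccT_p}=(-1)^{|\lambda_{>r}|}$. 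This completes the case $r\notin\lambda$.

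For $r\in\lambda$, set $\lambda'=\lambda\backslash\{r\}$. Since $\lambda'\cup\{r\}=\lambda$ and $|\lambda'_{>r}|=|\lambda_{>r}|$, the already-established case gives $\tS_\lambda(\nu)=(-1)^{|\lambda_{>r}|}\sum_{t'\in\nu}\prod_{p>r}(\ccT_p-\ccT_{t'})\,\tS_{\lambda'}(\nu\backslash\{t'\})/\prod_{i\in\nu\backslash\{t'\}}(\ccT_{t'}-\ccT_i)$ for all $\nu$. Plugging $\nu=\mu\backslash\{t\}$ into $D^r_\lambda(\mu)$ rewrites it as $(-1)^{|\lambda_{>r}|}$ times a sum over ordered pairs $t\ne t'$ in $\mu$; splitting $\prod_{i\in\mu\backslash\{t\}}(\ccT_t-\ccT_i)=(\ccT_t-\ccT_{t'})\prod_{i\in\mu\backslash\{t,t'\}}(\ccT_t-\ccT_i)$ exhibits the $(t,t')$-summand as the negative of the $(t',t)$-summand (numerators and the symmetric part of the denominators agree, only $\ccT_t-\ccT_{t'}$ changes sign), so the whole sum vanishes.

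The partial-sum check in~(i) and the degree bookkeeping in~(iii) are routine; the step that requires genuine care — and which I expect to be the main obstacle — is the sign computation in~(ii): one must describe $\inv(\lambda\cup\{r\})$ precisely in terms of $\inv(\lambda)$ (the inversions $(i,r)$ with $i\in\lambda$, $i<r$ are lost, the inversions $(r,j)$ with $j\notin\lambda$, $j>r$ are gained) and cancel the resulting product against $\prod_{p=r+1}^n(\ccT_p-\ccT_r)/\prod_{i\in\lambda}(\ccT_r-\ccT_i)$ so that exactly the factors indexed by $\{p>r:\,p\in\lambda\}$ remain, each contributing $-1$.
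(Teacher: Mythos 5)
Your proposal is correct and follows essentially the same route as the paper: identify the left-hand side as the fixed-point restrictions of $\oX\star\omega_\lambda$, verify the characterising properties (i)--(iii) of Lemma~\ref{lem:caract-S} for $(-1)^{|\lambda_{>r}|}D^r_\lambda$ when $r\notin\lambda$, and deduce the case $r\in\lambda$ by the pairwise $(t,t')\leftrightarrow(t',t)$ cancellation. Your treatment is in fact slightly more explicit than the paper's in two spots — observing that only summands with $t\leqslant r$ survive (which is what makes the support bound $\mu\geqslant\lambda\cup\{r\}$ rather than just $\mu\geqslant\lambda\cup\{t\}$ go through) and spelling out the inversion-count identity behind the sign — but the argument is the same.
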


\begin{proof}
Denote by $D_\lambda(\mu)$ the left hand side of \eqref{formulaforfancycreator}. This is a polynomial, by \Cref{lem:caract-S}(ii) and arises by restriction of $x=\oX\star\omega_\lambda\in H^*_\rmG(\calF\times \calG_k)\cong H^*_\rmT(\calG_k).$

Assume $r\not\in \lambda$. To verify  \eqref{formulaforfancycreator} it suffices to verify again the characterising properties (i)-(iii) from \Cref{lem:caract-S} for $(-1)^{|\lambda_{>r}|}D_\lambda(\mu)$. 
For (i) assume that $D_\lambda(\mu)\ne 0$ for some $\mu$. Then there exists $t\in \mu$ such that $\tS_\lambda(\mu\backslash\{t\})\ne 0$. This implies $\mu\backslash\{t\}\geqslant \lambda$ and thus $\mu\geqslant \lambda \cup\{t\}$. For (ii) we have 
\begin{equation*}
(-1)^{|\lambda_{>r}|}D_\lambda(\lambda\cup\{r\})=\frac{\prod_{p\not\in\lambda,p>r}(\ccT_p-\ccT_r)\tS_\lambda(\lambda)}{\prod_{i\in \lambda,i<r}(\ccT_r-\ccT_i)}=\tS_{\lambda\cup\{r\}}(\lambda\cup\{r\}).
\end{equation*}
The second equality holds by \Cref{lem:caract-S} whereas the first one 
holds since only the summand for $t=r$ survives in the definition of  $D_\lambda(\lambda\cup\{n\})$. (The others are zero because $\lambda\cup\{r\}\backslash\{t\}<\lambda$ for $t<r$.) Thus (ii) holds, and (iii) is obvious.

Now, assume $t\in \lambda$. Using the already proved case we calculate
\begin{equation*}
\tS_\lambda(\mu\backslash\{t\})=(-1)^{|\lambda_{>r}|}\sum_{t'\in\mu\backslash\{t\}}\frac{\prod_{p=r+1}^{n}(\ccT_p-\ccT_{t'})\tS_{\lambda\backslash \{r\}}(\mu\backslash\{t,t'\})}{\prod_{i\in\mu\backslash\{t,t'\}}(\ccT_{t'}-\ccT_{i})},
\end{equation*}
\begin{equation*}
D_\lambda(\mu)=\sum_{t,t'\in\mu,t\ne t'}\frac{\prod_{p=r+1}^{n}(\ccT_p-\ccT_{t})\prod_{p=r+1}^{n}(\ccT_p-\ccT_{t'})\tS_{\lambda\backslash \{r\}}(\mu\backslash\{t,t'\})}{\prod_{i\in\mu\backslash\{t\}}(\ccT_t-\ccT_i)\prod_{i\in\mu\backslash\{t,t'\}}(\ccT_{t'}-\ccT_{i})}=0,
\end{equation*}
since the summands for $(t,t')$ and $(t',t)$ cancel each other.
\end{proof}
\begin{rk}
We also get a geometric construction of the element $\vartheta_r$ defined in \cite[Prop. 4.9]{AEHL} to give an exterior basis of the graded center of $\hNH_n$. Their $\vartheta_r$ corresponds to the push-forward of $\sum_{l=0}^{n-r}e_{n-r-l}(X_1,\ldots ,X_n)(-\xi)^l\in H_*^\rmG(\ccZ_{\supse,k})$ ($e_r$ is the $r$th elementary symmetric polynomial). 
\end{rk}
\subsection{Annihilation operators}
Similarly to $\ccZ_{\supse,k}$ we consider the \emph{opposite Grassmannian inclusion subvariety} 
\begin{equation}\label{Grassindopp}
\ccZ_{\subse,k}=\{((\bV,W),(\tV,\tW))\in\ccY_{k}\times\ccY_{k+1}\mid W\subset \tW, \bV=\tV\}.
\end{equation}

\begin{prop}
\label{lem:annihilation-1}
Convolution with the fundamental class $[\ccZ_{\subse,k}]$ induces, via \Cref{Schurandomega}, the (signed) annihilation operator $(-1)^{k}\omega_1^-\colon H_*^\rmG(\ccY_{k+1})\to H_*^\rmG(\ccY_{k})$.
\end{prop}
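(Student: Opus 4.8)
The plan is to imitate, for the transposed correspondence, the creation-operator computations of \Cref{lem:creation+n} and \Cref{lem:creation+r}. First I would write $[\ccZ_{\subse,k}]$ in the $\rmT$-fixed point basis. Since $\ccZ_{\subse,k}=\calF\times\{W\subset\tW\}$ is smooth, \Cref{lem:comp-in-loc} gives $[\ccZ_{\subse,k}]=\sum_x\op{eu}(\ccZ_{\subse,k},x)^{-1}[x]$, and a short tangent-space computation (using the fibration $\{W\subset\tW\}\to\calG_k$, $(W,\tW)\mapsto W$, with fibre $\bbP(V/W)$) shows that the coefficient $\tB^-_{w_1,w_2,\mu_1,\mu_2}$ vanishes unless $w_1=w_2$ and $\mu_2=\mu_1+\epsilon_t$ for some $t\notin\mu_1$, in which case it equals $\tA^{-1}_{w_1,\mu_1}\prod_{j\notin\mu_2}(\ccT_{w_1(j)}-\ccT_{w_1(t)})^{-1}$.

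Then, running \eqref{unclear} and \eqref{eq:pol-wedge-loc} exactly as in the proof of \Cref{lem:creation+n}, the coefficient of $[x_{z,\nu}]$ in $[\ccZ_{\subse,k}]\star(P\omega_\lambda)$ (for $\lambda\in\Lambda_{k+1}(n)$) comes out as $\tA^{-1}_{z,\nu}P_z\cdot z\big(\sum_{t\notin\nu}\tS_\lambda(\nu\cup\{t\})/\prod_{j\notin\nu\cup\{t\}}(\ccT_j-\ccT_t)\big)$, using $\tS^z_\lambda=z(\tS^\Id_\lambda)$ from \Cref{rk-Sw}. Comparing this with the fixed-point expansion \eqref{eq:pol-wedge-loc} of $P\omega_{\lambda\setminus\{1\}}$, and using that the reversed-index convention of \Cref{notnew} gives $\omega^-_1(\omega_\lambda)=(-1)^{|\lambda|-1}\omega_{\lambda\setminus\{1\}}$ (which is exactly what converts the global $(-1)^k$ in the statement into no sign at the level of classes), the whole proposition reduces to the Schubert identity
\[
\sum_{t\notin\nu}\frac{\tS_\lambda(\nu\cup\{t\})}{\prod_{j\notin\nu\cup\{t\}}(\ccT_j-\ccT_t)}=
\begin{cases}
\tS_{\lambda\setminus\{1\}}(\nu) & \text{if }1\in\lambda,\\
0 & \text{if }1\notin\lambda,
\end{cases}
\]
the annihilation-side counterpart of \Cref{lem:S-lambda+n} and \Cref{lem:S-lambda+r}.

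Finally I would prove this identity. For $1\in\lambda$ its left-hand side is a polynomial, being the restriction of the class $[\ccZ_{\subse,k}]\star\omega_\lambda\in H^*_\rmG(\ccY_k)$ to $x_{\Id,\nu}$, so by uniqueness in \Cref{lem:caract-S} it suffices to verify the three characterising properties for $\tS^\Id_{\lambda\setminus\{1\}}$: the support bound (i) because adjoining one element shifts partial sums by at most $1$; property (ii) because at $\nu=\lambda\setminus\{1\}$ only the $t=1$ summand survives while $\inv(\lambda)=\inv(\lambda\setminus\{1\})\sqcup\{(1,j):j\notin\lambda\}$ makes the stray factor $\prod_{j\notin\lambda}(\ccT_j-\ccT_1)$ cancel the denominator; and the degree count (iii) via $\#\inv(\lambda)=\#\inv(\lambda\setminus\{1\})+(n-k-1)$. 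For $1\notin\lambda$ I would read the already-proved case backwards, $\tS_\lambda(\nu')=\sum_{t'\notin\nu'}\tS_{\lambda\cup\{1\}}(\nu'\cup\{t'\})/\prod_{j\notin\nu'\cup\{t'\}}(\ccT_j-\ccT_{t'})$, substitute $\nu'=\nu\cup\{t\}$, and observe that the resulting double sum over distinct $t,t'\notin\nu$ cancels in pairs, the $(t,t')$ and $(t',t)$ denominators differing precisely by the antisymmetric factor $\ccT_{t'}-\ccT_t$ versus $\ccT_t-\ccT_{t'}$ — the same device as in \Cref{lem:S-lambda+n} and \Cref{lem:S-lambda+r}.

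I expect the one genuinely new point to be this vanishing case $1\notin\lambda$: in the creation lemmas the ``$=0$'' alternative concerns the distinguished index lying \emph{inside} $\lambda$, whereas here it lies \emph{outside}, so the bootstrap must be run in the opposite direction, expanding a Schubert class of Grassmannian degree $k+1$ in terms of those of degree $k+2$; once this reversal is spotted the cancellation is immediate. The only other subtlety is keeping the $\frakS_n$-twist of the fixed points and the reversed-index convention for $\omega_\lambda$ aligned, so that the sign in the statement comes out as $(-1)^k$ rather than its negative.
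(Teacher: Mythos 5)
Your proposal is correct and follows essentially the same route as the paper: the same fixed-point expansion of $[\ccZ_{\subse,k}]$, reduction to the Schubert identity $\sum_{t\not\in\mu}\tS_\lambda(\mu\cup\{t\})/\prod_{i\not\in\mu\cup\{t\}}(\ccT_i-\ccT_t)=\tS_{\lambda\setminus\{1\}}(\mu)$ (resp.\ $0$), which is exactly the paper's \Cref{lem:S-lambda-1}, proved there too by verifying the characterising properties of \Cref{lem:caract-S} for $1\in\lambda$ and by the reversed bootstrap with pairwise cancellation for $1\notin\lambda$. Your sign bookkeeping via $\omega_1^-(\omega_\lambda)=(-1)^{|\lambda|-1}\omega_{\lambda\setminus\{1\}}$ also correctly accounts for the $(-1)^k$ in the statement.
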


\begin{proof}
Writing 
$
[\ccZ_{\subse,k}]=\sum_{w_1,w_2,\mu_1,\mu_2}\tB^{-,k}_{w_1,w_2,\mu_1,\mu_2}[x_{w_1,w_2,\mu_1,\mu_2}] 
$ we  have 
\begin{align}\label{Bminusk}
\hspace{-3.5mm}\tB^{-,k}_{w_1,w_2,\mu_1,\mu_2}=
\begin{cases}
\tA_{w_1,\mu_1}^{-1}\!\prod_{i\not\in \mu_2}(\ccT_{w_1(i)}-\ccT_{w_1(t)})^{-1} &
\!\!\!\!\!\mbox{ if }(w_1,\mu_1)=(w_2, \mu_2-\epsilon_t),\\
0 &\!\!\!\!\!\mbox{ otherwise}.
\end{cases}
\end{align}
The coefficient of $[x_{w,\mu}]$ in the product $[\ccZ_{\subse,k}]\star P\omega_\lambda$ equals $$
\tA^{-1}_{w,\mu}\sum_{t\not\in\mu}\frac{\tS_\lambda^w(\mu\cup t)}{\prod_{i\not\in\mu\cup \{t\}}(\ccT_{w(i)}-\ccT_{w(t)})}.
$$
Now the statement follows from \Cref{lem:S-lambda-1} below and \Cref{rk-Sw}.
\end{proof}

\begin{lem}
\label{lem:S-lambda-1}
The equivariant Schubert classes $\tS_\lambda:=\tS^\Id_\lambda$ satisfy the formula 
\begin{equation}\label{Schursforann}
\sum_{t\not\in \mu}\frac{\tS_\lambda(\mu\cup\{t\})}{\prod_{i\not\in\mu\cup{t}}(\ccT_i-\ccT_t)}=
\begin{cases}
\tS_{\lambda\backslash \{1\}}(\mu) &\mbox{ if }1\in\lambda,\\
0 &\mbox{ if }1\not\in\lambda.
\end{cases}
\end{equation}
\end{lem}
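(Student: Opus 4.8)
The plan is to mimic the proof of Lemma~\ref{lem:S-lambda+n}, exploiting the formal analogy between the annihilation operator $\omega_1^-$ and the creation operator $\omega_n^+$ (adding the largest index $n$ to $\lambda$ being replaced by removing the smallest index $1$). The statement is for the $\Id$-Schubert classes, which suffices for Lemma~\ref{lem:annihilation-1} via Remark~\ref{rk-Sw} (applying the ring automorphism $w$ of $\mathrm{Frac}(\tR_\rmT)$ to the $\Id$-identity yields the $w$-version, with each $\ccT_i$ replaced by $\ccT_{w(i)}$). Denote by $D_\lambda(\mu)$ the left hand side of~\eqref{Schursforann}. As computed in the proof of Lemma~\ref{lem:annihilation-1}, $\tA_{\Id,\mu}^{-1}D_\lambda(\mu)$ is the coefficient of $[x_{\Id,\mu}]$ in $[\ccZ_{\subse,k}]\star\omega_\lambda$; hence $\{D_\lambda(\mu)\}_\mu$ is the family of restrictions to the $\rmT$-fixed points $x_{\Id,\mu}$ of the single class $[\ccZ_{\subse,k}]\star\omega_\lambda\in H^*_\rmG(\calF\times\calG_k)$. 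In particular each $D_\lambda(\mu)$ is a polynomial, and since convolution of homogeneous classes is homogeneous, all $D_\lambda(\mu)$ share the same cohomological degree (or vanish). So $\{D_\lambda(\mu)\}_\mu$ is eligible for the characterisation in Lemma~\ref{lem:caract-S}.

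\emph{Case $1\in\lambda$.} I would verify that $\{D_\lambda(\mu)\}_\mu$ satisfies properties (i)--(iii) of Lemma~\ref{lem:caract-S} for $\lambda\backslash\{1\}$, which forces $D_\lambda=\tS_{\lambda\backslash\{1\}}$. For (i): if $D_\lambda(\mu)\ne 0$ then $\tS_\lambda(\mu\cup\{t\})\ne 0$ for some $t\notin\mu$, hence $\mu\cup\{t\}\geqslant\lambda$ by Lemma~\ref{lem:caract-S}(i), and comparing partial sums (using $\lambda_1=1$, so $\sum_{i\leqslant r}(\lambda\backslash\{1\})_i=\sum_{i\leqslant r}\lambda_i-1$) gives $\mu\geqslant\lambda\backslash\{1\}$. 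For (ii), evaluate at $\mu=\lambda\backslash\{1\}$: a summand with $t\ne 1$ vanishes because $(\lambda\backslash\{1\})\cup\{t\}\not\geqslant\lambda$ (its first partial sum is $0<1$), so only $t=1$ survives; since $(\lambda\backslash\{1\})\cup\{1\}=\lambda$ and $\inv(\lambda)=\inv(\lambda\backslash\{1\})\sqcup\{(1,j):j\notin\lambda\}$, the factor $\prod_{j\notin\lambda}(\ccT_j-\ccT_1)$ in $\tS_\lambda(\lambda)$ (via Lemma~\ref{lem:caract-S}(ii)) cancels the denominator $\prod_{i\notin\lambda}(\ccT_i-\ccT_1)$, leaving exactly $\tS_{\lambda\backslash\{1\}}(\lambda\backslash\{1\})$. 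Property (iii) is immediate, since the common homogeneous degree of $D_\lambda$ equals that of $D_\lambda(\lambda\backslash\{1\})=\tS_{\lambda\backslash\{1\}}(\lambda\backslash\{1\})$, namely $\#\inv(\lambda\backslash\{1\})+\binom{n}{2}$.

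\emph{Case $1\notin\lambda$.} Here I would apply the already-proven case to the partition $\lambda\cup\{1\}$ and to $\mu\cup\{t\}$ in place of $\mu$, to rewrite
\[
\tS_\lambda(\mu\cup\{t\})=\sum_{t'\notin\mu\cup\{t\}}\frac{\tS_{\lambda\cup\{1\}}(\mu\cup\{t,t'\})}{\prod_{i\notin\mu\cup\{t,t'\}}(\ccT_i-\ccT_{t'})}.
\]
Substituting into $D_\lambda(\mu)$ turns it into a double sum over ordered pairs $(t,t')$ of distinct indices not in $\mu$. Factoring $\ccT_{t'}-\ccT_t$ out of $\prod_{i\notin\mu\cup\{t\}}(\ccT_i-\ccT_t)$, the $(t,t')$-term and the $(t',t)$-term have the same numerator $\tS_{\lambda\cup\{1\}}(\mu\cup\{t,t'\})$ and the same symmetric denominator $\prod_{i}(\ccT_i-\ccT_t)\prod_i(\ccT_i-\ccT_{t'})$ (product over $i\notin\mu\cup\{t,t'\}$), differing only by the factor $\tfrac{1}{\ccT_{t'}-\ccT_t}$ versus $\tfrac{1}{\ccT_t-\ccT_{t'}}$; these cancel in pairs, so $D_\lambda(\mu)=0$, as desired.

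The computations throughout are routine; the point requiring care is the bookkeeping of the dominance order when passing between $\lambda$ and $\lambda\backslash\{1\}$ — ensuring that only $t=1$ contributes in (ii) and that (i) holds — together with the sign/denominator cancellation in the case $1\notin\lambda$. All of this runs exactly parallel to the proof of Lemma~\ref{lem:S-lambda+n}, with the roles of ``adding the largest index $n$'' and ``removing the smallest index $1$'' interchanged.
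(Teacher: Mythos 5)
Your proof is correct and runs exactly parallel to the paper's own argument: both identify $D_\lambda(\mu)$ as the fixed-point restriction of the class $[\ccZ_{\subse,k}]\star\omega_\lambda$, check properties (i)--(iii) of Lemma~\ref{lem:caract-S} for $\lambda\backslash\{1\}$ when $1\in\lambda$, and then derive the case $1\notin\lambda$ from the proved case by substituting the identity for $\tS_\lambda(\mu\cup\{t\})$ and observing the pairwise cancellation between $(t,t')$ and $(t',t)$. You spell out several steps the paper leaves implicit (the partial-sum comparison for the dominance order, the decomposition $\inv(\lambda)=\inv(\lambda\backslash\{1\})\sqcup\{(1,j):j\notin\lambda\}$ used in (ii), the explicit $\ccT_{t'}-\ccT_t$ factoring in the cancellation), but these are elaborations, not a different route.
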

\begin{proof}
The proof is similar to the proof of \Cref{lem:S-lambda+n}. We are therefore brief. Denote by $D_\lambda(\mu)$ the left hand side of \eqref{Schursforann} which arises by restriction from $x=[\ccZ_{\subse,k}]\star\omega_\lambda\in H^*_\rmG(\calF\times \calG_k)\cong H^*_\rmT(\calG_k),$ see the proof of \Cref{lem:annihilation-1}. 

Assuming $1\in \lambda$ we again check the properties (i)-(ii), since (iii) is obvious. 

If $D_\lambda(\mu)\ne 0$ for some $\mu$, then there exists $t\in \mu$ such that $\tS_\lambda(\mu\cup\{t\})\ne 0$. Thus $\mu\cup\{t\}\geqslant \lambda$ and therefore $\mu\geqslant \lambda \backslash \{1\}$ proving (i). For (ii) note that 
\begin{equation*}
D_\lambda(\lambda\backslash \{1\})=\frac{\tS_\lambda(\lambda)}{\prod_{i\not\in\lambda}(\ccT_i-\ccT_1)}=\tS_{\lambda\backslash \{1\}}(\lambda\backslash \{1\}).
\end{equation*}
Knowing the case $1\notin \lambda$, we deduce the case  $1\notin \lambda$ by computing
\begin{equation*}
\tS_\lambda(\mu\cup\{t\})=\sum_{t'\not\in\mu\cup\{t\}}\frac{\tS_{\lambda\cup \{1\}}(\mu\cup\{t,t'\})}{\prod_{i\not\in\mu\cup\{t,t'\}}(\ccT_{i}-\ccT_{t'})}\quad\text{and}
\end{equation*}
\begin{equation*}
D_\lambda(\mu)=\sum_{t,t'\not\in\mu,t\ne t'}\frac{\tS_{\lambda\cup \{1\}}(\mu\cup\{t,t'\})}{\prod_{i\in\mu\cup\{t\}}(\ccT_i-\ccT_t)\prod_{i\not\in\mu\cup\{t,t'\}}(\ccT_{i}-\ccT_{t'})}=0.\hfill\qedhere
\end{equation*}
\end{proof}

\bigskip

Abusing notation, we denote by 
$\xi\in  H_*^\rmG(\ccZ_{\subse,k})$ also the Chern class of the line bundle given by $\tW/W$ for the opposite Grassmannian inclusion variety, \eqref{Grassindopp}. 

\begin{prop}
\label{lem:annihilation-r}
Let $r\in[1;n-1]$. The push-forward to $H_*^\rmG(\ccY\times\ccY)$ of 
\begin{equation}\label{fancyann}
\prod_{p=1}^{r-1}(X_p-\xi)\in H_*^\rmG(\ccZ_{\subse,k})
\end{equation}
agrees with the (signed) annihilation operator $(-1)^{k}\omega^-_r\colon H_*^\rmG(\ccY_{k+1})\to H_*^\rmG(\ccY_{k})$.
\end{prop}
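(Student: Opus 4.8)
The plan is to run the proof of \Cref{lem:creation+r} on the dual side: express the push-forward class in the basis of $\rmT$-fixed points, compute its convolution action by localisation, and reduce the whole statement to a Schubert-class identity which is then checked against the characterisation in \Cref{lem:caract-S}. The case $r=1$ (empty product) is exactly \Cref{lem:annihilation-1}, so \eqref{Schursforann} serves both as a guide and as a base case.

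First I would compute the push-forward $\oX\in H_*^\rmG(\ccY\times\ccY)$ of \eqref{fancyann} in the fixed-point basis. Since $\ccZ_{\subse,k}$ fibres over $\ccY_k$ with fibre $\bbP(V/W)$, a $\rmT$-fixed point $x_{w_1,w_2,\mu_1,\mu_2}$ contributes only when $w_1=w_2$ and $\mu_1=\mu_2-\epsilon_t$ for some $t$; there the classes $X_p,\xi$ restrict to $\ccT_{w_1(p)},\ccT_{w_1(t)}$ and the tangent space has Euler class $\tA_{w_1,\mu_1}\prod_{i\notin\mu_2}(\ccT_{w_1(i)}-\ccT_{w_1(t)})$, whence
\[
\tB^{-,r}_{w_1,w_2,\mu_1,\mu_2}=
\begin{cases}
\tA_{w_1,\mu_1}^{-1}\,\dfrac{\prod_{p=1}^{r-1}(\ccT_{w_1(p)}-\ccT_{w_1(t)})}{\prod_{i\notin\mu_2}(\ccT_{w_1(i)}-\ccT_{w_1(t)})}&\text{if }w_1=w_2,\ \mu_1=\mu_2-\epsilon_t,\\
0&\text{otherwise},
\end{cases}
\]
which generalises \eqref{Bminusk}. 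Combining \eqref{unclear}, \eqref{eq:pol-wedge-loc} and $\tS^w_\lambda=w(\tS^\Id_\lambda)$ from \Cref{rk-Sw}, the coefficient of $[x_{z,\nu}]$ in $\oX\star P\omega_\lambda$ becomes $\tA_{z,\nu}^{-1}P_z\,z(D_\lambda(\nu))$, with
\[
D_\lambda(\nu)=\sum_{t\notin\nu}\frac{\prod_{p=1}^{r-1}(\ccT_p-\ccT_t)\,\tS_\lambda(\nu\cup\{t\})}{\prod_{i\notin\nu\cup\{t\}}(\ccT_i-\ccT_t)}.
\]

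Next I would prove the analogue of \Cref{lem:S-lambda-1} and \Cref{lem:S-lambda+r}, namely that $D_\lambda(\nu)=(-1)^{|\lambda_{<r}|}\tS_{\lambda\setminus\{r\}}(\nu)$ if $r\in\lambda$ and $D_\lambda(\nu)=0$ if $r\notin\lambda$. As before, $D_\lambda$ is a polynomial arising by restriction of $\oX\star\omega_\lambda\in H^*_\rmG(\calF\times\calG_k)\cong H^*_\rmT(\calG_k)$, so for $r\in\lambda$ it suffices to check that $(-1)^{|\lambda_{<r}|}D_\lambda$ satisfies properties (i)--(iii) of \Cref{lem:caract-S} for the partition $\lambda\setminus\{r\}$: the numerator factor $\prod_{p=1}^{r-1}(\ccT_p-\ccT_t)$ forces $t\geqslant r$ in every nonzero summand, so $\tS_\lambda(\nu\cup\{t\})\neq 0$ gives $\nu\cup\{t\}\geqslant\lambda$ and hence $\nu\geqslant\lambda\setminus\{r\}$, which is (i); (iii) is automatic; for (ii) one checks that at $\nu=\lambda\setminus\{r\}$ only the summand $t=r$ survives (as $(\lambda\setminus\{r\})\cup\{t\}\not\geqslant\lambda$ for $t>r$), and a direct cancellation of $\prod_{p=1}^{r-1}(\ccT_p-\ccT_r)$ against $\prod_{i\notin\lambda}(\ccT_i-\ccT_r)$ together with the discrepancy between $\inv(\lambda)$ and $\inv(\lambda\setminus\{r\})$ in \Cref{lem:caract-S}(ii) produces the sign $(-1)^{|\lambda_{<r}|}$. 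The case $r\notin\lambda$ follows by substituting the just-proven formula for $\tS_\lambda(\nu\cup\{t\})$ into $D_\lambda(\nu)$ and noting that the resulting double sum over ordered pairs $(t,t')$ is antisymmetric in $t,t'$. Finally, plugging $D_\lambda(\nu)=(-1)^{|\lambda_{<r}|}\tS_{\lambda\setminus\{r\}}(\nu)$ back into the coefficient computation, comparing with \eqref{eq:pol-wedge-loc}, and using the sign identity $(-1)^k\omega^-_r(\omega_\lambda)=(-1)^{|\lambda_{<r}|}\omega_{\lambda\setminus\{r\}}$ — obtained by moving $\omega_r$ to the leftmost slot of the wedge $\omega_\lambda$ and using $|\lambda_{<r}|+|\lambda_{>r}|=|\lambda|-1$ — together with the fact that multiplication by $P$ commutes with $\omega^-_r$, one concludes that $\oX$ acts on $H_*^\rmG(\ccY)$ as $(-1)^k\omega^-_r$.

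The main obstacle I expect is the Schubert-class identity, and specifically property (ii) of \Cref{lem:caract-S}: isolating the unique surviving summand $t=r$ and carrying out the somewhat fiddly cancellation of the numerator factor $\prod_{p=1}^{r-1}(\ccT_p-\ccT_t)$ against the zero pattern of the $\tS_\lambda$'s and the combinatorics of $\inv(\lambda\setminus\{r\})$, which is precisely what pins down the sign $(-1)^{|\lambda_{<r}|}$. Everything else is routine localisation bookkeeping parallel to \Cref{lem:creation+r}.
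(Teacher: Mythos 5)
Your proposal is correct and follows essentially the same route as the paper's proof: localise the push-forward to fixed points (your $\tB^{-,r}$ coefficients coincide with the paper's, since $w_1(\prod_p(\ccT_p-\ccT_t))=\prod_p(\ccT_{w_1(p)}-\ccT_{w_1(t)})$), reduce to the Schubert identity which is exactly \Cref{lem:S-lambda-r}, and verify it via \Cref{lem:caract-S}(i)--(iii) for $r\in\lambda$ and by antisymmetrisation for $r\notin\lambda$. The one place you go further than the paper is the explicit final sign check $(-1)^k\omega^-_r(\omega_\lambda)=(-1)^{|\lambda_{<r}|}\omega_{\lambda\setminus\{r\}}$ via $|\lambda_{<r}|+|\lambda_{>r}|=|\lambda|-1$; this is left implicit in the paper but your derivation is correct and consistent with the conventions of \Cref{notnew}.
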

\begin{proof}
Write the push-forward $X\in H_*^\rmG(\ccY\times\ccY)$ of \eqref{fancyann} as 
\begin{equation*}
X=\sum_{\mu_1,\mu_2,w_1,w_2}\tB^{-,r}_{w_1,w_2,\mu_1,\mu_2}[x_{w_1,w_2,\mu_1,\mu_2}],
\end{equation*}
\begin{equation*}
\tB^{-,r}_{w_1,w_2,\mu_1,\mu_2}=
\begin{cases}
\tA_{w_1,\mu_1}^{-1}\frac{w_1(\prod_{p=1}^{r-1}(\ccT_p-\ccT_t))}{\prod_{i\not\in \mu_2}(\ccT_{w_1(i)}-\ccT_{w_1(t)})} &\mbox{ if }w_1=w_2, ~\mu_1=\mu_2-\epsilon_t,\\
0 & \mbox{ otherwise}.
\end{cases}
\end{equation*}
The coefficient of $[x_{w,\mu}]$ in the product $X\star P\omega_\lambda$ is 
\begin{equation*}
\tA^{-1}_{w,\mu}\sum_{t\not\in\mu}\frac{{\prod_{p=1}^{r-1}(\ccT_{w(p)}-\ccT_{w(t)})}\tS_\lambda^w(\mu\cup t)}{\prod_{i\not\in\mu\cup \{t\}}(\ccT_{w(i)}-\ccT_{w(t)})}.
\end{equation*}
The statement follows from \Cref{lem:S-lambda-r} below and \Cref{rk-Sw}.
\end{proof}

\begin{lem}
\label{lem:S-lambda-r}
Let $r\in[1;n-1]$. The polynomials $\tS_\lambda(\mu):=\tS^\Id_\lambda(\mu)$ satisfy
\begin{equation}\label{formulaforfancyann}
\sum_{t\not\in \mu}\frac{(\prod_{p=1}^{r-1}(\ccT_p-\ccT_t))\tS_\lambda(\mu\cup\{t\})}{\prod_{i\not\in\mu\cup{t}}(\ccT_i-\ccT_t)}=
\begin{cases}
(-1)^{|\lambda_{<r}|}\tS_{\lambda\backslash \{r\}}(\mu) &\mbox{ if }r\in\lambda,\\
0 &\mbox{ if }r\not\in\lambda.
\end{cases}
\end{equation}
\end{lem}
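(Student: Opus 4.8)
The plan is to follow the template of the proof of the creation analogue \Cref{lem:S-lambda+r}. Set $k=|\mu|$, so $\lambda\in\Lambda_{k+1}(n)$, and write $D_\lambda(\mu)$ for the left hand side of \eqref{formulaforfancyann}. As in the proof of \Cref{lem:annihilation-r}, the $D_\lambda(\mu)$ are the restrictions at the $\rmT$-fixed points $x_{\Id,\mu}$ of one single class $x=X\star\omega_\lambda\in H^*_\rmG(\calF\times\calG_k)\cong H^*_\rmT(\calG_k)$, with $X$ the push-forward of $\prod_{p=1}^{r-1}(X_p-\xi)$; in particular each $D_\lambda(\mu)$ is a polynomial, and they are all homogeneous of one common degree (or zero). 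A useful preliminary remark is that the factor $\prod_{p=1}^{r-1}(\ccT_p-\ccT_t)$ contains the vanishing factor $\ccT_t-\ccT_t$ whenever $t\in[1;r-1]$, so the sum in $D_\lambda(\mu)$ runs effectively over $t\notin\mu$ with $t\geqslant r$ only. I will first establish the case $r\in\lambda$ and then deduce from it the case $r\notin\lambda$.

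In the case $r\in\lambda$ the claim is $(-1)^{|\lambda_{<r}|}D_\lambda=\tS_{\lambda\setminus\{r\}}$, which I would prove by verifying the three characterising conditions of \Cref{lem:caract-S} for $(-1)^{|\lambda_{<r}|}D_\lambda$ with base $\lambda\setminus\{r\}\in\Lambda_k(n)$. Condition (i): if $D_\lambda(\mu)\ne0$ there is $t\notin\mu$ with $t\geqslant r$ and $\tS_\lambda(\mu\cup\{t\})\ne0$, whence $\mu\cup\{t\}\geqslant\lambda$; since $t\geqslant r$ we have $\sum_{i=1}^s(\epsilon_t)_i\leqslant\sum_{i=1}^s(\epsilon_r)_i$ for every $s$, and subtracting this from $\mu\cup\{t\}\geqslant\lambda$ yields $\mu\geqslant\lambda\setminus\{r\}$. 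Condition (ii): at $\mu=\lambda\setminus\{r\}$, \Cref{lem:caract-S}(i) forces every summand with $t>r$ to vanish (as $(\lambda\setminus\{r\})\cup\{t\}$ has strictly smaller $r$-th partial sum than $\lambda$), so only $t=r$ survives and $D_\lambda(\lambda\setminus\{r\})=\prod_{p=1}^{r-1}(\ccT_p-\ccT_r)\,\tS_\lambda(\lambda)/\prod_{i\notin\lambda}(\ccT_i-\ccT_r)$; substituting $\tS_\lambda(\lambda)=\tA_\Id\prod_{(i,j)\in\inv(\lambda)}(\ccT_j-\ccT_i)$ and the elementary identity $\inv(\lambda)=\bigl(\inv(\lambda\setminus\{r\})\setminus\{(i,r):i<r,\ \lambda_i=1\}\bigr)\cup\{(r,j):j>r,\ \lambda_j=0\}$ (valid because $r\in\lambda$), all factors cancel except $\prod_{i<r,\,\lambda_i=1}(\ccT_i-\ccT_r)/\prod_{i<r,\,\lambda_i=1}(\ccT_r-\ccT_i)=(-1)^{|\lambda_{<r}|}$, leaving $(-1)^{|\lambda_{<r}|}\tS_{\lambda\setminus\{r\}}(\lambda\setminus\{r\})$. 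Condition (iii) then comes for free, since $(-1)^{|\lambda_{<r}|}D_\lambda$ is the restriction of a homogeneous class whose value at $\lambda\setminus\{r\}$ is, by (ii), nonzero of degree $\#(\inv(\lambda\setminus\{r\}))+\binom{n}{2}$. Uniqueness in \Cref{lem:caract-S} concludes this case.

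For the case $r\notin\lambda$ I would reduce to the case just proved, applied to $\lambda\cup\{r\}\in\Lambda_{k+2}(n)$: since $(\lambda\cup\{r\})_{<r}=\lambda_{<r}$, that case rewrites, for every $\nu\in\Lambda_{k+1}(n)$, the quantity $\tS_\lambda(\nu)=\tS_{(\lambda\cup\{r\})\setminus\{r\}}(\nu)$ as $(-1)^{|\lambda_{<r}|}$ times the corresponding annihilation sum over $t'$ involving $\tS_{\lambda\cup\{r\}}$. Plugging $\nu=\mu\cup\{t\}$ into $D_\lambda(\mu)$ and using $\prod_{i\notin\mu\cup\{t\}}(\ccT_i-\ccT_t)=(\ccT_{t'}-\ccT_t)\prod_{i\notin\mu\cup\{t,t'\}}(\ccT_i-\ccT_t)$ turns $D_\lambda(\mu)$ into a double sum over distinct $t,t'\notin\mu$ in which the $(t,t')$-summand is exactly the negative of the $(t',t)$-summand (numerator and the two remaining denominator products are symmetric in $t,t'$, while $\ccT_{t'}-\ccT_t$ flips sign), so the sum vanishes. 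The step I expect to be the main obstacle is the sign bookkeeping in (ii): one must carry the index reversal built into $\omega_\lambda$ (cf.~\Cref{notnew}) through the $\inv$-rewriting to land precisely on $(-1)^{|\lambda_{<r}|}$, and in (i) it is essential to have first reduced the summation to $t\geqslant r$, the partial-sum comparison being false for $t<r$.
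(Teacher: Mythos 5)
Your proof is correct and follows essentially the same route as the paper's: for $r\in\lambda$ one verifies the characterising properties (i)–(iii) of \Cref{lem:caract-S} for $(-1)^{|\lambda_{<r}|}D_\lambda$ (with only the $t=r$ summand surviving at $\mu=\lambda\setminus\{r\}$), and for $r\notin\lambda$ one substitutes the already-proved case for $\lambda\cup\{r\}$ and cancels the $(t,t')$ and $(t',t)$ summands. Your extra care with the $\inv$-identity and the reduction of the sum to $t\geqslant r$ only makes explicit what the paper's computation does implicitly.
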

\begin{proof}
We argue as for \eqref{formulaforfancycreator}. Denote by $D_\lambda(\mu)$ the left hand side of \eqref{formulaforfancyann}.
First, assume $r\in \lambda$. We check that $(-1)^{|\lambda_{<r}|}D_\lambda(\mu)$ satisfies the properties of \Cref{lem:caract-S} (by the proof of \Cref{lem:annihilation-r} it arises by restriction from
$x=\prod_{p=1}^{r-1}(X_p-z)\star\omega_\lambda\in H^*_\rmG(\calF\times \calG_k)\cong H^*_\rmT(\calG_k)$). 
The support of $D_\lambda$ lies above $\lambda\backslash \{r\}$, since if  $D_\lambda(\mu)\ne 0$, then there exists $t\not\in \mu$ such that $\tS_\lambda(\mu\cup\{t\})\ne 0$ and $t\geqslant r$. This implies $\mu\cup\{t\}\geqslant \lambda$ and thus $\mu\geqslant \lambda \backslash \{r\}$ which shows property $(i)$. Now, 
\begin{equation*}
D_\lambda(\lambda\backslash \{r\})=(\prod_{i=1}^{r-1}(\ccT_i-\ccT_r))\tS_\lambda(\lambda)\prod_{i\not\in\lambda}(\ccT_i-\ccT_r)^{-1}
\end{equation*}
\begin{equation*}
=
(-1)^{|\lambda_{<r}|}(\prod_{i\in\lambda,i<r}(\ccT_r-\ccT_i))\tS_\lambda(\lambda)\prod_{i\not\in\lambda,i>r}(\ccT_i-\ccT_r)^{-1}=(-1)^{|\lambda_{<r}|}\tS_{\lambda\backslash \{r\}}(\lambda\backslash \{r\}). 
\end{equation*}
Here, the first equality holds, since only the summand for $t=r$ survives in $D_\lambda(\lambda\backslash \{r\})$, because $\lambda\backslash \{r\}\cup\{t\}<\lambda$ if $t>r$ and $\prod_{p=1}^{r-1}(\ccT_p-\ccT_t)=0$ if $t<r$. Thus (ii) and obviously (iii) holds. 
If now $r\not\in \lambda$ we can use 
\begin{equation*}
\tS_\lambda(\mu\cup\{t\})=(-1)^{|\lambda_{<r}|}\sum_{t'\not\in\mu\cup\{t\}}\frac{(\prod_{p=1}^{r-1}(\ccT_{p}-\ccT_{t'}))\tS_{\lambda\cup \{r\}}(\mu\cup\{t,t'\})}{\prod_{i\not\in\mu\cup\{t,t'\}}(\ccT_{i}-\ccT_{t'})}
\end{equation*}
and deduce 
\begin{equation*}
D_\lambda(\mu)=(-1)^{|\lambda_{<r}|}\sum_{t,t'\not\in\mu,t\ne t'}\frac{(\prod_{p=1}^{r-1}(\ccT_p-\ccT_t)(\ccT_{p}-\ccT_{t'}))\tS_{\lambda\cup \{r\}}(\mu\cup\{t,t'\})}{\prod_{i\in\mu\cup\{t\}}(\ccT_i-\ccT_t)\prod_{i\not\in\mu\cup\{t,t'\}}(\ccT_{i}-\ccT_{t'})}=0.\hfill\qedhere
\end{equation*}
\end{proof}

\subsection{The $\star$-product on $H^*_\rmT(\calG)$}
\label{subs:star-prod}
Let $a\in[1;n]$ and $X\in H^*_\rmT(\calG_a)$. Let $i_\mu$ be the inclusion $i_\mu\colon \{g_\mu\}\to \calG_a$ of the fixed point labelled by $\mu\in \Lambda_a(n)$. Consider the polynomial $X(\mu)= i_\mu^*X$ and let $[g_\mu]$ be the push-forwand of the fixed point $g_\mu$ to $\calG_a$. Since $(i_\mu)^*(i_\mu)_*=\tA_\mu$, we can write $\oX=\sum_{\mu\in\calG_a}\oX(\mu)\tA^{-1}_\mu[g_\mu]$ in $H^*_\rmT(\calG_a)_{\loc}$.

If now $\oX\in H_\rmT^*(\calG_a)_{\loc}$ we can similarly describe it via a family of rational functions $\oX(\mu)$ labelled by $\mu\in\Lambda_a(n)$. Given such $\oX$ and 
 $\oY\in H_\rmT^*(\calG_b)_{\loc}$ define
\begin{equation}\label{staralg}
(\oX\star \oY)(\mu)=\sum_{\stackrel{\mu_1\sqcup \mu_2=\mu}{|\mu_1|=a,|\mu_2|=b}}\frac{\oX(\mu_1)\oY(\mu_2)}{\catP_{\mu_1,\mu_2}}, \quad\text{where} \quad \catP_{\lambda,\nu}=\prod_{i\in\lambda,j\in \nu}(\ccT_i-\ccT_j).
\end{equation}
It is easy to see that $\star$ is associative and that $\star$ yields a graded algebra structure on $H_\rmT^*(\calG)_{\loc}$. 
We next show that $\star$ preserves $H^*_\rmT(\calG)\subset H^*_\rmT(\calG)_{\loc}$.  
\begin{lem}
\label{lem:S-star-S}
For $\lambda,\nu\in\Lambda(n)$, the following holds in $H^*_\rmT(\calG):$
\begin{equation}\label{SS}
\tS_{\lambda}\star \tS_{\nu}=
\begin{cases}
(-1)^{s(\lambda,\nu)}\tS_{\lambda\sqcup \nu}& \mbox{ if }\lambda\cap\nu=\emptyset,\\
0&\mbox{ otherwise,}
\end{cases}
\end{equation}
where $s(\lambda,\nu):=|\{(i,j)\mid i\in\lambda, j\in \nu, i<j\}$. 
\end{lem}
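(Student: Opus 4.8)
The plan is to verify the identity by checking that both sides, expressed in the fixed-point basis via the rational functions attached to each class, agree. By \Cref{lem:caract-S} the equivariant Schubert class $\tS_{\lambda}$ is characterised by its restriction functions $\tS_\lambda(\mu)=\tS^{\Id}_\lambda(\mu)$, which are polynomials, supported above $\lambda$, with the leading value $\tS_\lambda(\lambda)=\prod_{(i,j)\in\inv(\lambda)}(\ccT_j-\ccT_i)$ and degree $\#\inv(\lambda)+\binom n2$ on its support. So the strategy is: show that the function $\mu\mapsto (\tS_\lambda\star\tS_\nu)(\mu)$ defined by \eqref{staralg} satisfies these same characterising properties (with $\lambda$ replaced by $\lambda\sqcup\nu$ and the extra sign $(-1)^{s(\lambda,\nu)}$), when $\lambda\cap\nu=\emptyset$; and that it is identically zero when $\lambda\cap\nu\neq\emptyset$.

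First I would treat the vanishing case. If $\lambda\cap\nu\neq\emptyset$, then in any decomposition $\mu=\mu_1\sqcup\mu_2$ with $|\mu_1|=|\lambda|$, $|\mu_2|=|\nu|$ one needs $\mu_1\geqslant\lambda$ and $\mu_2\geqslant\nu$ for the numerator not to vanish (support condition, \Cref{lem:caract-S}(i)). I would argue a counting/combinatorial obstruction: $\mu_1\geqslant\lambda$, $\mu_2\geqslant\nu$ and $\mu_1\sqcup\mu_2=\mu$ force, position by position, a contradiction when $\lambda$ and $\nu$ share an index — roughly because the partial sums $\sum_{i\le r}(\mu_1)_i+\sum_{i\le r}(\mu_2)_i=\sum_{i\le r}\mu_i\le r$ cannot dominate $\sum_{i\le r}\lambda_i+\sum_{i\le r}\nu_i$ at a position $r$ where both $\lambda$ and $\nu$ have already ``used up'' a $1$. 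Hence the sum in \eqref{staralg} is empty and $(\tS_\lambda\star\tS_\nu)(\mu)=0$ for all $\mu$.

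Next, in the disjoint case, I would establish the three properties for $(-1)^{s(\lambda,\nu)}(\tS_\lambda\star\tS_\nu)$. Property (i) (support above $\lambda\sqcup\nu$): if the sum is nonzero at $\mu$ there is a splitting with $\mu_1\geqslant\lambda$, $\mu_2\geqslant\nu$, and one checks (again by partial sums) that this forces $\mu\geqslant\lambda\sqcup\nu$. Property (ii) (the value at $\mu=\lambda\sqcup\nu$): only the single splitting $\mu_1=\lambda$, $\mu_2=\nu$ can contribute, because any other splitting would need $\mu_1\geqslant\lambda$ with $\mu_1\neq\lambda$ but $\mu_1\subseteq\lambda\sqcup\nu$, and the disjointness plus minimality kills it (\Cref{lem:caract-S}(i) forces the other terms to vanish); so $(\tS_\lambda\star\tS_\nu)(\lambda\sqcup\nu)=\tS_\lambda(\lambda)\tS_\nu(\nu)/\catP_{\lambda,\nu}$, and I would expand
\[
\frac{\prod_{(i,j)\in\inv(\lambda)}(\ccT_j-\ccT_i)\;\prod_{(i,j)\in\inv(\nu)}(\ccT_j-\ccT_i)}{\prod_{i\in\lambda,\,j\in\nu}(\ccT_i-\ccT_j)}
\]
and match it, up to the sign $(-1)^{s(\lambda,\nu)}$, with $\prod_{(i,j)\in\inv(\lambda\sqcup\nu)}(\ccT_j-\ccT_i)$, using that $\inv(\lambda\sqcup\nu)=\inv(\lambda)\sqcup\inv(\nu)\sqcup\{(i,j)\mid i\in\nu, j\in\lambda,\ i<j\}$ and that $s(\lambda,\nu)$ counts the sign discrepancy coming from the denominator pairs $i\in\lambda$, $j\in\nu$, $i<j$. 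Property (iii) (degree on the support) is a direct degree count: each term in \eqref{staralg} has degree $(\#\inv(\lambda)+\binom n2)+(\#\inv(\nu)+\binom n2)-|\lambda|\,|\nu|$, and one checks this equals $\#\inv(\lambda\sqcup\nu)+\binom n2$ (it is automatically satisfied since zero has any degree, so only an upper bound is needed). Finally, by the uniqueness in \Cref{lem:caract-S} (after observing that $(-1)^{s(\lambda,\nu)}(\tS_\lambda\star\tS_\nu)$ indeed is the restriction of a genuine equivariant class — it is a $\star$-product of such, and $\star$ preserves $H^*_\rmT(\calG)$, which is exactly what we are proving; to avoid circularity I would instead observe that \eqref{staralg} applied to two polynomial families produces a polynomial family, using that the only possible poles cancel by the support argument), we conclude $(-1)^{s(\lambda,\nu)}(\tS_\lambda\star\tS_\nu)=\tS_{\lambda\sqcup\nu}$.

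The main obstacle I anticipate is the careful bookkeeping of the sign $(-1)^{s(\lambda,\nu)}$ in property (ii): matching the rational expression above with $\tS_{\lambda\sqcup\nu}(\lambda\sqcup\nu)$ requires correctly accounting for which factors $(\ccT_i-\ccT_j)$ versus $(\ccT_j-\ccT_i)$ appear, and verifying that the denominator $\catP_{\lambda,\nu}=\prod_{i\in\lambda,j\in\nu}(\ccT_i-\ccT_j)$ exactly cancels the ``cross'' inversions of $\lambda\sqcup\nu$ up to the stated sign. A secondary subtlety is making the polynomiality argument for $\oX\star\oY$ genuinely independent of the lemma itself; I expect this follows because the support constraints force all potential denominators $\catP_{\mu_1,\mu_2}$ to be cancelled by numerators, but it needs to be spelled out cleanly rather than invoked.
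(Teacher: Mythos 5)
Your argument for the vanishing case does not work. The claimed combinatorial obstruction is false: when $\lambda\cap\nu\neq\emptyset$ the sum in \eqref{staralg} is in general \emph{not} empty. Take $n=3$, $\lambda=\nu=\{2\}\in\Lambda_1(3)$ and $\mu=\{1,2\}$. The splitting $\mu_1=\{1\}$, $\mu_2=\{2\}$ satisfies $\mu_1\geqslant\lambda$ and $\mu_2\geqslant\nu$ (partial sums $1,1,1\geqslant 0,1,1$ and $0,1,1\geqslant 0,1,1$), so $\tS_\lambda(\mu_1)\tS_\nu(\mu_2)\neq 0$ and this term survives. The partial-sum bound you invoke, $\sum_{i\leqslant r}\lambda_i+\sum_{i\leqslant r}\nu_i\leqslant r$, simply need not be violated when $\lambda$ and $\nu$ overlap (here the left side is $0,2,2$ against $r=1,2,3$). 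The vanishing is genuinely a \emph{cancellation} phenomenon: in the example the two splittings $(\{1\},\{2\})$ and $(\{2\},\{1\})$ contribute opposite terms since $\catP_{\{1\},\{2\}}=-\catP_{\{2\},\{1\}}$. This is exactly the mechanism in the proofs of \Cref{lem:S-lambda+n} and \Cref{lem:S-lambda+r} (the $(t,t')$ and $(t',t)$ summands cancel), and for general overlapping $\lambda,\nu$ there is no evident pairing of splittings, so this half of the lemma cannot be dispatched the way you propose.

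For the disjoint case your combinatorics is sound — the unique contributing splitting at $\mu=\lambda\sqcup\nu$ is $(\lambda,\nu)$ because the partial-sum inequalities must all be equalities, and the sign bookkeeping against $\catP_{\lambda,\nu}$ does come out to $(-1)^{s(\lambda,\nu)}$ — but invoking the uniqueness of \Cref{lem:caract-S} requires knowing that the family $\mu\mapsto(\tS_\lambda\star\tS_\nu)(\mu)$ is the restriction of an actual class in $H^*_{\rmG}(\ccY_k)$, not merely that it is polynomial; polynomiality alone does not enter the uniqueness statement, and you acknowledge but do not close this gap. The paper avoids both problems by inducting on $|\nu|$: the base case $|\nu|=1$ is literally \Cref{lem:S-lambda+r} (where the class-membership is supplied by the geometric correspondence and the vanishing by the explicit $(t,t')\leftrightarrow(t',t)$ cancellation), and the inductive step is just associativity of $\star$ together with the sign identity $(-1)^{s(\lambda,\nu)+s(\nu,\epsilon_r)+s(\lambda\sqcup\nu,\epsilon_r)}=(-1)^{s(\lambda,\nu\sqcup\{r\})}$ coming from associativity of wedge products. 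If you want to keep your characterisation-based approach, you would still need to factor $\tS_\nu$ as an iterated product of the $\tS_{\epsilon_r}$ (or give an independent GKM-type argument) to settle both the vanishing and the class-membership issues.
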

\begin{proof}
We prove \eqref{SS} by induction on $\nu$. If  $|\nu|=1$ then $\nu$ is of the form $\nu=\epsilon_r$ and the statement is equivalent to \Cref{lem:S-lambda+r}. Thus we have $\tS_\lambda\star \tS_{\nu\sqcup\{r\}}=\tS_{\nu}\star \tS_{\epsilon_r}$
for $(\lambda,\nu\sqcup\{r\})$ with $r\not\in \nu$. Assuming now \eqref{SS} for $(\lambda,\nu)$, we calculate
\begin{eqnarray*}
\tS_\lambda\star \tS_{\nu\sqcup\{r\}}=(-1)^{s(\lambda,\nu)+s(\nu,\epsilon_r)}\tS_{\lambda\sqcup\nu}\star \tS_{\epsilon_r}=
\begin{cases}
(-1)^{s(\lambda,\nu\sqcup\{r\})}\tS_{\lambda\sqcup\nu\sqcup\{r\}}& \mbox{ if }r\not\in\lambda,\\
0& \mbox{ otherwise.}
\end{cases}
\end{eqnarray*}
We used hereby the identity
$
(-1)^{s(\lambda,\nu)+s(\nu,\epsilon_r)+s(\lambda\sqcup\nu,\epsilon_r)}=(-1)^{s(\lambda,\nu\sqcup\{r\})},
$
which follows\footnote{Note that $\omega_\lambda\wedge\omega_\nu=(-1)^{s(\lambda,\mu)}\omega_{\lambda\cup\nu}$. 
}  from the associativity of wedges:
$
(\omega_\lambda\wedge\omega_\nu)\wedge\omega_r=\omega_\lambda\wedge(\omega_\nu\wedge\omega_r).
$
\end{proof}
\begin{coro}
The subspace $H^*_\rmT(\calG)$ of $H^*_\rmT(\calG)_{\loc}$ is a subalgebra. Moreover,
$$
H^*_\rmT(\calG)\cong\Pol_n\otimes\mywedge(\omega_1,\ldots,\omega_n)\qquad \text{such that}\qquad\tS_\lambda\mapsto \omega_\lambda.
$$
\end{coro}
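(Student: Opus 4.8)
The plan is to derive the statement formally from \Cref{lem:S-star-S}, once two structural facts about $H^*_\rmT(\calG)=\bigoplus_{k=0}^n H^*_\rmT(\calG_k)$ are in place. First, from the defining formula \eqref{staralg} the $\star$-product is $\tR_\rmT$-bilinear: for $P\in\tR_\rmT$ one has $i_\mu^*(P\oX)=P\cdot i_\mu^*\oX$ at every fixed point $g_\mu$, so scaling $\oX$ (or $\oY$) by $P$ scales every coefficient of $\oX\star\oY$ by $P$. Moreover, since $\catP_{\emptyset,\mu}=1$ and $\tS_\emptyset=[\calG_0]=1\in H^*_\rmT(\calG_0)=\tR_\rmT$ restricts to $1$ at the unique fixed point of $\calG_0$, the class $\tS_\emptyset$ is a two-sided $\star$-unit and $\star$-multiplication by an element of $H^*_\rmT(\calG_0)$ coincides with ordinary scalar multiplication; thus $(H^*_\rmT(\calG)_{\loc},\star)$ is a unital $\tR_\rmT$-algebra. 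Second, by \Cref{Schurandomega} transported through the identification $H^*_\rmG(\calF\times\calG)\cong H^*_\rmT(\calG)$ of \Cref{lem:isom-FG/G-G/T} (which matches multiplication by $X_r$ with multiplication by $\ccT_r$, i.e.\ identifies $\Pol_n$ with $\tR_\rmT$), the set $\{\tS_\lambda\mid\lambda\in\Lambda(n)\}$ is a $\tR_\rmT$-basis of $H^*_\rmT(\calG)$.

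Given these, the argument is short. By \Cref{lem:S-star-S} we have $\tS_\lambda\star\tS_\nu\in H^*_\rmT(\calG)$ for all $\lambda,\nu\in\Lambda(n)$; since the $\tS_\lambda$ span $H^*_\rmT(\calG)$ over $\tR_\rmT$ and $\star$ is $\tR_\rmT$-bilinear, this gives $H^*_\rmT(\calG)\star H^*_\rmT(\calG)\subseteq H^*_\rmT(\calG)$, and since $\tS_\emptyset\in H^*_\rmT(\calG)$ we conclude that $H^*_\rmT(\calG)$ is a unital subalgebra of $H^*_\rmT(\calG)_{\loc}$. For the isomorphism I would introduce the $\tR_\rmT$-linear map $\Phi\colon\Pol_n\otimes\mywedge(\omega_1,\dots,\omega_n)\to H^*_\rmT(\calG)$, $1\otimes\omega_\lambda\mapsto\tS_\lambda$ (with $\Pol_n=\tR_\rmT$ acting on the source through the first tensor factor); it is bijective because it carries the $\tR_\rmT$-basis $\{\omega_\lambda\}$ onto the $\tR_\rmT$-basis $\{\tS_\lambda\}$. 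To see $\Phi$ is multiplicative it suffices, by $\tR_\rmT$-bilinearity of both products, to compare products of basis elements: with the ordering convention of \Cref{notnew} one has $\omega_\lambda\wedge\omega_\nu=(-1)^{s(\lambda,\nu)}\omega_{\lambda\sqcup\nu}$ if $\lambda\cap\nu=\emptyset$ and $0$ otherwise --- the wedge identity already recorded in the proof of \Cref{lem:S-star-S} --- which is exactly the rule \eqref{SS} satisfied by $\star$ on the $\tS_\lambda$. Since also $\Phi(1\otimes\omega_\emptyset)=\tS_\emptyset=1$, the map $\Phi$ is a unital algebra isomorphism; its inverse sends $\tS_\lambda$ to $\omega_\lambda$, as claimed.

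The only genuinely delicate point is the bookkeeping of signs and degenerate cases: one must verify that the sign $(-1)^{s(\lambda,\nu)}$ coming from the reordering convention for $\omega_\lambda$ in \Cref{notnew} agrees with the sign in \eqref{SS}, and that the boundary situations behave as used above ($\calG_0$ a point, $\catP_{\emptyset,\mu}=1$, $\tS_\emptyset=1$, and the cases where $\lambda$ or $\nu$ is empty). Beyond that, the corollary is a purely formal consequence of \Cref{lem:S-star-S} together with the freeness of $H^*_\rmT(\calG)$ over $\tR_\rmT$ on the Schubert basis.
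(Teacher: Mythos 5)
Your proposal is correct and is exactly the argument the paper intends (the corollary is stated without proof as an immediate consequence of \Cref{lem:S-star-S}): $\tR_\rmT$-bilinearity of $\star$ from \eqref{staralg}, the $\tR_\rmT$-basis $\{\tS_\lambda\}$ of $H^*_\rmT(\calG)$ coming from \Cref{Schurandomega} via \Cref{lem:isom-FG/G-G/T}, and the matching of the sign $(-1)^{s(\lambda,\nu)}$ with the wedge identity recorded in the footnote to the proof of \Cref{lem:S-star-S}. No gaps.
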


\subsection{Geometric construction of the $\star$-product}
\label{subs:geom-star-prod}
In this section we give a geometric construction of the $\star$-product $H^*_\rmT(\calG_a)\times H^*_\rmT(\calG_b)\to H^*_\rmT(\calG_{a+b})$ from \eqref{staralg} for any $a,b\in[1;n]$ such that  $a+b\leqslant n$. The construction will use some extra hermitian scalar product which we introduce first.

Fix now a (hermitian) scalar product on $V$. Let $\rmU\subset \rmG=\rm{GL}(V)$ be the maximal compact subgroup formed by unitary transformations. 

\begin{rk}
\label{rk:G-vs-U}
Since there is no difference between $\rmG$-equivariant and $\rmU$-equivariant (co)homology, we will, to simplify notation, often write $\rmG$-equivariant (co)homology even if the variety is only $\rmU$-stable (and we really mean $\rmU$-equivariance). This is particularly convenient, since it is enough to find a $\rmU$-invariant isomorphism between two $\rmG$-varieties to be able to identify their $\rmG$-equivariant (co)homology. It is not even necessary to have an algebraic isomorphism, a homeomorphism suffices.
Similarly, for $\overline{\rm T}=\rmT\cap\rmU$ . When we mean $\overline{\rm T}$-equivariant (co)homology, we may write $\rmT$-equivariance  
instead $\overline{\rm T}$-equivariance, even if the variety does not carry a $\rmT$- action.
\end{rk}

Fix $k_1,k_2\in\bbZ$ such that $n\geqslant k_1\geqslant k_2\geqslant 0$.  Let $\calG_{k_1\supset k_2}$ denote the $3$-step partial flag variety in $V$ with subspaces of dimension $k_1$ and $k_2$. In other words,  $\calG_{k_1\supset k_2}$ is the subvariety of $\calG_{k_1}\times \calG_{k_2}$ where the subspaces are contained in each other.

\begin{lem}
\label{lem:perm-flags-3}
There is a diffeomorphism $\gamma_{k_1,k_2}:\calG_{k_1\supset k_2}\cong \calG_{k_1\supset k_1-k_2}$.
\end{lem}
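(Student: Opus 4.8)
The plan is to write down the map explicitly using the fixed hermitian scalar product on $V$ and to check it is a mutually inverse pair of smooth maps. For $(A,B)\in\calG_{k_1\supset k_2}$ (so $B\subset A\subset V$, $\dim A=k_1$, $\dim B=k_2$) set
\begin{equation*}
\gamma_{k_1,k_2}(A,B)=(A,\,A\cap B^{\perp}),
\end{equation*}
where $B^\perp$ denotes the orthogonal complement of $B$ in $V$. Since $B\subseteq A$, the subspace $A\cap B^{\perp}$ is exactly the orthogonal complement of $B$ taken \emph{inside} $A$, so $\dim(A\cap B^{\perp})=\dim A-\dim B=k_1-k_2$ and $A\cap B^{\perp}\subseteq A$; hence $\gamma_{k_1,k_2}(A,B)\in\calG_{k_1\supset k_1-k_2}$ and the map is well defined.

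Next I would exhibit the inverse. Applying the same construction with parameters $(k_1,k_1-k_2)$ gives a map $\gamma_{k_1,k_1-k_2}\colon\calG_{k_1\supset k_1-k_2}\to\calG_{k_1\supset k_2}$, $(A,C)\mapsto(A,\,A\cap C^{\perp})$. The key point is that orthogonal complementation \emph{within the fixed space $A$} is an involution: for any subspace $W\subseteq A$ one has $A\cap(A\cap W^{\perp})^{\perp}=W$ (this is just $(W^{\perp_A})^{\perp_A}=W$ for the induced scalar product on $A$). Applying this with $W=B$ shows $\gamma_{k_1,k_1-k_2}\circ\gamma_{k_1,k_2}=\mathrm{id}$, and with $W=C$ shows $\gamma_{k_1,k_2}\circ\gamma_{k_1,k_1-k_2}=\mathrm{id}$. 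Thus $\gamma_{k_1,k_2}$ is a bijection with inverse $\gamma_{k_1,k_1-k_2}$.

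Finally I would check regularity. The passage $B\mapsto B^{\perp}$ is the (anti-holomorphic, hence not algebraic, but real-analytic) map on Grassmannians given by $\mathrm{id}-\mathrm{pr}_B$ where $\mathrm{pr}_B$ is orthogonal projection, which depends smoothly on $B$; intersecting with the smoothly varying $A$ (a constant-rank operation here, since the dimension $k_1-k_2$ is forced) is again smooth. Hence $\gamma_{k_1,k_2}$ and its inverse $\gamma_{k_1,k_1-k_2}$ are smooth maps of the underlying (compact) real-analytic manifolds, so $\gamma_{k_1,k_2}$ is a diffeomorphism. I would also record that $\gamma_{k_1,k_2}$ is $\rmU$-equivariant, since unitary transformations preserve the scalar product and hence commute with $(-)^{\perp}$; in view of \Cref{rk:G-vs-U} this is what will be used later to identify equivariant (co)homologies.

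\textbf{Main obstacle.} There is essentially no serious obstacle; the only point requiring care is the involutivity of orthogonal complementation restricted to a fixed subspace together with the dimension bookkeeping (one must note $B\subseteq A$ so that $A\cap B^\perp$ really is the complement of $B$ inside $A$), and the observation that the map is only a diffeomorphism — not a biholomorphism — because $(-)^\perp$ is conjugate-linear.
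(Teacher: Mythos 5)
Your proof is correct and uses exactly the same map as the paper (replace the smaller subspace by its orthogonal complement inside the larger one); the paper simply states "this obviously defines a diffeomorphism," whereas you spell out the involutivity, smoothness, and $\rmU$-equivariance, all of which match the paper's subsequent remark.
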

\begin{proof}
Using the scalar product on $V$, we can define the following map
$$
\calG_{k_1\supset k_2}\to \calG_{k_1\supset k_1-k_2},\qquad (V\supset W\supset \tW\supset \{0\})\mapsto (V\supset W\supset \tW^\perp\cap W\supset \{0\}).
$$
In other words, given a flag $(V\supset W\supset \tW\supset \{0\})$ in $\calG_{k_1\supset k_2}$, we can replace the $k_2$-dimensional subspace $\tW$ by its orthogonal complement in $W$ and obtain a flag in $\calG_{k_1\supset k_1-k_2}$. 
This obviously defines a diffeomorphism.
\end{proof}

\begin{rk}
This diffeomorphism is only $\rmU$-invariant, and not 
$\rmG$-invariant. It still induces by   \Cref{rk:G-vs-U} an isomorphism in $\rmG$- or $\rmT$-equivariant cohomology. 

\end{rk}

\begin{df}
Let $\iota_{k_1,k_2}\colon H^*_\rmT(\calG_{k_1-k_2})\to H^*_\rmT(\calG_{k_1\supset k_2})$ be the  composition
$$
H^*_\rmT(\calG_{k_1-k_2})\stackrel{p^*}\longrightarrow H^*_\rmT(\calG_{k_1\supset k_1-k_2}) \xrightarrow{\gamma_{k_1,k_2}}\ H^*_\rmT(\calG_{k_1\supset k_2}),
$$
where the first map is the pull-back with respect to $p\colon\calG_{k_1\supset k_1-k_2}\to \calG_{k_1-k_2}$ which forgets one component of the flag.
Assume now $a,b\in[1;n]$. Pick integers $n\geqslant k_1\geqslant k_2 \geqslant k_3\geqslant 0$ such that $k_1-k_2=a$, $k_2-k_3=b$. We then have the inclusions
\begin{equation}\label{inclusions}
\begin{gathered}
\iota_{k_1,k_2}\colon H^*_\rmT(\calG_a)\to H^*_\rmT(\calG_{k_1\supset k_2}), \qquad \iota_{k_2,k_3}\colon H^*_\rmT(\calG_b)\to H^*_\rmT(\calG_{k_2\supset k_3}) 
\\
 \iota_{k_1,k_3}\colon H^*_\rmT(\calG_{a+b})\to H^*_\rmT(\calG_{k_1\supset k_3}).
\end{gathered}
\end{equation}
\end{df}
We view $\rmT$-fixed points in $\calG_{k_1\supset k_2}$ as $\rmT$-fixed point of $\calG_{k_1}\times \calG_{k_2}$. They are of the form $(g_{\mu_1},g_{\mu_2})$, where $\mu_1\in \Lambda_{k_1}(n)$, $\mu_2\in \Lambda_{k_2}(n)$ and $\mu_1\supset \mu_2$. Let us write $g_{\mu_1,\mu_2}$ instead of $(g_{\mu_1},g_{\mu_2})$ and similarly for $\calG_{k_2\supset k_3}$ and $\calG_{k_1\supset k_3}$. 
\begin{prop}
The usual convolution product 
$$
H^*_\rmT(\calG_{k_1\supset k_2})\times H^*_\rmT(\calG_{k_2\supset k_3})\to H^*_\rmT(\calG_{k_1\supset k_3})
$$
with respect to the inclusions
$$
\calG_{k_1\supset k_2}\subset \calG_{k_1}\times \calG_{k_2},\qquad \calG_{k_2\supset k_3}\subset \calG_{k_2}\times \calG_{k_3},\qquad \calG_{k_1\supset k_3}\subset \calG_{k_1}\times \calG_{k_3}
$$
restricts along \eqref{inclusions} to the $\star$-product  
$H^*_\rmT(\calG_a)\times H^*_\rmT(\calG_b)\to H^*_\rmT(\calG_{a+b})$ from \eqref{staralg}. 
\end{prop}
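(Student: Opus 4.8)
The plan is to reduce the identity of products to a computation in the fixed-point basis, exactly as was done for the creation and annihilation operators. First I would unwind all the definitions: a class in $H^*_\rmT(\calG_{k_1\supset k_2})$ is described by its restrictions to the $\rmT$-fixed points $g_{\mu_1,\mu_2}$ with $\mu_1\supset\mu_2$, $|\mu_1|=k_1$, $|\mu_2|=k_2$; and the diffeomorphism $\gamma_{k_1,k_2}$ from \Cref{lem:perm-flags-3} sends the fixed point $g_{\mu_1,\mu_2}$ to $g_{\mu_1,\mu_1\setminus\mu_2}$. Hence for $X\in H^*_\rmT(\calG_a)$ the restriction of $\iota_{k_1,k_2}(X)$ to $g_{\mu_1,\mu_2}$ is just $X(\mu_1\setminus\mu_2)$ (the pull-back $p^*$ forgets the $\calG_a$-component, which in fixed-point language means reading off $X$ at $\mu_1\setminus\mu_2\in\Lambda_a(n)$). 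So the three maps in \eqref{inclusions} become, in fixed-point coordinates, the substitutions $X\mapsto (\,g_{\mu_1,\mu_2}\mapsto X(\mu_1\setminus\mu_2)\,)$.

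Next I would write down the ordinary convolution product $H^*_\rmT(\calG_{k_1\supset k_2})\times H^*_\rmT(\calG_{k_2\supset k_3})\to H^*_\rmT(\calG_{k_1\supset k_3})$ in the fixed-point basis, using \Cref{lem:comp-in-loc}\ref{1} (or its specialisation): the only subtlety is that the intermediate variety over which one fibres is $\calG_{k_2}$, so the Euler-class factor appearing in the convolution is $\op{eu}(\calG_{k_2},g_{\mu})=\tA_\mu=\prod_{i\in\mu,\,j\notin\mu}(\ccT_j-\ccT_i)$, see \eqref{Alambda}. Concretely, for $\oX\in H^*_\rmT(\calG_{k_1\supset k_2})$ and $\oY\in H^*_\rmT(\calG_{k_2\supset k_3})$, the restriction of $\oX\star\oY$ to $g_{\mu_1,\mu_3}$ is a sum over all $\mu_2$ with $\mu_1\supset\mu_2\supset\mu_3$, $|\mu_2|=k_2$, of the product of restrictions $\oX(\mu_1,\mu_2)\,\oY(\mu_2,\mu_3)$ times an appropriate ratio of Euler classes; substituting the fixed-point formulas for the images of $\iota_{k_1,k_2}(X)$, $\iota_{k_2,k_3}(Y)$ and expanding $\tA_{\mu_2}$ should match, after reindexing, the definition \eqref{staralg}. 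The bijection to use for the reindexing is: writing $\mu_1=\mu_3\sqcup\kappa$ with $|\kappa|=a+b$ (this is the splitting in \eqref{staralg} for $\calG_{a+b}$), the choices of $\mu_2$ with $\mu_1\supset\mu_2\supset\mu_3$ correspond bijectively to decompositions $\kappa=\kappa_1\sqcup\kappa_2$ with $|\kappa_1|=a$, $|\kappa_2|=b$, via $\kappa_1=\mu_2\setminus\mu_3$, $\kappa_2=\mu_1\setminus\mu_2$; then $X$ gets evaluated at $\mu_2\setminus\mu_3=\kappa_1$ and $Y$ at $\mu_1\setminus\mu_2=\kappa_2$, which is precisely what \eqref{staralg} prescribes.

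The main obstacle — the one step that is not purely bookkeeping — is to verify that the Euler-class bookkeeping produces exactly the denominator $\catP_{\kappa_1,\kappa_2}=\prod_{i\in\kappa_1,\,j\in\kappa_2}(\ccT_i-\ccT_j)$ and no spurious extra factors. There are several competing contributions: the factor $\tA_{\mu_2}$ from the intermediate Grassmannian $\calG_{k_2}$ in the convolution, and the tangent-space (Euler) classes hidden inside the pull-backs $p^*$ along the forgetful maps $\calG_{k_1\supset k_1-k_2}\to\calG_{k_1-k_2}$ (which are fibrations, hence $p^*$ introduces no Euler factor, but one must check the normalisations of the fundamental classes carefully), together with the identification of $\op{eu}(\calG_{k_1\supset k_2},g_{\mu_1,\mu_2})$ as a product over $\calG_{k_1}$-directions times $\calG_{k_2\subset k_1}$-fibre directions. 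I would organise this by comparing tangent spaces: $T_{g_{\mu_1,\mu_2}}\calG_{k_1\supset k_2}\cong T_{g_{\mu_1}}\calG_{k_1}\oplus T_{g_{\mu_2}}\big(\text{Gr}_{k_2}(g_{\mu_1})\big)$ and tracking how $\gamma_{k_1,k_2}$ acts on these pieces (it replaces $\text{Gr}_{k_2}(g_{\mu_1})$ by $\text{Gr}_{k_1-k_2}(g_{\mu_1})$, whose fixed-point tangent weights are the negatives/complements of the original ones). Once all the $(\ccT_i-\ccT_j)$ factors are collected and the ones coming from "inside $\mu_1$" cancel between numerator and denominator, only the cross-terms between $\kappa_1$ and $\kappa_2$ survive, giving $\catP_{\kappa_1,\kappa_2}$. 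After that the associativity and gradedness of $\star$ are already known from \S\ref{subs:star-prod}, so nothing further is needed.
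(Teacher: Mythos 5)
Your proposal follows essentially the same route as the paper's proof of the surrounding Lemma~\ref{redwine}: localize at $\rmT$-fixed points, observe that $\iota_{k_1,k_2}\oX$ restricts at $g_{\mu_1,\mu_2}$ to $\oX(\mu_1\setminus\mu_2)$, compute the convolution via $[g_{\mu_1,\mu_2}]\star[g_{\mu_2,\mu_3}]=\tA_{\mu_2}[g_{\mu_1,\mu_3}]$, and match the Euler-class factors using the same bijection between intermediate $\mu_2$ and decompositions of $\mu_1\setminus\mu_3$. The only step you leave somewhat open—namely the identity $\op{eu}(\calG_{k_1\supset k_2},g_{\mu_1,\mu_2})=\tA_{\mu_1}\catP_{\mu_1\setminus\mu_2,\mu_2}$ and the cancellation $\catP_{\mu_1\setminus\mu_2,\mu_2}\catP_{\mu_2\setminus\mu_3,\mu_3}=\catP_{\mu_1\setminus\mu_2,\mu_2\setminus\mu_3}\catP_{\mu_1\setminus\mu_3,\mu_3}$—is also stated without further elaboration in the paper, so there is no substantive gap between the two.
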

\begin{proof}
This follows directly from \Cref{redwine} below.
\end{proof}
\begin{lem}\label{redwine}
In the notation from \eqref{inclusions} we have for $\oX\in H^*_\rmT(\calG_a)$, $\oY\in H^*_\rmT(\calG_b)$,
\begin{equation*}
(\iota_{k_1,k_2}\oX)\star (\iota_{k_2,k_3}\oY)=\iota_{k_1,k_3}(\oX\star \oY).
\end{equation*}
\end{lem}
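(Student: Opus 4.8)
The statement to prove is Lemma~\ref{redwine}: for $\oX\in H^*_\rmT(\calG_a)$ and $\oY\in H^*_\rmT(\calG_b)$, the identity $(\iota_{k_1,k_2}\oX)\star(\iota_{k_2,k_3}\oY)=\iota_{k_1,k_3}(\oX\star\oY)$ holds, where the left-hand $\star$ is the honest convolution product on the partial flag varieties $\calG_{k_1\supset k_2}$, $\calG_{k_2\supset k_3}$, $\calG_{k_1\supset k_3}$. The plan is to work entirely in the localised equivariant cohomology using the $\rmT$-fixed point basis, since all the relevant maps ($\gamma$, $p^*$, and convolution) have explicit formulas on fixed points. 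Both sides are elements of $H^*_\rmT(\calG_{k_1\supset k_3})_{\loc}$, so it suffices to compare coefficients at each fixed point $g_{\nu_1,\nu_3}$ with $\nu_1\in\Lambda_{k_1}(n)$, $\nu_3\in\Lambda_{k_3}(n)$, $\nu_1\supset\nu_3$.

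**Key steps.** First I would record the fixed-point formula for $\iota_{k_1,k_2}$. Since $p\colon\calG_{k_1\supset k_1-k_2}\to\calG_{k_1-k_2}$ forgets the $k_1$-dimensional subspace, the pullback $p^*$ sends a class supported with value $\oX(\sigma)$ at $g_\sigma$ to the class with value $\oX(\sigma)$ at every $g_{\tau,\sigma}$ with $\tau\supset\sigma$; and the diffeomorphism $\gamma_{k_1,k_2}$ relabels $g_{\tau,\sigma}\mapsto g_{\tau,\,\tau\setminus\sigma}$ (the fixed point of $\calG_{k_1\supset k_2}$ with big space indexed by $\tau$ and small space indexed by the complement of $\sigma$ inside $\tau$), without changing the coefficient — except that one must track the Euler-class bookkeeping, since $\gamma$ only preserves the $\rmT$-fixed coefficients up to the ratio of tangent-space Euler classes, which one computes to be harmless because $\gamma$ is a diffeomorphism identifying $\rmT$-fixed points with $\rmT$-fixed points and pulls back the orientation class appropriately. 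Concretely this yields, for $\mu_1\supset\mu_2$ in $\Lambda_{k_1}(n)\times\Lambda_{k_2}(n)$,
\[
(\iota_{k_1,k_2}\oX)(\mu_1,\mu_2)=\oX(\mu_1\setminus\mu_2)\cdot\frac{\tA_{\mu_1\setminus\mu_2}}{\tA^{\calG_{k_1\supset k_2}}_{\mu_1,\mu_2}},
\]
where $\tA^{\calG_{k_1\supset k_2}}_{\mu_1,\mu_2}$ is the Euler class of the tangent space at the fixed point. Second, I would expand the convolution product on partial flag varieties in fixed-point coordinates via \Cref{lem:comp-in-loc}: the product is a sum over intermediate fixed points $g_{\sigma_2}\in\calG_{k_2}$ (the middle Grassmannian), weighted by the Euler class of $\calG_{k_2}$ at $g_{\sigma_2}$, subject to the incidence conditions coming from the defining subvarieties $\calG_{k_1\supset k_2}\subset\calG_{k_1}\times\calG_{k_2}$ etc. Third, I would substitute the formulas for $\iota_{k_1,k_2}\oX$ and $\iota_{k_2,k_3}\oY$ into this expansion, and carefully unravel which index data survives: a fixed point $g_{\nu_1,\nu_2}$ of $\calG_{k_1\supset k_2}$ meets $g_{\nu_2,\nu_3}$ of $\calG_{k_2\supset k_3}$ over the common middle point $g_{\nu_2}$, so after the substitution the only surviving terms have $\nu_1\setminus\nu_2$ of size $a$ and $\nu_2\setminus\nu_3$ of size $b$, i.e. we get a sum over decompositions of $\nu_1\setminus\nu_3$ into two disjoint sets of sizes $a$ and $b$ — exactly the index set in the definition \eqref{staralg} of the $\star$-product on $H^*_\rmT(\calG)_{\loc}$.

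**The main obstacle.** The hard part will be the Euler-class bookkeeping: one must verify that the product of the Euler-class ratios coming from the three copies of $\iota$ (in the denominators and numerators), together with the Euler class $\op{eu}(\calG_{k_2},g_{\nu_2})$ weighting the convolution sum, collapses to exactly the factor $1/\catP_{\mu_1,\mu_2}$ appearing in \eqref{staralg} — with the sign conventions matching. This amounts to an identity among products of the form $\prod_{i\in A,j\notin A}(\ccT_j-\ccT_i)$ for various subsets $A$ built from $\nu_1,\nu_2,\nu_3$ and their complements inside one another (essentially \eqref{Alambda} applied in ambient spaces of different dimensions, plus the normal-bundle Euler classes of the partial flag varieties inside products of Grassmannians). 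I would organise this by writing every tangent space at a fixed point of $\calG_{k_1\supset k_2}$ as an explicit $\rmT$-module — namely $\Hom$ between the graded pieces $\mu_2$, $\mu_1\setminus\mu_2$, and $(\text{complement of }\mu_1)$ — so that all Euler classes become transparent products of $(\ccT_i-\ccT_j)$'s, and then the required cancellation is a direct, if tedious, matching of factors. Once this Euler-class identity is in hand, comparing coefficients at $g_{\nu_1,\nu_3}$ on both sides is immediate and finishes the proof. I expect no conceptual difficulty beyond this computation, since the structural content — that forgetting-then-orthocomplementing intertwines honest convolution with the combinatorial $\star$-product — is forced by the fixed-point formulas.
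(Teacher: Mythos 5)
Your proposal is correct and follows essentially the same route as the paper: localise to $\rmT$-fixed points, write $\iota_{k_1,k_2}\oX$ explicitly in the point-class basis, convolve via $[g_{\mu_1,\mu_2}]\star[g_{\mu_2,\mu_3}]=\op{eu}(\calG_{k_2},g_{\mu_2})[g_{\mu_1,\mu_3}]$, and reduce everything to an identity among the factors $\catP_{\lambda,\nu}$. One small simplification over your sketch: since $p^*$ and the equivariant diffeomorphism $\gamma_{k_1,k_2}$ both preserve restrictions to matching fixed points, one simply has $(\iota_{k_1,k_2}\oX)(\mu_1,\mu_2)=\oX(\mu_1\setminus\mu_2)$ with no Euler-class ratio, and the "bookkeeping" you anticipate collapses to the single identity $\catP_{\mu_1\backslash\mu_2,\mu_2}\catP_{\mu_2\backslash\mu_3,\mu_3}=\catP_{\mu_1\backslash\mu_2,\mu_2\backslash\mu_3}\catP_{\mu_1\backslash\mu_3,\mu_3}$ together with $\op{eu}(\calG_{k_1\supset k_2},g_{\mu_1,\mu_2})=\tA_{\mu_1}\catP_{\mu_1\backslash\mu_2,\mu_2}$.
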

\begin{proof}
It is enough to verify this on the $\rmT$-fixed points. For any $\mu_1\in \Lambda_{k_1}(n)$, $\mu_2\in \Lambda_{k_2}(n)$ and $\mu_3\in \Lambda_{k_3}(n)$ we have
$$
(\iota_{k_1,k_2}\oX)(\mu_1,\mu_2)=\oX(\mu_1\backslash\mu_2) \qquad \mbox{and} \qquad \op{eu}(\calG_{k_1\supset k_2},g_{\mu_1,\mu_2})=\tA_{\mu_1}\catP_{\mu_1\backslash\mu_2,\mu_2} 
$$
with $\catP_{\mu_1\backslash\mu_2,\mu_2}$ as in \eqref{staralg}. Thus we can write 
$$
\iota_{k_1,k_2}\oX={\sum_{\mu_1\supset\mu_2}}\tA^{-1}_{\mu_1}\catP^{-1}_{\mu_1\backslash\mu_2,\mu_2}\oX(\mu_1\backslash\mu_2)[g_{\mu_1,\mu_2}],
$$
and similarly 
$$
\iota_{k_2,k_3}\oY={\sum_{\mu_2\supset\mu_3}}\tA^{-1}_{\mu_2}\catP^{-1}_{\mu_2\backslash\mu_3,\mu_3}\oY(\mu_2\backslash\mu_3)[g_{\mu_2,\mu_3}].
$$
Then, using $[g_{\mu_1,\mu_2}]\star [g_{\mu_2,\mu_3}]=\tA_{\mu_2}[g_{\mu_1,\mu_3}]$, which holds by \Cref{lem:comp-in-loc}~\ref{2}.), we get
$$
(\iota_{k_1,k_2}\oX)\star (\iota_{k_2,k_3}Y)={\sum_{\mu_1\subset\mu_2\subset \mu3}}\tA^{-1}_{\mu_1}\catP^{-1}_{\mu_1\backslash\mu_2,\mu_2}\catP^{-1}_{\mu_2\backslash\mu_3,\mu_3}\oX(\mu_1\backslash\mu_2)Y(\mu_2\backslash\mu_3)[g_{\mu_1,\mu_3}].
$$
On the other hand, we have 
\begin{align*}
\iota_{k_1,k_3}(\oX\star \oY)=&&&\sum_{\mu_1\subset\mu_3}\tA^{-1}_{\mu_1}\catP^{-1}_{\mu_1\backslash\mu_3,\mu_3}(\oX\star \oY)(\mu_1\backslash\mu_3)[g_{\mu_1,\mu_3}]\\
=&&&{\sum_{\mu_1\subset\mu_2\subset\mu_3}}\tA^{-1}_{\mu_1}\catP^{-1}_{\mu_1\backslash\mu_3,\mu_3}\catP^{-1}_{\mu_1\backslash\mu_2,\mu_2\backslash\mu_3}\oX(\mu_1\backslash\mu_2)\oY(\mu_2\backslash\mu_3)[g_{\mu_1,\mu_3}].
\end{align*}
The statement follows then from the following obvious equality 
\begin{eqnarray*}
\catP_{\mu_1\backslash\mu_2,\mu_2}\catP_{\mu_2\backslash\mu_3,\mu_3}=\catP_{\mu_1\backslash\mu_2,\mu_2\backslash\mu_3}\catP_{\mu_1\backslash\mu_2,\mu_3}\catP_{\mu_2\backslash\mu_3,\mu_3}=\catP_{\mu_1\backslash\mu_2,\mu_2\backslash\mu_3}\catP_{\mu_1\backslash\mu_3,\mu_3}.\hfill\qedhere
\end{eqnarray*}
\end{proof}
\subsection{The isomorphism $H^G_*(\ccZ)\cong \hhNH_n$}
We now are able to give a geometric construction of the doubly extended nil-Hecke algebra:  
\begin{thm}\label{doubly}
There is an isomorphism of algebras $H^G_*(\ccZ)\cong \hhNH_n$.
\end{thm}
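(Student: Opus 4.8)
The plan is to construct an explicit algebra homomorphism $H^G_*(\ccZ)\to\End(\hPol_n)$ whose image is exactly $\hhNH_n$, and then upgrade this to an isomorphism by a dimension/basis count. Concretely: $\ccZ=\ccY\times\ccY$ with $\ccY=\calF\times\calG$, so by \Cref{lem:conv}(i) the space $H^G_*(\ccZ)$ carries a convolution product, and by \Cref{lem:conv}(ii) it acts on $H^G_*(\ccY)\cong\hPol_n$. By \Cref{lem:comp-in-loc}, this action is faithful (here one uses that $\ccY$ and $\ccZ$ admit nice pavings, coming from products of Schubert cells on $\calF$ and on the Grassmannians $\calG_k$, so that the localisation maps are injective and \Cref{rk:faith-from-strat} applies). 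Hence it suffices to identify the image of $H^G_*(\ccZ)$ in $\End_{\tR_\rmT}(\hPol_n)$ with the subalgebra $\hhNH_n$.

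The heart of the argument is that all the algebra generators of $\hhNH_n$ from \Cref{df:hhNH} are realised by explicit classes in $H^G_*(\ccZ)$: the element $X_i$ is the Chern class of the tautological line bundle $\bV^i/\bV^{i-1}$ pulled back from the diagonal, and acts by multiplication by $X_i$ on $H^G_*(\ccY)$; the element $T_r=-[\Zr]$ acts by the Demazure operator $\partial_r$ by \Cref{TrasDemazure}; the creation operators $\omega^+_r$ are realised by the pushforwards of the classes \eqref{fancycreators} on the Grassmannian inclusion varieties $\ccZ_{\supse,k}$ by \Cref{lem:creation+r} (and $\omega^+_n$ by the plain fundamental class via \Cref{lem:creation+n}); the annihilation operators $\omega^-_r$ are realised analogously by the classes \eqref{fancyann} on $\ccZ_{\subse,k}$ by \Cref{lem:annihilation-r} and \Cref{lem:annihilation-1} (the signs $(-1)^k$ are absorbed by the idempotents $1_k$, which themselves lie in the image since each $\ccY_k$ is a connected component of $\ccY$). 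This shows $\hhNH_n\subseteq\Im\big(H^G_*(\ccZ)\to\End(\hPol_n)\big)$, i.e. $\hhNH_n$ is a subalgebra of (the faithful image of) $H^G_*(\ccZ)$.

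For the reverse inclusion — that these generators actually span all of $H^G_*(\ccZ)$ — the plan is to compare sizes as graded $\tR_\rmT$-modules, equivalently as graded $\Bbbk$-vector spaces after specialising the equivariant parameters appropriately, or by comparing graded dimensions directly. On the geometric side, $H^G_*(\ccZ)=H^G_*(\ccY\times\ccY)$ decomposes over the components $\ccZ_{k_1,k_2}=\ccY_{k_1}\times\ccY_{k_2}$, and using the paving of each $\ccY_k=\calF\times\calG_k$ one reads off $\gdim H^G_*(\ccZ_{k_1,k_2})$ as a product of a nil-Hecke-type factor (from $\calF\times\calF$) with $\gdim H^*(\calG_{k_1})\cdot\gdim H^*(\calG_{k_2})$; summing over $k_1,k_2$ gives $\gdim H^G_*(\ccZ)=\gdim\NH_n\cdot(\sum_k\gdim H^*(\calG_k))^2$. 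On the algebraic side, \Cref{lem:basis-hhNH} / \Cref{coro:decomp-hhNH} give $\hhNH_n\cong\NH_n\otimes\mywedge_n\otimes\mywedge_n$ with matching graded dimension $\gdim\NH_n\cdot(\gdim\mywedge_n)^2$, and $\gdim\mywedge_n=\sum_{k=0}^n\binom{n}{k}q^{\text{(shift)}}=\sum_k\gdim H^*(\calG_k)$ by \Cref{Schurandomega}. Since the graded dimensions agree and $\hhNH_n$ is a graded subalgebra of the faithful image of $H^G_*(\ccZ)$, the inclusion must be an equality, giving the desired isomorphism of graded algebras $H^G_*(\ccZ)\cong\hhNH_n$.

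The main obstacle I expect is the bookkeeping needed to make the "subalgebra of the faithful image" step rigorous: one must check that the listed geometric classes, which a priori live in various $\ccZ_{\supse,k}$, $\ccZ_{\subse,k}$ and $\Zr$, do compose correctly under $\star$ to land inside a single well-defined subalgebra of $H^G_*(\ccZ)$ — this requires the $\circ$-composability conditions of \Cref{convcirc} for each relevant closed subvariety, and some care that the pushforwards to $\ccY\times\ccY$ interact well with the convolution (the $\star$-product computations of \S\ref{subs:star-prod}–\S\ref{subs:geom-star-prod} are exactly what make this work for the Grassmannian directions). A secondary subtlety is ensuring the pavings of $\ccZ=\ccY\times\ccY$ are genuinely fine enough that $H^G_*(\ccZ)$ is free over $\tR_\rmT$ with the claimed graded rank and that the localisation map is injective; for $\mathfrak{sl}_2$ this is classical (products of Schubert cells), so it should go through cleanly here.
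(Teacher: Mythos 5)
Your proof is correct and essentially identical to the paper's: both realise each generator of $\hhNH_n$ by an explicit class acting the same way on the faithful module $H^G_*(\ccY)\cong\hPol_n$ (via \Cref{TrasDemazure}, \Cref{lem:creation+n}, \Cref{lem:creation+r}, \Cref{lem:annihilation-1}, \Cref{lem:annihilation-r}), then match graded dimensions using $\hhNH_n\cong\NH_n\otimes\mywedge_n\otimes\mywedge_n$ against $\ccZ=\calF\times\calF\times\calG\times\calG$. The composability worry you raise at the end is unnecessary: since $\ccZ=\ccY\times\ccY$ is the full convolution variety (i.e.\ the case $\abX=\point$ of \Cref{lem:conv}), the classes built on $\ccZ_{\supse,k}$, $\ccZ_{\subse,k}$, $\Zr$ already live as pushforwards inside the ambient algebra $H^G_*(\ccZ)$, and no invocation of \Cref{convcirc} is required.
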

\begin{proof}
The algebra $\hhNH_n$ acts faithfully on $\hPol_n$ by \Cref{df:hhNH}, and $H_*^\rmG(\ccZ)$ acts faithfully on $H_*^\rmG(\ccY)\cong \hPol_n$, by \Cref{lem:comp-in-loc} with  \Cref{rk:faith-from-strat}. 
Moreover, for each generator of the algebra  $\hhNH_n$ we have constructed 
in Propositions \ref{TrasDemazure}, \ref{lem:creation+n}, \ref{lem:creation+r}, 
\ref{lem:annihilation-1}, \ref{lem:annihilation-r},
 an element of $H_*^\rmG(\ccZ)$ acting on the polynomial representation by the same operator. This implies immediately that there is an injective homomorphism $\hhNH_n\hookrightarrow H^G_*(\ccZ)$. To prove that this is an isomorphism, we check that the graded dimensions of the algebras agree.

We already know by \Cref{prop:geom-NH},  that the graded dimensions of $\NH_n$ and $H^G_*(\calF\times\calF)$ agree. Moreover, by  \Cref{coro:decomp-hhNH}, we have an isomorphism of graded vector spaces $\hhNH_n\cong \NH_n\otimes \mywedge_n\otimes \mywedge_n$. Since  $\ccZ=\calF\times\calF\times\calG\times\calG$ and $H^*(\calG)\cong \mywedge_n$ as graded vector spaces, the graded dimensions of $H_*^\rmG(\ccZ)$ and $\hhNH_n$ agree. 
\end{proof}
\subsection{The Grassmannian quiver Hecke algebra}
\label{subs:geom-hNH}
We next construct the extended nil-Hecke algebra using the following \emph{Grassmannian--Steinberg variety}:
\begin{nota}\label{Zplus}
Set $\ccZplus=\calF\times\calF\times \calG=\coprod_{k=0}^n \ccZplus_k$, where $\ccZplus_k=\calF\times\calF\times \calG_k$.
\end{nota}
Our goal is to turn $H_*^\rmG(\ccZplus)$ into an algebra isomorphic to $\hNH_n\subset\hhNH_n$. 

Let $k\in[0;n]$ and fix integers $n\geqslant k_1\geqslant k_2\geqslant 0$ such that $k_1-k_2=k$ and recall the partial flag varieties from \S\ref{subs:geom-star-prod}. Consider the following diagram
\begin{equation*}
\ccZ_{k_1,k_2}\stackrel{\x}{\longleftarrow}\calF\times\calF\times \calG_{k_1\supset k_2}\stackrel{\y}\longrightarrow \calF\times\calF\times \calG_{k_1\supset k} \stackrel{\z}{\longrightarrow} \ccZplus_k,
\end{equation*}

Here, $\x$ and $\z$ are the obvious maps coming from $\calG_{k_1\supset k_2}\subset \calG_{k_1}\times \calG_{k_2}$ and from forgetting the $k_1$-dimensional flag component, and $\gamma$ is the diffeomorphism induced by $\gamma_{k_1,k+2}$ from \Cref{lem:perm-flags-3}. 
\begin{lem}
The linear map 
\begin{equation*}
\iota_{k_1, k_2}\colon H_*^\rmG(\ccZplus_k)\to H_*^\rmG(\ccZ_{k_1, k_2}), \quad \iota_{k_1, k_2}=\x_*\y^*\z^*
\end{equation*}
is injective and thus identifies $H_*^\rmG(\ccZplus_k)$ with a subspace of $H_*^\rmG(\ccZ_{k_1, k_2})$. 
\end{lem}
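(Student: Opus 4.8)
The plan is to prove injectivity by exhibiting $\iota_{k_1,k_2}$ as a composition of explicitly understood maps, then checking injectivity one step at a time, ultimately reducing everything to the localisation map on $\rmT$-fixed points. First I would note that $\z^*$ is the pullback along the projection $\z\colon \calF\times\calF\times\calG_{k_1\supset k}\to\ccZplus_k=\calF\times\calF\times\calG_k$ which forgets the $k_1$-dimensional subspace. This projection is a fibre bundle with fibre a Grassmannian (the choice of a $k_1$-dimensional subspace containing a fixed $k$-dimensional one), so by the Leray--Hirsch theorem $\z^*$ is injective (in fact split injective as a map of $H^*_\rmG(\ccZplus_k)$-modules). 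The diffeomorphism $\y$ induced by $\gamma_{k_1,k+2}$ from \Cref{lem:perm-flags-3} gives an isomorphism on $\rmG$-equivariant cohomology by \Cref{rk:G-vs-U}, so $\y^*$ is an isomorphism. Hence the composition $\y^*\z^*$ is injective, and it only remains to show that the pushforward $\x_*$ along $\x\colon \calF\times\calF\times\calG_{k_1\supset k_2}\hookrightarrow \calF\times\calF\times\calG_{k_1}\times\calG_{k_2}=\ccZ_{k_1,k_2}$ is injective on the image of $\y^*\z^*$.

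For that last point, the cleanest route is to pass to $\rmT$-fixed points and use the localisation map from \Cref{locmap}. The varieties involved — products of flag varieties and partial flag varieties — all carry $\rmT$-invariant pavings by affine spaces (products of Schubert cells), hence their equivariant Borel--Moore homology is free over $\tR_\rmT$, and by \Cref{locmap} the localisation maps $H^\rmT_*(-)\hookrightarrow H^\rmT_*((-)^\rmT)_{\rm loc}$ are injective. Since $\x$ is a closed embedding, on the finite fixed-point sets it is simply the inclusion $(\calG_{k_1\supset k_2})^\rmT\hookrightarrow (\calG_{k_1}\times\calG_{k_2})^\rmT$, whose induced map on localised homology sends $[x]\mapsto \op{eu}(\text{normal directions})\cdot[x]$ — a nonzero rescaling of each basis vector. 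Thus $\x_*$ becomes injective after localisation, and one concludes via the commuting square
\[
\begin{CD}
H_*^\rmG(\calF\times\calF\times\calG_{k_1\supset k}) @>\x_*\y^*>> H_*^\rmG(\ccZ_{k_1,k_2})\\
@VVV @VVV\\
H_*^\rmT((\calF\times\calF\times\calG_{k_1\supset k})^\rmT)_{\rm loc} @>>> H_*^\rmT((\ccZ_{k_1,k_2})^\rmT)_{\rm loc},
\end{CD}
\]
whose vertical arrows are injective and whose bottom arrow is injective by the Euler-class computation above; hence the top arrow is injective, and precomposing with the injective $\z^*$ gives injectivity of $\iota_{k_1,k_2}=\x_*\y^*\z^*$.

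The main obstacle I anticipate is not any single step but making sure the fixed-point/paving bookkeeping is set up so that all the relevant localisation maps are genuinely injective — i.e. verifying that $\calF\times\calF\times\calG_{k_1\supset k_2}$ and $\ccZ_{k_1,k_2}$ admit affine pavings (equivalently, that their equivariant homologies are free $\tR_\rmT$-modules), which is standard for flag and partial-flag varieties and their products but does need to be invoked carefully, cf. \Cref{rk:faith-from-strat}. A mild subtlety is that $\y$ is only a $\rmU$-diffeomorphism, not a $\rmG$-morphism, so one must use \Cref{rk:G-vs-U} to transport it to equivariant cohomology; this is routine but worth flagging. Given these, injectivity of $\iota_{k_1,k_2}$, and hence the identification of $H_*^\rmG(\ccZplus_k)$ with a subspace of $H_*^\rmG(\ccZ_{k_1,k_2})$, follows.
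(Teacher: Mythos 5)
Your proof is correct and follows essentially the same route as the paper: the paper likewise notes that $\z^*$ is injective because $\z$ is a Grassmannian bundle, that $\y$ induces an isomorphism, and that $\x_*$ is injective, citing a cell decomposition where you instead pass to $\rmT$-fixed points and localise (an equivalent justification). One tiny slip: the push-forward $\x_*$ of a fixed-point class is just the corresponding fixed-point class downstairs, with no Euler-class factor (Euler classes arise for pull-backs, not push-forwards), but this does not affect the injectivity conclusion.
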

\begin{proof}
Indeed, $\z^*$ is injective, because $\z$ is a Grassmannian bundle. The map $\x_*$ is injective, because the push-forward of the inclusion $\calG_{k_1\supset k_2}\to \calG_{k_1}\times \calG_{k_2}$ is injective (we can see this using a cell decomposition).
\end{proof}

Taking the direct sum by all possible $k_1$ and $k_2$ such that $n\geqslant k_1\geqslant k_2\geqslant 0$, we get an identification of $H_*^\rmG(\ccZplus)$ with a vector subspace of $H_*^\rmG(\ccZ)$. We would like to show that this vector subspace is a subalgebra.

For this, we use that $H_*^\rmG(\ccZ)$ acts faithfully on $H_*^\rmG(\ccY)$. Abusing the terminology, we will say that $H_*^\rmG(\ccZplus)$ \emph{acts} on $H_*^\rmG(\ccY)$, by which we just mean the restriction of the $H_*^\rmG(\ccZ)$-action to the subspace $H_*^\rmG(\ccZplus)$. To relate the multiplication in $H_*^\rmG(\ccZ)$ with the $\star$-product from \S\ref{subs:geom-hNH}, we identify $H_*^\rmG(\ccY)\cong H_*^\rmT(\calG)$ as in \Cref{lem:isom-FG/G-G/T}. 

\begin{prop}The following diagram, relating the $H_*^\rmT(\calG\times \calG)$-action on $H_*^\rmT(\calG)$ at the top to the convolution action of $H_*^\rmG(\ccZ)$ on $H_*^\rmG(\ccY)$ the bottom, commutes.
$$
\begin{CD}
H_*^\rmT(\calG\times \calG)\times H_*^\rmT(\calG)@>>>H_*^\rmT(\calG)\\
@VV{\cong {\mbox{ \rm \eqref{iso-FG/G-G/T}}}}V @VV{\cong {\mbox{ \rm \eqref{iso-FG/G-G/T}}}}V\\
H_*^\rmG(\calF\times\calG\times \calG)\times H_*^\rmG(\ccY)@.H_*^\rmG(\ccY)\\
@VV{\Delta_*}V @VV\Id V\\
H_*^\rmG(\ccZ)\times H_*^\rmG(\ccY)@>>>H_*^\rmG(\ccY).\\
\end{CD}
$$
Here 
$
\Delta\colon \calF\times\calG\times \calG\to\calF\times\calF\times\calG\times \calG=\ccZ
$ 
is the diagonal inclusion.
\end{prop}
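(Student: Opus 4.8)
The plan is to verify the commutativity of the diagram by unwinding all the identifications involved, and then to check the claimed equality on the level of $\rmT$-fixed points, where both the $\star$-product on $H_*^\rmT(\calG)$ and the convolution product on $H_*^\rmG(\ccZ)$ have explicit formulas. The upper square is essentially a tautology: it simply says that the two copies of the isomorphism \eqref{iso-FG/G-G/T} from \Cref{lem:isom-FG/G-G/T} are compatible with the module structures, which follows from functoriality of the construction $X\mapsto H_*^\rmG(\calF\times X)\cong H_*^\rmT(X)$ applied to the map $\calG\times\calG\to\calG$ (or rather to the relevant correspondence). The lower rectangle is the substantive part: one must show that convolving with $\Delta_*(a)$ for $a\in H_*^\rmG(\calF\times\calG\times\calG)$, under the $H_*^\rmG(\ccZ)$-action on $H_*^\rmG(\ccY)$ from \Cref{lem:conv}, reproduces the $H_*^\rmG(\calF\times\calG\times\calG)$-action on $H_*^\rmG(\ccY)$ that corresponds via \eqref{iso-FG/G-G/T} to the $H_*^\rmT(\calG\times\calG)$-action on $H_*^\rmT(\calG)$ given by $\star$ together with pull-back along $\calG\to\point$, i.e.\ the convolution action with respect to the inclusion $\calG\times\calG\subset\calG\times\calG$ viewed through the diagonal-type correspondence.

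First I would set up coordinates: parametrise $\rmT$-fixed points of $\ccY=\calF\times\calG$ by pairs $x_{w,\mu}$ as in \S\ref{subs:more-Euler}, fixed points of $\ccZ=\ccY\times\ccY$ by quadruples $x_{w_1,w_2,\mu_1,\mu_2}$, and fixed points of $\calF\times\calG\times\calG$ by triples $(f_w,g_{w(\mu)},g_{w(\mu')})$. Using \Cref{locmap} and the localisation isomorphisms (which apply here since all the relevant Borel--Moore homologies are free over $\tR_\rmT$ by the existence of Schubert-type pavings), everything reduces to a computation in $H_*^\rmT((\cdot)^\rmT)_{\rm loc}$. I would then trace the class $\Delta_*(a)$ for a fixed-point class $a=[(f_w,g_{w(\mu_1)},g_{w(\mu_2)})]$ through the diagonal inclusion $\Delta\colon\calF\times\calG\times\calG\hookrightarrow\ccZ$: its push-forward is $\op{eu}(N_\Delta)\cdot[x_{w,w,\mu_1,\mu_2}]$ where $N_\Delta$ is the normal bundle, whose Euler class at that fixed point is exactly $\tA_w$ (the Euler class of $\calF$ at $f_w$, since the normal direction of the diagonal $\calF\hookrightarrow\calF\times\calF$ contributes $T_{f_w}\calF$). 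Then I apply the action formula from \Cref{lem:comp-in-loc}~\ref{2}.), namely $[x_{w_1,w_2,\mu_1,\mu_2}]\star[x_{w,\mu}]=\delta_{w_2,w}\delta_{\mu_2,\mu}\op{eu}(\ccY,x_{w,\mu})[x_{w_1,\mu_1}]$, and compare with the explicit formula for $\star$ in \eqref{staralg} combined with the description \eqref{eq:pol-wedge-loc} of $P\omega_\lambda$ in the fixed-point basis and the dictionary $\op{eu}(\ccY,x_{w,\mu})=\tA_{w,\mu}=\tA_w\tA_{w(\mu)}$ from \eqref{DefAwmu}. The bookkeeping with the factors $\catP_{\mu_1,\mu_2}=\prod_{i\in\mu_1,j\in\mu_2}(\ccT_i-\ccT_j)$ and the various $\tA$'s should then match on the nose; this is exactly the kind of identity already verified in \Cref{redwine}, so I would reduce to or imitate that lemma rather than redo the cancellation from scratch.

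The main obstacle I anticipate is getting the Euler-class/normal-bundle bookkeeping for the diagonal $\Delta$ exactly right — in particular keeping track of which factor contributes $\tA_w$ versus which contributes the Grassmannian Euler classes $\tA_{w(\mu)}$, and making sure the $\catP_{\mu_1,\mu_2}$ denominators coming from the $\star$-product (which encode the "overlap" Euler class of the relevant partial-flag variety, cf.\ the formula $\op{eu}(\calG_{k_1\supset k_2},g_{\mu_1,\mu_2})=\tA_{\mu_1}\catP_{\mu_1\backslash\mu_2,\mu_2}$ in the proof of \Cref{redwine}) are correctly identified with the corresponding contributions from the convolution diagram. A secondary technical point is justifying the use of fixed-point localisation for $\ccZ$ itself: one needs $H_*^\rmG(\ccZ)\hookrightarrow H_*^\rmT(\ccZ^\rmT)_{\rm loc}$, which holds because $\ccZ=\calF\times\calF\times\calG\times\calG$ admits a product paving by (Schubert cell)$\times$(Schubert cell)$\times$(Schubert cell)$\times$(Schubert cell), so this is routine. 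Once the fixed-point match is in hand, commutativity of the full diagram follows by chasing, since the vertical localisation maps are injective.
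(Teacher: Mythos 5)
Your strategy is the same as the paper's: the paper's proof consists of exactly the reduction you describe — pass to the localised diagram and check commutativity on the basis of $\rmT$-fixed points, using \Cref{rk:loc-X-vs-FX} for how classes transform under \eqref{iso-FG/G-G/T}. One bookkeeping correction, though, precisely at the spot you flag as delicate: the push-forward $\Delta_*$ along the closed embedding \emph{preserves} coefficients in the fixed-point basis, so $\Delta_*[(f_w,g_{w(\mu_1)},g_{w(\mu_2)})]=[x_{w,w,\mu_1,\mu_2}]$ with no factor $\op{eu}(N_\Delta)$; such an Euler class would only appear if you compared restrictions $i_x^*\Delta_*$ rather than fixed-point coefficients. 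If you insert the extra $\tA_w$ you propose, the two sides differ by exactly that factor. The $\tA_w$ you are looking for is already accounted for elsewhere: by \Cref{rk:loc-X-vs-FX} both $a$ and $b$ acquire a factor $\tA_w^{-1}$ when viewed on $\calF\times(\text{--})$, and the action formula \eqref{unclear} produces $\tA_{w,\mu}=\tA_w\tA_{w(\mu)}$, whose $\tA_w$ cancels one of the two $\tA_w^{-1}$'s, leaving precisely the localised form of $[(g_{\mu_1},g_{\mu_2})]\star[g_\mu]=\delta_{\mu_2,\mu}\tA_\mu[g_{\mu_1}]$ transported through \eqref{iso-FG/G-G/T}. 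With that fix the fixed-point match is immediate and the rest of your argument goes through.
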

\begin{proof}
The statement follows from the commutativity of the localised diagram
$$
\begin{CD}
H_*^\rmT(\calG\times \calG)_{\rm loc}\times H_*^\rmT(\calG)_{\rm loc}@>>>H_*^\rmT(\calG)_{\rm loc}\\
@VVV @VVV\\
H_*^\rmT(\calF\times\calG\times \calG)_{\rm loc}\times H_*^\rmT(\ccY)_{\rm loc}@.H_*^\rmT(\ccY)_{\rm loc}\\
@VVV @VVV\\
H_*^\rmT(\ccZ)_{\rm loc}\times H_*^\rmT(\ccY)_{\rm loc}@>>>H_*^\rmT(\ccY)_{\rm loc}.\\
\end{CD}
$$
which can be checked on the basis of $\rmT$-fixed points using \Cref{rk:loc-X-vs-FX}.
\end{proof}
Projection onto the factors directly implies the following refinement:
\begin{coro}
\label{coro:diag-Zplus-act}
The following diagram commutes
$$
\begin{CD}
H_*^\rmT(\calG_a)\times H_*^\rmT(\calG_b)@>>> H_*^\rmT(\calG_{a+b})\\
@VV{\cong {\mbox{ \rm \eqref{iso-FG/G-G/T}}}}V                                    @VV{\cong {\mbox{ \rm \eqref{iso-FG/G-G/T}}}}V\\
H_*^\rmG(\ccY_a)\times H_*^\rmG(\ccY_b)@. H_*^\rmG(\ccY_{a+b})\\
@VV{\Delta_*}V                               @VV\Id V\\
H_*^\rmG(\ccZplus_a)\times H_*^\rmG(\ccY_b)@. H_*^\rmG(\ccY_{a+b})\\
@VV{\iota_{a+b,b}}V                               @VV\Id V\\
H_*^\rmG(\ccZ_{a+b,b}) \times H_*^\rmG(\ccY_b)@>>> H_*^\rmG(\ccY_{a+b}),\\
\end{CD}
$$
where the horizontal maps are the $\star$-product respectively the $H_*^\rmG(\ccZ)$-action on $H_*^\rmG(\ccY)$, and $\Delta\colon \ccY_a\to \ccZplus_a$ denotes the diagonal inclusion.
\end{coro}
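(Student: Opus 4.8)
The plan is to obtain the corollary from the preceding proposition together with the geometric model of the $\star$-product built in \S\ref{subs:geom-star-prod}; essentially everything is already contained in the proposition, and one only has to project onto the right graded summand and reinterpret the vertical maps. First I would project the preceding proposition onto the component in which the two Grassmannian factors of $\ccZ$ carry dimensions $a+b$ and $b$, while the flag variety $\ccY$ on the left has Grassmannian dimension $b$ and the one on the right has Grassmannian dimension $a+b$. All maps in that diagram — the identifications \eqref{iso-FG/G-G/T}, the diagonal pushforward, and the two convolution actions — respect the decompositions $\calG=\coprod_k\calG_k$, $\ccY=\coprod_k\ccY_k$, $\ccZ=\coprod_{k_1,k_2}\ccZ_{k_1,k_2}$, so the projection yields a commutative diagram whose bottom row is the convolution action of $H_*^\rmG(\ccZ_{a+b,b})$ on $H_*^\rmG(\ccY_b)$ with values in $H_*^\rmG(\ccY_{a+b})$, whose middle left object is $H_*^\rmG(\calF\times\calG_{a+b}\times\calG_b)\cong H_*^\rmT(\calG_{a+b}\times\calG_b)$, and whose top row is the action of $H_*^\rmT(\calG_{a+b}\times\calG_b)$ on $H_*^\rmT(\calG_b)$ landing in $H_*^\rmT(\calG_{a+b})$.

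The remaining task is to reconcile the left column of this projected diagram with the left column of the corollary, that is, to understand $\iota_{a+b,b}\circ\Delta_*\colon H_*^\rmG(\ccY_a)\to H_*^\rmG(\ccZ_{a+b,b})$. Here I would use that in $\iota_{a+b,b}=\x_*\y^*\z^*$ the three maps $\x,\y,\z$ are of the form $\op{id}_{\calF\times\calF}\times(\text{a map of Grassmannians})$: $\z$ is the identity times the forgetful Grassmannian bundle $\calG_{(a+b)\supset a}\to\calG_a$, $\y$ is the identity times the diffeomorphism $\gamma_{a+b,b}$ of \Cref{lem:perm-flags-3}, and $\x$ is the identity times the closed embedding $\calG_{(a+b)\supset b}\hookrightarrow\calG_{a+b}\times\calG_b$. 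By base change and functoriality of pushforward these commute with the $\calF$-diagonal, so $\iota_{a+b,b}\circ\Delta_*$ is the $\calF$-diagonal pushforward into $\ccZ_{a+b,b}$ of a class on $\calF\times\calG_{a+b}\times\calG_b$, which under \eqref{iso-FG/G-G/T} corresponds to the pushforward of $\iota_{a+b,b}(\oX)$ along $\calG_{(a+b)\supset b}\hookrightarrow\calG_{a+b}\times\calG_b$, where now $\iota_{a+b,b}=\gamma_{a+b,b}^*\circ p^*$ is exactly the map defined in \S\ref{subs:geom-star-prod}.

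It then remains to combine two facts. First, by the projection formula the action of the pushforward of $\iota_{a+b,b}(\oX)$ on $\oY$ equals the convolution product $H_*^\rmT(\calG_{(a+b)\supset b})\times H_*^\rmT(\calG_b)\to H_*^\rmT(\calG_{a+b})$ — attached to the inclusions $\calG_{(a+b)\supset b}\subset\calG_{a+b}\times\calG_b$, $\calG_b=\calG_{b\supset 0}\subset\calG_b\times\calG_0$ and $\calG_{a+b}=\calG_{(a+b)\supset 0}\subset\calG_{a+b}\times\calG_0$ — of $\iota_{a+b,b}(\oX)$ and $\oY$. Second, by the proposition of \S\ref{subs:geom-star-prod} (see \Cref{redwine}), applied with $k_1=a+b$, $k_2=b$, $k_3=0$ and with $\iota_{b,0}$, $\iota_{a+b,0}$ the canonical identifications $\calG_{b\supset 0}\cong\calG_b$, $\calG_{(a+b)\supset 0}\cong\calG_{a+b}$, this convolution product is precisely $\oX\star\oY$. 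Chaining all three steps identifies the bottom path of the corollary's diagram with $\oX\star\oY$, hence with the top path, which gives the claimed commutativity.

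The main obstacle is the middle step: one has to unravel $\iota_{a+b,b}=\x_*\y^*\z^*$ carefully, keeping track of which partial flag varieties the maps live over, and check that pushforward along the (only $\rmU$-equivariant) diffeomorphism $\gamma_{a+b,b}$ and along the Grassmannian embedding is compatible both with the diagonal and with the isomorphisms \eqref{iso-FG/G-G/T}. If organising this cleanly becomes unwieldy, the robust alternative is to check the whole square on $\rmT$-fixed points via \Cref{rk:loc-X-vs-FX} and \Cref{lem:comp-in-loc}: there $\Delta_*$, $\z^*$, $\y^*$, $\x_*$ and both convolution actions are explicit formulas in the Euler classes $\tA_w$, $\tA_\mu$ and the products $\catP_{\lambda,\nu}$, and the required equality collapses to the identity $\catP_{\mu_1\backslash\mu_2,\mu_2}\catP_{\mu_2\backslash\mu_3,\mu_3}=\catP_{\mu_1\backslash\mu_2,\mu_2\backslash\mu_3}\catP_{\mu_1\backslash\mu_3,\mu_3}$ used in the proof of \Cref{redwine}, together with the defining sum \eqref{staralg} of $\star$.
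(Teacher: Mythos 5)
Your proposal is correct and follows essentially the same route as the paper, which dispatches this corollary with the single phrase ``projection onto the factors directly implies'' applied to the preceding proposition, implicitly combined with \Cref{redwine} (taken with $k_3=0$). Your elaboration --- in particular the observation that $\x,\y,\z$ are all of the form $\op{id}_{\calF\times\calF}\times(\text{map of Grassmannians})$ and hence commute with the $\calF$-diagonal, so that $\iota_{a+b,b}\circ\Delta_*$ matches the Grassmannian-level $\iota$ of \S\ref{subs:geom-star-prod} followed by $\Delta_*$ --- supplies exactly the details the paper leaves to the reader, and your fixed-point fallback is the same computation underlying the paper's proofs of the proposition and of \Cref{redwine}.
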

As a direct consequence we obtain the geometric creation operators. For this let  $\omega_\lambda$ be the push-forward of $\omega_\lambda\in H_*^\rmG(\ccY)$ to $H_*^\rmG(\ccZplus)$ with respect to the diagonal inclusion 
$
\ccY=\calF\times\calG\to \ccZplus=\calF\times\calF\times\calG.
$ The notation is consistent:
\begin{coro}\label{actionomega}
Acting with $\omega_\lambda\in H_*^\rmG(\ccZplus)$ on $H_*^\rmG(\ccY)$   multiplies by $\omega_\lambda\in\mywedge_n$.
\end{coro}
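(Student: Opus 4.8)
The claim to prove is \Cref{actionomega}: that the class $\omega_\lambda\in H_*^\rmG(\ccZplus)$, defined as the push-forward of $\omega_\lambda\in H_*^\rmG(\ccY)$ along the diagonal $\Delta\colon\ccY=\calF\times\calG\to\ccZplus=\calF\times\calF\times\calG$, acts on $H_*^\rmG(\ccY)$ by multiplication by $\omega_\lambda\in\mywedge_n$ under the identification of \Cref{Schurandomega}.

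The plan is to deduce this directly from \Cref{coro:diag-Zplus-act} together with the $\star$-product computation \Cref{lem:S-star-S}. First I would fix $\lambda\in\Lambda_a(n)$ and observe that $\omega_\lambda\in H_*^\rmG(\ccY_a)$ corresponds, via \eqref{iso-FG/G-G/T}, to the equivariant Schubert class $\tS_\lambda\in H_*^\rmT(\calG_a)$ by \Cref{Schurandomega}. Applying $\Delta_*$ gives exactly the class $\omega_\lambda\in H_*^\rmG(\ccZplus_a)$ appearing in the statement. Now take any $b$ with $a+b\le n$ and any $\nu\in\Lambda_b(n)$; the bottom square of the diagram in \Cref{coro:diag-Zplus-act} says that the action of $\omega_\lambda\in H_*^\rmG(\ccZplus_a)$ on $\omega_\nu\in H_*^\rmG(\ccY_b)$ equals (after transporting along \eqref{iso-FG/G-G/T}) the $\star$-product $\tS_\lambda\star\tS_\nu\in H_*^\rmT(\calG_{a+b})$. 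By \Cref{lem:S-star-S} this is $(-1)^{s(\lambda,\nu)}\tS_{\lambda\sqcup\nu}$ if $\lambda\cap\nu=\emptyset$ and $0$ otherwise, which under \Cref{Schurandomega} is precisely $\omega_\lambda\wedge\omega_\nu$ (using the footnote identity $\omega_\lambda\wedge\omega_\nu=(-1)^{s(\lambda,\nu)}\omega_{\lambda\cup\nu}$). Since the classes $\omega_\nu$, $\nu\in\Lambda(n)$, form an $H^*_\rmG(\calF)$-module basis of $H_*^\rmG(\ccY)$ and $\omega_\lambda$ acts $H^*_\rmG(\calF)=\Pol_n$-linearly (the $\Pol_n$-module structure being respected by all maps in the diagram, cf.\ the remark after \Cref{lem:isom-FG/G-G/T}), this determines the action completely: $\omega_\lambda$ acts by left multiplication by $\omega_\lambda\in\mywedge_n$.

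There is essentially no obstacle here; the work has all been done in the preceding subsections. The only point requiring a word of care is the matching of signs: one must make sure the sign $(-1)^{s(\lambda,\nu)}$ produced by \Cref{lem:S-star-S} is exactly the sign built into the convention $\omega_\lambda\wedge\omega_\nu=(-1)^{s(\lambda,\nu)}\omega_{\lambda\cup\nu}$ coming from the ordering convention in \Cref{notnew}. This is automatic because the corollary to \Cref{lem:S-star-S} already identifies $(H^*_\rmT(\calG),\star)$ with $\Pol_n\otimes\mywedge_n$ via $\tS_\lambda\mapsto\omega_\lambda$, so the result is really just the statement that $\Delta_*$ followed by the convolution action on $H_*^\rmG(\ccY)$ recovers, under \eqref{iso-FG/G-G/T}, the $\star$-product, which is exactly the content of the bottom two squares of the diagram in \Cref{coro:diag-Zplus-act}. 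Hence the consistency of the notation (the class called $\omega_\lambda$ in $H_*^\rmG(\ccZplus)$ acting by the operator called $\omega_\lambda$) follows, completing the proof.
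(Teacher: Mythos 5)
Your proof is correct and follows exactly the route the paper intends: the corollary is presented in the paper as a "direct consequence" of \Cref{coro:diag-Zplus-act}, and you spell out that consequence, chasing the diagram to reduce the statement to \Cref{lem:S-star-S} and the sign convention $\omega_\lambda\wedge\omega_\nu=(-1)^{s(\lambda,\nu)}\omega_{\lambda\cup\nu}$. (The case $|\lambda|+|\nu|>n$, where the target component $\ccY_{|\lambda|+|\nu|}$ is empty and both sides vanish for trivial reasons, is implicitly covered and not worth flagging.)
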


\begin{prop}
\label{lem:basisHZplus}
The vector subspace $H_*^\rmG(\ccZplus)$ of $H_*^\rmG(\ccZ)$ has basis
$$
\{T_{w}X^{\bfa}\omega_\lambda; w\in \frakS_n, \bfa\in\bbZ_{\geqslant 0}^n,\lambda\in \Lambda(n)\}.
$$
In particular, this vector subspace is a subalgebra.
\end{prop}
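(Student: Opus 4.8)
The strategy is to identify the subspace $H_*^\rmG(\ccZplus)\subset H_*^\rmG(\ccZ)$ explicitly as the $\Bbbk$-span of the indicated set, and then deduce from \Cref{coro:diag-Zplus-act} together with the faithful action on $H_*^\rmG(\ccY)$ that it is closed under the convolution product. First I would exhibit the elements $T_w$, $X_r$ and $\omega_\lambda$ as genuine elements of $H_*^\rmG(\ccZplus)$: the $T_w$ (for $w\in\frakS_n$) and the polynomials $X^\bfa$ already live in the homology of the ``base'' Steinberg variety $\calF\times\calF$, which is the $k=0$ (or diagonal-in-$\calG$) part, and they push forward along the inclusion $\calF\times\calF\times\calG\hookrightarrow\ccZ$ compatibly with the embedding $\iota_{k_1,k_2}$; the classes $\omega_\lambda$ are by definition the push-forwards of $\omega_\lambda\in H_*^\rmG(\ccY)$ along the diagonal $\ccY=\calF\times\calG\to\ccZplus$, cf. \Cref{actionomega}. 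Thus $\{T_wX^\bfa\omega_\lambda\}$ is a well-defined family of elements of the vector subspace $H_*^\rmG(\ccZplus)$ of $H_*^\rmG(\ccZ)$.

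Next I would check that this family is a $\Bbbk$-basis of $H_*^\rmG(\ccZplus)$. Since $\iota_{k_1,k_2}$ is injective by the preceding lemma, it suffices to count graded dimensions: $\ccZplus=\calF\times\calF\times\calG$ and $\calG$ has a cell decomposition (a disjoint union of Schubert cells over all $\op{Gr}_k(V)$), so $H_*^\rmG(\ccZplus)$ is a free module over $\tR_\rmG$ with graded rank $\grdim H_*^\rmG(\calF\times\calF)\cdot \grdim H^*(\calG)$, and $H^*(\calG)\cong\mywedge_n$ as graded vector space. On the other hand the proposed set has exactly the cardinality matching $\NH_n\otimes\mywedge_n$ (using the PBW basis $\{T_wX^\bfa\}$ of $\NH_n\cong H_*^\rmG(\calF\times\calF)$ from \Cref{prop:geom-NH}, \Cref{coro:basis-hNH}); a standard argument via the localisation map $H_*^\rmG(\ccZplus)\hookrightarrow H_*^\rmT((\ccZplus)^\rmT)_{\rm loc}$ shows the family is $\tR_\rmG$-linearly independent (the triangularity of $T_w$ on fixed points and of the $\tS_\lambda$ on $\Lambda(n)$, exactly as in the proofs of \Cref{lem:caract-S} and \Cref{problematic}), hence a basis.

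Finally I would prove closure under the product. By \Cref{lem:comp-in-loc} the action of $H_*^\rmG(\ccZ)$ on $H_*^\rmG(\ccY)\cong\hPol_n$ is faithful, so it suffices to show that the product of two basis elements, computed inside $H_*^\rmG(\ccZ)\cong\hhNH_n$, again lies in the subspace $H_*^\rmG(\ccZplus)$; equivalently, that the operator $(T_{w_1}X^{\bfa_1}\omega_{\lambda_1})\cdot(T_{w_2}X^{\bfa_2}\omega_{\lambda_2})$ on $\hPol_n$ preserves the image of $H_*^\rmG(\ccZplus)$ under the action, and indeed coincides with the action of some $\Bbbk$-linear combination of the $T_wX^\bfa\omega_\lambda$. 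Here \Cref{coro:diag-Zplus-act} is the key input: it tells us that acting by $\omega_{\lambda_1}\in H_*^\rmG(\ccZplus)$ is the same as multiplying by $\omega_{\lambda_1}\in\mywedge_n$ via the $\star$-product, and \Cref{lem:S-star-S} gives that $\omega_{\lambda_2}\star\omega_{\lambda_1}=\pm\omega_{\lambda_1\sqcup\lambda_2}$ or $0$; combined with the facts that $T_w$, $X^\bfa$ act exactly as the corresponding elements of $\NH_n\subset\hNH_n$ and that $\hNH_n$ is closed under multiplication with a PBW decomposition $\hNH_n\cong\NH_n\otimes\mywedge_n$ (\Cref{coro:basis-hNH}), one moves all $\omega$'s to the right using the commutation rules (the analogue of \Cref{Esr}, i.e. $\partial_r\omega_r=-\omega_{r+1}$ etc.) and rewrites the product in the claimed basis. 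So the image under the faithful action is precisely $\hNH_n\subset\hhNH_n$, whence $H_*^\rmG(\ccZplus)$ is a subalgebra isomorphic to $\hNH_n$.

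\textbf{Main obstacle.} The delicate point is not the dimension count but verifying closure: a priori $H_*^\rmG(\ccZplus)$ carries no intrinsic convolution product (the triple product on $\calF\times\calF\times\calG$ is not available), so one genuinely must work inside $\hhNH_n$ and control how products of the ``mixed'' elements $T_wX^\bfa\omega_\lambda$ land back in the span; the crux is establishing that the annihilation operators $\omega_i^-$ never appear, which is exactly the content that makes $\hNH_n$ rather than $\hhNH_n$ show up, and this is forced by \Cref{coro:diag-Zplus-act} identifying the product with the (annihilation-free) $\star$-product.
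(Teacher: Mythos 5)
Your overall strategy is sound, and your treatment of the ``in particular'' part coincides with the paper's: the paper also deduces closure from \Cref{coro:diag-Zplus-act} and \Cref{actionomega} via the faithful action on $H_*^\rmG(\ccY)\cong\hPol_n$, using that the operators $T_wX^{\bfa}\omega_\lambda$ span the subalgebra $\hNH_n\subset\End(\hPol_n)$. For the basis statement, however, the paper argues differently: it invokes the cellular fibration lemma of Chriss--Ginzburg, which filters $H_*^\rmG(\ccZplus_k)$ by classes supported on closures of cells of $\calF\times\calF\times\calG_k$ and produces a free $\Pol_n$-basis intrinsically inside $H_*^\rmG(\ccZplus)$, whereas you argue by a graded dimension count plus linear independence via localisation. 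Both routes can be made to work, but the paper's route has a structural advantage that exposes the weak point of yours.

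The gap is in your first paragraph. You claim that $\{T_wX^{\bfa}\omega_\lambda\}$ is ``a well-defined family of elements of the vector subspace $H_*^\rmG(\ccZplus)$'' because each individual factor lies in that subspace. But these elements are \emph{products computed in the ambient convolution algebra} $H_*^\rmG(\ccZ)\cong\hhNH_n$, and the subspace $H_*^\rmG(\ccZplus)$ carries no a priori product — indeed, closure of the subspace under the ambient product is exactly the assertion you are trying to prove. So ``each factor is in the subspace, hence the product is'' begs the question, and everything downstream (linear independence \emph{of elements of the subspace}, the dimension count, and the closure argument, which all presuppose that these products populate $H_*^\rmG(\ccZplus)$) inherits this gap. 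To repair it you must either (a) prove a compatibility of the embeddings $\iota_{k_1,k_2}$ with convolution in the style of \Cref{redwine} (i.e.\ $\iota_{k_1,k_2}(x)\star\iota_{k_2,k_3}(y)=\iota_{k_1,k_3}(x\star y)$ for suitably defined partial products on $H_*^\rmG(\ccZplus)$), from which membership of $T_wX^{\bfa}\cdot\Delta_*(\omega_\lambda)$ follows by a localisation computation; or (b) follow the paper and build the basis from fundamental classes of cell closures, which are manifestly classes on $\calF\times\calF\times\calG$, and only afterwards identify them (using the fixed-point formulas already established) with the products $T_wX^{\bfa}\omega_\lambda$ taken in $H_*^\rmG(\ccZ)$. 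Once membership is secured, your dimension count and your faithfulness argument for closure go through as written.
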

\begin{proof}
The statement on the basis follows by standard arguments based on the cellular fibration lemma, see \cite[\S 5.5]{CG97}. The consequence holds then by \Cref{coro:diag-Zplus-act} and \Cref{actionomega}.
\end{proof}
We call $H_*^\rmG(\ccZplus)$ the \emph{Grassmannian quiver Hecke algebra} (attached to  $\mathfrak{sl}_2$ and $n$).
\subsection{The Naisse-Vaz algebra as Grassmannian quiver Hecke algebra}
The Grassmannian quiver Hecke algebra relates to the quiver Hecke algebra from \cite{VV} as the Naisse--Vaz algebra relates to the KLR algebras from \cite{KL}, \cite{Rou2KM}:
\begin{thm}
\label{thm:main-sl2}
The algebra $H_*^\rmG(\ccZplus)$ is isomorphic to $\hNH_n$.
\end{thm}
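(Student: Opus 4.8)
The plan is to exhibit $H_*^\rmG(\ccZplus)$ and $\hNH_n$ as two subalgebras of the \emph{same} larger algebra, namely $\hhNH_n \cong H^G_*(\ccZ)$, and show they coincide. First I would invoke \Cref{doubly}, which gives an isomorphism $H^G_*(\ccZ)\cong\hhNH_n$; under this isomorphism I want to identify the subspace $H_*^\rmG(\ccZplus)\subset H_*^\rmG(\ccZ)$ (via the embedding $\iota_{k_1,k_2}=\x_*\y^*\z^*$ and \Cref{lem:basisHZplus}) with the subalgebra $\hNH_n\subset\hhNH_n$ of \Cref{coro:decomp-hhNH}. By \Cref{lem:basisHZplus} the subspace $H_*^\rmG(\ccZplus)$ is a subalgebra with basis $\{T_w X^\bfa\omega_\lambda\}$, and by \Cref{coro:basis-hNH} the algebra $\hNH_n$ has the matching basis $\{T_w X^\bfa\omega_\lambda\}$ inside $\hhNH_n$; so it suffices to match the generators.

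The key step is to check that the generators of $H_*^\rmG(\ccZplus)$ map to the generators of $\hNH_n$ under $H^G_*(\ccZ)\cong\hhNH_n$. Concretely: the classes $X_r$ and $T_r=-[\Zr]$ already live in $H^G_*(\calF\times\calF)\subset H_*^\rmG(\ccZplus)$ and, by \Cref{prop:geom-NH} and \Cref{TrasDemazure}, correspond to the multiplication operators $X_r$ and the Demazure operators $\partial_r$ — i.e.\ to the generators $X_r,T_r$ of $\NH_n\subset\hNH_n$. The remaining generator of $\hNH_n$ over $\NH_n$ is (left multiplication by) $\omega_\lambda$, or really $\omega_1^+$; and \Cref{actionomega} says precisely that the diagonal push-forward class $\omega_\lambda\in H_*^\rmG(\ccZplus)$ acts on the faithful representation $H_*^\rmG(\ccY)\cong\hPol_n$ by multiplication with $\omega_\lambda$. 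Since both $H_*^\rmG(\ccZ)$ and $\hhNH_n$ act \emph{faithfully} on $\hPol_n$ (by \Cref{lem:comp-in-loc} with \Cref{rk:faith-from-strat}, respectively \Cref{df:hhNH}), agreement of the actions of all generators forces the element $\omega_\lambda\in H_*^\rmG(\ccZplus)$ to be carried to the element $\omega_\lambda\in\hNH_n$ under the isomorphism $H^G_*(\ccZ)\cong\hhNH_n$.

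It then follows that the subalgebra of $H^G_*(\ccZ)$ generated by the $X_r$, the $T_r$, and the $\omega_\lambda$ — which is all of $H_*^\rmG(\ccZplus)$ by \Cref{lem:basisHZplus} — is mapped isomorphically onto the subalgebra of $\hhNH_n$ generated by $X_r, T_r,\omega_\lambda$, which is $\hNH_n$ by \Cref{df:hNH}. To finish, I would compare graded dimensions: by \Cref{lem:basisHZplus} the graded dimension of $H_*^\rmG(\ccZplus)$ is that of $\NH_n\otimes\mywedge_n$ with the gradings from \Cref{rk:gr-hNH}, and by \Cref{coro:basis-hNH} this is exactly $\gdim\hNH_n$; so the injective graded homomorphism $H_*^\rmG(\ccZplus)\to\hNH_n$ obtained above is an isomorphism of graded algebras, with the grading matching by the normalisations chosen in \Cref{gradingKLR}, \Cref{DefgradhR} and \Cref{rk:gr-hNH}.

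The main obstacle is really bookkeeping rather than a deep point: one must make sure that the embedding $\iota_{k_1,k_2}$ of $H_*^\rmG(\ccZplus)$ into $H_*^\rmG(\ccZ)$ is compatible with the \emph{algebra} structure, i.e.\ that the $\star$-product description used to pass between $H_*^\rmT(\calG)$-convolution and the $H_*^\rmG(\ccZ)$-action (\Cref{coro:diag-Zplus-act}) is exactly the one realising the product on the subspace $H_*^\rmG(\ccZplus)$ — this is where \Cref{redwine} and \Cref{coro:diag-Zplus-act} do the work — and that the diagonal push-forward classes $\omega_\lambda$ defined there genuinely lie in this subspace and generate it together with $\NH_n$. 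Once the diagram in \Cref{coro:diag-Zplus-act} is in hand, faithfulness of the action on $\hPol_n$ upgrades ``agreement of operators'' to ``equality of algebra elements'' and the theorem drops out.
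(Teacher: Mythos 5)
Your proposal is correct and follows essentially the same route as the paper: both arguments rest on the faithfulness of the two actions on $\hPol_n\cong H_*^\rmG(\ccY)$, match the generators $X_r$, $T_r$, $\omega_\lambda$ via \Cref{TrasDemazure} and the creation-operator/diagonal-class results, and conclude by comparing the basis of \Cref{lem:basisHZplus} with the decomposition of \Cref{coro:basis-hNH}. The paper simply phrases the embedding in the direction $\hNH_n\hookrightarrow H_*^\rmG(\ccZplus)$ and cites \Cref{lem:creation+n}, \Cref{lem:creation+r} where you cite \Cref{actionomega}, which amounts to the same thing.
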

\begin{proof}
Both algebras act faithfully on $H_*^\rmG(\ccY)\cong \hPol_n$ (see  \Cref{lem:comp-in-loc} with  \Cref{rk:faith-from-strat}, and \Cref{df:hNH}). By Propositions \ref{TrasDemazure}, \ref{lem:creation+n}, \ref{lem:creation+r}, 
$\hNH_n$ embeds into $H_*^\rmG(\ccZplus)$. Now, compare \Cref{lem:basisHZplus} with \Cref{coro:basis-hNH}.
\end{proof}
\begin{rk}\label{shiftedgrading}
The isomorphism in \Cref{thm:main-sl2} becomes compatible with the grading from \Cref{rk:gr-hNH} if we use the \emph{shifted grading} on $H_*^\rmG(\ccZplus)$ with   $H_r^\rmG(\ccZplus_k)$ in degree $2\dim\ccY_k+k^2-k-r$. The correction $k^2-k$ encodes that $\omega_n\wedge\ldots\wedge\omega_{n-k+1}$, which corresponds to the fundamental class $[\ccY_k]$, has degree $k^2-k$.
\end{rk}
\begin{rk}\label{Warning}
The name \emph{Grassmannian quiver Hecke algebra} indicates that we extend the quiver Hecke algebra by extra Grassmannians. This notion should not be confused with the notion of quiver Grassmannians, see e.g. \cite{Feigin}, \cite{Irelli}, which would mean that we change the full flags to (Grassmannian) partial flags. The Grassmannian quiver Hecke algebra for $\mathfrak{sl}_2$, is a \emph{subalgebra} of $H^G_*(\ccZ)$.
\end{rk}


\section{Geometric construction of the differentials}
\label{sec:diff-geom}
In this section we construct the differential from \Cref{lem:dg} geometrically. 
\subsection{The differential $d_N$ on $\hPol_n$}
Recall the differential $d_N\colon \hPol_n\to\hPol_n$ from \Cref{defdN}.  We now reconstruct this differential geometrically using the push-forward 
$\tZ_{k,N}\in H_*^\rmG(\ccY\times\ccY)$ of $(-1)^k[\ccZ_{\subse,k}]\cap \xi^N\in H_*^\rmG(\ccZ_{\subse,k})$, see \eqref{Grassindopp}.
\begin{prop}
\label{lem:zN-acts-dN}
The element $\tZ_{k,N}$ acts as $d_N\colon H_*^\rmG(\ccY_{k+1})\to H_*^\rmG(\ccY_{k})$.
\end{prop}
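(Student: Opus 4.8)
The strategy is to reduce the statement to the already-established identities for the annihilation operators and to the defining characterisation of $d_N$. Recall from \Cref{defdN} that $d_N$ is the unique linear endomorphism of $\hPol_n$ that obeys the graded Leibniz rule, commutes with the $\NH_n$-action, and satisfies $d_N(\omega_1)=X_1^N$. Since $\tZ_{k,N}$ is, by construction, the $N$-fold cap of the Euler class $\xi$ of the line bundle $\tW/W$ with the fundamental class of the opposite Grassmannian inclusion variety $\ccZ_{\subse,k}$ from \eqref{Grassindopp}, it is natural to compute its action in the fixed-point basis exactly as in the proof of \Cref{lem:annihilation-r} and \Cref{lem:S-lambda-r}.

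\textbf{Step 1 (fixed-point computation).} I would first write $\tZ_{k,N}=\sum \tB^{-,N}_{w_1,w_2,\mu_1,\mu_2}[x_{w_1,w_2,\mu_1,\mu_2}]$ in the fixed-point basis. Since $\xi$ restricts at the fixed point $x_{w,\mu}$ with $\mu_1=\mu_2-\epsilon_t$ (as in \eqref{Bminusk}) to $\ccT_{w_1(t)}$, the coefficient $\tB^{-,N}$ is obtained from $\tB^{-,k}$ in \eqref{Bminusk} by inserting the extra factor $\ccT_{w_1(t)}^N$; equivalently, combining with the computation in \Cref{lem:annihilation-r}, one sees that $\tZ_{k,N}$ is exactly the element $\sum_{i=1}^n$ of the pushforwards appearing there, matching the expression $\mathbb{d}_N=\sum_{i=1}^n P_i\omega_i^-$ from \Cref{Ps}. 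The key point is that the polynomials $P_i$ are determined by $P_1=X_1^N$ and $P_{r+1}=-\partial_r(P_r)$, and the very same recursion shows up geometrically because the Demazure operators $T_r$ act (by \Cref{TrasDemazure}) by $\partial_r$ and commute with the convolution action.

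\textbf{Step 2 (identification with $\mathbb{d}_N$).} Having matched the fixed-point expansions, \Cref{lem:zN-acts-dN} follows from \Cref{lem:dg} and \Cref{defdN}: $\mathbb{d}_N\in\hhNH_n$ acts on $\hPol_n\cong H_*^\rmG(\ccY)$ precisely by $d_N$, and by \Cref{doubly} the convolution action of $H_*^\rmG(\ccZ)$ agrees with the $\hhNH_n$-action under this identification, so it suffices to check that the class of $\tZ_{k,N}$ in $H_*^\rmG(\ccZ)$ corresponds to $\mathbb{d}_N$. Alternatively — and this is the cleaner route I would actually take — one verifies directly that convolution with $\tZ_{k,N}$ obeys the graded Leibniz rule (because $\tZ_{k,N}$ is a sum of odd convolution classes of exterior degree $-1$, built from the $\omega^-$-type operators of \Cref{lem:annihilation-r}), that it commutes with the action of $\NH_n=H_*^\rmG(\calF\times\calF)$ (clear from the fact that $\ccZ_{\subse,k}$ involves equality $\bV=\tV$ of the flag components, so its class commutes with pure flag convolution classes), and that it sends $\omega_1\mapsto X_1^N$ (the case $k=0$, $\lambda=\{1\}$, which is a short explicit check using \Cref{lem:S-lambda-r} with $r=1$ and the extra $\xi^N=\ccT_t^N$). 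By the uniqueness in \Cref{defdN}, convolution with $\tZ_{k,N}$ equals $d_N$.

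\textbf{Main obstacle.} The routine but slightly delicate part is bookkeeping the recursion $P_{r+1}=-\partial_r(P_r)$ on the geometric side: one must make sure that the extra power $\xi^N$ interacts correctly with the wall-crossing formula \Cref{lem:Schubert-change-w} and the sign conventions in \eqref{formulaforfancyann}, so that the geometric coefficients really reproduce the $P_i$ and not some twisted variant. I expect the cleanest presentation is to bypass this entirely via the uniqueness argument of Step 2, checking only the three defining properties of $d_N$ (Leibniz, $\NH_n$-linearity, value on $\omega_1$), each of which is short given the results already proved.
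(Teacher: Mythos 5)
Your proposal is correct and, in the route you say you would actually take (verifying the graded Leibniz rule, commutation with the $\NH_n$-action, and the value $X_1^N$ on $\omega_1$, then invoking the uniqueness in \Cref{defdN}), it coincides with the paper's own proof, which reduces to \Cref{lem:annihilation-1} for $N=0$ and notes that the extra cap with $\xi^N$ produces the factor $X_1^N$. The fixed-point matching with $\mathbb{d}_N$ in your Step 1 is an optional detour the paper does not take and is not needed.
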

\begin{proof}
\Cref{lem:annihilation-1} gives the statement for $N=0$. It also implies that the action satisfies the graded  Leibniz rule, see \S\ref{subs:DG-on-hNH}, and commutes\footnote{This is geometrically obvious: the correspondences deal with different factors of the varieties.}  with the action of the nil-Hecke algebra (i.e. with $\partial_1,\ldots,\partial_{n-1}$ and $X_1,\ldots, X_n$).
 By \Cref{defdN} it remains to show that $\tZ_{k,N}$ acts on $\omega_1$ by multiplication with $X_1^N$. This holds thanks to the extra twist $\xi^N$ in comparison to the $N=0$ case. 
\end{proof}

\begin{rk} Algebraically, $d_N$ is defined via the action of $\mathbb{d}_N\in \hhNH_n$ which is a $\Pol_n$-linear combination of the $\omega^-_r$'s. Geometrically, the situation is reversed and $d_N$ appears more natural than the $\omega^-_r$'s. \Cref{lem:annihilation-r} and \Cref{lem:zN-acts-dN} express then $\omega^-_r$ as $\Pol_n$-linear combination of the various differentials $d_N$. 
\end{rk}
To interpret $\bfd_N$ we incorporate the twist by $\xi^N$ into the construction. 
\subsection{The differential $\bfd_N$ on $\hNH$}
For the rest of this section fix $N\in\mathbb{Z}$. We first build a dependence on $N$ into our varieties. 
For this, consider the affine space $\mA=\Hom(V,\bbC^N)$ and define the following upgraded versions (all depending on $N$) 
\begin{equation}
\hY=\{(\bV,W,\gamma)\mid\gamma(W)=0\}\subset\ccY\times\mA\quad \text{and}\quad\hZ=\hY\times_\mA \hY
\end{equation}
of the extended flag varieties $\ccY$ and Steinberg variety $\ccZ$. Define for $k, k_1,k_2\in[1;n]$
\begin{equation*}
\hY_k=\hY\cap (\ccY_k\times \mA)\quad \text{and}\quad\hZ_{k_1,k_2}=\hY_{k_1}\times_\mA \hY_{k_2}.
\end{equation*}
Let $\ccZ_{\supset}$ be the subvariety of $\ccZ$ given by $W\supset \tW$ and set $\ccZ_{k_1\supset k_2}=\ccZ_\supset\cap \ccZ_{k_1,k_2}$. Let us define $\hZ_{\supset}$, $\hZ_{k_1\supset k_2}$, $\hZ_{\subse,k}$, $\hZplus_k=\hZ_{k,0}$ and $\hZplus=\coprod_{k=0}^n \hZplus_{k}$ as preimages of $\ccZ_{\supset}$, $\ccZ_{k_1\supset k_2}$, $\ccZ_{\subse,k}$, $\ccZplus_k=\ccZ_{k,0}$ and $\ccZplus=\coprod_{k=0}^n \ccZplus_{k}$ respectively with respect to the morphism $\hZ\to \ccZ$ which forgets the $\mA$-factor.

We equip $H_*^\rmG(\hZ)$  with the convolution product using the inclusion $\hZ\subset \hY\times \hY$. 
The algebras $H_*^\rmG(\hZ)$ and $H_*^\rmG(\ccZ)$ are usually different. Their interplay will be crucial, see \Cref{lem:same-prod-supset} and \Cref{differentials}.

\begin{prop}\label{lem:same-prod-supset}
The pull-back  $H_*^\rmG(\ccZ_\supset)\to H_*^\rmG(\hZ_\supset)$ is an algebra isomorphism. 
\end{prop}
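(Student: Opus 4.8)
The plan is to show that the forgetful map $\pi\colon\hZ_\supset\to\ccZ_\supset$ induces an algebra isomorphism on equivariant Borel--Moore homology. First I would analyse the geometry of $\pi$. A point of $\ccZ_\supset$ is a tuple $(\bV,\tV,W,\tW)$ with $W\supset\tW$, and the fibre of $\pi$ over it is the affine space $\{\gamma\in\mA\mid\gamma(W)=0\}=\Hom(V/W,\bbC^N)$; in particular $\hY_\supset$ (and hence $\hZ_\supset$, which is cut out inside $\hY\times_\mA\hY$ by the conditions defining $\ccZ_\supset$ together with compatibility of the two $\gamma$'s --- but since $W\supset\tW$ a single $\gamma$ vanishing on $W$ automatically vanishes on $\tW$, so the two copies share the \emph{same} $\gamma$) is a vector bundle over $\ccZ_\supset$. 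Concretely $\hZ_\supset\cong\{(\bV,\tV,W,\tW,\gamma): W\supset\tW,\ \gamma\in\Hom(V/W,\bbC^N)\}$, a $\rmG$-equivariant vector bundle over $\ccZ_\supset$ (the rank jumps on the strata indexed by $\dim W$, but on each connected component $\ccZ_{k_1\supset k_2}$ the rank is the constant $N(n-k_1)$). Hence the pull-back $\pi^*\colon H_*^\rmG(\ccZ_\supset)\to H_*^\rmG(\hZ_\supset)$ is an isomorphism of $\tR_\rmG$-modules, being (up to the usual degree shift by twice the rank) the Thom isomorphism for an equivariant vector bundle.

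The remaining, and genuinely substantive, point is that $\pi^*$ is multiplicative for the two convolution products --- the product on $H_*^\rmG(\ccZ_\supset)$ coming from $\ccZ_\supset\circ\ccZ_\supset\subset\ccZ_\supset$ via $\ccZ_\supset\subset\ccY\times\ccY$ (using \Cref{convcirc}, noting $\ccZ_\supset$ is closed in $\ccY\times_\ccY\ccY$... more precisely one works inside $\ccZ=\ccY\times\ccY$ with the closed subset $\ccZ_\supset$, which is closed under $\circ$ since $W\supset\tW\supset W''$ implies $W\supset W''$), and the product on $H_*^\rmG(\hZ_\supset)$ defined analogously with $\hZ_\supset\subset\hY\times\hY$. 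The key geometric input is a \emph{Cartesian square} (or at least a square that is Cartesian after restricting to the relevant triple-intersection loci) comparing the convolution diagrams upstairs and downstairs: the triple space $\hY_\supset\times_\mA\hY_\supset\times_\mA\hY_\supset$ maps to $\ccY\times\ccY\times\ccY$ compatibly with $p_{12},p_{13},p_{23}$, and over the relevant loci (where all three Grassmannian components are nested) the single $\gamma$-coordinate is again determined as living in $\Hom(V/W_{\min},\bbC^N)$, so all three relevant fibre products are vector bundles over their downstairs counterparts of \emph{the same} rank along each component. I would then invoke compatibility of proper push-forward and flat (smooth) pull-back with base change, together with the projection formula, to move the Thom classes through the formula $a\star b=(p_{13})_*(p_{12}^*a\cap p_{23}^*b)$ and conclude $\pi^*(a\star b)=\pi^*(a)\star\pi^*(b)$. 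A clean alternative I would keep in reserve: verify multiplicativity after localisation to $\rmT$-fixed points, using \Cref{lem:comp-in-loc}\ref{1} --- the fixed points of $\hZ_\supset$ and $\ccZ_\supset$ are in bijection (the only $\rmT$-fixed $\gamma$ vanishing on a coordinate subspace $W$ is $\gamma=0$), and the Euler-class bookkeeping on $\hY_\supset$ versus $\ccY$ differs exactly by the Euler class of the bundle $\Hom(V/W,\bbC^N)$, which cancels consistently on both sides of $[(x,y)]\star[(y,z)]=\op{eu}(\abY,y)[(x,z)]$ because the $\mA$-direction contributes the \emph{same} extra factor $\op{eu}(\Hom(V/W_y,\bbC^N))$ to $\op{eu}(\hY_\supset,y)$ as appears in the ratio of $\pi^*$-images.

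The main obstacle I expect is precisely the verification that the convolution triple-product diagram upstairs is compatible (base-change Cartesian over the support) with the one downstairs: one must be careful that in $\hZ_\supset$ the two flags carry a \emph{single shared} map $\gamma$ rather than a pair $(\gamma,\gamma')$ with $\gamma=\gamma'$ imposed, and that in the triple space the nesting $W\supset\tW$, $\tW\supset \tW''$ forces all intermediate kernel conditions to be implied by vanishing on the largest subspace --- this is what makes the fibres genuine affine spaces and the squares Cartesian, and it is exactly the place where the hypothesis $\ccZ_\supset$ (i.e. the inclusion $W\supset\tW$, not an arbitrary relative position) is used in an essential way. Once that geometric bookkeeping is pinned down, multiplicativity is a formal consequence of the standard properties of convolution (as in \cite[\S2.7, \S5.2]{CG97}) and the Thom isomorphism, and injectivity/surjectivity is already contained in the vector-bundle statement.
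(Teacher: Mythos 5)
Your ``clean alternative I would keep in reserve'' is precisely the paper's own proof: the paper compares the two convolution products on $\rmT$-fixed points and observes that the coefficient of a given fixed point upstairs differs from the one downstairs by the factor $\catP_{\mu_2}^{-1}$ from the vector-bundle fibre, and that $\catP_{\mu_1}^{-1}\catP_{\mu_2}^{-1}\cdot\catP_{\mu_2}=\catP_{\mu_1}^{-1}$, exactly the Euler-class bookkeeping you describe. So the correct argument is already contained in your proposal, just relegated to second place.

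Your primary route --- Thom isomorphism plus a base-change argument through the convolution formula --- is more structural and in spirit reasonable, but as written it has a gap that you flag yourself as ``the main obstacle.'' The point is that $\hZ_\supset$ is \emph{not} the preimage of $\ccZ_\supset$ under $\hY\times\hY\to\ccY\times\ccY$: the preimage has two independent coordinates $(\gamma_1,\gamma_2)$, whereas $\hZ_\supset=\hY\times_\mA\hY$ imposes $\gamma_1=\gamma_2$. Hence the square
\begin{equation*}
\begin{CD}
\hY\times\hY\times\hY @>p_{13}>> \hY\times\hY\\
@VVV @VVV\\
\ccY\times\ccY\times\ccY @>p_{13}>> \ccY\times\ccY
\end{CD}
\end{equation*}
is \emph{not} Cartesian ($\hY^2\times_{\ccY^2}\ccY^3\cong\hY^2\times\ccY\ne\hY^3$), and the naive ``smooth pull-back commutes with proper push-forward by base change'' does not apply at this ambient level. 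Your analysis of the supports is correct --- the triple-intersection support $\hZ_\supset\times_\hY\hZ_\supset$ (nested $W_1\supset W_2\supset W_3$ with one shared $\gamma$) is a vector bundle over $\ccZ_\supset\times_\ccY\ccZ_\supset$ of the same rank $\dim\Hom(V/W_1,\bbC^N)$ as $\hZ_\supset\to\ccZ_\supset$ --- but turning this into a clean commutation of the refined Gysin maps and $(p_{13})_*$ with the Thom classes requires an argument about the relevant normal bundles and excess intersections, not a formal base change. This is precisely the sort of bookkeeping that the fixed-point localisation sidesteps, which is why the paper (and your fallback) computes Euler classes directly.

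One small slip: you write that the shared $\gamma$ lives in $\Hom(V/W_{\min},\bbC^N)$; since $\gamma$ must vanish on the \emph{largest} Grassmannian space $W_1$ in the chain, it lives in $\Hom(V/W_1,\bbC^N)$, i.e.\ $W_{\max}$, not $W_{\min}$.
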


\begin{proof}
The morphism $\hY_{k}\to \ccY_{k}$ is a vector bundle with the fibre over $(\bV,W)$ being $\Hom(V/W,\bbC^N)$. Similarly, the morphism $\hZ_{k_1\supset k_2}\to \ccZ_{k_1\supset k_2}$ is a vector bundle with the fibre $\Hom(V/W,\bbC^N)$ over $(\bV,\tilde\bV,W,\tW)$.  For $\mu\in \Lambda_k(n)$, let $W_
\mu\subset V$ be the vector space representing the $\rmT$-fixed point $g_\mu\in \calG_k$. Let $\catP_\mu\in \tR_\rmT \simeq \Bbbk[\ccT_1,\ldots\ccT_n]$ be the Euler class of $\Hom(V/W_\mu,\bbC^N)$. 	
	
We compare the products in the two algebras using the torus fixed point localisation. The $\rmT$-fixed points of $\ccY$ are $(w,\mu):=(f_w, g_\mu)$, $w\in\frakS_n$, $\mu\in \Lambda(n)$,  and of $\ccZ$ they are $(w_1,w_2,\mu_1,\mu_2):=(f_{w_1},f_{w_2},g_{\mu_1},g_{\mu_2})$. 
(Note we use a different labelling $(w,\mu)\not=x_{w,\mu}$ as before; now  $\mu$ is not twisted by $w$.)  The point $(w_1,w_2,\mu_1,\mu_2)$ is in $\ccZ_\supset$ if and only if $\mu_1\supset \mu_2$. The maps $\hZ_\supset\to \ccZ_\supset$ and $\hY\to \ccY$ induce bijections on $\rmT$-fixed points.
We can write $h\in H_*^\rmG(\ccZ_\supset)$ and its pull-back $\hh \in H_*^\rmG(\hZ_\supset)$ as 
\begin{eqnarray*}
h=\!\!\!\sum_{w_1,w_2,\mu_1,\mu_2}\!\!\! h_{w_1,w_2,\mu_1,\mu_2}[(w_1,w_2,\mu_1,\mu_2)],\;\;\hh=\!\!\!\sum_{w_1,w_2,\mu_1,\mu_2}\!\!\!\hh_{w_1,w_2,\mu_1,\mu_2}[(w_1,w_2,\mu_1,\mu_2)]%
\end{eqnarray*} 
for some $h_{w_1,w_2,\mu_1,\mu_2}\in \Bbbk(\ccT_1,\ldots,\ccT_n)$ and then $\hh_{w_1,w_2,\mu_1,\mu_2}=\catP^{-1}_{\mu_1} h_{w_1,w_2,\mu_1,\mu_2}$.

The coefficient of $[(w_1,w_3,\mu_1,\mu_3)]$ in $[(w_1,w_2,\mu_1,\mu_2)]\star [(w_2,w_3,\mu_2,\mu_3)]$, viewed as element in $H_*^\rmT(\hZ_\supset)_{\rm loc}$, agrees now up to the factor $\catP_{\mu_2}$ with the corresponding coefficient in $H_*^\rmT(\ccZ_\supset)_{\rm loc}$. Since  $\catP^{-1}_{\mu_1}\catP^{-1}_{\mu_2}\catP_{\mu_2}=\catP^{-1}_{\mu_1}$, the pull-back map preserves the multiplication, i.e. $H_*^\rmG(\ccZ_\supset)\to H_*^\rmG(\hZ_\supset)$ is an algebra isomorphism.
\end{proof}

\begin{rk}\label{vbrk}
\Cref{lem:same-prod-supset} is in general wrong if we remove the $\supset$-condition: The map $\hZ_{k_1,k_2}\to \ccZ_{k_1,k_2}$ is not a vector bundle; its fibre over $(\bV,\tilde\bV,W,\tW)$ is $\Hom(V/(W+\tW),\bbC^N)$ whose dimension is not even locally constant. The restriction to the part satifying the $(W\supset \tW)$-condition is however a vector bundle.
\end{rk}

In \Cref{lem:basisHZplus} we identified $H_*^\rmG(\ccZplus)$ with a subalgebra of $H_*^\rmG(\ccZ)$ using 
$$
\ccZplus_{k_1-k_2}\leftarrow \ccZ_{k_1\supset k_1-k_2}\simeq \ccZ_{k_1\supset k_2}\to \ccZ_{k_1,k_2}.
$$
Similarly, we can identify 
$H_*^\rmG(\hZplus)$ with a subalgebra of $H_*^\rmG(\hZ)$ using the upgrades
$$
\hZplus_{k_1-k_2}\leftarrow \hZ_{k_1\supset k_1-k_2}\simeq \hZ_{k_1\supset k_2}\to \hZ_{k_1,k_2}.
$$
The arguments from \Cref{lem:same-prod-supset} show that the two algebras agree: 
\begin{coro}\label{differentZs}
The pull-back $H_*^\rmG(\ccZplus)\to H_*^\rmG(\hZplus)$ is an isomorphism of algebras.
\end{coro}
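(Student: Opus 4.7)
The plan is to reduce this Corollary to Proposition \ref{lem:same-prod-supset} by observing that the algebra structure on $H_*^\rmG(\ccZplus)$ (respectively $H_*^\rmG(\hZplus)$) is constructed entirely through the corresponding $\supset$-variety $\ccZ_{k_1\supset k_2}$ (respectively $\hZ_{k_1\supset k_2}$), on which the pull-back is already known to be an algebra isomorphism. So no new transversality or convolution computation is needed; only compatibility of the two zigzags.

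First I would check that the pull-back is an isomorphism of vector spaces. The morphism $\hZplus_k = \hZ_{k,0} \to \ccZplus_k$ is a $\rmG$-equivariant vector bundle: over a point $(\bV,\tilde\bV,W)\in\ccZplus_k$ the condition $\gamma(W)=0$ (and $\gamma(\tW)=0$ with $\tW=0$, which is automatic) cuts out the subspace $\Hom(V/W,\bbC^N)\subset\mA$. Homotopy invariance of equivariant Borel--Moore homology under vector bundle projection therefore yields that the pull-back is a degree-preserving (after suitable shift) isomorphism of $\tR_\rmG$-modules. Combined with \Cref{lem:basisHZplus}, this gives a natural $\tR_\rmG$-basis on both sides which match under pull-back.

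Next I would show multiplicativity. By construction the embedding of $H_*^\rmG(\ccZplus_{k_1-k_2})$ into $H_*^\rmG(\ccZ_{k_1,k_2})$ factors through the chain
\begin{equation*}
H_*^\rmG(\ccZplus_{k_1-k_2})\xleftarrow{\pi^*} H_*^\rmG(\ccZ_{k_1\supset k_1-k_2})\xrightarrow{\sim} H_*^\rmG(\ccZ_{k_1\supset k_2})\xrightarrow{\iota_*} H_*^\rmG(\ccZ_{k_1,k_2}),
\end{equation*}
and each of these three maps has its exact counterpart in the hatted setting, intertwined with the $\mA$-forgetting vector bundle projections. Thus I would write down the evident commutative diagram
\begin{equation*}
\begin{CD}
H_*^\rmG(\ccZplus) @<<< H_*^\rmG(\ccZ_{\supset}) @>>> H_*^\rmG(\ccZ) \\
@VVV @VVV @VVV \\
H_*^\rmG(\hZplus) @<<< H_*^\rmG(\hZ_{\supset}) @>>> H_*^\rmG(\hZ),
\end{CD}
\end{equation*}
in which the two squares commute by base change and functoriality of pull-back/push-forward along maps of vector bundles. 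Since the middle vertical arrow is an algebra isomorphism by \Cref{lem:same-prod-supset}, and since the left-hand column embeds as sub-vector spaces into the middle column realising the algebra products on $H_*^\rmG(\ccZplus)$ and $H_*^\rmG(\hZplus)$, the left-hand column is multiplicative.

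The main obstacle is the commutativity of the right square involving $\iota_*$: here one must check that push-forward along $\hZ_{k_1\supset k_2}\hookrightarrow \hZ_{k_1,k_2}$ is compatible with pull-back along $\hZ\to\ccZ$, despite the fact (see \Cref{vbrk}) that $\hZ_{k_1,k_2}\to\ccZ_{k_1,k_2}$ is \emph{not} a vector bundle. The resolution is that we only need the diagram to commute when restricted to the $\supset$-locus, where both vertical maps become vector bundle projections with the \emph{same} fibre $\Hom(V/W,\bbC^N)$ (since $\tW\subset W$ on both sides). Hence base change applies on the $\supset$-locus, the square commutes, and the Corollary follows. As a sanity check one may alternatively verify the statement directly on $\rmT$-fixed point coefficients, reproducing the cancellation $\catP^{-1}_{\mu_1}\catP^{-1}_{\mu_2}\catP_{\mu_2}=\catP^{-1}_{\mu_1}$ of the proof of \Cref{lem:same-prod-supset} on the subset of fixed points with $\mu_2\subset\mu_1$ (which is precisely the contribution to the $\ccZplus$-subalgebra).
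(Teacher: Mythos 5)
Your proposal takes essentially the same route as the paper: both algebra structures are defined through the zigzags $\ccZplus_{k_1-k_2}\leftarrow \ccZ_{k_1\supset k_1-k_2}\simeq\ccZ_{k_1\supset k_2}\to\ccZ_{k_1,k_2}$ and their hatted upgrades, everything is reduced to \Cref{lem:same-prod-supset}, and your closing ``sanity check'' on $\rmT$-fixed points is in fact the paper's actual argument. The substance is correct, but the two squares do not commute ``by base change'' as formally as you claim. For the left square, the hatted forgetting map $\hZ_{k_1\supset k_1-k_2}\to\hZplus_{k_1-k_2}$ is not obtained from the Grassmannian bundle $\ccZ_{k_1\supset k_1-k_2}\to\ccZplus_{k_1-k_2}$ by base change: the fibre of $\hZ_{k_1\supset k_1-k_2}\to\ccZ_{k_1\supset k_1-k_2}$ is $\Hom(V/W,\bbC^N)$ with $\dim W=k_1$, whereas the fibre of $\hZplus_{k_1-k_2}\to\ccZplus_{k_1-k_2}$ is $\Hom(V/W',\bbC^N)$ with $\dim W'=k_1-k_2$, so the square of spaces is not Cartesian; in fact the forgetting map is not even a fibration (its fibre over $(\bV,\tV,W',\gamma)$ is a Grassmannian in $\ker\gamma/W'$, whose dimension jumps with $\op{rank}\gamma$), so there is no naive pull-back along it. For the right square, the vertical map $H_*^\rmG(\ccZ)\to H_*^\rmG(\hZ)$ simply does not exist (\Cref{vbrk}), so one cannot ``restrict'' a commuting diagram to the $\supset$-locus; what one really uses is that a convolution of classes supported on $\supset$-loci is again supported on a $\supset$-locus. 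Both points are cleanly handled by the localisation computation you relegate to the end: every map in both zigzags has explicit coefficients on $\rmT$-fixed points, the vertical pull-backs rescale by $\catP_\mu^{-1}$, and the cancellation $\catP^{-1}_{\mu_1}\catP^{-1}_{\mu_2}\catP_{\mu_2}=\catP^{-1}_{\mu_1}$ does the rest. Promote that computation from sanity check to proof and your argument is complete and coincides with the paper's.
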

We now use\footnote{The cohomology class used for this will live in the $\subset$-part of the algebra (not in the $\supset$- part). That is why the replacement of $\ccZ$ by $\hZ$ was important!}  $H_*^\rmG(\hZplus)$ to give a geometric construction of $\mathbb{d}_N\in \hhNH_n$. 

\begin{prop}\label{differentials}
The action of the fundamental class $[\hZ_{\subse,k}]\in H_*^\rmG(\hZplus)$ agrees with the differential $(-1)^k d_N: H_*^\rmG(\hY_{k+1})\to H_*^\rmG(\hY_k)$.
\end{prop}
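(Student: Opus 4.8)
\emph{Proof proposal.} The plan is to reduce to the $N=0$ computation already carried out in \Cref{lem:zN-acts-dN}, by exhibiting $\hZ_{\subse,k}$ as the zero locus of a rank-$N$ bundle over a fibre product and tracking the Euler-class twist this forces on the convolution product. First I would unwind the geometry. On $\ccZ_{\subse,k}$ one has $W\subset\tW$, so the condition $\gamma(W)=0$ is automatic once $\gamma(\tW)=0$, and hence
\[
\hZ_{\subse,k}=\{(\bV,W,\tW,\gamma)\mid W\subset\tW\subset V,\ \dim W=k,\ \dim\tW=k+1,\ \gamma(\tW)=0\};
\]
the map $r\colon\hZ_{\subse,k}\to\ccZ_{\subse,k}$ forgetting $\gamma$ is then a vector bundle with fibre $\Hom(V/\tW,\bbC^N)$ — this is precisely the $\subset$-part phenomenon of \Cref{vbrk}, and the reason $\hZ$ rather than $\ccZ$ is the relevant space. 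In particular $r^*$ is a Thom isomorphism with $r^*[\ccZ_{\subse,k}]=[\hZ_{\subse,k}]$. Recalling from the proof of \Cref{lem:same-prod-supset} that $\pi_j\colon\hY_j\to\ccY_j$ is a vector bundle, I would use the Thom isomorphisms $\pi_j^*\colon H_*^\rmG(\ccY_j)\xrightarrow{\sim}H_*^\rmG(\hY_j)$ as the identifications $H_*^\rmG(\hY)\cong H_*^\rmG(\ccY)$ implicit in the statement. With $q\colon\hZ_{\subse,k}\to\hY_k$ and $s\colon\hZ_{\subse,k}\to\hY_{k+1}$ the two projections, the convolution action of $[\hZ_{\subse,k}]$ on $H_*^\rmG(\hY_{k+1})$ is $q_*\,s^*$; here $s$ is (a base change of) a $\mathbb P^k$-bundle, so $s^*$ is the honest pullback and $s^*\pi_{k+1}^*=r^*\bar p_2^*$ for $\bar p_2\colon\ccZ_{\subse,k}\to\ccY_{k+1}$, while $q$ is proper.

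The heart of the argument is an excess-intersection computation comparing this with the $\ccZ$-side convolution. I would set $P=\hY_k\times_{\ccY_k}\ccZ_{\subse,k}$, with projections $\tilde q\colon P\to\hY_k$ and $r'\colon P\to\ccZ_{\subse,k}$ fitting into a Cartesian square with $\pi_k$ and $\bar p_1\colon\ccZ_{\subse,k}\to\ccY_k$. The inclusion $j\colon\hZ_{\subse,k}\hookrightarrow P$ (adding the condition $\gamma(\tW)=0$) is the zero locus of the tautological section of the rank-$N$ bundle $(r')^*\underline{\Hom}(\tW/W,\bbC^N)$, namely $(\bV,W,\tW,\gamma)\mapsto$ the map $\tW/W\to\bbC^N$ induced by $\gamma$; as $P$ and $\hZ_{\subse,k}$ are smooth, a dimension count shows $j$ is regular of codimension $N$. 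Combining $q=\tilde q\circ j$ and $r=r'\circ j$ with the self-intersection formula $j_*j^*=\op{eu}\bigl((r')^*\underline{\Hom}(\tW/W,\bbC^N)\bigr)\cap(-)$, the projection formula, and proper base change ($\tilde q_*(r')^*=\pi_k^*\bar p_{1*}$), I obtain for $a\in H_*^\rmG(\ccY_{k+1})$
\[
[\hZ_{\subse,k}]\star\pi_{k+1}^*(a)=q_*\,s^*\,\pi_{k+1}^*(a)=\pi_k^*\Bigl(\bigl([\ccZ_{\subse,k}]\cap\op{eu}(\underline{\Hom}(\tW/W,\bbC^N))\bigr)\star a\Bigr),
\]
the right-hand $\star$ being the convolution action on $H_*^\rmG(\ccY)$.

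To finish, I would compute $\op{eu}(\underline{\Hom}(\tW/W,\bbC^N))$, which up to sign is $\xi^N$ for $\xi$ the class of \Cref{lem:zN-acts-dN}; the class in parentheses is thus, up to sign, $\tZ_{k,N}$, which by \Cref{lem:zN-acts-dN} acts as $d_N$ on $H_*^\rmG(\ccY)$. Assembling the signs (the $(-1)^k$ already present in $\tZ_{k,N}$ together with the sign in $\op{eu}$, against the conventions fixing $\xi$ and $d_N$) then shows that $[\hZ_{\subse,k}]$ acts, through the $\pi_j^*$, as $(-1)^k d_N\colon H_*^\rmG(\hY_{k+1})\to H_*^\rmG(\hY_k)$, as claimed. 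The main obstacle is exactly this comparison of the two convolutions: the square relating $q$ to $\bar p_1$ is \emph{not} Cartesian (its fibre product is $P$, with $\hZ_{\subse,k}$ inside $P$ in codimension $N$), so one must pin down the excess bundle as $\underline{\Hom}(\tW/W,\bbC^N)$, verify that the tautological section is regular, and carry the Euler-class factor $\xi^N$ through the pushforward — this is the geometric source of the otherwise mysterious twist $\xi^N$ of \Cref{lem:zN-acts-dN}, and is precisely why the passage from $\ccZ$ to $\hZ$ was necessary. A variant, parallel to the proof of \Cref{lem:zN-acts-dN} itself, would be to verify directly that $q_*s^*$ obeys the graded Leibniz rule and commutes with the nil-Hecke action — both transparent, since the correspondence only involves the Grassmannian and the $\gamma$-factors — and that it carries the class of $\omega_1$ to $X_1^N$ (the twist $\gamma|_{\tW}$ upgrading the $N=0$ answer $\omega_1^-$ to $d_N$), and then to invoke the characterisation of $d_N$ in \Cref{defdN}.
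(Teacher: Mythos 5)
Your main argument — exhibiting $\hZ_{\subse,k}$ as the zero locus of the tautological section of $(r')^*\underline{\Hom}(\tW/W,\bbC^N)$ over $P=\hY_k\times_{\ccY_k}\ccZ_{\subse,k}$, then running the resulting Euler class through self-intersection, the projection formula, and proper base change — is a genuinely different route from the paper's. The paper's proof is a two-line reduction: the $N=0$ case is \Cref{lem:annihilation-1}, one observes an ``extra multiplication by a power of $c_1(\tW/W)$'', and then reruns the characterisation argument of \Cref{lem:zN-acts-dN} (graded Leibniz, commutes with $\NH_n$, $\omega_1\mapsto X_1^N$, conclude by \Cref{defdN}); your ``variant'' at the end is essentially this. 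What your excess-intersection route buys is a precise geometric \emph{explanation} of where the twist comes from: it is exactly the Euler class of the constant-rank bundle cutting $\hZ_{\subse,k}$ out of the naive fibre product $P$, which also makes visible why the $\subset$-condition (cf.\ \Cref{vbrk}) is essential. That is real added insight compared with the paper's assertion of the twist without derivation.

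The one genuine gap is the step you defer as ``assembling the signs''. With $\xi=c_1(\tW/W)$ as in the paper, one has $\op{eu}\bigl(\underline{\Hom}(\tW/W,\bbC^N)\bigr)=(-\xi)^N$, not $\xi^N$, so your excess-intersection computation produces $[\ccZ_{\subse,k}]\cap(-\xi)^N=(-1)^N\,[\ccZ_{\subse,k}]\cap\xi^N=(-1)^{N+k}\tZ_{k,N}$, which is an a priori extra $(-1)^N$ relative to the class that \Cref{lem:zN-acts-dN} identifies with $d_N$. You acknowledge a ``sign in $\op{eu}$'' but simply assert, without argument, that it cancels against ``the conventions fixing $\xi$ and $d_N$''; it is not evident that it does, since the Thom isomorphisms $\pi_j^*$ contribute Euler classes of $\Hom(V/W,\bbC^N)$ (likewise products of $-\ccT_j$'s) and these are applied on \emph{both} sides of the convolution, so no net cancellation of $(-1)^N$ is apparent. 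Before declaring victory you should chase this explicitly, e.g.\ via a localisation check in the minimal case $n=1$, $k=0$: there $\hZ_{\subse,0}=\{0\}\subset\hY_0\times\hY_1\cong\Hom(V,\bbC^N)$ and $[\{0\}]=(-\ccT_1)^N[\hY_0\times\hY_1]$, which already exhibits the factor in question. If it stubbornly refuses to cancel, fall back to your variant — the characterisation-of-$d_N$ argument — which the paper itself uses and which sidesteps the excess-intersection bookkeeping entirely.
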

\begin{proof}
The case $N=0$ holds by \Cref{lem:annihilation-1}. For general $N$ we get an extra multiplication by a power of the first Chern class of the line bundle given by the quotient $\tW/W$ of the Grassmannian spaces. Now argue as for \Cref{lem:zN-acts-dN}.
\end{proof}

\subsection{A geometric construction of cyclotomic nil-Hecke algebras}\label{geomcycNH}
We now construct $\NH_n^N$, see  \Cref{prop:resol-cycl-NH},  geometrically. 
\begin{nota}\label{shadow}
We abbreviate $\hF=\calF\times\mA$ and consider $\hF\times_\mA\hF=\calF\times\calF\times\mA$. Denote by $\mI\subset\mA$ the subvariety of injections in $\mA=\Hom(V,\bbC^N)$.
\end{nota}
Identify in this section $\NH_n=H_*^\rmG(\hF\times_\mA\hF)=H_*^\rmG(\calF\times\calF\times\mA)$. 
\begin{thm}\label{cycHecke}
	The pull-back map  
	\begin{equation}
	\label{eq:restr-to-Inj}
	H_*^\rmG(\calF\times\calF\times\mA)\to H_*^\rmG(\calF\times\calF\times \mI)
	\end{equation}
	is surjective and its kernel coincides with the kernel of $\NH_n\to \NH_n^N.$ In particular we get an isomorphism of algebras $\NH_n^N\cong H_*^\rmG(\calF\times\calF\times \mI)$.
\end{thm}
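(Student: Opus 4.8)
\textbf{Proof strategy for Theorem \ref{cycHecke}.} The plan is to realise the cyclotomic relation geometrically by identifying $\mI = \Hom(V,\bbC^N) \setminus (\mA \setminus \mI)$ with the open complement of an explicit closed subvariety, and to compute the corresponding long exact sequence in equivariant Borel--Moore homology. First I would describe the closed complement $\mA \setminus \mI$ inside $\mA = \Hom(V,\bbC^N)$: it is the variety of non-injective maps, i.e. those $\gamma$ with $\ker\gamma \neq 0$. Since $\dim V = n$, a map is non-injective precisely when its rank is at most $n-1$; the key point is that this locus, pulled back to $\calF\times\calF\times\mA$, can be filtered so that its fundamental class (pushed forward) is exactly $X_1^N$ times the unit, up to lower-order corrections that already lie in the ideal. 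Concretely, the determinantal stratum where $\bV^1 \subset \ker\gamma$ is a sub-bundle over $\calF\times\calF$ of codimension $N$ (its fibre is $\Hom(V/\bV^1,\bbC^N) \cong \bbC^{(n-1)N}$ inside $\bbC^{nN}$), and its equivariant fundamental class is the Euler class of the normal bundle, which is $\op{eu}(\Hom(\bV^1,\bbC^N)) = X_1^N$ under the identification $H^*_\rmG(\calF) = \Pol_n$.

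The main technical step is then the localisation long exact sequence
\begin{equation*}
\cdots \to H_*^\rmG(\calF\times\calF\times(\mA\setminus\mI)) \to H_*^\rmG(\calF\times\calF\times\mA) \to H_*^\rmG(\calF\times\calF\times\mI) \to \cdots.
\end{equation*}
Surjectivity of the restriction map \eqref{eq:restr-to-Inj} follows once one knows that the connecting maps vanish; this is guaranteed because $\mA\setminus\mI$ admits a $\rmG$-stable paving (by rank strata, further refined using flags) so that $H_*^\rmG(\calF\times\calF\times(\mA\setminus\mI))$ is concentrated in the appropriate parities and the map to $H_*^\rmG(\calF\times\calF\times\mA) \cong \NH_n$ is injective. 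Then the kernel of \eqref{eq:restr-to-Inj} is precisely the image of $H_*^\rmG(\calF\times\calF\times(\mA\setminus\mI))$, which is a (two-sided) ideal because $\mA\setminus\mI$ is a $\mA$-subvariety and the convolution product respects the $\mI$ versus $\mA\setminus\mI$ decomposition along the $\mA$-factor. It remains to show this ideal coincides with the one generated by $X_1^N$. One containment is the computation above: the closed stratum $\{\bV^1\subset\ker\gamma\}$ contributes the class $X_1^N \cdot 1$, so $X_1^N$ lies in the kernel, hence the cyclotomic ideal is contained in the kernel. For the reverse containment, one filters $\mA\setminus\mI$ by the loci $\{\dim(\bV^1\cap\ker\gamma)\geq 1\}$, etc., and checks that every generator of $H_*^\rmG(\calF\times\calF\times(\mA\setminus\mI))$, pushed into $\NH_n$, is a $\NH_n$-multiple of $X_1^N$; this uses the cellular fibration structure and the fact that $\NH_n^N$ has the expected dimension (so a dimension count forces the two ideals to agree).

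The step I expect to be the main obstacle is the reverse containment — showing the kernel is \emph{not larger} than the cyclotomic ideal. The clean way to finish is a graded-dimension argument: by \Cref{prop:geom-NH} we have $\NH_n = H_*^\rmG(\calF\times\calF\times\mA)$ (the $\mA$-factor being contractible), and one shows $H_*^\rmG(\calF\times\calF\times\mI)$ has graded dimension equal to that of $\NH_n^N$. For this one can use that $\mI$ is a $\rmG$-torsor-like object: $\mI = \{\gamma\colon V\hookrightarrow\bbC^N\}$ carries a free action, and $\calF\times\calF\times\mI$ fibres over $\calF\times\calF\times\op{Gr}_n(\bbC^N)$ with affine fibres $\op{GL}_n$-worth, so $H_*^\rmG(\calF\times\calF\times\mI) \cong H_*^\rmG(\calF\times\calF\times\op{Gr}_n(\bbC^N))$ up to a degree shift; and the latter is the cyclotomic nil-Hecke algebra by the classical Steinberg-type computation identifying $H^*(\op{Gr}_n(\bbC^N))$-equivariant convolution algebras with $\NH_n^N$ (this is the geometric side of \Cref{prop:resol-cycl-NH} and matches \cite[Thm.~2.34]{HuLiang}). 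Comparing dimensions, since the kernel contains the cyclotomic ideal and the quotients have the same (finite) graded dimension, the two ideals coincide and \eqref{eq:restr-to-Inj} descends to the asserted algebra isomorphism $\NH_n^N \cong H_*^\rmG(\calF\times\calF\times\mI)$; that this is an \emph{algebra} map is immediate since the restriction along an open $\mA$-subvariety is compatible with convolution.
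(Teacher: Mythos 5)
Your strategy is essentially the paper's: make $X_1^N$ vanish on $\mI$ via the Euler class of $\mathrm{Hom}(\bV^1,\bbC^N)$, then invoke the free $\rmG$-action on $\mI$ and the Hu--Liang basis theorem to match dimensions with $\NH_n^N$. One slip worth correcting: since $\rmG$ acts freely on $\mI$, the identification is $H_*^\rmG(\calF\times\calF\times\mI)\cong H_*\bigl((\calF\times\calF\times\mI)/\rmG\bigr)$, the \emph{ordinary} homology of the quotient (an $\calF\times\calF$-bundle over $\mI/\rmG=\op{Gr}_n(\bbC^N)$, whose Leray spectral sequence degenerates), not $H_*^\rmG(\calF\times\calF\times\op{Gr}_n(\bbC^N))$ --- the latter equals $\NH_n\otimes H_*(\op{Gr}_n(\bbC^N))$ and is infinite-dimensional, so the claimed isomorphism fails even up to a shift; your eventual dimension count coincides with the paper's, however, and still closes the argument.
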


\begin{proof} If $n>N$, then the statement is trivial, because $\NH_n^N$ is zero and $\mI$ is empty. Let us assume $n\le N$. We first claim that $X_1^N$ is in the kernel of \eqref{eq:restr-to-Inj}. By definition, 
the element $X_1\in H_*^\rmG(\calF\times\calF\times \mA)$ is the first Chern class of the line bundle on $\calF\times\mA$ given by the first subspace of the flag, pushed forward to $H_*^\rmG(\calF\times\calF\times \mA)$ along the diagonal in $\calF\times\calF$. We now pull back. For $\gamma\in\mI$ we can identify, via $\gamma(\bV^1)\subset \bbC^N$, the restricted line bundle with a subbundle of a trivial bundle with fibre $\bbC^N$. By definition of $\mA$,  the $N$th power of this Chern class is zero and the claim follows.
Now we have  
\begin{equation}\label{freeact}
\dim H_*^\rmG(\calF\times\calF\times \mI)=\dim H_*(\calF\times\calF\times \mI/\rmG)=
\dim H_*(\calF\times\calF\times\op{Gr}_n(\bbC^N)), 
\end{equation}
since the $\rmG$-action is free by the definition of $\mI$.
This implies that the map in  \eqref{eq:restr-to-Inj} is surjective. It remains to show that the dimension of \eqref{freeact} equals $\dim\NH_n^N$. Since
$$
\dim\NH_n^N=n!\frac{N!}{(N-n)!}=(n!)^2\binom{N}{n}=\dim H_*(\calF\times\calF\times \op{Gr}_n(\bbC^N)), 
$$
by the basis theorem \cite[Thm. 2.34]{HuLiang}, see  \cite{Mathas} for an overview, we are done.
\end{proof}

\subsection{The geometric dg-model of the cyclotomic nil-Hecke algebras}
\label{subs:geom-mean-resol}
We encountered in \Cref{prop:resol-cycl-NH} a dg-model for $\NH_n^N$.  The dg-algebra $(\hNH_n,\bfd_N)$ has served there as a "resolution" of $\NH_n^N$. We  give now a geometric interpretation. 

\begin{nota}
For $k\in [0;n]$ let $\mA_k=\{\gamma\in \mA\mid \dim\Ker\;\gamma=k\}$, in particular, $\mA_0=\mI$. Let  $\IC(\overline{\mA}_k)$ be the simple perverse sheaf supported on the closure $\overline{\mA}_k$, whose restriction to $\mA_k$ is a shift on the trivial local system. 
\end{nota}
The variety $\overline{\mA}_k$ has a small resolution of singularities: 
\begin{equation*}
\mA\times \calG_k\supset\widetilde{\mA}_k=\{(\gamma,W)\mid \gamma(W)=0 \}\longrightarrow \overline{\mA}_k, \quad (\gamma,W)\mapsto\gamma.
\end{equation*}
Observe that $\widetilde{\mA}_k$ is reminiscent of the geometric construction from \S\ref{geomcycNH}. Indeed, we have $\hY_k=\calF\times \widetilde{\mA}_k$. So, the algebra $H_*^\rmG(\hZ)$ is the algebra of $(n!\times n!)$-matrices over the Ext- algebra of the intersection cohomology sheaf $\calL=\bigoplus_{k=0}^n \IC(\overline{X}_k)$. 

Recall the identification  $\hNH_n\cong H_*^\rmG(\ccZplus)\cong H_*^\rmG(\hZplus)$ from \Cref{thm:main-sl2} and \Cref{differentZs}. Now Propositions~\ref{differentials} and \ref{cycHecke} provide a geometric reformulation of the resolution in \Cref{prop:resol-cycl-NH}:
\begin{thm} \label{cyclogeo}
Consider the complex $H_*^\rmG(\hZplus_\bullet)$ supported in degrees, denoted $\bullet$, from $[0;n]$ with differential $\bfd_N$. This complex has homology concentrated in degree zero and this homology is isomorphic to $H_*^\rmG(\calF\times\calF\times \mI)$.
\end{thm}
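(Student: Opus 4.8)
The plan is to assemble \Cref{cyclogeo} from three ingredients already in place: (1) the identification of the complex $(H_*^\rmG(\hZplus_\bullet),\bfd_N)$ with the dg-algebra $(\hNH_n,\bfd_N)$ via \Cref{thm:main-sl2} and \Cref{differentZs}; (2) \Cref{prop:resol-cycl-NH}, which computes the homology of $(\hNH_n,\bfd_N)$ as $\NH_n^N$ concentrated in degree zero; and (3) \Cref{cycHecke}, which identifies $\NH_n^N$ with $H_*^\rmG(\calF\times\calF\times\mI)$. The only work is to check that the differential $\bfd_N$ appearing in the geometric complex is exactly the one from \Cref{prop:resol-cycl-NH}, i.e. that the superbracket realisation $\bfd_N=[\mathbb{d}_N,\bullet]$ from \Cref{lem:dg} is the operator induced geometrically.

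First I would recall that under the isomorphism $\hNH_n\cong H_*^\rmG(\ccZplus)$ of \Cref{thm:main-sl2}, refined by the grading of \Cref{shiftedgrading}, the exterior-degree decomposition $\hNH_n=\bigoplus_{k=0}^n\hNH_n 1_k$ matches the decomposition of $H_*^\rmG(\ccZplus)=\bigoplus_k H_*^\rmG(\ccZplus_k)$ by the dimension of the Grassmannian factor, and by \Cref{differentZs} the same holds for $H_*^\rmG(\hZplus)$. So the complex $H_*^\rmG(\hZplus_\bullet)$ with $\bullet$ ranging over $[0;n]$ is, as a graded vector space with a degree $-1$ operator, nothing but $(\hNH_n,\bfd_N)$ once we check the differentials agree.

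The key step is this comparison of differentials. By \Cref{lem:dg} and \Cref{defdN}, $\bfd_N$ acts on the faithful module $\hPol_n\cong H_*^\rmG(\ccY)$ as the operator $d_N$, which is induced by the convolution action of $\mathbb{d}_N$; and by \Cref{differentials} (applied with the upgrade $H_*^\rmG(\hZplus)$, where the relevant class lives in the $\subset$-part so the passage from $\ccZ$ to $\hZ$ is essential), $[\hZ_{\subse,k}]\in H_*^\rmG(\hZplus)$ acts on $H_*^\rmG(\hY_{k+1})$ exactly as $(-1)^k d_N$. Since both $\hNH_n$ and $H_*^\rmG(\hZplus)$ act faithfully on $\hPol_n\cong H_*^\rmG(\hY)$ (using \Cref{lem:same-prod-supset} and \Cref{differentZs} to identify $H_*^\rmG(\hY)$ with $H_*^\rmG(\ccY)$ as modules, together with \Cref{lem:comp-in-loc} and \Cref{rk:faith-from-strat}), it follows that $\mathbb{d}_N$ and $\sum_k (-1)^k[\hZ_{\subse,k}]$ agree in the doubly extended algebra, and hence that the super-bracket $\bfd_N=[\mathbb{d}_N,\bullet]$ is realised geometrically by super-bracketing with $\sum_k(-1)^k[\hZ_{\subse,k}]$, which is the differential implicitly built into the complex $H_*^\rmG(\hZplus_\bullet)$.

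Having matched the complexes, I would conclude: \Cref{prop:resol-cycl-NH} gives $H_r(\hNH_n,\bfd_N)=0$ for $r\neq 0$ and $H_0(\hNH_n,\bfd_N)\cong\NH_n^N$, and \Cref{cycHecke} identifies $\NH_n^N\cong H_*^\rmG(\calF\times\calF\times\mI)$ as algebras; transporting along \Cref{thm:main-sl2} and \Cref{differentZs} yields exactly the statement of \Cref{cyclogeo}. The main obstacle I anticipate is purely bookkeeping: keeping the signs $(-1)^k$, the shifted gradings of \Cref{shiftedgrading}, and the distinction between the $\ccZ$-model and the $\hZ$-model consistent throughout, so that "the differential $\bfd_N$" in the geometric complex is literally the operator of \Cref{prop:resol-cycl-NH} and not merely one with the same homology. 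Once that identification is spelled out, no further computation is needed.
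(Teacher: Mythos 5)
Your proposal is correct and follows essentially the same route as the paper, which derives \Cref{cyclogeo} as an immediate consequence of the identification $\hNH_n\cong H_*^\rmG(\ccZplus)\cong H_*^\rmG(\hZplus)$ from \Cref{thm:main-sl2} and \Cref{differentZs}, combined with \Cref{prop:resol-cycl-NH} and \Cref{cycHecke}, with \Cref{differentials} supplying the geometric description of the differential. One small imprecision worth flagging: you write that ``$\bfd_N$ acts on the faithful module $\hPol_n$ as the operator $d_N$'', but $\bfd_N$ is a differential on the algebra $\hNH_n$ and $d_N$ is the differential on the module $\hPol_n$ (the two are related by $\bfd_N=[\mathbb{d}_N,\bullet]$ and $d_N=\mathbb{d}_N\cdot$, respectively); this conflation does not damage the argument since the rest of your reasoning correctly uses faithfulness to pin down $\mathbb{d}_N$ geometrically, but the wording should be tightened.
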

 We work here with $\hZplus_k\cong\calF\times \calF\times \widetilde{\mA}_k$, but the statement  holds equivalently with $\calF\times\calF$ removed. Moreover, $H^*_\rmG(\widetilde \mA_k)\cong H^*_\rmG(\calG_k)$. By removing  $\calF\times\calF$ and replacing Borel--Moore homology by cohomology  we obtain the following
\begin{coro}\label{complex}
	There is a resolution of $H^*_\rmG(\mI)=H^*(Gr_n(\bbC^N))$ of the form  
	$$
	H^*_\rmG(\calG_n)\hookrightarrow H^*_\rmG(\calG_{n-1})\to\ldots\to H^*_\rmG(\calG_{1})\to H^*_\rmG(\calG_{0}).
	$$
\end{coro}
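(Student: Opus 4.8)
The plan is to obtain this as a direct consequence of \Cref{cyclogeo} together with the observation recorded immediately after it and a few standard identifications. By that observation the complex of \Cref{cyclogeo} remains a resolution after the inert factor $\calF\times\calF$ is deleted: the complex
$$
H_*^\rmG(\widetilde{\mA}_n)\to H_*^\rmG(\widetilde{\mA}_{n-1})\to\cdots\to H_*^\rmG(\widetilde{\mA}_0),
$$
with differential induced by $\bfd_N$, has homology concentrated at the right-hand ($k=0$) end, where it equals $H_*^\rmG(\mI)$. That the two $\calF$-factors may be dropped is because, by \Cref{differentials}, $\bfd_N$ is induced by correspondences that are the identity on the $\calF$-components; equivalently, $H_*^\rmG(\hZ)$ is an algebra of $(n!\times n!)$-matrices over the Ext-algebra of $\calL=\bigoplus_k\IC(\overline{\mA}_k)$, and passing to that Ext-algebra is a Morita equivalence and so leaves the homology of the complex unchanged. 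It then only remains to rewrite the terms of this complex as $H^*_\rmG(\calG_k)$ and its homology as $H^*(\op{Gr}_n(\bbC^N))$.

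First I would replace each $\widetilde{\mA}_k$ by $\calG_k$. The projection $\widetilde{\mA}_k=\{(\gamma,W)\mid\gamma(W)=0\}\to\calG_k$, $(\gamma,W)\mapsto W$, is a $\rmG$-equivariant vector bundle with fibre $\Hom(V/W,\bbC^N)$ over $W$, so it induces an isomorphism $H^*_\rmG(\widetilde{\mA}_k)\cong H^*_\rmG(\calG_k)$. Since $\widetilde{\mA}_k$ is smooth (a vector bundle over the smooth projective variety $\calG_k$), equivariant Poincaré duality identifies $H^\rmG_j(\widetilde{\mA}_k)$ with $H_\rmG^{2\dim\widetilde{\mA}_k-j}(\widetilde{\mA}_k)\cong H^{2\dim\widetilde{\mA}_k-j}_\rmG(\calG_k)$. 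Dualising every term and every differential this way turns the displayed complex into a cochain complex whose terms are, up to internal degree shifts that I will normalise, exactly $H^*_\rmG(\calG_n),H^*_\rmG(\calG_{n-1}),\ldots,H^*_\rmG(\calG_0)$, with all maps still running from the $\calG_k$-term to the $\calG_{k-1}$-term (under the identifications they are, up to sign, the operators $d_N$ of \Cref{differentials}); the first of these is injective because the homology of the original complex vanishes at that end.

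It remains to identify the homology. By the definition of $\mI$ the group $\rmG=\op{GL}(V)$ acts freely on it, and $\gamma\mapsto\gamma(V)$ identifies $\mI/\rmG$ with $\op{Gr}_n(\bbC^N)$; as $\mI$ is open in $\mA$, hence smooth of complex dimension $nN$, this gives $H^\rmG_j(\mI)\cong H^{2nN-j}_\rmG(\mI)\cong H^{2nN-j}(\op{Gr}_n(\bbC^N))$, which is exactly the computation already made in the proof of \Cref{cycHecke}, see \eqref{freeact}. Inserting this, the cochain complex $H^*_\rmG(\calG_n)\hookrightarrow H^*_\rmG(\calG_{n-1})\to\cdots\to H^*_\rmG(\calG_0)$ is exact except at the $\calG_0$-term, where its cohomology is $H^*_\rmG(\mI)=H^*(\op{Gr}_n(\bbC^N))$, as asserted (and all maps are $H^*_\rmG(\point)$-linear, so this is a resolution of graded modules). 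The only genuinely technical point is the bookkeeping of the degree shifts needed to display the terms exactly as stated; since every space in sight is smooth and everything is $H^*_\rmG(\point)$-linear, the required Poincaré dualities and vector-bundle isomorphisms are routine, and I do not anticipate a real obstacle here.
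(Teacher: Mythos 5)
Your argument is correct and follows the same route as the paper: \Cref{complex} is obtained directly from \Cref{cyclogeo} by discarding the inert $\calF\times\calF$-factor, identifying $H_*^\rmG(\widetilde{\mA}_k)\cong H^*_\rmG(\widetilde{\mA}_k)\cong H^*_\rmG(\calG_k)$ via smoothness and the vector-bundle projection, and computing $H^*_\rmG(\mI)\cong H^*(\op{Gr}_n(\bbC^N))$ from the free $\rmG$-action as in \eqref{freeact}. You simply make explicit the Morita/matrix-algebra justification and the Poincar\'e-duality bookkeeping that the paper leaves implicit.
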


Using  $H^*_\rmG(\widetilde \mA_k)=H^*_\rmG(\IC(\overline \mA_k))$, this can be reformulated in terms of sheaves in the following way.

\begin{coro}\label{IC}
		There is a resolution of $H^*_\rmG(\mA_0)=H^*(Gr_n(\bbC^N))$ of the form  
	$$
	H^*_\rmG(\IC(\overline \mA_n))\hookrightarrow H^*_\rmG(\IC(\overline \mA_{n-1}))\to\ldots\to H^*_\rmG(\IC(\overline \mA_1))\to H^*_\rmG(\IC(\overline \mA_0)).
	$$
\end{coro}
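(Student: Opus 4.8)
The plan is to deduce \Cref{IC} directly from \Cref{complex} by nothing more than a change of notation, so I would keep the proof very short. First I would recall that in \Cref{subs:geom-mean-resol} we introduced the small resolution $\widetilde{\mA}_k=\{(\gamma,W)\mid\gamma(W)=0\}\to\overline{\mA}_k$, and that smallness of this map gives the standard identity $H^*_\rmG(\widetilde{\mA}_k)\cong H^*_\rmG(\IC(\overline{\mA}_k))$ (the pushforward of the constant sheaf along a small map is the $\IC$ sheaf, so their equivariant hypercohomologies agree). Second, I would note that the projection $\widetilde{\mA}_k\to\calG_k$, $(\gamma,W)\mapsto W$, is a $\rmG$-equivariant vector bundle (fibre $\Hom(V/W,\bbC^N)$ over $W$), hence induces an isomorphism $H^*_\rmG(\widetilde{\mA}_k)\cong H^*_\rmG(\calG_k)$, exactly as remarked just before \Cref{complex}. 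Combining these two identifications, the resolution of \Cref{complex},
\[
H^*_\rmG(\calG_n)\hookrightarrow H^*_\rmG(\calG_{n-1})\to\cdots\to H^*_\rmG(\calG_1)\to H^*_\rmG(\calG_0),
\]
is carried term by term to
\[
H^*_\rmG(\IC(\overline{\mA}_n))\hookrightarrow H^*_\rmG(\IC(\overline{\mA}_{n-1}))\to\cdots\to H^*_\rmG(\IC(\overline{\mA}_1))\to H^*_\rmG(\IC(\overline{\mA}_0)),
\]
with the same differentials (transported along the isomorphisms). It remains only to match the target: $\mA_0=\mI$, and since $\overline{\mA}_0=\overline{\mI}=\mA$ is smooth, $\IC(\overline{\mA}_0)$ is (up to shift) the constant sheaf on $\mA$, so $H^*_\rmG(\IC(\overline{\mA}_0))=H^*_\rmG(\mA)=H^*_\rmG(\mathrm{pt})$; but more to the point, the degree-zero homology of the complex was already identified in \Cref{cyclogeo}/\Cref{complex} with $H^*_\rmG(\mI)=H^*(\op{Gr}_n(\bbC^N))$, and $\mI=\mA_0$, so the claimed statement is literally the same resolution rewritten.

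Concretely I would write: "By smallness of the resolution $\widetilde{\mA}_k\to\overline{\mA}_k$ we have $H^*_\rmG(\widetilde{\mA}_k)\cong H^*_\rmG(\IC(\overline{\mA}_k))$, and since $\widetilde{\mA}_k\to\calG_k$ is a $\rmG$-equivariant vector bundle also $H^*_\rmG(\widetilde{\mA}_k)\cong H^*_\rmG(\calG_k)$. Substituting these isomorphisms into the resolution of \Cref{complex} (whose differentials are transported accordingly, and are identified with $\bfd_N$ via \Cref{cyclogeo}) and using $\mA_0=\mI$, together with $H^*_\rmG(\mI)=H^*(\op{Gr}_n(\bbC^N))$ from \Cref{cycHecke}, yields the claim." No serious obstacle is expected here; the only point requiring a word of justification is the smallness of $\widetilde{\mA}_k\to\overline{\mA}_k$ (equivalently the semismallness estimate on fibre dimensions over the strata $\mA_j$, $j\le k$), which is classical for these determinantal varieties — the fibre over a rank-$(n-j)$ map is $\op{Gr}_{k-j}(\Bbbk^j)$ of dimension $(k-j)(j-(k-j))$ when nonempty, strictly less than $\tfrac12\op{codim}$ of the stratum for $j<k$ — so $H^*_\rmG(\widetilde{\mA}_k)$ indeed computes the intersection cohomology. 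This is the step I would flag as the "hard part", though it is standard; everything else is bookkeeping and has been set up in \Cref{subs:geom-mean-resol}.
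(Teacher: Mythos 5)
Your proof is correct and follows exactly the paper's route: \Cref{IC} is obtained from \Cref{complex} by substituting the identification $H^*_\rmG(\widetilde{\mA}_k)\cong H^*_\rmG(\IC(\overline{\mA}_k))$ coming from the smallness of $\widetilde{\mA}_k\to\overline{\mA}_k$, which the paper itself only asserts. The one slip is in your parenthetical smallness check: the fibre over $\gamma\in\mA_j$ (for $j\ge k$, not $j\le k$) is $\op{Gr}_k(\bbC^j)$ of dimension $k(j-k)$, and smallness follows from $k(j-k)<\tfrac12(j-k)(N-n+j+k)$ for all $j>k$ when $n\le N$.
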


\section{Geometric bases: Cells and Gells}
\label{sec:flags-cells}
In this section we introduce different decompositions of $\ccZplus=\calF\times\calF\times\calG$ which result in geometric constructions of different $\Pol_n$-bases of $\hNH_n$.

\subsection{Basic pavings}
\label{subs:B-strat}
Recall the isomorphism of algebras $\hNH_n\cong H_*^\rmG(\ccZplus)$ from \Cref{thm:main-sl2} using the varieties $\ccZplus$ from \Cref{Zplus}.  

Fix $k\in [0;n]$ and assume for this section that we have a finite decomposition 
\begin{equation}\label{dec}
\ccZplus_k=\coprod_{\theta\in \Theta}\calO_{\theta}
\end{equation}
with disjoint parts, labelled by some linearly ordered set $\Theta$. Set $\ccZplus_{k,\leqslant \theta}=\coprod_{\nu\leqslant \theta}\calO_{\nu}$.
\begin{df}
\label{def:b-paving}
We call the decomposition \eqref{dec} a \emph{basic paving} if the following holds:
\begin{enumerate}[\rm{(b-}i\rm{)}]
\item \label{b1}For each $\theta\in\Theta$, the subspace $\ccZplus_{k,\leqslant\theta}$ of $\ccZplus_k$ is closed.
\item \label{b2}Each $\calO_\theta$ is a smooth real\footnote{In the case of Gells studied below, $\calO_\theta$ is a priory a real and not a complex manifold. The fibres of $\calO_\theta\to \calF$ are however just complex affine spaces by \Cref{lem:cond-B-Gells}.} $\rmU$-stable manifold.
\item \label{cond-df:B-str-fibr} The projection $\calF\times\calF\times\calG_k\to\calF$ onto the first component restricted to $\calO_\theta$, is a locally trivial fibration 
$\calO_\theta\to\calF$ with (complex) affine fibres
(then the push-forward induces an isomorphism 
$H_*^\rmG(\calO_\theta)\cong H_*^\rmG(\calF)$).
\end{enumerate}
\end{df}
The word basic refers to the fact that it provides a basis in cohomology, namely assume the decomposition \eqref{dec} is a basic paving, then $H_*^\rmG(\ccZplus_k)$ is a free $\Pol_n$-module of rank $\lvert\Theta\rvert$.
Indeed, we get a filtration $H_*^\rmG(\ccZplus_k)^{\leqslant \theta}=H_*^\rmG(\ccZplus_{k,\leqslant\theta})$ on $H_*^\rmG(\ccZplus_k)$ such that each associated graded $H_*^\rmG(\ccZplus_k)^{\leqslant \theta}/H_*^\rmG(\ccZplus_k)^{<\theta}$ is isomorphic to $\Pol_n\cong H_*^\rmG(\calF)\cong H_*^\rmG(\calO_\theta)$.  Moreover, this filtration is stable by the left $\Pol_n$-action and, in view of \ref{cond-df:B-str-fibr}, the left $\Pol_n$-action on $H_*^\rmG(\ccZplus_k)^{\leqslant \theta}/H_*^\rmG(\ccZplus_k)^{<\theta}\cong H_*^\rmG(\calO_\theta)\cong \Pol_n$ is just the action by the left multiplication with polynomials. A $\Pol_n$-basis of the associate graded can then be lifted to a $\Pol_n$-basis in $H_*^\rmG(\ccZplus_k)$. 
\subsection{Adapted basic pavings}
From now on we identify $H_*^\rmG(\ccZplus)\cong\hNH_n$ via \Cref{thm:main-sl2} with the shifted grading from \Cref{shiftedgrading}.
We moreover fix a subset $\calB=\{b_\theta\mid \theta\in\Theta\}\subset H_*^\rmG(\ccZplus_k)$ of  homogeneous, pairwise distinct elements $b_\theta$. 

\begin{df} Assume \eqref{dec} is a basic paving. We call the decomposition \eqref{dec} \emph{strongly adapted} to $\calB$  if each $b_\theta$ is contained in the image of $H_*^\rmG(\ccZplus_{k,\leqslant\theta})$ and its restriction  to $H_*^\rmG(\calO_\theta)$ is a constant nonzero polynomial in $\Pol_n\cong H_*^\rmG(\calF)\cong H_*^\rmG(\calO_\theta)$.
\end{df}
\begin{lem} \label{strong}
	Assume there exists a  basic paving \eqref{dec} strongly adapted to $\calB$, 
	then $\calB$ is a $\Pol_n$-basis of $H_*^\rmG(\ccZplus_k)$.
\end{lem}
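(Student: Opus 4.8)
The plan is to combine the filtration argument already recorded for basic pavings with the extra information coming from the strongly adapted condition. Recall that a basic paving \eqref{dec} gives a filtration
\[
0=H_*^\rmG(\ccZplus_k)^{<\theta_{\min}}\subset\dots\subset H_*^\rmG(\ccZplus_k)^{\leqslant\theta}\subset\dots\subset H_*^\rmG(\ccZplus_k),
\]
indexed by the linearly ordered set $\Theta$, whose successive quotients $H_*^\rmG(\ccZplus_k)^{\leqslant\theta}/H_*^\rmG(\ccZplus_k)^{<\theta}$ are each free of rank one over $\Pol_n$, via the localisation-to-open-part isomorphism with $H_*^\rmG(\calO_\theta)\cong\Pol_n$; moreover this isomorphism intertwines the left $\Pol_n$-action with multiplication by polynomials. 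So the only thing missing for a $\Pol_n$-basis is a coherent choice of lifts of the generators of the associated graded pieces, and that is exactly what $\calB$ provides.

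First I would fix $\theta\in\Theta$ and examine the image $\bar b_\theta$ of $b_\theta$ in the quotient $H_*^\rmG(\ccZplus_k)^{\leqslant\theta}/H_*^\rmG(\ccZplus_k)^{<\theta}$. By the strongly adapted hypothesis $b_\theta$ lies in the subspace $H_*^\rmG(\ccZplus_{k,\leqslant\theta})$ (whose image in $H_*^\rmG(\ccZplus_k)$ is $H_*^\rmG(\ccZplus_k)^{\leqslant\theta}$), so $\bar b_\theta$ makes sense, and its restriction to $H_*^\rmG(\calO_\theta)$ is by assumption a \emph{nonzero constant} $c_\theta\in\Pol_n$. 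Under the identification of the quotient with $\Pol_n$, this says precisely that $\bar b_\theta=c_\theta$ with $c_\theta\in\Bbbk^\times$, i.e. $\bar b_\theta$ is a unit times the canonical generator of the rank-one free $\Pol_n$-module. Hence $\{\bar b_\theta\}$ is a $\Pol_n$-basis of each associated graded piece.

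The second step is the standard lifting argument: since the filtration is by $\Pol_n$-submodules, each associated graded is a free $\Pol_n$-module, and for each $\theta$ we have an element $b_\theta\in H_*^\rmG(\ccZplus_k)^{\leqslant\theta}$ projecting to a $\Pol_n$-basis element $\bar b_\theta$ of the quotient by $H_*^\rmG(\ccZplus_k)^{<\theta}$. Proceeding by induction along the linear order $\Theta$ (which is finite), one shows that $\{b_\nu\mid\nu\leqslant\theta\}$ is a $\Pol_n$-basis of $H_*^\rmG(\ccZplus_k)^{\leqslant\theta}$: given a $\Pol_n$-linear relation, project to the top quotient to kill the coefficient of $b_\theta$ (using that $\bar b_\theta$ is a free generator), then apply the inductive hypothesis to the remainder lying in $H_*^\rmG(\ccZplus_k)^{<\theta}$; spanning is analogous, peeling off the leading term. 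Taking $\theta$ maximal gives that $\calB$ is a $\Pol_n$-basis of $H_*^\rmG(\ccZplus_k)$.

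The argument is essentially formal once the strongly adapted condition is unwound, so there is no serious obstacle; the one point that needs a little care is checking that "restriction to $H_*^\rmG(\calO_\theta)$ is a nonzero constant" really does translate into "$\bar b_\theta$ is a unit multiple of the free generator" under the chosen identification of the subquotient with $\Pol_n\cong H_*^\rmG(\calF)\cong H_*^\rmG(\calO_\theta)$ — this uses condition \ref{cond-df:B-str-fibr} of \Cref{def:b-paving}, namely that $\calO_\theta\to\calF$ is an affine fibration so that its pushforward is an isomorphism of $\Pol_n$-modules. Everything else is the routine graded-Nakayama/lifting bookkeeping over the finite ordered set $\Theta$.
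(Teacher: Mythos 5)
Your proposal is correct and follows exactly the same route as the paper's proof, which simply states in one sentence that the strongly adapted hypothesis makes $\{b_\theta\}$ a $\Pol_n$-basis of the associated graded and hence of $H_*^\rmG(\ccZplus_k)$ itself. Your write-up is just a fully unwound version of that filtration-and-lifting argument.
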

\begin{proof}
By the assumption, $\{b_\theta\}_{\theta\in\Theta}$ induces a $\Pol_n$-basis of the associated graded of $H_*^\rmG(\ccZplus_k)$, which implies that it is also a $\Pol_n$-basis in $H_*^\rmG(\ccZplus_k)$.
\end{proof}
Since strongly adaptedness is not easy to check we consider a weaker notion:\footnote{We denote by "$\dim$" always the complex dimension. In case of a real variety, "$\dim$" is half of the real dimension. This situation appears in the case of Gells in \S\ref{subsub:Gells}.}
\begin{df}
 We call a basic paving \eqref{dec} \emph{weakly adapted} to $\cB$ if, for each $\theta\in\Theta$,
 \begin{equation}\label{wadapt}
 \deg(b_\theta)=2\dim\ccY_k-2\dim\calO_\theta+ k^2-k.
 \end{equation}
\end{df}
By \Cref{shiftedgrading}, weakly adapted is indeed weaker than strongly adapted. 

\begin{lem}\label{checkbasis}
Assume that the basic paving \eqref{dec} is weakly adapted to $\cB$. If $\cB$ is $\Pol_n$-linearly independent or a $\Pol_n$-spanning set, then $\cB$ is a $\Pol_n$-basis of $H_*^\rmG(\ccZplus_k)$.
\end{lem}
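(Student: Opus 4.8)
The statement to prove is Lemma~\ref{checkbasis}: given a weakly adapted basic paving, a $\Pol_n$-linearly independent (resp. $\Pol_n$-spanning) subset $\cB$ is automatically a $\Pol_n$-basis. The point is purely a rank count: a basic paving (Definition~\ref{def:b-paving}) already shows that $H_*^\rmG(\ccZplus_k)$ is a free $\Pol_n$-module of rank $\lvert\Theta\rvert$ (this is the paragraph after Definition~\ref{def:b-paving}, via the filtration by the closed pieces $\ccZplus_{k,\leqslant\theta}$ and the isomorphisms $H_*^\rmG(\calO_\theta)\cong\Pol_n$). Since $\cB=\{b_\theta\mid\theta\in\Theta\}$ is indexed by $\Theta$, it has exactly $\lvert\Theta\rvert$ elements, which matches the rank of the free module. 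So the whole content of the lemma is the elementary fact that in a free module of finite rank $m$ over a (commutative, Noetherian) ring, a subset of cardinality $m$ that is either linearly independent or spanning is a basis.

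First I would record the rank statement as the input: fix the basic paving, obtain the filtration $H_*^\rmG(\ccZplus_k)^{\leqslant\theta}$, and conclude $H_*^\rmG(\ccZplus_k)\cong\Pol_n^{\oplus\lvert\Theta\rvert}$ as graded $\Pol_n$-modules, exactly as in the text. Then I would invoke the general algebra fact in the two directions. For the spanning case: a surjection $\Pol_n^{\oplus m}\twoheadrightarrow\Pol_n^{\oplus m}$ (sending the standard basis to $\cB$) between finitely generated modules over the commutative Noetherian ring $\Pol_n$ is automatically an isomorphism, so $\cB$ is a basis. For the linearly independent case: linear independence gives an injection $\Pol_n^{\oplus m}\hookrightarrow H_*^\rmG(\ccZplus_k)\cong\Pol_n^{\oplus m}$; since $\Pol_n$ is a domain, tensoring with the fraction field $\op{Frac}(\Pol_n)$ yields an injection of $m$-dimensional vector spaces, hence an isomorphism over the fraction field; then one checks that the cokernel over $\Pol_n$ is a finitely generated torsion module, but $H_*^\rmG(\ccZplus_k)$ is free (hence torsion-free), so any torsion submodule of a quotient forces the cokernel to be zero --- more cleanly, use that over a graded polynomial ring a graded injective endomorphism of a free module of finite rank that becomes bijective after inverting all homogeneous elements, and is degree-preserving with matching Hilbert series, must be surjective. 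Since $\cB$ is homogeneous and the weak adaptedness condition \eqref{wadapt} pins down the degrees $\deg(b_\theta)=2\dim\ccY_k-2\dim\calO_\theta+k^2-k$, which by Remark~\ref{shiftedgrading} are exactly the degrees of the basis elements coming from the associated graded pieces $H_*^\rmG(\calO_\theta)$, the graded Hilbert series of the free submodule generated by $\cB$ equals that of $H_*^\rmG(\ccZplus_k)$; an injection (or surjection) of graded modules with equal Hilbert series in each degree is an isomorphism.

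The mild subtlety --- and the only place one has to be a little careful --- is that $\calB$ being ``weakly'' rather than ``strongly'' adapted does not by itself guarantee that $\cB$ maps to a basis of the associated graded; it only guarantees the \emph{degrees} are right. That is precisely why the extra hypothesis (linear independence or spanning) is needed, and why the proof is a Hilbert-series / finite-rank-free-module argument rather than a direct ``read off the basis of the associated graded'' argument as in Lemma~\ref{strong}. So the logical skeleton is: (i) basic paving $\Rightarrow$ $H_*^\rmG(\ccZplus_k)$ free of rank $\lvert\Theta\rvert$ with known graded Hilbert series; (ii) weak adaptedness $\Rightarrow$ the graded submodule (or quotient) attached to $\cB$ has the same graded Hilbert series; (iii) a degree-preserving injection (resp. surjection) of graded $\Pol_n$-modules which have equal finite Hilbert series in every degree is an isomorphism; (iv) conclude $\cB$ is a $\Pol_n$-basis. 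I expect step (iii), the graded-Nakayama / Hilbert-series bookkeeping, to be the only part requiring a sentence of genuine justification; everything else is bookkeeping on top of the basic-paving paragraph already in the text.
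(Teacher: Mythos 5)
Your proof is correct and follows essentially the same route as the paper: the paper constructs an auxiliary strongly adapted set $\calB'$ via \Cref{strong} to exhibit $H_*^\rmG(\ccZplus_k)$ as free with a homogeneous basis in the degrees prescribed by \eqref{wadapt}, and then concludes exactly by the elementary graded-module fact you spell out in step (iii). Your explicit Hilbert-series justification of that last step (which the paper leaves implicit in ``By the assumptions, $\cB$ forms then a $\Pol_n$-basis as well'') is accurate, including your correct observation that torsion cokernel alone would not suffice in the linearly independent case.
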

\begin{proof}
For each $\theta\in\Theta$ we can find an element $b'_\theta\in H_*^\rmG(\ccZplus_{k,\leqslant\theta})\subset H_*^\rmG(\ccZplus_k)$ such that its restriction to $H_*^\rmG(\calO_\theta)\cong \Pol_n$ is $1$. Then \eqref{dec} is strongly adapted to the set $\calB'=\{b'_\theta\}_{\theta\in\Theta}$. Thus,  $\calB'$  is a $\Pol_n$-basis by \Cref{strong}. By \eqref{wadapt}, $b'_\theta$ and $b_\theta$ have the same degrees. By the assumptions, $\cB$ forms then a $\Pol_n$-basis as well.
\end{proof}

\subsection{Relative position of flags}
We will now construct some weakly adapted\footnote{We believe that they are in fact strongly adapted, but will not dicuss this.} pavings. 
First, we recall the notion of a relative position of two full flags in $V$. 
\begin{df}
Let $\bV, \tV\in\cF$.  Then $(\bV, \tV)$ is \emph{in relative position $w\in\frakS_n$}, denoted $w=\rel(\bV,\tV)$, if 
	$
	\dim (\bV^a\cap {\tV}{}^b)=\vert[1;a]\cap w([1;b])\rvert
	$ for all $a,b\in[1;n]$.
\end{df}

The relative position of partial flags can be reduced to this notion as follows.  Given a composition $\nu=(\nu_1,\nu_2,\ldots,\nu_t)$ of $n$, a partial flag $\bV$ in $V$ is of type $\nu$ if it is of the form
$
\bV=(\{0\}=\bV^0\subset \bV^1\subset\ldots \subset \bV^t=V)
$ 
with $\dim(\bV^r/\bV^{r-1})=\nu_r$ for $r\in [1;t]$.
Let $\frakS_\nu$ be the parabolic subgroups of $\frakS_n$ corresponding to $\nu$. 
We will use this notion for full flags (then $\nu=(1,1,\ldots,1)$ and $\frakS_\nu=\{\Id\}$) or for Grassmannians (then $\nu$ is of the form $(k,n-k)$ and $\frakS_\nu=\frakS_k\times\frakS_{n-k}$). 

Assume that $\bV$ is a partial flag of type $\nu$ and $\tV$ is a partial flag of type $\mu$. 
Complete them to full flags and let $w\in\frakS_n$ be their relative position. This depends on the chosen completions, but the double coset $\frakS_\nu w \frakS_\mu\in \frakS_\nu\backslash \frakS_n/\frakS_\mu$ does not and is by definition the relative position, $\rel(\bV,\tV)$, of $(\bV,\tV)$. We sloppily write $w=\rel(\bV,\tV)$.

\subsection{Cells and Gells}
We still fix $k\in [0;n]$. We equip $\frakS_n$ with the Bruhat order. We also fix the  induced ordering on any set $S$ of (double) cosets in $\frakS_n$ (obtained by embedding $S$ into $\frakS_n$ sending a coset to its representative of minimal length).
\label{subs:cells}
\subsubsection{Upper and lower cells}
\label{subsub:up-cells}
\begin{df}
Let $\Theta_{\rm up}$ be the set $\frakS_n\times (\frakS_n/(\frakS_k\times \frakS_{n-k}))$ with the lexicographic order on $\Theta_{\rm up}$. For $(w,\lambda)\in \Theta_{\rm up}$ define the \emph{upper cell}\footnote{Their images in $\calF\times\calF$ and $\cF\times\calG_k$ (for the left copy of $\calF$) are usual Schubert cells.}
\begin{equation}\label{upstrat}
\calO^{\rm up}_{w,\lambda}=\{(\bV,\tV, W)\mid \rel(\bV,\tV)=w, \rel(\bV,W)=\lambda\}\subset\ccZplus_k.
\end{equation}
\end{df}
For the following we identify $\frakS_n/(\frakS_k\times \frakS_{n-k})=\Lambda_k(n)$ via $x\mapsto x([1;k])$.
\begin{prop}
\label{prop:adapted-up} 
The decomposition $\ccZplus_k=\coprod_{(w,\lambda)\in \Theta_{\rm up}}\calO^{\rm up}_{w,\lambda}$ from \eqref{upstrat} is a basic paving and weakly adapted to $\calB^\up:=\{\omega_\lambda T_w\}_{w\in\frakS_n,\lambda\in\Lambda_k(n)}\subset\hNH_n$.
\end{prop}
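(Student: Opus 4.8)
The plan is to verify the two assertions of \Cref{prop:adapted-up} separately: first that the decomposition \eqref{upstrat} of $\ccZplus_k$ is a basic paving in the sense of \Cref{def:b-paving}, and then that this basic paving is weakly adapted to $\calB^\up$.

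For the basic paving I would work $\rmG$-equivariantly over the first flag. Fixing $\bV$ to be the standard flag $\bU$ with stabiliser $\rmB$, \eqref{eq:isom-G/B*Gr} gives a $\rmG$-equivariant identification $\ccZplus_k\cong\rmG\times_\rmB(\calF\times\calG_k)$ under which $\calO^{\rm up}_{w,\lambda}$ corresponds to $\rmG\times_\rmB(C_w\times C_\lambda)$, where $C_w=\{\tV\mid\rel(\bU,\tV)=w\}=\rmB w\rmB/\rmB$ is the Schubert cell in $\calF$, an affine space of dimension $\ell(w)$, and $C_\lambda=\{W\mid\rel(\bU,W)=\lambda\}\subset\calG_k$ is the corresponding Grassmannian Schubert cell, an affine space of dimension $k(n-k)-\#\inv(\lambda)$ (with $\inv$ as in \Cref{lem:caract-S}; this dimension formula is a direct count). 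Since the functor $\rmG\times_\rmB(-)$ sends a point to $\calF$, preserves open and closed embeddings, and preserves Zariski-local triviality of fibrations, conditions (b-ii) and (b-iii) of \Cref{def:b-paving} follow at once: restricted to $\calO^{\rm up}_{w,\lambda}$ the projection to $\calF$ is the associated bundle with complex affine fibre $C_w\times C_\lambda$, and each $\calO^{\rm up}_{w,\lambda}$ is a smooth $\rmG$-stable (hence $\rmU$-stable) subvariety. For the closedness condition (b-i) it suffices, by the same reduction, to show that $\coprod_{(\nu,\mu)\leqslant_{\rm lex}(w,\lambda)}C_\nu\times C_\mu$ is closed in $\calF\times\calG_k$. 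I would split this set as $\bigl(\{\tV\mid\rel(\bU,\tV)<w\}\times\calG_k\bigr)\cup\bigl(C_w\times\overline{C_\lambda}\bigr)$; the first piece is closed, being the finite union of the Schubert varieties $\overline{C_\nu}$ over the maximal $\nu<w$, and the closure of the second piece is $\overline{C_w}\times\overline{C_\lambda}=\coprod_{\nu\leqslant w,\mu\leqslant\lambda}C_\nu\times C_\mu$, which lies in the lex-downward set because $\nu<w$ already forces $(\nu,\mu)<_{\rm lex}(w,\lambda)$ while $\nu=w$ forces $\mu\leqslant\lambda$. Hence the union is closed.

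For weak adaptedness I would compare degrees with dimensions. By the grading conventions of \Cref{rk:gr-hNH} together with \Cref{shiftedgrading} one has $\deg(\omega_\lambda T_w)=2\sum_{i\in\lambda}(n-i)-2\ell(w)$, while the fibration above gives $\dim\calO^{\rm up}_{w,\lambda}=\binom{n}{2}+\ell(w)+k(n-k)-\#\inv(\lambda)$ and $\dim\ccY_k=\binom{n}{2}+k(n-k)$. Substituting into \eqref{wadapt} and cancelling, the required equality reduces to the combinatorial identity $\sum_{i\in\lambda}(n-i)=\#\inv(\lambda)+\binom{k}{2}$. Writing $\lambda=\{i_1<\dots<i_k\}$, for each $i_m\in\lambda$ the number of $j>i_m$ with $j\notin\lambda$ is $(n-i_m)-(k-m)$, and summing over $m\in[1;k]$ yields $\#\inv(\lambda)=\sum_{i\in\lambda}(n-i)-\binom{k}{2}$, which is precisely that identity.

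The main obstacle is the closedness condition (b-i): one must be careful that the lexicographic combination of the Bruhat orders on the two factors behaves correctly, so that the closure of $\calO^{\rm up}_{w,\lambda}$ genuinely lands in the lex-downward set $\ccZplus_{k,\leqslant(w,\lambda)}$ and not merely in the strictly larger set $\{(\bV,\tV,W)\mid\rel(\bV,\tV)\leqslant w,\ \rel(\bV,W)\leqslant\lambda\}$. Everything else is a routine assembly of standard facts about Schubert cells and their closures, the cellular fibration structure (as already used in \Cref{lem:basisHZplus}, cf.~\cite[\S 5.5]{CG97}), and the degree bookkeeping of \Cref{shiftedgrading}.
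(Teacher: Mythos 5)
Your proof is correct, and for the part the paper actually proves -- the weak adaptedness -- it follows essentially the same route: matching $\deg(\omega_\lambda T_w)$ against $2\dim\ccY_k+k^2-k-2\dim\calO^{\rm up}_{w,\lambda}$, with your $\#\inv(\lambda)$-bookkeeping equivalent to the paper's use of $\ell(\lambda)=k(n-k)-\#\inv(\lambda)$. The paper treats the basic-paving conditions (b-i)--(b-iii) as standard and does not verify them; your reduction via $\ccZplus_k\cong\rmG\times_\rmB(\calF\times\calG_k)$ and the closure argument for the lex-downward sets correctly fill in exactly the details that are left implicit there.
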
 
\begin{proof}
With the grading from \Cref{rk:gr-hNH}  and \Cref{shiftedgrading} we have
$$
\deg (T_w\omega_\lambda)=-2\ell(w)+2nk-k(k+1)-2\ell(\lambda).
$$
On the other hand
$
\dim\calO^\up_{w,\lambda}=\dim\calF+\ell(w)+\ell(\lambda),
$
where $\ell(\lambda)$ denotes the length of the shortest coset representative corresponding to $\lambda\in\Lambda_k(n)=\frakS_n/(\frakS_k\times \frakS_{n-k})$. Thus,
$$
2\dim\ccY_k+k^2-k-2\dim\calO^\up_{w,\lambda}=2\dim\calG_k+k^2-k-2\ell(w)-2\ell(\lambda)=\deg(T_w\omega_\lambda),
$$
and therefore \eqref{wadapt} holds which we wanted to show. 
\end{proof}
Varying over all $k$ gives what we call the \emph{geometric upper basis} of $H_*^\rmG(\ccZplus)$, since the diagrams of this basis have floating dots only on the top of the diagram. 
\begin{df} 
Denote by $\Theta_\down$ the set $\frakS_n\times(\frakS_k\times \frakS_{n-k})\backslash\frakS_n$ equipped with the lexicographic ordering. For $(w,\lambda)\in \Theta_\down^k$ define
\begin{equation}
\label{downstrat}
\calO^{\down}_{w,\lambda}=\{(\bV,\tV, W)\mid \rel(\bV,\tV)=w, \rel(W,\tV)=\lambda\}
\subset\ccZplus_k.
\end{equation}
\end{df}

Identifying $(\frakS_k\times \frakS_{n-k})\backslash\frakS_n=\Lambda_k(n)$,  $x\mapsto x^{-1}([1;k])$, the following holds.
\begin{prop}
The decomposition $\ccZplus_k=\coprod_{(w,\lambda)\in \Theta_{\down}}\calO^{\down}_{w,\lambda}$ from \eqref{downstrat} is a basic paving and weakly adapted to $\calB^\down:=\{T_w \omega_\lambda \}_{w\in\frakS_n,\lambda\in\Lambda_k(n)}\subset\hNH_n$.
\end{prop}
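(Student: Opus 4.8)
The plan is to run the argument from the proof of Proposition~\ref{prop:adapted-up} essentially verbatim, with the subordinate relative position $\rel(\bV,W)$ everywhere replaced by $\rel(W,\tV)$. First I would verify the three conditions of Definition~\ref{def:b-paving} for the decomposition \eqref{downstrat}. For condition (b-iii) (which then also gives (b-ii)), fix $\bV\in\calF$: the fibre of $\calO^\down_{w,\lambda}\to\calF$ over $\bV$ fibres over the Schubert cell $\{\tV\mid\rel(\bV,\tV)=w\}\cong\bbC^{\ell(w)}$, with the fibre over a given $\tV$ equal to the relative Schubert cell $\{W\in\calG_k\mid\rel(W,\tV)=\lambda\}\cong\bbC^{\ell(\lambda)}$, where now $\ell(\lambda)$ is the length of the shortest representative of $\lambda\in(\frakS_k\times\frakS_{n-k})\backslash\frakS_n$. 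Both fibrations are locally trivial, so the fibre over $\bV$ is a locally trivial affine bundle over an affine space, hence itself an affine space of dimension $\ell(w)+\ell(\lambda)$, and the whole family is locally trivial over $\calF$; in particular $\dim\calO^\down_{w,\lambda}=\dim\calF+\ell(w)+\ell(\lambda)$.

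For condition (b-i) I would write $\ccZplus_{k,\leqslant(w,\lambda)}=X_{\leqslant w}\setminus Y$, where $X_{\leqslant w}=\{\rel(\bV,\tV)\leqslant w\}$ is the preimage of the Schubert variety $\overline{O^w}\subset\calF\times\calF$ under the projection $\ccZplus_k\to\calF\times\calF$, hence closed, and $Y=\{\rel(\bV,\tV)=w\}\cap\{\rel(W,\tV)\not\leqslant\lambda\}$ is, inside $X_{\leqslant w}$, the intersection of the open complement of the closed subset $X_{<w}$ with the open complement of $\{\rel(W,\tV)\leqslant\lambda\}$; the latter is closed in $\ccZplus_k$ since it is the preimage of an orbit closure under $\ccZplus_k\to\calG_k\times\calF$, $(\bV,\tV,W)\mapsto(W,\tV)$. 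Thus $Y$ is open in $X_{\leqslant w}$ and $\ccZplus_{k,\leqslant(w,\lambda)}$ is closed. I expect this to be the only point that genuinely needs attention, and the main potential obstacle: unlike in the upper-cell case, the subordinate condition now couples the Grassmannian space $W$ to the \emph{second} flag $\tV$ rather than the first, so one must be sure that the relevant closure relation is still governed by the Bruhat order on $(\frakS_k\times\frakS_{n-k})\backslash\frakS_n$ inducing the lexicographic order on $\Theta_\down$ — which is exactly the statement that closures of $\rmG$-orbits in $\calG_k\times\calF$ follow the Bruhat order on cosets. (If one insists on a total order, refine the lexicographic order arbitrarily; this is harmless since Bruhat closures are downward closed.)

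It then remains to check weak adaptedness \eqref{wadapt}. With the grading of Remark~\ref{rk:gr-hNH} one has $\deg(T_w)=-2\ell(w)$ and $\deg(\omega_\lambda)=\sum_{i\in\lambda}2(n-i)=2nk-k(k+1)-2\ell(\lambda)$, using that the identification $(\frakS_k\times\frakS_{n-k})\backslash\frakS_n\cong\Lambda_k(n)$, $x\mapsto x^{-1}([1;k])$, assigns to $\lambda=\{i_1<\dots<i_k\}$ the same length $\ell(\lambda)=\sum_j i_j-\binom{k+1}{2}$ as in the proof of Proposition~\ref{prop:adapted-up} (this is the inverse-of-a-minimal-left-coset-representative computation). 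Since $\dim\ccY_k=\binom{n}{2}+k(n-k)$ and $\dim\calO^\down_{w,\lambda}=\binom{n}{2}+\ell(w)+\ell(\lambda)$, a short computation gives
\[
2\dim\ccY_k+k^2-k-2\dim\calO^\down_{w,\lambda}=2nk-k(k+1)-2\ell(w)-2\ell(\lambda)=\deg(T_w\omega_\lambda),
\]
which is precisely \eqref{wadapt}. Hence the decomposition is a basic paving weakly adapted to $\calB^\down$, as claimed.
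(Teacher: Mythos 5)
Your proof is correct and takes essentially the same approach as the paper: the statement is given in the paper without an explicit proof, the intended argument being the one for upper cells (Proposition \ref{prop:adapted-up}), that is, matching $\deg(T_w\omega_\lambda)=-2\ell(w)+2nk-k(k+1)-2\ell(\lambda)$ against $\dim\ccY_k$, $\dim\calO^\down_{w,\lambda}=\dim\calF+\ell(w)+\ell(\lambda)$, and the shift $k^2-k$, exactly as you do in the last display. You also spell out the verification of conditions (b-i)--(b-iii) of Definition \ref{def:b-paving}, which the paper treats as routine here (it singles out (b-iii) as non-obvious only for Gells in Lemma \ref{lem:cond-B-Gells}); your closedness argument via $X_{\leqslant w}\setminus Y$ and your observation about refining the Bruhat-lex order to a total order are sound, just more explicit than the paper chose to be.
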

Varying over all $k$ gives what we call the \emph{geometric lower basis} of $H_*^\rmG(\ccZplus)$
\begin{rk}
	\label{rk:up-down-weak}
Since the dimensions and the degrees for the upper and lower basis agree, the basic paving  \eqref{upstrat} is weakly adapted to $\calB^\down$, similar for \eqref{downstrat} and $\calB^\up$. For strongly adaptedness this is different. 
 We expect that \eqref{upstrat} is strongly adapted to $\calB^\up$ and not to  $\calB^\down$; similarly for \eqref{downstrat} with respect to $\calB^\down$ (and not $\calB^\up$). \end{rk}

\subsubsection{$\calG$-cells} 
\label{subsub:W-cells}

So far we considered cells defined via relative positions with respect to the flag $\bV$ (upper cells) respectively $\tV$ (lower cells). We next construct basic pavings by  considering instead the relative position with respect to the Grassmannian subspaces $W$. For this we will introduce the notion of $\calG$-cells. 
\begin{df}\label{VW}
Assume $\bV\in \calF$ and $W\in \calG_k$ with $
\rel(\bV,W)=x\in \frakS_n/(\frakS_k\times \frakS_{n-k})$. View $x$ as element of $\frakS_n$ by taking the coset representative of minimal length, and let $\bV^W$ be the unique full flag such that $\rel(\bV,\bV^W)=x$.
\end{df}
\begin{rk} \label{rkVW} If 
$
\bV=(\bV^1\subset \bV^2 \subset\ldots\subset \bV^n=V) 
$
then $\bV^W$ is the full flag given by 
\begin{equation*}
\bV^W=(\bV^1\cap W\subset\cdots\subset \bV^{n-1}\cap W\subset W\subset \bV^1+W\subset \bV^2+W\subset \cdots\subset \bV^{n-1}+ W\subset V).
\end{equation*}
(The equalities appearing here must be removed to get a true $n$-step full flag.) Thus $\bV^W$ contains $W$ and it is "as close as possible" to $\bV$ amongst all such full flags. 
\end{rk}

\begin{df}\label{naivGstrat}
Set $\Theta_{\calG_k}=(\frakS_n/(\frakS_k\times \frakS_{n-k}))\times (\frakS_k\times \frakS_{n-k})\times ((\frakS_k\times \frakS_{n-k})\backslash \frakS_n)$ and define the \emph{$\calG$-cell} corresponding to $(x,y,z)\in \Theta_{\calG_k}$ as
\begin{equation*}
\calO^{\calG}_{x,y,z}=\{(\bV,\tV, W)\mid \rel(\bV,W)=x, \rel(\bV^W,\tV{}^W)=y, \rel(W, \tV)=z\}\subset\ccZplus_k.
\end{equation*}
We equip $\Theta_{\calG_k}$ with the twisted(!) lexicographic ordering which orders the triples  $(x,y,z)$ by comparing first $x$, then $z$, then $y$.
\end{df}

\begin{prop}\label{calcinGcells}
The decomposition $\ccZplus_k=\coprod_{(x,y,z)\in \Theta_{\calG_k}}\calO^{\calG}_{x,y,z}$  is a basic paving which is weakly adapted to $\calB^{\calG_k}$ defined as
\begin{equation}\label{twobases}
\{T_x \omega_1\omega_2\ldots\omega_k T_yT_z \}_{(x,y,z)\in \Theta_{\calG_k}} \quad\text{or}\quad \{T_x T_y \omega_1\omega_2\ldots\omega_k T_z \}_{(x,y,z)\in \Theta_{\calG_k}}. 
\end{equation}
\end{prop}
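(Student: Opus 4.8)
The plan is to verify the conditions \textbf{(b-i)}--\textbf{(b-iii)} of \Cref{def:b-paving} for the $\calG$-cell decomposition and then the weak-adaptedness condition \eqref{wadapt} for both choices of basis in \eqref{twobases}, after which \Cref{checkbasis} finishes the job once we know (say) $\Pol_n$-linear independence. First I would check that the $\calG$-cells are well-defined and genuinely partition $\ccZplus_k$: given $(\bV,\tV,W)$, the relative position $x=\rel(\bV,W)$ is defined, this determines $\bV^W$ (and symmetrically $\tV^W$ once we know $\rel(\tV,W)$, but note the third coordinate $z=\rel(W,\tV)$ records exactly this, so both auxiliary full flags $\bV^W,\tV^W$ are determined by the data), and then $y=\rel(\bV^W,\tV^W)$ lies in $\frakS_k\times\frakS_{n-k}$ because $\bV^W$ and $\tV^W$ both contain $W$ as the $k$-th step, so their relative position preserves the decomposition $W\oplus (V/W)$; this is the content one must spell out using \Cref{rkVW}. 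The local structure needed for \textbf{(b-ii)} and \textbf{(b-iii)} comes from fibering the projection $\calO^\calG_{x,y,z}\to\calF$ (onto the $\bV$ factor): over a fixed $\bV$, the choices of $W$ with $\rel(\bV,W)=x$ form a Schubert cell in $\calG_k$ (an affine space), and over a fixed $(\bV,W)$ the flag $\bV^W$ is rigid while the choices of $\tV$ with $\rel(W,\tV)=z$ and $\rel(\bV^W,\tV^W)=y$ again form an affine cell; composing these affine fibrations gives the required locally trivial fibration with complex affine fibres, hence $H_*^\rmG(\calO^\calG_{x,y,z})\cong H_*^\rmG(\calF)\cong\Pol_n$.

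The closure condition \textbf{(b-i)} is where the \emph{twisted} lexicographic ordering (compare $x$, then $z$, then $y$) becomes essential, and I expect this to be the main obstacle. The point is that degenerating a triple $(x,y,z)$ can only increase $x$ in the Bruhat order on $\frakS_n/(\frakS_k\times\frakS_{n-k})$ or, if $x$ stays fixed, increase $z$, or, if both $x$ and $z$ stay fixed, increase $y$ --- this last implication uses that once $\rel(\bV,W)$ and $\rel(W,\tV)$ are constant in a family, the full flags $\bV^W$ and $\tV^W$ vary continuously (they are determined by intersecting/summing the fixed-position data), so their relative position $y$ can only jump up. One must argue that no degeneration increases $y$ while decreasing $z$ or $x$; this is exactly why $z$ is inserted before $y$ in the order. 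I would prove this by a standard limit argument: take a one-parameter family in $\overline{\calO^\calG_{x,y,z}}$, analyze which relative positions are semicontinuous, and check the three coordinates degenerate compatibly with the twisted order. Then $\ccZplus_{k,\leqslant(x,y,z)}$ is closed as a finite union of cells closed under "going up".

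For the weak-adaptedness \eqref{wadapt}, I would compute $\dim\calO^\calG_{x,y,z}=\dim\calF+\ell(x)+\ell(z)+\ell(y)$ from the affine fibration structure above (where $\ell$ denotes the length of the minimal coset representative in each factor), and on the algebraic side compute $\deg(T_x\,\omega_1\cdots\omega_k\,T_yT_z)$ using the gradings from \Cref{rk:gr-hNH} and \Cref{shiftedgrading}: each $T$ contributes $-2$ times the relevant length, while $\omega_1\cdots\omega_k$ contributes $\sum_{i=1}^k 2(n-i)=2nk-k(k+1)=2\dim\calG_k+k^2-k$ exactly as in the proof of \Cref{prop:adapted-up}. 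Both orderings of the generators in \eqref{twobases} have the same degree since reordering homogeneous elements does not change the degree, so \eqref{wadapt} holds for either choice verbatim as in \Cref{prop:adapted-up}. Finally, to invoke \Cref{checkbasis} it remains to observe that each of the two sets in \eqref{twobases} is a $\Pol_n$-spanning set of $H_*^\rmG(\ccZplus_k)\cong\hNH_n$: this follows from \Cref{coro:basis-hNH} together with the presentation in \Cref{onlyomega1}, since $\omega_1\cdots\omega_k$ runs (as $k$ varies and after applying $T_x$ on the left, $T_y,T_z$ on the right) through a spanning set of the exterior-algebra part while the $T_w$'s and $X$-multiplications fill out $\NH_n$; alternatively one counts graded ranks and uses the weak-adaptedness to conclude the count matches, so linear independence and spanning are equivalent here. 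Either way \Cref{checkbasis} then yields that $\calB^{\calG_k}$ is a $\Pol_n$-basis.
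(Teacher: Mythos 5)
Your proposal follows the paper's own proof: the paper likewise treats conditions (b-i) and (b-ii) as immediate, establishes (b-iii) by fibering $\calO^\calG_{x,y,z}\to\calF$ over the first flag exactly as you describe (deferring the details to the harder Gell case, \Cref{lem:cond-B-Gells}, whose argument is your affine-fibration computation), and then verifies weak adaptedness by the same degree/dimension count, observing as you do that the two orderings in \eqref{twobases} have equal degree so one reduction suffices. One caution in the closure discussion (which the paper omits as obvious): the direction of degeneration is reversed in your sketch --- under specialisation the intersections $\bV^a\cap\tV^b$ can only grow, so relative positions can only \emph{drop} in the Bruhat order, which is precisely why $\ccZplus_{k,\leqslant\theta}$ rather than $\ccZplus_{k,\geqslant\theta}$ is closed; your "can only increase"/"jump up" should read "decrease". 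Finally, the proposition as stated only asserts the weakly adapted basic paving, so your concluding paragraph on spanning and \Cref{checkbasis} addresses the footnoted basis claim rather than anything required here.
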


    Varying over all $k$ gives what we call\footnote{Using \Cref{checkbasis} one can see that they are indeed bases, justifying the name.} a \emph{$\calG$-cells basis} of $H_*^\rmG(\ccZplus).$

\begin{proof}
Only condition \ref{cond-df:B-str-fibr} for basic pavings is not obvious. It can be proved exactly as in \Cref{lem:cond-B-Gells} below which treats a harder situation. To show the weakly adaptedness note that for any two elements in  \eqref{twobases} with the same label the degrees agree. So it suffices to assume $\calB^{\calG_k}$  is the first set. We calculate 
\begin{equation*}
\deg(T_x \omega_1\omega_2\ldots\omega_k T_yT_z)=-2\ell(x)-2\ell(y)-2\ell(z)+2nk-k(k+1),
\end{equation*}
and
$
\dim\calO^{\calG}_{x,y,z}=\dim\calF+\ell(x)+\ell(y)+\ell(z).
$
Thus, $\deg (T_x \omega_1\omega_2\ldots\omega_k T_yT_z)$ equals 
\begin{equation*}
2\dim\calG_k+k^2-k-2\ell(x)-2\ell(y)-2\ell(z)=2\dim\ccY_k+k^2-k-2\dim \calO^\calG_{x,y,z}.
\end{equation*}
This proves the proposition.
\end{proof}

\subsubsection{Gells}
\label{subsub:Gells}
We next introduce an interesting twisted version of $\calG$-cells which we call \emph{Gells}. We give a special name to this notion because it will be very important in \S\ref{sec:gen-qv}, where we do general quivers. The choice of name comes from the fact that Gells are in some sense  "Grassmannian-based cells".

Using Gells we will construct a basic paving adapted to a basis containing only $\omega_1$ and no $\omega_2,\omega_3,\ldots,\omega_n$, cf. \Cref{onlyomega1}. Gells may seem to be unconvincing in case of $\mathfrak{sl}_2$, where we have many other, much nicer, bases of $\hNH_n$ available, see also \cite{AEHL}. For more general quivers only this complicated basis works and Gells will play a much more important role there. The definition of Gells requires to work with orthogonal complements of flags and actions of the group $\rmU$ instead of $\rmG$.

We fix a (hermitian) scalar product on $V.$ 
\begin{df}\label{VWperp}
Let  $\bV\in \calF$, $W\in \calG_k$. We denote by $\VWp$ the modification of the flag $\bV^W$ from \Cref{VW} where the part of the flag contained in $W$ is replaced by its dual flag inside of $W$, and the remaining part is kept; more precisely
\begin{eqnarray*}
\VWp&=&((\bV^{n-1}\cap W)^\perp\cap W\subset (\bV^{n-2}\cap W)^{\perp}\cap W\subset\cdots\subset (\bV^{1}\cap W)^{\perp}\cap W\\
&&\quad
\subset W\subset \bV^1+W\subset \bV^2+W\subset \cdots\subset \bV^{n-1}+ W\subset V).
\end{eqnarray*}
\end{df}
\begin{rk} \label{relVWp} If $\rel(\bV,W)=\rel(\bV, \bV^W)=x$, then  $\rel(\bV,\VWp)=xw_{0,k}$, where $w_{0,k}$ is the longest element in $\frakS_k$.
\end{rk}
\begin{df}
For $(x,y,z)\in \Theta_{\calG_k}$ define the corresponding \emph{Gell} as
\begin{equation*}
\calO^{\rm Gell}_{x,y,z}=\{(\bV,\tV, W)\mid \rel(\bV,W)=x;\; \rel(\VWp,\tV{}^W)=y;\; \rel(W, \tV)=z\}\subset\ccZplus_k.
\end{equation*}
\end{df}
\begin{lem}
\label{lem:cond-B-Gells}
The varieties $\calO^{\rm Gell}_{x,y,z}$ satisfy condition \ref{cond-df:B-str-fibr} from \Cref{naivGstrat}.
\end{lem}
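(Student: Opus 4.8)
The plan is to study the first projection $\pi\colon\calO^{\rm Gell}_{x,y,z}\to\calF$, $(\bV,\tV,W)\mapsto\bV$. All the data defining a Gell — relative positions, the auxiliary flag $\bV^W$, the orthogonal-complement flag $\VWp$ inside $W$ — is invariant under the diagonal $\rmU$-action, and $\calF\cong\rmU/(\rmU\cap\rmB)$ is $\rmU$-homogeneous; hence $\pi$ is automatically a locally trivial fibration, $\calO^{\rm Gell}_{x,y,z}\cong\rmU\times_{\rmU\cap\rmB}\pi^{-1}(\bV)$. So it suffices to prove that the fibre $\mathcal P:=\pi^{-1}(\bV)$ over one fixed flag $\bV\in\calF$ is a complex affine space.

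I would realize $\mathcal P$ as the total space of a three-step tower of locally trivial fibrations with complex affine fibres, so that $\mathcal P$ is a complex affine space by induction. Step one: the map $\mathcal P\to C_x:=\{W'\in\calG_k\mid\rel(\bV,W')=x\}$, $(\tV,W)\mapsto W$; the base $C_x$ is the Schubert cell of $\calG_k$ attached to $x$ for the fixed complete flag $\bV$, an affine space. Fixing $W\in C_x$ fixes the flags $\bV^W$, $\VWp$ and the closed subvariety $\calF^W\subset\calF$ of complete flags having $W$ as $k$th step; one has $\calF^W\cong\calF(W)\times\calF(V/W)$. Step two: the refinement map $(\tV,W)\mapsto(W,\tV^W)$; the condition $\rel(\VWp,\tV^W)=y$ says precisely that $\tV^W$ lies in $C'_y:=\{F\in\calF^W\mid\rel(\VWp,F)=y\}$, which is a product of Schubert cells in $\calF(W)$ and $\calF(V/W)$ relative to the fixed flag $\VWp$, again an affine space. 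Step three: over a fixed $F\in C'_y$, the remaining fibre is $\{\tV\in\calF\mid\tV^W=F,\ \rel(W,\tV)=z\}$.

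The main point, and the one requiring care, is the affineness and local triviality in step three. Here I would use that the stabilizer $\rmG_W$ of $W$ acts on $\calF$ with orbits exactly the relative-position strata $\{\rel(W,\tV)=z\}$, and that the refinement map $\tV\mapsto\tV^W$ is $\rmG_W$-equivariant for the transitive $\rmG_W$-action on $\calF^W$ (through its Levi $\mathrm{GL}(W)\times\mathrm{GL}(V/W)$); hence this map restricts to a $\rmG_W$-equivariant, and therefore locally trivial, surjection of the stratum $\{\rel(W,\tV)=z\}$ onto $\calF^W$. Its fibre over $F$ is a single orbit of the Borel $\rmB_F:=\Stab_{\rmG_W}(F)\subset\rmG_W$ on $\calF$ (note $\rmB_F$ is an honest Borel subgroup of $\rmG$, since $F$ is a complete flag containing $W$), and such an orbit is a complex affine space of dimension constant along the stratum, by the Bruhat decomposition. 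Intersecting with the preimage of $C'_y$ turns step three into a locally trivial affine bundle over $C'_y$. Composing the three steps shows $\mathcal P$ is a complex affine space and proves condition \ref{cond-df:B-str-fibr}. The one combinatorial nuisance to be careful about is the precise matching of partial-flag relative positions ($y\in\frakS_k\times\frakS_{n-k}$, $z\in(\frakS_k\times\frakS_{n-k})\backslash\frakS_n$) with the recipe for $\tV^W$ from \Cref{rkVW}, and keeping track that the hermitian metric — which enters only through $\VWp$, hence only in naming the cell $C'_y$ in step two — does not disturb the $\rmU$-equivariance used in the first paragraph.
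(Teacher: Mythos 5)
Your argument is correct and follows essentially the same route as the paper's proof: project to the first flag $\bV$, note that the admissible $W$ form a Schubert cell of dimension $\ell(x)$, and then parametrise the admissible $\tV$ by Bruhat cells relative to $\VWp$. The only cosmetic difference is that the paper merges your steps two and three into the single cell $\{\tV\mid \rel(\VWp,\tV)=yz\}$ of dimension $\ell(yz)=\ell(y)+\ell(z)$, which packages the combinatorial fact you isolate in step three (that fixing $\tV^W=F$ and $\rel(W,\tV)=z$ amounts to the single Bruhat condition $\rel(F,\tV)=z$) into the identity $\rel(\VWp,\tV^W)\cdot\rel(\tV^W,\tV)=\rel(\VWp,\tV)$.
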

\begin{proof}
We examine the fibres of the map $\calO^{\rm Gell}_{x,y,z}\to \calF$, $(\bV,W,\tV)\mapsto \bV$. For given $\bV$, the  $W$ satisfying $\rel(\bV, W)=x$ are  parameterised by $\bbC^{\ell(x)}$. For given $\bV$ and $W$ the $\tV$ satisfying $\rel(\VWp,\tV)=yz$ are parametrised by $\bbC^{\ell(yz)}$. For every such choice of $\tV$ we automatically have 
$
\rel(W,\tV)=\rel(\tV{}^W,\tV)=z$  and $\rel(\VWp,\tV{}^W)=y,
$ by definition of $\tV{}^W$.
We have a locally trivial fibration with complex affine fibres. 
\end{proof}
This shows that, with the ordering $\Theta_{\calG_k}$ from  \Cref{naivGstrat}, the decomposition into Gells is a basic paving (the conditions \ref{b1} and \ref{b2} are obvious).
\begin{ex} \label{minGell}
The minimal Gell $\calO^{\rm Gell}_{\Id,\Id,\Id}\subset\ccZplus_k$ contains all $(\bV,\tV,W)$ such that the subspaces in $\bV$ and $\tV$ agree in dimension $\geq k$, equal $W$ in dimension $k$ and form orthogonal flags in dimensions $\leq k$.
\end{ex}
The fundamental classes of minimal Gells motivate the special elements of $\hNH_n$: 
\begin{equation}\label{fundGell}
\Omega_{k,n}=\omega_1T_1\omega_1 T_2T_1\omega_1\ldots \ldots \omega_1T_{k-2}T_{k-2}\ldots T_1\omega_1T_{k-1}T_{k-2}\ldots T_1\omega_1.
\end{equation}
\begin{minipage}[t]{6cm}
Diagrammatically, this element is the permutation diagram of the longest element in $\frakS_k$, viewed inside $\frakS_n$, with $k$ floating dots inserted, at most one in each region. Each floating dot is separated from the left hand side by exactly one strand and different floating dots correspond to different strands.\\
\end{minipage}
\begin{minipage}[t]{0.3cm}
\hfill\\
\end{minipage}
\begin{minipage}[t]{6cm}
For example, for $(n,k)=(6,4)$ it is 	\\\tikz[thick,scale=.75,baseline={([yshift=1.0ex]current bounding box.center)}]{	
	\node at (-1,1.5){$\Omega_{4,6}=$};	
	\draw (0,3)-- (3,0); 	
	\draw (1,3)  .. controls (-0.5,2) ..  (2,0);
	\draw (2,3)  .. controls (-0.5,1) ..  (1,0);
    \draw (3,3)-- (0,0); 
    \draw (4,3)-- (4,0); 
    \draw (5,3)-- (5,0); 
    \fdot{}{0.3,3};
    \fdot{}{0.3,2};
    \fdot{}{0.3,1};
    \fdot{}{0.3,0};
    \node at (5.5,0){$.$}
} 
\end{minipage}
Gells can be used to provide a basis of $\calB^{\rm Gell}$ of $H_*^\rmG(\ccZplus)$:
\begin{prop}
For fixed $k$, the basic paving $\coprod_{(x,y,z)\in \Theta_{\calG_k}}\calO^{\rm Gell}_{x,y,z}$ is weakly adapted to $\calB^{\rm Gell}_k=\{T_x \Omega_{k,n} T_yT_z \}_{(x,y,z)\in \Theta_{\calG_k}}\subset\hNH_n$, i.e., we have
\begin{equation}
\label{eq-weaklyad-Gells-sl2}
\deg(T_x \Omega_{k,n} T_yT_z)=2\dim\ccY_k-2\dim\calO^{\rm Gell}_{x,y,z}+ k^2-k.
\end{equation}
In particular, $ \calB^{\rm Gell}=\bigcup_{0\leq k\leq n} \calB^{\rm Gell}_k$ is a $\Pol_n$-basis of $H_*^\rmG(\ccZplus).$
\end{prop}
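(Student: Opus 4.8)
The plan is to verify \eqref{eq-weaklyad-Gells-sl2} by a direct degree count, exactly parallel to the proofs of \Cref{prop:adapted-up} and \Cref{calcinGcells}, and then invoke \Cref{checkbasis}. First I would compute the dimension of the Gell. By \Cref{lem:cond-B-Gells} the projection $\calO^{\rm Gell}_{x,y,z}\to\calF$ is a locally trivial fibration whose fibre over a fixed $\bV$ is an affine space of dimension $\ell(x)+\ell(yz)=\ell(x)+\ell(y)+\ell(z)$ (using that $y\in\frakS_k\times\frakS_{n-k}$ and $z$ is a minimal coset representative of $(\frakS_k\times\frakS_{n-k})\backslash\frakS_n$, so that $\ell(yz)=\ell(y)+\ell(z)$). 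Hence
\begin{equation*}
\dim\calO^{\rm Gell}_{x,y,z}=\dim\calF+\ell(x)+\ell(y)+\ell(z).
\end{equation*}

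Next I would compute $\deg(T_x\Omega_{k,n}T_yT_z)$ using the grading from \Cref{rk:gr-hNH} together with the shift from \Cref{shiftedgrading}. Here $\deg(T_w)=-2\ell(w)$ for each of the three permutation parts, and the only genuinely new input is $\deg(\Omega_{k,n})$. From the explicit expression \eqref{fundGell}, $\Omega_{k,n}$ is a product of $k$ factors of $\omega_1$ and a permutation diagram realising the longest element $w_{0,k}\in\frakS_k$, so $\ell(w_{0,k})=\binom{k}{2}$ crossings contribute degree $-2\binom{k}{2}=-k(k-1)$, while the $k$ copies of $\omega_1$ each contribute $\deg(\omega_1)=2(n-1)$. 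Thus $\deg(\Omega_{k,n})=2k(n-1)-k(k-1)$, and therefore
\begin{equation*}
\deg(T_x\Omega_{k,n}T_yT_z)=-2\ell(x)-2\ell(y)-2\ell(z)+2k(n-1)-k(k-1).
\end{equation*}
It now remains to check that this matches the right-hand side of \eqref{eq-weaklyad-Gells-sl2}, i.e. that $2\dim\ccY_k+k^2-k-2\dim\calO^{\rm Gell}_{x,y,z}$ equals the same quantity. Since $\dim\ccY_k=\dim\calF+\dim\calG_k=\dim\calF+k(n-k)$, the terms $-2\ell(x)-2\ell(y)-2\ell(z)$ and the $\dim\calF$ cancel as before, and one is left to verify the purely numerical identity $2k(n-k)+k^2-k=2k(n-1)-k(k-1)$, which is immediate. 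This establishes that the Gell paving is weakly adapted to $\calB^{\rm Gell}_k$.

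Finally, to conclude that $\calB^{\rm Gell}=\bigcup_{0\le k\le n}\calB^{\rm Gell}_k$ is a $\Pol_n$-basis of $H_*^\rmG(\ccZplus)$, I would apply \Cref{checkbasis}: we have a weakly adapted basic paving, so it suffices to know that $\calB^{\rm Gell}_k$ is either $\Pol_n$-linearly independent or a $\Pol_n$-spanning set in $H_*^\rmG(\ccZplus_k)$. One natural route is to exhibit $\calB^{\rm Gell}_k$ as a spanning set using \Cref{onlyomega1}, which says $\hR_\bfn=\hNH_n$ is generated as an ordinary algebra by $R_\bfn=\NH_n$ and $\Omega1_\ui$, together with the fact that $\{T_w\}_{w\in\frakS_n}$ is a $\Pol_n$-basis of $\NH_n$ and that conjugates/products of the $\Omega$'s realise the various $\Omega_{k,n}$; the cardinalities then match by \Cref{coro:basis-hNH}, $\dim_{\Pol_n}\hNH_n=n!\cdot 2^n=\sum_k n!\binom{n}{k}=\sum_k|\Theta_{\calG_k}|$. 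I expect the degree bookkeeping to be entirely routine; the only point requiring a little care is the spanning (or independence) input feeding \Cref{checkbasis}, i.e. genuinely matching the combinatorics of the triples $(x,y,z)$ with the product structure of $\hNH_n$ so that one knows $\calB^{\rm Gell}_k$ spans — this is the step where one must actually use the structure of the Naisse--Vaz algebra rather than just counting.
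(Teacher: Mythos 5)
Your degree and dimension counts are correct and amount to the same computation the paper performs: the paper's own proof is a one-line reduction to the $\calG$-cell case (\Cref{calcinGcells}), using that $\deg(T_x\Omega_{k,n}T_yT_z)=\deg(T_x\omega_1\cdots\omega_kT_yT_z)$ and $\dim\calO^{\rm Gell}_{x,y,z}=\dim\calO^{\calG}_{x,y,z}$, which is exactly the identity $2k(n-1)-k(k-1)=2kn-k(k+1)$ you verify directly. For the final basis claim, the paper likewise routes through \Cref{checkbasis} and imports the spanning/independence input from the Naisse--Vaz basis theorem (cf.\ the remark that the Gell basis is a special case of \cite[Thm.~3.16]{NaisseVaz}), which is the step you correctly flag as the only non-bookkeeping ingredient.
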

\begin{proof}
We can just refer to the statement on $\calG$-cells from \Cref{calcinGcells}, since $\deg (T_x \Omega_{k,n} T_yT_z)=\deg (T_x \omega_1\omega_2\ldots\omega_k T_yT_z)$
 and $\dim \calO^{\rm Gell}_{x,y,z}=\dim \calO^\calG_{x,y,z}$.
 \end{proof}
This Gell basis is a special case of the $\Pol_n$-bases from \cite[Thm. 3.16]{NaisseVaz}.
\begin{rk}
Checking weakly adaptedness is merely a matching of degrees and dimensions. As a result, several basic pavings are weakly adapted to a given basis and a given paving can be used to construct many bases. For instance, the paving by Gells is also weakly adapted to the $\calG$-cells bases. In contrast to this example, we expect that the examples we gave proofs for are in fact strongly adapted. Strongly adaptedness reflects more of the geometry in algebra and more of the algebra in geometry. Currently we however do not have a proof for these stronger statements. 
\end{rk}

\addtocontents{toc}{{\textbf{Part II: The case of general quivers}}}
\section{Coloured geometry and its coloured combinatorics}
\label{sec:coloured}\label{generalnot}
From now on fix as in \S\ref{sec:KLR} a general finite quiver $\Gamma=(I,A)$ without loops together with a \emph{dimension vector} $\bfn=\sum_{i\in I}n_i\cdot i\in \bbZ_{\geqslant 0}I.$ Let $|\bfn|:=\sum_{i\in I}n_i=n.$ Finally fix  an $I$-graded complex vector space $V=\bigoplus_{i\in I}V_i$ of graded dimension $\bfn$. 

Let $\rmG_\bfn=\prod_{i\in I}\mathrm{GL}(V_i)$ with its standard torus $\rmT$ formed by diagonal matrices. We denote by $E_\bfn=\bigoplus_{a\in A}{\rm Hom}(V_{s(a)},V_{t(a)})$ the affine variety of representations of $\Gamma$ of dimension vector $\bfn$ equipped with the action of $\rmG_\bfn=\prod_{i\in I}\mathrm{GL}(V_i)$. 

We provide in this section combinatorial and geometric notions depending on $\Gamma$. In case $\Gamma=(\{\bullet\}, \emptyset)$ we recover many constructions from Part I (which were usually denoted by the same symbol there). We refer to the elements in $I$, i.e. the vertices of $\Gamma$, as \emph{colours}. By  \emph{coloured geometry} and 
        \emph{coloured combinatorics}, we mean geometry and combinatorics for more than one colour.
\subsection{Coloured geometry}
\label{subs:spec-var}
\begin{nota} Elements in $I^n$ will be denoted by bold letters, such as $\ui,\uj$, and their components by the same not bold letter, for instance  $\ui=(i_1,i_2,\ldots, i_n)$. 
Let $I^\bfn\subset I^n$ be given by the tuples $\ui$, where $\lvert\{j\mid i_j=k\}\rvert=n_k.$
The group $\frakS_n$ acts on $I^n$ and $I^\bfn$ such that $w(\uj)=\ui$ means $j_r=i_{w(r)}$ for $r\in[1;n]$.

For $\ui\in I^{\bfn}$, let $\calF_\ui$ be the variety of flags 
$
\bV=(\{0\}=\bV^0\subset \bV^1\subset\cdots\subset \bV^n=V),
$
which are homogeneous with respect to the decomposition $V=\bigoplus_{i\in I}V_i$, and for each $1\leq r\leq n$ the graded dimension of $\bV^r/\bV^{r-1}$ is equal to $i_r$. Set $\calF=\coprod_{\ui \in I^\bfn}\calF_\ui$.
\end{nota}
We say $\alpha\in E_\bfn$ {\emph{preserves $\bV\in\calF$} if $\bV$ is a sequence of subrepresentation of $\alpha$.
Set  $\ccF_\bfn=\prod_{i\in I}\calF(V_i)$, where $\calF(V_i)$ is the usual flag variety in the vector space $V_i$. Independently of $\ui\in I^\bfn$, the variety $\calF_\ui$ is always isomorphic to $\ccF_\bfn$. However whether $\alpha$ preserves $\bV$  depends essentially on $\ui$.
\begin{df} A \emph{Grassmannian dimension vector} is a vector $\bfk=(k_i)_{i\in I}$ with $k_i\in \bbZ$ such that $0\leqslant k_i\leqslant n_i$. Let $\Jbfn=\coprod_{i\in I}[0;n_i]$ be the set of such vectors.
\end{df}
Consider the following varieties associated with $\bfk\in \Jbfn$ where $|\bfk|=\sum_{i\in I}k_i$.
\begin{equation}\label{Grassvar}
\calG_\bfk=\prod_{i\in I}\op{Gr}_{k_i}(V_i), \quad \calG=\coprod_{\bfk\in \Jbfn}\calG_\bfk,\quad\text{and}\quad\calG_k=\coprod_{\bfk\in \Jbfn,|\bfk|=k}\calG_\bfk\quad\text{for fixed $k$}. 
\end{equation}
\begin{df}\label{Grassmannian Springer variety}
We define the \emph{Grassmannian--Springer quiver variety}
\begin{equation}
\rmY=\tF\times \calG\subset E_\bfn\times\calF\times\calG, \quad\text{where}\quad\tF=\{(\alpha,\bV)\mid \text{$\alpha$ preserves $\bV$}\}\subset E_\bfn\times\calF.
\end{equation}
We denote elements of $\rmY$ as triples $(\alpha,\bV, W)$, where $\alpha\in E_\bfn$, $\bV\in\calF$, $W\in\calG$. 
\end{df}
The decomposition $\calF=\coprod_{\ui\in I^\bfn}\calF_\ui$ induces decompositions $\tF=\coprod_{\ui\in I^\bfn}\tF_\ui$ and $\rmY=\coprod_{\ui\in I^\bfn}\rmY_\ui$. Moreover, \eqref{Grassvar} induces decompositions $\rmY=\coprod_{k=0}^n\rmY_k=\coprod_{\bfk\in \Jbfn}\rmY_\bfk$. Set also $\rmY_{\ui,k}=\rmY_{\ui}\cap \rmY_{k}$ and $\rmY_{\ui,\bfk}=\rmY_{\ui}\cap \rmY_{\bfk}$.
 
\begin{df}\label{Zdd}
Consider the following variety
\begin{equation}\label{GrassStein}
\GGZ=\rmY\times_{E_\bfn}\rmY\subset E_\bfn\times\calF\times\calG\times\calF\times \calG. 
\end{equation}
\end{df}
Points in  $\GGZ$ are usually denoted as tuples $(\alpha,\bV,\tV,W,\tW)$ (that is we first list the flags and then the Grassmannian spaces). Via the inclusion $\GGZ\subset \rmY\times \rmY$, the variety $\GGZ$ inherits many decompositions and subvarieties. We introduce notation for certain unions of connected components of $\GGZ$ as follows:

\begin{gather*}
\GGZ_{\ui,\uj}=\GGZ\cap (\rmY_\ui\times \rmY_\uj), \quad \GGZ_{\bfk_1,\bfk_2}=\GGZ\cap (\rmY_{\bfk_1}\times \rmY_{\bfk_2}), \quad \GGZ_{k_1,k_2}=\GGZ\cap (\rmY_{k_1}\times \rmY_{k_2}),\\
\GGZ_{(\ui,\bfk_1),(\uj,\bfk_2)}=\GGZ\cap (\rmY_{\ui,\bfk_1}\times \rmY_{\uj,\bfk_2}),\quad \GGZ_{(\ui,k_1),(\uj,k_2)}=\GGZ\cap (\rmY_{\ui,k_1}\times \rmY_{\uj,k_2}).
\end{gather*}

We define the \emph{Grassmannian--Steinberg variety} $\Znaiv$ involving only one copy of $\calG$:
\begin{df}
For $k\in[0;n]$ we define $\Znaiv_{k}=\GGZ_{k,0}$ and $\Znaiv=\coprod_{k=0}^n \Znaiv_k$. 
\end{df}
Alternatively, we could define $\Znaiv$ as $\rmY\times_{E_\bfn}\rmY_0$.  We denote points in $\Znaiv$ as tuples $(\alpha,\bV,\tV,W)$. Similar to $\GGZ$, we use additional notation for subvarieties of $\Znaiv$, namely
\begin{gather*}
\Znaiv_{\ui,\uj}=\Znaiv\cap (\rmY_\ui\times \rmY_{\uj,0}), \quad \Znaiv_{\bfk}=\Znaiv\cap (\rmY_{\bfk}\times \rmY_{0})=\GGZ_{\bfk,0},\\
\Znaiv_{\ui,\uj,\bfk}=\Znaiv\cap (\rmY_{\ui,\bfk}\times \rmY_{\uj,0})=\GGZ_{(\ui,\bfk),(\uj,0)},\quad \Znaiv_{\ui,\uj,k}=\Znaiv\cap (\rmY_{\ui,k}\times \rmY_{\uj,0})=\GGZ_{(\ui,k),(\uj,0)}.\nonumber
\end{gather*}

\subsection{Coloured permutations}
In this subsection we will identify permutations $w\in\frakS_n$ with the associated permutation diagram, for example $w\in\frakS_5$ given by $(w(1),\ldots, w(5))=(4,5,3,1,2)$ is identified with the diagram with five black strand connecting the $i$th point at the bottom with the $w(i)$ point at the top. We can take any other reduced diagram, i.e. where the number of crossings is equal to $\ell(w)$.

A \emph{coloured permutation} $D=(w,c)$ is a permutation diagram $w$ with a colouring $c$, which means each strand is labelled by an element of $I$, see \Cref{cross} for $I=\{g,b,r\}$. We identify the set of coloured permutations with 
\begin{eqnarray*}
I^n\times\frakS_n\quad=&\{\text{coloured permutations}\}&=\quad\frakS_n\times I^n,\\
(\ui,w)\quad\reflectbox{$\mapsto$}& D=(w,c)&\mapsto\quad(w,\uj)
\end{eqnarray*}
where $\uj$ respectively $\ui$ is the colour sequences read off at the bottom and at the top.  
\begin{ex} \label{cross} Here are instances of (coloured) permutation diagrams\\

\begin{minipage}[c]{3.5cm}
\centering
\tikz[very thick,scale=0.5,baseline={([yshift=-8ex]current bounding box.center)}]{	
	\draw (1,3)-- (4,0); 	
    \draw (2,3) --  (5,0);
	\draw (3,3)  .. controls (1,1.5) ..  (3,0);
	\draw (4,3)-- (1,0); 
	\draw (5,3)-- (2,0); 
	}
\\ 
\end{minipage}
\begin{minipage}[c]{0.5cm}
\end{minipage}
$\quad$
\begin{minipage}[c]{5.5cm}
\centering
\tikz[very thick,scale=0.5,baseline={([yshift=0ex]current bounding box.center)}]{	
	\draw[color=red] (1,3)-- (4,0); 	
    \draw (2,3) --  (5,0);
	\draw[color=red] (3,3)  .. controls (1,1.5) ..  (3,0);
	\draw[color=green] (4,3)-- (1,0); 
	\draw (5,3)-- (2,0); 
	}=
\tikz[very thick,scale=0.5,baseline={([yshift=1.5ex]current bounding box.center)}]{	
	\draw (1,3)-- (4,0); 	
    \draw (2,3) --  (5,0);
	\draw (3,3)  .. controls (1,1.5) ..  (3,0);
	\draw (4,3)-- (1,0); 
	\draw (5,3)-- (2,0); 
	\node at (1,-.5){$g$};
	\node at (2,-.5){$b$};
	\node at (3,-.5){$r$};
	\node at (4,-.5){$r$};
	\node at (5,-.5){$b$};
	}	 
\end{minipage}
\begin{minipage}[c]{3cm}
\centering
\tikz[very thick,baseline={([yshift=+.6ex]current bounding box.center)}]{
		\draw   (-.5,-.5)-- (.5,.5);
		\draw   (.5,-.5)-- (-.5,.5);
		\node at (-.5,-.75){$i$};
		\node at (.5,-.75){$j$};
} 	
\end{minipage}
\hfill\\
\begin{minipage}[c]{4cm}
\centering
\emph{permutation diagram} $w$
\end{minipage}
\begin{minipage}[c]{5.5cm}
\centering
\emph{coloured permutation}
\end{minipage} 
\begin{minipage}[c]{3cm}
\centering
\emph{coloured crossing} 
\end{minipage}
In formulas, the coloured permutation here is $(w,(g,b,r,r,b))=((r,b,r,g,b),w).$ 
\end{ex}
\begin{rk}
One could define $I$-coloured permutations as reduced expressions of morphisms in the free symmetric monoidal category generated by the set $I$.
\end{rk}
\begin{rk} Purely diagrammatically, the coloured permutation $(w,\uj)$ looks like the diagram $\tau_w1_\uj$ from \S\ref{sec:KLR}, but the two should not be confused. 
Coloured permutation do for instance not depend on a choice of a reduced expression.
\end{rk}
The following count of crossings of the form \eqref{cross} does make sense for coloured permutations\footnote{One can easily check that the count is independent of the chosen drawing of the permutation. } as well as KLR diagrams. 
\begin{df} Assume $D$ is a coloured permutation.
\begin{itemize}
\item Let $\Xequal(D)$ be the number of crossings such that $i=j$.
\item Let $\Xright(D)$ be the multiplicity of crossings such that $i\to j$ in $\Gamma$.
\item Let $\Xleft(D)$ be the multiplicity of crossings such that $j\to i$ in $\Gamma$.
\end{itemize}
By \emph{multiplicity} we mean hereby that if there are $r$ arrows between $i$ and $j$ of the specified direction, we count the crossing  with multiplicity $r$. 
Sometimes we just write $\Xequal(w)$, $\Xright(w)$, $\Xleft(w)$, if $D=(w,\ui)$ and $\ui$ is clear clear from the context. 
\end{df}
\begin{ex}
We illustrate the counting for the coloured permutation $D=(w,\uj)$ from \Cref{cross}. Consider first the quiver $\Gamma_1:$ 
$
\begin{tikzcd}
{\color{red}\bullet} \arrow[r] & \bullet  & {\color{green}\bullet} \arrow[l] 
\end{tikzcd}.
$
The possible coloured crossings are of the form
\begin{equation*}
\tikz[very thick, scale=0.7,baseline={([yshift=0ex]current bounding box.center)}]{
		\draw[color=red]   (-.5,-.5)-- (.5,.5);
		\draw[color=red]   (.5,-.5)-- (-.5,.5);
		\node at (0,-1){$x_{rr}=1$};
	}  	
\tikz[very thick,scale=0.7,baseline={([yshift=0ex]current bounding box.center)}]{
		\draw[color=green]   (-.5,-.5)-- (.5,.5);
		\draw[color=green]   (.5,-.5)-- (-.5,.5);
			\node at (0,-1){$x_{gg}=0$};
	}  	
	\tikz[very thick, scale=0.7,baseline={([yshift=0ex]current bounding box.center)}]{
		\draw[color=black]   (-.5,-.5)-- (.5,.5);
		\draw[color=black]   (.5,-.5)-- (-.5,.5);
			\node at (0,-1){$x_{bb}=1$};
	}  	
		\tikz[very thick, scale=0.7,baseline={([yshift=0ex]current bounding box.center)}]{
		\draw[color=red]   (-.5,-.5)-- (.5,.5);
		\draw[color=black]   (.5,-.5)-- (-.5,.5);
			\node at (0,-1){$x_{rb}=1$};
	}  	
\tikz[very thick,scale=0.7,baseline={([yshift=0ex]current bounding box.center)}]{
		\draw[color=green]   (-.5,-.5)-- (.5,.5);
		\draw   (.5,-.5)-- (-.5,.5);
			\node at (0,-1){$x_{gb}=1$};
	}  	
		\tikz[very thick,scale=0.7,baseline={([yshift=0ex]current bounding box.center)}]{
		\draw[color=black]   (-.5,-.5)-- (.5,.5);
		\draw[color=red]   (.5,-.5)-- (-.5,.5);
			\node at (0,-1){$x_{br}=2$};
	} 
		\tikz[very thick,scale=0.7,baseline={([yshift=0ex]current bounding box.center)}]{
		\draw[color=black]   (-.5,-.5)-- (.5,.5);
		\draw[color=green]   (.5,-.5)-- (-.5,.5);
			\node at (0,-1){$x_{bg}=0$};
	} 
	\tikz[very thick,scale=0.7,baseline={([yshift=0ex]current bounding box.center)}]{
		\draw[color=red]    (-.5,-.5)-- (.5,.5);
		\draw[color=green]   (.5,-.5)-- (-.5,.5);
			\node at (0,-1){$x_{rg}=0$};
	} 
	\tikz[very thick,scale=0.7,baseline={([yshift=0ex]current bounding box.center)}]{
		\draw[color=green]    (-.5,-.5)-- (.5,.5);
		\draw[color=red]   (.5,-.5)-- (-.5,.5);
			\node at (0,-1){$x_{gr}=2.$};
	} 	
\end{equation*}
Below every crossing we displayed how often it appears in $D=(w,\uj)$. We calculate
\begin{equation*}
\Xequal(w,\uj)=x_{rr}+x_{gg}+x_{bb}=2,\quad \Xright(w,\uj)=x_{rb}+x_{gb}=2,\quad \Xleft(w,\uj)=x_{br}+x_{bg}=2.
\end{equation*}
Note that crossings which involve one red and one green strand are irrelevant. 

To illustrate how sensitive these numbers are to the given quiver, let us first add a second arrow from the red vertex to the black vertex and then also consider the quiver which has two arrows from red to black and one arrow from black to red (no arrows at the green vertex). Thus we consider the following quivers
\begin{equation*}
\Gamma_2:\quad
\begin{tikzcd}
{\color{red}\bullet} \arrow[r,bend left] \arrow[r,bend right] & \bullet  & {\color{green}\bullet} \arrow[l] 
\end{tikzcd}
\quad\text{and}\quad \Gamma_3:\quad
\begin{tikzcd}
{\color{red}\bullet} \arrow[r,bend left] \arrow[r,bend right] & \bullet \arrow[l]  & {\color{green}\bullet}  
\end{tikzcd}.
\end{equation*}
For $\Gamma_2$ we get extra multiplicities, namely
\begin{equation*}
\Xequal(w,\uj)=x_{rr}+x_{gg}+x_{bb}=2,\;\; \Xright(w,\uj)=2x_{rb}+x_{gb}=3,\;\; \Xleft(w,\uj)=2x_{br}+x_{bg}=4.
\end{equation*}
For $\Gamma_3$ crossings involving black and red contribute twice and we get
\begin{equation*}
\Xequal(w,\uj)=x_{rr}+x_{gg}+x_{bb}=2,\;\; \Xright(w,\uj)=2x_{rb}+x_{br}=4,\;\; \Xleft(w,\uj)=2x_{br}+x_{rb}=5.
\end{equation*}
\end{ex}
\begin{rk}\label{countrk}
We will use such countings to obtain various dimensions formulas in geometry. For KLR diagrams, the counts encode their degrees in the KLR algebra, see \Cref{gradingKLR}. We have for example $\deg(\tau_w1_\uj)=2\cdot\Xright(w,\uj)-2\cdot\Xequal(w,\uj)$.
\end{rk}
\subsection{Some dimension formulas}
\label{subs:dim-formulas}
Given $\bV\in \calF_\ui$, denote by $X(\bV)$ its fibre under the morphism $\tF_\ui\to \calF_\ui$, i.e. the subvariety of $E_\bfn$ given by the representations that preserve the flag $\bV$. We will use now coloured permutations to do calculations with dimensions related to the fibres $X(\bV)$.

If  $\bV\in \calF_\ui$ and $\tV\in\calF_\uj$ then the relative position $\rel(\bV,\tV)$ of the two flags is a permutation $w\in\frakS_n$ such that $w(\uj)=\ui$. It is useful to think of the relative position as being encoded by the coloured permutation $(w,\uj)$, since it carries the information about the coloured types ($\ui$ and $\uj$) of the flags. We will do this from now on.

\begin{df} \label{defsum}
Given quivers  $(I,A_1)$ and  $(I,A_2)$ with the same set of vertices, we define\footnote{It is indeed a coproduct in an appropriate category of quivers with vertex set $I$.} their \emph{sum} as the quiver $(I,A)$ where $A$ is the disjoint union of $A_1$ and $A_2$. 
\end{df}
Each quiver having at least one arrow can be presented as a sum of quivers with one arrow.  Thus, this notion allows to do reduction to a one-arrow quiver in proofs involving quantities which are additive with respect to the sum of quivers.

\begin{prop}
\label{lem:diff-dim-KLR}
Assume $\bV\in \calF_\ui$ and $\tV\in \calF_\uj$ with relative position $\rel(\bV,\tV)=w$. Then we have
$
\dim X(\bV)-\dim (X(\bV)\cap X(\tV))=\Xright(w,\uj).
$ 
\end{prop}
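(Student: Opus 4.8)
The quantity $\dim X(\bV)-\dim(X(\bV)\cap X(\tV))$ is additive in the quiver in the sense of \Cref{defsum}: writing $E_\bfn=\bigoplus_{a\in A}\Hom(V_{s(a)},V_{t(a)})$, the variety $X(\bV)$ of representations preserving $\bV$ decomposes as a product $\prod_{a\in A}X_a(\bV)$ over arrows, where $X_a(\bV)$ is the space of maps $V_{s(a)}\to V_{t(a)}$ carrying the $s(a)$-part of $\bV$ into the $t(a)$-part; likewise $X(\bV)\cap X(\tV)=\prod_a\big(X_a(\bV)\cap X_a(\tV)\big)$. On the combinatorial side, $\Xright(w,\uj)=\sum_{a\in A}\Xright^a(w,\uj)$, where $\Xright^a$ counts crossings of a strand coloured $s(a)$ over a strand coloured $t(a)$ in the coloured permutation $(w,\uj)$ (for $a$ a loopless arrow). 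So by \Cref{defsum} it suffices to prove the statement when $\Gamma$ has a single arrow $a\colon i_0\to j_0$ (and no other arrows, in particular no loops).

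In that case I would compute both sides directly. Fix a graded basis of $V$ adapted to the flag $\bV$: choose vectors $v_1,\dots,v_n$ with $\bV^r=\langle v_1,\dots,v_r\rangle$, where the colour of $v_r$ (i.e.\ the index $i\in I$ with $v_r\in V_i$) is $i_r$. A linear map $\alpha\colon V_{i_0}\to V_{j_0}$, extended by zero on the other graded pieces, preserves $\bV$ iff $\alpha(\bV^r\cap V_{i_0})\subseteq \bV^r\cap V_{j_0}$ for all $r$; equivalently, writing $\alpha$ as a matrix in the basis $(v_r)$, the $(s,t)$-entry (target index $s$ with $i_s=j_0$, source index $t$ with $i_t=i_0$) is free if $s\le t$ and forced to be $0$ if $s>t$. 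Hence $\dim X(\bV)=\#\{(s,t): i_s=j_0,\ i_t=i_0,\ s\le t\}$. For $X(\bV)\cap X(\tV)$ I would argue that, after choosing a reduced coloured permutation diagram for $w=\rel(\bV,\tV)$, the same count applies simultaneously in both the $\bV$-adapted and the $\tV$-adapted basis: an $\alpha$ lies in $X(\bV)\cap X(\tV)$ iff it preserves both flags, and the difference $\dim X(\bV)-\dim(X(\bV)\cap X(\tV))$ counts exactly the pairs of indices whose relative order is reversed by $w$ — that is, strands coloured $i_0$ (at the bottom) and $j_0$ (at the bottom) that cross in such a way that the $i_0$-strand passes over the $j_0$-strand, which is precisely $\Xright(w,\uj)$ for the one-arrow quiver.

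Concretely, I would make the last step precise via the relative-position description: $\dim(\bV^a\cap\tV^b)=|[1;a]\cap w([1;b])|$. The variety $X(\bV)\cap X(\tV)$ is cut out in $\Hom(V_{i_0},V_{j_0})$ by the conditions $\alpha(\bV^r\cap V_{i_0})\subseteq\bV^r\cap V_{j_0}$ and $\alpha(\tV^r\cap V_{i_0})\subseteq\tV^r\cap V_{j_0}$ for all $r$. Using that the two flags are in relative position $w$, one can choose a common basis in which both filtrations are coordinate filtrations (up to the permutation $w$), so that these linear conditions become: the matrix entry indexed by a $j_0$-coloured target row and an $i_0$-coloured source column vanishes unless the corresponding bottom points are not separated by a right-crossing of the associated strands. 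Counting the entries that become forced to vanish in $X(\bV)\cap X(\tV)$ but were free in $X(\bV)$ gives exactly $\Xright(w,\uj)$, completing the single-arrow case.

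\textbf{Main obstacle.} The routine bookkeeping — choosing a reduced diagram for $w$ and verifying that the count of newly-vanishing matrix entries is diagram-independent and equals $\Xright(w,\uj)$ — is where care is needed; one must check that although $X(\bV)\cap X(\tV)$ is defined by conditions coming from \emph{both} full flags, its dimension drop relative to $X(\bV)$ depends only on the relative position $w$ and the colours, not on further data, and that the ``over-crossing'' convention matches the definition of $\Xright$ (as opposed to $\Xleft$), which fixes a sign/orientation choice consistent with \Cref{countrk}. Once the one-arrow case is settled, the reduction to it via additivity of both $\dim X(\bV)-\dim(X(\bV)\cap X(\tV))$ and $\Xright(w,\uj)$ over the arrows of $\Gamma$ is immediate.
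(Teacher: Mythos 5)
Your proof follows essentially the same route as the paper's: reduce to a quiver with a single arrow by additivity over arrows (the sum-of-quivers reduction from \Cref{defsum}), then compute both sides as counts of matrix entries in bases adapted to the flags. The paper's version is terser — it simply observes that the pairs $(r,t)$ with $j_r=i$, $j_t=j$, $r<t$, $w(r)>w(t)$ index exactly the matrix entries that vanish for $X(\tV)$ but not for $X(\bV)$ — but the underlying idea and the bookkeeping are the same as yours.
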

\begin{proof}
Since both sides of the proposed equation are additive with respect to the sum of quivers, it is enough to assume that the quiver has at most one arrow. The case of no arrow is obvious. Thus we assume that the quiver has exactly one arrow $i\to j$ (but possibly more vertices). Then $\Xright(w,\uj)$ is the number of pairs 
 $(r,t)\in [1;n]^2$ such that $j_r=i$ and $j_t=j$ and $r<t$, $w(r)>w(t)$.
 By definition, the vector spaces $X(\bV)$, $X(\tV)$ can be described as spaces of $(n\times n)$-matrices specified by the vanishing of certain matrix entries. The $(r,t)$-matrix entries for the above pairs $(r,t)$ are exactly the ones which are zero for $X(\tV)$ but not zero for $X(\bV)$.
\end{proof}

Let $\J(w,\uj)=\J(\ui,w,\uj)\subset \calF_\ui\times\calF_\uj$ be the subvariety containing the pairs of flags in relative position $w$. Let $\JJ(w,\uj)$ be the preimage of $\J(w,\uj)$ under the projection $\tF_\ui\times_{E_\bfn}\tF_\uj\to \calF_\ui\times\calF_\uj$. Both,  $\tF_\ui\to\calF_\ui$ and $\JJ(w,\uj)\to\J(w,\uj)$, are vector bundles.
Let $\dim_\alpha\tF_\ui$ and  $\dim_\alpha \JJ(w,\uj)$ be their ranks, that is the dimensions of the fibres.\footnote{The $\alpha$ should indicate that we take  dimensions of the vector spaces of the possible $\alpha\in E_\bfn$.} 

\begin{coro}
\label{coro:dim-alpha-cell-KLR}
For $w\in \frakS_n$ such that $w(\uj)=\ui$, we have 
$$
\dim_\alpha\tF_\ui-\dim_\alpha \JJ(w,\uj)= \Xright(w,\uj).
$$
\end{coro}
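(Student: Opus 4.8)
The statement is a direct consequence of \Cref{lem:diff-dim-KLR} together with the bundle structures already recorded just above the corollary. The idea is to relate the fibrewise dimensions of the two vector bundles $\tF_\ui\to\calF_\ui$ and $\JJ(w,\uj)\to\J(w,\uj)$ to the dimensions $\dim X(\bV)$ and $\dim(X(\bV)\cap X(\tV))$ appearing in \Cref{lem:diff-dim-KLR}, and then invoke that proposition.

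First I would fix a point $(\bV,\tV)\in\J(w,\uj)$, so that $\bV\in\calF_\ui$, $\tV\in\calF_\uj$ and $\rel(\bV,\tV)=w$. By definition, the fibre of $\tF_\ui\to\calF_\ui$ over $\bV$ is precisely the subvariety $X(\bV)\subset E_\bfn$ of representations preserving $\bV$; since $\tF_\ui\to\calF_\ui$ is a vector bundle, its rank is $\dim_\alpha\tF_\ui=\dim X(\bV)$, independently of the chosen $\bV$. Likewise, the fibre of $\JJ(w,\uj)\to\J(w,\uj)$ over $(\bV,\tV)$ is, by the very definition of $\JJ(w,\uj)$ as the preimage of $\J(w,\uj)$ under $\tF_\ui\times_{E_\bfn}\tF_\uj\to\calF_\ui\times\calF_\uj$, the set of $\alpha\in E_\bfn$ preserving both $\bV$ and $\tV$, i.e. $X(\bV)\cap X(\tV)$. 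Again using that this map is a vector bundle, we get $\dim_\alpha\JJ(w,\uj)=\dim(X(\bV)\cap X(\tV))$.

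Subtracting these two identities gives
\[
\dim_\alpha\tF_\ui-\dim_\alpha\JJ(w,\uj)=\dim X(\bV)-\dim(X(\bV)\cap X(\tV)),
\]
and the right-hand side equals $\Xright(w,\uj)$ by \Cref{lem:diff-dim-KLR}, which applies exactly because $\rel(\bV,\tV)=w$. This proves the corollary. The only point that requires a word of care is the verification that both maps really are vector bundles in the relevant sense: $\tF_\ui\to\calF_\ui$ is a vector bundle because the condition ``$\alpha$ preserves $\bV$'' is a linear condition on $\alpha$ whose solution space has locally constant dimension as $\bV$ varies in the homogeneous flag variety $\calF_\ui$ (the homogeneity under $\rmG_\bfn$ makes all fibres isomorphic), and the same reasoning applies fibrewise over $\J(w,\uj)$ for $\JJ(w,\uj)$, where the relative position $w$ is fixed so the pair $(\bV,\tV)$ moves in a single $\rmG_\bfn$-orbit. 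I do not expect a genuine obstacle here; this is essentially a bookkeeping argument packaging \Cref{lem:diff-dim-KLR} into the language of the bundles $\tF_\ui$ and $\JJ(w,\uj)$.
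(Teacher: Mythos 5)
Your proof is correct and is exactly the paper's argument: the paper disposes of this corollary with the single line ``this is just a reformulation of \Cref{lem:diff-dim-KLR},'' and your write-up simply spells out that reformulation by identifying the fibre of $\tF_\ui\to\calF_\ui$ over $\bV$ with $X(\bV)$ and the fibre of $\JJ(w,\uj)\to\J(w,\uj)$ over $(\bV,\tV)$ with $X(\bV)\cap X(\tV)$.
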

\begin{proof}
	This is just a reformulation of the statement in \Cref{lem:diff-dim-KLR}.
\end{proof}

\begin{coro}
\label{coro:dim-cell-KLR}
For $w\in \frakS_n$ such that $w(\uj)=\ui$, we have 
$$
\dim\tF_\ui-\dim \JJ(w,\uj)=\frac{1}{2}\deg (\tau_w1_\uj).
$$
\end{coro}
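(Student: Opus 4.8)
The statement to prove is Corollary~\ref{coro:dim-cell-KLR}: for $w\in\frakS_n$ with $w(\uj)=\ui$,
$$
\dim\tF_\ui-\dim\JJ(w,\uj)=\tfrac12\deg(\tau_w1_\uj).
$$
The idea is to upgrade the statement about \emph{fibre} dimensions in Corollary~\ref{coro:dim-alpha-cell-KLR} to one about total dimensions by adding the base contributions, and then to recognise the resulting combinatorial count as half the KLR degree via Remark~\ref{countrk}. Since $\tF_\ui\to\calF_\ui$ and $\JJ(w,\uj)\to\J(w,\uj)$ are vector bundles (as noted just before Corollary~\ref{coro:dim-alpha-cell-KLR}), we have $\dim\tF_\ui=\dim\calF_\ui+\dim_\alpha\tF_\ui$ and $\dim\JJ(w,\uj)=\dim\J(w,\uj)+\dim_\alpha\JJ(w,\uj)$.

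\textbf{Key steps.} First I would write
$$
\dim\tF_\ui-\dim\JJ(w,\uj)=\big(\dim\calF_\ui-\dim\J(w,\uj)\big)+\big(\dim_\alpha\tF_\ui-\dim_\alpha\JJ(w,\uj)\big).
$$
The second bracket equals $\Xright(w,\uj)$ by Corollary~\ref{coro:dim-alpha-cell-KLR}. For the first bracket, $\J(w,\uj)\subset\calF_\ui\times\calF_\uj$ is the locus of pairs of flags in fixed relative position $w$; projecting to the first factor $\calF_\ui$ realises $\J(w,\uj)$ as a fibre bundle whose fibre over a flag $\bV$ is the Schubert cell of flags $\tV$ with $\rel(\bV,\tV)=w$, an affine space of (complex) dimension $\ell(w)$. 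Hence $\dim\J(w,\uj)=\dim\calF_\ui+\ell(w)$, so $\dim\calF_\ui-\dim\J(w,\uj)=-\ell(w)$. Now $\ell(w)$ is the total number of crossings of the coloured permutation $(w,\uj)$, which splits as $\ell(w)=\Xequal(w,\uj)+\#\{\text{crossings with }i\neq j\}$; among the latter, those of the form $i\to j$ in $\Gamma$ are counted (with multiplicity) by... wait, one must be careful: $\Xright(w,\uj)$ uses multiplicity $h_{i,j}$ while $\ell(w)$ counts each geometric crossing once. So instead I would simply combine the two brackets directly: the total equals $\Xright(w,\uj)-\ell(w)$, and then use $\ell(w)=\Xequal(w,\uj)+\sum_{\text{crossings }i\neq j}1$. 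This does not obviously simplify, so the cleaner route is to go back to Remark~\ref{countrk}, which states $\deg(\tau_w1_\uj)=2\Xright(w,\uj)-2\Xequal(w,\uj)$, i.e. $\tfrac12\deg(\tau_w1_\uj)=\Xright(w,\uj)-\Xequal(w,\uj)$. So it suffices to show $\dim\calF_\ui-\dim\J(w,\uj)=-\Xequal(w,\uj)$, equivalently $\dim\J(w,\uj)=\dim\calF_\ui+\Xequal(w,\uj)$.

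\textbf{Resolving the base term.} The point is that $\J(\ui,w,\uj)$ is a \emph{coloured} relative-position variety: a pair $(\bV,\tV)$ with $\bV\in\calF_\ui$, $\tV\in\calF_\uj$, $\rel(\bV,\tV)=w$ exists only when the colour constraint $w(\uj)=\ui$ holds, and then the Schubert cell in $\calF_\uj$ through a fixed $\bV$ of the right relative position is cut out inside the ordinary Schubert cell by the homogeneity (colour-grading) conditions. A matrix-coordinate computation as in the proof of Proposition~\ref{lem:diff-dim-KLR} shows that the surviving coordinates of this coloured Schubert cell are exactly the inversions $(r,t)$ of $w$ with $j_r=j_t$, i.e. the crossings of the coloured permutation joining two equal-coloured strands; their number is $\Xequal(w,\uj)$. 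Hence the fibre of $\J(w,\uj)\to\calF_\ui$ has dimension $\Xequal(w,\uj)$, giving $\dim\J(w,\uj)=\dim\calF_\ui+\Xequal(w,\uj)$ as required. Combining, $\dim\tF_\ui-\dim\JJ(w,\uj)=\Xright(w,\uj)-\Xequal(w,\uj)=\tfrac12\deg(\tau_w1_\uj)$.

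\textbf{Main obstacle.} The only genuine content beyond bookkeeping is the identification of the base contribution $\dim\J(w,\uj)-\dim\calF_\ui$ with $\Xequal(w,\uj)$ rather than with $\ell(w)$; the subtlety is that passing from a single-vertex quiver to a coloured one cuts the relative-position variety down precisely by forcing each geometric crossing of \emph{differently}-coloured strands to become a closed condition, leaving only same-colour crossings as free parameters. I expect this to follow by exactly the additivity-over-sum-of-quivers and matrix-entry argument already used for Proposition~\ref{lem:diff-dim-KLR}, reducing to the case $I=\{\text{single colour}\}$ (where the fibre is the full $\ell(w)$-dimensional Schubert cell and $\Xequal=\ell(w)$) plus the trivial bookkeeping that strands of different colour cannot be permuted into one another. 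Everything else is the vector-bundle dimension additivity and the invocation of Remark~\ref{countrk}.
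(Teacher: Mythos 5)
Your proof is correct and follows the same route as the paper: split $\dim\tF_\ui-\dim\JJ(w,\uj)$ into a fibre contribution (identified with $\Xright(w,\uj)$ via Corollary~\ref{coro:dim-alpha-cell-KLR}) and a base contribution (identified with $-\Xequal(w,\uj)$), then invoke $\tfrac12\deg(\tau_w1_\uj)=\Xright(w,\uj)-\Xequal(w,\uj)$ from Remark~\ref{countrk}; the paper treats the base equality as ``obvious'' whereas you spell out the coloured-Schubert-cell count. One small caution: your interim claim that the fibre of $\J(w,\uj)\to\calF_\ui$ has dimension $\ell(w)$ is wrong in the coloured setting (it is $\Xequal(w,\uj)=\sum_{i\in I}\ell(w^{(i)})$, which is precisely what you establish afterwards), but since you abandon that line before using it, the argument you finally commit to is sound.
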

\begin{proof}
We have
$\dim\tF_\ui=\dim_\alpha\tF_\ui+\dim\calF_\ui$, $\dim \JJ(w,\uj)=\dim_\alpha \JJ(w,\uj)+\dim \J(w,\uj).$ Now, the statement follows from \Cref{coro:dim-alpha-cell-KLR} and from the obvious equality
$
\dim\calF_\ui-\dim \J(w,\uj)=-\Xequal(w,\uj)
$ with $
\frac{1}{2}\deg (\tau_w1_\uj)=\Xright(w,\uj)-\Xequal(w,\uj).$
\end{proof}

Assume $\bV\in \calF_\ui$. As explained in \S\ref{subs:spec-var}, there is an isomorphism $\calF_\ui\cong \ccF_\bfn$.  In other words, the data of a flag $\bV\in\calF_\ui$ is the same as the data of a flag $\bV_i\in \calF(V_i)$ for each $i$. The components of the flag $\bV_i$ are just the intersection of the components of the flag $\bV$ with $V_i$, with repetitions removed. 

Now take a second flag $\tV\in\calF_\uj$ and describe it again by a family of flags $\tV_i\in\calF(V_i)$. Let $w=\rel(\bV,\tV)$ be the relative position of $\bV$ and $\tV$. The permutation $w$ is naturally coloured by $\ui$ on the top. Moreover, knowing the relative position $w\in\frakS_n$ of the flags $\bV$ and $\tV$ is the same thing as knowing the relative position $w^{(i)}\in \frakS_{n_i}$ of the flags $\bV_i$ and $\tV_i$ for each $i$. We can see the permutation $w_i$ in the coloured permutation $w$ just by ignoring all strands which are not of colour $i$. The definitions imply the following.

\begin{lem}
	\label{lem:rel-triple-flags}
	Assume $\ui,\uj,\ut\in I^\bfn$ and $\bV\in \calF_\ui$, $\tV\in \calF_\uj$, $\ttV\in\calF_\ut$. Then the following two statements are equivalent 
	
	\begin{enumerate}[I.\rm{)}]
		\item We have $\rel(\bV,\tV)\cdot \rel(\tV,\ttV)=\rel(\bV,\ttV)$.
		\item For each $i\in I$, we have $\rel(\bV_i,\tV_i)\cdot \rel(\tV_i,\ttV_i)=\rel(\bV_i,\ttV_i)$.
	\end{enumerate}
\end{lem}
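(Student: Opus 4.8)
The statement to prove is \Cref{lem:rel-triple-flags}: for $\bV\in\calF_\ui$, $\tV\in\calF_\uj$, $\ttV\in\calF_\ut$ the equality $\rel(\bV,\tV)\cdot\rel(\tV,\ttV)=\rel(\bV,\ttV)$ holds if and only if it holds colour by colour, i.e. $\rel(\bV_i,\tV_i)\cdot\rel(\tV_i,\ttV_i)=\rel(\bV_i,\ttV_i)$ for each $i\in I$. The key observation, recorded just before the lemma, is that a flag $\bV\in\calF_\ui$ is the same datum as a tuple $(\bV_i)_{i\in I}$ with $\bV_i\in\calF(V_i)$, and the relative position $w=\rel(\bV,\tV)$ decomposes as a ``block sum'' of the colour-wise relative positions $w^{(i)}=\rel(\bV_i,\tV_i)$: concretely, $w$ lies in the double coset that permutes the colour-$i$ indices among themselves according to $w^{(i)}$, and $\ell(w)=\sum_i\ell(w^{(i)})+(\text{number of crossings between strands of different colours})$.

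First I would set up the bijection precisely. Recall that $\rel(\bV,\tV)=w$ means $\dim(\bV^a\cap\tV{}^b)=|[1;a]\cap w([1;b])|$ for all $a,b$. Since $V=\bigoplus_i V_i$ and each $\bV^a$, $\tV^b$ is homogeneous, $\bV^a\cap\tV{}^b=\bigoplus_i(\bV_i^{a_i}\cap\tV_i^{b_i})$ where $a_i$, $b_i$ are the number of colour-$i$ steps among the first $a$, resp.\ $b$, steps of $\bV$, resp.\ $\tV$. This identifies the coloured permutation $(w,\uj)$ with the tuple $(w^{(i)})_{i\in I}$ together with the colour sequences $\ui,\uj$; in permutation-diagram language, deleting all strands not of colour $i$ from the diagram of $w$ yields a reduced diagram of $w^{(i)}$. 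I would state this as a short sublemma (or simply cite the discussion preceding the lemma, which already asserts it).

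Next, the product formula. The crucial point is the additivity of lengths under this decomposition: one always has $\ell(w_1 w_2)\le\ell(w_1)+\ell(w_2)$, with equality iff no reduced word cancellations occur, equivalently iff $\mathrm{inv}(w_1 w_2)=\mathrm{inv}(w_1)\sqcup w_1^{-1}(\mathrm{inv}(w_2))$ as a disjoint union of inversion sets. Applying this with $w_1=\rel(\bV,\tV)$, $w_2=\rel(\tV,\ttV)$, the relation $w_1 w_2=\rel(\bV,\ttV)$ is equivalent to $\ell(w_1 w_2)=\ell(w_1)+\ell(w_2)$ (since $\rel(\bV,\ttV)$ is \emph{some} permutation with $w_1 w_2$ acting the same way on colour labels — but in fact $\rel(\bV,\ttV)$ is by its very definition a fixed permutation, so the content is that $w_1 w_2$ has no ``avoidable'' crossings, i.e.\ is length-additive). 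Now because, strand by colour $i$, the diagram of $w_1 w_2$ restricted to colour-$i$ strands is the composite $w_1^{(i)} w_2^{(i)}$, and because crossings between strands of different colours contribute to $\ell$ on both sides in the same way (each such crossing in $w_1$ or $w_2$ survives), length-additivity of $w_1 w_2$ over $w_1,w_2$ is equivalent to simultaneous length-additivity of $w_1^{(i)} w_2^{(i)}$ over $w_1^{(i)}, w_2^{(i)}$ for every $i$. This gives exactly $\mathrm{I.})\Leftrightarrow\mathrm{II.})$.

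The main obstacle — or rather the only point requiring care — is matching ``$\rel(\bV,\tV)\cdot\rel(\tV,\ttV)=\rel(\bV,\ttV)$'' with the clean combinatorial condition ``$\ell$ is additive''. One direction ($\mathrm{rel}\cdot\mathrm{rel}=\mathrm{rel}\implies$ additivity) is the standard fact that the relative position of a composite of flag inclusions is length-minimal among the cosets, and the product of two relative positions always dominates (in Bruhat order, or just in length) the relative position of the composite, with equality iff the product is reduced; the reverse direction is then a triviality. I would prove the length comparison directly from the dimension count: $\dim(\bV^a\cap\ttV{}^c)\ge\dim(\bV^a\cap\tV{}^b)+\dim(\tV{}^b\cap\ttV{}^c)-\dim\tV{}^b$ for the optimal intermediate $b$, which translates into $\mathrm{inv}(\rel(\bV,\ttV))\subseteq\mathrm{inv}(\rel(\bV,\tV))\cup\rel(\bV,\tV)^{-1}(\mathrm{inv}(\rel(\tV,\ttV)))$, with equality precisely when the product $w_1 w_2$ is reduced. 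Then, because the inversion set of a coloured permutation partitions into same-colour inversions (one set per colour, giving the $\mathrm{inv}(w^{(i)})$) and mixed-colour inversions (which behave identically in $w_1$, $w_2$ and $w_1 w_2$ because relative positions never permute strands of distinct colours past each other ``unnecessarily''), the global reducedness is equivalent to the conjunction of the colour-wise reducednesses, and we are done. I expect the whole argument to be about one page, with the only genuinely delicate step being the careful bookkeeping of inversion sets split by colour; I would isolate that bookkeeping into a single displayed computation rather than belabor it in prose.
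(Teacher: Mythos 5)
Your proof hinges on the claim that, for $w_1=\rel(\bV,\tV)$ and $w_2=\rel(\tV,\ttV)$, the identity $w_1w_2=\rel(\bV,\ttV)$ is \emph{equivalent} to $\ell(w_1w_2)=\ell(w_1)+\ell(w_2)$. Only one implication is true (length-additivity implies the composition formula for relative positions — the standard BN-pair fact); the converse fails. Take the one-vertex quiver, $n=2$, and $\bV=\ttV\neq\tV$: then $\rel(\bV,\tV)=\rel(\tV,\ttV)=s_1$ and $\rel(\bV,\ttV)=e=s_1s_1$, so condition I.) holds while $\ell(s_1s_1)=0\neq 2$. A variant with two colours and $n_i=n_j=1$ likewise refutes your next step, that global length-additivity is equivalent to simultaneous colour-wise length-additivity: there all colour-wise components live in $\frakS_1$ and are trivially additive, yet the two mixed-colour crossings of $w_1$ and $w_2$ cancel in $w_1w_2$, contradicting your assertion that ``each such crossing in $w_1$ or $w_2$ survives.'' So every link of your chain I.) $\Leftrightarrow$ (additivity) $\Leftrightarrow$ (colour-wise additivity) $\Leftrightarrow$ II.) is false, even though the two endpoints happen to be equivalent.

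The argument the paper has in mind (it offers no proof beyond ``the definitions imply'') needs no lengths at all. A permutation $w$ with $w(\ut)=\ui$ is uniquely determined by the tuple $(w^{(i)})_{i\in I}$ of its colour-wise components, since each position in $[1;n]$ has a unique colour and $w$ must send the $r$-th colour-$i$ entry of $\ut$ to the $w^{(i)}(r)$-th colour-$i$ entry of $\ui$ (this is the bijection \eqref{identi1}); and this bijection is compatible with composition, $(w_1w_2)^{(i)}=w_1^{(i)}w_2^{(i)}$, because deleting the non-$i$ strands commutes with stacking diagrams. Combined with the colour-wise description of relative positions of homogeneous flags, $\rel(\bV,\tV)^{(i)}=\rel(\bV_i,\tV_i)$ (which you do set up correctly at the start), the equality $w_1w_2=\rel(\bV,\ttV)$ holds if and only if $w_1^{(i)}w_2^{(i)}=\rel(\bV_i,\ttV_i)$ for every $i$, which is II.). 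Your proof should replace the length-additivity detour by this injectivity-plus-multiplicativity argument.
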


\subsection{Coloured subsets}
\label{subs:col-subsets}
We introduce now coloured version of the sets $\Lambda(n)=\coprod_{k=0}^n\Lambda_k(n)\subset [1;n]$ from \Cref{notnew} to parametrize coloured cells in  \S\ref{subs:col-cells} .

Fix $\uj\in I^\bfn$ and think of the elements $1,2,\ldots,n$ from $[1;n]$ as being coloured by $j_1,j_2,\ldots,j_n$ respectively.  We obtain a decomposition  
$\Lambda_k(n)=\coprod_{\bfk\in \Jbfn,|\bfk|=k}\Lambda_\bfk(n,\uj)$, where $\Lambda_\bfk(n,\uj)$ consists of all subsets of $[1;n]$ of cardinality $k$, such that exactly $k_i$ elements have colour $i$. Obviously the parts of this decomposition depend on $\uj$ and 
\begin{equation}\label{cLambda}
\Lambda(n)=\coprod_{\bfk\in \Jbfn}\Lambda_\bfk(n,\uj)=\coprod_{k\in [0;n]}\quad\coprod_{\bfk\in \Jbfn,|\bfk|=k}\Lambda_\bfk(n,\uj).
\end{equation} 
We obtain a bijection $\Lambda_\bfk(n,\uj)\cong \Lambda_\bfk(\bfn):=\prod_{i\in I}\Lambda_{k_i}(n_i)$ by considering each colour separately. 
Indeed, given  $\mu\in \Lambda_\bfk(n,\uj)$ and $i\in I$ let $1\leqslant r_1<\ldots<r_{n_i}\leqslant n$ be the elements in $[1;n]$ coloured $i$. Identify this set order preserving with $[1;n_i]$ and let $\mu^{(i)}\in \Lambda_{k_i}(n_i)$ be its intersection with $\mu$. 
Obvioulsy $\mu\mapsto \prod_{i\in I}\mu^{(i)}$ is a bijection.
\begin{ex}
	Assume $\uj=(ijiijiijj)$, $\mu=({\color{red}1}0{\color{red}01}0{\color{red}00}10)$. Then $n_i=5$, $n_j=4$, $k=3$, $k_i=2$ and $k_j=1$. We get $\mu^{(i)}=(10100)\in \Lambda_2(5)$ and $\mu^{(j)}=(0010)\in \Lambda_1(4)$.
\end{ex} 
In \S\ref{subs:cells} we used minimal coset representatives to label the parts of basic pavings. Recall the identification $\frakS_n/(\frakS_k\times\frakS_{n-k})\cong \Lambda_k(n)$, $x\mapsto x([1,k])$. Still fixing $\uj$, we assign to a minimal coset representative $x$ the coloured permutation $(\uj,x)$ and let $x^{(i)}$ be the permutation obtained by ignoring all strands not coloured by $i$. With the assignment $x\mapsto (x^{(i)})_{i\in I}$ as left vertical map we obtain the following diagram:
\begin{lem}
\label{lem:CD-left-coset-col}
The identifications fit into a commutative diagram of bijections
$$
\begin{CD}
\frakS_n/(\frakS_k\times\frakS_{n-k})@>>> \Lambda_k(n)\\
@VVV                                      @VVV\\
\coprod_{\bfk\in \Jbfn,|\bfk|=k}(\prod_{i\in I}(\frakS_{n_i}/(\frakS_{k_i}\times\frakS_{n_i-k_i})))@>>> \coprod_{\bfk\in \Jbfn,|\bfk|=k}\Lambda_\bfk(\bfn).
\end{CD}
$$ 
\end{lem}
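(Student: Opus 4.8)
The statement is a compatibility of bijections: we must check that the square in \Cref{lem:CD-left-coset-col} commutes, where the top horizontal arrow is $x\mapsto x([1;k])$, the bottom horizontal arrow is the product over $i\in I$ of the maps $x^{(i)}\mapsto x^{(i)}([1;k_i])$, the left vertical arrow is $x\mapsto (x^{(i)})_{i\in I}$ (obtained by reading off the colour-$i$ strands of the coloured permutation $(\uj,x)$), and the right vertical arrow is the bijection $\Lambda_k(n)\cong\coprod_{|\bfk|=k}\Lambda_\bfk(\bfn)$, $\mu\mapsto(\mu^{(i)})_{i\in I}$, described just before the lemma (intersect $\mu$ with the order-preserving copy of $[1;n_i]$ sitting inside $[1;n]$ at the positions coloured $i$). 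The fact that all four maps are bijections was already recorded in the surrounding text, so only commutativity remains; the natural route is a direct unwinding of the definitions, tracking a single minimal coset representative $x$ around the square.

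\textbf{Key steps.} First I would fix $\uj\in I^\bfn$ and a minimal-length representative $x$ of a coset in $\frakS_n/(\frakS_k\times\frakS_{n-k})$, and set $\mu=x([1;k])\in\Lambda_k(n)$; its colour type is the $\bfk$ with $k_i=\#\{r\in[1;k]\mid j_{x(r)}=i\}$. Then, going down-then-right, I compute $\mu^{(i)}$: by definition of the right vertical map, $\mu^{(i)}$ is the subset of $[1;n_i]$ obtained by intersecting $\mu=x([1;k])$ with the set $S_i=\{r_1<\dots<r_{n_i}\}$ of positions of colour $i$ in $\uj$, transported to $[1;n_i]$ by the order isomorphism $S_i\cong[1;n_i]$. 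Going right-then-down, I must show $\mu^{(i)}=x^{(i)}([1;k_i])$, where $x^{(i)}\in\frakS_{n_i}$ is the permutation read off from the colour-$i$ strands of the coloured diagram $(\uj,x)$. The content is the following observation about coloured permutation diagrams: a strand of $(\uj,x)$ has colour $i$ at the bottom exactly when its bottom endpoint $r$ lies in $S_i$, and it has colour $i$ at the top exactly when its top endpoint $x^{-1}(r)$ lies in $x^{-1}(S_i)$; restricting the diagram to colour-$i$ strands and applying the two order isomorphisms $S_i\cong[1;n_i]$ (bottom) and $x^{-1}(S_i)\cong[1;n_i]$ (top — here I use $x^{-1}(S_i)=\{x^{-1}(r):r\in S_i\}$) yields precisely the permutation $x^{(i)}$. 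Consequently $x^{(i)}([1;k_i])$ is, after identifying via these order isomorphisms, the set of those $r\in S_i$ with $x^{-1}(r)\le k_i$ in the restricted numbering. The last point to check is that ``$x^{-1}(r)\le k_i$ among colour-$i$ tops'' is equivalent to ``$r\in x([1;k])$'': this holds because $x$ is the \emph{minimal} coset representative, so $x$ is increasing on $[1;k]$ and on $[k+1;n]$ separately; hence within each colour class $S_i$ the relative order of $\{r\in S_i: x^{-1}(r)\le k\}$ is preserved, so it occupies exactly the first $k_i$ slots of the order isomorphism $x^{-1}(S_i)\cong[1;n_i]$. Assembling: $\mu^{(i)}=x^{(i)}([1;k_i])$ for every $i\in I$, which is exactly the assertion that the square commutes. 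Running this for all $i$ and all $x$ gives the lemma, and one notes in passing that it refines \eqref{cLambda} and the bijection $\Lambda_\bfk(n,\uj)\cong\Lambda_\bfk(\bfn)$ already established.

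\textbf{Main obstacle.} There is no deep difficulty here; the proof is entirely bookkeeping. The one genuinely load-bearing input — the step most likely to need care in the write-up — is the use of minimality of the coset representative $x$ to guarantee that passing to colour-$i$ strands commutes with ``taking the first $k$ (resp.\ $k_i$) elements'', i.e.\ that $x$ increasing on $[1;k]$ forces $x^{(i)}$ to send $[1;k_i]$ to the first $k_i$ colour-$i$ positions in the correct order. If one instead chose an arbitrary representative this compatibility would fail, so the commutativity of the square is really a statement about the \emph{chosen} (minimal) representatives and the identification $\frakS_{n_i}/(\frakS_{k_i}\times\frakS_{n_i-k_i})\cong\Lambda_{k_i}(n_i)$ on each factor; I would make sure the write-up flags this explicitly rather than burying it. Everything else is immediate from the definitions of $\mu^{(i)}$, $x^{(i)}$, and the coloured-diagram conventions of \S\ref{subs:spec-var} and the present subsection.
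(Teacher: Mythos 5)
Your proof is correct and is exactly the direct definition-unwinding the paper intends: the lemma is stated there without proof, as an immediate consequence of the identifications set up in \S\ref{subs:col-subsets}, and your down-then-right versus right-then-down computation is the obvious verification.

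One correction to your ``main obstacle'' paragraph: minimality of the coset representative is \emph{not} what makes the key step work. The point is simply that $[1;k]$ is an initial segment of $[1;n]$, so $x^{-1}(S_i)\cap[1;k]$ is downward-closed in $x^{-1}(S_i)$ and has cardinality $k_i$, hence consists of the $k_i$ smallest elements of $x^{-1}(S_i)$ for \emph{any} representative $x$ of the coset; the equality $x^{(i)}([1;k_i])=\mu^{(i)}$ and thus the commutativity of the square then follow without any appeal to $x$ being increasing on $[1;k]$ and $[k+1;n]$. Minimality is what makes the left vertical map well defined (and a bijection) as a map on cosets, but your claim that commutativity ``would fail'' for an arbitrary representative is not accurate.
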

We get an analogous commutative diagram when we use right cosets instead. For this consider the identification 
$(\frakS_k\times\frakS_{n-k})\backslash\frakS_n\cong \Lambda_k(n), z\mapsto z^{-1}([1,k])$ and attach to a minimal right coset representative $z$ now the coloured permutation $(z,\uj)$.  We obtain the following (with the assignment  $z\mapsto (z^{(i)})_{i\in I}$ as the left vertical map).

\begin{lem}
\label{lem:CD-right-coset-col}
The identifications fit into a commutative diagram of bijections
$$
\begin{CD}
(\frakS_k\times\frakS_{n-k})\backslash\frakS_n@>>> \Lambda_k(n)\\
@VVV                                      @VVV\\
\coprod_{\bfk\in \Jbfn,|\bfk|=k}(\prod_{i\in I}((\frakS_{k_i}\times\frakS_{n_i-k_i})\backslash\frakS_{n_i}))@>>> \coprod_{\bfk\in \Jbfn,|\bfk|=k}\Lambda_\bfk(\bfn).
\end{CD}
$$ 
\end{lem}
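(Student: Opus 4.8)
\textbf{Proof of \Cref{lem:CD-right-coset-col}.} The plan is to reduce this to \Cref{lem:CD-left-coset-col} via the order-reversing anti-automorphism $w\mapsto w^{-1}$ of $\frakS_n$, tracking how all four corners of the diagram transform. First I would recall that for each $k$ the map $z\mapsto z^{-1}$ restricts to a bijection between minimal right coset representatives for $(\frakS_k\times\frakS_{n-k})\backslash\frakS_n$ and minimal left coset representatives for $\frakS_n/(\frakS_k\times\frakS_{n-k})$, because $\ell(z)=\ell(z^{-1})$ and $z$ is a minimal right coset representative iff $z^{-1}$ is a minimal left coset representative. Under this bijection the top horizontal identification $z\mapsto z^{-1}([1;k])$ of the present lemma is by construction exactly the top horizontal identification $x\mapsto x([1;k])$ of \Cref{lem:CD-left-coset-col} precomposed with $z\mapsto z^{-1}$; so the top edges of the two squares match up. The right vertical edges (the decomposition $\Lambda_k(n)=\coprod_{|\bfk|=k}\Lambda_\bfk(\bfn)$ obtained by separating colours) are literally the same map in both lemmas, so nothing needs to be checked there.

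The one genuine point to verify is compatibility of the left vertical edges. In \Cref{lem:CD-left-coset-col} a coloured permutation $(\uj,x)$ is cut into the single-colour permutations $x^{(i)}$ by deleting all strands not coloured $i$ (reading colours at the top from $\uj$... more precisely from the fixed sequence, compatibly with the conventions of \S\ref{subs:col-subsets}); here we cut $(z,\uj)$ into $z^{(i)}$ the same way. So I would check the elementary fact that deleting the non-$i$ strands commutes with taking the inverse permutation diagram: flipping a coloured permutation diagram upside down both inverts the underlying permutation and leaves the partition of strands into colours unchanged, hence $(z^{-1})^{(i)}=(z^{(i)})^{-1}$ for every $i$. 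Combined with the fact, already used in the proof of \Cref{lem:CD-left-coset-col}, that $\ell(x)=\sum_i\ell(x^{(i)})$ for $x$ a minimal coset representative (which guarantees that each $z^{(i)}$ is again a minimal right coset representative for $(\frakS_{k_i}\times\frakS_{n_i-k_i})\backslash\frakS_{n_i}$), this shows that the left vertical map $z\mapsto(z^{(i)})_i$ of the present lemma equals the left vertical map of \Cref{lem:CD-left-coset-col} conjugated by the inversion bijections on both source and target.

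Putting these three observations together, the square in \Cref{lem:CD-right-coset-col} is obtained from the (commutative) square in \Cref{lem:CD-left-coset-col} by pre- and post-composing each of its four edges with the appropriate inversion bijection; commutativity is therefore inherited, and all four maps are bijections since they are composites of bijections. Alternatively, and perhaps more transparently for the reader, one can simply repeat verbatim the proof of \Cref{lem:CD-left-coset-col} with ``left coset'', ``$x([1;k])$'' and ``colours read at the top'' replaced throughout by ``right coset'', ``$z^{-1}([1;k])$'' and ``colours read at the bottom''; the only input is the multiplicativity of length under separating colours, which holds for minimal right coset representatives by the same argument as for minimal left coset representatives. I do not expect any real obstacle here; the only thing to be careful about is bookkeeping of which sequence ($\ui$ versus $\uj$) labels the top versus the bottom of the coloured permutation, so that the single-colour pieces $z^{(i)}$ land in the correct factor $(\frakS_{k_i}\times\frakS_{n_i-k_i})\backslash\frakS_{n_i}$ of the bottom-left corner.
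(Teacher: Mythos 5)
Your proof is correct. The paper itself supplies no argument for \Cref{lem:CD-right-coset-col}: it is presented as the mirror of \Cref{lem:CD-left-coset-col}, with the sentence preceding it saying only ``we get an analogous commutative diagram when we use right cosets instead'' and listing the replacement data (the identification $z\mapsto z^{-1}([1;k])$, the colouring convention $(z,\uj)$ with $\uj$ at the bottom, and the colour-separation map $z\mapsto(z^{(i)})_i$). Your write-up makes this implicit ``by symmetry'' claim rigorous by working it out as a reduction through the anti-automorphism $w\mapsto w^{-1}$. The three points you isolate are exactly the ones that need checking: (i) that inversion exchanges minimal left and minimal right coset representatives because $\ell(z)=\ell(z^{-1})$; (ii) that inversion corresponds to flipping the coloured permutation diagram upside down, so $(z^{-1})^{(i)}=(z^{(i)})^{-1}$ and colour separation commutes with inversion; (iii) that $z\mapsto z^{-1}([1;k])$ is, by construction, $x\mapsto x([1;k])$ precomposed with inversion. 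You also correctly flag the length additivity $\ell(z)=\sum_i\ell(z^{(i)})$ as the input needed to see that each $z^{(i)}$ is again a minimal (right) coset representative, so the left vertical map actually lands in the claimed codomain. Your closing remark about the $\ui$-at-top versus $\uj$-at-bottom bookkeeping is the one place where such a symmetry argument could silently go wrong, and you handle it correctly. Either of your two proposed phrasings (reduction via inversion, or verbatim repetition with ``left'' replaced by ``right'') is a legitimate way to fill the gap the paper leaves to the reader.
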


\subsection{Coloured cells, $\calG$-cells and Gells}
\label{subs:col-cells}
Similar to \S\ref{sec:flags-cells}, we like to have cells, $\calG$-cells and Gells for the variety $\calF_\ui\times\calF_\uj\times \calG_\bfk$. Recall from \S\ref{subs:spec-var} the varieties
$$
\calF_\ui\cong \prod_{l\in I}\calF(V_l)\cong\calF_\uj, \qquad  \calG_\bfk=\prod_{i\in I}Gr_{k_i}(V_i).
$$
Using these identifications our task is easy to achieve: we can take the definition of cells, $\calG$-cells or Gells from  \S\ref{subs:cells}, apply them separately for each colour and then pull the resulting decompositions through the following isomorphism 
\begin{equation}\label{coloursintro}
\calF_\ui\times\calF_\uj\times \calG_\bfk\cong \prod_{i\in I}(\calF(V_i)\times \calF(V_i)\times \op{Gr}_{k_i}(V_i)).
\end{equation}
We obtain a decomposition whose parts we call (coloured) \emph{cells}, \emph{$\calG$-cells} and \emph{Gells}, respectively.  The parametrization of the parts is however not very convenient from a more global point of view, since it is just a naive combination of the parameterizations inside each $i\in I$. We improve this using the combinatorics from \S\ref{subs:col-subsets}.

Arguing as for \eqref{cLambda} we obtain a bijection
\begin{equation}\label{identi1}
\{w\in\frakS_n\mid w(\uj)=\ui\}\cong\prod_{i\in I}\frakS_{n_i}. 
\end{equation}
Namely, we view $w$ as a coloured permutation $(w,\uj)$ and then consider each colour $i\in I$ separately. Similarly we have identifications 
\begin{equation}\label{identi2}
\prod_{i\in I}(\frakS_{n_i}/(\frakS_{k_i}\times \frakS_{n_i-k_i}))\cong \prod_{i\in I}\Lambda_{k_i}(n_i)=\Lambda_{\bfk}(\bfn).
\end{equation}
In summary, the isomorphism \eqref{coloursintro} together with the identifications \eqref{identi1},  \eqref{identi2}, \eqref{cLambda} give the \emph{global labelling sets} (in the second case we specify additionally $\uj$):
\begin{itemize}
\item Upper and lower cells in  $\calF\times\calF_\uj\times \calG$ are labelled by $\frakS_n\times \Lambda(n)$.
\item Upper and lower cells in $\calF\times\calF\times \calG$ are labelled by $\frakS_n\times\Lambda(n)\times I^\bfn$.
\end{itemize}
\medskip
The combinatorics is a bit more delicate for $\calG$-cells and Gells. We have that 
\begin{itemize}
\item $\calG$-cells and Gells in $\calF\times\calF\times \calG_k$ are labelled by the quadruples
\begin{equation*}
\quad\quad(x,y,z,\uj)\in (\frakS_n/(\frakS_k\times \frakS_{n-k}))\times (\frakS_k\times \frakS_{n-k})\times ((\frakS_k\times \frakS_{n-k})\backslash \frakS_n)\times I^\bfn,
\end{equation*}
\end{itemize}
where the $\calG$-cell $\calO^{\calG}_{x,y,z,\uj}$ respectively the Gell $\calO^{\rm Gell}_{x,y,z,\uj}$ is defined via the following bijection to the colour-wise labelling.
 Depending whether we consider $\calG$-cells or Gells, pick
 $w=e$ or $w=w_{0,k}$, by which we mean the shortest or the longest element in $\frakS_k$ respectively. We view them as element of $\frakS_n$ via the standard inclusion $\frakS_k\subset \frakS_n$.  Then we colour  the permutation $xwyz\in\frakS_n$ by $\uj$ at the bottom. This induces colours on the premutations $x$, $y$, $z$ and $w$. Let $\ui=xwyz(\uj)$ and $\bfk$ such that $z\in\Lambda_\bfk(n,\uj)\subset\Lambda_k(n)\cong  (\frakS_{k}\times \frakS_{n-k})\backslash \frakS_{n}$, see \eqref{cLambda}.  Considering each colour $i\in I$ separately, we obtain permutations 
$$
x^{(i)}\in \frakS_{n_i}/(\frakS_{k_i}\times \frakS_{n_i-k_i}),\qquad y^{(i)}\in \frakS_{k_i}\times \frakS_{n_i-k_i},\qquad z^{(i)}\in (\frakS_{k_i}\times \frakS_{n_i-k_i})\backslash \frakS_{n_i}.
$$
\begin{df}\label{DefGellgl}
Define the \emph{$\calG$-cell} respectively \emph{Gell corresponding to $(x,y,z,\uj)$} as
\begin{equation}\label{DefglobalGells}
\calO^\calG_{x,y,z,\uj}=\prod_{i\in I}\calO^\calG_{x^{(i)},y^{(i)},z^{(i)}}
\quad\text{and}\quad
\calO^{\rm Gell}_{x,y,z,\uj}=\prod_{i\in I}\calO^{\rm Gell}_{x^{(i)},y^{(i)},z^{(i)}}.
\end{equation}
Here the equalities are inside $\calF_\ui\times\calF_\uj\times \calG_\bfk=\prod_{i\in I}(\calF(V_i)\times\calF(V_i)\times \op{Gr}_{k_i}(V_i)).$
\end{df}
\begin{lem}\label{eq:dim-col-Gell}
The Gell $\calO^{\rm Gell}_{x,y,z,\uj}$ has dimension $\dim\calO^{\rm Gell}_{x,y,z,\uj}$ equal to 
\begin{equation*}
\dim(\calF_\ui\times \calG_\bfk)+ \left(\frac{1}{2}\sum_{i\in I}(k_i^2-k_i)\right)+(\Xequal(x)+\Xequal(y)+\Xequal(z)+\Xequal(w_{0,k}))-\sum_{i\in I}(n_i-1)k_i.
\end{equation*}
\end{lem}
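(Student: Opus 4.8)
The statement is a dimension count for a Gell, and the key point made in \Cref{DefGellgl} is that Gells are built colour-by-colour: by \eqref{DefglobalGells} we have $\calO^{\rm Gell}_{x,y,z,\uj}=\prod_{i\in I}\calO^{\rm Gell}_{x^{(i)},y^{(i)},z^{(i)}}$ inside $\calF_\ui\times\calF_\uj\times\calG_\bfk\cong\prod_{i\in I}(\calF(V_i)\times\calF(V_i)\times\op{Gr}_{k_i}(V_i))$. So the plan is first to reduce to the single-colour (i.e. $\mathfrak{sl}_2$) case by taking a product over $i\in I$, and then to invoke the already-proven $\mathfrak{sl}_2$ dimension formula.

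\textbf{Step 1: Reduce to one colour.} Since $\dim\calO^{\rm Gell}_{x,y,z,\uj}=\sum_{i\in I}\dim\calO^{\rm Gell}_{x^{(i)},y^{(i)},z^{(i)}}$, it suffices to compute each factor in the flag variety $\calF(V_i)\times\calF(V_i)\times\op{Gr}_{k_i}(V_i)$, which is exactly the $\mathfrak{sl}_2$ situation with $(n,k)$ replaced by $(n_i,k_i)$. First I would recall from the $\mathfrak{sl}_2$ discussion (see \Cref{lem:cond-B-Gells} and the proof of \Cref{calcinGcells}) that $\dim\calO^{\rm Gell}_{x^{(i)},y^{(i)},z^{(i)}}=\dim\calF(V_i)+\ell(x^{(i)})+\ell(y^{(i)})+\ell(z^{(i)})$, where I use $\dim\op{Gr}_{k_i}(V_i)=k_i(n_i-k_i)$ and the fact that the fibre over $\bV\in\calF(V_i)$ of the projection is an affine space of dimension $\ell(x^{(i)})+\ell(y^{(i)}z^{(i)})=\ell(x^{(i)})+\ell(y^{(i)})+\ell(z^{(i)})$ (the last equality because $y^{(i)}\in\frakS_{k_i}\times\frakS_{n_i-k_i}$ and $z^{(i)}$ is a minimal right coset representative). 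Equivalently, one may write $\dim\op{Gr}_{k_i}(V_i)=\ell(w_{0,n_i})-\ell(w_{0,k_i})-\ell(w_{0,n_i-k_i})$ and absorb the $\ell(x^{(i)})$ etc. into this; I will keep both forms handy.

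\textbf{Step 2: Translate lengths into crossing counts.} Summing over $i\in I$ and using the definition of the colour-wise permutations, $\sum_{i\in I}\ell(x^{(i)})=\Xequal(x)$ (and similarly for $y$ and $z$), because $\ell(x^{(i)})$ counts crossings of two strands of colour $i$ in the diagram of $x$ coloured by $\uj$. Likewise $\dim\op{Gr}_{k_i}(V_i)$ can be written, via $\dim\op{Gr}_{k_i}(V_i)=\tfrac12 k_i(2n_i-2k_i)$, so that $\sum_{i\in I}\dim\op{Gr}_{k_i}(V_i)=\dim\calG_\bfk$; and one has the identity $\ell(w_{0,k_i})=\binom{k_i}{2}=\tfrac12(k_i^2-k_i)$, hence $\sum_{i\in I}\ell(w_{0,k_i})=\Xequal(w_{0,k})=\tfrac12\sum_{i\in I}(k_i^2-k_i)$. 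Plugging these into $\sum_i\dim\calO^{\rm Gell}_{x^{(i)},y^{(i)},z^{(i)}}$ and rearranging — being careful with the bookkeeping between $\dim\calF_\ui$ (which equals $\sum_i\dim\calF(V_i)$) and $\dim\calG_\bfk$, and noting that the term $-\sum_{i\in I}(n_i-1)k_i$ arises from expressing $\dim\op{Gr}_{k_i}(V_i)=k_i(n_i-k_i)=k_i(n_i-1)-(k_i^2-k_i)$ — yields precisely the claimed formula $\dim(\calF_\ui\times\calG_\bfk)+\tfrac12\sum_{i\in I}(k_i^2-k_i)+(\Xequal(x)+\Xequal(y)+\Xequal(z)+\Xequal(w_{0,k}))-\sum_{i\in I}(n_i-1)k_i$.

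\textbf{Main obstacle.} None of the individual steps is hard; the only genuine subtlety — and the place where I expect to spend care — is the bookkeeping in Step 2: making sure the Grassmannian dimensions $k_i(n_i-k_i)$ are correctly split between the ``$+\tfrac12\sum(k_i^2-k_i)$'' correction term (which, as \Cref{shiftedgrading} and \eqref{eq-weaklyad-Gells-sl2} indicate, records the degree of the fundamental class of $\ccY_\bfk$) and the ``$-\sum(n_i-1)k_i$'' term (which records $\deg(\Omega1_\ui)=2(n_{i_1}-1)$ from \Cref{DefgradhR}, summed appropriately over the floating dots), so that the final expression matches exactly the degree of $T_x\Omega_{k,n}T_yT_z$ colour by colour, as needed for the weak adaptedness in the general-quiver analogue of \eqref{eq-weaklyad-Gells-sl2}. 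This is purely a matter of organizing the algebra correctly rather than a conceptual difficulty.
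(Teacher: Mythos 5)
Your proof is correct and takes essentially the same route as the paper: both reduce colour-by-colour via the product decomposition of the Gell, use $\Xequal(w)=\sum_{i\in I}\ell(w^{(i)})$, and then rearrange the Grassmannian dimensions. The only cosmetic difference is that you start from the direct $\mathfrak{sl}_2$ dimension formula $\dim\calO^{\rm Gell}_{x^{(i)},y^{(i)},z^{(i)}}=\dim\calF(V_i)+\ell(x^{(i)})+\ell(y^{(i)})+\ell(z^{(i)})$, whereas the paper sums the equivalent identity \eqref{eq-weaklyad-Gells-sl2} for each $i$; the arithmetic is the same.
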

\begin{proof}
We consider equation \eqref{eq-weaklyad-Gells-sl2} for the Gell $\calO^{\rm Gell}_{x^{(i)},y^{(i)},z^{(i)}}$ for each $i\in I$ and we sum these equations. 
We also use the fact that for a coloured permutation $w$ we have $\Xequal(w)=\sum_{i\in I}\ell(w^{(i)})$. Note that the terms with $\Xequal$ come from the degrees of the crossings and the sum $\sum_{i\in I}(n_i-1)k_i$ is the half-degree of the floating dots.
\end{proof}

\section{The problem: extending quiver Hecke algebras}
\label{sec:philsophy}
We assume the setup from the opening paragraph of \S\ref{sec:coloured}. In particular, $\Gamma=(I,A)$ is a quiver with no loops and $\bfn$ is a dimension vector.  In this section we set up the basics towards our main goal: a geometric construction of the algebra $\hR_\bfn$. We start by recalling the faithful representation of $\hR_\bfn$ from \cite{NaisseVaz} and the geometric construction of KLR. Based on these and the construction of the Grassmannian quiver Hecke algebra in the $\mathfrak{sl}_2$-case, we discuss an intuitive, although too naive, strategy towards the main goal.  We describe its deficiencies in some detail, since  this analysis helps to understand the subtleties in our final geometric construction. 

\subsection{Polynomial representation of $\hR_\bfn$}
\label{ssec:polyaction}
Recall $\hPol_n$ from \S\ref{subs:hNH} and set
\begin{equation}
\label{Epolcoloured}
\hPol_\bfn = \bigoplus_{\ui \in I^\bfn} \hPol_n 1_\ui.
\end{equation}
Here, $\hPol_n 1_\ui=\hPol_n$ as vector space, but we add $1_\ui$ to the notation to distinguish the different copies of $\hPol_n$ in $\hPol_\bfn$. Consider the $\frakS_n$-action on $\hPol_\bfn$ given by
\begin{gather}
\begin{aligned}
s_r : \hPol_n 1_\ui &\rightarrow \hPol_n 1_{s_r\ui},\\
X_{p} 1_\bi &\mapsto X_{s_r(p)}1_{s_r \bi},\\
\omega_{p} 1_\bi &\mapsto  \begin{cases}
\left( \omega_{r} + (X_{r} - X_{r+1}) \omega_{r+1} \right)   1_{s_r \bi} &\text{ if $p = r$ and $i_r = i_{r+1}$}, \\
\omega_{p} 1_{s_r\bi} &\text{ if $p \ne r$ and $i_r = i_{r+1}$}, \\ 
\omega_{s_r(p)}   1_{s_r \bi} &\text{ if $i_r \ne i_{r+1}$}. 
\end{cases}
\end{aligned}
\end{gather}
\smallskip
For $i,j\in I$, $i\ne j$, we consider the polynomial $\cP_{ij}(u,v)=(u-v)^{h_{i,j}}$. Then we have $\cQ_{i,j}(u,v)=\cP_{i,j}(u,v)\cP_{j,i}(v,u)$,  see \S\ref{sec:KLR}.
We assign to the generators, \eqref{diagdotcross} and \eqref{diagfloatdot} of $\hR_\bfn$ the following endomorphisms of $\hPol_n 1_\ui$ extended by zero to $\hPol_\bfn$.
\begin{equation}\label{faithfulcole}
\begin{gathered}
X_r1_\bi
\quad\longmapsto\qquad f1_\ui\mapsto X_{r}f1_\ui ,
\quad\quad
\Omega 1_\ui\quad \longmapsto\qquad
f1_\ui \mapsto \omega_1f1_\ui,\\
	\tau_r 1_\bi \quad\longmapsto\qquad
f 1_\bi \mapsto
\begin{cases}
   \dfrac{f1_\bi - s_{r}(f1_\bi)}{X_{r} - X_{r+1}}  & \text{if $i_r = i_{r+1}$},
  \\[1.5ex]
 \cP_{i_r,i_{r+1}}(X_{r},X_{r+1}) 
 s_{r} (f  1_{\bi}) & \text{if $i_r \ne i_{r+1}$}.
 \\[1.5ex]
\end{cases}
\end{gathered}
\end{equation}
\begin{prop}
\label{prop:faithrep-KLR-Gr}
The rules \eqref{faithfulcole} define a faithful action of $\hR_\bfn$ on $\hPol_\bfn$.
\end{prop}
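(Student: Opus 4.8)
The statement to prove is \Cref{prop:faithrep-KLR-Gr}: the rules \eqref{faithfulcole} define a faithful action of $\hR_\bfn$ on $\hPol_\bfn$. This is essentially \cite[Prop.~3.9 (or the analogous statement)]{NaisseVaz}, so the plan is to reduce to the known algebraic facts rather than reprove everything from scratch, but to organise the verification so that it is self-contained modulo the KLR case and the $\mathfrak{sl}_2$ computations of \Cref{problematic}. The proof splits into two parts: (a) well-definedness, i.e.\ the assignment \eqref{faithfulcole} respects all the defining relations of $\mathcal{ER}$ (equivalently, by \Cref{onlyomega1}, the relations of $R_\bfn$ together with \eqref{eq:ExtR2-tight} and \eqref{eq:extrarel-tight}); and (b) faithfulness.

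For part (a), I would first note that on the subalgebra $R_\bfn\subset \hR_\bfn$ the formulas in \eqref{faithfulcole} restrict to the classical polynomial representation of the KLR algebra on $\Pol_\bfn=\bigoplus_{\ui}\Pol_n 1_\ui$ (the ``$\cP$-normalised'' version), whose faithfulness is due to \cite{KL1}, \cite{Rou2KM}; this takes care of relations \eqref{eq:KLRR2}--\eqref{eq:R3_2}. What remains is to check that the operator attached to $\Omega1_\ui$, namely left multiplication by $\omega_1$ on $\hPol_n 1_\ui$, together with the $\tau_r$-operators and the $\frakS_n$-action \eqref{faithfulcole}, satisfies \eqref{eq:ExtR2-tight} and \eqref{eq:extrarel-tight}. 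The key input here is \Cref{problematic}: for the one-vertex quiver the displayed $\frakS_n$-action on $\hPol_n$ is exactly the one in \eqref{Esr}, and hence the operators $\tau_r$ act by the Demazure operators $\partial_r$ (when $i_r=i_{r+1}$) or by $\cP_{i_r,i_{r+1}}(X_r,X_{r+1})s_r$ (otherwise). Relations \eqref{eq:ExtR2-tight}, \eqref{eq:extrarel-tight} involve only two adjacent strands and the leftmost strand, so one checks them colour-by-colour on $\hPol_n 1_\ui$; when the two relevant colours are equal this is a direct computation with $\partial_r$ and multiplication by $\omega_1$ using $\partial_r(\omega_1)=0$ for $r\ge 2$ and the commutation $[\partial_r, \omega_1\cdot]$, and when they differ one uses $\cQ_{i,j}=\cP_{i,j}\cdot \cP_{j,i}^{\mathrm{op}}$ to match the coefficients $q_{ij}^{tr}$ appearing on the right-hand side of \eqref{eq:ExtR2}. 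Since these are finite, elementary polynomial identities I would only indicate the mechanism and refer to \cite[Lemma~3.11, Cor.~3.17]{NaisseVaz} for the bookkeeping.

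For part (b), faithfulness, the cleanest route is a filtration/triangularity argument. Each element of $\hR_\bfn$ can, by \eqref{eq:extra-rel} and the super relations, be written in the normal form of \cite[Thm.~3.16]{NaisseVaz} / \Cref{prop:NVbasis-gen}: a $\Bbbk$-linear combination of $1_\ui\,\tau_w\,X^{\bfa}\,\Omega^{\varepsilon}$-type monomials indexed by a basis. One shows that the image of such a basis acts by operators which are ``triangular'' with respect to a suitable order (e.g.\ order first by the exterior-degree component, then by length of $w$, then by monomial degree), exactly as in the KLR faithfulness proof, with the new feature that the leading term of $\Omega1_\ui$ acting is honest multiplication by $\omega_1$, which is injective on $\hPol_n 1_\ui$ and commutes with the $\Pol_n$-action. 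Concretely: restrict a hypothetical element $h$ of the kernel to the degree-zero part $\Pol_\bfn\subset\hPol_\bfn$ and to its top exterior-degree part; the former forces the ``$R_\bfn$-part'' of $h$ to vanish by faithfulness of the KLR polynomial representation, and an induction on exterior degree, peeling off one $\omega_1$ at a time via the annihilation-type operators, forces the remaining terms to vanish. Alternatively, and most efficiently, one can invoke the geometric side: once \Cref{thm:main-sl2} and its general-quiver analogue identify $\hR_\bfn$ with $H_*^{\rmG}(\ccZplus)$ and $\hPol_\bfn$ with $H_*^{\rmG}(\rmY)$, faithfulness is immediate from \Cref{lem:comp-in-loc} and \Cref{rk:faith-from-strat}; but since the present proposition is used to set up that identification, I would keep the self-contained algebraic argument and treat the geometric statement only as a consistency check.

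\textbf{Main obstacle.} The genuinely delicate point is not well-definedness on the two-strand relations (those are local and reduce to $\mathfrak{sl}_2$-type computations already encoded in \Cref{problematic}) but rather the faithfulness in the presence of the non-trivial colour-dependence of the $\tau_r$-action: when $i_r\ne i_{r+1}$ the operator $\tau_r$ is multiplication by $\cP_{i_r,i_{r+1}}(X_r,X_{r+1})$ followed by the transposition, which is \emph{not} injective, so the naive ``triangular leading term'' must be identified carefully along a reduced word for $w$, and one must make sure the floating-dot contributions do not interfere with this triangularity. I expect this to be the step requiring the most care; it is handled in \cite{NaisseVaz} by first establishing the basis theorem \Cref{prop:NVbasis-gen} independently (via the diagrammatic calculus), after which faithfulness follows because the acting operators are linearly independent by a leading-term comparison in $\hPol_\bfn$. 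I would follow that logical order: cite the basis theorem, then verify linear independence of the images of the basis elements.
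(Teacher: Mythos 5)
Your proposal is correct and takes essentially the same route as the paper: the paper's proof is a one-line citation to Naisse--Vaz (Proposition~3.8 and Theorem~3.15 of \cite{NaisseVaz}), and you likewise reduce to those results, merely spelling out in more detail what they assert and how well-definedness and faithfulness would be verified. The only small discrepancy is the reference you guess (Prop.~3.9), but you hedge this appropriately.
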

\begin{proof} This is Proposition~3.8 and Theorem 3.15 in~\cite{NaisseVaz}. 
\end{proof}
It will sometimes be useful to enumerate the omegas colour by colour. For this assume $\ui\in I^\bfn$ and $i\in I$ and let then $t_1<t_2<\cdots<t_{n_i}$ be the indices coloured by $i$, that is $i=i_{t_1}=i_{t_2}=\cdots=i_{t_{n_i}}$. We then denote $\omega_{r,i}1_\ui=\omega_{t_r}1_\ui$, the $\omega$ corresponding to the $r$th index of colour $i$ with the conventions $\omega_{0,i}=0$. Similarly, for polynomial variables, set $X_{r,i}1_\ui=X_{t_r}1_\ui$.
\subsection{Bases of $\hR_\bfn$}
For each $w\in\frakS_n$ fix a reduced decomposition  $w=s_{r_1}\ldots s_{r_t}$. Recalling $\tau_w$ from \Cref{tau} the following holds, see e.g. \cite[Thm. 3.7]{Rou2KM}.
\begin{lem}
\label{lem:basis-KLR}
	The set $\{\tau_w 1_\uj\mid w\in\frakS_n,\uj\in I^\bfn\}$ is a $\Pol_n$-basis of $R_\bfn$.
\end{lem}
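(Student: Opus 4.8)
The statement to prove is the classical PBW-type basis theorem for the KLR algebra $R_\bfn$: that $\{\tau_w 1_\uj \mid w\in\frakS_n, \uj\in I^\bfn\}$ is a $\Pol_n$-basis, where $\Pol_n$ acts on $R_\bfn$ by left multiplication with the polynomial subalgebra $\Bbbk[X_1,\ldots,X_n]\subset R_\bfn$ attached to the idempotent decomposition. The cleanest route is to combine a spanning argument coming directly from the diagrammatic relations with a linear-independence argument coming from the polynomial (Demazure-type) representation. First I would establish that the proposed set spans: working in the diagrammatic category $\mathcal{R}$, any KLR diagram can be rewritten, using the relations \eqref{eq:KLRR2}--\eqref{eq:R3_2}, so that dots are pushed to the bottom of the diagram (before any crossing) and the underlying crossing diagram is reduced. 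Concretely, relations \eqref{eq:R3_1}--\eqref{eq:R3_2} allow one to replace any non-reduced configuration (two strands of the same colour crossing, or a braid-like triple region) by a $\Bbbk[X_\bullet]$-linear combination of diagrams with strictly fewer crossings; an induction on the number of crossings then reduces every element to the form $\sum (\text{polynomial in }X_i)\cdot \tau_w 1_\uj$ with $w$ running over (representatives of) reduced expressions. This shows $R_\bfn=\sum_{w,\uj}\Pol_n\,\tau_w 1_\uj$ as a left $\Pol_n$-module, i.e. the set is a spanning set; one must check this is well-defined, i.e. independent of the chosen reduced word for $w$, which follows from \eqref{eq:R3_1} modulo lower terms.

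\textbf{Linear independence via the polynomial representation.} For the other half I would invoke the faithful polynomial representation of $R_\bfn$ on $\Pol_\bfn=\bigoplus_{\ui}\Pol_n 1_\ui$, which is the degree-zero (in the second, exterior grading) part of the faithful action of $\hR_\bfn$ on $\hPol_\bfn$ from \Cref{prop:faithrep-KLR-Gr}; equivalently it is the classical action where $X_r$ acts by multiplication, and $\tau_r$ acts by the Demazure operator $\partial_r$ when $i_r=i_{r+1}$ and by $\cP_{i_r,i_{r+1}}(X_r,X_{r+1})s_r$ otherwise. Suppose $\sum_{w,\uj} p_{w,\uj}(X)\,\tau_w 1_\uj=0$ in $R_\bfn$ with $p_{w,\uj}\in\Pol_n$. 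Fix a source colour sequence $\uj$ and apply the relation to $1_\uj$: one gets $\sum_w p_{w,\uj}(X)\,\tau_w 1_\uj$ acting as zero on $\Pol_n 1_\uj$. Now order the $w$ by length (or by Bruhat order) and use the standard fact that $\tau_w 1_\uj$ acts, up to a nonzero scalar multiple of $s_w$ plus lower-order operators, as a product of the mixed operators; by leading-term analysis in the $\frakS_n$-action (the operators $s_w$ for distinct $w$ are linearly independent over $\Pol_n$ inside $\End_{\Bbbk}(\Pol_n)$, a Galois/Artin-type independence statement) one peels off the coefficients $p_{w,\uj}$ one at a time, concluding all $p_{w,\uj}=0$. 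The only subtlety is the interaction between the Demazure part ($i_r=i_{r+1}$ case, which lowers degree and is not simply an invertible twist by $s_r$) and the invertible-twist part ($i_r\ne i_{r+1}$); here one argues colour by colour, reducing $w$ to its colour-components $w^{(i)}\in\frakS_{n_i}$ as in \Cref{lem:rel-triple-flags}, and uses the classical freeness of the nil-Hecke algebra $\NH_{n_i}$ over $\Pol_{n_i}$ (Definition~\ref{actionNH} and the well-known fact that $\{T_w\}_{w\in\frakS_{n_i}}$ is a $\Pol_{n_i}$-basis of $\NH_{n_i}$) for the monochromatic blocks, combined with the obvious invertibility of the cross-colour operators.

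\textbf{Assembling and the main obstacle.} Putting the two halves together: the set spans $R_\bfn$ as a left $\Pol_n$-module and is linearly independent, hence is a $\Pol_n$-basis. I expect the genuinely delicate point to be the spanning/well-definedness step rather than independence: one must verify that the rewriting procedure via \eqref{eq:R3_1}--\eqref{eq:R3_2} terminates and that the resulting normal form does not depend on the sequence of relation applications, i.e. that passing between two reduced words for the same $w$ only changes the diagram by terms supported on strictly smaller $w$ in Bruhat order. This is the standard but somewhat tedious "straightening" argument; a clean way to organise it is to set up a filtration of $R_\bfn$ by $F_{\le w}$ = span of diagrams whose crossing pattern is $\le w$ in a suitable partial order, show the relations respect this filtration, and identify the associated graded with $\bigoplus_{w,\uj}\Pol_n$. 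I would cite \cite[Thm. 3.7]{Rou2KM} (or \cite{KL1}) for this, since the excerpt already does so for \Cref{lem:basis-KLR}, and present the proof above as the standard argument, spelling out only the colour-by-colour reduction and the leading-term analysis in the polynomial representation, which are the steps most relevant to the geometric constructions developed later in the paper.
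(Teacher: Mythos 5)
The paper does not prove this lemma; it simply records the statement and cites \cite[Thm.~3.7]{Rou2KM} (the same citation also appears in \cite{KL1}). So there is no ``paper's own proof'' to compare against, and your task was effectively to reconstruct the standard argument, which you have done correctly in outline: straightening via \eqref{eq:R3_1}--\eqref{eq:R3_2} for spanning, and the polynomial representation for linear independence.

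One point of phrasing deserves care, because it hides a potential circularity. You open the independence half by ``invoking the faithful polynomial representation,'' citing \Cref{prop:faithrep-KLR-Gr}. Faithfulness of the polynomial representation of $R_\bfn$ (and a fortiori of $\hR_\bfn$) is in the standard treatments a \emph{corollary} of the basis theorem you are trying to prove, not an independent input: one first checks the representation is well-defined (a finite check on the relations), then shows that the operators attached to the proposed basis are linearly independent, and faithfulness drops out at the end. If you quote faithfulness as a black box from a source whose own proof runs through the basis theorem, the argument is circular. Fortunately, the rest of your paragraph is exactly the non-circular argument: after localizing to $\Bbbk(X_1,\ldots,X_n)$, each $\tau_w 1_\uj$ acts as a rational-function combination $\sum_{v\le w} c_v\, s_v$ with $c_w\ne 0$ (the cross-colour crossings contribute invertible multipliers $\cP_{i_r,i_{r+1}}$ and the same-colour ones contribute the Demazure decomposition $\partial_r=\tfrac{1}{X_r-X_{r+1}}-\tfrac{1}{X_r-X_{r+1}}s_r$), and the $s_v$ are linearly independent over the rational function field by Artin--Dedekind. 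Triangularity in Bruhat order then peels off the coefficients. So I would recommend you drop the appeal to faithfulness and state the independence argument in this form; as written it reads as if you are assuming what you want to prove. The colour-by-colour reduction is not needed for this step, since the Artin--Dedekind triangularity works directly in $\frakS_n$, but it is harmless. The spanning half, including the filtration $F_{\le w}$ and the well-definedness of $\tau_w$ up to lower-order terms under change of reduced word, is the genuinely tedious part and is correctly identified as such.
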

To formulate a similar result for $\hR_\bfn$, set for $k\in[0;n]$, 
 analogously to \eqref{fundGell}, 
\begin{equation}
\label{omegakn}
\Omega_{k,n}=\Omega\tau_1\Omega \tau_2\tau_1\Omega\ldots \ldots \Omega\tau_{k-2}\tau_{k-2}\ldots \tau_1\Omega\tau_{k-1}\tau_{k-2}\ldots \tau_1\Omega.
\end{equation}
The following gives a geometric version of a basis of type \cite[Thm. 3.16]{NaisseVaz}. 
\begin{prop}
\label{prop:NVbasis-gen}
	There is a  $\Pol_n$-basis of $\hR_\bfn$ as follows:
	\begin{equation}
	\label{eq:NV-basis-canon}
\coprod_{k=0}^n	\left\{\tau_x\Omega_{k,n}\tau_y\tau_z1_\uj\mid (x,y,z)\in\frakS,\, \uj\in I^\bfn\right\},
	\end{equation}
where $\frakS=\left(\frakS_n/(\frakS_k\times \frakS_{n-k})\right)\times\left(\frakS_k\times \frakS_{n-k}\right)\times\left((\frakS_k\times \frakS_{n-k})\backslash \frakS_n\right)$. 
\end{prop}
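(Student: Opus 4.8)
The strategy is to reduce the statement to the known $\mathfrak{sl}_2$-case in \Cref{sec:flags-cells}, namely the Gell basis from \eqref{eq-weaklyad-Gells-sl2}, using the colour-decomposition machinery developed in \S\ref{subs:col-subsets} and \S\ref{subs:col-cells}. The key point is that $\hR_\bfn$ is, up to the twisting of the $\tau$-generators by the polynomials $\cP_{i,j}$, built out of copies of $\hNH_{n_i}$ glued along the KLR relations, and that the faithful polynomial representation $\hPol_\bfn$ of \Cref{prop:faithrep-KLR-Gr} behaves colour by colour. Concretely, I would first establish $\Pol_n$-linear independence of the proposed set \eqref{eq:NV-basis-canon} by passing to the associated graded coming from a filtration by the Gell pavings of the varieties $\calF_\ui\times\calF_\uj\times\calG_\bfk$, and then establish that it spans, by a counting/induction argument on the number of crossings; since a $\Pol_n$-linearly independent \emph{and} spanning set is a basis, this suffices. (The geometric reformulation via Gells is precisely the role of \Cref{lem:checkbasis} in the $\mathfrak{sl}_2$ case; here we use its combinatorial shadow.)

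\textbf{Step 1: spanning.} First I would show that every element $\tau_w1_\uj$ with floating dots inserted can be rewritten, modulo lower terms in a suitable order, as a $\Pol_n$-combination of elements of the form \eqref{eq:NV-basis-canon}. The input is \Cref{onlyomega1}: as an ordinary algebra $\hR_\bfn$ is generated by $R_\bfn$ together with the $\Omega1_\ui$, modulo \eqref{eq:ExtR2-tight} and \eqref{eq:extrarel-tight}. Using \Cref{lem:basis-KLR} for the $R_\bfn$-part and the relations \eqref{eq:extra-rel}, \eqref{eq:ExtR2-tight} one moves all floating dots to the left as far as possible; then, exactly as in the $\mathfrak{sl}_2$ argument leading to \eqref{fundGell}, one recognizes that a configuration of $k$ floating dots that are all ``one strand from the left on distinct strands'' assembles into a copy of $\Omega_{k,n}$ after absorbing a permutation on the left (the $\tau_x$) and on the right (the $\tau_y\tau_z$). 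The colour data is recorded by $\uj\in I^\bfn$ and the decomposition $z\in\Lambda_\bfk(n,\uj)$ from \eqref{cLambda}. This is the combinatorial heart and should mirror \cite[Cor.~3.17]{NaisseVaz}; I expect it to be essentially the same reduction as in loc.\ cit., so I would either cite it or reproduce the short induction.

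\textbf{Step 2: linear independence via Gells.} For independence I would use the faithful representation $\hPol_\bfn$ and the Gell decomposition. Under \eqref{coloursintro} the variety $\calF_\ui\times\calF_\uj\times\calG_\bfk$ is a product over $i\in I$ of $\mathfrak{sl}_2$-type Grassmannian--Steinberg pieces, and by \Cref{DefGellgl} the Gell $\calO^{\rm Gell}_{x,y,z,\uj}$ is the corresponding product of $\mathfrak{sl}_2$-Gells. The $\mathfrak{sl}_2$ paving into Gells is a basic paving weakly adapted to $\calB^{\rm Gell}$ by \eqref{eq-weaklyad-Gells-sl2}; taking products over $i\in I$ (a basic paving of a product is a basic paving, with the lexicographic order on the product of the ordered index sets), and translating the dimension count through \Cref{eq:dim-col-Gell}, one gets that the filtration on $\bigoplus_{\ui,\uj,\bfk}H_*^\rmG\bigl(\calF_\ui\times\calF_\uj\times\calG_\bfk\bigr)$ indexed by Gells has each graded piece free of rank one over $\Pol_n$, with the class of $\tau_x\Omega_{k,n}\tau_y\tau_z1_\uj$ restricting to a nonzero constant on the open stratum of its Gell and lying in the closure-filtration below it. Matching the degree of $\tau_x\Omega_{k,n}\tau_y\tau_z1_\uj$ (computed from \Cref{DefgradhR} and \eqref{omegakn}) with $2\dim(\calF_\ui\times\calG_\bfk)-2\dim\calO^{\rm Gell}_{x,y,z,\uj}+\sum_i(k_i^2-k_i)$ using \Cref{eq:dim-col-Gell} shows weak adaptedness colour-wise, hence globally. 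This forces the family \eqref{eq:NV-basis-canon} to be $\Pol_n$-linearly independent: a nontrivial relation would, restricted to the open stratum of a maximal Gell occurring in it, give a nontrivial polynomial relation among constants, a contradiction. Combined with Step 1, \eqref{eq:NV-basis-canon} is a $\Pol_n$-basis.

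\textbf{Main obstacle.} The genuinely delicate point is not the geometry (which factors cleanly over colours) but the bookkeeping in Step 1 showing that the reduction terminates and that the ``leftover'' permutations land exactly in the index set $\frakS=\left(\frakS_n/(\frakS_k\times\frakS_{n-k})\right)\times\left(\frakS_k\times\frakS_{n-k}\right)\times\left((\frakS_k\times\frakS_{n-k})\backslash\frakS_n\right)$ with no double-counting — i.e.\ that the rewriting is confluent with respect to the chosen order and that the count of resulting monomials equals $n!\cdot\binom{n}{k}\cdot|\Lambda_k(n)|$ summed over $k$, matching the rank predicted by Step 2. In the $\mathfrak{sl}_2$ case this is handled by \Cref{onlyomega1} plus the Gell count; for general quivers the $\cP_{i,j}$-twists in \eqref{faithfulcole} never interfere with the floating-dot manipulations (those only involve $\cQ_{i,j}$ through \eqref{eq:fdots}, \eqref{eq:ExtR2}, whose colour-$i$ diagonal part is nil-Hecke-like), so the obstacle is purely combinatorial and is resolved by the same argument as \cite[Thm.~3.16]{NaisseVaz}; I would invoke that theorem for the abstract basis and use the Gell paving only to give the geometric incarnation, which is what the statement advertises (``a geometric version of a basis of type \cite[Thm.~3.16]{NaisseVaz}'').
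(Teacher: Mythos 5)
Your Step 2 is the load-bearing step and it has a genuine gap. To speak of ``the class of $\tau_x\Omega_{k,n}\tau_y\tau_z1_\uj$'' lying in a Gell-filtration and restricting to a constant on an open stratum, you need to identify this algebra element with a homology class on a variety whose Gell paving is adapted to the basis; but that identification is exactly what the paper establishes only later (\Cref{mainthm}), and its proof uses \Cref{prop:NVbasis-gen} as input to match graded dimensions (\Cref{lem:dim-deg-gen}), so your argument is circular. If instead you work with the bare product $\calF_\ui\times\calF_\uj\times\calG_\bfk$, the degree matching fails for a general quiver: $\deg(\tau_r1_\uj)$ contains the term $2h_{j_r,j_{r+1}}$, which is invisible to the flag-variety dimension count of \Cref{eq:dim-col-Gell} and is accounted for only by the $E_\bfn$-fibres (the $\Xright$-terms in \eqref{problem2}); and for the naive variety $\Znaiv$ the fibres over a fixed Gell are not of constant dimension, which is precisely the central difficulty described in \S\ref{naiveapproach}. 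So the claim that ``the geometry factors cleanly over colours'' is exactly what is false here — the maps $\alpha_h\colon V_i\to V_j$ couple the colours. Separately, deducing linear independence by restricting a putative relation to the open stratum of a maximal Gell requires \emph{strong} adaptedness, which the paper explicitly states it does not prove; weak adaptedness together with \Cref{checkbasis} only upgrades one of (independence, spanning) to a basis statement, so Step 2 cannot supply the independence on its own.

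The paper's actual proof is a short, purely algebraic argument and is essentially the fallback you gesture at in your last paragraph, with the one missing ingredient made explicit: take the basis of \cite[Thm.~3.16]{NaisseVaz} as given, and observe that rewriting an element of \eqref{eq:NV-basis-canon} in the Naisse--Vaz normal form requires moving strands through crossings, which by the defining relations only produces ``smaller'' correction terms; hence the change-of-basis matrix between the two families is unitriangular with units on the diagonal, and \eqref{eq:NV-basis-canon} is a basis. This unitriangularity is what you still need to supply — merely invoking \cite[Thm.~3.16]{NaisseVaz} ``for the abstract basis'' does not show that your differently normalised set (with the specific $\Omega_{k,n}$ of \eqref{omegakn} and the chosen coset representatives) is again a basis.
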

\begin{proof}
Consider the basis of $\hR_\bfn$ given in \cite[Thm. 3.16]{NaisseVaz}. To check that \eqref{eq:NV-basis-canon} is indeed a basis, it is enough to show that the elements of \eqref{eq:NV-basis-canon} may be written in term of the elements of the basis in \cite[Thm. 3.16]{NaisseVaz} with a triangular change-of-basis matrix with units on the diagonal. However, to get an element of the basis in \cite{NaisseVaz} from an element of \eqref{eq:NV-basis-canon} we need to move some strands through crossings, this would create extra terms. This means that an element of \eqref{eq:NV-basis-canon} is equal to an element of the basis in \cite[Thm. 3.16]{NaisseVaz} plus smaller terms.
\end{proof}

\subsection{The geometric construction of the KLR algebras}
\label{subs:KLR-geom}
We will now use the varieties introduced in \S\ref{sec:coloured}.
We consider the algebra structure on $H^{\rmG_\bfn}_*(\tF\times_{E_\bfn}\tF)$, given by the convolution product with respect to the inclusion $\tF\times_{E_\bfn}\tF\subset \tF\times\tF$. The following important result is proven in \cite[Thm. 3.6]{VV} and in \cite{Rou2Lie}.
\begin{prop}
	\label{prop:KLR-geom}
There is an isomorphism of algebras $R_\bfn\cong H_*^{\rmG_\bfn}(\tF\times_{E_\bfn}\tF)$.
\end{prop}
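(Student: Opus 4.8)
\textbf{Proof plan for Proposition~\ref{prop:KLR-geom}.} The statement is the standard geometric realisation of KLR algebras, due to Varagnolo--Vasserot and Rouquier; the plan is to recall the skeleton of that proof and highlight where the argument uses ingredients already assembled in the excerpt (relative positions of flags, the dimension formulas of \S\ref{subs:dim-formulas}, and localisation to $\rmT$-fixed points from \S\ref{subs:conv}). First I would fix the faithful polynomial representation of $R_\bfn$: by \Cref{lem:basis-KLR} the algebra $R_\bfn$ is a free $\Pol_n$-module with basis $\{\tau_w 1_\uj\}$, and it acts faithfully on $\Pol_\bfn=\bigoplus_{\ui\in I^\bfn}\Pol_n 1_\ui$ by the degree-zero part of the rules \eqref{faithfulcole} (Demazure operators when colours agree, polynomial-twisted permutations otherwise). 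On the geometric side, $H_*^{\rmG_\bfn}(\tF\times_{E_\bfn}\tF)$ acts on $H_*^{\rmG_\bfn}(\tF)$ by the convolution action \eqref{conv2}, and this action is faithful by \Cref{lem:comp-in-loc} together with \Cref{rk:faith-from-strat} once one exhibits a suitable paving; since $\tF\to\calF$ and $\tF\times_{E_\bfn}\tF\to \calF\times\calF$ are vector bundles, the cell-decomposition of $\calF\times\calF$ by relative positions pulls back to give such a paving, and the Borel--Moore homologies embed into their $\rmT$-localisations. The isomorphism $H_*^{\rmG_\bfn}(\tF)\cong \Pol_\bfn$ (as a module over $H^*_{\rmG_\bfn}(\calF)=\Pol_\bfn$) comes from the Thom isomorphism for the vector bundle $\tF\to\calF$ composed with the identification of \Cref{lem:isom-FG/G-G/T} applied colour by colour.

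Next I would produce the homomorphism $R_\bfn\to H_*^{\rmG_\bfn}(\tF\times_{E_\bfn}\tF)$. For each generator one writes down an explicit class: $X_r 1_\ui$ is the equivariant Chern class of the tautological line bundle $\bV^r/\bV^{r-1}$ restricted to the diagonal component $\tF_\ui$; and $\tau_r 1_\ui$ is (up to sign and, in the unequal-colour case, a twist by a power of $X_r-X_{r+1}$ dictated by $\cP_{i_r,i_{r+1}}$) the fundamental class of the subvariety $\tF\times_{E_\bfn,s_r}\tF$ of pairs of flags in relative position $\le s_r$ with the matching compatibility with $\alpha$. One then checks, by a localisation computation at the $\rmT$-fixed points exactly parallel to the $\mathfrak{sl}_2$ computation of \Cref{TrasDemazure}, that these classes act on $H_*^{\rmG_\bfn}(\tF)\cong\Pol_\bfn$ by precisely the operators \eqref{faithfulcole} (the Euler-class bookkeeping is the coloured analogue of \S\ref{subs:more-Euler}). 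Because the polynomial action of $R_\bfn$ is faithful, any assignment of classes that reproduces these operators automatically respects the defining relations \eqref{eq:KLRR2}--\eqref{eq:R3_2}; hence the assignment extends to an algebra homomorphism $\varphi\colon R_\bfn\to H_*^{\rmG_\bfn}(\tF\times_{E_\bfn}\tF)$, and $\varphi$ is injective since its composition with the faithful action on $H_*^{\rmG_\bfn}(\tF)$ is the faithful action of $R_\bfn$ on $\Pol_\bfn$.

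It then remains to show $\varphi$ is surjective, equivalently that $H_*^{\rmG_\bfn}(\tF\times_{E_\bfn}\tF)$ is a free $\Pol_n$-module whose graded rank matches $\#\frakS_n\cdot\#I^\bfn$. Here I would invoke the cellular fibration lemma (\cite[\S5.5]{CG97}), exactly as in the proof of \Cref{lem:basisHZplus}: the decomposition of $\tF\times_{E_\bfn}\tF$ into locally closed pieces $\widetilde{\calF}(w,\uj)$ indexed by $(w,\uj)$ (preimages of the relative-position strata $\calF(w,\uj)$) is a filtrable paving with affine fibrations over $\calF$, so $H_*^{\rmG_\bfn}$ has a filtration whose associated graded is $\bigoplus_{w,\uj} H_*^{\rmG_\bfn}(\widetilde{\calF}(w,\uj))$, each summand free of rank one over $\Pol_n$ by \Cref{cond-df:B-str-fibr}-type reasoning. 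Comparing with \Cref{lem:basis-KLR} gives equality of graded ranks, and the pieces $\varphi(\tau_w 1_\uj)$ are supported on the closed strata $\le(w,\uj)$ and restrict to invertible elements on the open stratum (their degrees match by \Cref{coro:dim-cell-KLR}), so they form a $\Pol_n$-basis of the target; thus $\varphi$ is an isomorphism. The main obstacle is the surjectivity/dimension-count step: one must be careful that the relative-position stratification of $\tF\times_{E_\bfn}\tF$ really is a \emph{filtrable} paving by affine bundles over $\calF$ (the fibre over $\bV$ of $X(\bV)\cap X(\tV)$ varies, but the \emph{dimension} is locally constant on each stratum, which is precisely what \Cref{lem:diff-dim-KLR} guarantees), so that the cellular fibration lemma applies and no homology is lost; everything else is a coloured but routine repetition of the $\mathfrak{sl}_2$ computations.
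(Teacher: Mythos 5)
Your proposal is correct and follows essentially the same route as the paper's own (sketched) argument: faithful actions on $\Pol_\bfn\cong H_*^{\rmG_\bfn}(\tF)$ on both sides, explicit classes for the generators checked by $\rmT$-fixed-point localisation to get an injection, and surjectivity by matching the $\Pol_n$-basis $\{\tau_w 1_\uj\}$ against the relative-position paving of $\tF\times_{E_\bfn}\tF$ using the degree/dimension count of \Cref{coro:dim-cell-KLR}. The only cosmetic difference is that you assert the images restrict to invertible classes on the open strata (strong adaptedness), whereas the paper only needs the weaker degree-matching condition together with the already-established injectivity.
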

We sketch the proof of this statement to prepare the reader for the proof of a similar statement for the Naisse-Vaz extended version $\hR_\bfn$. 
\begin{proof}[Idea of the proof]
The algebra $R_\bfn$ acts faithfully on $\Pol_\bfn=\bigoplus_{\ui\in I^\bfn}\Pol_n1_\ui$ and $H_*^{\rmG_\bfn}(\tF\times_{E_\bfn}\tF)$ acts faithfully on  $H_*^{\rmG_\bfn}(\tF)\cong \Pol_\bfn$. To get the desired isomorphism one can proceed as follows. For each generator $g$ of $R_\bfn$ one can construct by hand some $g'$ $H_*^{\rmG_\bfn}(\tF\times_{E_\bfn}\tF)$ such that the action of $g$ and $g$ agree. 
This defines a monomorphism $R_\bfn\to H_*^{\rmG_\bfn}(\tF\times_{E_\bfn}\tF)$. To show that it is an isomorphism, it is enough to check that a $\Pol_n$-basis of $R_\bfn$ goes to a $\Pol_n$-basis of $H_*^{\rmG_\bfn}(\tF\times_{E_\bfn}\tF)$. This can be done following \S\ref{subs:B-strat}. Namely, on can show that the $\Pol_n$-basis of $R_\bfn$ from \Cref{lem:basis-KLR} is weakly adapted\footnote{We define basic pavings similarly to the $\mathfrak{sl}_2$ case. For condition \ref{cond-df:B-str-fibr} in \Cref{def:b-paving}, we take the projection to the first flag component $\calF_\ui$.}
to the basic paving given by the decomposition with respect to relative positions of flags. To check a condition similar to \eqref{wadapt}, we use \Cref{coro:dim-cell-KLR}.
\end{proof}
\begin{term}
\label{rk:KLR-vs-quivHecke}
To keep track of different versions of (possibly isomorphic) algebras, we refer to the algebraic/diagrammatic algebra $R_\bfn$ as \emph{KLR algebra} and to the geometric algebra $H_*^{\rmG_\bfn}(\tF\times_{E_\bfn}\tF)$ as \emph{quiver Hecke algebra}. Then \Cref{prop:KLR-geom} says that the KLR algebra is isomorphic to the quiver Hecke algebra. 

Following the same philosophy, we call the Naisse-Vaz extension $\hR_\bfn$ of $R_\bfn$ the \emph{extended KLR algebra}. Its geometric counterpart that we introduce in \Cref{def:Gr-quiver-Hecke} is then the \emph{Grassmannian quiver Hecke algebra}. The naming reflects the fact that we add the variety of Grassmannians to the previous geometric construction. Our main result \Cref{mainthm}  then says that the extended KLR algebra is isomorphic to the Grassmannian quiver Hecke algebra. 
\end{term}

\subsection{An intuitive approach and its problem}\label{naiveapproach}
\label{subs:intuit-approch-philo}
Intuitively, \S\ref{sec:sl2-geom}  and \Cref{prop:KLR-geom} suggest the following strategy to construct $\hR_\bfn$ geometrically.

We consider the coloured versions $\rmY=\tF\times \calG$ and $\GGZ=\rmY\times_{E_\bfn}\rmY$ of the extended flag variety and the $\ccZ$  from \S\ref{setupsl2}. Hereby, the fibre product is defined using the obvious forgetting map $\rmY\to E_\bfn$.

By viewing $\GGZ$ as a subvariety of $E_\bfn\times\calF\times\calG\times \calF\times\calG$ we can turn $H^{\rmG_\bfn}_*(\GGZ)$ into an algebra for the convolution product with respect to the inclusion $\GGZ\subset \rmY\times \rmY$, which  acts on $H^{\rmG_\bfn}_*(\rmY)$, see \Cref{lem:conv}.   

On the other hand, $H_*^{\rmG_\bfn}(\rmY)$ can be identified with the space $\hPol_\bfn$ from \eqref{Epolcoloured} underlying the faithful representation of $\hR_\bfn$. Indeed, we have $\rmY=\coprod_{\ui\in I^\bfn}\rmY_\ui$ and 
$$
H^*_{\rmG_\bfn}(\rmY)=\bigoplus_{\ui\in I^\bfn} H^*_{\rmG_\bfn}(\rmY_\ui)=\bigoplus_{\ui\in I^\bfn} H^*_{\rmG_\bfn}(\calF_\ui\times \calG)\cong\bigoplus_{\ui\in I^\bfn} H^*_\rmT(\calG)\cong\bigoplus_{\ui\in I^\bfn}\hPol_n=\hPol_\bfn.
$$
The third equality here follows by applying \eqref{eq:isom-G/B*Gr} for each vertex of the quiver.

Again following \S\ref{sec:sl2-geom} we can then consider the variety $\Znaiv=\rmY\times_{E_\bfn}\rmY_0$ containing only one Grassmannian instead of two Grassmannians and similarly to \S \ref{subs:geom-hNH} identify $H_*^{\rmG_\bfn}(\Znaiv)$ with a subalgebra of $H_*^{\rmG_\bfn}(\GGZ)$. This will be done in \S\ref{subs:Z'-subalg-Z}. One could expect that the algebra $H_*^{\rmG_\bfn}(\Znaiv)$ is isomorphic to $\hR_\bfn$. However, we will see that it is not true in general. We face the problem that \emph{the algebra $H_*^{\rmG_\bfn}(\Znaiv)$ is too big}. 

The problem is that in the definitions of the varieties $\GGZ$ and $\Znaiv$ from \S\ref{naiveapproach}, we \emph{naively} added copies of $\calG$ without any interaction with $E_\bfn$ nor $\calF$. 
In general, we are forced to introduce some coupling of $\calG$ with $E_\bfn$ and $\calF$. For this some (non-obvious) modifications of the varieties $\GGZ$ and $\Znaiv$ are necessary. In \S\ref{sec:sl2-geom} this was obsolete, because of  
the absence of arrows in the quiver. 

\subsection{Vector bundles over Gells?}
Let us explain what fails if we consider the naive coloured analogue $\Znaiv$  of $\ccZplus$.  Imagine that we can copy the proof of  \Cref{prop:KLR-geom}. The final step requires a check that the inclusion $\hR_\bfn\hookrightarrow H_*^{\rmG_\bfn}(\Znaiv)$ is an isomorphism. We would do this using bases in cohomology arising from basic pavings. Now recall from \S\ref{sec:sl2-geom},  that Gells provide bases which algebraically generalize to arbitrary $\Gamma$, see \S\ref{subs:col-cells}. Thus it looks promising to consider the decomposition of $\Znaiv$ induced by the preimages  $p^{-1}(\calO^{\rm Gell}_{x,y,z,\uj})$ of the Gells under the projection $p:\Znaiv\to \calF\times\calF\times \calG$.  These preimages seem to be not well-behaved, e.g. not smooth. 

Moreover, to be able to deduce that the image of the basis in \Cref{prop:NVbasis-gen} is weakly adapted to the decomposition in $\Znaiv$ we need to verify the dimension formula
\begin{equation}\label{problem}
2\dim p^{-1}(\calO^{\rm Gell}_{x,y,z,\uj})=
2\dim \rmY_{\ui,\bfk}+\sum_{i\in I}(k_i^2-k_i)-
\deg(\tau_x\Omega_{k,n}\tau_y\tau_z1_\uj).
\end{equation}
We have already seen in \eqref{eq-weaklyad-Gells-sl2} that this formula holds in case of $\mathfrak{sl}_2$. In the general we compute the dimension by computing the dimensions on the base (which is just an $\mathfrak{sl}_2$ calculation) and of the fibres. If we divide \eqref{problem} by two and subtract the equality from \Cref{eq:dim-col-Gell}  we obtain with \Cref{countrk} that \eqref{problem} is equivalent to
\begin{equation}\label{problem2}
\dim_\alpha p^{-1}(\calO^{\rm Gell}_{x,y,z,\uj})=\dim_\alpha \rmY_{\ui,\bfk}-(\Xright(x)+\Xright(y)+\Xright(z)+\Xright(w_{0,k})),
\end{equation}
where we colour the permutations $x$, $y$, $z$, $w_{0,k}$ as in \S\ref{subs:col-cells}. As above, $\dim_\alpha$ denotes the dimension of the fibre of the vector bundle given by forgetting the $E_\bfn$-component.

While trying to verify this, we however encountered a bad behaviour: the fibres of $p$ are vector spaces, but their dimensions are not constant over a fixed Gell.  
We therefore want to modify $\Znaiv$  (more precisely, the fibres of $\Znaiv\to\calF\times\calF\times\calG$) to a less naive coloured analogue such that the map $p$ defines a vector bundle over each Gell and the dimensions of the fibres are such that  formula \eqref{problem2} holds. Making this precise will be the purpose of the next sections.

\section{Geometric construction of the Naisse--Vaz generators}
\label{sec:gen-qv}
We still assume the setup from \S\ref{sec:coloured}. In particular, $\Gamma=(I,A)$ is a quiver with no loops and $\bfn$ is a dimension vector. In this section we will construct geometrically the action of the generators of the algebra $\hR_\bfn$. They arise from the action of a subspace $H_*^{\rmG_\bfn}(\Znaiv)$ inside the convolution algebra $H_*^{\rmG_\bfn}(\GGZ)$ from \S\ref{subs:intuit-approch-philo}. We show that this subspace is in fact an algebra and realise $\hR_\bfn$ as a subalgebra of $H_*^{\rmG_\bfn}(\Znaiv)$.

\subsection{Fixed points}\label{sec:fixedpoints}
To understand $H_*^{\rmG_\bfn}(\GGZ)$ better, we compute the action of certain elements from $H_*^{\rmG_\bfn}(\GGZ)$ on the polynomial representation. We do this using the torus localisation and compare the results with \eqref{faithfulcole}.

Fix once and for all a basis $\{e_1,e_2,\ldots, e_n\}$ of $V$ consisting of $\rmT$-eigenvectors. Let  $\bU$ be the corresponding \emph{standard flag} given by $\bU^i=\langle  e_1,e_2,\ldots,e_{i} \rangle\subset V.$ Let $\ui_0$ be the colour type of this flag, i.e., $\bU\in\calF_{\ui_0}$. 

Set $f_w=w(\bU)$, and $\ui_w=w^{-1}(\ui_0)$. We have $f_w\in \calF_{\ui_w}$ and $\calF^\rmT=\{f_w;\,w\in\frakS_n\}$. 
Given a $\rmT$-fixed point $f_w\in\calF$. Then $f_w\in\calF_\uj$ if and only if $w(\uj)=\ui_0$.

The obvious inclusion $\calG_k\subset \op{Gr}_k(V)$ from \eqref{Grassvar}, induces a bijection on the sets of $\rmT$-fixed points. As in \S \ref{subs:more-Euler}, we thus have 
$\calG_k^\rmT=\op{Gr}_k(V)^\rmT=\{g_\mu;\, \mu\in \Lambda_k(n)\}$.
Since $\calG_\bfk=\prod_{i\in I}\op{Gr}_{k_i}(V_i)$, the $\rmT$-fixed points can also naturally be labelled by $\Lambda_\bfk(\bfn)$. The inclusion $\calG_\bfk^\rmT\subset \calG_k^\rmT$ induces then an inclusion $\Lambda_\bfk(\bfn)\subset \Lambda_k(n)$ which is in fact the inclusion from \S\ref{subs:col-subsets} (with respect to the fixed sequence $\ui_0\in I^\bfn$).

The $\rmT$-fixed points in $\calF\times \calG_k$ by $\frakS_n\times \Lambda_k(n)$ can be parametrized as in \S\ref{subs:more-Euler}, namely $(\calF\times \calG_k)^\rmT=\{x_{w,\lambda};\,w\in \frakS_n, \lambda\in \Lambda_k(n)\}$, where $x_{w,\lambda}=(f_w,g_{w(\lambda)})$. Note that $\rmT$-fixed points in $\rmY_k$ have zero $E_\bfn$-component. Therefore, the $\rmT$-fixed points in $\rmY_k$ can be viewed as $\rmT$-fixed points in $\calF\times \calG_k$. Abusing the notation, we also write $x_{w,\lambda}$ for the elements of $\rmY_k^\rmT$. 

We also identify the $\rmT$-fixed points via the inclusion $\GGZ\subset \rmY\times \rmY$ and denote them as tuples $x_{w_1,w_2,\mu_1,\mu_2}=(x_{w_1,\mu_1},x_{w_2,\mu_2})$, where $w_1,w_2\in \frakS_n$, $\mu_1,\mu_2\in \Lambda(n)$. We will moreover need the labelling which does not involve a twist, namely 
\begin{equation*}
(w_1,w_2,\mu)=(f_{w_1},f_{w_2},g_\mu)\in\Znaiv, \quad\text\quad (w_1,w_2,\mu_1,\mu_2)=(f_{w_1},f_{w_2},g_{\mu_1},g_{\mu_2})\in \GGZ,
\end{equation*}
of the $\rmT$-fixed points  in $\Znaiv$ and $\GGZ$ respectively.
\subsection{The representation $H_*^{\rmG_\bfn}(\rmY)$} 
Similarly to \S\ref{subs:rep-H(FG)} we consider for $\lambda\in\Lambda_k(n)$ subvarieties $C_\lambda\subset \calF\times \calG_k$ defined as 
$$
C_\lambda=\{(\bV,W)\in \calF\times \calG_k\mid\dim((\bV^r\cap W)/(\bV^{r-1}\cap W))=\lambda_r,~\forall r\in[1;n]\}.
$$
Denote $\omega_\lambda=[\overline {C_\lambda}]\in H^*_{\rmG_\bfn}(\calF\times \calG_k)\cong H^*_{\rmG_\bfn}(\rmY_k)$. The notation makes sense, since the identification of $H_*^{\rmG_\bfn}(\rmY)$ with $\bigoplus_{\uj\in I^\bfn}\hPol_n$ in \S\ref{naiveapproach}  sends $\omega_\lambda$ to $\omega_\lambda$. We can write
\begin{equation}
\label{eq:omega-geom-gen}
\omega_\lambda=\sum_{w,\mu}\ovS^w_{\lambda}(\mu)\tA^{-1}_{w,\mu}
[x_{w,\mu}]\quad\text{where}\quad \tA_{w,\mu}=\op{eu}(\rmY,x_{w,\mu}).
\end{equation}
We would like to have a description of the coefficients $\ovS^w_\lambda(\mu)$. 
We are going to use the notation defined in \S\ref{subs:col-subsets}. Moreover, if $\lambda,\mu\in \Lambda_k(n)$, we say that $\mu$ is $\uj$-above $\lambda$, denoted $\mu\geqslant^\uj\lambda$, if $\lambda,\mu\in \Lambda^\uj_\bfk(n)$ for some $\bfk\in \Jbfn$ and $\mu^{(i)}\geqslant\lambda^{(i)}$ for all $i\in I$. Here we identify $\Lambda^\uj_\bfk(n)=\Lambda_\bfk(\bfn)$ as in \S\ref{subs:col-subsets} and we consider the partial order on $\Lambda_{k_i}(n_i)$ defined in \S \ref{subs:rep-H(FG)}. 

\begin{lem}
\label{lem:T-to-S}
Recalling \Cref{lem:caract-S},  the coefficients in \eqref{eq:omega-geom-gen}  can be expressed as
$$
\ovS^w_\lambda(\mu)=\prod_{i\in I}\tS^{w^{(i)}}_{\lambda^{(i)}}(\mu^{(i)}).
$$
We use here the notation from \S\ref{subs:col-subsets} with $w$ being coloured by $\ui_0$ on the top, and thus by $\ui_w$ on the bottom, and with $[1;n]$ being coloured by $\ui_w$. 
\end{lem}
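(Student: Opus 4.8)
The statement factors the equivariant Schubert coefficient $\ovS^w_\lambda(\mu)$ of the class $\omega_\lambda$ on $\calF\times\calG_k$ as a product over colours $i\in I$ of the corresponding coefficients $\tS^{w^{(i)}}_{\lambda^{(i)}}(\mu^{(i)})$ for the one-vertex situation from \S\ref{subs:more-Euler}. The plan is to reduce everything to the colour-by-colour factorization \eqref{coloursintro} of the relevant varieties. First I would recall that, by the opening discussion of \S\ref{subs:spec-var} and \S\ref{naiveapproach}, the variety $\calF_\ui\times\calG_\bfk$ splits as $\prod_{i\in I}(\calF(V_i)\times\op{Gr}_{k_i}(V_i))$, that the group $\rmG_\bfn=\prod_{i\in I}\op{GL}(V_i)$ and its torus $\rmT$ split accordingly, and hence that $\tR_\rmT=H^*_\rmT(\point)\cong\bigotimes_{i\in I}\Bbbk[\ccT_t\mid i_{w,t}=i]$, using the identification of $\rmT$-fixed points with $\frakS_n\times\Lambda(n)$ from \S\ref{sec:fixedpoints} and the colour-wise refinement $\Lambda_\bfk(n,\ui_0)\cong\Lambda_\bfk(\bfn)$ from \S\ref{subs:col-subsets}.

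Next I would identify the subvariety $\overline{C_\lambda}\subset\calF_{\ui_w}\times\calG_\bfk$ under this product decomposition. The relative-position condition $\dim((\bV^r\cap W)/(\bV^{r-1}\cap W))=\lambda_r$ is automatically diagonal with respect to the splitting $V=\bigoplus_{i}V_i$, because the flags in $\calF$ and the subspaces in $\calG$ are homogeneous; so $C_\lambda=\prod_{i\in I}C_{\lambda^{(i)}}$ where $C_{\lambda^{(i)}}\subset\calF(V_i)\times\op{Gr}_{k_i}(V_i)$ is exactly the cell considered in \S\ref{subs:rep-H(FG)} for the vector space $V_i$. Taking closures and fundamental classes, this gives $\omega_\lambda=\boxtimes_{i\in I}\omega_{\lambda^{(i)}}$ inside $H^*_{\rmG_\bfn}(\calF_{\ui_w}\times\calG_\bfk)\cong\bigotimes_{i\in I}H^*_{\op{GL}(V_i)}(\calF(V_i)\times\op{Gr}_{k_i}(V_i))$ by the Künneth isomorphism.

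Then I would compare the fixed-point expansions. The fixed point $x_{w,\mu}$ of $\calF\times\calG_k$ corresponds, colour by colour, to the pair of fixed points $(f_{w^{(i)}},g_{w^{(i)}(\mu^{(i)})})$ in $\calF(V_i)\times\op{Gr}_{k_i}(V_i)$, where $w^{(i)}$ is obtained from $w$ by deleting the strands not of colour $i$ as in \S\ref{subs:col-cells}. The Euler class factorizes, $\tA_{w,\mu}=\prod_{i\in I}\tA_{w^{(i)},\mu^{(i)}}$ (the cross-colour weights $\ccT_p-\ccT_q$ with $p,q$ of different colours never appear in a tangent space to $\calF\times\calG$ because tangent directions are colour-preserving). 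Pulling back the Künneth decomposition $\omega_\lambda=\boxtimes_i\omega_{\lambda^{(i)}}$ to the fixed point $x_{w,\mu}$ and using \eqref{eq:omega-geom-gen} in each factor — which is precisely the defining expansion $\omega_{\lambda^{(i)}}=\sum_{w^{(i)},\mu^{(i)}}\tS^{w^{(i)}}_{\lambda^{(i)}}(\mu^{(i)})\tA^{-1}_{w^{(i)},\mu^{(i)}}[x_{w^{(i)},\mu^{(i)}}]$ from \S\ref{subs:more-Euler} — gives $\ovS^w_\lambda(\mu)=\prod_{i\in I}\tS^{w^{(i)}}_{\lambda^{(i)}}(\mu^{(i)})$, as claimed. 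Finally I would check consistency with \Cref{lem:caract-S}: the support condition $\mu\geqslant^\uj\lambda$ is the colour-wise conjunction of the one-colour conditions $\mu^{(i)}\geqslant\lambda^{(i)}$, and the degree adds up, so the right-hand side does restrict from a genuine $\rmG_\bfn$-equivariant class, namely $\omega_\lambda$ itself.

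\textbf{Main obstacle.} None of the steps is deep, but the bookkeeping is the real content: I would need to be careful that the colour-splitting of $\rmT$-fixed points, of Euler classes, and of the partial order all match the conventions fixed in \S\ref{sec:fixedpoints} and \S\ref{subs:col-subsets} — in particular the twist of $\mu$ by $w$ in $x_{w,\mu}=(f_w,g_{w(\mu)})$ versus the untwisted labelling, and the fact that $w^{(i)}$ carries the induced colour $\ui_w$ on the bottom. The cleanest way to avoid sign or indexing errors is to phrase the whole argument as a single Künneth identification of $H^*_{\rmG_\bfn}(\calF_{\ui_w}\times\calG_\bfk)$ with the tensor product over $i\in I$ of the one-vertex cohomologies, under which $\omega_\lambda\mapsto\boxtimes_i\omega_{\lambda^{(i)}}$, and then simply read off the fixed-point coefficients; everything else is then forced. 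That Künneth compatibility with the $\rmT$-fixed-point localization — i.e.\ that the localization map of a product is the tensor product of the localization maps — is the one technical point worth stating explicitly, and it follows from \Cref{lem:comp-in-loc} applied factor-wise together with \Cref{rk:faith-from-strat}.
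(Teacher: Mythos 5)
Your proof is correct and takes essentially the same approach as the paper: the paper's proof is the one-liner ``This follows from the Künneth formula applied to $\calG_\bfk=\prod_{i\in I}\op{Gr}_{k_i}(V_i)$,'' and your argument simply unpacks that Künneth factorization explicitly — the colour-wise splitting of the cell $C_\lambda$, of the Euler classes, and of the fixed-point expansions.
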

\begin{proof}
This follows from the K\"unneth formula applied to $\calG_\bfk=\prod_{i\in I}\op{Gr}_{k_i}(V_i)$.
\end{proof}

\subsection{Coloured Demazure operators}
For $r\in[1;n-1]$, consider the subvariety $\Zr\subset \GGZ$ defined by the condition that the two Grassmannian spaces are the same and the two flags are the same except maybe at the $r$th component. For $\ui,\uj\in I^\bfn$, set $\Zr_{\ui,\uj}=\Zr\cap \GGZ_{\ui,\uj}$ and write
\begin{equation}
\label{eq:Z-loc-gen}
[\Zr_{s_r(\uj),\uj}]=\sum_{w_1,w_2,\mu_1,\mu_2}\tA_{w_1,w_2,\mu_1,\mu_2}[x_{w_1,w_2,\mu_1,\mu_2}].
\end{equation}
The following lemma is proved in \cite[Lemma 2.19]{VV}.
\begin{lem}
\label{lem:coeff-Zr-gen} The coefficient $\tA_{w_1,w_2,\mu_1,\mu_2}$  in the expansion  \eqref{eq:Z-loc-gen} can only be nonzero if   
	$
	w_1s_r(\uj)=\ui_0, w_2(\uj)=\ui_0$, and $w_1(\mu_1)=w_2(\mu_2); 
	$
in which case we moreover have
\begin{eqnarray*}
\tA_{w_1,w_2,\mu_1,\mu_2}=
\begin{cases}
\tA^{-1}_{w_1,\mu_1}(\ccT_{(w_1(r))}-\ccT_{w_1(r+1)})^{h_{\uj_{r},\uj_{r+1}}} &\mbox{if } s_r(\uj)\ne \uj, w_2=w_1s_r,\\ 
\tA^{-1}_{w_1,\mu_1}(\ccT_{(w_2(r))}-\ccT_{w_2(r+1)})^{-1} &\mbox{if } s_r(\uj)=\uj, w_2\in\{w_1, w_1s_r\},\\
0, & \mbox{otherwise}.
\end{cases}
\end{eqnarray*}
\end{lem}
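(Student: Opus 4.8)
The statement is the computation of the localised coefficients $\tA_{w_1,w_2,\mu_1,\mu_2}$ appearing in the expansion \eqref{eq:Z-loc-gen} of the fundamental class $[\Zr_{s_r(\uj),\uj}]$ in the basis of $\rmT$-fixed points of $\GGZ$. Since the reference \cite[Lemma 2.19]{VV} is cited, the proof should be an adaptation of the $\mathfrak{sl}_2$-type computation carried out in \Cref{subs:Euler classes in (extended) flag varieties}, in particular of \Cref{eunm}, to the coloured setting with an extra (unchanged) Grassmannian factor. The plan is as follows. First I would observe that the fundamental class $[\Zr_{s_r(\uj),\uj}]$, being that of a $\rmG_\bfn$-stable subvariety, localises to a $\rmT$-equivariant class; by \Cref{lem:comp-in-loc} and the localisation theorem it is determined by its restrictions to the $\rmT$-fixed points of $\GGZ$, and by \Cref{locmap} the fixed points of $\GGZ$ that can contribute are exactly those of the form $x_{w_1,w_2,\mu_1,\mu_2}$ lying \emph{on} $\Zr_{s_r(\uj),\uj}$. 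A fixed point $x_{w_1,w_2,\mu_1,\mu_2}=(f_{w_1},f_{w_2},g_{w_1(\mu_1)},g_{w_2(\mu_2)})$ lies on $\GGZ_{s_r(\uj),\uj}$ precisely when $w_1s_r(\uj)=\ui_0$ and $w_2(\uj)=\ui_0$; it lies on $\Zr_{s_r(\uj),\uj}$ precisely when in addition the two Grassmannian spaces coincide, i.e. $g_{w_1(\mu_1)}=g_{w_2(\mu_2)}$, equivalently $w_1(\mu_1)=w_2(\mu_2)$, and the two flags agree away from position $r$, forcing $w_2\in\{w_1,w_1s_r\}$ (with $w_2=w_1s_r$ only possible when $s_r(\uj)=\uj$ on the flag side, since otherwise the colour types at position $r$ and $r+1$ prevent $w_1$ and $w_2=w_1s_r$ from differing only there while keeping the required colour types — this is where the case split $s_r(\uj)\ne\uj$ versus $s_r(\uj)=\uj$ originates). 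This establishes the support statement.

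Next I would compute the restriction of $[\Zr_{s_r(\uj),\uj}]$ at each such fixed point as the Euler class of the normal space, using $i_x^*[\overline{Z}]=\tA_{w_1,w_2,\mu_1,\mu_2}\op{eu}(\GGZ)\big|_x$ — or rather, following the normalisation of \eqref{eq:Z-loc-gen} in which the coefficient is already the quotient by $\op{eu}(\rmY,x_{w_1,\mu_1})=\tA_{w_1,\mu_1}$ of the Euler class along the transverse directions. Here the variety $\GGZ=\rmY\times_{E_\bfn}\rmY$ fibres over $E_\bfn$, so near a fixed point (which has zero $E_\bfn$-component) the tangent space splits; and $\Zr_{s_r(\uj),\uj}$ is, after the forgetting-the-second-flag-and-the-coinciding-Grassmannian projection, essentially $\tF\times_{E_\bfn}$ (a $\rmP_{r}$-orbit closure in the $r$th flag direction), exactly as in the proof of \Cref{eunm}. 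The Grassmannian factor contributes nothing new: since $W=\tW$ on $\Zr$, the Grassmannian directions are the diagonal and their normal contribution is already absorbed in $\tA_{w_1,\mu_1}^{-1}$. So the residual Euler factor is $\op{eu}(\frakm_{w_1,w_1s_r})$-type, i.e. a product of the roots $\ccT_{w(r)}-\ccT_{w(r+1)}$, with the exponent governed by how many of the relevant matrix-coordinate directions in $E_\bfn$ are actually present: when $s_r(\uj)\ne\uj$, i.e. $\uj_r\ne\uj_{r+1}$, the relevant piece of $E_\bfn$ at position $(r,r+1)$ has dimension $h_{\uj_r,\uj_{r+1}}$ (the number of arrows), producing the factor $(\ccT_{w_1(r)}-\ccT_{w_1(r+1)})^{h_{\uj_r,\uj_{r+1}}}$; when $s_r(\uj)=\uj$ there is no arrow contribution (loops are excluded) and we get the usual nil-Hecke-type factor $(\ccT_{w_2(r)}-\ccT_{w_2(r+1)})^{-1}$ exactly as in \eqref{eq:Zr-coeff}. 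Assembling these two cases gives the displayed formula.

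\textbf{Main obstacle.} The routine part is the normal-bundle bookkeeping; the delicate point is the precise identification of the transverse slice to $\Zr_{s_r(\uj),\uj}$ inside $\GGZ$ at a fixed point and, relatedly, making sure the $E_\bfn$-direction contributions are counted with the correct multiplicity $h_{\uj_r,\uj_{r+1}}$ rather than $h_{\uj_r,\uj_{r+1}}+h_{\uj_{r+1},\uj_r}$. This is exactly the subtlety that \Cref{coro:dim-alpha-cell-KLR} and \Cref{lem:diff-dim-KLR} are designed to handle: only the matrix entries that vanish on $\tF_{\ui}$ but not on the swapped flag (equivalently the $\Xright$-type crossings, which on a single transposition $s_r$ with $\uj_r\ne\uj_{r+1}$ pick out the arrows $\uj_r\to\uj_{r+1}$) enter the transverse directions. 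So the cleanest route is to reduce to the one-arrow quiver via \Cref{defsum} (both sides being additive over sums of quivers), combine with the pure-$\mathfrak{sl}_2$ computation of \Cref{eunm} for the no-arrow and the equal-colour directions, and then just cite \cite[Lemma 2.19]{VV} for the arrow-direction Euler factor. I expect the writeup to be short once this reduction is made.
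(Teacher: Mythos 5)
The paper itself gives no argument for this lemma: it simply cites \cite[Lemma~2.19]{VV}. Your proposal is therefore filling a gap the paper leaves to the literature, and your overall plan (read off the support from the fixed points lying on $\Zr_{s_r(\uj),\uj}$, then compute the localisation coefficient as a ratio of Euler classes by comparing with the $\mathfrak{sl}_2$ case and absorbing the Grassmannian and flag contributions into $\tA_{w_1,\mu_1}^{-1}$, with the $E_\bfn$-directions producing the extra factor of degree $h_{\uj_r,\uj_{r+1}}$) is the right shape of argument and is consistent with the paper's decision to outsource it.

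There is, however, a concrete logical error in your support analysis, and it is backwards from what the lemma asserts. You write that $w_2 = w_1 s_r$ is ``only possible when $s_r(\uj) = \uj$''. Check the colour constraints: we need $w_1 s_r(\uj) = \ui_0$ (so $f_{w_1}\in\calF_{s_r(\uj)}$) and $w_2(\uj)=\ui_0$ (so $f_{w_2}\in\calF_{\uj}$). If $w_2 = w_1 s_r$, the second condition becomes $w_1 s_r(\uj) = \ui_0$, which is exactly the first; no further constraint on $\uj$. By contrast, if $w_2 = w_1$, the two conditions force $w_1(\uj) = w_1 s_r(\uj)$, i.e. $s_r(\uj) = \uj$. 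So it is $w_2 = w_1$ (the diagonal fixed point), not $w_2 = w_1 s_r$, that is ruled out when $\uj_r \neq \uj_{r+1}$ — exactly matching the lemma's case split, where $s_r(\uj)\neq\uj$ allows \emph{only} $w_2 = w_1 s_r$ while $s_r(\uj)=\uj$ allows both. Your inversion also makes the second half of your argument internally inconsistent: you go on to compute the $h_{\uj_r,\uj_{r+1}}$-power factor for the $s_r(\uj)\neq\uj$ case at the fixed point $w_2 = w_1 s_r$, which by your own parenthetical would not exist. The Euler-class reasoning and the final suggestion to reduce to the one-arrow quiver and invoke \cite[Lemma~2.19]{VV} are fine; fix the support claim and the sketch is sound.
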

Identifying  $H^*_{\rmG_\bfn}(\rmY)\cong\hPol_\bfn$ as in \S\ref{naiveapproach}, we construct the Naisse--Vaz operators: 
  \begin{prop}
The fundamental class $[\Zr_{s_r(\uj),\uj}]$ acts on $h\in H_*^{\rmG_\bfn}(\rmY)$ by 
$$
[\Zr_{s_r(\uj),\uj}]h=
\begin{cases}
(X_{r}-X_{r+1})^{h_{\uj_{r},\uj_{r+1}}}s_r(h)\in H_*^{\rmG_\bfn}(\rmY_{s_r(\uj)}) &\mbox{ if } s_r(\uj)\ne\uj,\\
-\partial_r(h)\in H_*^{\rmG_\bfn}(\rmY_{s_r(\uj)}) &\mbox{ if } s_r(\uj)=\uj.
\end{cases}
$$
In particular, this action agrees (up to sign) with the action of $\tau_r1_\uj$ from \eqref{faithfulcole}.
\end{prop}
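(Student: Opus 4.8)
The strategy is the same localisation computation that was carried out in detail for $\mathfrak{sl}_2$ in \Cref{TrasDemazure}, now performed colour by colour. The key input is \Cref{lem:coeff-Zr-gen}, which tells us the coefficients $\tA_{w_1,w_2,\mu_1,\mu_2}$ of $[\Zr_{s_r(\uj),\uj}]$ in the basis of $\rmT$-fixed points, together with \eqref{eq:omega-geom-gen} and \Cref{lem:T-to-S} for the expansion of an arbitrary element $h$ of $H_*^{\rmG_\bfn}(\rmY)$. By $\Pol_n$-linearity and the fact that $H_*^{\rmG_\bfn}(\rmY_\bfk)$ is spanned over $\Pol_n$ by the classes $\omega_\lambda$, it suffices to compute $[\Zr_{s_r(\uj),\uj}]\star P\omega_\lambda$ for $P\in\Pol_n$ and $\lambda\in\Lambda(n)$.

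First I would write, using \Cref{lem:comp-in-loc}~\ref{2}.) (or the special case \eqref{unclear}) together with \eqref{eq:Z-loc-gen},
\begin{equation*}
[\Zr_{s_r(\uj),\uj}]\star P\omega_\lambda=\sum_{w_1,\mu_1}\Big(\sum_{w_2,\mu_2}\tA_{w_1,w_2,\mu_1,\mu_2}\,\tA_{w_2,\mu_2}\,(P\omega_\lambda)_{w_2,\mu_2}\Big)[x_{w_1,\mu_1}],
\end{equation*}
where $(P\omega_\lambda)_{w,\mu}=P_w\,\ovS^w_\lambda(\mu)\,\tA_{w,\mu}^{-1}$ are the fixed-point coefficients of $P\omega_\lambda$ as in \eqref{eq:omega-geom-gen}. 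Then I would split into the two cases of \Cref{lem:coeff-Zr-gen}. If $s_r(\uj)\neq\uj$, only the term $w_2=w_1s_r$ contributes, and the coefficient of $[x_{z,\nu}]$ becomes $\tA^{-1}_{z,\nu}(\ccT_{z(r)}-\ccT_{z(r+1)})^{h_{\uj_r,\uj_{r+1}}}(P\omega_\lambda)_{zs_r,s_r(\nu)}\cdot\tA_{zs_r,s_r(\nu)}$, which — after comparing with \eqref{inducedSn} and the definition of the $\frakS_n$-action on $\hPol_\bfn$, exactly as in \Cref{rk:loc-X-vs-FX} — is recognised as the fixed-point coefficient of $(X_r-X_{r+1})^{h_{\uj_r,\uj_{r+1}}}s_r(P\omega_\lambda)$ in $H_*^{\rmG_\bfn}(\rmY_{s_r(\uj)})$. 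Here one uses that $s_r(\omega_\lambda)=\omega_{s_r(\lambda)}$ when $i_r\neq i_{r+1}$ (no correction term appears in \eqref{faithfulcole} in that case), which on the geometric side is \Cref{lem:T-to-S} together with \Cref{lem:Schubert-change-w}: the ``colour-$i_r$'' and ``colour-$i_{r+1}$'' factors of $\ovS^w_\lambda$ are untouched by the relevant wall-crossing.

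If $s_r(\uj)=\uj$, both $w_2=w_1$ and $w_2=w_1s_r$ contribute with coefficient $\tA^{-1}_{w_1,\mu_1}(\ccT_{w_2(r)}-\ccT_{w_2(r+1)})^{-1}$, and the coefficient of $[x_{z,\nu}]$ collapses to
\begin{equation*}
\tA^{-1}_{z,\nu}\,\frac{P_z\,\ovS^z_\lambda(\nu)-P_{zs_r}\,\ovS^{zs_r}_\lambda(s_r(\nu))}{\ccT_{z(r)}-\ccT_{z(r+1)}},
\end{equation*}
which is the fixed-point coefficient of $-\partial_r(P\omega_\lambda)=\tfrac{P\omega_\lambda-s_r(P\omega_\lambda)}{X_{r+1}-X_r}$, using \eqref{eq:pol-wedge-loc}, \eqref{inducedSn} and \Cref{lem:Schubert-change-w} (now the colour-$i_r$ wall-crossing \emph{does} produce the term $(X_r-X_{r+1})\omega_{s_r(\lambda)}$, matching the first case of \eqref{faithfulcole}). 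This is verbatim the computation in \Cref{TrasDemazure}. The final sentence of the proposition — agreement up to sign with $\tau_r1_\uj$ — is then immediate by comparing with \eqref{faithfulcole}, since the only discrepancy is the minus sign in $T_r=-[\Zr]$ in the diagonal case and no sign in the off-diagonal case.

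The main obstacle is bookkeeping rather than conceptual: one must be careful that the colour decomposition of $\ovS^w_\lambda$ from \Cref{lem:T-to-S} interacts correctly with the single wall-crossing at position $r$, i.e.\ that \Cref{lem:Schubert-change-w} is applied to the correct colour-factor (the one containing the strands coloured $i_r=i_{r+1}$ in the diagonal case) while the other factors transform by the plain $w\mapsto w^{(i)}$-twist, and that the twists $x_{w,\mu}=(f_w,g_{w(\mu)})$ versus the untwisted labels $(w_1,w_2,\mu)$ are not confused (cf.\ the warning after \Cref{lem:same-prod-supset}). Since every ingredient is already in place, no genuinely new idea is required beyond assembling \Cref{lem:coeff-Zr-gen}, \Cref{lem:T-to-S} and \Cref{lem:Schubert-change-w}.
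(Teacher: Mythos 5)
Your proposal is correct and follows essentially the same route as the paper: expand $[\Zr_{s_r(\uj),\uj}]$ and $P\omega_\lambda$ in the $\rmT$-fixed point basis, apply \Cref{lem:coeff-Zr-gen} and \eqref{unclear}, and match the resulting coefficients in the two cases with $(X_r-X_{r+1})^{h_{\uj_r,\uj_{r+1}}}s_r(P\omega_\lambda)$ respectively $-\partial_r(P\omega_\lambda)$. The only difference is that you make explicit the role of \Cref{lem:Schubert-change-w} and \Cref{lem:T-to-S} in identifying the resulting fixed-point coefficients with the algebraic $\frakS_n$-action, a step the paper leaves implicit.
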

\begin{proof}
Let $s=s_r$ and $h= P\omega_\lambda$, where $P\in\Pol_n$. Using \eqref{eq:Z-loc-gen} and \eqref{eq:omega-geom-gen} we get
\begin{equation*}
[\Zr_{s(\uj),\uj}]\star P\omega_\lambda=(\sum_{w_1,w_2,\mu_1,\mu_2}\tA_{w_1,w_2,\mu_1,\mu_2}[x_{w_1,w_2,\mu_1,\mu_2}])\star (\sum_{w\in\WW_n}P_w\ovS^w_\lambda(\mu)\tA^{-1}_{w,\mu}[x_{w,\mu}]).
\end{equation*}
We compute the coefficient of $[x_{w,\mu}]$ using  \eqref{unclear} and  \Cref{lem:coeff-Zr-gen}.
If $s(\uj)\ne \uj$ we get
\begin{equation*}
\tA_{w,ws,\mu,s(\mu)}P_{ws}\ovS^{ws}_\lambda(s(\mu))=\tA^{-1}_{w,\mu}(\ccT_{(w(r))}-\ccT_{w(r+1)})^{h_{\uj_{r},\uj_{r+1}}}P_{ws}\ovS^{ws}_\lambda(s(\mu)).
\end{equation*}
This agrees with the coefficient of $[x_{w,\mu}]$ when we write $(X_{r}-X_{r+1})^{h_{\uj_{r},\uj_{r+1}}}s(P\omega_\lambda)$ in the basis of $\rmT$-fixed points, since we have
$P\omega_\lambda=\sum_{w,\mu}P_w\ovS^w_{\lambda}(\mu)\tA^{-1}_{w,\mu}
[x_{w,\mu}]$ by \eqref{eq:omega-geom-gen}. If $s(\uj)= \uj$ we get
\begin{eqnarray*}
&&\tA_{w,w,\mu,\mu}P_w\ovS^w_\lambda(\mu)+\tA_{w,ws,\mu,s(\mu)}P_{ws}\ovS^{ws}_\lambda(s(\mu))\\
&=&\tA^{-1}_{w,\mu}P_w\ovS^w_\lambda(\mu)(\ccT_{w(r+1)}-\ccT_{w(r)})^{-1}+\tA^{-1}_{w,\mu} P_{ws}\ovS^{ws}_\lambda(s(\mu))(\ccT_{ws(r+1)}-\ccT_{ws(r)})^{-1}\\
&=&\tA^{-1}_{w,\mu}\cdot (P_w\ovS^w_\lambda(\mu)-P_{ws}\ovS^{ws}_\lambda(s(\mu)))(\ccT_{w(r+1)}-\ccT_{w(r)})^{-1}.
\end{eqnarray*}
This is the coefficient of $[x_{w,\mu}]$ in $
-\partial_r(P\omega_\lambda)
$
written in the $\rmT$-fixed points basis. 
 \end{proof}

\subsection{Creation operators}
Fix $j\in I$ and $k\in[0;n-1]$.
Consider the subvariety 
\begin{equation*}\label{Zsubsetk}
\GGZ^j_{\supse,k}=\{(\alpha,\bV,\tV,W,\tW)\in \GGZ_{k+1,k}\mid W\supset \tW, \bV=\tV, \grdim(W/\tW)=j\}
\end{equation*}
and write
\begin{equation}
\label{eq:Z_k+loc-gen}
[\GGZ^j_{\supse,k}]=\sum_{w_1,w_2,\mu_1,\mu_2}\tB^+_{w_1,w_2,\mu_1,\mu_2}[x_{w_1,w_2,\mu_1,\mu_2}]. 
\end{equation}
Using the abbreviation $\ui=\ui_{w_1}$ we have
\begin{equation}
\label{eq:coef-Z_k-gen}
\tB^+_{w_1,w_2,\mu_1,\mu_2}=
\begin{cases}
\tA_{w_1,\mu_1}^{-1}\!\!\displaystyle\prod_{r\in \mu_2,i_r=j}\!\!(\ccT_{w_1(t)}-\ccT_{w_1(r)})^{-1} &\mbox{if } (w_1,\mu_1,i_t)=(w_2, \mu_2+\epsilon_t, j),\\
0 & \mbox{otherwise}.
\end{cases}
\end{equation}

We recover the multiplication by the elements $\omega_{r,i}$ from \S\ref{ssec:polyaction}.
\begin{prop}\label{colouredwn}
The action of $[\GGZ^j_{\supse,k}]$ on $H_*^{\rmG_\bfn}(\rmY)$ is by multiplication with $\omega_{n_j,j}$.
\end{prop}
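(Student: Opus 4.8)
The strategy is exactly as in the $\mathfrak{sl}_2$-case treated in \Cref{lem:creation+n}, now adapted to the coloured setting. Write $h\in H_*^{\rmG_\bfn}(\rmY)$ in the fixed-point basis as $h = P\omega_\lambda$ with $P\in\Pol_n$; using \eqref{eq:omega-geom-gen} we have $P\omega_\lambda = \sum_{w,\mu}P_w\,\ovS^w_\lambda(\mu)\,\tA^{-1}_{w,\mu}[x_{w,\mu}]$. Applying the convolution rule \eqref{unclear} together with the explicit coefficients \eqref{eq:coef-Z_k-gen}, the coefficient of $[x_{z,\nu}]$ in $[\GGZ^j_{\supse,k}]\star P\omega_\lambda$ works out to
\begin{equation*}
\tA^{-1}_{z,\nu}\,P_z\sum_{\substack{t\in\nu\\ i_t=j}}\frac{\ovS^z_\lambda(\nu\backslash\{t\})}{\prod_{r\in\nu\backslash\{t\},\,i_r=j}(\ccT_{z(t)}-\ccT_{z(r)})}.
\end{equation*}
By \Cref{lem:T-to-S} the polynomial $\ovS^z_\lambda(\nu)$ factors as $\prod_{i\in I}\tS^{z^{(i)}}_{\lambda^{(i)}}(\nu^{(i)})$; the factors for $i\ne j$ depend only on $\nu^{(i)}$ (not on $t$) and pull out of the sum, while the factor for $i=j$ together with the denominator is precisely the left-hand side of \eqref{Schursforcreation} applied to the vector space $V_j$ of dimension $n_j$. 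So after invoking \Cref{lem:S-lambda+n} (for $\mathfrak{gl}(V_j)$) and then \Cref{rk-Sw}, the whole expression collapses to $\tA^{-1}_{z,\nu}\,P_z\,\ovS^z_{\lambda\cup\{s\}}(\nu)$ when $n_j\notin\lambda^{(j)}$, where $s$ is the index of colour $j$ corresponding to position $n_j$ in the colour-$j$-subword, i.e. $\lambda\cup\{s\}$ corresponds to $\omega_{n_j,j}\cdot\omega_\lambda$; and to zero when $n_j\in\lambda^{(j)}$, which is exactly the statement that multiplication by $\omega_{n_j,j}$ annihilates the monomials already containing $\omega_{n_j,j}$ (recall $\omega_{n_j,j}$ squares to zero). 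Comparing with the action \eqref{faithfulcole} (more precisely, with the enumeration $\omega_{r,i}1_\ui=\omega_{t_r}1_\ui$ from \S\ref{ssec:polyaction}) this is precisely left multiplication by $\omega_{n_j,j}$.

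\textbf{Key steps in order.} First I would pin down the combinatorics: record that the $\rmT$-fixed points of $\GGZ^j_{\supse,k}$ are exactly the tuples with $w_1=w_2$ and $\mu_1=\mu_2+\epsilon_t$ with the colour of position $t$ equal to $j$, which gives \eqref{eq:coef-Z_k-gen} — this is a direct Euler-class computation on the Grassmannian-inclusion fibre, essentially the coloured version of the computation preceding \Cref{lem:creation+n}, using that the normal directions involve only colour-$j$ weights $\ccT_{w_1(t)}-\ccT_{w_1(r)}$ for $r\in\mu_2$ of colour $j$. Second, I would substitute into the convolution formula \eqref{unclear} and extract the coefficient of a general fixed point. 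Third, I would apply \Cref{lem:T-to-S} to factor $\ovS$ colour by colour and isolate the colour-$j$ contribution. Fourth, I would quote \Cref{lem:S-lambda+n} verbatim for the vector space $V_j$, noting the index $n$ there becomes $n_j$ here. Finally, I would translate the resulting fixed-point expression back through \Cref{rk-Sw} and \Cref{lem:T-to-S} into the statement ``$=\omega_{n_j,j}\cdot h$'' and match it against \eqref{faithfulcole}.

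\textbf{Main obstacle.} The genuine work is in the bookkeeping of colours: one must be careful that the identification $\Lambda_k(n)\supset\Lambda_\bfk(n,\ui_0)\cong\Lambda_\bfk(\bfn)$ from \S\ref{subs:col-subsets} is compatible with the twist by $w$ appearing in $x_{w,\mu}=(f_w,g_{w(\mu)})$, so that the colour-$j$ sub-index $\mu^{(j)}$ and the colour-$j$ sub-permutation $w^{(j)}$ really do assemble into the data to which \Cref{lem:S-lambda+n} applies for $\mathfrak{gl}(V_j)$. The other mild subtlety is the sign/ordering convention: since we add the new generator at position $n_j$ (the \emph{last} colour-$j$ strand) and our convention in \Cref{notnew} puts the highest index outermost in $\omega_\lambda$, no sign is introduced — this is the coloured analogue of the sign-free statement in \Cref{lem:creation+n}, and I would remark on it explicitly. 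Everything else is routine: the argument is a colour-wise K\"unneth reduction to the already-proven $\mathfrak{sl}_2$ statement.
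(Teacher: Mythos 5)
Your proposal is correct and follows essentially the same route as the paper's proof: localize at the torus fixed points, substitute the explicit Euler-class coefficients \eqref{eq:coef-Z_k-gen} into the convolution formula \eqref{unclear}, factor $\ovS^w_\lambda$ colour by colour via \Cref{lem:T-to-S}, and conclude with \Cref{lem:S-lambda+n} (plus \Cref{rk-Sw}) applied to the colour-$j$ factor. The only difference is that you spell out the K\"unneth reduction and the colour bookkeeping that the paper leaves implicit, which is accurate and helpful but not a new argument.
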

\begin{proof}
Using  \eqref{eq:Z_k+loc-gen} and \eqref{eq:omega-geom-gen}, we get
$$
[\GGZ^j_{\supse,k}]\star P\omega_\lambda=(\sum_{w_1,w_2,\mu_1,\mu_2}\tB^+_{w_1,w_2,\mu_1,\mu_2}[x_{w_1,w_2,\mu_1,\mu_2}])\star (\sum_{w,\mu}P_w\ovS^w_\lambda(\mu)\tA^{-1}_{w,\mu}[x_{w,\mu}]).
$$
By \eqref{unclear} and \eqref{eq:Z_k+loc-gen}, the coefficient of $[x_{w,\mu}]$ in this product is
$$
\tA^{-1}_{w,\mu}P_w\sum_{t\in\mu,i_t=j}\frac{\ovS^w_\lambda(\mu\backslash\{t\})}{\prod_{r\in \mu\backslash\{t\},i_r=j}(\ccT_{w(t)}-\ccT_{w(r)})}.
$$

Now, the statement follows from \Cref{lem:S-lambda+n} and \Cref{lem:T-to-S}.
\end{proof}

As in \Cref{lem:creation+r}, we can construct multiplications by other $\omega_i$'s. Similar to \S\ref{subs:creation}, denote by $\xi$ the first Chern class of the line bundle on $\GGZ^j_{\supse,k}$ given by $W/\tW$.
\begin{prop}
	\label{lem:creation+r-gen}
Fix $\uj\in I^\bfn$ and $r\in[1;n]$ such that $j_r=j$. The push-forward to $H_*^{\rmG_\bfn}(\GGZ)$ of the element 
	$$
	\prod_{p\in [r+1;n],j_p=j}(X_p-\xi)\in H_*^{\rmG_\bfn}(\GGZ^j_{\supse,k})
	$$
	acts on $H_*^{\rmG_\bfn}(\rmY_\uj)$ by the multiplication with $\omega_r$.
\end{prop}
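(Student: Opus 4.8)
The strategy is entirely parallel to the proof of \Cref{colouredwn}, with the extra twist factor $\prod_{p\in[r+1;n],j_p=j}(X_p-\xi)$ playing the same role as the analogous factor in \Cref{lem:creation+r}. First I would express the push-forward $\oX\in H_*^{\rmG_\bfn}(\GGZ)$ of $\prod_{p\in[r+1;n],j_p=j}(X_p-\xi)\in H_*^{\rmG_\bfn}(\GGZ^j_{\supse,k})$ in the $\rmT$-fixed point basis. Since $X_p$ restricts on the fixed point $x_{w_1,\mu_1}$ to $\ccT_{w_1(p)}$ and $\xi$ restricts to $\ccT_{w_1(t)}$ (where $t$ is the unique index with $\mu_1=\mu_2+\epsilon_t$), the coefficient $\tB^{+,r}_{w_1,w_2,\mu_1,\mu_2}$ is obtained from \eqref{eq:coef-Z_k-gen} by multiplying with $\prod_{p\in[r+1;n],i_p=j}(\ccT_{w_1(p)}-\ccT_{w_1(t)})$; that is, using $\ui=\ui_{w_1}$,
\begin{equation*}
\tB^{+,r}_{w_1,w_2,\mu_1,\mu_2}=
\begin{cases}
\tA_{w_1,\mu_1}^{-1}\dfrac{\prod_{p\in[r+1;n],i_p=j}(\ccT_{w_1(p)}-\ccT_{w_1(t)})}{\prod_{s\in\mu_2,i_s=j}(\ccT_{w_1(t)}-\ccT_{w_1(s)})} &\text{if }(w_1,\mu_1,i_t)=(w_2,\mu_2+\epsilon_t,j),\\
0 & \text{otherwise.}
\end{cases}
\end{equation*}

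Next, using \eqref{unclear} and \eqref{eq:omega-geom-gen}, I would compute that the coefficient of $[x_{w,\mu}]$ in $\oX\star P\omega_\lambda$ equals
\begin{equation*}
\tA^{-1}_{w,\mu}P_w\sum_{t\in\mu,\,i_t=j}\frac{\prod_{p\in[r+1;n],i_p=j}(\ccT_{w(p)}-\ccT_{w(t)})\,\ovS^w_\lambda(\mu\setminus\{t\})}{\prod_{s\in\mu\setminus\{t\},\,i_s=j}(\ccT_{w(t)}-\ccT_{w(s)})}.
\end{equation*}
Now I would apply \Cref{lem:T-to-S}, which factors $\ovS^w_\lambda=\prod_{i\in I}\tS^{w^{(i)}}_{\lambda^{(i)}}$, so that the whole sum over $t$ (which only involves indices of colour $j$) decouples into the colour-$j$ factor times the untouched factors for $i\neq j$. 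For the colour-$j$ factor one applies \Cref{lem:S-lambda+r} for the vector space $V_j$ of dimension $n_j$: passing to the colour-$j$ relabelling, $r$ corresponds to the position $r^{(j)}$ among the colour-$j$ indices, the product $\prod_{p\in[r+1;n],i_p=j}$ becomes the product $\prod_{p\in[r^{(j)}+1;n_j]}$ appearing in \eqref{formulaforfancycreator}, and the identity yields $(-1)^{|\lambda^{(j)}_{>r^{(j)}}|}\tS_{\lambda^{(j)}\cup\{r^{(j)}\}}$ when $r^{(j)}\notin\lambda^{(j)}$ and $0$ otherwise. Reassembling via \Cref{lem:T-to-S} and \Cref{rk-Sw}, together with the sign convention built into $\omega_\lambda$ from \Cref{notnew} (the inversion of indices cancels exactly the sign $(-1)^{|\lambda^{(j)}_{>r^{(j)}}|}$, just as in the proof of \Cref{lem:creation+r}), shows that the result is the coefficient of $[x_{w,\mu}]$ in $P\omega_r$ written in the fixed-point basis. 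Hence $\oX$ acts as multiplication by $\omega_r$ on $H_*^{\rmG_\bfn}(\rmY_\uj)$.

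The only genuinely non-routine point is the bookkeeping of which index $r$ in $[1;n]$ corresponds to which index $r^{(j)}$ in $[1;n_j]$, and checking that the sign coming from \eqref{formulaforfancycreator} is absorbed by the chosen ordering convention for $\omega_\lambda$; both were already settled in the $\mathfrak{sl}_2$ case (\Cref{lem:creation+r}) and the colour-by-colour argument reduces the present statement to that case verbatim. Everything else is a direct transcription of the computation in \Cref{colouredwn}.
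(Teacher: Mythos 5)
Your argument is correct and is exactly the argument the paper has in mind: the paper's own proof is the one-line "argue as for \Cref{colouredwn} except that we use \Cref{lem:S-lambda+r}," and your write-up simply unpacks that reduction (restrict to fixed points, decouple colour by colour via \Cref{lem:T-to-S}, apply \Cref{lem:S-lambda+r} on the colour-$j$ factor, and absorb the sign $(-1)^{|\lambda^{(j)}_{>r^{(j)}}|}$ via the decreasing-index convention for $\omega_\lambda$). The relabelling bookkeeping between $r\in[1;n]$ and $r^{(j)}\in[1;n_j]$ is handled correctly.
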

\begin{proof}
	We can argue as for \Cref{colouredwn} except that we use \Cref{lem:S-lambda+r}. 
\end{proof}

\subsection{The algebra $H^{\rmG_\bfn}_*(\Znaiv)\subset H^{\rmG_\bfn}_*(\GGZ)$}
\label{subs:Z'-subalg-Z}
Fix for each $i\in I$ a (hermitian) scalar product on $V_i$. They combine to a scalar product on $V$ such that all $V_i$ are orthogonal. Let $\rmU_\bfn\subset \rmG_\bfn$ be the maximal compact subgroup of unitary transformations. Fix $\bfk_1,\bfk_2\in \Jbfn$ such that $\bfk_1-\bfk_2\in \Jbfn$.  We define the \emph{inclusion variety} $\GGZ_{\bfk_1\supset \bfk_2}$ as the subvariety of $\GGZ_{\bfk_1,\bfk_2}$ defined by the condition $W\supset \tW$. 

Similarly to \Cref{lem:perm-flags-3}, there is a diffeomorphism 
\begin{equation}\label{gamma}
\gamma_{\bfk_1,\bfk_2}\colon\GGZ_{\bfk_1\supset \bfk_2}\cong \GGZ_{\bfk_1\supset \bfk_1-\bfk_2},\quad  (\bV,\tV,W,\tW)\mapsto (\bV,\tV,W,\tW^\perp\cap W).
\end{equation}
This diffeomorphism is not $\rmG_\bfn$-invariant, but it is $\rmU_\bfn$-invariant, see \Cref{rk:G-vs-U}.
\begin{df}\label{imap}
	Let $\iota_{\bfk_1,\bfk_2}\colon H_*^{\rmG_\bfn}(\Znaiv_{\bfk_1-\bfk_2})\to H_*^{\rmG_\bfn}(\GGZ_{\bfk_1, \bfk_2})$ be the composition
	$$
	H_*^{\rmG_\bfn}(\Znaiv_{\bfk_1-\bfk_2})\to H_*^{\rmG_\bfn}(\GGZ_{\bfk_1\supset \bfk_1-\bfk_2}) \to H_*^{\rmG_\bfn}(\GGZ_{\bfk_1\supset \bfk_2})\to H_*^{\rmG_\bfn}(\GGZ_{\bfk_1, \bfk_2}),
	$$
	where the first map is the pull-back with respect to $\GGZ_{\bfk_1\supset \bfk_1-\bfk_2}\to \Znaiv_{\bfk_1-\bfk_2}$ which forgets the component $W$, the second map is induced by $\gamma_{\bfk_1,\bfk_2}$ and the third map is the push-forward with respect to the inclusion.
\end{df}

Assume $r\in [1;n]$. Consider the subvarieties $\dot\Delta_j$ of $\Znaiv_j$ given by $\bV=\tV$ and $\dot\Delta_{j,r}$ of $\dot\Delta_j$ given by $W\subset \bV^r$. Assume that $\uj\in I^\bfn$ is such that $j_r=j$ and assume $\bfk\in\Jbfn$.
\begin{lem}
The image of the fundamental class $[\dot\Delta_{j,r}]\in H_*^{\rmG_\bfn}(\Znaiv_j)$ under $\iota_{\bfk+j,\bfk}$ acts on $H_*^{\rmG_\bfn}(\rmY_{\uj,\bfk})$ as multiplication with $\omega_r$. 
\end{lem}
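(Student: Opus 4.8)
The plan is to reduce this statement to the already established creation-operator computation \Cref{lem:creation+r-gen}, by identifying the class $\iota_{\bfk+j,\bfk}([\dot\Delta_{j,r}])$ with the push-forward considered there. First I would unwind the definition of $\iota_{\bfk+j,\bfk}$ from \Cref{imap}: starting with $[\dot\Delta_{j,r}]\in H_*^{\rmG_\bfn}(\Znaiv_j)$, pull it back along the $W$-forgetting map $\GGZ_{\bfk+j\supset\bfk}\to\Znaiv_j$, transport it through the diffeomorphism $\gamma_{\bfk+j,\bfk}$ of \eqref{gamma}, and push it forward along the closed inclusion $\GGZ_{\bfk+j\supset\bfk}\subset\GGZ_{\bfk+j,\bfk}\subset\GGZ$. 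Tracing through these three steps, a point of $\Znaiv_j$ in the image of $[\dot\Delta_{j,r}]$ is a tuple $(\alpha,\bV,\tV,W)$ with $\bV=\tV$ and the one-dimensional extra Grassmannian space $W$ contained in $\bV^r$; the pullback replaces the missing flag component and the $\gamma$-twist swaps $W$ for its orthogonal complement inside the bigger Grassmannian space, so the resulting cycle in $\GGZ_{\bfk+j\supset\bfk}$ consists of tuples $(\alpha,\bV,\bV,W',\widetilde W')$ with $W'\supset\widetilde W'$, $\mathrm{grdim}(W'/\widetilde W')=j$, and the line $W'\cap(\widetilde W')^\perp$ lying in $\bV^r$. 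The condition ``the line lies in $\bV^r$'' is, up to the usual Euler-class bookkeeping, exactly the condition that cuts out a hyperplane section relative to $\GGZ^j_{\supse,k}$, i.e. it is Poincar\'e dual to a product of the form $\prod_{p\in[r+1;n],\,j_p=j}(X_p-\xi)$ appearing in \Cref{lem:creation+r-gen}.

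The cleanest way to make the previous paragraph rigorous is not to argue cycle-theoretically but via torus-fixed-point localisation, exactly as in the proofs of \Cref{colouredwn} and \Cref{lem:creation+r-gen}. So the second step of the plan is: expand $\iota_{\bfk+j,\bfk}([\dot\Delta_{j,r}])$ in the $\rmT$-fixed-point basis of $H_*^\rmT(\GGZ)_{\rm loc}$, using that all three operations (pullback along a Grassmannian-bundle-type map, $\gamma$-transport, and push-forward along an inclusion of fixed-point-equivalent varieties) are controlled on fixed points by the Euler-class formulas recalled in \S\ref{subs:conv} and \S\ref{subs:more-Euler}. Concretely, for a fixed point $x_{w_1,\mu_1}$ with $w_1(\mu_1\setminus\{t\})$ and $i_t=j$, the coefficient will be $\tA_{w_1,\mu_1}^{-1}$ times a product of linear factors $(\ccT_{w_1(t)}-\ccT_{w_1(r)})^{-1}$ over $r\in\mu_2$ with $i_r=j$, multiplied by the extra factor $\prod_{p\in[r+1;n],\,j_p=j}(\ccT_{w_1(p)}-\ccT_{w_1(t)})$ coming from the $W\subset\bV^r$ constraint; this is precisely the coefficient $\tB^{+,r}_{w_1,w_2,\mu_1,\mu_2}$ computed in the proof of \Cref{lem:creation+r-gen} (the general-quiver analogue of the $\mathfrak{sl}_2$ formula just before \Cref{lem:creation+r}). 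Having matched coefficients, I would then invoke \Cref{lem:comp-in-loc}\,\ref{2}.) together with \Cref{lem:S-lambda+r} and \Cref{lem:T-to-S} to conclude that the action on $H_*^{\rmG_\bfn}(\rmY_{\uj,\bfk})$ of this class is multiplication by $\omega_r$, verbatim as in \Cref{lem:creation+r-gen}.

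A small but necessary third step is the compatibility of the $W$-forgetting map $\GGZ_{\bfk+j\supset\bfk}\to\Znaiv_j$ with fixed-point localisation: one must check this map induces a bijection on $\rmT$-fixed points and that its fibres are affine spaces (so the pullback is injective and its effect on fixed points is the obvious transfer of coefficients times an Euler class). This is the same kind of vector-bundle observation made in \Cref{lem:same-prod-supset} and \Cref{vbrk} --- the $\supset$-condition is exactly what guarantees local constancy of the fibre dimension --- so I would cite that reasoning rather than redo it. The main obstacle, if any, is purely bookkeeping: keeping the orthogonal-complement twist $\gamma_{\bfk+j,\bfk}$ and the re-indexing of Grassmannian spaces consistent across the three maps so that the line $W'\cap(\widetilde W')^\perp$ --- rather than $W'$ or $\widetilde W'$ itself --- is the one constrained to lie in $\bV^r$; getting this right is what turns the constraint into the Chern-class product $\prod_{p\in[r+1;n],\,j_p=j}(X_p-\xi)$ of \Cref{lem:creation+r-gen} rather than into multiplication by some other $\omega_{r'}$. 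Once the identification $\iota_{\bfk+j,\bfk}([\dot\Delta_{j,r}])=\text{(push-forward of }\prod_{p\in[r+1;n],\,j_p=j}(X_p-\xi)\text{)}$ is in hand, the lemma is immediate from \Cref{lem:creation+r-gen}.
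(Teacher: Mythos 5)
Your first paragraph essentially locates both ingredients of the paper's (very short) proof: you trace $\iota_{\bfk+j,\bfk}$ correctly to the cycle $\GGZ^j_{\supse,\bfk}$, and you observe that the constraint $W\subset\bV^r$ is dual to $\prod_{p\in[r+1;n],\,j_p=j}(X_p-\xi)$. The paper simply records those two facts as the two equalities $\iota_{\bfk+j,\bfk}([\dot\Delta_j])=[\GGZ^j_{\supse,\bfk}]$ (a cycle-level identification needing no localisation) and $[\dot\Delta_{j,r}]=[\dot\Delta_j]\cdot\prod_{p\in[r+1;n],\,j_p=j}(X_p-\xi)$ (the Euler-class of the vanishing locus of the map $W\to V/\bV^r$ restricted to colour $j$), pushes the Chern-class product through $\iota$ by the projection formula, and cites \Cref{lem:creation+r-gen}. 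Instead of stopping there, you announce that the cleanest route is fixed-point localisation and pivot.

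The localisation route could be made to work, but your step 3 contains genuine errors. The map $\GGZ_{\bfk+j\supset\bfk}\to\Znaiv_j$ you need to control (the $W$-forgetting map precomposed with $\gamma_{\bfk+j,\bfk}$) is a Grassmannian bundle, not a vector bundle: its fibres are products of compact Grassmannians, not affine spaces, and it does \emph{not} induce a bijection on $\rmT$-fixed points --- a fixed point downstairs lifts to a Grassmannian's worth of fixed points upstairs. Consequently its effect on localised classes is not ``transfer of coefficients times an Euler class''; rather, as recorded in the paper's formula for $\iota_{\bfk_1,\bfk_2}$ on $H_*^\rmT(\cdot)_{\loc}$ just before \Cref{starintertwines}, a coefficient at $\mu$ spreads to all pairs $(\mu_1,\mu_2)$ with $\mu_1\setminus\mu_2=\mu$, each weighted by $\catP^{-1}_{\mu_2,\mu_1^c}$. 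Your citation of \Cref{lem:same-prod-supset} and \Cref{vbrk} is also misplaced: those statements concern the $\mA$-forgetting map $\hZ\to\ccZ$, which \emph{is} a vector bundle on the $\supset$-locus --- a genuinely different kind of forgetting map. So while your asserted coefficient matches $\tB^{+,r}$ in the end, the mechanism you describe for producing it does not hold up, and the matching-coefficients conclusion is not actually justified by the reasoning given. The direct cycle-level argument of the paper avoids all of this.
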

\begin{proof}
 We have $\iota_{\bfk+j,\bfk}([\dot\Delta_j])=[\GGZ^j_{\supse,\bfk}]$, where $\GGZ^j_{\supse,\bfk}=\GGZ^j_{\supse,k}\cap \GGZ_{\bfk+j,\bfk}$ for $k=|\bfk|$. 
Since we have $[\dot\Delta_{j,r}]=[\dot\Delta_j]\prod_{p\in [r+1;n],j_p=j}(X_p-\xi)$, \Cref{lem:creation+r-gen} implies then that the image of $[\dot\Delta_{j,r}]$ acts on $H_*^{\rmG_\bfn}(\rmY_{\uj,\bfk})$ by the multiplication with $\omega_r$. 
\end{proof}

}

Now, we construct a product on $H^{\rmG_\bfn}_*(\Znaiv)$ and we identify this algebra with a subalgebra of $H^{\rmG_\bfn}_*(\GGZ)$. This could be done exactly as in \S\ref{sec:sl2-geom} because $\Znaiv$ contains $\calG$ as a direct factor (since $\Znaiv=(\tF\times_{E_\bfn}\tF)\times\calG)$ and $\GGZ$ contains $\calG\times \calG$ as a direct factor (namely $\GGZ=(\tF\times_{E_\bfn}\tF)\times\calG\times\calG)$. We prefer however to give an argument that also works for the modifications of $\GGZ$ and $\Znaiv$ that we will define later.

\begin{df}
For $\bfk_1,\bfk_2\in \Jbfn$ such that  $\bfk_1+\bfk_2\in \Jbfn$ construct a \emph{product map}
\begin{equation}\label{eq:star-G-gen}
H_*^{\rmG_\bfn}(\Znaiv_{\bfk_1})\times H_*^{\rmG_\bfn}(\Znaiv_{\bfk_2})\to H_*^{\rmG_\bfn}(\Znaiv_{\bfk_1+\bfk_2})
\end{equation}
as the composition of $\iota_{\bfk_1+\bfk_2,\bfk_2}\times \Id$ with the convolution product
\begin{equation*}
H_*^{\rmG_\bfn}(\Znaiv_{\bfk_1})\times H_*^{\rmG_\bfn}(\Znaiv_{\bfk_2})\to H_*^{\rmG_\bfn}(\GGZ_{\bfk_1+\bfk_2,\bfk_2})\times H_*^{\rmG_\bfn}(\GGZ_{\bfk_2,0})\to H_*^{\rmG_\bfn}(\GGZ_{\bfk_1+\bfk_2,0})
\end{equation*}
using the identification $H_*^{\rmG_\bfn}(\GGZ_{\bfk_1+\bfk_2,0})= H_*^{\rmG_\bfn}(\Znaiv_{\bfk_1+\bfk_2})$. 
\end{df}
\begin{prop}\label{closedmulti}
The maps \eqref{eq:star-G-gen} define an algebra structure on  $H^{\rmG_\bfn}_*(\Znaiv)$. 
\end{prop}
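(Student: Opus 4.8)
The claim is that the product maps \eqref{eq:star-G-gen} assemble into an associative unital algebra structure on $H^{\rmG_\bfn}_*(\Znaiv) = \bigoplus_{\bfk \in \Jbfn} H^{\rmG_\bfn}_*(\Znaiv_\bfk)$. The strategy is to reduce everything to the already-established convolution algebra $H^{\rmG_\bfn}_*(\GGZ)$ from \S\ref{subs:intuit-approch-philo}. First I would show that the maps $\iota_{\bfk_1,\bfk_2}$ from \Cref{imap}, taken over all pairs, glue to an \emph{injective} linear map $\iota\colon H^{\rmG_\bfn}_*(\Znaiv) \hookrightarrow H^{\rmG_\bfn}_*(\GGZ)$. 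Injectivity of each $\iota_{\bfk_1,\bfk_2}$ follows as in the $\mathfrak{sl}_2$ case (\Cref{lem:basisHZplus} and the lemma preceding it): the pull-back along the Grassmannian-bundle map $\GGZ_{\bfk_1\supset\bfk_1-\bfk_2}\to\Znaiv_{\bfk_1-\bfk_2}$ is injective, the diffeomorphism $\gamma_{\bfk_1,\bfk_2}$ induces an isomorphism, and the push-forward along the inclusion $\GGZ_{\bfk_1\supset\bfk_2}\subset\GGZ_{\bfk_1,\bfk_2}$ is injective because that inclusion admits a compatible cell decomposition (or, more robustly, one checks injectivity of the localisation map $H^{\rmG_\bfn}_*(\GGZ_{\bfk_1\supset\bfk_2})\to H^{\rmT}_*((\GGZ_{\bfk_1\supset\bfk_2})^\rmT)_{\loc}$, cf. \Cref{rk:faith-from-strat}). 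Since the three identifications used here only couple the $\calG\times\calG$-factors and leave $\tF\times_{E_\bfn}\tF$ untouched, the fibrewise torus-localisation computation is identical to the one in \Cref{lem:same-prod-supset}, replacing the scalar factor $\catP_\mu$ by the Euler class of the relevant Grassmannian subquotient bundle.

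The core of the proof is then to verify that the product map \eqref{eq:star-G-gen} is exactly the restriction, along $\iota$, of the convolution product on $H^{\rmG_\bfn}_*(\GGZ)$ — that is, that $\iota$ is multiplicative, so that the image $\iota\big(H^{\rmG_\bfn}_*(\Znaiv)\big)$ is a subalgebra of $H^{\rmG_\bfn}_*(\GGZ)$. By definition \eqref{eq:star-G-gen} is already built as "$\iota_{\bfk_1+\bfk_2,\bfk_2}\times\Id$ followed by convolution in $\GGZ$", so the content is to show that this agrees with "$\iota_{\bfk_1+\bfk_2,\bfk_2}\times\iota_{\bfk_2,0}$ followed by convolution", i.e.\ that $\iota_{\bfk_2,0}$ is the identity (which it is, since $\Znaiv_{\bfk_2}=\GGZ_{\bfk_2,0}$) and that the associativity of convolution in $\GGZ$ makes the diagram commute. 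Concretely I would pass to $\rmT$-fixed points and use \Cref{lem:comp-in-loc}~\ref{1}.): an element of $H^{\rmG_\bfn}_*(\Znaiv_{\bfk_1})$ pulled into $H^{\rmG_\bfn}_*(\GGZ_{\bfk_1+\bfk_2,\bfk_2})$, convolved with one in $H^{\rmG_\bfn}_*(\GGZ_{\bfk_2,0})=H^{\rmG_\bfn}_*(\Znaiv_{\bfk_2})$, lands in $H^{\rmG_\bfn}_*(\GGZ_{\bfk_1+\bfk_2,0})=H^{\rmG_\bfn}_*(\Znaiv_{\bfk_1+\bfk_2})$; associativity and unitality are inherited from the convolution algebra because the subspaces $H^{\rmG_\bfn}_*(\GGZ_{\bfk,0})$ together with the images of the $\iota$-maps are closed under the triple product $p_{1,3\,*}(p_{1,2}^*\cap p_{2,3}^*\cap p_{3,4}^*)$ over $\rmY\times\rmY\times\rmY$. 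The unit is the fundamental class of the diagonal in $\Znaiv_0 = \tF\times_{E_\bfn}\tF$, which maps under $\iota$ to the diagonal class that is the unit of $H^{\rmG_\bfn}_*(\GGZ)$.

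The main obstacle I anticipate is the bookkeeping in the localisation computation showing $\iota$ is multiplicative: one must track the Euler-class factors $\tA_{w,\mu}$ and the Grassmannian-subquotient Euler classes through the composition of pull-back, the non-algebraic diffeomorphism $\gamma_{\bfk_1,\bfk_2}$ (only $\rmU_\bfn$-equivariant, cf.\ \Cref{rk:G-vs-U}), and push-forward, and check that the "extra" factors introduced by $\iota_{\bfk_1+\bfk_2,\bfk_2}$ on the left precisely cancel against the Euler classes appearing in the convolution product formula $[(x,y)]\star[(y,z)]=\op{eu}(\rmY,y)[(x,z)]$ — this is the exact analogue of the cancellation $\catP^{-1}_{\mu_1}\catP^{-1}_{\mu_2}\catP_{\mu_2}=\catP^{-1}_{\mu_1}$ at the end of \Cref{lem:same-prod-supset} and of \Cref{redwine}, but now carried out with the $\tF\times_{E_\bfn}\tF$-factor present and with colours. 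Since that factor is untouched by all the maps in sight, it contributes inertly, and the computation decouples into an $\mathfrak{sl}_2$-type Grassmannian computation (already done in \Cref{redwine}) tensored with the identity on $H^{\rmG_\bfn}_*(\tF\times_{E_\bfn}\tF)$; making this decoupling rigorous is the one place where care is needed, but it presents no genuinely new difficulty beyond the $\mathfrak{sl}_2$ case.
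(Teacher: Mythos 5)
Your proposal is correct and follows essentially the same route as the paper: the key step you describe — the $\rmT$-fixed-point computation showing that the maps $\iota_{\bfk_1,\bfk_2}$ intertwine \eqref{eq:star-G-gen} with the convolution product on $H_*^{\rmG_\bfn}(\GGZ)$, via the coloured analogue of the Euler-class cancellation in \Cref{redwine} — is exactly what the paper packages as \Cref{starintertwines} and \Cref{coromult}, from which the subalgebra statement, associativity and unitality follow as you indicate. The only (deliberate) difference is that the paper declines your closing shortcut of decoupling off the $\calG$-factor, precisely because it wants an argument that also applies to the later twisted modifications of $\Znaiv$ where no such direct-factor decomposition exists.
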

After some preparation we will in fact prove a stronger statement in \Cref{closedmultistrong}. We start with some cohomology calculations using localisation.  
For this we upgrade the notation from \S\ref{subs:star-prod} to coloured subsets. For $\lambda,\nu\in \Lambda(n)$ coloured by $\ui_0$, we set $\catP_{\lambda,\nu}=\prod_{r\in\lambda,t\in \nu}(\ccT_i-\ccT_j)$, where the product runs only over pairs $(r,t)$ such that $r$ and $t$ have the same colour (in $\ui_0$). 

Recalling the notation for $\rm T$-fixed points from \S\ref{sec:fixedpoints} we write  $\oX\in H_*^\rmT(\Znaiv_{\bfk_1})_{\rm loc}$ as
\begin{equation}\label{X}
\oX=\sum_{w_1,w_2,\mu}\oX(w_1,w_2,\mu)[(w_1,w_2,\mu)].
\end{equation}
The morphism $\iota_{\bfk_1,\bfk_2}$ from \Cref{imap} lifts to $H_*^\rmT(\bullet)_{\rm loc}$ by setting 
$$
\iota_{\bfk_1,\bfk_2}\oX={\sum_{\mu_1\supset\mu_2}}\catP^{-1}_{\mu_2,\mu_1^c}\oX(w_1,w_2,\mu_1\backslash\mu_2)[(w_1,w_2,\mu_1,\mu_2)].
$$
Set $\Theta_{w,\mu}=\op{eu}(\rmY,(f_w,g_\mu))$ and $\Theta_w=\op{eu}(\tF,f_w)$. Then $\Theta_{w,\mu}=\Theta_w\catP_{\mu,\mu^c}$, where here and in the following $\mu^c=[1;n]\backslash \mu$ denotes the complement of $\mu$. 
\begin{df} Define the following product on $H_*^\rmT(\Znaiv)_{\rm loc}$ using notation \eqref{X}. For $\oX\in H_*^\rmT(\Znaiv_{\bfk_1})_{\rm loc}$ and $\oY\in H_*^\rmT(\Znaiv_{\bfk_2})_{\rm loc}$ let $\oX\star \oY\in H_*^\rmT(\Znaiv_{\bfk_1+\bfk_2})_{\rm loc}$ be given by
	\begin{equation*}
	\label{eq:star-loc-gen}
	{(\oX\star\oY)}(w_1,w_2,\mu)=\sum_{w_2,\mu=\mu'\coprod\mu''} \Theta_{w_2}\catP_{\mu'',(\mu')^c}\oX(w_1,w_2,\mu')\oY(w_2,w_3,\mu'').
	\end{equation*}
\end{df}

\begin{lem}\label{starintertwines}
Assume we are given $\bfk_1,\bfk_2,\bfk_3\in \Jbfn$ such that\footnote{In this case, the element $\bfk_{13}:=\bfk_{12}+\bfk_{23}=\bfk_1-\bfk_3$ is then automatically in $\Jbfn$.} 	
$$
\bfk_{12}:=\bfk_1-\bfk_2\in \Jbfn \quad \mbox{ and } \quad \bfk_{23}:=\bfk_2-\bfk_3\in \Jbfn.
$$
Then the following diagram commutes
$$
\begin{CD}
H_*^\rmT(\Znaiv_{\bfk_{12}})_{\rm loc}\times H_*^\rmT(\Znaiv_{\bfk_{23}})_{\rm loc} @>{\star}>> H_*^\rmT(\Znaiv_{\bfk_{13}})_{\rm loc}\\
@V{\iota_{\bfk_1,\bfk_2}\times \iota_{\bfk_2,\bfk_3}} VV                                                             @V{\iota_{\bfk_1,\bfk_3}}VV\\
H_*^\rmT(\GGZ_{\bfk_1,\bfk_2})_{\rm loc}\times H_*^\rmT(\GGZ_{\bfk_2,\bfk_3})_{\rm loc} @>{\star}>> H_*^\rmT(\GGZ_{\bfk_1,\bfk_3})_{\rm loc}.\\
\end{CD}
$$
Here the bottom map is the product on $H_*^\rmT(\GGZ)_{\rm loc}$ from \Cref{lem:comp-in-loc}.
\end{lem}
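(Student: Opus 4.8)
The statement is a compatibility of the two $\star$-products under the inclusion maps $\iota$, formulated entirely in the localised Borel--Moore homology. Since everything in sight is a module over ${\rm Frac}(\tR_\rmT)$ and the $\rmT$-fixed points of all the varieties involved are finite (each fixed point of $\Znaiv$ or $\GGZ$ has zero $E_\bfn$-component and so lives in a product of flag and Grassmannian varieties, whose fixed points are indexed as in \S\ref{sec:fixedpoints}), the plan is to check the commutativity on the basis of $\rmT$-fixed point classes $[(w_1,w_2,\mu)]$, respectively $[(w_1,w_2,\mu_1,\mu_2)]$. This reduces the lemma to a finite bookkeeping identity among rational functions in the $\ccT$'s.

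First I would record the explicit formulas for all four arrows in the diagram on fixed-point classes. The bottom $\star$ is given by \Cref{lem:comp-in-loc}\,\ref{1}, i.e. $[(w_1,w_2,\mu_1,\mu_2)]\star[(w_2',w_3,\mu_2',\mu_3)]=\delta_{w_2,w_2'}\delta_{\mu_2,\mu_2'}\,\op{eu}(\rmY,(f_{w_2},g_{\mu_2}))\,[(w_1,w_3,\mu_1,\mu_3)]$, with $\op{eu}(\rmY,(f_{w_2},g_{\mu_2}))=\Theta_{w_2}\catP_{\mu_2,\mu_2^c}$ in the notation just introduced. The top $\star$ on $H_*^\rmT(\Znaiv)_{\rm loc}$ is the one just defined, with its combinatorial factor $\Theta_{w_2}\catP_{\mu'',(\mu')^c}$. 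The vertical maps $\iota_{\bfk_1,\bfk_2}$ act on the fixed-point expansion exactly as displayed before the lemma: they send $[(w_1,w_2,\mu_1\backslash\mu_2)]$ (suitably weighted) to $\sum_{\mu_1\supset\mu_2}\catP^{-1}_{\mu_2,\mu_1^c}\,(\cdots)\,[(w_1,w_2,\mu_1,\mu_2)]$, the factor $\catP^{-1}_{\mu_2,\mu_1^c}$ coming from the pull-back along the Grassmannian-bundle projection (the Euler class of the fibre of $\GGZ_{\bfk_1\supset\bfk_1-\bfk_2}\to\Znaiv_{\bfk_1-\bfk_2}$) together with the diffeomorphism $\gamma_{\bfk_1,\bfk_2}$, whose effect on a fixed point is $g_{\mu_2}\mapsto g_{\mu_1\backslash\mu_2}$ as a sub of $g_{\mu_1}$ and therefore on Euler classes is just a relabelling. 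This is the coloured analogue of the computation in \Cref{redwine}, and I would state the coloured versions of the two ingredients used there: $\iota_{\bfk_1,\bfk_2}\oX$ has $(w_1,w_2,\mu_1,\mu_2)$-coefficient $\Theta_{w_1}^{-1}\catP^{-1}_{\mu_1\backslash\mu_2,\mu_2}\catP^{-1}_{\mu_1\backslash\mu_2,\mu_2^c}\,\oX(w_1,w_2,\mu_1\backslash\mu_2)$ after one also divides by the ambient Euler class, and $\op{eu}(\GGZ_{\bfk_1\supset\bfk_2},(w_1,w_2,\mu_1,\mu_2))=\Theta_{w_1}\catP_{\mu_1\backslash\mu_2,\mu_2}\catP_{\mu_1,\mu_1^c}$.

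Next I would chase both ways around the square applied to a pair of fixed-point classes $[(w_1,w_2,\mu_1\backslash\mu_2)]\otimes[(w_2,w_3,\mu_2\backslash\mu_3)]$ (all other pairs give zero on both sides by the $\delta$'s). Going down-then-across produces a single class $[(w_1,w_3,\mu_1,\mu_3)]$ with a coefficient that is a product of: two factors $\catP^{-1}$ from the two $\iota$'s on the left, the Euler factor $\Theta_{w_2}\catP_{\mu_2,\mu_2^c}$ from the bottom $\star$, and the inherited weights of $\oX,\oY$. Going across-then-down produces the same class with a coefficient built from the top $\star$'s factor $\Theta_{w_2}\catP_{\mu_2\backslash\mu_3,(\mu_1\backslash\mu_3)^c}$ (here $\mu=\mu_1\backslash\mu_3$, $\mu'=\mu_1\backslash\mu_2$, $\mu''=\mu_2\backslash\mu_3$) and the single $\catP^{-1}$ from $\iota_{\bfk_1,\bfk_3}$. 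Equating the two reduces, after the obvious cancellations of $\Theta$'s and of the $\oX,\oY$-weights, to a pure identity among the $\catP$-polynomials, namely a coloured refinement of
\[
\catP_{\mu_1\backslash\mu_2,\mu_2}\,\catP_{\mu_2\backslash\mu_3,\mu_3}=\catP_{\mu_1\backslash\mu_2,\mu_2\backslash\mu_3}\,\catP_{\mu_1\backslash\mu_3,\mu_3},
\]
exactly the multiplicativity of $\catP$ used at the end of \Cref{redwine}, together with $\catP_{\lambda,\nu\sqcup\nu'}=\catP_{\lambda,\nu}\catP_{\lambda,\nu'}$ and $\catP_{\mu',(\mu')^c}\catP$-splittings of the ambient Euler classes. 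Since in the coloured setting $\catP_{\lambda,\nu}$ only pairs indices of equal colour, this identity holds colour by colour and follows from the $\mathfrak{sl}_2$ case of \Cref{redwine} applied inside each $V_i$ via the K\"unneth decomposition $\calG_\bfk=\prod_{i\in I}\op{Gr}_{k_i}(V_i)$.

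\textbf{Main obstacle.} The only real subtlety — and the step I expect to take the most care — is the behaviour of $\gamma_{\bfk_1,\bfk_2}$ on Euler classes and on the fixed-point labels: it is only a $\rmU_\bfn$-diffeomorphism, not $\rmG_\bfn$-equivariant, and one must make sure (via \Cref{rk:G-vs-U}) that the induced map on $\rmT$-equivariant cohomology sends $[(w_1,w_2,\mu_1,\mu_2)]$ to the class supported at the fixed point with $\tW$ replaced by $\tW^\perp\cap W$, i.e. with label $\mu_2\rightsquigarrow(\mu_1\backslash\mu_2)$, and that the Euler-class ratio picked up is precisely $\catP_{\mu_1\backslash\mu_2,\mu_2}/\catP$-type factors rather than some twist. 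Once this identification of the $\iota$-map on fixed points is pinned down exactly as in \Cref{redwine} (the coloured case being a colour-wise product of that computation), the rest is the bookkeeping identity above, and the diagram commutes.
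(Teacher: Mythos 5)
Your proposal follows the same route as the paper's proof: expand everything in $\rmT$-fixed-point classes, plug in the explicit formulas for the $\iota$'s and for the bottom $\star$ from \Cref{lem:comp-in-loc}, chase the square, and reduce to a $\catP$-multiplicativity identity — indeed the very identity
$\catP_{\mu_1\backslash\mu_2,\mu_2}\catP_{\mu_2\backslash\mu_3,\mu_3}=\catP_{\mu_1\backslash\mu_2,\mu_2\backslash\mu_3}\catP_{\mu_1\backslash\mu_3,\mu_3}$
from the proof of \Cref{redwine}, and you correctly note that because the coloured $\catP$ pairs only indices of equal colour, this factors colour-by-colour and reduces to the $\mathfrak{sl}_2$ computation. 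This is exactly what the paper does, modulo rewriting the identity in terms of complements $\mu^c$ and absorbing the $\Theta_{w_2,\mu_2}=\Theta_{w_2}\catP_{\mu_2,\mu_2^c}$ factor.

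Two small quibbles, neither fatal. First, your stated fixed-point coefficient for $\iota_{\bfk_1,\bfk_2}\oX$, namely $\Theta_{w_1}^{-1}\catP^{-1}_{\mu_1\backslash\mu_2,\mu_2}\catP^{-1}_{\mu_1\backslash\mu_2,\mu_2^c}$, does not match the paper's $\catP^{-1}_{\mu_2,\mu_1^c}$ and, taken literally, pairs $\mu_1\backslash\mu_2$ with all of $[1;n]=\mu_2\sqcup\mu_2^c$ (hence with itself), which is problematic; you hedge by saying this is "after one also divides by the ambient Euler class", but the conventions for $\oX(\mu)$ in the coloured setting are the unnormalised coefficients, so there is no extra division and you should just carry the single factor $\catP^{-1}_{\mu_2,\mu_1^c}$ (this is what makes the final $\catP$-identity land cleanly). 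Second, the obstacle you flag — the $\rmU_\bfn$-invariance of $\gamma_{\bfk_1,\bfk_2}$ and its effect on fixed-point labels and Euler classes — is a real subtlety, but it is resolved in the \emph{definition} of $\iota_{\bfk_1,\bfk_2}$ (and in the $\mathfrak{sl}_2$ prototype \Cref{redwine}), not in the proof of this lemma; once the formula for $\iota$ on fixed points is taken as given, the proof of the lemma is the purely combinatorial bookkeeping identity and involves no further geometry.
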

\begin{rk}
By \Cref{lem:comp-in-loc}, we have explicitly (here $\delta$ is the Kronecker delta)
\begin{equation}
\label{eq:mulp-Tfixed-GGZ}
[(w_1,w_2,\mu_1,\mu_2)]\star [(w'_2,w_3,\mu'_2,\mu_3)]=\delta_{\mu_2,\mu_2'}\delta_{w_2,w_2'}\Theta_{w_2,\mu_2}[(w_1,w_3,\mu_1,\mu_3)].
\end{equation}
\end{rk}
\begin{proof}
Assume $\oX\in H_*^\rmT(\Znaiv_{\bfk_{12}})_{\rm loc}$ and $\oY\in H_*^\rmT(\Znaiv_{\bfk_{23}})_{\rm loc}$. Then we can write
\begin{align*}
\iota_{\bfk_1,\bfk_2}\oX&={\sum_{\mu_1\supset\mu_2}}\catP^{-1}_{\mu_2,\mu_1^c}\oX(w_1,w_2,\mu_1\backslash\mu_2)[(w_1,w_2,\mu_1,\mu_2)],\\
\iota_{\bfk_2,\bfk_3}\oY&={\sum_{\mu_2\supset\mu_3}}\catP^{-1}_{\mu_3,\mu_2^c}\oY(w_2,w_3,\mu_2\backslash\mu_3)[(w_2,w_3,\mu_2,\mu_3)],\\
\iota_{\bfk_1,\bfk_3}(\oX\star \oY)&=\sum_{\mu_1\subset\mu_3}\catP^{-1}_{\mu_3,\mu_1^c}{(\oX\star \oY)}(w_1,w_3,\mu_1\backslash\mu_3)[(w_1,w_3,\mu_1,\mu_3)].
\end{align*}
Using \eqref{eq:mulp-Tfixed-GGZ}, we can then compute $(\iota_{\bfk_1,\bfk_2}X)\star (\iota_{\bfk_2,\bfk_3}Y)$ as
$$
{\sum_{\mu_1\supset\mu_2\supset \mu_3}}\Theta_{w_2,\mu_2}\catP^{-1}_{\mu_2,\mu_1^c}\catP^{-1}_{\mu_3,\mu_2^c}\oX(w_1,w_2,\mu_1\backslash\mu_2)\oY(w_2,w_3,\mu_2\backslash\mu_3)[(w_1,w_3,\mu_1,\mu_3)].
$$
The desired equality $(\iota_{\bfk_1,\bfk_2}X)\star (\iota_{\bfk_2,\bfk_3}Y)=\iota_{\bfk_1,\bfk_3}(\oX\star \oY)$ follows, since we have 
\begin{align*}
&\Theta_{w_2,\mu_2}\catP^{-1}_{\mu_2,\mu_1^c}\catP^{-1}_{\mu_3,\mu_2^c}\catP_{\mu_3,\mu_1^c}=\Theta_{w_2}\catP_{\mu_2,(\mu_2)^c}\catP^{-1}_{\mu_2,\mu_1^c}\catP^{-1}_{\mu_3,\mu_2^c}\catP_{\mu_3,\mu_1^c}\\
=\quad&\Theta_{w_2}\catP_{\mu_2,(\mu_1\backslash \mu_2)^c}\catP^{-1}_{\mu_3,(\mu_1\backslash \mu_2)^c}=\Theta_{w_2}\catP_{\mu_2\backslash\mu_3,(\mu_1\backslash \mu_2)^c}.
\end{align*}
by the formula $\Theta_{w,\mu}=\Theta_w\catP_{\mu,\mu^c}$ and the definitions. 
\end{proof}
We observe that \Cref{starintertwines} applied in case $\bfk_3=0$ implies that the product on $H_*^\rmT(\Znaiv)_{\rm loc}$ defined in \eqref{eq:star-loc-gen} restricts to the product on $H_*^{\rmG_\bfn}(\Znaiv)$ defined in \eqref{eq:star-G-gen}. \begin{coro}\label{coromult}
	Assume that $\bfk_1,\bfk_2,\bfk_3\in \Jbfn$ are as in \Cref{starintertwines}.	
	Then the following diagram commutes
	$$
	\begin{CD}
	H_*^{\rmG_\bfn}(\Znaiv_{\bfk_{12}})\times H_*^{\rmG_\bfn}(\Znaiv_{\bfk_{23}}) @>>> H_*^{\rmG_\bfn}(\Znaiv_{\bfk_{13}})\\
	@V{\iota_{\bfk_1,\bfk_2}\times \iota_{\bfk_2,\bfk_3}} VV                                                             @V{\iota_{\bfk_1,\bfk_3}}VV\\
	H_*^{\rmG_\bfn}(\GGZ_{\bfk_1,\bfk_2})\times H_*^{\rmG_\bfn}(\GGZ_{\bfk_2,\bfk_3}) @>>> H_*^{\rmG_\bfn}(\GGZ_{\bfk_1,\bfk_3}).
	\end{CD}
	$$
\end{coro}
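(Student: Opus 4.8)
The final statement to prove is \Cref{coromult}. The plan is to deduce it from the already-established \Cref{starintertwines} by passing from $\rmT$-equivariant localized homology to $\rmG_\bfn$-equivariant homology.

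First I would recall the mechanism, developed in \S\ref{subs:conv} and used throughout, that relates $\rmG_\bfn$-equivariant homology to $\rmT$-equivariant homology: since the ground field has characteristic zero, $H_*^{\rmG_\bfn}(\abX)\cong (H_*^\rmT(\abX))^{W}$ for the relevant Weyl group $W$, and this sits inside $H_*^\rmT(\abX)\subset H_*^\rmT(\abX)_{\rm loc}$ provided $H_*^\rmT(\abX)$ embeds into its localization (which holds here because the relevant varieties admit nice pavings, cf.\ \Cref{rk:faith-from-strat}). Thus all four corners of the square in \Cref{coromult} are canonically subspaces of the corresponding localized groups appearing in \Cref{starintertwines}. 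The key observation is then that the two vertical maps and the two horizontal maps in the $\rmG_\bfn$-square are simply the restrictions of the maps in the $\rmT_{\rm loc}$-square: the $\iota$'s were \emph{defined} (\Cref{imap}) as compositions of a pull-back, the isomorphism induced by the diffeomorphism $\gamma_{\bfk_1,\bfk_2}$ from \eqref{gamma}, and a push-forward, each of which is compatible with the localization map and with the $\rmT_{\rm loc}$-lift written just before \Cref{starintertwines}; and the horizontal product maps restrict to the $\star$-product of \eqref{eq:star-G-gen} by the remark immediately following \Cref{starintertwines} (the case $\bfk_3=0$ of that lemma), together with the general fact (\Cref{lem:comp-in-loc}) that the convolution product on $H_*^{\rmG_\bfn}(\GGZ)$ is the restriction of the fixed-point product on $H_*^\rmT(\GGZ^\rmT)_{\rm loc}$.

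The proof I would write is therefore short: take $a\in H_*^{\rmG_\bfn}(\Znaiv_{\bfk_{12}})$ and $b\in H_*^{\rmG_\bfn}(\Znaiv_{\bfk_{23}})$, regard them as $W$-invariant elements of $H_*^\rmT(\Znaiv_{\bfk_{12}})_{\rm loc}$ and $H_*^\rmT(\Znaiv_{\bfk_{23}})_{\rm loc}$ respectively, apply \Cref{starintertwines} to conclude that $(\iota_{\bfk_1,\bfk_2}a)\star(\iota_{\bfk_2,\bfk_3}b)=\iota_{\bfk_1,\bfk_3}(a\star b)$ as elements of $H_*^\rmT(\GGZ_{\bfk_1,\bfk_3})_{\rm loc}$, and then note that both sides in fact lie in the subspace $H_*^{\rmG_\bfn}(\GGZ_{\bfk_1,\bfk_3})$, because $\iota_{\bfk_1,\bfk_3}$ and the convolution product preserve $\rmG_\bfn$-equivariant classes (they are induced by genuine, $\rmU_\bfn$-invariant — hence $\rmG_\bfn$-equivariant, by \Cref{rk:G-vs-U} — geometric maps, not merely maps on localized groups). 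Since the localization maps are injective, the equality in $H_*^{\rmG_\bfn}(\GGZ_{\bfk_1,\bfk_3})$ follows. One should also record the two sanity checks: that the $\iota$-maps land in the $\rmG_\bfn$-equivariant part (this is where the $\rmU_\bfn$-invariance of $\gamma_{\bfk_1,\bfk_2}$ matters, exactly as in \Cref{rk:G-vs-U}) and that the horizontal maps do too.

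I do not expect a serious obstacle here; \Cref{coromult} is essentially a formal descent of \Cref{starintertwines} along the inclusion of the $W$-invariants. The only point requiring a little care is bookkeeping: one must make sure the three dimension vectors $\bfk_{12},\bfk_{23},\bfk_{13}=\bfk_1-\bfk_3$ genuinely lie in $\Jbfn$ (noted in the footnote to \Cref{starintertwines}) so that all varieties and maps in the diagram are defined, and that the various $\rmT$-fixed-point labelings from \S\ref{sec:fixedpoints} are used consistently. Once that is in place, the commutativity is immediate from \Cref{starintertwines} by restriction, and this then feeds into the proof of \Cref{closedmulti} (and the stronger \Cref{closedmultistrong}) by specializing $\bfk_3=0$.
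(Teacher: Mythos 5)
Your proposal is correct and matches the paper's intent exactly: the paper gives no separate proof of \Cref{coromult}, treating it as an immediate consequence of \Cref{starintertwines} together with the observation (stated just before the corollary) that the localized $\star$-product restricts to the product \eqref{eq:star-G-gen}, which is precisely your descent-to-$W$-invariants argument via the injective localization maps. Your two sanity checks ($\rmU_\bfn$-invariance of $\gamma_{\bfk_1,\bfk_2}$ and the membership of $\bfk_{12},\bfk_{23},\bfk_{13}$ in $\Jbfn$) are the right points to record and are consistent with \Cref{rk:G-vs-U} and the footnote to \Cref{starintertwines}.
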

\Cref{coromult} identifies $H_*^{\rmG_\bfn}(\Znaiv)$ with a subalgebra of $H_*^{\rmG_\bfn}(\GGZ)$. 
Moreover, the algebra $H_*^{\rmG_\bfn}(\Znaiv)$ acts faithfully on $H_*^{\rmG_\bfn}(\rmY)=\hPol_\bfn$ by \Cref{rk:faith-from-strat}.  
For each generator of $\hR_\bfn$, namely for $X_{r}1_{\bi}$,  $\tau_{r}1_{\bi}$ from \eqref{diagdotcross} and for $\Omega1_{\ui}$ from \eqref{diagfloatdot}, 
we have constructed an element of $H_*^{\rmG_\bfn}(\Znaiv)$ acting by the same operator. This gives a homomorphism $\hR_\bfn\to H_*^{\rmG_\bfn}(\Znaiv)$ of algebras. With the following grading it is homogeneous of degree zero.

\begin{df}\label{geomgradhR} 
Define a $\bbZ$-grading on the algebra $H_*^{\rmG_\bfn}(\Znaiv)$ by putting $H_r^{\rmG_\bfn}(\Znaiv_{\ui,\uj,\bfk})$ in degree $
-r+2\dim\rmY_{\ui,\bfk}+\sum_{i\in I}(k_i^2-k_i).$ 
\end{df}

In summary we obtain the following strengthening of \Cref{closedmulti}.
\begin{thm}\label{closedmultistrong}
The product maps  \eqref{eq:star-G-gen}  equip $H_*^{\rmG_\bfn}(\Znaiv)$ with an algebra structure. Moreover, there is an embedding $\hR_\bfn\hookrightarrow H_*^{\rmG_\bfn}(\Znaiv)$ of algebras, even of graded algebras with the gradings given by \Cref{DefgradhR} and \Cref{geomgradhR} respectively.
\end{thm}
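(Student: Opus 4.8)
The statement combines two assertions: that \eqref{eq:star-G-gen} gives a well-defined associative unital algebra structure on $H_*^{\rmG_\bfn}(\Znaiv)$, and that the map $\hR_\bfn\to H_*^{\rmG_\bfn}(\Znaiv)$ assembled from the generator computations is an embedding of graded algebras. The plan is to deduce everything from the localisation machinery already set up in \Cref{starintertwines} and \Cref{coromult}, together with the faithful action on $H_*^{\rmG_\bfn}(\rmY)=\hPol_\bfn$.

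First I would dispose of associativity and unitality. By \Cref{starintertwines}, the collection of product maps \eqref{eq:star-loc-gen} on $H_*^\rmT(\Znaiv)_{\rm loc}$ is compatible with the associative convolution product on $H_*^\rmT(\GGZ)_{\rm loc}$ via the (injective) maps $\iota_{\bfk_1,\bfk_2}$; explicitly, for any admissible $\bfk_1,\bfk_2,\bfk_3$ we have $\iota_{\bfk_1,\bfk_3}(\oX\star\oY)=(\iota_{\bfk_1,\bfk_2}\oX)\star(\iota_{\bfk_2,\bfk_3}\oY)$ inside $H_*^\rmT(\GGZ_{\bfk_1,\bfk_3})_{\rm loc}$. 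Chaining this identity twice for a triple product $\oX\star\oY\star\oZ$ and using that the convolution product on $H_*^\rmT(\GGZ)_{\rm loc}$ is associative (it is the matrix product of \Cref{lem:comp-in-loc}\ref{1}) together with the injectivity of $\iota_{\bfk,0}$ forces $(\oX\star\oY)\star\oZ=\oX\star(\oY\star\oZ)$ on $H_*^\rmT(\Znaiv)_{\rm loc}$. Since $H_*^{\rmG_\bfn}(\Znaiv)$ embeds into $H_*^\rmT(\Znaiv)_{\rm loc}$ (via torus localisation, using that $\Znaiv$ has a paving so that $H_*^{\rmG_\bfn}(\Znaiv)\subset H_*^\rmT(\Znaiv)_{\rm loc}$) and the product $\star$ of \eqref{eq:star-G-gen} is the restriction of the localised product (this is exactly the observation recorded right after \Cref{starintertwines}, the case $\bfk_3=0$), associativity descends. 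The unit is the fundamental class of the diagonal component $\Znaiv_0=\tF\times_{E_\bfn}\tF$ inside $\Znaiv$, which is the convolution unit of $H_*^{\rmG_\bfn}(\tF\times_{E_\bfn}\tF)\cong R_\bfn$ from \Cref{prop:KLR-geom} and remains a unit for $\star$ because $\iota_{\bfk,\bfk}=\Id$ by construction; one checks $[\Znaiv_0]\star x=x=x\star[\Znaiv_0]$ directly on fixed points using \eqref{eq:star-loc-gen} with one of $\mu',\mu''$ empty.

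Next, the embedding. By \Cref{coromult}, $\iota_{\bullet,0}$ identifies $(H_*^{\rmG_\bfn}(\Znaiv),\star)$ with a subalgebra of the convolution algebra $H_*^{\rmG_\bfn}(\GGZ)$; in particular $H_*^{\rmG_\bfn}(\Znaiv)$ acts on $H_*^{\rmG_\bfn}(\rmY)=\hPol_\bfn$ by restriction of the convolution action, and this action is faithful by \Cref{lem:comp-in-loc} together with \Cref{rk:faith-from-strat} (a paving of $\Znaiv$ is needed here, which we have). On the other side, $\hR_\bfn$ acts faithfully on $\hPol_\bfn$ by \Cref{prop:faithrep-KLR-Gr}. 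For each algebra generator of $\hR_\bfn$ --- namely $X_r1_\bi$ and $\tau_r1_\bi$ from \eqref{diagdotcross} and $\Omega1_\ui$ from \eqref{diagfloatdot} --- the preceding sections produce an explicit class in $H_*^{\rmG_\bfn}(\Znaiv)$ whose action on $\hPol_\bfn$ coincides with the action of that generator: the Demazure/crossing classes $[\Zr_{s_r(\uj),\uj}]$ give $\tau_r1_\uj$ (up to the fixed sign), the Chern classes $X_p$ give the multiplications $X_r1_\bi$, and the classes $[\dot\Delta_{j,r}]$ (equivalently the push-forwards of $\prod_{p}(X_p-\xi)$ from $\GGZ^j_{\supse,k}$, \Cref{lem:creation+r-gen}) give the multiplications by $\omega_r$, hence $\Omega1_\ui=\omega_1 1_\ui$. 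Since the defining relations of $\hR_\bfn$ are relations of operators on the faithful module $\hPol_\bfn$, any assignment of the generators to operators respecting those relations extends to a homomorphism; the images lie in $H_*^{\rmG_\bfn}(\Znaiv)$ because that is a subalgebra of the operator algebra. Thus we obtain $\hR_\bfn\to H_*^{\rmG_\bfn}(\Znaiv)$, and it is injective because the composite with the faithful action on $\hPol_\bfn$ is the faithful $\hR_\bfn$-action. Homogeneity of degree zero is a bookkeeping check: one compares \Cref{DefgradhR} with \Cref{geomgradhR} on each generator, using $\deg(\tau_r1_\uj)=2h_{i_r,i_{r+1}}-2\delta_{i_r,i_{r+1}}$ against the dimension count for $\Zr_{s_r(\uj),\uj}$ (this is essentially \Cref{coro:dim-cell-KLR}), $\deg(X_r)=2$ against the Chern class convention, and $\deg(\Omega1_\uj)=2(n_{j_1}-1)$ against the fibre-dimension shift built into the correction term $\sum_i(k_i^2-k_i)$; the $\mathfrak{sl}_2$ prototype of this matching is \Cref{shiftedgrading}.

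\textbf{Main obstacle.} The genuinely nontrivial ingredient is the existence of a \emph{paving} of $\Znaiv$ fine enough to guarantee $H_*^{\rmG_\bfn}(\Znaiv)\subset H_*^\rmT(\Znaiv)_{\rm loc}\cong H_*^\rmT((\Znaiv)^\rmT)_{\rm loc}$ and freeness over $\tR_\rmT$ --- this is what makes the localisation argument (and hence both associativity-descent and faithfulness) legitimate. In the $\mathfrak{sl}_2$ case this was supplied by cell/Gell decompositions; for general $\Gamma$ the naive preimages of Gells in $\Znaiv$ are badly behaved (as flagged in \S\ref{sec:philsophy}), so one must be careful here. However, for the mere purpose of this theorem --- associativity, unitality, and the \emph{embedding} $\hR_\bfn\hookrightarrow H_*^{\rmG_\bfn}(\Znaiv)$ (not yet the claim that it is an isomorphism, which requires the twisted varieties of later sections) --- it suffices to have \emph{some} affine paving of $\Znaiv$, e.g. the one obtained by intersecting the $\GGZ\subset\rmY\times\rmY$ relative-position strata with $\Znaiv$ and pulling back an affine paving of $\tF\times_{E_\bfn}\tF$ along the vector-bundle projection together with the Schubert paving of the single $\calG$-factor. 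That this is enough is exactly the content of the cellular-fibration argument used in \Cref{prop:KLR-geom}; the subtle dimension formula \eqref{problem2} is \emph{not} needed at this stage, only the much weaker statement that the relevant Borel--Moore homologies are free $\tR_\rmT$-modules concentrated in even degrees, which follows from the paving. So the write-up should isolate this paving lemma, prove the freeness/localisation consequence, and then run the two deductions above.
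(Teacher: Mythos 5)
Your proposal is correct and follows essentially the same route as the paper's (implicit) proof, which consists of the paragraph immediately preceding the theorem together with Lemma~\ref{starintertwines}, Corollary~\ref{coromult}, Remark~\ref{rk:faith-from-strat}, and the generator constructions of \S\ref{sec:gen-qv}. The one point you flag as a potential "main obstacle" --- the existence of a good paving of $\Znaiv$ to make localisation legitimate --- is handled by the paper in the sentence preceding Definition~\ref{imap}: since $\Znaiv=(\tF\times_{E_\bfn}\tF)\times\calG$ is a direct product, the required paving and freeness over $\tR_\rmT$ come for free from the standard paving of the Steinberg quiver flag variety together with the Schubert paving of $\calG$, which is essentially the construction you describe.
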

\begin{rk}\label{Rkonembedding} 
The algebra homomorphism $\hR_\bfn\to H_*^{\rmG_\bfn}(\Znaiv)$ is in general not surjective, although it is indeed an isomorphism in case of $\mathfrak{sl}_2$. It is not hard to show, that $H_*^{\rmG_\bfn}(\Znaiv)$ is the algebra of operators on $\hPol_\bfn$ generated by $\hR_\bfn$ and the multiplications with $\omega_2,\omega_3,\ldots,\omega_n$. In the nil-Hecke case the floating dot $\Omega_r$ acts on the 
polynomial representation by $\omega_r$ for any $1\leq r\leq n$. In general $\Omega_11_\ui$ acts by $\omega_1$, but $\Omega_r1_\ui$ acts only by $\omega_r1_\ui$ multiplied by some polynomial plus some other terms involving some $\omega_11_\ui,\ldots,\omega_{r-1}1_\ui$.
\end{rk}
\begin{rk} We like to stress that the overall strategy of this section is rather general and might be pursued for other varieties. We use three ingredients: 
\begin{enumerate}[\rm{(I-}i\rm{)}]
\item We used the embeddings of $H_*^{\rmG_\bfn}(\GGZ)$ into $H_*^{\rmT}(\GGZ)_{\rm loc}$ and of $H_*^{\rmG_\bfn}(\Znaiv)$ into $H_*^{\rmT}(\Znaiv)_{\rm loc}$. This for instance always holds, when the variety has a nice paving, see \Cref{rk:faith-from-strat}.
	\item We used that $\GGZ_{\bfk_1\supset \bfk_2}\to \Znaiv_{\bfk_1-\bfk_2}$ is a Grassmannian bundle whose relative Euler class at the $\rmT$-fixed point $(w_1,w_2,\mu_1,\mu_2)$ is $\catP_{\mu_2,\mu_1^c}$. 
	\item The Euler class $\op{eu}(\rmY,(w,\mu))$ factorizes as a product of $\catP_{\mu,\mu^c}$ and of something which depends only on $w$.
\end{enumerate}
\end{rk}

\subsection{Higher floating dots}\label{Higherfloatingdots}
In \Cref{Rkonembedding}  we elaborated on the difference between the action of the $\Omega_r1_\ui$ and the multiplication with $\omega_r$. Algebraically the higher floating dots $\Omega_{r,i}^a$ (that is floating dots with index $r>1$ or with nonzero twist $a$) are rather mysterious and seem to appear ad hoc. We finish now this section by explaining their geometric meaning. 

\begin{rk}
The floating dot $\Omega_{r,j_0}$ will finally correspond to the fundamental class of some variety and $\Omega^a_{r,j_0}$ will arise by \emph{twisting} this class by the $a$th power of a certain Chern class. This observation motivated our terminology (\emph{twist}).
\end{rk}

Whereas, by  \cite[\S 3.2.1]{NaisseVaz}, the element $\Omega_{r,i}^a$ acts on $\hPol_\bfn$ by some complicated recursively defined operator,  the floating dots appear geometrically more directly. We describe next the meaning of a single higher floating dot of colour $j_0$ in $H_*^{\rmG_\bfn}(\Znaiv_{j_0})$.  In this case, the vector space $W=W_{j_0}$ in $\Znaiv_{j_0}$ is $1$-dimensional. We will see that $\Omega_{r,j_0}^a$ acts on $\hPol_\bfn$ by some $\omega_{r'}$ multiplied by a product of Chern classes, see \Cref{highdot} and \eqref{eq:Omega-via-z}. We will upgrade our geometric framework so that this product of Chern classes appears naturally from certain geometric conditions. To run the construction we add a new copy of $E_\bfn$ to $\Znaiv$. We  identify $H_*^{\rmG_\bfn}(\Znaiv)=H_*^{\rmG_\bfn}(\Znaiv\times E_\bfn)$ as vector spaces and will interpret the product of Chern classes from \eqref{eq:Omega-via-z} below in terms of geometric conditions on elements in the new $E_\bfn$.

As above, we fix a (hermitian) scalar product on each $V_i$, $i\in I$. This allow to identify $\Hom(V_i,V_j)$ with the the space of sesquilinear forms $V_i\times V_j\to \bbC$. So $\beta$ can be thought as a family of sesquilinear forms $\beta_h\colon V_i\times V_j\to \bbC$ for each arrow $h\colon i\to j$ of the quiver.

Fix $\ui\in I^\bfn$.  Algebraically the vector space $H^*_{\rmG_\bfn}(\rmY_{\ui,j_0})$ can be identified with the vector subspace of $\hPol_\bfn$ whose elements are of the form
\begin{equation}
\label{eq:Y-j0-alg}
\sum_{r=1}^{n_{j_0}}P_r\,\omega_{r,j_0}1_{\ui},\qquad P_r\in\Pol_n.
\end{equation}
 Moreover, we can identify $\rmY_{\ui,j_0}$ with the subvariety $\dot\Delta_{\ui,j_0}$ of $\Znaiv_{\ui,j_0}$ given by $\bV=\tV$. The action of $H_*^{\rmG_\bfn}(\dot\Delta_{\ui,j_0})\subset H_*^{\rmG_\bfn}(\Znaiv_{\ui,j_0})$ on $H^*_{\rmG_\bfn}(\rmY)$ corresponds in  $\hPol_\bfn$ to the multiplication by the elements \eqref{eq:Y-j0-alg}.

\begin{df}\label{Omegafancy}
Denote by $\mathbb{\Omega}_{r,j_0}$ the subvariety of $\dot\Delta_{j_0}\times E_\bfn$ given by the conditions 
\begin{itemize}
	\item $W\subset \bV^r$,
	\item for each arrow $h$ in the quiver of the form $j_0\to j$ or $j\to j_0$, the vector space $W=W_{j_0}$ is orthogonal to $\bV^r_j$ with respect to $\beta_h$.
\end{itemize}
\end{df}

Let $\xi\in H^*_{\rmG_\bfn}(\rmY_{\ui,j_0})$ be the first Chern class of the line bundle with fibre $W$. Then, abusing notation,  the operator $\xi\colon H_*^{\rmG_\bfn}(\rmY_{\ui,j_0})\to H_*^{\rmG_\bfn}(\rmY_{\ui,j_0})$  of multiplication by $\xi$  is characterized by the fact that it commutes with multiplications by polynomials and satisfies 
\begin{equation}
\label{eq:z-acts-omega}
(X_{r,j_0}-\xi)\omega_{r,j_0}1_\ui=\omega_{r-1,j_0}1_\ui,\qquad\text{for all } r\in[1;n_{j_0}].
\end{equation}
Now higher floating dots and their twists have a natural geometric interpretation.
 \begin{prop}\label{highdot}
Via the  embedding $\hR_\bfn\hookrightarrow H_*^{\rmG_\bfn}(\Znaiv)$ of algebras, the (higher) floating dot $\Omega^a_{r,j_0} 1_\ui$ corresponds to the twisted fundamental class $(-\xi)^a[\mathbb{\Omega}_{r,j_0}]$  pushed forward to $H_*^{\rmG_\bfn}(\Znaiv\times E_\bfn)$.
\end{prop}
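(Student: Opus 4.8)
The strategy is to reduce the statement to the already-established description of the multiplication operators $\omega_r$ on $\hPol_\bfn$ together with the characterization \eqref{eq:z-acts-omega} of the Chern-class operator $\xi$. Recall from \Cref{Rkonembedding} that under the embedding $\hR_\bfn\hookrightarrow H_*^{\rmG_\bfn}(\Znaiv)$ the element $\Omega_{r,j_0}1_\ui$ acts on $\hPol_\bfn$ by $\omega_{r}1_\ui$ multiplied by a polynomial plus correction terms involving $\omega_{1}1_\ui,\ldots,\omega_{r-1}1_\ui$; and that the twists $\Omega^a_{r,j_0}$ are obtained by a recursion (from \cite[\S3.2.1]{NaisseVaz}). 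So it suffices to verify that the pushforward of $(-\xi)^a[\mathbb{\Omega}_{r,j_0}]$ acts on $H_*^{\rmG_\bfn}(\rmY)\cong\hPol_\bfn$ by exactly the same operator, because the action of $H_*^{\rmG_\bfn}(\Znaiv)$ (hence of $H_*^{\rmG_\bfn}(\Znaiv\times E_\bfn)\cong H_*^{\rmG_\bfn}(\Znaiv)$, identified as vector spaces after integrating out the new $E_\bfn$) on $\hPol_\bfn$ is faithful by \Cref{rk:faith-from-strat}. The whole proof is therefore a fixed-point/localisation computation exactly parallel to the ones carried out for \Cref{colouredwn} and \Cref{lem:creation+r-gen}.

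First I would treat the case $a=0$. Here $\mathbb{\Omega}_{r,j_0}\subset\dot\Delta_{j_0}\times E_\bfn$ is cut out inside $\dot\Delta_{j_0}\times E_\bfn$ by the conditions $W\subset \bV^r$ and the orthogonality of $W=W_{j_0}$ to $\bV^r_j$ for each arrow $h$ incident to $j_0$. The first condition $W\subset\bV^r$ is, by the same argument as in the lemma preceding \Cref{closedmulti} (the one identifying $[\dot\Delta_{j,r}]$ with $[\dot\Delta_j]\prod_{p>r,\,j_p=j}(X_p-\xi)$), exactly what produces the factor $\prod_{p\in[r+1;n],\,j_p=j_0}(X_p-\xi)$, i.e. via \Cref{lem:creation+r-gen} it gives multiplication by $\omega_r$ on the relevant summand. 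The orthogonality conditions on the new $E_\bfn$-factor are, after integrating out $E_\bfn$, precisely a product of equivariant Euler classes of the relevant $\Hom$-bundles restricted to $\mathbb{\Omega}_{r,j_0}$, i.e. a polynomial factor; concretely, at a $\rmT$-fixed point $(w,w,\mu)$ with $\mu=\{t\}$ coloured $j_0$, the extra factor is $\prod_{h}\prod_{p}(\ccT_{w(t)}-\ccT_{w(p)})$ over the appropriate index set, which is exactly the polynomial appearing in the recursive formula of \cite[\S3.2.1]{NaisseVaz} for $\Omega_{r,j_0}^0$. So I would write $[\mathbb{\Omega}_{r,j_0}]$ in the fixed-point basis, compute the convolution action on $\sum_w P_w\ovS^w_\lambda(\mu)\tA^{-1}_{w,\mu}[x_{w,\mu}]$ using \eqref{unclear}, and match the resulting coefficients with the known action of $\Omega_{r,j_0}1_\ui$, invoking \Cref{lem:T-to-S} and \Cref{lem:caract-S} exactly as in the creation-operator proofs.

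For general $a$, twisting by $(-\xi)^a$ multiplies the convolution class by the $a$-th power of the first Chern class of the line bundle $W=W_{j_0}$ restricted to $\mathbb{\Omega}_{r,j_0}$. At the fixed point $x_{w,\mu}$ with $\mu=\{t\}$, this Chern class restricts to $\ccT_{w(t)}$, so on $\hPol_\bfn$ the operator $(-\xi)^a[\mathbb{\Omega}_{r,j_0}]$ acts as $(-\xi)^a$ composed with the $a=0$ operator, where $\xi$ is the operator of \eqref{eq:z-acts-omega}. Comparing with the recursion defining $\Omega^a_{r,j_0}$ from $\Omega^{0}_{r,j_0}$ (which is, on the nose, repeated application of $(X_{r,j_0}-\xi)$-type shifts — i.e. the very relation \eqref{eq:z-acts-omega}), one obtains the same operator. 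By faithfulness of the action, the two classes coincide in $H_*^{\rmG_\bfn}(\Znaiv\times E_\bfn)$, which is the assertion.

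\textbf{Main obstacle.} The delicate point is not the $a$-twist but the bookkeeping of the orthogonality conditions in \Cref{Omegafancy}: one must check that, after pushing forward along $\Znaiv\times E_\bfn\to\Znaiv$ and localising, the conditions ``$W$ orthogonal to $\bV^r_j$ with respect to $\beta_h$'' contribute exactly the product of linear forms that matches the coefficients in the Naisse--Vaz recursion for higher floating dots, with the correct sign conventions coming from the ordering convention in \Cref{notnew}. This requires care because the orthogonality is imposed with respect to a hermitian structure and mixes the two copies of $E_\bfn$, so the cleanest route is to first reduce to a one-arrow quiver via \Cref{defsum} (the conditions and the relevant Euler classes are multiplicative over arrows), then identify the single-arrow contribution with a single product $\prod_p(\ccT_{w(t)}-\ccT_{w(p)})$ by a direct $\rmT$-weight computation, and finally compare with \cite[\S3.2.1]{NaisseVaz}. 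Once this identification of the ``extra polynomial factor'' is in place, the rest is a routine localisation argument identical in structure to \Cref{colouredwn}.
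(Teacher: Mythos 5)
Your geometric ingredients are the right ones, and the key discovery — that the orthogonality conditions in \Cref{Omegafancy} contribute a product of Euler classes of $\Hom$-bundles, multiplicative over arrows — matches what happens: after integrating out the new $E_\bfn$ one gets the factor $\prod_{t=1}^{r}\cQ_{i_t,j_0}(X_t,\xi)$, so the geometric class acts on the polynomial representation by the closed formula $(-\xi)^a\prod_{t=1}^{r}\cQ_{i_t,j_0}(X_t,\xi)\,\omega_{r'}1_\ui$ (with $r'$ the largest index $\leq r$ of colour $j_0$, not $r$ — a small slip in your $a=0$ discussion where you write ``$\omega_r$''). Where your plan diverges from the paper, and where I think it would not close as written, is the final comparison.

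You propose to match the $a=0$ coefficients directly against the Naisse--Vaz polynomial action and then get the general-$a$ case by saying the recursion ``is, on the nose, repeated application of $(X_{r,j_0}-\xi)$-type shifts — i.e. the very relation \eqref{eq:z-acts-omega}.'' That characterization is wrong for the case $i_r\ne j_0$: the recursion \eqref{eq:fdots}, spelled out in \eqref{eq:rel1-flot-alg}, there reads $\Omega_{r,j_0}^a1_\ui=\sum_{t,p}(-1)^p q_{i_r j_0}^{tp}\,\Omega_{r-1,j_0}^{a+p}X_r^t1_\ui$, which involves the $q^{tp}_{ij}$-coefficients of $\cQ_{i_r,j_0}$ and \emph{increases} the twist $a$; it is not a $(X-\xi)$-shift. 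Since the higher floating dots are defined by this recursion and have no closed formula in \cite{NaisseVaz}, ``matching the coefficients with the known action'' already requires unwinding the recursion, so you can't sidestep this case. The paper's route is the cleaner one: having identified the geometric operator with the closed expression above, one verifies that this \emph{family} of operators (indexed by $a,r,\ui$) satisfies the defining relations \eqref{eq:fdots}, \eqref{eq:ExtR2} and \eqref{eq:extra-rel} of $\hR_\bfn$ — this is a mechanical computation with \eqref{eq:z-acts-omega}, but it genuinely handles both branches of the recursion — and then checks the base case $a=0$, $r=1$ (where the operator is $\omega_1$ if $i_1=j_0$ and zero otherwise). Together with faithfulness this pins the identification down. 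I would recommend replacing your ``compare with the recursion'' step by this verification of relations; the rest of your plan, including the reduction to a one-arrow quiver and the Euler-class bookkeeping, is sound and is essentially what the paper does in the first half of its proof.
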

\begin{proof}
	In case it exists, let $r'$ be maximal such that $r'\leqslant r$ and $i_{r'}=j_0$. Set $r'=0$ otherwise.
	By \Cref{lem:creation+r-gen}, the push-forward of the fundamental class of the subvariety given by the condition  $W\subset \bV^r$ in \Cref{Omegafancy}
	acts just by $\omega_{r'}1_\ui$. The second condition adds an additional multiplication by 
	$\prod_{t=1}^{r}\cQ_{i_t,j_0}(X_t,\xi)$. Thus, the push-forward in the proposition acts on the polynomial representation by
	\begin{equation}
	\label{eq:Omega-via-z}
	(-\xi)^a\prod_{t=1}^{r}\cQ_{i_t,j_0}(X_t,\xi)\omega_{r'}1_\ui.
	\end{equation}
	This operator should be understood as a polynomial in $\xi$ with coefficient in \eqref{eq:Y-j0-alg}. The power of $\xi$ should be applied to the coefficient.
		
All $\Omega^a_{r,j_0} 1_\ui$ can be reduced to $\Omega_{1,j_0}^01_\ui$ using the relations \eqref{eq:fdots}-\eqref{eq:ExtR2}  and  \eqref{eq:extra-rel} (with the obvious commutativity relations making the relations well-defined). Since the commutativity relations are obvious,  it suffices to check that the operators \eqref{eq:Omega-via-z} satisfy the 
 formulas \eqref{eq:fdots}--\eqref{eq:ExtR2}  and \eqref{eq:extra-rel} and act as required  in case $a=0$, $r=1$. Indeed, they act by $\omega_1$ if $j_0=i_1$ and by zero otherwise. Algebraically, \eqref{eq:fdots} equals
\begin{equation}
\label{eq:rel1-flot-alg}
\Omega_{r,j_0}^a1_\ui=
\begin{cases}
\Omega_{r-1,j_0}^{a-1}1_\ui-X_r\Omega_{r,j_0}^{a-1}1_\ui, &\mbox{if $a>0$, $i_r=j_0$},\\
\sum_{t,p} (-1)^p q_{i_rj_0}^{tp}\Omega_{r-1,j_0}^{a+p}X_r^t1_\ui&\mbox{if $i_r\ne j_0$.}
\end{cases}
\end{equation}
To verify this geometrically assume first  $r=1$. If $i_1\ne j_0$, then the relation \eqref{eq:rel1-flot-alg} becomes $0=0$, since $\omega_{r'}=\omega_0=0$. (We interpret the operator \eqref{eq:Omega-via-z} as zero for $r=0$.)
If $i_1\ne j_0$, then \eqref{eq:rel1-flot-alg} holds, since for the operator from \eqref{eq:Omega-via-z} we have  
$$
(-X_1)^a\omega_11_\ui=0-X_1(-X_1)^{a-1}\omega_1 1_\ui.
$$
Next assume $r>1$. In case $j_0\ne i_r$ the relation \eqref{eq:rel1-flot-alg} translates into geometry as 	
	\begin{equation*}
	(-\xi)^a\prod_{t=1}^{r}\cQ_{i_t,j_0}(X_t,\xi)\omega_{r'}1_\ui=\cQ_{i_t,j_0}(X_r,\xi)\cdot(-\xi)^a\prod_{t=1}^{r-1}\cQ_{i_t,j_0}(X_t,\xi)\omega_{r'}1_\ui,
	\end{equation*}
	which is obviously true. If $j_0=i_r$ then the relation \eqref{eq:rel1-flot-alg} translates into 	
		\begin{gather*}
	(-\xi)^a\prod_{t=1}^{r}\cQ_{i_t,j_0}(X_t,\xi)\omega_r1_\ui\\
	=(-\xi)^{a-1}\prod_{t=1}^{r-1}\cQ_{i_t,j_0}(X_t,\xi)\omega_{(r-1)'}1_\ui-(-\xi)^{a-1}X_r\prod_{t=1}^{r}\cQ_{i_t,j_0}(X_t,\xi)\omega_r1_\ui,
	\end{gather*}
	which holds by \eqref{eq:z-acts-omega}.  Thus, \eqref{eq:fdots}  holds for the operators \eqref{eq:Omega-via-z}. 
	
	To show that the operators \eqref{eq:Omega-via-z}
 satisfy also formula \eqref{eq:ExtR2}, it suffices to verify    
\begin{equation}\label{lost}
\tau_r \Omega_{r,j_0}^a\tau_r1_\ui=\Omega_{r+1,j_0}^a1_\ui+ \sum_{t,p} q_{i_ri_{r+1}}^{tp} \sum_{\substack{h+\ell=\\p-1}} (-1)^h\Omega_{r-1,j_0}^{a+h}X_r^tX_{r+1}^\ell
\end{equation}
	geometrically in case $i_r\ne i_{r+1}=j_0$. Since $\cQ_{i,j}(u,v)=\sum_{t,p}q_{ij}^{tp}u^tv^p$, we have
	\begin{equation*}
	\frac{\cQ_{i,j}(u,v_2)-\cQ_{i,j}(u,v_1)}{v_2-v_1}=\sum_{t,p}q_{ij}^{tp}u^t\sum_{h+\ell=p-1}v_1^hv_2^\ell
	\end{equation*}
	and thus \eqref{lost} turns into
	\begin{equation}\label{lost2}
	\tau_r^21_\ui=Q_{i_r,i_{r+1}}(X_r,\xi)1_\ui+\frac{Q_{i_r,i_{r+1}}(X_r,X_{r+1})-Q_{i_r,i_{r+1}}(X_r,\xi)}{X_{r+1}-\xi}(X_{r+1}-\xi)1_\ui
	\end{equation}
	multiplied by $(-\xi)^a\prod_{t=1}^{r-1}\cQ_{i_t,j_0}(X_t,\xi)\omega_{r+1}$. However, \eqref{lost2} is true, because $\tau_r^21_\ui=Q_{i_r,i_{r+1}}(X_r,X_{r+1})1_\ui$. Thus we verified \eqref{eq:ExtR2}. Finally we consider  \eqref{eq:extra-rel}. Diagrammatically it is a consequence of the already checked formulas and  the fact that $\Omega_{r,j_0}^a$ commutes with $\tau_p$ for $r\ne p$ which is also clear geometrically.
\end{proof}
	
\begin{rk}
The element $\xi$ does not make sense globally in $H_*^{\rmG_\bfn}(\Znaiv)$, but only when we work in $H_*^{\rmG_\bfn}(\Znaiv_{j_0})$. This fact explains why,  algebraically,  the formula for the action of $\Omega_{r,j_0}^a$ cannot be written simply as $\omega_{r'}$ times a polynomial as in \eqref{eq:Omega-via-z}. 
\end{rk}

\section{The source algebras and the sink algebras}

\label{sec:source-sink}
We still assume the setup from the opening paragraphs of \S\ref{sec:KLR} and \S\ref{sec:coloured}. In particular, $\Gamma=(I,A)$ is a quiver with no loops and $\bfn$ is a dimension vector. 

We approach now our problem of defining a variety $\Zplus$ whose equivariant Borel-Moore homology is the algebra $\hR_\bfn$ by first realising some important subalgebras of $\hR_\bfn$ geometrically and then combine the constructions. 

For this assume that $K\subset I$ is a subset of vertices of $\Gamma$ and consider the corresponding set of Grassmannian dimension vectors $\bfk$ that are supported on $K$, i.e., we only take those $\bfk$ where $k_i=0$ for $i\in I\backslash K$.
We define the \emph{subalgebra of $\hR_\bfn$ Grassmannian supported on $K$}, as the subalgebra of $\hR_\bfn$ generated by $R_\bfn$ and by the floating dot with colours from $K$. 
\begin{rk} The subalgebras of $\hR_\bfn$ Grassmannian supported on some $K$ are  the algebras called \emph{$\mathfrak p$-KLR algebras} in  the context of categorified parabolic Verma modules,  see \cite[Def. 4.1, Def.~7.6]{NaisseVaz}. 
\end{rk}

We will see in \S \ref{subs:source}, that if we pick for $K$ the set of sources in $\Gamma$, then  
the subalgebra Grassmannian supported on $\vecI$ can be constructed using a certain subvariety $\Zplus$ of $\Znaiv$ defined by the \emph{Grassmannian--Steinberg conditions}, \Cref{sinkcondition}.  In case $K$ is the set of sinks in $\Gamma$, the same holds, but now for a variety $\Zplus$ not inside $\Znaiv\subset E_\bfn\times \calF\times \calF\times \calG$, but rather inside $E_\bfn\times E_\bfn\times \calF\times \calF\times \calG$, i.e., we need to use a second copy of $E_\bfn$. We start by explaining these source and sink cases.  We believe that the geometric source/sink algebras are interesting on their own.

\subsection{The source algebra}
\label{subs:source}
We say that a vertex $i\in I$ of $\Gamma$ is a \emph{source}, if there is no arrow which ends in $i$. 
Let $\vecI\subset I$ be the set of sources of $\Gamma$ and let $\vec J_{\bfn}\subset\Jbfn$ be the set of Grassmannian dimension vectors $\bfk$ that are supported on $\vecI$, i.e., we only take those $\bfk$ where $k_i=0$ for $i\in I\backslash \vecI$.
\begin{df} The \emph{source algebra}  $\vechR_{\bfn}$ is the subalgebra of $\hR_\bfn$ Grassmannian supported on $\vecI$, that is the subalgebra generated by $R_\bfn$ and by the floating dots with colours in $\vecI$. 
\end{df}
We denote by $\vec\rmY$, $\vec\GGZ$, $\vec\Znaiv$ the modification of $\rmY$, $\GGZ$, $\Znaiv$ obtained by only allowing the Grassmannian dimension vectors $\bfk\in \vec J_{\bfn}$. We define the replacement $\vec\Zplus$ of $\vec\Znaiv$. 
\begin{df}\label{sinkcondition} Define the subvariety 
$\vec\Zplus=\{\bV,\tV,W)\mid \alpha(W)=0\}\subset\vec\Znaiv$,
where the condition $\alpha(W)=0$ means $\alpha_h(W_i)=0$ for each arrow $h\colon i\to j$. 
\end{df}

\begin{df}
Define a $\bbZ$-grading on the algebra $H_*^{\rmG_\bfn}(\vec\Znaiv)$ as in \Cref{geomgradhR}.
\end{df}

The embedding from \Cref{closedmultistrong} restricts to an injective algebra homomorphism $\vechR_{\bfn}\hookrightarrow H_*^{\rmG_\bfn}(\vec\Znaiv)$ and the following holds:
\begin{thm}\label{geomsourcealgebra}
The push-forward of the inclusion $\vec\Zplus\subset \vec\Znaiv$ identifies $H_*^{\rmG_\bfn}(\vec\Zplus)$ with the source algebra $\vechR_{\bfn}$ inside $H_*^{\rmG_\bfn}(\vec\Znaiv)$.
\end{thm}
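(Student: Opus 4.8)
The strategy mirrors the $\mathfrak{sl}_2$ case (Theorems~\ref{thm:main-sl2}, \ref{doubly}) and the KLR geometrisation (\Cref{prop:KLR-geom}): realise both algebras as subalgebras of the convolution algebra $H_*^{\rmG_\bfn}(\vec\GGZ)$ acting faithfully on $H_*^{\rmG_\bfn}(\vec\rmY)\cong \hPol_\bfn$, show that $H_*^{\rmG_\bfn}(\vec\Zplus)$ lands inside $H_*^{\rmG_\bfn}(\vec\Znaiv)$ as a subalgebra containing the image of $\vechR_\bfn$, and then match $\Pol_n$-bases using a basic paving by Gells to conclude the inclusion is an equality. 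Concretely, I would proceed in four steps.

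\textbf{Step 1: $\vec\Zplus\circ\vec\Zplus\subset\vec\Zplus$ and subalgebra structure.} First I would check that the \emph{Grassmannian--Steinberg condition} $\alpha(W)=0$ is stable under composition in the sense needed for \Cref{convcirc} and for the product maps \eqref{eq:star-G-gen}. Precisely, under the diagram $\vec\Zplus_{\bfk_1}\xrightarrow{\iota}\vec\GGZ_{\bfk_1+\bfk_2,\bfk_2}$ and convolution with $\vec\Zplus_{\bfk_2}\subset\vec\GGZ_{\bfk_2,0}$, one must see the product again satisfies $\alpha(W)=0$. Since sources have no incoming arrows, the flag component of $\alpha$ imposes no constraint between $W$ and the flags at other vertices through $\alpha$; the diffeomorphism $\gamma_{\bfk_1,\bfk_2}$ replaces $\tW$ by $\tW^\perp\cap W$, and the point is that $\alpha(W)=0$ together with $\alpha(W')=0$ (for the larger space $W\supset W'$) is automatic once $\alpha(W)=0$ since $W'\subset W$. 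Thus the subspace $H_*^{\rmG_\bfn}(\vec\Zplus)$ of $H_*^{\rmG_\bfn}(\vec\Znaiv)$ is closed under the product \eqref{eq:star-G-gen} by a fixed-point computation entirely analogous to \Cref{starintertwines}, \Cref{coromult}; the key factorisation facts (I-i)--(I-iii) listed after \Cref{Rkonembedding} hold verbatim on $\vec\Zplus$ because the $\alpha(W)=0$ condition is a linear condition cutting out a sub-vector-bundle of the relevant Grassmannian/vector bundles.

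\textbf{Step 2: $\vechR_\bfn$ embeds into $H_*^{\rmG_\bfn}(\vec\Zplus)$.} The generators of $\vechR_\bfn$ are those of $R_\bfn$ (namely $X_r1_\ui$, $\tau_r1_\ui$) together with the floating dots $\Omega_{1,j_0}1_\ui$ for $j_0\in\vecI$. For the $R_\bfn$-generators the corresponding classes in $H_*^{\rmG_\bfn}(\vec\GGZ)$ coming from \Cref{prop:KLR-geom} already lie in the $\bV=\tV$-type subvarieties where $W=0$ or $W$ arbitrary; restricting to $\alpha(W)=0$ is harmless since these classes are pulled back along the forgetful maps to $\tF\times_{E_\bfn}\tF$. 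For the floating dots $\Omega_{1,j_0}1_\ui$ with $j_0$ a source, the geometric model from \Cref{highdot} (with the $E_\bfn$-twist) simplifies: the second condition in \Cref{Omegafancy} involving arrows $j_0\to j$ becomes exactly $\alpha(W)=0$ (the orthogonality against $\bV^r_j$ with $\beta_h$; but for $r=1$ and $W$ one-dimensional this is visible directly), and for arrows $j\to j_0$ there are none since $j_0$ is a source. So $\Omega_{1,j_0}1_\ui$ is (up to sign) the fundamental class of the relevant subvariety of $\vec\Zplus$, with no twist needed for $a=0$, $r=1$. Hence the faithful actions match on generators and we obtain $\vechR_\bfn\hookrightarrow H_*^{\rmG_\bfn}(\vec\Zplus)$.

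\textbf{Step 3: basis matching via Gells.} To show the embedding is onto $H_*^{\rmG_\bfn}(\vec\Zplus)$, I would produce a basic paving of $\vec\Zplus$ adapted to the image of the Naisse--Vaz-type $\Pol_n$-basis from \Cref{prop:NVbasis-gen}. The relevant paving is the pullback of the Gell decomposition (\Cref{DefGellgl}) under the projection $\vec\Zplus\to\calF\times\calF\times\calG$, intersected with the $\alpha$-preserving and $\alpha(W)=0$ loci. The essential computation is that over each Gell $\calO^{\rm Gell}_{x,y,z,\uj}$ the map $\vec\Zplus\to\calF\times\calF\times\calG$ is a vector bundle whose fibre is the space of $\alpha\in E_\bfn$ preserving $\bV$ \emph{and} killing $W$ --- and that this dimension is \emph{constant} on the Gell. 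This is where sources help decisively: since $W$ is supported at sources and sources have no incoming arrows, the constraint $\alpha_h(W_i)=0$ and the preservation constraint $\alpha_h(\bV^r_i)\subset\bV^r_j$ decouple cleanly, and the fibre dimension is $\dim_\alpha\tF_\ui$ minus a correction counting, for each arrow $h\colon i\to j$ with $i\in\vecI$, the dimension $k_i\cdot n_j$ forced to vanish --- which matches the bookkeeping needed so that formula \eqref{problem2} (restricted to the source-supported Grassmannians) holds with the right-hand side given by $\Xright$-counts of $x,y,z,w_{0,k}$. Verifying this dimension identity, i.e.\ checking weak adaptedness in the sense of \eqref{wadapt}/\eqref{problem} for $\vec\Zplus$, is the technical heart; once it holds, \Cref{checkbasis}-style reasoning together with the established embedding (which gives $\Pol_n$-linear independence of the image of the basis) forces the image to be a full $\Pol_n$-basis.

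\textbf{Main obstacle.} The delicate point, as flagged in \S\ref{sec:philsophy} for the general algebra, is precisely the fibre-dimension computation in Step~3: one must confirm that pulling back the Gell paving to $\vec\Zplus$ yields genuine vector bundles with \emph{locally constant} fibre dimension over each Gell, and that the dimension count matches the degrees of the basis elements $\tau_x\Omega_{k,n}\tau_y\tau_z1_\uj$ (with floating dots only at sources). The paper asserts (end of \S\ref{sec:philsophy}) that the naive $\Znaiv$ fails this for general quivers, but believes it does hold for the source algebra; so the proof must isolate exactly the property of sources (no incoming arrows $\Rightarrow$ the condition $\alpha(W)=0$ imposes $\sum_{h\colon i\to j}k_i n_j$ independent linear conditions, independently of the relative positions encoded by the Gell) that rescues the argument. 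I expect this to reduce, after choosing the $\rmT$-fixed point in each Gell and using $\rmU_\bfn$-invariance, to an elementary but careful linear-algebra count of vanishing matrix entries, parallel to \Cref{lem:diff-dim-KLR} but incorporating the extra vanishing forced by $W$.
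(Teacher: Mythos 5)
Your proposal follows the paper's proof essentially step for step: the subalgebra structure is obtained by running the localisation/convolution argument of \S\ref{subs:Z'-subalg-Z} on the $\alpha(W)=0$ locus, the containment of $\vechR_\bfn$ is checked on generators (with only the floating dots requiring attention), and the equality is then forced by matching graded dimensions against the Gell paving via \Cref{checkbasis} and \Cref{prop:NVbasis-gen}.

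The one place your sketch goes astray is the fibre-dimension count in Step 3, which you rightly identify as the technical heart. The condition $\alpha(W)=0$ does \emph{not} impose $\sum_{h\colon i\to j}k_i n_j$ independent linear conditions, and the number of genuinely new conditions is \emph{not} independent of the relative positions encoded by the Gell: for $\alpha$ already preserving $\bV$, the image $\alpha_h(W_i)$ is automatically confined to the appropriate steps of $\bV_j$, so how many extra conditions $\alpha(W)=0$ adds depends on $\rel(\bV,W)=x$. The paper's mechanism is different and cleaner: since every target of an arrow is a non-source (hence $W_j=0$ there), the locus $\{\alpha\in X(\bV)\mid \alpha(W)=0\}$ is exactly $X(\bV^W)$, so the fibre of $\vec\Zplus\to\calF\times\calF\times\calG$ over a point of the Gell is $X(\bV^W)\cap X(\tV)$; applying \Cref{lem:diff-dim-KLR} to the pairs $(\bV,\bV^W)$ and $(\bV^W,\tV)$, whose relative positions are $x$ and $yz$, yields the drop $\Xright(x)+\Xright(y)+\Xright(z)$, and $\Xright(w_{0,k})=0$ because $w_{0,k}$ only crosses strands coloured by sources. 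Your proposed count, taken literally, would not reproduce \eqref{problem2}; the identification of the fibre with $X(\bV^W)\cap X(\tV)$ is the missing idea that makes the degree match come out.
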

We call $\vec\Zplus$  the \emph{Grassmannian--Steinberg variety Grassmannian supported on sources}.
\begin{rk}Despite the name, we should warn the reader that this variety $\vec\Zplus$ makes only sense when we work in the specific setup of this section. Although the homology of $\vec\Zplus$ is an idempotent truncation of the homology of the Grassmannian--Steinberg variety $\Zplus$ defined later, the variety $\vec\Zplus$ is \emph{not} a  union of connected components of  $\Zplus$ (whereas $\vec\Zplus\times E_\bfn$ would be of this form and would be also  generalizable). 
\end{rk}
\begin{proof}
Consider the variety $\vec\ZplusGG=\{(\alpha,\bV,W,\tV,\tW)\mid \alpha(W)=0\}\subset\vec\GGZ$. Then $H_*^{\rmG_\bfn}(\vec\ZplusGG)$ is a subalgebra of $H_*^{\rmG_\bfn}(\vec\GGZ)$ by \Cref{convcirc} and the same argument as in \S\ref{subs:Z'-subalg-Z} shows that $H_*^{\rmG_\bfn}(\vec\Zplus)$ is a subalgebra of $H_*^{\rmG_\bfn}(\vec\Znaiv)$. By looking at the generators, is is not hard to see that the image of $\vechR_{\bfn}\hookrightarrow H_*^{\rmG_\bfn}(\vec\Znaiv)$ is contained in $H_*^{\rmG_\bfn}(\vec\Zplus)$. (We only need to check this for $\Omega$.) To conclude it suffices to show that 
the algebras $\vechR_{\bfn}$ and  $H_*^{\rmG_\bfn}(\vec\Zplus)$ have the same graded dimension, which is  \Cref{blacktea}. 
\end{proof}

\begin{lem} \label{blacktea}
The algebras $\vechR_{\bfn}$ and  $H_*^{\rmG_\bfn}(\vec\Zplus)$ have the same graded dimension.
\end{lem}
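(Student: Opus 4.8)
The statement is purely a graded-dimension count, so the plan is to compute both sides explicitly and match them. On the algebraic side, the source algebra $\vechR_{\bfn}$ is the subalgebra of $\hR_\bfn$ Grassmannian supported on $\vecI$; by the $\mathfrak p$-KLR basis theorem of Naisse--Vaz (the sourced version of \cite[Thm.~3.16]{NaisseVaz}, compare \Cref{prop:NVbasis-gen}), it has a $\Pol_n$-basis given by $\{\tau_x\Omega_{k,n}\tau_y\tau_z1_\uj\}$ where now $\uj$ ranges only over those colour sequences for which the floating dots live at source vertices, i.e. $\bfk\in\vec J_\bfn$. I would first record the graded dimension of $\vechR_{\bfn}$ as the corresponding sum over $\uj\in I^\bfn$, $\bfk\in\vec J_\bfn$, and $(x,y,z)$ in the triple-coset set, with $q$-degree read off from \Cref{DefgradhR} as in \Cref{prop:NVbasis-gen}.

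On the geometric side, I would produce a basic paving of $\vec\Zplus$ adapted to this basis. The idea is exactly the Gell paving of \S\ref{subsub:Gells} and \S\ref{subs:col-cells}, pulled back along the projection $p\colon\vec\Zplus\to\calF\times\calF\times\calG$: set $p^{-1}(\calO^{\rm Gell}_{x,y,z,\uj})$ and order these pieces by the ordering on $\Theta_{\calG_\bfk}\times I^\bfn$. Here is the crucial point that makes the source case work (and the general case not): since $W$ is supported at sources, the Grassmannian--Steinberg condition $\alpha(W)=0$ of \Cref{sinkcondition} is automatically compatible with a flag $\bV$ preserving $\alpha$ whenever $W\subset\bV^r$ for $r$ large enough — more precisely, because a source vertex $i$ has no incoming arrows, the only constraint $\alpha(W_i)=0$ imposes on $\alpha$ is the vanishing of the block $\alpha_h$ on $W_i$ for $h$ outgoing, and this cuts the fibre dimension by a \emph{constant} amount over each Gell. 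I would verify that $p$ restricted to each $p^{-1}(\calO^{\rm Gell}_{x,y,z,\uj})$ is a vector bundle over $\calO^{\rm Gell}_{x,y,z,\uj}$ whose rank is $\dim_\alpha\tF_\ui$ diminished by the number of pairs ``outgoing arrow $\times$ basis vector of $W_i$'', and that this number is exactly $\Xright(x)+\Xright(y)+\Xright(z)+\Xright(w_{0,\bfk})$ computed with the colouring of \S\ref{subs:col-cells}. This is the key identity; it is the source-supported analogue of the failed formula \eqref{problem2}, and it holds precisely because at a source the ``twisted Springer'' subtlety does not arise. Granting it, the composite $p^{-1}(\calO^{\rm Gell}_{x,y,z,\uj})\to\calO^{\rm Gell}_{x,y,z,\uj}\to\calF_\ui$ is a locally trivial affine fibration, so condition \ref{cond-df:B-str-fibr} of \Cref{def:b-paving} holds; conditions \ref{b1} and \ref{b2} are inherited from the Gell paving and the smoothness of the vanishing locus $\alpha(W)=0$.

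From the basic paving, \S\ref{subs:B-strat} gives $H_*^{\rmG_\bfn}(\vec\Zplus)$ as a free $\Pol_n$-module with one generator per Gell in each fixed $\uj$-, $\bfk$-sector, and its graded dimension is $\sum q^{2\dim\rmY_{\ui,\bfk}+\sum_{i}(k_i^2-k_i)-2\dim p^{-1}(\calO^{\rm Gell}_{x,y,z,\uj})}$ by \Cref{geomgradhR}. Combining \Cref{eq:dim-col-Gell} with the rank identity above, $2\dim\rmY_{\ui,\bfk}+\sum_i(k_i^2-k_i)-2\dim p^{-1}(\calO^{\rm Gell}_{x,y,z,\uj})$ equals $\deg(\tau_x\Omega_{k,n}\tau_y\tau_z1_\uj)$ by the degree bookkeeping of \Cref{countrk} and \Cref{DefgradhR} — this is the source-supported version of \eqref{eq-weaklyad-Gells-sl2}, and its verification is the routine computation I would suppress. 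Since the two $q$-power sums then agree term by term under the labelling bijection, $\vechR_{\bfn}$ and $H_*^{\rmG_\bfn}(\vec\Zplus)$ have the same graded dimension, proving the lemma. \textbf{The main obstacle} is establishing the vector-bundle statement with the \emph{constant} rank over each Gell and identifying that rank with $\Xright(x)+\Xright(y)+\Xright(z)+\Xright(w_{0,\bfk})$: one must check that imposing $\alpha(W)=0$, when $W$ moves within a fixed Gell and sits in the appropriate flag piece by construction of the Gell, never changes which matrix entries of $\alpha$ are forced to vanish. This is where the hypothesis that $W$ is supported at sources is used in an essential way, and it is worth isolating as a small lemma before the dimension count.
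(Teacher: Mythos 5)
Your overall route coincides with the paper's: pave $\vec\Zplus$ by the preimages $p^{-1}(\calO^{\rm Gell}_{x,y,z,\uj})$ of Gells, note that $p$ is a vector bundle over each Gell, reduce the graded-dimension comparison to the identity \eqref{reformulate} (equivalently \eqref{gradeddim}), and match against the basis $\{\tau_x\Omega_{k,n}\tau_y\tau_z1_\uj\}$ of $\vechR_{\bfn}$ with $\bfk\in\vec J_{\bfn}$. You have also correctly isolated the one nontrivial step, namely identifying the rank of that vector bundle.

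However, your proposed computation of the rank is wrong. You claim the fibre dimension is $\dim_\alpha\tF_\ui$ diminished by the number of pairs ``outgoing arrow $\times$ basis vector of $W_i$''. This count (i) is independent of $(x,y,z)$, whereas the target $\Xright(x)+\Xright(y)+\Xright(z)+\Xright(w_{0,k})$ manifestly varies with the Gell, and (ii) accounts only for the condition $\alpha(W)=0$ while ignoring the conditions coming from $\alpha$ preserving the \emph{second} flag: the fibre of $p^{-1}(\calO^{\rm Gell}_{x,y,z,\uj})\to\calO^{\rm Gell}_{x,y,z,\uj}$ over $(\bV,\tV,W)$ is $\{\alpha\mid \alpha\text{ preserves }\bV\text{ and }\tV,\ \alpha(W)=0\}$, not $\{\alpha\in X(\bV)\mid\alpha(W)=0\}$. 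Moreover the equations $\alpha(W)=0$ are in general not independent of the flag conditions; for the quiver $i\to j$ with $n_i=n_j=1$ and $W=V_i=\bV^1$, the condition $\alpha(W)=0$ is already forced by $\alpha$ preserving $\bV$, so the drop is $0$ while your pair count is $1$. The correct argument (the paper's) is to observe that in the source case $\{\alpha\in X(\bV)\mid\alpha(W)=0\}=X(\bV^W)$ and then apply \Cref{lem:diff-dim-KLR} twice, to the pair $(\bV,\bV^W)$ in relative position $x$ and to the pair $(\bV^W,\tV)$ in relative position $yz$, giving a total drop of $\Xright(x)+\Xright(yz)=\Xright(x)+\Xright(y)+\Xright(z)$, while $\Xright(w_{0,k})=0$ because all strands of $w_{0,k}$ are coloured by sources. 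This relative-position argument is also what makes the rank constant on each Gell, a point your proposal asserts but does not actually establish.
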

\begin{proof}
For each Gell $\calO^{\rm Gell}_{x,y,z,\uj}\subset \calF_\ui\times\calF_\uj\times\calG_\bfk$, denote by $p^{-1}(\calO^{\rm Gell}_{x,y,z,\uj})$ its preimage with respect to the projection $\vec\Zplus\to \calF\times\calF\times \vec\calG$.
Note that this projection is a vector bundle over each Gell.
The preimages $p^{-1}(\calO^{\rm Gell}_{x,y,z,\uj})$ of Gells form an analogue of a basic paving of $\vec\Zplus$. As in \S\ref{sec:flags-cells}, the classes $[p^{-1}(\calO^{\rm Gell}_{x,y,z,\uj})]$ form then a $\Pol_n$-basis of the associated graded of $H_*^{\rmG_\bfn}(\vec\Zplus)$, and this basis can be lifted to a homogeneous basis of $H_*^{\rmG_\bfn}(\vec\Zplus)$. We would like to show that the elements of this basis have the same degrees as the elements $\tau_x\Omega_{k,n}\tau_y\tau_z1_\uj$ of the $\Pol_n$-basis of $\vechR_{\bfn}$ in \Cref{prop:NVbasis-gen} corresponding to the Grassmannian dimension vectors supported on $\vecI$. 

Using \Cref{checkbasis}, it is enough to show that the paving $\vec\Zplus=\coprod_{x,y,z,\uj} p^{-1}(\calO^{\rm Gell}_{x,y,z,\uj})$ is weakly adapted to $\{\tau_x\Omega_{k,n}\tau_y\tau_z1_\uj\}_{x,y,z,\uj}$.
  For weakly  adaptedness we need to verify
	\begin{equation}\label{gradeddim}
     \deg(\tau_x\Omega_{k,n}\tau_y\tau_z1_\uj)=2\dim \rmY_{\ui,\bfk}+\sum_{i\in I}(k_i^2-k_i)-2\dim p^{-1}(\calO^{\rm Gell}_{x,y,z,\uj}).
	\end{equation}
Assume first that the quiver has no arrows.  Then the equality \eqref{gradeddim} holds by \eqref{eq-weaklyad-Gells-sl2}. For a general $\Gamma$ we argue as for \eqref{problem}: using \Cref{eq:dim-col-Gell}, we reformulate \eqref{gradeddim} as 
\begin{equation}\label{reformulate}
\Xright(x)+\Xright(y)+\Xright(z)+\Xright(w_{0,k})=\dim_\alpha \rmY_{\ui,\bfk}-\dim_\alpha p^{-1}(\calO^{\rm Gell}_{x,y,z,\uj}).
\end{equation}
To verify this pick $\bV\in \calF_\ui$, $\tV\in \calF_\uj$ and  $W\in\calG_\bfk$ such that $(\bV,\tV,W)\in \calO^{\rm Gell}_{x,y,z,\uj}$. 

We have $\dim_\alpha \rmY_{\ui,\bfk}=\dim X(\bV)$. Moreover, $\dim_\alpha p^{-1}(\calO^{\rm Gell}_{x,y,z,\uj})=\dim (X(\bV^W)\cap X(\tV))$, since $X(\bV^W)$ is the subvariety of $X(\bV)$ given by the condition $\alpha(W)=0$.  

On the other hand, $\rel(\bV,\bV^W)=x$ and $\rel(\bV^W,\tV)=yz$ by the definition of $\calO^{\rm Gell}_{x,y,z,\uj}$. From  \Cref{lem:diff-dim-KLR} we get then 
\begin{equation*}
\dim X(\bV)-\dim X(\bV^W)=\Xright(x),\quad\dim X(\bV^W)-\dim (X(\bV^W)\cap X(\tV))=\Xright(yz).
\end{equation*}
Altogether we obtain
\begin{align*}
&\dim_\alpha \rmY_{\ui,\bfk}-\dim_\alpha p^{-1}(\calO^{\rm Gell}_{x,y,z,\uj})\;=\;\dim X(\bV)-\dim (X(\bV^W)\cap X(\tV))\\
=\quad& (\dim X(\bV)-\dim X(\bV^W))+(\dim X(\bV^W)- \dim (X(\bV^W)\cap X(\tV)))\\ 
=\quad &\Xright(x)+\Xright(yz)\;=\;\Xright(x)+\Xright(y)+\Xright(z).
\end{align*}
Since we assumed that $\bfk\in \vec J_{\bfn}$, i.e. Grassmannian vetors are supported on sources, we have $\Xright(w_{0,k})=0$ and therefore \eqref{reformulate} holds. Thus we showed weakly adaptedness, \eqref{gradeddim}. Finally, the equality of the graded dimensions follows by comparing bases, since $\cB$ is a basis of the source algebra $\vechR_{\bfn}$.
\end{proof}
\subsection{The sink algebras algebraically}\label{problemtarget}
Analogous to the source algebra in \S\ref{subs:source} we like to restrict now to sink vertices.
We say that a vertex $i\in I$ of $\Gamma$ is a \emph{sink}, if there is no arrow which starts in $i$. 
Let $\cevI\subset I$ be the set of sinks of $\Gamma$ and let $\cev J_{\bfn}\subset\Jbfn$ be the set of Grassmannian dimension vectors $\bfk$ that are supported on $\cevI$.
\begin{df} The \emph{sink algebra}  $\cevhR_{\bfn}$ is the subalgebra of $\hR_\bfn$ Grassmannian supported on $\cevI$, that is generated by $R_\bfn$ and the floating dots with colours in $\cevI$. 
\end{df}

Similarly, let $\cevhPol_{\bfn}$ be the subalgebra of $\hPol_{\bfn}$ generated by the $X_r$, $1_\ui$ and by the omegas having colours in $\cevI$ (i.e. by $\omega_r 1_\ui$ such that $i_r\in I_t$). Now consider $\hPol_{\bfn}$ as a representation of the sink algebra  $\cevhR_{\bfn}$. Then $\cevhPol_{\bfn}\subset \hPol_{\bfn}$ is clearly a subrepresentation which is still faithful.

To get the geometric source algebra we have replaced the naive variety $\vec\Znaiv$ by a smaller variety $\vec\Zplus$. This strategy however cannot work for sink algebras:
\begin{ex}\label{ex:sinkcase}
	\label{ex:contr-target}
	Let $\Gamma$ be the quiver with two vertices: $i$ and $j$, and one arrow $i\to j$. Set $\bfn=i+j$, $\ui=(i,j)$. Algebraically, we have the floating dot $\Omega_2 1_\ui$ which acts on the polynomial representation by multiplication with $(X_2-X_1)\omega_21_\ui$.
	In contrast, the fundamental class of $[\GGZ_{\ui,\ui,j}]$ acts on $\hPol_\bfn$ by $\omega_21_\uj$. We observe that it is not possible to match the algebraic formula by considering a smaller variety because $\Znaiv_{\ui,\ui,j}$ is already just a point! 
	
To compare with what happens for the source algebra, consider the same example but with the arrow reversed, thus $j\to i$ instead of $i\to j$. Now $\Omega_2 1_\ui$ acts on the polynomial representation by $(X_1-X_2)\omega_21_\ui$. The fundamental class $[\GGZ_{\ui,\ui,j}]$ acts on $\hPol_\bfn$ by $\omega_21_\uj$. But now $\Znaiv_{\ui,\ui,j}\cong \Hom(V_j,V_i)$ and the variety $\Zplus\subset \Znaiv$ is cut out by imposing $\alpha=0$. Now the subvariety is again a point, but the action differs from the sink case by an extra factor $X_1-X_2$ coming from the Euler class of $\Hom(V_j,V_i)$.
\end{ex}

\Cref{ex:sinkcase} shows that sink algebras need a different geometric treatment. Our main idea is, in comparison to the source algebras,  to add first an additional copy of $E_\bfn$ to the variety $\rmY$ and then also some additional twist mixing the two copies of $E_\bfn$. This approach was inspired by the Naisse--Vaz diagrams, more precisely by the \emph{diagram varieties} which we can attach to each such diagram; see \Cref{app:diag-var} for a definition and more motivation. More precisely, we believe that in the naive version \S\ref{subssubs:diag-naive} some transversality property holds for every basis diagram which has floating dots only for source vertices, and that this is the reason why the source case works in an easy way. This expected transversality in the source case fails however in the general case, see \Cref{ex:contrex-transv} for an example, and \S\ref{subsubs:diag-twisted} for a fix.

\subsection{$\calG$-twists of vector bundles}
\label{subs:W-twist}
We provide now some basic linear algebra facts which will
be useful to describe vector bundles which appear in our constructions.
\begin{df} \label{defWtwist}
Let $E$ be a complex vector space with a fixed direct sum decomposition $E=U\oplus U'$. Then the \emph{$U$-flip map} is the linear map 
\begin{equation}\label{Wtwist}
\phi_U\colon E\oplus E\to E\oplus E,\qquad
(u_1+u_1',u_2+u_2')\mapsto (u_2+u_1',u_1+u_2').
\end{equation}
where $u_1,u_2\in U$,  $u_1',u_2'\in U'$.
\end{df}
\begin{rk}
If $U=E$, then $\phi_U$ just flips the summands. It is the
identity map if $U=\{0\}$. In general,  $\phi_U$ exchanges the two
copies of $U$ in $E\oplus E$ without touching the $U'$ parts.
\end{rk}
One can easily compute the \emph{$U$-twists} $\phi_U(E'\oplus E)$ for a subspace $E'\subset E$: 
\begin{lem}\label{Wtwistlemma}
For any vector subspace $E'\subset E$, we have
\begin{equation}\label{lem:twist-E'E}
\phi_U(E'\oplus E)=(E'+U)\oplus(E'\cap U\oplus U').
\end{equation}
\end{lem}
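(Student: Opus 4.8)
The statement to prove is the computation of the $U$-twist $\phi_U(E'\oplus E)$ in Lemma~\ref{Wtwistlemma}, namely that $\phi_U(E'\oplus E)=(E'+U)\oplus(E'\cap U\oplus U')$. This is an elementary linear-algebra fact, so the plan is a direct verification: decompose an arbitrary element of $E'\oplus E$ according to the splitting $E=U\oplus U'$, apply the definition \eqref{Wtwist} of $\phi_U$, and identify the image. The main (and only mild) obstacle is that $E'$ need not respect the decomposition $E=U\oplus U'$, so one cannot simply write $E' = (E'\cap U)\oplus(E'\cap U')$; the argument has to be carried out at the level of vectors rather than by naively splitting $E'$.

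First I would take a general element $(v_1, v_2)\in E'\oplus E$ with $v_1\in E'$ and $v_2\in E$. Write $v_1 = u_1 + u_1'$ and $v_2 = u_2 + u_2'$ with $u_1, u_2\in U$ and $u_1', u_2'\in U'$ (the components $u_1, u_1'$ are uniquely determined by $v_1$, but individually they need not lie in $E'$). By definition, $\phi_U(v_1, v_2) = (u_2 + u_1', u_1 + u_2')$. So the first component ranges over all $u_2 + u_1'$ where $u_2\in U$ is arbitrary and $u_1'$ is the $U'$-component of some vector in $E'$; and the second component ranges over $u_1 + u_2'$ where $u_1$ is the $U$-component of that same vector $v_1\in E'$ and $u_2'\in U'$ is arbitrary.

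Next I would show the two inclusions. For ``$\subseteq$'': given such an image $(u_2+u_1', u_1+u_2')$, note $u_1' = v_1 - u_1 \in E' + U$ (since $v_1\in E'$, $u_1\in U$), hence $u_2 + u_1'\in E'+U$ as $u_2\in U$; and $u_1 + u_2' \in (E'\cap U) + U'$ is not immediate — rather one observes that $u_1$ need not be in $E'$, so instead I would argue more carefully: actually the cleanest route is to first reduce to the case $u_2 = 0$ and $u_2' = 0$ using that $U\oplus\{0\}$ and $\{0\}\oplus U'$ are both visibly contained in the claimed right-hand side and are fixed appropriately by $\phi_U$. Concretely, $\phi_U(E'\oplus E)\supseteq \phi_U(\{0\}\oplus U) = U\oplus\{0\}$ and $\phi_U(E'\oplus E)\supseteq\phi_U(\{0\}\oplus U') = \{0\}\oplus U'$, while $\phi_U(E'\oplus\{0\}) = \{(u_1', u_1)\mid u_1+u_1' = v_1\in E'\}$. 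Thus $\phi_U(E'\oplus E)$ is spanned by $U\oplus\{0\}$, $\{0\}\oplus U'$, and $\{(u_1',u_1)\mid u_1+u_1'\in E'\}$. The span of $U\oplus\{0\}$ together with the $E'$-piece gives, in the first coordinate, $U + \{u_1' : u_1+u_1'\in E'\} = U + \pi_{U'}(E')$, and since $E'\subseteq U + \pi_{U'}(E')$ trivially (as $u_1' = v_1 - u_1$), in fact $U + \pi_{U'}(E') = U + E' = E'+U$; in the second coordinate one gets $\{u_1 : u_1 + u_1'\in E'\text{ for some }u_1'\} \oplus U'$ — but $\{u_1 : u_1+u_1'\in E'\} = \pi_U(E')$, and combined with the freedom to add arbitrary elements of $U$ in the first slot independently, a short argument (choosing the first and second coordinates independently, exploiting that adding $U\oplus\{0\}$ kills the coupling) shows the second coordinate is exactly $(E'\cap U)\oplus U'$. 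I would then finish by a dimension count or by exhibiting explicit preimages to confirm equality rather than mere inclusion: for any $(a, b)\in (E'+U)\oplus(E'\cap U\oplus U')$, write $a = e + u$ with $e\in E'$, $u\in U$, write $b = c + u''$ with $c\in E'\cap U$, $u''\in U'$, and check that $\phi_U$ sends $(e + c,\ u + \pi_{U'}(e) + u'')$ — suitably arranged — to $(a,b)$. This makes the ``$\supseteq$'' inclusion explicit and, together with ``$\subseteq$'', completes the proof.

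\textbf{Remark on where care is needed.} The only subtlety is the interplay between $E'$ and the decomposition $E = U\oplus U'$: one must consistently use $\pi_U, \pi_{U'}$ for the projections, keep track that $\pi_U(E')$ and $E'\cap U$ differ in general, and verify that the $U$-component and $U'$-component of the image can be chosen independently (which is what ultimately produces the direct-sum shape $(E'+U)\oplus(E'\cap U\oplus U')$ rather than something smaller). I expect roughly half a page of routine bookkeeping once the projections are set up.
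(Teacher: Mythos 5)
Your reduction of $\phi_U(E'\oplus E)$ to the span of $U\oplus\{0\}$, $\{0\}\oplus U'$ and $\{(u_1',u_1)\mid u_1+u_1'\in E'\}$ is correct, but the step where you assert that "adding $U\oplus\{0\}$ kills the coupling" is exactly where the argument fails. Adding $U\oplus\{0\}$ and $\{0\}\oplus U'$ only lets you modify the $U$-part of the first coordinate and the $U'$-part of the second; the $U'$-part of the first coordinate and the $U$-part of the second remain coupled through $E'$. Writing $\pi_U,\pi_{U'}$ for the two projections, the span you computed is
\begin{equation*}
\phi_U(E'\oplus E)=\{(x,y)\in E\oplus E\mid \pi_{U'}(x)+\pi_U(y)\in E'\},
\end{equation*}
whereas the right-hand side of the lemma is $\{(x,y)\mid \pi_{U'}(x)\in\pi_{U'}(E'),\ \pi_U(y)\in E'\cap U\}$. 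These two subspaces have the same dimension $\dim E'+\dim E$ (so your proposed dimension count cannot detect the discrepancy), and they coincide if and only if $\pi_U(E')\subset E'$, i.e. iff $E'=(E'\cap U)\oplus(E'\cap U')$. For a non-homogeneous $E'$ the asserted identity is simply false: take $E=\bbC^2$, $U=\langle e_1\rangle$, $U'=\langle e_2\rangle$, $E'=\langle e_1+e_2\rangle$; then $(e_2,e_1)$ lies in $\phi_U(E'\oplus E)$ but not in $(E'+U)\oplus(E'\cap U\oplus U')=E\oplus\langle e_2\rangle$, while $(e_2,0)$ lies in the right-hand side but not in the image. Consistently with this, your closing preimage check does not go through: $\phi_U(e+c,\ u+\pi_{U'}(e)+u'')=(u+\pi_{U'}(e),\ e+c+u'')$, which differs from $(a,b)=(e+u,\ c+u'')$ by $\pi_U(e)$ in the first slot and by $e$ in the second, and no rearrangement can fix it because the inclusion $\supseteq$ genuinely fails.

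So the gap is not merely a missing computation: the statement as literally written, for an arbitrary subspace $E'\subset E$, cannot be proved. Your spanning-set analysis already yields the correct coupled description $\{(x,y)\mid\pi_{U'}(x)+\pi_U(y)\in E'\}$; the honest completion of your argument is to stop there and then observe that this decouples into $(E'+U)\oplus(E'\cap U\oplus U')$ precisely under the extra hypothesis that $E'$ is homogeneous for the decomposition $E=U\oplus U'$. Since the paper gives no proof of this lemma, there is nothing to compare your route against, but you should flag the missing hypothesis rather than paper over it with the "short argument."
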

\begin{ex} \label{exWtwist}
Back to our general setup in \S\ref{sec:coloured}, consider the $I$-graded vector space $V=\oplus_{i\in I}V_i$ with fixed subspaces $W_i\subset V_i$ for all $i$. We equip $V$ with a hermitian scalar product which is compatible with the $I$-grading. 
This induces a decomposition $V_i=W_i\oplus W_i^\perp$ for any $i$. Now we can take 
\begin{eqnarray*}
&E:=E_\bfn=\bigoplus_{i\to j}\Hom(V_i,V_j)=U\oplus U', \text{ where } & \\
& U=\bigoplus_{i\to j}\Hom(V_i,W_j)\quad\text{and} \quad U'=\bigoplus_{i\to j}\Hom(V_i,W_j^\perp). &
 \end{eqnarray*}
 For fixed $\bV\in\calF$ consider the corresponding \emph{Springer conditions}
 \begin{equation}\label{KLR}
 E':=\{\alpha\in E\mid \alpha_h(\bV^r_i)\subset \bV_j^r \text{ for all } h\colon i\to j\}.
 \end{equation}
By \Cref{Wtwistlemma} the $U$-twist of $E'\oplus E\subset E\oplus E=\bigoplus_{i\to j}\Hom(V_i,V_j\oplus V_j)$ equals
\begin{equation*}
\phi_U(E'\oplus E)=\{\overline\alpha\in E_\bfn\oplus E_\bfn\mid \overline\alpha (\bV_i^r)\subset (\bV_j^r+W_j)\oplus ((\bV^r_j\cap W_j)+W_j^\perp)\}.
\end{equation*}
More explicitly, $(\alpha,\beta)\in \phi_U(E'\oplus E)$ satisfies the \emph{twisted Springer conditions}
\begin{equation*}
\label{eq:twisted-preserve-cond}
\phi_U(E'\oplus E)=\{(\alpha,\beta)\in E_\bfn\oplus E_\bfn\mid 
\alpha_h(\bV_i^r)\subset \bV_j^r+W_j,\; \beta_h(\bV_i^r)\subset (\bV^r_j\cap W_j)+W_j^\perp\}. 
\end{equation*}
\end{ex}

We will now generalize the flip map to vector bundles. For this let $X$, $Y$ be smooth complex varieties and let 
$\calV\to X\times Y$ be the trivial vector bundle on $X\times Y$ with
fibre $V$, and denote by $\calV_X$ and $\calV_Y$ the trivial vector
bundles with fibre $V$ on $X$ and $Y$ respectively. These vector bundles have an induced scalar product on each fibre. We can view $\calV$ as the pull-backs of $\calV_X$ and of  $\calV_Y$ along the projection to $X$ respectively $Y$.
Similarly, for subbundles $\calK_X$ of $\calV_X\oplus\calV_X$ and $\calU_Y$ of
$\calV_Y$ we denote by  $\calK$ and $\calU$ the subbundles of $\calV\oplus\calV$ respectively  $\calV$ 
which are the pull-backs to $X\times Y$ along the respective projection map.
Since the fibre over $(x,y)\in X\times Y$ of the vector bundle $\calU$
is the fibre of $\calU_Y$ over $y$ (independently of $x$), we denote it
just by $U_y$. Similarly, for the fibre $K_x$ of $\calK$ over $x$.
We now define vector bundles  by glueing $U$-flip maps:
\begin{df} 
Consider $\calV$ as a real vector bundle. The {$\calU$-twist} $\phi_\calU(\calK)$ of $\calK$ for the triple $(X,Y,V)$ is the subbundle of
$\calV\oplus\calV$  with fibre $\phi_{U_y}(K_x)$ over $(x,y)$. 
\end{df}
\begin{rk}
As a bundle of real varieties,  $\phi_\calU(\calK)$ is isomorphic to $\calK$ (via
$\phi_\calU$).  The base and the fibres are complex varieties, although it is in general not a bundle of complex varieties (since the map is not necessarily algebraic).
\end{rk}
In practise we will use $\calU$-twists to show that certain manifolds are (real) vector bundles by realizing them as some $\phi_\calU(\calK)$. Since for us prominent examples will be such that $\calU$ is the
tautological bundle of a Grassmannian we call these twists
$\calG$-twists or \emph{Grassmannian twists}. 
\begin{ex}\label{exWtwistfancy}
As an upgrade of \Cref{exWtwist}, we can define the \emph{extended KLR twist bundle} $\phi_\calU(\calK)$. For this take $X=\calF$, $Y=\calG$, $V=E_\bfn$. Then define $\calU_X$ as subbundle of $\calV_Y$ and $\calK_X$ as the subbundle of  $\calV_X\oplus\calV_X$  with fibres $U$ respectively $E'\oplus E$ as in \Cref{exWtwist}. 
\end{ex}

\subsection{The sink algebras geometrically}
\label{subs:target}

Now, we explain how to construct geometrically the subalgebra $\cevhR_\bfn$ of $\hR_\bfn$.
We denote by $\cev\rmY$, $\cevGGZ$ the modification of $\rmY$, $\GGZ$ from \Cref{generalnot} by only allowing the Grassmannian dimension vectors $\bfk\in \cev J_{\bfn}$. To define the replacement $\cev\Zplus$ of $\Znaiv$ we first enlarge $\cev\rmY$ by  an extra copy of $E_\bfn.$ 

Set $\bfY_{\ui,\bfk}=\rmY_{\ui,\bfk}\times E_\bfn$, $\bfY_\bfk=\rmY_\bfk\times E_\bfn$ and $\cev\bfY=\cev\rmY\times E_\bfn$. We denote elements of the original copy of $E_\bfn$ by $\alpha$ and of the new copy by $\beta$, thus $(\alpha, \beta)\in E_\bfn\oplus E_\bfn=\bigoplus_{i\to j}\Hom(V_i,V_j\oplus V_j)$. Finally  fix a (hermitian) scalar product on $V$ such that different $V_i$'s are orthogonal.
\begin{df} \label{crucialdef}
Let $\Ytw_{\ui,\bfk}$ be the (real) vector bundle\footnote{Although $\Ytw$ is by definition a real algebraic variety,  it is homeomorphic to $\bfY$, and we can view it as a complex algebraic variety. The variety $\Ztw$ below is however real.}  over $\calF\times\calG$ whose fiber over $(\bV,W)$ is the subspace of 
$E_\bfn\times   E_\bfn\times\calF_\ui\times \calG_\bfk$ determined by the tuples
\begin{equation}\label{twistSprcond}
\{(\alpha,\beta,\bV,W)\mid 
\alpha_h(\bV_i^r)\subset \bV_j^r+W_j,\;\beta_h(\bV_i^r)\subset (\bV^r_j\cap W_j)+W_j^\perp\; 
\text{for } h:i\rightarrow j\},
\end{equation}
Set also
\begin{eqnarray*}
&\Ytw_\bfk=\coprod_{\ui\in I^\bfn}\Ytw_{\ui,\bfk}\quad\text{and}\quad\cevYtw=\coprod_{\ui\in I^\bfn,\bfk\in \cev J_{\bfn}}\bfY_{\ui,\bfk},&\\
&\Ztw_{\bfk_1,\bfk_2}=\Ytw_{\bfk_1}\times_{E_\bfn\oplus E_\bfn}\Ytw_{\bfk_2},\qquad \cevZtw=\coprod_{\bfk_1,\bfk_2\in \cev J_{\bfn}}\Ztw_{\bfk_1,\bfk_2},&\\
&\Zplus_{\bfk}=\Ztw_{\bfk,0},\qquad\cev\Zplus=\coprod_{\bfk\in \cev J_{\bfn}}\Zplus_\bfk.&
\end{eqnarray*}

\end{df}
\begin{rk}
Note that $\Ytw_{\ui,\bfk}$ is the $\calU$-twist of the bundle with fibres $E'\oplus E$ given by the Springer conditions \eqref{KLR} for the triple $(\calF,\calG, E_\bfn)$, see \Cref{exWtwistfancy}. We therefore call the conditions from \eqref{twistSprcond} \emph{twisted Springer conditions}.
\end{rk}

Consider now the algebra $H_*^{\rmG_\bfn}(\cevZtw)$, with product given by convolution defined with respect to the inclusion $\cevZtw\subset \cevYtw\times\cevYtw$. We next define generalizations of the Grassmannian inclusion subvarieties from \S\ref{subs:creation}. Namely we consider
\begin{equation}\label{geninclvar}
\cevZtw_\supset:=\{(\alpha,\beta,\bV,\tV,W,\tW)\mid W\supset \tW\}\subset\cevZtw\quad\text{and}  \quad\Ztw_{\bfk_1\supset\bfk_2}=\Ztw_{\bfk_1,\bfk_2}\cap \cevZtw_\supset. 
\end{equation}
If $\bfk_1,\bfk_2,\bfk_1-\bfk_2\in \cev J_{\bfn}$, then we have 
\begin{equation*}
\Ztw_{\bfk_1\supset\bfk_2}\to \Zplus_{\bfk_1-\bfk_2}, 
\quad 
(\alpha,\beta, \bV,\tV,W,\tW)\mapsto (\phi_{U}(\alpha,\beta),\bV,\tV,W,W\cap \tW^\perp),
\end{equation*}
where $U= \bigoplus_{i\to j}\Hom(V_i,\tW_j)$ and $U'= \bigoplus_{i\to j}\Hom(V_i,\tW_j^\perp)$. 
By definition of the flip map, this is a locally trivial fibration with Grassmannians as fibers; the fibers are  the same fibres as for
$\GGZ_{\bfk_1\supset\bfk_2}\to \Zplus_{\bfk_1-\bfk_2}$.

We have a linear map $H_*^{\rmG_\bfn}(\cev\Zplus)\to H_*^{\rmG_\bfn}(\cevZtw)$ defined as the composition of the pull-back $H_*^{\rmG_\bfn}(\cev\Zplus)\to H_*^{\rmG_\bfn}(\cevZtw_\supset)$ and the push-forward $H_*^{\rmG_\bfn}(\cevZtw_\supset)\to H_*^{\rmG_\bfn}(\cevZtw)$.

\begin{prop}
The space $H_*^{\rmG_\bfn}(\cev\Zplus)$ is a subalgebra of $H_*^{\rmG_\bfn}(\cevZtw)$.
\end{prop}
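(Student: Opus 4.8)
The plan is to reproduce the argument from \S\ref{subs:Z'-subalg-Z} (the proof of \Cref{closedmulti}/\Cref{closedmultistrong}) in the present twisted setting, using the three ingredients listed in the remark at the end of that section as a checklist. First I would record that $H_*^{\rmG_\bfn}(\cevZtw)$ carries a convolution product via $\cevZtw\subset\cevYtw\times\cevYtw$, as already asserted, and that the localisation map $H_*^{\rmG_\bfn}(\cevZtw)\hookrightarrow H_*^\rmT(\cevZtw^\rmT)_{\rm loc}$ is injective; the latter follows from the existence of a nice paving of $\cevZtw$ (built from preimages of Gells under $\cevZtw\to\calF\times\calF\times\cev\calG$), exactly as in the $\mathfrak{sl}_2$ case, cf.\ \Cref{rk:faith-from-strat}. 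This reduces everything to a computation on $\rmT$-fixed points.

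\textbf{Key steps.} The core point is that the twisting by the flip map $\phi_U$ does \emph{not} affect the $\rmT$-fixed point combinatorics: since the scalar product is $\rmT$-compatible and $W_i\subset V_i$ is $\rmT$-stable at each fixed point, the decomposition $V_i=W_i\oplus W_i^\perp$ is $\rmT$-stable, so $\phi_U$ is a $\rmT$-equivariant (real-linear) bundle isomorphism $\Ytw_{\ui,\bfk}\cong\bfY_{\ui,\bfk}$ and hence induces an identification of $\rmT$-fixed points of $\cevZtw$ with those of $\cevGGZ$ (restricted to $\bfk\in\cev J_\bfn$). Concretely I would check: (i) the $\rmT$-fixed points of $\Ytw_\bfk$ are still labelled by $(w,\mu)$ with zero $(\alpha,\beta)$-component, and the Euler class $\op{eu}(\Ytw,(w,\mu))$ factorizes as $\Theta_w\cdot\catP_{\mu,\mu^c}$ times the Euler class of the extra $E_\bfn$-factor, which again has the shape (something depending only on $w$) $\times\,\catP_{\mu,\mu^c}$-free part — more precisely, the twisted Springer conditions cut out a $\rmT$-stable subspace whose character is, at the fixed point, independent of the $W$-data in the relevant sense, so ingredient (I-iii) holds; (ii) $\Ztw_{\bfk_1\supset\bfk_2}\to\Zplus_{\bfk_1-\bfk_2}$ is a Grassmannian bundle with the same relative Euler class $\catP_{\mu_2,\mu_1^c}$ at the fixed point $(w_1,w_2,\mu_1,\mu_2)$ as in \S\ref{subs:Z'-subalg-Z}, since the fibre is literally the same as for $\GGZ_{\bfk_1\supset\bfk_2}\to\Zplus_{\bfk_1-\bfk_2}$ as noted in the text — this is ingredient (I-ii); (iii) the diffeomorphism $\gamma_{\bfk_1,\bfk_2}$ of \eqref{gamma} restricts to $\cevZtw_\supset$, and the product map $H_*^{\rmG_\bfn}(\cev\Zplus_{\bfk_1})\times H_*^{\rmG_\bfn}(\cev\Zplus_{\bfk_2})\to H_*^{\rmG_\bfn}(\cev\Zplus_{\bfk_1+\bfk_2})$ is defined as the composition of $\iota_{\bfk_1+\bfk_2,\bfk_2}\times\Id$ with convolution, verbatim as in \eqref{eq:star-G-gen}. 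With these in hand, the same localised computation as in \Cref{starintertwines} (taking $\bfk_3=0$) shows the product on $H_*^\rmT(\cevZtw)_{\rm loc}$ restricts to a well-defined product on the subspace $H_*^{\rmG_\bfn}(\cev\Zplus)$, and that the pull-back--push-forward map identifies $H_*^{\rmG_\bfn}(\cev\Zplus)$ with a subalgebra of $H_*^{\rmG_\bfn}(\cevZtw)$; associativity follows from the analogue of \Cref{coromult}.

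\textbf{Main obstacle.} The delicate point — and the only place where this differs from \S\ref{subs:Z'-subalg-Z} — is verifying that $\cevZtw$ and its subvarieties actually admit the pavings needed for the localisation injectivity (I-i), because $\Ztw_{\bfk_1,\bfk_2}$ is only a \emph{real} algebraic variety and the twisted fibre product is not obviously cut out by the naive intersection conditions. I would handle this by using the $\calU$-twist description: $\Ytw$ is $\phi_\calU$ of the honest (complex) Springer bundle, so the relevant pieces of $\cevZtw$ are, after applying the flip, honest vector bundles over the Gell-pavings of $\calF\times\calF\times\cev\calG$ whose fibre dimensions are controlled by the twisted Springer conditions; this is precisely the content needed and will be established (for the dimension count) in the companion lemma analogous to \Cref{blacktea}. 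For the mere statement that $H_*^{\rmG_\bfn}(\cev\Zplus)$ is a subalgebra, one only needs the paving to exist (so that fixed-point localisation is injective), not the exact dimension formula, so I would keep this proof short and defer the dimension bookkeeping. I would also remark, as in the text preceding \Cref{closedmulti}, that the argument is written so as to be robust under the twist — one genuinely only uses the three abstract ingredients (I-i)--(I-iii), all of which survive the passage from $\GGZ,\Znaiv$ to $\cevGGZ,\cev\Zplus$.
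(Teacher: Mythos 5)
Your overall architecture (define the product via $\iota_{\bfk_1+\bfk_2,\bfk_2}$ followed by convolution, verify everything on $\rmT$-fixed points as in \Cref{starintertwines}) is the paper's approach, but there is one genuine gap: you assert that the localisation map $H_*^{\rmG_\bfn}(\cevZtw)\to H_*^\rmT(\cevZtw^\rmT)_{\rm loc}$ is injective, justified by a "nice paving of $\cevZtw$ built from preimages of Gells." The paper explicitly flags exactly this injectivity as \emph{not} clear, and for good reason: $\cevZtw=\Ytw\times_{E_\bfn\oplus E_\bfn}\Ytw$ is a fibre product of two bundles twisted with respect to \emph{different} Grassmannian data ($W$ on one factor, $\tW$ on the other), so its fibre over $(\bV,\tV,W,\tW)$ is an intersection of two twisted Springer spaces whose dimension is not locally constant over a stratum. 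This is the same phenomenon as in \Cref{vbrk}: removing the $\supset$-condition destroys the vector-bundle structure. Your "main obstacle" paragraph gestures at applying the flip to untwist, but a single flip cannot simultaneously straighten both factors, so the claimed paving of $\cevZtw$ does not exist by this argument. Only $\cev\Zplus=\Ytw\times_{E_\bfn\oplus E_\bfn}\Ytw_0$ (where the second Grassmannian is trivial) is a vector bundle over each Gell, and only for it is localisation known to be injective.

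The gap is repairable, and the repair is precisely what the paper does: you do not need injectivity on all of $H_*^{\rmG_\bfn}(\cevZtw)$. Run your fixed-point computation inside $H_*^\rmT(\cevZtw^\rmT)_{\rm loc}$; this identifies $H_*^{\rmG_\bfn}(\cev\Zplus)$ with a subalgebra of the \emph{quotient} of $H_*^{\rmG_\bfn}(\cevZtw)$ by the kernel of its localisation map. Then use that localisation \emph{is} injective on $H_*^{\rmG_\bfn}(\cev\Zplus)$ itself (via its Gell paving) to conclude that the composite $H_*^{\rmG_\bfn}(\cev\Zplus)\to H_*^{\rmG_\bfn}(\cevZtw)\to H_*^{\rmG_\bfn}(\cevZtw)/\ker(\op{loc})$ is injective, hence so is the first map, and the image in $H_*^{\rmG_\bfn}(\cevZtw)$ is closed under the product. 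With that substitution your proof goes through.
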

\begin{proof}
We just copy the arguments of \S \ref{subs:Z'-subalg-Z}.  In that case, we knew (using Gells) that $H_*^{\rmG_\bfn}(\cev\Zplus)$ embeds into $H_*^{\rmT}(\cev\Zplus)_{\rm loc}$ but the injectivity for the similar map for $\cevZtw$ is not clear. So, the argument above allows to identify $H_*^{\rmG_\bfn}(\cev\Zplus)$ with a subalgebra of the quotient of $H_*^{\rmG_\bfn}(\cevZtw)$ by the kernel of the localisation map. Moreover, since the localisation map is injective on the image of $H_*^{\rmG_\bfn}(\cev\Zplus)$ in $H_*^{\rmG_\bfn}(\cevZtw)$, we see that $H_*^{\rmG_\bfn}(\cev\Zplus)$ is identified with a subalgebra of $H_*^{\rmG_\bfn}(\cevZtw)$ (not just of its quotient).
\end{proof}

\begin{df}\label{geomgradhR-sink} 
	Define a $\bbZ$-grading on the algebra $H_*^{\rmG_\bfn}(\cev\Zplus)$ by putting $H_r^{\rmG_\bfn}(\Zplus_{\ui,\uj,\bfk})$ in degree $
	-r+2\dim\Ytw_{\ui,\bfk}+\sum_{i\in I}(k_i^2-k_i).$ 
\end{df}
\begin{rk}
We omit the straight-forward check that  \Cref{geomgradhR-sink} is a well-defined grading. The main difference to  \Cref{geomgradhR} is that we use now $\dim\Ytw_{\ui,\bfk}$ instead of $\dim\rmY_{\ui,\bfk}$. In fact, we have $\dim\Ytw_{\ui,\bfk}=\dim\bfY_{\ui,\bfk}=\dim\rmY_{\ui,\bfk}+\dim E_\bfn$.
\end{rk}
\begin{thm} \label{geomsinkalgebra} 
There is an isomorphism of graded algebras $\cevhR_\bfn\cong H_*^{\rmG_\bfn}(\cev\Zplus)$.
\end{thm}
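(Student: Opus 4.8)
\textbf{Proof plan for Theorem~\ref{geomsinkalgebra}.}
The plan is to mirror the structure of the $\mathfrak{sl}_2$ argument (Theorem~\ref{thm:main-sl2}) and of the source-algebra argument (Theorem~\ref{geomsourcealgebra}): establish a two-sided bound, first an injective algebra homomorphism $\cevhR_\bfn\hookrightarrow H_*^{\rmG_\bfn}(\cev\Zplus)$, then an equality of graded dimensions, and conclude. The embedding part is the easier half. By \Cref{closedmultistrong} we already have an embedding $\hR_\bfn\hookrightarrow H_*^{\rmG_\bfn}(\Znaiv)$, and restricting to the Grassmannian dimension vectors $\bfk\in\cev J_\bfn$ supported on sinks gives $\cevhR_\bfn\hookrightarrow H_*^{\rmG_\bfn}(\cev\Znaiv)$. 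I would then check, generator by generator, that the operators on $\cevhPol_\bfn$ coming from $R_\bfn$ (namely $X_r1_\ui$, $\tau_r1_\ui$) and from the floating dots $\Omega1_\ui$ of sink colour factor through classes in $H_*^{\rmG_\bfn}(\cev\Zplus)$. For $X_r,\tau_r$ this is essentially formal since they do not touch the $\beta$-copy of $E_\bfn$, so one pulls back the corresponding classes from \S\ref{sec:gen-qv}. The interesting case is the floating dot: here I would use the twisted Springer conditions \eqref{twistSprcond} of \Cref{crucialdef}, together with \Cref{Wtwistlemma} and \Cref{exWtwist}, to verify that the fundamental class of the relevant subvariety of $\Ztw$ acts on $\cevhPol_\bfn$ by exactly the operator $\Omega_r1_\ui$ of \Cref{highdot}. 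Concretely, the $U$-flip is designed so that the extra Euler-class factor $\prod\cQ_{i_t,j_0}(X_t,\xi)$ in \eqref{eq:Omega-via-z} arises from the $\beta$-part of \eqref{twistSprcond} (cf.\ \Cref{ex:sinkcase}, reversed-arrow case), while the $\alpha$-part reproduces the plain $\omega_{r'}$. Faithfulness of the action of $H_*^{\rmG_\bfn}(\cev\Zplus)$ on $H_*^{\rmG_\bfn}(\cevYtw)\cong\cevhPol_\bfn$ follows from \Cref{rk:faith-from-strat} once we have a good paving, so the composite $\cevhR_\bfn\to H_*^{\rmG_\bfn}(\cev\Zplus)$ is injective, and it is homogeneous of degree zero for the grading in \Cref{geomgradhR-sink} by the usual degree bookkeeping.

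The bulk of the work is the dimension count, which I would package as an analogue of \Cref{blacktea}. Fix $\bfk\in\cev J_\bfn$, $\ui,\uj\in I^\bfn$, and consider the decomposition of $\Zplus_{\ui,\uj,\bfk}$ by preimages $p^{-1}(\calO^{\rm Gell}_{x,y,z,\uj})$ under the projection $p\colon\cev\Zplus\to\calF\times\calF\times\cev\calG$. The key point, and the reason for introducing the $U$-twist, is that over each Gell the map $p$ \emph{is} a (real) vector bundle with constant fibre dimension: this is precisely where the naive $\Znaiv$ fails (\S\ref{subs:intuit-approch-philo}, \S\ref{problemtarget}) and where \Cref{crucialdef} succeeds. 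I would prove this by writing the fibre, for $(\bV,\tV,W)\in\calO^{\rm Gell}_{x,y,z,\uj}$, as the $\calU$-twist $\phi_\calU(E'\oplus E)$ of the Springer-condition bundle \eqref{KLR} (as in \Cref{exWtwistfancy}), intersected with the fibre conditions coming from the second flag; since a $\calU$-twist is fibrewise a linear isomorphism onto its image, the fibre dimension equals that of the untwisted object, which by \Cref{Wtwistlemma} splits as $(E'+U)\oplus(E'\cap U\oplus U')$ and so has locally constant dimension. Having this, the classes $[p^{-1}(\calO^{\rm Gell}_{x,y,z,\uj})]$ form a $\Pol_n$-basis of the associated graded of $H_*^{\rmG_\bfn}(\cev\Zplus)$, and I lift them to a homogeneous $\Pol_n$-basis. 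By \Cref{checkbasis} it then suffices to show this paving is weakly adapted to $\{\tau_x\Omega_{k,n}\tau_y\tau_z1_\uj\}$ with $\bfk\in\cev J_\bfn$, i.e.\ to verify the degree formula
\begin{equation*}
\deg(\tau_x\Omega_{k,n}\tau_y\tau_z1_\uj)=2\dim\Ytw_{\ui,\bfk}+\sum_{i\in I}(k_i^2-k_i)-2\dim p^{-1}(\calO^{\rm Gell}_{x,y,z,\uj}).
\end{equation*}

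To verify this formula I would follow the reduction used for \eqref{problem}--\eqref{reformulate}: subtract \Cref{eq:dim-col-Gell} (the base contribution, a pure $\mathfrak{sl}_2$ computation already done) and use \Cref{countrk}, reducing the claim to the statement that the fibre dimension drop from $\dim_\alpha\Ytw_{\ui,\bfk}$ to $\dim_\alpha p^{-1}(\calO^{\rm Gell}_{x,y,z,\uj})$ equals $\Xright(x)+\Xright(y)+\Xright(z)+\Xright(w_{0,k})$. In the source case the last term $\Xright(w_{0,k})$ vanished because the Grassmannian vectors were supported on sources; in the sink case it does \emph{not} vanish, and this is exactly the term that the $\beta$-copy of $E_\bfn$ (with the orthogonal-complement twist through $W$, as recorded by the passage $\VWp$ in the definition of Gells) must account for. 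I expect the main obstacle here to be a clean computation of $\dim_\alpha\Ztw_{\ui,\bfk}$ and of the twisted fibre over a Gell: one has to show that, after the $U$-flip with $U=\bigoplus_{i\to j}\Hom(V_i,\tW_j)$, the conditions $\alpha_h(\bV_i^r)\subset\bV_j^r+W_j$ and $\beta_h(\bV_i^r)\subset(\bV_j^r\cap W_j)+W_j^\perp$ together impose exactly $\Xright(x)+\Xright(y)+\Xright(z)+\Xright(w_{0,k})$ extra linear conditions relative to the open Gell, using \Cref{lem:diff-dim-KLR} applied to the relative positions $\rel(\bV,\VWp)=xw_{0,k}$ (\Cref{relVWp}) and $\rel(\VWp,\tV{}^W)=y$, $\rel(W,\tV)=z$, and reducing to one-arrow quivers by \Cref{defsum}. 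Once the degrees match, \Cref{checkbasis} gives the basis, hence equality of graded dimensions, and comparing with the injective homomorphism above yields the graded isomorphism $\cevhR_\bfn\cong H_*^{\rmG_\bfn}(\cev\Zplus)$.
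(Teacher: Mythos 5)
Your overall plan is the right one and matches the paper's proof structure: establish an embedding $\cevhR_\bfn\hookrightarrow H_*^{\rmG_\bfn}(\cev\Zplus)$ by identifying actions on $\cevhPol_\bfn$, then show the Gell paving is weakly adapted to the Naisse--Vaz basis elements supported on sinks to match graded dimensions, and invoke \Cref{checkbasis}. This is exactly \Cref{lem:check-dim-target} in the paper.

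However, there is a genuine misconception in your account of where the dimension count comes from, and it concerns the heart of the sink case. You write that in the sink case $\Xright(w_{0,k})$ does \emph{not} vanish, and that the $\beta$-copy of $E_\bfn$ must ``account for'' that term. Both claims are wrong. Since $\bfk\in\cev J_\bfn$, every strand in the Grassmannian block is sink-coloured, and a crossing in $w_{0,k}$ involves two sink-coloured strands; as a sink has no outgoing arrow there is no arrow between two sinks, so $\Xright(w_{0,k})=0$. The same reasoning shows $\Xright(x)=0$ in the sink case as well (each crossing in $x$ has a sink-coloured strand in the $\Xright$-relevant position). The paper's proof of \Cref{lem:check-dim-target} uses precisely these two vanishings: after splitting $\dim_{\alpha,\beta}$ into $\dim_\alpha+\dim_\beta$, one observes that $\dim_\beta\Ytw_{\ui,\bfk}=\dim_\beta p^{-1}(\calO^{\rm Gell}_{x,y,z,\uj})$, so the $\beta$-contribution cancels from the difference entirely, leaving only an $\alpha$-count that produces $\Xright(y)+\Xright(z)$ via $\dim_\alpha\Ytw_{\ui,\bfk}=\dim X(\bV^W)$, $\dim_\alpha p^{-1}(\calO^{\rm Gell}_{x,y,z,\uj})=\dim(X(\bV^W)\cap X(\tV))$ and \Cref{lem:diff-dim-KLR} with $\rel(\bV^W,\tV)=yz$. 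Your framing --- that the $\beta$-part supplies $\Xright(w_{0,k})$ conditions --- would send the computation in a direction that cannot close: the role of the $\beta$-copy and the flip is \emph{not} to add linear conditions to the dimension difference, but to make $p$ an actual vector bundle over each Gell with constant fibre dimension (the failure mode of $\cev\Znaiv$ displayed in \Cref{ex:contr-target}) and to shift the absolute dimension so that the grading formula \Cref{geomgradhR-sink} works out. In the difference of fibre dimensions the $\beta$-part is invisible.

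Two further small slips worth noting: (i) in the flip description you take $U=\bigoplus_{i\to j}\Hom(V_i,\tW_j)$, which is the $U$ appearing in the Grassmannian fibration $\Ztw_{\bfk_1\supset\bfk_2}\to\Zplus_{\bfk_1-\bfk_2}$ after \eqref{geninclvar}, not the one used in the definition of $\Ytw$ via \eqref{twistSprcond}, which involves $W_j$; conflating the two makes the bundle argument murkier than it needs to be. (ii) For the generator check, only the plain floating dot $\Omega 1_\ui$ with $i_1\in\cevI$ has to be handled (the paper realizes it as the fundamental class of the locus $\bV=\tV$, $W=\bV^1$); the Euler-class factor $\prod\cQ_{i_t,j_0}(X_t,\xi)$ from \eqref{eq:Omega-via-z} belongs to higher floating dots and is not needed here, so invoking \Cref{highdot} overcomplicates this step.
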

\begin{proof}
The algebra $H_*^{\rmG_\bfn}(\cevZtw)$ acts on $H_*^{\rmG_\bfn}(\cevYtw)\cong H_*^{\rmG_\bfn}(\cev\rmY)\cong\cevhPol_{\bfn}$.  We can restrict this action to $H_*^{\rmG_\bfn}(\cev\Zplus)$. Similarly to what we did before, for each generator $x\in\cevhR_{\bfn}$ we can construct some $x\in H_*^{\rmG_\bfn}(\cev\Zplus)$ acting by the same operator on $\cevhPol_\bfn$. Indeed, there is nothing to do for $\tau_r$ and $X_p$, we only have to consider $\Omega 1_\ui$ with $i_1\in \cevI$. It is easy to see (doing a modification of the computations above) that it acts by the same operator (namely multiplication by $\omega_11_\ui$), as the fundamental class of the closed subset of $\Zplus_{\ui,\ui,i_1}$ given by $\bV=\tV$ and  $W=\bV^1$. This is explained in more detailed in the more general situation below, see \Cref{lem:gen-mixed}.
 Moreover the $H_*^{\rmG_\bfn}(\cev\Zplus)$-action on $H_*^{\rmG_\bfn}(\cevYtw)$ is faithful, as can be shown using the approach of \Cref{rk:faith-from-strat} and the paving of $\cev\Zplus$ by preimages of Gells. Thus,  we get an injective algebra homomorphism $\cevhR_{\bfn}\hookrightarrow H_*^{\rmG_\bfn}(\cev\Zplus)$. This is an isomorphism, since the two algebras have the same graded dimension by \Cref{lem:check-dim-target} below. 
\end{proof}
Let $\calO^{\rm Gell}_{x,y,z,\uj}$ be a Gell in $\calF_\ui\times\calF_\uj\times\calG_\bfk$ and let $p^{-1}(\calO^{\rm Gell}_{x,y,z,\uj})$ be its preimage under the projection $\cev\Zplus\to\calF\times\calF\times\cev\calG$.
\begin{lem}
\label{lem:check-dim-target}
	For $\bfk\in\cev J_\bfn$, we have
	\begin{equation}	\label{eq:check-dim-target}
	\deg(\tau_x\Omega_{k,n}\tau_y\tau_z1_\uj)=	2\dim \Ytw_{\ui,\bfk}+\sum_{i\in I}(k_i^2-k_i)-2\dim \calO^{\rm Gell}_{x,y,z,\uj}.
	\end{equation}
\end{lem}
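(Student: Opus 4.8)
The strategy is to reduce \eqref{eq:check-dim-target} to a statement about the dimensions of the fibres of the vector bundle $p^{-1}(\calO^{\rm Gell}_{x,y,z,\uj})\to \calO^{\rm Gell}_{x,y,z,\uj}$ obtained by forgetting the $E_\bfn\oplus E_\bfn$-component, exactly as was done for the source algebra in \Cref{blacktea}. First I would invoke \Cref{eq:dim-col-Gell}, which computes $\dim\calO^{\rm Gell}_{x,y,z,\uj}$ in terms of $\dim(\calF_\ui\times\calG_\bfk)$, the numbers $\Xequal$ of the relevant coloured permutations $x,y,z,w_{0,k}$, the correction $\frac12\sum_i(k_i^2-k_i)$ and the term $\sum_i(n_i-1)k_i$ (the half-degree of the floating dots). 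Combining this with the degree formula from \Cref{DefgradhR} (using $\deg(\tau_w1_\uj)=2\Xright(w,\uj)-2\Xequal(w,\uj)$ and $\deg(\Omega_{k,n}1_\uj)=\sum_i(n_i-1)k_i$, as in \Cref{countrk} and \eqref{omegakn}), the identity \eqref{eq:check-dim-target} becomes, after cancelling the $\Xequal$-terms and the floating-dot degrees, the purely fibrewise statement
\begin{equation*}
\dim_\alpha\Ytw_{\ui,\bfk}-\dim_\alpha p^{-1}(\calO^{\rm Gell}_{x,y,z,\uj})=\Xright(x)+\Xright(y)+\Xright(z)+\Xright(w_{0,k}),
\end{equation*}
where $\dim_\alpha$ denotes the rank of the respective vector bundle over the base; this is the exact analogue of \eqref{reformulate}, but now with the crucial extra term $\Xright(w_{0,k})$ which vanished in the source case.

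Next I would compute the two fibre dimensions. Since $\Ytw_{\ui,\bfk}$ is by \Cref{crucialdef} the $\calU$-twist of the bundle with fibres $E'\oplus E$, where $E'$ is cut out by the Springer conditions \eqref{KLR}, the $\calU$-flip is a homeomorphism onto its image, so $\dim_\alpha\Ytw_{\ui,\bfk}=\dim(E'\oplus E)=\dim X(\bV)+\dim E_\bfn$. For the fibre of $p^{-1}(\calO^{\rm Gell}_{x,y,z,\uj})$ I would pick representatives $\bV\in\calF_\ui$, $\tV\in\calF_\uj$, $W\in\calG_\bfk$ with $(\bV,\tV,W)\in\calO^{\rm Gell}_{x,y,z,\uj}$; by definition of the Gell we have $\rel(\bV,W)=x$, hence $\rel(\bV,\VWp)=xw_{0,k}$ by \Cref{relVWp}, and $\rel(\VWp,\tV{}^W)=y$, $\rel(W,\tV)=z$. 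The fibre over such a point inside $\Zplus_\bfk=\Ztw_{\bfk,0}$ consists of pairs $(\alpha,\beta)$ satisfying the twisted Springer conditions \eqref{twistSprcond} for $\bV$ together with the ordinary Springer conditions for $\tV$ (coming from the $\rmY_0$-factor). Using \Cref{Wtwistlemma} to untwist, I expect this fibre to be homeomorphic to $\phi_U$ applied to $(X(\VWp)\cap X(\tV))\oplus(\text{something of dimension }\dim E_\bfn)$; the key point is that the twisted condition on $\alpha$ for $\bV$ becomes, after untwisting, the condition $\alpha(\bV^r_i)\subset \bV^r_j+W_j$, i.e. the Springer condition for the flag $\VWp$ (since $\VWp$ has the same subspaces as $\bV$ modulo $W$, differing only inside $W$, and the orthogonal modification inside $W$ is invisible to the $+W_j$ condition). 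Hence $\dim_\alpha p^{-1}(\calO^{\rm Gell}_{x,y,z,\uj})=\dim(X(\VWp)\cap X(\tV))+\dim E_\bfn$.

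Then the required identity follows from \Cref{lem:diff-dim-KLR} applied twice: $\dim X(\bV)-\dim X(\VWp)=\Xright(xw_{0,k})=\Xright(x)+\Xright(w_{0,k})$ (using additivity of $\Xright$ under concatenation of reduced coloured permutations, which is the content of \Cref{lem:rel-triple-flags} combined with \Cref{lem:diff-dim-KLR} — here one must check that $xw_{0,k}$ is a reduced product of coloured permutations, which holds because $x$ is a minimal coset representative for $\frakS_n/(\frakS_k\times\frakS_{n-k})$ and $w_{0,k}\in\frakS_k$), and $\dim X(\VWp)-\dim(X(\VWp)\cap X(\tV))=\Xright(yz)=\Xright(y)+\Xright(z)$. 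Adding these gives $\dim X(\bV)-\dim(X(\VWp)\cap X(\tV))=\Xright(x)+\Xright(w_{0,k})+\Xright(y)+\Xright(z)$, which is exactly what is needed. \textbf{The main obstacle} I anticipate is the careful verification that the $\calU$-twisted fibre really untwists to $X(\VWp)\cap X(\tV)$ (times an $E_\bfn$-worth of $\beta$'s) — one has to track precisely how the flip map \eqref{Wtwist} interacts with the orthogonal-complement modification $\bV^W\rightsquigarrow\VWp$ inside $W$, and confirm that the dimension count is genuinely independent of the choice of point in the Gell (i.e. that $p$ really is a vector bundle over the Gell), which is where the twist was introduced to fix the non-constancy problem flagged in \S\ref{subs:intuit-approch-philo}. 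The bookkeeping with the split $E_\bfn\oplus E_\bfn=U\oplus U'\oplus U\oplus U'$ and which of $\alpha,\beta$ carries which constraint is the delicate part; everything else is a routine transcription of the source-algebra argument in \Cref{blacktea}.
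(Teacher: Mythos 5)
Your reduction of \eqref{eq:check-dim-target} to the fibrewise identity, and your computation of the total fibre dimension of $\Ytw_{\ui,\bfk}$ as $\dim X(\bV)+\dim E_\bfn$, follow the paper exactly; but the two steps after that are both false as stated, and your argument only lands on the right answer because the errors cancel. First, the fibre of $p^{-1}(\calO^{\rm Gell}_{x,y,z,\uj})$ over $(\bV,\tV,W)$ does \emph{not} have dimension $\dim(X(\VWp)\cap X(\tV))+\dim E_\bfn$. By \Cref{Wtwistlemma} it splits as $F_\alpha\oplus F_\beta$ with $F_\alpha=(X(\bV)+U)\cap X(\tV)$ and $F_\beta=(X(\bV)\cap U)\oplus U'$, so the $\beta$-part has dimension $\dim(X(\bV)\cap U)+\dim U'$, strictly less than $\dim E_\bfn$ whenever $U\not\subset X(\bV)$; the Springer condition for $\tV$ constrains only $\alpha$, so nothing "untwists" to a free copy of $E_\bfn$. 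Second, \Cref{lem:diff-dim-KLR} computes the codimension of $X(\bV)\cap X(\VWp)$ \emph{inside} $X(\bV)$, not the difference $\dim X(\bV)-\dim X(\VWp)$; these differ by $\Xright((xw_{0,k})^{-1})$, which is nonzero in general. A minimal counterexample to both claims: quiver $i\to j$, $\bfn=i+j$, $\bfk=j\in\cev J_\bfn$, $\bV=\tV=(V_i\subset V)$, $W=V_j$, so $(x,y,z,\uj)=(s_1,e,s_1,(i,j))$. The fibre of $p^{-1}$ is $\{(0,0)\}$ (the $\tV$-condition forces $\alpha=0$ and the twist forces $\beta=0$), whereas your formula predicts $0+\dim E_\bfn=1$; and $\dim X(\bV)-\dim X(\VWp)=0-1=-1\neq 0=\Xright(xw_{0,k})$. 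The required identity, $1=\Xright(z)$, nevertheless holds.

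The paper's proof avoids both traps: it observes that the $\beta$-parts of the fibres of $\Ytw_{\ui,\bfk}$ and of $p^{-1}(\calO^{\rm Gell}_{x,y,z,\uj})$ are literally the same subspace, so they cancel without ever being computed; it identifies the $\alpha$-parts as $X(\bV^W)$ and $X(\bV^W)\cap X(\tV)$ (note $\bV^W$, not $\VWp$, and $\dim X(\bV^W)\neq\dim X(\bV)$ in general, as the example shows); and it invokes the sink hypothesis $\bfk\in\cev J_\bfn$ to get $\Xright(x)=\Xright(w_{0,k})=0$, so that only one correctly oriented application of \Cref{lem:diff-dim-KLR}, to the pair $(\bV^W,\tV)$, remains. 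Your write-up never uses the sink hypothesis, which is the warning sign: the passage from $\dim X(\bV)$ to $\dim X(\VWp)$ genuinely costs $\Xright((xw_{0,k})^{-1})$, a quantity invisible to the identity you are trying to prove. To repair the argument, replace $\VWp$ by $\bV^W$ in the fibre identification, record the $\beta$-part as $(X(\bV)\cap U)\oplus U'$ on both sides so that it cancels, and use the sink hypothesis to dispose of $\Xright(x)$ and $\Xright(w_{0,k})$.
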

\begin{proof}

If the quiver has no arrows, then the equality \eqref{eq:check-dim-target} holds by \eqref{eq-weaklyad-Gells-sl2}. For a general quiver $\Gamma$ we reformulate \eqref{eq:check-dim-target}, as  we did for \eqref{problem}, using \Cref{eq:dim-col-Gell} as  
\begin{equation}
\label{eq:to-check-alphabar-target}
\Xright(x)+\Xright(y)+\Xright(z)+\Xright(w_{0,k})=\dim_{\alpha,\beta} \Ytw_{\ui,\bfk}-\dim_{\alpha,\beta}\left(p^{-1}(\calO^{\rm Gell}_{x,y,z,\uj})\right).
\end{equation}
Here $\dim_{\alpha,\beta}$ denotes the dimension of the fibre of the vector bundle given by forgetting both $E_\bfn$-components. To make sure that this notation makes sense consider 
\begin{equation}
\begin{array}[t]{rcccrccc}
\label{eq:2bundles-target}
q\colon&
\Ytw_{\ui,\bfk}&\to& \calF_\ui\times \calG_\bfk,& \text{and}& p^{-1}(\calO^{\rm Gell}_{x,y,z,\uj})&\to& \calO^{\rm Gell}_{x,y,z,\uj}\\
&(\alpha,\beta,\bV,W)&\mapsto&(\bV,W),&
&(\alpha, \beta,\bV,\tV,W)&\mapsto &(\bV,\tV,W).
\end{array}
\end{equation}
These are subbundles of the trivial vector bundles with fibre $E_\bfn\oplus E_\bfn$. Moreover, it is easy to see from the construction, that in both cases every fibre $F\subset E_\bfn\oplus E_\bfn$ is homogeneous with respect to the decomposition $E_\bfn\oplus E_\bfn$, i.e., we always have $F=F_\alpha\oplus F_\beta$, where $F_\alpha$ and $F_\beta$ are the intersections of $F$ with the first $E_\bfn$ and second copy of $E_\bfn$ respectively. Note that $\dim F=\dim F_\alpha+\dim F_\beta$ is independent of the choice of fibre, but $\dim F_\alpha$ and  $\dim F_\beta$ heavily depend on the choice.

Pick therefore $(\bV,W)\in\calF_\ui\times \calG_\bfk$ and $(\bV,\tV,W)\in\calO^{\rm Gell}_{x,y,z,\uj}$, and consider their fibres in \eqref{eq:2bundles-target}. We get  decompositions (dependent of our choice)
\begin{eqnarray*}
&\dim_{\alpha,\beta} \Ytw_{\ui,\bfk}=\dim_{\alpha} \Ytw_{\ui,\bfk}+\dim_{\beta} \Ytw_{\ui,\bfk},&\\ &\dim_{\alpha,\beta}p^{-1}(\calO^{\rm Gell}_{x,y,z,\uj})=\dim_{\alpha}p^{-1}(\calO^{\rm Gell}_{x,y,z,\uj})+\dim_{\beta}p^{-1}(\calO^{\rm Gell}_{x,y,z,\uj}).&
\end{eqnarray*}
Note that we have $\dim_{\beta}\Ytw_{\ui,\bfk}=\dim_{\beta}p^{-1}(\calO^{\rm Gell}_{x,y,z,\uj})$. Thus, \eqref{eq:to-check-alphabar-target} becomes
\begin{equation}\label{toshow}
\Xright(y)+\Xright(z)=\dim_{\alpha} \Ytw_{\ui,\bfk}-\dim_{\alpha}p^{-1}(\calO^{\rm Gell}_{x,y,z,\uj}),
\end{equation}
since by assumption (sink case), we have $\Xright(x)=\Xright(w_{0,k})=0$.
On the other hand,
\begin{equation}\label{toshow2}
\dim_{\alpha} \Ytw_{\ui,\bfk}=\dim X(\bV^W), \qquad \dim_{\alpha}p^{-1}(\calO^{\rm Gell}_{x,y,z,\uj})=\dim (X(\bV^W)\cap X(\tV)),
\end{equation}
by \Cref{rkVW} and the arguments after \eqref{reformulate}. By \Cref{lem:diff-dim-KLR}, these formulas imply \eqref{toshow}. Since \eqref{toshow} is equivalent to \eqref{lem:check-dim-target}, this finishes the proof.
\end{proof}
\subsubsection*{Summary} We treated sources and sinks quite differently. To obtain the source algebra we did not do anything special with the variety $\vec\rmY$, but defined $\vec\Zplus$ by putting conditions cutting $\cev\rmY\times_{E_\bfn}\vec\rmY_0$. To obtain the sink algebra, we modified $\cev\rmY$ to $\cevYtw$ (by adding a copy of $E_\bfn$ and a twist), but defined $\cev\Zplus$ as $\cevYtw\times_{E_\bfn\times E_\bfn}\cevYtw_0$ (without passing to a proper closed subset). We fusion now these two approaches.

\section{Grassmannian quiver Hecke algebras}
\label{sec:isom-thm-GqHecke-gen}
In this section we prove the main isomorphism theorem which realises the Naisse--Vaz algebras geometrically in terms of Grassmannian quiver Hecke algebras. 

We still assume the setup from the opening paragraphs of \S\ref{sec:KLR} and \S\ref{sec:coloured}. In particular, $\Gamma=(I,A)$ is a quiver with no loops and $\bfn$ is a dimension vector. 

\subsection{The Grassmannian--Steinberg varieties}\label{111}
 Let $\bfk$ be an arbitrary element of $\Jbfn$.
We define $\bfY_{\ui,\bfk}$ and $\Ytw_{\ui,\bfk}$ as in \Cref{crucialdef}, except we do not assume $\bfk\in \cev J_{\bfn}$. Set $\Ztw=\Ytw\times _{E_\bfn\oplus E_\bfn}\Ytw$ and $\Znaivtw=\Ytw\times _{E_\bfn\oplus E_\bfn}\Ytw_0$. We also consider certain unions of connected components of $\Ztw$ exactly as for $\GGZ$, see \Cref{Zdd} and the definitions afterwards. We now can define our main player:
\begin{df}\label{DefGrassSt}
The \emph{Grassmannian--Steinberg variety}  is the subvariety $\Zplus\subset\Znaivtw$ of    
all $(\alpha,\beta,\bV,\tV,W)$ satisfying the following \emph{Grassmannian--Steinberg conditions}
\begin{equation} 
\label{eq:cond-bfZ'-gen}
\alpha_h((\bV^r\cap W_i)^\perp\cap W_i)\subset (\bV^r\cap W_j)^\perp\cap W_j,
\end{equation}   
for any $h\colon i\to j$ and $r\in [0;n]$; that is, the flag $\VWp$ is preserved by $\alpha$. 
\end{df}
\begin{rk}
In case $W$ is supported on sources, \eqref{eq:cond-bfZ'-gen} recovers \Cref{sinkcondition}.
In case $W$ is supported on sinks, \eqref{eq:cond-bfZ'-gen} is no condition  (compatible with \Cref{subs:target}).
\end{rk}
Let $\calO^{\rm Gell}_{x,y,z,\uj}$ be a Gell in $\calF_\ui\times\calF_\uj\times\calG_\bfk$ and $p^{-1}(\calO^{\rm Gell}_{x,y,z,\uj})$ its preimage in $\Zplus$. The next statement, illustrated in \Cref{exWinM}, will help to compute their dimensions.
\begin{lem} \label{relpositioncalc}
Assume that $\Gamma$ has exactly one arrow $i\to j$. Let  $\bV\in\calF_\ui$ and $W\in\calG_\bfk$ with $\rel(\bV,W)=x$. 
Then $\rel(\bV,\VWp)=xw_{0,k}$ and $xw_{0,k}=x_1x_2$ with $x_1=\rel(\bV,\bV^{W_j})$, $x_2=\rel(\bV^{W_j},\VWp)$. Moreover, $\ell(x_1x_2)=\ell(x_1)+\ell(x_2).$
\end{lem}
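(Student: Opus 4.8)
The plan is to reduce everything to the one-colour (i.e.\ $\mathfrak{sl}_2$) combinatorics already developed in Part~I, via the fact that all the flag and Grassmannian data attached to $\bV$, $W$, $\VWp$, $\bV^{W_j}$ decompose as products over the vertices $i\in I$; see \Cref{coloursintro} and \Cref{lem:rel-triple-flags}. Since the quiver here has a single arrow $i\to j$, only the two colours $i$ and $j$ are relevant, and the only coupling between them is through the arrow. First I would establish the identity $\rel(\bV,\VWp)=xw_{0,k}$; this is purely colour-by-colour and is exactly \Cref{relVWp} applied at each vertex (recall $w_{0,k}=\prod_{i\in I}w_{0,k_i}$ under the product decomposition of $\frakS_k\subset\frakS_n$), so that $\ell(xw_{0,k})=\ell(x)+\ell(w_{0,k})$ and the permutation $xw_{0,k}$ differs from $x$ only by reversing, inside each colour block, the $k_i$ strands that land in $W_i$.

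Next I would factor $xw_{0,k}$ through the intermediate flag $\bV^{W_j}$. By \Cref{VW}, $\bV^{W_j}$ is the full flag built from $\bV$ by inserting only the subspace $W_j\subset V_j$ (and its intersections with the $\bV^r$); so $\rel(\bV,\bV^{W_j})=x_1$, where $x_1$ is the coloured permutation obtained from $x$ by \emph{keeping only the crossings that involve a strand of colour $j$ landing in $W_j$} — concretely $x_1 = x^{(j),W}$, the part of $x$ visible when we ignore all colour-$i$ strands and all colour-$j$ strands outside $W$. Then $x_2 := \rel(\bV^{W_j},\VWp)$ is, by \Cref{VWperp}, the permutation that dualises the flag inside $W$: it reverses the $W_j$-block and additionally reorganises the $W_i$-block relative to $\bV$. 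I would check, again colour-by-colour using the explicit description in \Cref{rkVW} and \Cref{VWperp}, that $x_1 x_2 = x w_{0,k}$ as elements of $\frakS_n$; for colour $j$ this is a factorisation $x^{(j)} w_{0,k_j} = x^{(j),W}\cdot (\text{rest})$ inside $\frakS_{n_j}$, and for colour $i$ one has $x_1^{(i)}=\mathrm{id}$ so the statement reduces to $x^{(i)}w_{0,k_i}=x_2^{(i)}$.

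The length additivity $\ell(x_1x_2)=\ell(x_1)+\ell(x_2)$ is the main point to be careful about, but it is still elementary: I would argue that no crossing is created twice. A crossing of $x_1$ always involves a colour-$j$ strand that ends in $W_j$ and one other strand; a crossing of $x_2$ is either between two strands both ending inside $W$ (colour $i$ or $j$) or between a colour-$i$ strand ending in $W_i$ and a colour-$j$ strand — in every case the ordered pair of strand endpoints occurs in $x_1x_2$ exactly once, because the $W_j$-to-outside crossings happen ``below'' (in $x_1$) and the within-$W$ reshuffling happens ``above'' (in $x_2$), with disjoint sets of inversions. Equivalently one can observe that $x_1$ is a minimal-length coset representative and $x_2$ lies in the complementary parabolic-coset piece, so Deodhar's lemma on length-additive factorisations applies. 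I expect the bookkeeping of \emph{which} inversions belong to $x_1$ versus $x_2$ to be the only genuinely fiddly part; everything else is a direct unwinding of Definitions~\ref{VW}, \ref{VWperp} together with \Cref{relVWp} and the product decomposition over colours. The picture in \Cref{exWinM} should make the inversion-counting transparent.
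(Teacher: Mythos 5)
Your proposal is correct and follows essentially the same route as the paper: the first identity is the colour-wise statement of \Cref{relVWp}, the factorisation $xw_{0,k}=x_1x_2$ is obtained by checking it separately for the two colours (which is exactly what \Cref{lem:rel-triple-flags} packages, with $x_1^{(i)}=\Id$ and $x_1^{(j)}=x^{(j)}$), and length additivity comes from an inversion count, which the paper states as: every inversion of $x_1^{-1}$ is an inversion of $(xw_{0,k})^{-1}$. Two of your side remarks are inaccurate, though neither is load-bearing: $x_1$ is \emph{not} obtained from $x$ by keeping a subset of its crossings (in \Cref{exWinM} one has $\ell(x_1)=3$ while only two crossings of $x$ involve a $W_j$-strand; the pair $(v_{i,2},v_{j,2})$ is inverted by $x_1$ but not by $x$), and $x_2$ does not lie in the parabolic $\frakS_{k_j}\times\frakS_{n-k_j}$ complementary to $x_1$ (in the same example $x_2=s_3s_1s_2$ does not stabilise $\{1,2\}$), so the appeal to the standard length-additive parabolic coset decomposition does not apply as stated. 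Your direct no-double-crossing argument is the right one and is what the paper's inversion criterion makes precise.
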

\begin{proof}
The first statement follows from the definitions, see \Cref{relVWp}. By \Cref{lem:rel-triple-flags}, we have $\rel(\bV,\bV^{W_j})\cdot \rel(\bV^{W_j},\VWp)=\rel(\bV,\VWp)$ which implies $xw_{0,k}=x_1x_2$. Since $x_1^{-1}(a)>x_1^{-1}(b)$ implies $(xw_{0,k})^{-1}(a)>(xw_{0,k})^{-1}(b)$ for any $1\leqslant a<b \leqslant n$, we also have $\ell(x_1x_2)=\ell(x_1)+\ell(x_2).$
\end{proof}

\begin{prop}
\label{lem:dim-deg-gen}
We have
\begin{equation}
\label{eq:dim-deg-gen}
\deg(\tau_x\Omega_{k,n}\tau_y\tau_z1_\uj) =	2\dim \Ytw_{\ui,\bfk}+\sum_{i\in I}(k_i^2-k_i)-2\dim \calO^{\rm Gell}_{x,y,z,\uj}.
\end{equation}
\end{prop}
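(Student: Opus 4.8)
The strategy is the same ``dimension vs.\ degree'' matching that powered \Cref{blacktea} and \Cref{lem:check-dim-target}, but now carried out in the presence of \emph{both} the twist $\phi_U$ and the Grassmannian--Steinberg conditions \eqref{eq:cond-bfZ'-gen}. Since both sides of \eqref{eq:dim-deg-gen} are additive with respect to the sum of quivers from \Cref{defsum}, and the equality for a quiver with no arrows is exactly \eqref{eq-weaklyad-Gells-sl2} from the $\mathfrak{sl}_2$ case (applied colour by colour), it suffices to prove the statement when $\Gamma$ has a single arrow $h\colon i\to j$. From now on I work in that case.

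First I would reduce \eqref{eq:dim-deg-gen} to a statement purely about fibre dimensions of the two vector bundles forgetting the $E_\bfn\oplus E_\bfn$ component. Dividing \eqref{eq:dim-deg-gen} by two, subtracting \Cref{eq:dim-col-Gell}, and using \Cref{countrk} (exactly as in the derivation of \eqref{problem2} and \eqref{eq:to-check-alphabar-target}) turns it into
\begin{equation*}
\Xright(x)+\Xright(y)+\Xright(z)+\Xright(w_{0,k})=\dim_{\alpha,\beta}\Ytw_{\ui,\bfk}-\dim_{\alpha,\beta}\bigl(p^{-1}(\calO^{\rm Gell}_{x,y,z,\uj})\bigr).
\end{equation*}
As in the sink case, I would split the fibre $F=F_\alpha\oplus F_\beta\subset E_\bfn\oplus E_\bfn$; the $\beta$-parts satisfy the same twisted Springer condition $\beta_h(\bV^r_i)\subset(\bV^r_j\cap W_j)+W_j^\perp$ in both bundles and add nothing to the right-hand side, so the identity collapses to
\begin{equation*}
\Xright(x)+\Xright(y)+\Xright(z)+\Xright(w_{0,k})=\dim_{\alpha}\Ytw_{\ui,\bfk}-\dim_{\alpha}\bigl(p^{-1}(\calO^{\rm Gell}_{x,y,z,\uj})\bigr).
\end{equation*}
Now I need to identify these two $\alpha$-dimensions geometrically. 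By the definition of the $U$-flip in \Cref{defWtwist} and \Cref{Wtwistlemma} (cf.\ \Cref{exWtwist}), the $\alpha$-condition $\alpha_h(\bV^r_i)\subset\bV^r_j+W_j$ appearing in \eqref{twistSprcond} says precisely that $\alpha$ preserves the flag $\bV^{W_j}$ from \Cref{VW} (the flag ``as close as possible to $\bV$ containing $W_j$''). Hence $\dim_\alpha\Ytw_{\ui,\bfk}=\dim X(\bV^{W_j})$. For the Gell preimage, I additionally impose \eqref{eq:cond-bfZ'-gen}, i.e.\ that $\alpha$ preserves $\VWp$, while $\tV$-data on the Gell forces $\rel(\VWp,\tV{}^W)=y$, $\rel(W,\tV)=z$; combined this means $\dim_\alpha p^{-1}(\calO^{\rm Gell}_{x,y,z,\uj})=\dim\bigl(X(\VWp)\cap X(\tV)\bigr)$.

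The final computation then uses \Cref{lem:diff-dim-KLR} three times along the factorization of relative positions. By \Cref{relpositioncalc} (this is exactly why that lemma was stated), in the one-arrow case $\rel(\bV,\bV^{W_j})=x_1$, $\rel(\bV^{W_j},\VWp)=x_2$ with $x_1x_2=xw_{0,k}$ and $\ell(x_1)+\ell(x_2)=\ell(xw_{0,k})$; moreover $\rel(\VWp,\tV)=\rel(\VWp,\tV{}^W)\cdot\rel(\tV{}^W,\tV)$ corresponds to the coloured permutation $yz$ by \Cref{lem:rel-triple-flags} and the definition of $\tV{}^W$. Then
\begin{align*}
\dim_\alpha\Ytw_{\ui,\bfk}-\dim_\alpha p^{-1}(\calO^{\rm Gell}_{x,y,z,\uj})
&=\dim X(\bV^{W_j})-\dim\bigl(X(\VWp)\cap X(\tV)\bigr)\\
&=\bigl(\dim X(\bV^{W_j})-\dim X(\VWp)\bigr)+\bigl(\dim X(\VWp)-\dim(X(\VWp)\cap X(\tV))\bigr)\\
&=\Xright(x_2)+\Xright(yz)=\Xright(x_2)+\Xright(y)+\Xright(z),
\end{align*}
where I used \Cref{lem:diff-dim-KLR} for the pairs $(\bV^{W_j},\VWp)$ (relative position $x_2$) and $(\VWp,\tV)$ (relative position $yz$). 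It remains to check $\Xright(x_2)=\Xright(x)+\Xright(w_{0,k})$, and this is where the bulk of the combinatorial care goes: one has to verify that, for the colouring induced by $\uj$, the number of ``$\rightarrow$-crossings'' in the coloured permutation $x_2=\rel(\bV^{W_j},\VWp)$ equals that of $x$ plus that of $w_{0,k}$. Concretely, $x_2$ differs from $xw_{0,k}$ only in how the strands inside $W$ are threaded; the $w_{0,k}$-part reverses the order of the $k$ strands landing in $W$, so a $\rightarrow$-crossing of $x_2$ is either a crossing already counted by $\Xright(x)$ (two strands not both reversed) or one of the newly created reversal crossings among the $W$-strands, which is exactly $\Xright(w_{0,k})$ for the induced colouring. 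I would make this precise by the same strand-ignoring bookkeeping used throughout \S\ref{sec:coloured} (ignoring all strands not of the relevant colour, and separating $W$-strands from non-$W$-strands). This crossing-count identity is the main obstacle; everything else is assembling lemmas already in hand. Once it is established we obtain $\dim_\alpha\Ytw_{\ui,\bfk}-\dim_\alpha p^{-1}(\calO^{\rm Gell}_{x,y,z,\uj})=\Xright(x)+\Xright(w_{0,k})+\Xright(y)+\Xright(z)$, which is the reduced form of \eqref{eq:dim-deg-gen}, completing the proof.
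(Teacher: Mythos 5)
Your proposal is correct and follows essentially the same route as the paper: reduce by additivity to a one-arrow quiver, pass from degrees to the $\alpha$-fibre dimensions (the $\beta$-parts cancelling as in the sink case), identify these with $\dim X(\bV^{W_j})$ and $\dim(X(\VWp)\cap X(\tV))$, and telescope via \Cref{lem:diff-dim-KLR} and \Cref{relpositioncalc}. The only (cosmetic) difference is at the last step: the paper gets $\Xright(x_2)=\Xright(xw_{0,k})=\Xright(x)+\Xright(w_{0,k})$ immediately from $\Xright(x_1)=0$ and additivity of crossing counts over the reduced factorization $xw_{0,k}=x_1x_2$, whereas you recount the crossings of $x_2$ directly — your sketch is the same bookkeeping in disguise.
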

\begin{proof}
In case the quiver has no arrows, \eqref{eq:dim-deg-gen} holds by \eqref{eq-weaklyad-Gells-sl2}. For a general quiver $\Gamma$ we argue as for \eqref{problem}: using \Cref{eq:dim-col-Gell}, we reformulate \eqref{eq:dim-deg-gen} as 
\begin{equation}\label{toshowmixed}
\Xright(x)+\Xright(y)+\Xright(z)+\Xright(w_{0,k})=\dim_{\alpha} \rmY_{\ui,\bfk}-\dim_{\alpha} p^{-1}(\calO^{\rm Gell}_{x,y,z,\uj}),
\end{equation} 	
where we reduce, as in the proof of \Cref{lem:check-dim-target} from $\dim_{\alpha,\beta}$ to $\dim_{\alpha}$.

Now assume that $\Gamma$ contains only one arrow, say $h:i\to j$. Then 
\begin{align}
&\dim_\alpha \Ytw_{\ui,\bfk}-\dim_\alpha p^{-1}(\calO^{\rm Gell}_{x,y,z,\uj})=\dim X(\bV^{W_j})-\dim (X(\VWp)\cap X(\tV))\nonumber\\
=&\quad(\dim X(\bV^{W_j})-\dim X(\VWp)+(\dim X(\VWp)-\dim (X(\VWp)\cap X(\tV)))\nonumber\\
=&\quad\Xright(xw_{0,k})+\Xright(yz).\label{toshowmixed2}
\end{align}
The third equality here follows from \Cref{lem:diff-dim-KLR} applied to $(\VWp,\tV)$ (which gives $\dim X(\VWp)-\dim (X(\VWp)\cap X(\tV))=\Xright(yz)$) and  \Cref{lem:diff-dim-KLR} applied to $(\bV^{W_j},\VWp)$. Indeed, 
with $x_1$, $x_2$ as in \Cref{relpositioncalc} we have $\Xright(x_1)=0$ and then $\Xright(x_2)=\Xright(xw_{0,k})$, and therefore 
$\dim X(\bV^{W_j})-\dim X(\VWp)=\dim X(\bV^{W_j})-\dim X(\bV^{W_j})\cap X(\VWp)=\Xright(x_2)=\Xright(xw_{0,k})$.
We thus showed \eqref{toshowmixed} in case $\Gamma$ has one arrow. It is clear if there is now arrow.
Now \eqref{toshowmixed} follows by writing $\Gamma$ as a sum of quivers with one arrow, see \Cref{defsum}, since all terms are additive. 
\end{proof}
We illustrate the main step of the proof and \Cref{relpositioncalc} in an example.
\begin{ex}\label{exWinM}
	Consider the quiver $i\to j$. Let $\bfn=2i+3j$, $\bfk=i+2j$, $\ui=(i,j,i,j,j)$ and $x=s_1s_2s_3$. For $\bV\in \calF_\ui$, $W\in\calG_\bfk$, the condition $\rel(\bV,W)=x$ means there exist  orthogonal bases $(v_{i,1},v_{i,2})$ of $V_i$ and $(v_{j,1},v_{j,2},v_{j,3})$ of $V_j$ such that
	\begin{equation}\label{exV}
	\bV^1=\langle v_{i,1}\rangle, \bV^2=\langle v_{i,1},v_{j,1}\rangle,\bV^3=\langle v_{i,1},v_{j,1},v_{i,2}\rangle, \bV^4=\langle v_{i,1},v_{j,1},v_{i,2},v_{j,2}\rangle,
	\end{equation}
	\begin{equation}\label{exW}
	 W_i=\langle v_{i,2}\rangle, W_j=\langle v_{j,1}, v_{j,2}\rangle.
\end{equation}
The flags  $\bV^W$, $\VWp$, $\bV^{W_j}$ are then (here each step changes the dimension by $1$):
\begin{align*}
&\bV^W:\quad&\{0\}\subset \langle v_{j,1}\rangle\subset \langle v_{j,1},v_{i,2}\rangle\subset \langle v_{j,1},v_{i,2},v_{j,2}\rangle \subset \langle v_{j,1},v_{i,2},v_{j,2},v_{i,1}\rangle \subset V.\\
&\VWp:\quad&\{0\}\subset \langle v_{j,2}\rangle\subset \langle v_{j,2},v_{i,2}\rangle\subset \langle v_{j,2},v_{i,2},v_{j,1}\rangle \subset \langle v_{j,2},v_{i,2},v_{j,1},v_{i,1}\rangle \subset V.\\
&\bV^{W_j}:\quad&\{0\}\subset \langle v_{j,1}\rangle\subset \langle v_{j,1},v_{j,2}\rangle\subset \langle v_{j,1},v_{j,2},v_{i,1}\rangle \subset \langle v_{j,1},v_{j,2},v_{i,1},v_{i,2}\rangle \subset V.
\end{align*}
To illustrate this diagrammatically we draw the coloured permutation $(\ui, xw_{0,k})$, 
	\begin{equation}\label{exWinM1}
	\tikz[very thick,scale=.75,baseline={([yshift=1.0ex]current bounding box.center)}]{	
	        \node at (-2.5,2.4){$\bV$};
		\node at (1,2.5){$i$};
		\node at (2,2.5){$j$};
		\node at (3,2.5){$i$};
		\node at (4,2.5){$j$};
		\node at (5,2.5){$j$};
		
		\draw (1,2)-- (4,1); 	
		\draw (2,2)-- (1,1)[color=red]; 
		\draw (3,2)-- (2,1); 
		\draw (4,2)-- (3,1)[color=red]; 
		\draw (5,2)-- (5,1)[color=red]; 
		
		\draw (0,1)-- (6,1)[color=yellow];
		  \node at (-2.5,1){$\bV^W$};
		\draw (1,1)-- (3,0)[color=red]; 	
		\draw (2,1) .. controls (1.5,0.5) .. (2,0); 
		\draw (3,1)-- (1,0)[color=red]; 
		\draw (4,1)-- (4,0); 
		\draw (5,1)-- (5,0)[color=red]; 
	  \node at (-2.5,0){$\VWp$};
        \node at (0,1.5){$x$};
        \node at (0,0.5){$w_{0,k}$};
	} 
	\end{equation}
with $w_{0,k}$ at the bottom (which is the longest element in $\frakS_k$ with $k=2+1$) and $x$, the given shortest coset representative at the top. The colour sequence $\bi$ at the top tells us $\bV\in\calF_\ui$ and the strands indicate the flag $\bV$ from \eqref{exV} (each strand stands for a new basis vector). Then $\bV^W$, and $W$ from \eqref{exW},  can be read off by looking at the intersection with the yellow horizontal line (the strands still stand for the basis vectors).  Finally, $\VWp$ is encoded by the strands at the bottom of the diagram.

Now we want to factor $xw_{0,k}$ as $x_1x_2$ with $x_1=\rel(\bV,\bV^{W_j})$, $x_2=\rel(\bV^{W_j},\VWp)$. In practise, we can take $\bV^{W_j}$ and encode it diagrammatically (see the yellow horizontal line). Then $x_1$ can be read off by connecting with strands in a minimal way to the top,  and $x_2$ appears when we extend afterwards at the bottom to get $xw_{0,k}$.	
\begin{equation}
	\tikz[very thick,scale=.75,baseline={([yshift=1.0ex]current bounding box.center)}]{	
		\node at (1,2.5){$i$};
		\node at (2,2.5){$j$};
		\node at (3,2.5){$i$};
		\node at (4,2.5){$j$};
		\node at (5,2.5){$j$};
		   \node at (-2.5,2.4){$\bV$};
		\draw (1,2)-- (3,1); 	
		\draw (2,2)-- (1,1)[color=red]; 
		\draw (3,2)-- (4,1); 
		\draw (4,2)-- (2,1)[color=red]; 
		\draw (5,2)-- (5,1)[color=red]; 
	        \node at (-2.5,1){${\bV^{W_j}}$};
		\draw (0,1)-- (6,1)[color=yellow];
		
		\draw (1,1)-- (3,0)[color=red]; 	
		\draw (2,1)-- (1,0)[color=red]; 
		\draw (3,1)-- (4,0); 
		\draw (4,1)-- (2,0); 
		\draw (5,1)-- (5,0)[color=red]; 
		  \node at (-2.5,0){$\VWp$};
	
	    \node at (0,1.5){$x_1$};
	    \node at (0,0.5){$x_2$};
	} 
\end{equation}
We can also argue purely diagrammatically by connecting the top and bottom sequences (determined by $\ui$ and $xw_{0,k}$) by a product $x_1x_2=xw_{0,k}$ of permutations. Hereby  $x_1$ only moves some  strands coloured $j$ (corresponding to $W_j$) to the left, and $x_2$ does the rest.
In particular, $\Xright(x_1)=0$, and the flag  ${\bV^{W_j}}$ can then be read off in the middle of the diagram.   
We see that $x_1=\rel(\bV,\bV^{W_j})=s_3s_1s_2$, $x_2=\rel(\bV^{W_j},\bV'^{W})=s_3s_1s_2$, although the two elements play very different roles. 		
\end{ex}
\subsection{Main theorems}
In this subsection we formulate the two main results: the \emph{subalgebra theorem} (\Cref{mainprop}) and the \emph{isomorphism theorem} (\Cref{mainthm}).

Similarly to what we did in \S\ref{subs:target}, we have the linear map $H_*^{\rmG_\bfn}(\Znaivtw)\to H_*^{\rmG_\bfn}(\Ztw)$ defined as the composition of the pull-back $H_*^{\rmG_\bfn}(\Znaivtw)\to H_*^{\rmG_\bfn}(\Ztw_\supset)$ and the push-forward $H_*^{\rmG_\bfn}(\Ztw_\supset)\to H_*^{\rmG_\bfn}(\Ztw)$. We also have the push-forward map $H_*^{\rmG_\bfn}(\Zplus)\to H_*^{\rmG_\bfn}(\Znaivtw)$.
	Similarly to \Cref{geomgradhR-sink}, we consider the following grading. 
	\begin{df}\label{geomgradhR-gen} 
	We equip the vector space $H_*^{\rmG_\bfn}(\Zplus)$ with a $\bbZ$-grading by putting $H_r^{\rmG_\bfn}(\Zplus_{\ui,\uj,\bfk})$ in degree $
		-r+2\dim\Ytw_{\ui,\bfk}+\sum_{i\in I}(k_i^2-k_i).$ 
	\end{df}
	
\begin{thm}\label{mainprop}
The convolution algebra structure on $H_*^{\rmT}(\Ztw)_{\rm loc}$ induces a graded algebra structure on $H_*^{\rmG_\bfn}(\Zplus)$ via the maps $H_*^{\rmG_\bfn}(\Zplus)\to H_*^{\rmG_\bfn}(\Znaivtw)\to H_*^{\rmG_\bfn}(\Ztw)\to H_*^{\rmT}(\Ztw)_{\rm loc}$. It turns $H_*^{\rmG_\bfn}(\Zplus)$ into a graded subalgebra of $H_*^{\rmT}(\Ztw)_{\rm loc}$.  
\end{thm}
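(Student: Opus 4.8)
The strategy follows the blueprint already established for the $\mathfrak{sl}_2$ case in \S\ref{subs:geom-hNH} and for the sink algebras in \S\ref{subs:target}, carried out with the twisted varieties $\Ytw$, $\Ztw$, $\Znaivtw$, $\Zplus$ in place of their naive analogues. The key point is that although $H_*^{\rmG_\bfn}(\Ztw)$ need not embed into its own localisation (we lack nice pavings for all the twisted varieties in general), the subspace $H_*^{\rmG_\bfn}(\Zplus)$ \emph{does} embed into $H_*^{\rmT}(\Ztw)_{\rm loc}$, and this is exactly what is needed to transport the convolution product. First I would recall from \S\ref{subs:Z'-subalg-Z} the three ingredients (I-i)--(I-iii): we still have the Grassmannian bundle $\Ztw_{\bfk_1\supset\bfk_2}\to \Zplus_{\bfk_1-\bfk_2}$ with the \emph{same} relative Euler class $\catP_{\mu_2,\mu_1^c}$ at the $\rmT$-fixed point $(w_1,w_2,\mu_1,\mu_2)$ as for $\GGZ_{\bfk_1\supset\bfk_2}$ (this is built into \Cref{crucialdef} via the flip map, since the flip only changes the $E_\bfn$-directions and not the Grassmannian-directions), and the Euler class $\op{eu}(\Ytw,(w,\mu))$ still factorizes as $\Theta_w\catP_{\mu,\mu^c}$ where $\Theta_w$ depends only on $w$ — indeed the twisted Springer conditions \eqref{twistSprcond} cut out a bundle whose fibre dimension at a fixed point depends on $w$ and $\mu$ in a way that factors, because the flip is a $\rmT$-equivariant linear isomorphism of $E_\bfn\oplus E_\bfn$ at each fixed point.

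Next I would define, exactly as in \S\ref{subs:Z'-subalg-Z}, the localised product $\oX\star\oY$ on $H_*^\rmT(\Znaivtw)_{\rm loc}$ by the formula $(\oX\star\oY)(w_1,w_2,\mu)=\sum_{w_2,\,\mu=\mu'\sqcup\mu''}\Theta_{w_2}\catP_{\mu'',(\mu')^c}\oX(w_1,w_2,\mu')\oY(w_2,w_3,\mu'')$, and prove the analogue of \Cref{starintertwines}: the maps $\iota_{\bfk_1,\bfk_2}$ (defined as pull-back along forgetting $W$, then the diffeomorphism $\gamma_{\bfk_1,\bfk_2}$ from the flip, then push-forward along the inclusion) intertwine this $\star$-product with the honest convolution product on $H_*^\rmT(\Ztw)_{\rm loc}$. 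The computation is verbatim the one in \Cref{starintertwines}: one writes out both sides on the basis of $\rmT$-fixed points using \eqref{eq:mulp-Tfixed-GGZ} (which holds for $\Ztw$ by \Cref{lem:comp-in-loc} applied to the smooth variety $\Ytw$) and reduces to the identity $\Theta_{w_2,\mu_2}\catP^{-1}_{\mu_2,\mu_1^c}\catP^{-1}_{\mu_3,\mu_2^c}\catP_{\mu_3,\mu_1^c}=\Theta_{w_2}\catP_{\mu_2\backslash\mu_3,(\mu_1\backslash\mu_2)^c}$, which is pure combinatorics of the $\catP$'s and the factorization $\Theta_{w,\mu}=\Theta_w\catP_{\mu,\mu^c}$. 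Specialising to $\bfk_3=0$ and passing to $\rmG_\bfn$-invariants as in \Cref{coromult} shows that the composite $H_*^{\rmG_\bfn}(\Zplus)\to H_*^{\rmG_\bfn}(\Znaivtw)\to H_*^{\rmG_\bfn}(\Ztw)\to H_*^{\rmT}(\Ztw)_{\rm loc}$ lands in a $\star$-closed subspace, i.e. the image is a subalgebra of $H_*^{\rmT}(\Ztw)_{\rm loc}$, and the multiplication restricted there agrees with the one defined via the $\iota$-maps.

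Then I would verify that the composite map $H_*^{\rmG_\bfn}(\Zplus)\to H_*^{\rmT}(\Ztw)_{\rm loc}$ is injective: this follows from \Cref{rk:faith-from-strat} once we exhibit a nice paving of $\Zplus$, namely the decomposition into preimages $p^{-1}(\calO^{\rm Gell}_{x,y,z,\uj})$ of Gells under $p\colon\Zplus\to\calF\times\calF\times\calG$. By \Cref{lem:cond-B-Gells} and the twisted-Springer set-up, $p$ restricted to each such preimage is a (real) vector bundle over the Gell whose fibre dimension is controlled by \Cref{lem:dim-deg-gen}; hence $H_*^{\rmG_\bfn}(\Zplus)$ is a free $\tR_{\rmG_\bfn}$-module with a filtration whose graded pieces are the $H_*^{\rmG_\bfn}$ of the strata, and the localisation map is injective on it. Finally, compatibility with the grading: the grading in \Cref{geomgradhR-gen} is set up so that the product map \eqref{eq:star-G-gen}, which shifts homological degree by the relevant Euler/dimension corrections, is homogeneous of degree zero — this is the same bookkeeping as in \Cref{closedmultistrong} and \Cref{geomsinkalgebra}, using $\dim\Ytw_{\ui,\bfk}=\dim\rmY_{\ui,\bfk}+\dim E_\bfn$ and the fact that the $\beta$-directions contribute equally on both sides.

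\textbf{Main obstacle.} The genuinely delicate point, as flagged in the introduction and in \S\ref{subs:intuit-approch-philo}, is establishing that $p^{-1}(\calO^{\rm Gell}_{x,y,z,\uj})$ is a bona fide vector bundle over the Gell with \emph{locally constant} fibre dimension — in the naive variety $\Znaiv$ this fails (the fibre $\Hom(V/(W+\tW),\bbC^N)$-type dimensions jump), and the whole purpose of the twist in \Cref{crucialdef} and the Grassmannian--Steinberg conditions \eqref{eq:cond-bfZ'-gen} is to repair exactly this. Concretely the obstacle is to show that imposing \eqref{eq:cond-bfZ'-gen} (that $\alpha$ preserves $\VWp$) together with the twisted Springer conditions on $(\alpha,\beta)$ produces, over a fixed Gell, a fibre of dimension $\dim X(\VWp)\cap X(\tV)$ (plus the constant $\beta$-contribution) — this is the content of \eqref{toshowmixed2} and \Cref{lem:dim-deg-gen}, and it is where the factorization $xw_{0,k}=x_1x_2$ from \Cref{relpositioncalc} and the additivity over one-arrow quivers are used. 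Once that dimension count is in hand, everything else is the same localisation bookkeeping as in the $\mathfrak{sl}_2$ and sink cases. I expect the write-up to mostly consist of checking that the formula-level identities of \S\ref{subs:Z'-subalg-Z} go through unchanged, with \Cref{lem:dim-deg-gen} (already proved above) supplying the one new geometric input.
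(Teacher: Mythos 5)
Your localisation scaffolding (the factorization of Euler classes at fixed points, the analogue of \Cref{starintertwines} for the twisted varieties, the Gell paving giving injectivity of $H_*^{\rmG_\bfn}(\Zplus)\to H_*^{\rmT}(\Zplus)_{\rm loc}$, and the degree bookkeeping via \Cref{lem:dim-deg-gen}) is sound, and all of these ingredients do appear in the paper. But there is a genuine gap at the decisive step. The intertwining statement you prove (the analogue of \Cref{coromult}) only shows that the image of $H_*^{\rmG_\bfn}(\Znaivtw)$ in $H_*^{\rmT}(\Ztw)_{\rm loc}$ is closed under the product. The theorem concerns the image of $H_*^{\rmG_\bfn}(\Zplus)$, which is a priori a strictly smaller subspace: $\Zplus$ is cut out of $\Znaivtw$ by the Grassmannian--Steinberg conditions \eqref{eq:cond-bfZ'-gen}, and nothing in the fixed-point formulas guarantees that the product of two classes supported on $\Zplus$ is again supported on $\Zplus$. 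Your sentence ``lands in a $\star$-closed subspace, i.e.\ the image is a subalgebra'' conflates these two subspaces; being contained in a $\star$-closed subspace does not make a subspace $\star$-closed. This closure property is exactly the hard content of \Cref{mainprop} and cannot be extracted from the localisation bookkeeping alone.

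The paper closes this gap by an indirect route that your proposal omits entirely. One first builds an algebra monomorphism $\Phi\colon\hR_\bfn\to H_*^{\rmT}(\Ztw)_{\rm loc}$ from the faithful polynomial representation (\Cref{lem:gen-mixed}, \Cref{Cormono}). Then \Cref{lem:fact-thr-bfZ'} shows, by explicitly composing the elementary correspondences attached to the generators of each basis element and tracking the orthogonal flag $W=W^0\supset W^1\supset\cdots\supset W^k=\{0\}$ of Grassmannian spaces appearing along the composition, that every such product satisfies \eqref{eq:cond-bfZ'-gen}, so $\op{im}\Phi$ is contained in the image of $H_*^{\rmG_\bfn}(\Zplus)$. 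Finally the graded-dimension equality of \Cref{lem:dim-deg-gen} forces $\op{im}\Phi$ to be the \emph{whole} image of $H_*^{\rmG_\bfn}(\Zplus)$; since the image of an algebra homomorphism is automatically closed under multiplication, the subalgebra statement follows. If you wanted to keep your direct approach you would instead have to prove a statement of the type $\Zplus\circ\Zplus\subset\Zplus$ for the $\iota$-twisted composition (an analogue of \Cref{convcirc}), which the paper does not attempt for arbitrary classes and which is not obviously true.
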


The second main result is the following \emph{isomorphism theorem}:	
\begin{thm}\label{mainthm} 
There is an isomorphism of graded algebras $\hR_\bfn\cong H_*^{\rmG_\bfn}(\Zplus)$. \hfill\\
    Moreover, the isomorphism can be chosen such that the following holds:
\begin{align*}
1_\uj&\mapsto[\dot\Delta_{\uj,\uj,0}], \quad \tau_r1_\uj\mapsto{(-1)^{\delta_{j_r,j_{r+1}}}}[\mathbb{T}_{r,\uj}],\quad \Omega1_\uj\mapsto [\mathbb{\Omega}_\uj],\\
X_r1_\uj&\mapsto\text{the first Chern class of the line bundle on $\dot\Delta_{\uj,\uj,0}$ given by $\bV^r/\bV^{r-1}$,} 
\end{align*}
where $\mathbb{\Omega}_\uj:=\{(\alpha,\beta,\bV,\tV,W)\mid \bV=\tV, W=\bV^1\}\subset\Zplus_{\uj,\uj,1}$
and 
$\mathbb{T}_{r,\uj}:=\{(\alpha,\beta,\bV,\tV)\mid \bV^p=\tV{}^p \text{ for }p\ne r\}\subset \Zplus_{s_r(\uj),\uj,0}$, and $\dot\Delta_{\uj,\uj,0}:=\{(\alpha,\beta,\bV,\tV)\mid \bV=\tV\}\subset\Zplus_{\uj,\uj,0}$.
\end{thm}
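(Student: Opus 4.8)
The strategy follows the template already used for the $\mathfrak{sl}_2$ case (\Cref{thm:main-sl2}), for the source algebra (\Cref{geomsourcealgebra}) and for the sink algebra (\Cref{geomsinkalgebra}), but now with the full Grassmannian--Steinberg variety $\Zplus$ from \Cref{DefGrassSt}. The three ingredients are: (1) construct the algebra structure on $H_*^{\rmG_\bfn}(\Zplus)$ and show it embeds into $H_*^\rmT(\Ztw)_{\rm loc}$; (2) produce, for each generator of $\hR_\bfn$, an explicit element of $H_*^{\rmG_\bfn}(\Zplus)$ acting by the prescribed operator on the faithful polynomial representation, giving an injective homomorphism $\hR_\bfn\hookrightarrow H_*^{\rmG_\bfn}(\Zplus)$; (3) compare graded dimensions via a paving by preimages of Gells to conclude the map is an isomorphism.

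For step (1), I would repeat the localisation argument of \S\ref{subs:Z'-subalg-Z} and \S\ref{subs:target}: the map $\Ztw_{\bfk_1\supset\bfk_2}\to\Zplus_{\bfk_1-\bfk_2}$ is a locally trivial Grassmannian fibration with the same relative Euler classes $\catP_{\mu_2,\mu_1^c}$ as in the untwisted case (the $\calU$-twist is a homeomorphism leaving the base and the fibre structure intact), and the Euler class $\op{eu}(\Ytw,(w,\mu))$ still factorises as $\Theta_w\catP_{\mu,\mu^c}$ times a factor depending only on the $E_\bfn\oplus E_\bfn$-directions, which cancels correctly in the product formula. The three ingredients (I-i)--(I-iii) listed after \Cref{Rkonembedding} thus hold verbatim, giving \Cref{mainprop}. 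Injectivity of $H_*^{\rmG_\bfn}(\Zplus)\hookrightarrow H_*^\rmT(\Zplus^\rmT)_{\rm loc}$ comes from the Gell paving (\S\ref{subs:col-cells}, \Cref{lem:cond-B-Gells}), so no new transversality input is needed here.

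For step (2): the idempotents $1_\uj\mapsto[\dot\Delta_{\uj,\uj,0}]$, the $X_r$ as Chern classes, and $\tau_r1_\uj\mapsto\pm[\mathbb{T}_{r,\uj}]$ act exactly as in the KLR geometric construction (\Cref{prop:KLR-geom}), since these classes only involve the first $E_\bfn$-component and the flags, and the twisted Springer conditions restrict to the ordinary Springer conditions there. The only genuinely new check is $\Omega1_\uj\mapsto[\mathbb{\Omega}_\uj]$ with $\mathbb{\Omega}_\uj=\{(\alpha,\beta,\bV,\tV,W)\mid\bV=\tV,\ W=\bV^1\}\subset\Zplus_{\uj,\uj,1}$: one must verify, using the fixed-point computations of \S\ref{sec:gen-qv} (in particular \Cref{colouredwn} and \Cref{lem:creation+r-gen}, adapted to the twisted variety), that this class acts on $\hPol_\bfn$ by multiplication with $\omega_11_\uj$, matching \eqref{faithfulcole}. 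The twisted Springer conditions \eqref{twistSprcond} together with the Grassmannian--Steinberg conditions \eqref{eq:cond-bfZ'-gen} are exactly engineered so that on the subvariety $W=\bV^1$ the $\beta$-directions contribute nothing new and the $\alpha$-directions impose precisely the condition making $\VWp$ preserved; this was already indicated in \Cref{highdot} and should be cross-referenced. One should also note that since $\mathbb{\Omega}_\uj$ is closed in $\Znaivtw$ and its push-forward followed by the $\iota$-map lands in $H_*^{\rmG_\bfn}(\Ztw)$, the computation is literally the one behind \Cref{highdot} specialised to $a=0$, $r=1$. This gives a homomorphism $\hR_\bfn\to H_*^{\rmG_\bfn}(\Zplus)$, which is injective because $\hR_\bfn$ acts faithfully on $\hPol_\bfn$ (\Cref{prop:faithrep-KLR-Gr}) and $H_*^{\rmG_\bfn}(\Zplus)$ acts faithfully on $H_*^{\rmG_\bfn}(\Ytw)\cong\hPol_\bfn$ (by \Cref{rk:faith-from-strat} using the Gell paving).

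For step (3): I would decompose $\Zplus$ into the preimages $p^{-1}(\calO^{\rm Gell}_{x,y,z,\uj})$ of Gells under $p\colon\Zplus\to\calF\times\calF\times\calG$. The key point, and \emph{the main obstacle}, is to show that $p$ restricts to a vector bundle over each Gell and to compute the fibre dimension; this is exactly the content of \Cref{lem:dim-deg-gen}, whose proof reduces (via additivity over one-arrow quivers, \Cref{defsum}) to the relative-position factorisation $\rel(\bV,\VWp)=x_1x_2$ of \Cref{relpositioncalc} and the dimension count of \Cref{lem:diff-dim-KLR}. The twist $\phi_\calU$ is what guarantees constancy of the fibre dimension over a Gell — the naive $\Znaiv$ fails precisely here, as explained in \S\ref{naiveapproach}. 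Granting \Cref{lem:dim-deg-gen}, the classes $[p^{-1}(\calO^{\rm Gell}_{x,y,z,\uj})]$ form a $\Pol_n$-basis of the associated graded of $H_*^{\rmG_\bfn}(\Zplus)$ (by the basic-paving/cellular-fibration argument of \S\ref{subs:B-strat}), hence lift to a $\Pol_n$-basis of $H_*^{\rmG_\bfn}(\Zplus)$ whose degrees match, by \eqref{eq:dim-deg-gen}, those of the Naisse--Vaz basis elements $\tau_x\Omega_{k,n}\tau_y\tau_z1_\uj$ from \Cref{prop:NVbasis-gen}. Since the embedding $\hR_\bfn\hookrightarrow H_*^{\rmG_\bfn}(\Zplus)$ is degree-preserving (by \Cref{DefgradhR} versus \Cref{geomgradhR-gen}) and sends a $\Pol_n$-spanning set to a $\Pol_n$-spanning set, \Cref{checkbasis} forces it to be an isomorphism, and the explicit images of the generators are exactly the elements exhibited in step (2). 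I expect the verification that $p$ is a genuine (real) vector bundle over each Gell — i.e. that the fibres are honest affine spaces of locally constant dimension, which requires unwinding the twisted Springer conditions together with \eqref{eq:cond-bfZ'-gen} and identifying the fibre with $\phi_\calU$ of the ordinary Springer-condition bundle restricted over the relevant relative-position stratum — to be the technically heaviest and most delicate part, exactly as it was in \Cref{lem:check-dim-target} and \Cref{blacktea}.
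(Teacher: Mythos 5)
Your overall architecture (explicit generator classes acting on the faithful representation, Gell paving, degree comparison via \Cref{lem:dim-deg-gen} and \Cref{checkbasis}) matches the paper's, and your steps (2) and (3) are essentially the paper's \Cref{lem:gen-mixed} and the final dimension count. However, there is a genuine gap in step (1), and it propagates into the logic of steps (2)--(3).

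You assert that the subalgebra arguments of \S\ref{subs:Z'-subalg-Z} and \S\ref{subs:target} ``hold verbatim'' for $\Zplus$. They do not, for two reasons the paper explicitly flags. First, the injectivity of the localisation maps $H_*^{\rmG_\bfn}(\Znaivtw)\to H_*^{\rmT}(\Znaivtw)_{\rm loc}$ and $H_*^{\rmG_\bfn}(\Ztw)\to H_*^{\rmT}(\Ztw)_{\rm loc}$ is \emph{not} known (the Gell paving controls $\Zplus$, not $\Znaivtw$ or $\Ztw$), which is why the paper works with the quotients $\overline{H}_*^{\rmG_\bfn}(\Znaivtw)$ and $\overline{H}_*^{\rmG_\bfn}(\Ztw)$ in diagram \eqref{eq:CD1-Zplus-loc}. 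Second, and more seriously, in the general case $\Zplus_\bfk$ is a \emph{proper} closed subvariety of $\Znaivtw_\bfk=\Ztw_{\bfk,0}$ cut out by the Grassmannian--Steinberg conditions \eqref{eq:cond-bfZ'-gen} (unlike the sink case, where these conditions are vacuous, and unlike the source case, where the condition $\alpha(W)=0$ is manifestly preserved under composition of correspondences). Consequently the convolution product of two classes coming from $\Zplus$ is a priori only a class on $\Znaivtw$, and your ``homomorphism $\hR_\bfn\to H_*^{\rmG_\bfn}(\Zplus)$'' is not yet defined: you only get a map into $H_*^{\rmT}(\Ztw)_{\rm loc}$. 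Showing that the image of this map is contained in the image of $H_*^{\rmG_\bfn}(\Zplus)$ is the content of \Cref{lem:fact-thr-bfZ'}, whose proof is a nontrivial composition-of-correspondences argument: one writes each basis element $\tau_x\Omega_{k,n}\tau_y\tau_z1_\uj$ as a product of elementary correspondences $C_p$, chooses an orthogonal basis $w_1,\ldots,w_k$ of $W$ adapted to the flag of Grassmannian spaces $W^0\supset W^1\supset\cdots\supset W^k$, and uses \Cref{lem:corr-Omega-image} to deduce that $\alpha$ preserves each $W^r$, whence \eqref{eq:cond-bfZ'-gen} holds on $C_1\circ\cdots\circ C_a$. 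Without this containment the dimension count in your step (3) cannot close the argument: equality of graded dimensions of two subspaces of $H_*^{\rmT}(\Ztw)_{\rm loc}$ does not imply the subspaces coincide; one needs one inclusion first. You should add this step explicitly; everything else in your plan is sound.
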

\begin{df}
	\label{def:Gr-quiver-Hecke}
We call $H_*^{\rmG_\bfn}(\Zplus)$ the \emph{Grassmannian quiver Hecke algebra}.
\end{df}
\Cref{mainthm}  says now that the extended KLR algebra is isomorphic to the Grassmannian quiver Hecke algebra (and \Cref{mainprop} says that that the Grassmannian quiver Hecke algebra is well-defined). See also \Cref{rk:KLR-vs-quivHecke}.

\begin{rk}
For the proof, we like to proceed as for the source and sink algebras and
deduce that $H_*^{\rmG_\bfn}(\Zplus)$ is an algebra isomorphic to  $\hR_\bfn$, but there are some subtleties in the general situation.  The variety $\Zplus$ is nice, because it is a vector bundle over each Gell, and we get an inclusion $H_*^{\rmG_\bfn}(\Zplus)\hookrightarrow H_*^\rmT(\Zplus)_{\rm loc}$. However, it is not clear if this also holds for $\Znaivtw$ and $\Ztw$ and we do not see a reason why $H_*^{\rmG_\bfn}(\Ztw)$ should act faithfully on $H_*^{\rmG_\bfn}(\Ytw)$. Since the map from $H_*^{\rmG_\bfn}(\Znaivtw)$  to the localisation is not necessarily injective, we cannot argue as in \S\ref{subs:Z'-subalg-Z} 
to define a product on $H_*^{\rmG_\bfn}(\Znaivtw)$ from the product on $H_*^{\rmG_\bfn}(\Ztw)$.

We overcome these problems as follows. We know that for each generator $x$ of the algebra $\hR_\bfn$ there exists some $x\in H_*^{\rmG_\bfn}(\Ztw)$ which acts in the same way on the polynomial representation. Without faithfulness, this does not allow to embed $\hR_\bfn$ into $H_*^{\rmG_\bfn}(\Ztw)$, but allows to embed $\hR_\bfn$ into the quotient $\overline{H}_*^{\rmG_\bfn}(\Ztw)$ of $H_*^{\rmG_\bfn}(\Ztw)$ by the kernel of the action. We then use the argument of \S\ref{subs:Z'-subalg-Z}  
to identify a quotient of $H_*^{\rmG_\bfn}(\Znaivtw)$ with a subalgebra of $\overline{H}_*^{\rmG_\bfn}(\Ztw)$ and show that the image of $\hR_\bfn$ is contained in this subalgebra. Finally, we show that the image of $\hR_\bfn$ is in the image of $H_*^{\rmG_\bfn}(\Zplus)$ and we identify them by counting degrees. 
\end{rk}
\subsection{The proof of the main theorem}
As a preparation of the proof consider the following commutative diagram: 
\begin{equation}
\label{eq:CD0-Zplus-loc}
\begin{CD}
H_{*}^\rmT(\Zplus)_{\rm loc} @>>> H_{*}^\rmT(\Znaivtw)_{\rm loc} @>>> H_{*}^\rmT(\Ztw)_{\rm loc}\\
@AAA    @AAA   @AAA\\
H_{*}^{\rmG_\bfn}(\Zplus) @>>> H_{*}^{\rmG_\bfn}(\Znaivtw) @>>> H_{*}^{\rmG_\bfn}(\Ztw).
\end{CD}
\end{equation}
Here, the left vertical map is injective, but we do not know if the other vertical maps are injective. Denoting by $\overline{H}_{*}^{\rmG_\bfn}(\Znaivtw)$ and $\overline{H}_{*}^{\rmG_\bfn}(\Ztw)$ the quotients of $H_{*}^{\rmG_\bfn}(\Znaivtw)$ and $H_{*}^{\rmG_\bfn}(\Ztw)$ respectively by the kernels of the vertical maps we obtain: 
\begin{lem}
All maps in the following commutative diagram are injective.
\begin{equation}
\label{eq:CD1-Zplus-loc}
\begin{CD}
H_{*}^\rmT(\Zplus)_{\rm loc} @>>> H_{*}^\rmT(\Znaivtw)_{\rm loc} @>>> H_{*}^\rmT(\Ztw)_{\rm loc}\\
@AAA     @AAA   @AAA\\
H_{*}^{\rmG_\bfn}(\Zplus) @>>> \overline{H}_{*}^{\rmG_\bfn}(\Znaivtw)@>>> \overline{H}_{*}^{\rmG_\bfn}(\Ztw).
\end{CD}
\end{equation}
\end{lem}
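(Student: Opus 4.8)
The claim is that in the diagram \eqref{eq:CD1-Zplus-loc}, all three vertical maps and both bottom horizontal maps are injective. The vertical maps are injective essentially by construction: the middle and right ones are injective because we \emph{defined} $\overline{H}_{*}^{\rmG_\bfn}(\Znaivtw)$ and $\overline{H}_{*}^{\rmG_\bfn}(\Ztw)$ as the quotients of $H_{*}^{\rmG_\bfn}(\Znaivtw)$ and $H_{*}^{\rmG_\bfn}(\Ztw)$ by exactly the kernels of the corresponding localisation maps from \eqref{eq:CD0-Zplus-loc}; the left one, $H_{*}^{\rmG_\bfn}(\Zplus)\to H_{*}^\rmT(\Zplus)_{\rm loc}$, is injective because $\Zplus$ is paved by preimages $p^{-1}(\calO^{\rm Gell}_{x,y,z,\uj})$ of Gells (recall that $p\colon\Zplus\to\calF\times\calF\times\calG$ is a vector bundle over each Gell, so these preimages form an analogue of a basic paving), whence $H_{*}^{\rmG_\bfn}(\Zplus)$ is free over $\tR_{\rmG_\bfn}$ and the localisation map is injective by \Cref{locmap} and \Cref{rk:faith-from-strat}. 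So the first thing I would do is record these three facts, citing the Gell paving of $\Zplus$ (which already underlies \Cref{lem:dim-deg-gen}) for the leftmost map.

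Next I would treat the horizontal maps. The top row consists of maps between localised homologies; the relevant maps $H_{*}^\rmT(\Zplus)_{\rm loc}\to H_{*}^\rmT(\Znaivtw)_{\rm loc}\to H_{*}^\rmT(\Ztw)_{\rm loc}$ are the composition of a pull-back along a Grassmannian (hence vector) bundle and a push-forward along a closed inclusion of $\rmT$-fixed-point-preserving type; after localisation both become injective, since on the $\rmT$-fixed point bases the pull-back along $\Ztw_\supset\to\Znaivtw$ (forgetting the $W$-component; a Grassmannian bundle) is injective with explicit coefficient $\catP^{-1}_{\mu_2,\mu_1^c}$ as computed in \S\ref{subs:target}, and the push-forward along the inclusion $\Ztw_\supset\hookrightarrow\Ztw$ is injective on the localised homology because the inclusion induces a bijection on $\rmT$-fixed points (the $\supset$-condition picks out a union of components among the fixed points). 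The diagram \eqref{eq:CD0-Zplus-loc} commutes because pull-back and push-forward commute with the $\rmG_\bfn$-to-$\rmT$ restriction maps. Commutativity of \eqref{eq:CD0-Zplus-loc} then forces the bottom horizontal maps to descend to the quotients $\overline{H}_{*}^{\rmG_\bfn}(\bullet)$, yielding the diagram \eqref{eq:CD1-Zplus-loc} with all squares commuting.

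Finally, injectivity of the bottom horizontal maps in \eqref{eq:CD1-Zplus-loc} is a purely diagram-chasing consequence of what precedes: given $a\in H_{*}^{\rmG_\bfn}(\Zplus)$ mapping to $0$ in $\overline{H}_{*}^{\rmG_\bfn}(\Znaivtw)$, commutativity of the left square shows that the image of $a$ in $H_{*}^\rmT(\Zplus)_{\rm loc}$ maps to $0$ in $H_{*}^\rmT(\Znaivtw)_{\rm loc}$; since the top map is injective and the left vertical map is injective, $a=0$. The same argument, using injectivity of the second top map and of the middle vertical map, handles $\overline{H}_{*}^{\rmG_\bfn}(\Znaivtw)\to\overline{H}_{*}^{\rmG_\bfn}(\Ztw)$. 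I expect the only genuinely non-formal ingredient to be the injectivity of the \emph{top} horizontal maps on localised homology, i.e. verifying on the $\rmT$-fixed point bases that the forgetful Grassmannian-bundle pull-back and the $\supset$-inclusion push-forward are injective after inverting $\tR_\rmT$; this is the technical heart, but it follows the same localisation bookkeeping already carried out for $\GGZ$ in \Cref{lem:same-prod-supset} and \S\ref{subs:target}, only now with the twisted varieties $\Ztw$, $\Znaivtw$, $\Zplus$ in place of $\GGZ$, $\Znaiv$, $\vec\Zplus$ — and since the twist $\phi_\calU$ is a $\rmU_\bfn$-equivariant homeomorphism it does not affect any of these fixed-point computations.
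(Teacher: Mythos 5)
Your argument is correct and follows essentially the route the paper intends (the lemma is stated there without a written proof, because the middle and right vertical maps are injective by the very definition of the quotients $\overline{H}_{*}^{\rmG_\bfn}(\bullet)$, the left one by the Gell paving of $\Zplus$, the top maps by the fixed-point computation after localisation, and the bottom maps by exactly the diagram chase you describe). Two cosmetic slips worth fixing: a Grassmannian bundle is \emph{not} a vector bundle (its pull-back is nevertheless injective, as your explicit fixed-point coefficients $\catP^{-1}_{\mu_2,\mu_1^c}$ show), and the inclusion $\Ztw_\supset\hookrightarrow\Ztw$ induces an \emph{injection}, not a bijection, on $\rmT$-fixed points — which still suffices for injectivity of the localised push-forward, since the fixed-point basis of the subvariety is a subset of that of the ambient variety.
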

\begin{prop}
\label{lem:gen-mixed}
For each generator $x$ of $\hR_\bfn$, there exists $x\in H_{*}^{\rmG_\bfn}(\Znaivtw)$ whose image in  $H_{*}^{\rmG_\bfn}(\Ztw)$ acts on $H_{*}^{\rmG_\bfn}(\Ytw)\cong \hPol_{\bfn}$ by the same operator.
\end{prop}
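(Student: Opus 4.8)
The strategy is to treat the three types of generators separately and in each case to exhibit an explicit subvariety (or Chern-class twist thereof) of $\Znaivtw$ whose image under the convolution action on $H_*^{\rmG_\bfn}(\Ytw)\cong\hPol_\bfn$ matches the operator \eqref{faithfulcole}. For the elements $X_r1_\uj$ there is nothing to do beyond bookkeeping: the first Chern class of the line bundle $\bV^r/\bV^{r-1}$, pushed forward along the diagonal $\dot\Delta_{\uj,\uj,0}\hookrightarrow\Znaivtw$, acts by multiplication with $X_r$ since the $\beta$-component and the extra Grassmannian factor are untouched; the argument is identical to the one already given in the $\mathfrak{sl}_2$ case and in \Cref{prop:KLR-geom}. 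For $\tau_r1_\uj$ one takes the fundamental class $[\mathbb{T}_{r,\uj}]$ of the locus where the two flags agree away from step $r$ (and $W=0$), and one repeats the localisation computation that proved the coloured Demazure proposition in \S\ref{sec:gen-qv}: the $U$-twist \eqref{twistSprcond} restricted to $W=0$ is trivial, so $\Ztw_{\ui,\uj}$ over that locus is just the usual quiver-Hecke Steinberg correspondence times $E_\bfn$, and \Cref{lem:coeff-Zr-gen} applies verbatim, yielding the operator $\pm(X_r-X_{r+1})^{h_{j_r,j_{r+1}}}s_r$ or $\mp\partial_r$ as required.

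The only substantive case is $\Omega1_\uj$, and this is where the twist in \Cref{crucialdef} earns its keep. First I would record that the $\rmT$-fixed points of $\Znaivtw$ and of $\rmY$-type varieties still biject with the combinatorial data of \S\ref{sec:fixedpoints}, and that $H_*^{\rmG_\bfn}(\Ytw_{\ui,\bfk})\cong H_*^{\rmG_\bfn}(\rmY_{\ui,\bfk})$ as $\Pol_n$-modules because $\Ytw_{\ui,\bfk}\to\calF\times\calG$ is a (real) vector bundle by the $\calU$-twist construction of \S\ref{subs:W-twist}; the Euler class of this bundle at the fixed point $x_{w,\mu}$ is the same as that of the untwisted bundle since the flip map preserves dimensions fibrewise, so the coefficients $\ovS^w_\lambda(\mu)$ of \Cref{lem:T-to-S} are unchanged. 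Then I would define $\mathbb{\Omega}_\uj=\{(\alpha,\beta,\bV,\tV,W)\mid \bV=\tV,\ W=\bV^1\}\subset\Zplus_{\uj,\uj,1}$, which in particular sits in $\Znaivtw_{\uj,\uj,1}$, and compute the convolution action of $[\mathbb{\Omega}_\uj]$ on $H_*^{\rmG_\bfn}(\Ytw)$ by localisation. The key point is that imposing $W=\bV^1$ cuts the $W$-parameters down exactly as the Grassmannian-inclusion varieties $\GGZ^j_{\supse,k}$ of \S\ref{subs:creation} did, so the same computation as in \Cref{colouredwn} (using \Cref{lem:S-lambda+n}) shows that $[\mathbb{\Omega}_\uj]$ acts by $\omega_{n_{j_1},j_1}$ plus possible lower-order corrections. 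To see that it acts by exactly $\omega_11_\uj$ — i.e.\ that the correction vanishes and the index is $1$ rather than $n_{j_1}$ — one uses that $W=\bV^1$ forces $W$ to be the \emph{smallest} admissible subspace, so only the $t=1$ term of the relevant sum survives; this is the coloured analogue of the support argument in the proof of \Cref{lem:S-lambda+n}. The twisted Springer conditions \eqref{twistSprcond} are automatically satisfied on $\mathbb{\Omega}_\uj$ because $W=\bV^1$ is itself $\alpha$-preserved, so $\mathbb{\Omega}_\uj$ genuinely lies in $\Znaivtw$, and one checks separately that its image under $\Znaivtw\to\Ztw$ is the class one expects.

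\textbf{Main obstacle.} The delicate point is not any single localisation computation but the verification that the operator attached to $[\mathbb{\Omega}_\uj]$ is precisely $\omega_11_\uj$ \emph{for a general quiver}, i.e.\ that the interaction of $W=\bV^1$ with the arrow maps $\alpha$ does not introduce extra Chern-class factors of the sort seen in \Cref{ex:sinkcase} and \Cref{highdot}. Here the role of the $\beta$-copy and the $U$-twist is exactly to absorb the $\cQ_{i,j}(X_t,\xi)$ factors that would otherwise appear, as explained in \Cref{highdot} and the remark after it: globally $\xi$ does not exist on $\Znaivtw$, but on the single-Grassmannian stratum $\Znaivtw_{\uj,\uj,1}$ the class $W=\bV^1$ makes $\xi=X_1$, which collapses the would-be correction. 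I expect the cleanest way to nail this is to reduce, via \Cref{defsum}, to a quiver with one arrow and compare directly with the operator $(-\xi)^0\prod_{t=1}^{1}\cQ_{i_t,j_0}(X_t,\xi)\,\omega_{1'}1_\uj$ of \eqref{eq:Omega-via-z} specialised at $r=1$, $a=0$, which the proof of \Cref{highdot} already shows equals $\omega_11_\uj$ when $j_0=i_1$ and $0$ otherwise. Once all three generator classes are in hand, the proposition follows since these classes visibly lie in $H_*^{\rmG_\bfn}(\Znaivtw)$ and their images in $H_*^{\rmG_\bfn}(\Ztw)$ act as the defining operators of the faithful representation $\hPol_\bfn$ of \Cref{prop:faithrep-KLR-Gr}.
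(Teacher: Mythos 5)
Your overall strategy is the same as the paper's: exhibit, for each generator, the explicit class of \Cref{mainthm} (diagonal for $1_\uj$, Chern class for $X_r$, almost-diagonal $\mathbb{T}_{r,\uj}$ for $\tau_r$, and $\mathbb{\Omega}_\uj$ for $\Omega$), and check the action on fixed points. For $1_\uj$, $X_r1_\uj$, $\tau_r1_\uj$ your argument (the twist does nothing at $\bfk=0$) coincides with the paper's observation that $\Znaivtw_0=\Znaiv_0\times E_\bfn$ and $\Ztw_{k\supset k}\cong\GGZ_{k\supset k}\times E_\bfn$.

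The handling of $\Omega1_\uj$ is where you diverge, and this is where there is a gap. You claim that "the same computation as in \Cref{colouredwn} (using \Cref{lem:S-lambda+n}) shows that $[\mathbb{\Omega}_\uj]$ acts by $\omega_{n_{j_1},j_1}$ plus possible lower-order corrections", and then argue the corrections vanish because $W=\bV^1$ "forces the smallest admissible subspace". This is not the right way to see it: $[\mathbb{\Omega}_\uj]$ is a proper subvariety of the locus computed in \Cref{colouredwn}, so it is not obtained from $[\GGZ^j_{\supse,k}]$ by "corrections" to be argued away; it is a different class that acts directly by $\omega_1$ (this is the content of the lemma on $[\dot\Delta_{j,r}]$ following \Cref{lem:creation+r-gen}, already available in the untwisted setting). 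Your subsequent appeal to \Cref{highdot} also conflates two different ambient spaces: $\mathbb{\Omega}_{r,j_0}$ of \Cref{Omegafancy} lives in $\Znaiv\times E_\bfn$ where the extra $E_\bfn$ is used as a space of sesquilinear forms, whereas $\mathbb{\Omega}_\uj$ lives in $\Znaivtw$, which involves the $\calU$-twist of \Cref{crucialdef}. The two coincide on the locus you need, but this requires an argument you do not give.

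The paper's argument for the $\Omega$-case is shorter and avoids both issues: it observes that although $\Znaivtw_1\neq\Znaiv_1\times E_\bfn$ in general, imposing $\bV=\tV$ and $W=\bV^1$ collapses the twisted Springer conditions to the untwisted ones (because $\bV^1_j\subset\bV^r_j$ for $r\geq 1$ absorbs the shift by $W_j$, and the $\beta$-constraint becomes vacuous), so the subvariety is literally the same as in $\Znaiv_1\times E_\bfn$; moreover its image in $\Ztw_{k+1,k}$ is cut out by $\bV=\tV$, $W=\tW\oplus\bV^1$ (orthogonal), again the same as in the untwisted case, so the fixed-point computation imports verbatim. If you want to salvage your version, replace the "$\omega_{n_{j_1},j_1}$ plus corrections" paragraph by this reduction-to-untwisted observation; the localisation machinery you set up before that point is fine and the Euler-class-invariance remark is correct (though the justification should be that the flip map is $\rmT$-equivariant at a $\rmT$-fixed point, giving a $\rmT$-module isomorphism of fibres, not merely that dimensions are preserved).
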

\begin{proof} We claim that the assignments in \Cref{mainthm} (with $\Zplus$ replaced by $\Znaivtw$) satisfy the desired property.
We already checked in \S\ref{sec:gen-qv} that this works for the naive version $\Znaiv$. Let us explain what happens when we add the twist. 
Since $\Znaivtw_0=\Znaiv_0\times E_\bfn$ and $\Ztw_{k\supset k}\cong\GGZ_{k\supset k}\times E_\bfn$, the twist changes nothing for $1_\uj$, $x_r1_\uj$, $\tau_r1_\uj$. 

We are left with $\Omega1_\uj$. We have $\Znaivtw_1\ne \Znaiv_1\times E_\bfn$, but these varieties become the same when we add the conditions $\bV=\tV$ and $W=\bbV^1$. Moreover, the image in $H_*^{\rmG_\bfn}(\Ztw_{k+1,k})$ of the class of the subvariety given by these conditions is the class of the subvariety defined by $\bV=\tV$, $W=\tW\oplus \bV^1$ and this direct sum is orthogonal. This subvariety is the same as in the non-twisted case. Thus, a computation using $\rmT$-fixed points gives the same coefficients in the twisted as in the untwisted case.
\end{proof}

\begin{coro}\label{Cormono}
	There is a monomorphism of algebras $\Phi\colon\hR_\bfn\to H_{*}^{\rmT}(\Ztw)_{\rm loc}$. 
\end{coro}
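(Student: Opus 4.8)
The plan is to assemble the monomorphism $\Phi$ from the pieces already in hand. By \Cref{prop:faithrep-KLR-Gr}, $\hR_\bfn$ acts faithfully on $\hPol_\bfn$, and by \Cref{lem:gen-mixed} together with the construction of the generators in \S\ref{sec:gen-qv} (the coloured Demazure operators, the coloured creation operators from \Cref{colouredwn} and \Cref{lem:creation+r-gen}, and the geometric floating dots from \Cref{highdot}), for each generator of $\hR_\bfn$ we have an explicit element of $H_*^{\rmG_\bfn}(\Znaivtw)$ whose image in $H_*^{\rmG_\bfn}(\Ztw)$ acts on $H_*^{\rmG_\bfn}(\Ytw)\cong\hPol_\bfn$ by the corresponding operator. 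Passing to the quotient $\overline{H}_*^{\rmG_\bfn}(\Ztw)$ by the kernel of the action, the convolution product on $H_*^\rmT(\Ztw)_{\rm loc}$ (via \Cref{lem:comp-in-loc}) makes $\overline{H}_*^{\rmG_\bfn}(\Ztw)$ act \emph{faithfully} on $H_*^{\rmG_\bfn}(\Ytw)$ by construction of the quotient; hence the subalgebra of $\overline{H}_*^{\rmG_\bfn}(\Ztw)$ generated by the chosen elements is isomorphic, as an abstract algebra acting on $\hPol_\bfn$, to a quotient of $\hR_\bfn$.

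First I would check that the assignment extends to a well-defined algebra homomorphism $\hR_\bfn\to\overline{H}_*^{\rmG_\bfn}(\Ztw)\hookrightarrow H_*^\rmT(\Ztw)_{\rm loc}$. This is where the faithfulness of the $\hR_\bfn$-action on $\hPol_\bfn$ does the work: since two elements of $\overline{H}_*^{\rmG_\bfn}(\Ztw)$ acting equally on $\hPol_\bfn$ are equal, and since the defining relations of $\hR_\bfn$ (those of $\mathcal{R}$ together with \eqref{eq:fdots}, \eqref{eq:ExtR2}, or equivalently the minimal presentation of \Cref{onlyomega1}) are, by \Cref{prop:faithrep-KLR-Gr}, satisfied by the operators on $\hPol_\bfn$, the chosen geometric classes automatically satisfy the same relations in $\overline{H}_*^{\rmG_\bfn}(\Ztw)$. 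Thus the universal property of $\hR_\bfn$ as an algebra with generators and relations yields the homomorphism $\Phi$. Composing with the injection $\overline{H}_*^{\rmG_\bfn}(\Ztw)_{\rm loc}\hookrightarrow H_*^\rmT(\Ztw)_{\rm loc}$ from diagram \eqref{eq:CD1-Zplus-loc} lands us in the target of the corollary.

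It remains to argue injectivity of $\Phi$. Here the point is that $\Phi$ factors through the faithful representation: the composite $\hR_\bfn\xrightarrow{\Phi}\overline{H}_*^{\rmG_\bfn}(\Ztw)\to\End(\hPol_\bfn)$ is precisely the faithful action of \Cref{prop:faithrep-KLR-Gr}. Since the composite is injective, $\Phi$ itself is injective. Being a homomorphism from an algebra acting faithfully into an algebra whose action on the same module realises that action, $\Phi$ is a monomorphism of algebras. (If one wishes $\Phi$ to be compatible with the gradings of \Cref{DefgradhR} and \Cref{geomgradhR-gen}, one observes that each chosen generator class is homogeneous of the prescribed degree, using the dimension count of \Cref{lem:dim-deg-gen}; this is not needed for the statement of \Cref{Cormono} but will be used downstream.)

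The main obstacle is conceptual rather than computational: one must be careful that the homomorphism is defined on $\hR_\bfn$ and not merely on $H_*^{\rmG_\bfn}(\Znaivtw)$, which \emph{a priori} carries no product. The resolution — already flagged in the remark following \Cref{mainthm} — is exactly to pass to the quotient $\overline{H}_*^{\rmG_\bfn}(\Ztw)$ by the kernel of the action before attempting any multiplication, so that the convolution product of $H_*^\rmT(\Ztw)_{\rm loc}$ descends and the relation-checking can be carried out faithfully on $\hPol_\bfn$. Once this is set up, verifying each individual relation is routine, as each reduces to an identity of operators on $\hPol_\bfn$ which holds by \Cref{prop:faithrep-KLR-Gr}, or, for the defining identities among floating dots, was already checked geometrically in the proof of \Cref{highdot}.
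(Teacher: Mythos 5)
Your argument is correct and is essentially the paper's own proof: the paper's one-line justification is precisely your key point that an element of $H_*^{\rmG_\bfn}(\Ztw)$ acts by zero on $H_*^{\rmG_\bfn}(\Ytw)\cong\hPol_\bfn$ if and only if its image in $H_*^{\rmT}(\Ztw)_{\rm loc}$ vanishes, which (combined with \Cref{lem:gen-mixed} and the faithfulness in \Cref{prop:faithrep-KLR-Gr}) yields both well-definedness of $\Phi$ on the relations and its injectivity. You merely spell out the relation-checking and universal-property step that the paper leaves implicit.
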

\begin{proof}
	An element of $H_{*}^{\rmG_\bfn}(\Ztw)$ acts by zero on $H_{*}^{\rmG_\bfn}(\Ytw)$ if and only if it is zero in $H_{*}^{\rmT}(\Ztw)_{\rm loc}$.
\end{proof}

	Similarly to the computation in \Cref{starintertwines}, we see that the algebra structure on  $H_{*}^\rmT(\Ztw)_{\rm loc}$ restricts to an algebra structure on $H_{*}^\rmT(\Znaivtw)_{\rm loc}$. 
\begin{lem}
		The image of $\Phi$ is contained in the image of $H_{*}^{\rmT}(\Znaiv)_{\rm loc}$.
	\end{lem}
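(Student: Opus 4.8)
The statement to prove is that the image of the embedding $\Phi\colon\hR_\bfn\to H_{*}^{\rmT}(\Ztw)_{\rm loc}$ lands inside the image of $H_{*}^{\rmT}(\Znaivtw)_{\rm loc}$ (not merely inside $H_{*}^{\rmT}(\Ztw)_{\rm loc}$). The plan is to check this on generators of $\hR_\bfn$, using the explicit description of $\Phi$ recorded in \Cref{mainthm} together with the structure of the argument already carried out for the source and sink algebras in \S\ref{sec:source-sink}. Since $\Phi$ is an algebra homomorphism and $H_{*}^{\rmT}(\Znaivtw)_{\rm loc}$ is a subalgebra of $H_{*}^{\rmT}(\Ztw)_{\rm loc}$ (this closure-under-multiplication is exactly the remark made just before the lemma, obtained by repeating the computation of \Cref{starintertwines}), it suffices to verify that each of the generators $1_\uj$, $X_r1_\uj$, $\tau_r1_\uj$, $\Omega1_\uj$ of $\hR_\bfn$ is sent into this subalgebra.

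First I would treat $1_\uj$, $X_r1_\uj$ and $\tau_r1_\uj$: by \Cref{mainthm} these map to the classes $[\dot\Delta_{\uj,\uj,0}]$, a Chern class supported on $\dot\Delta_{\uj,\uj,0}$, and $\pm[\mathbb{T}_{r,\uj}]$ respectively, and all of these live in components $\Zplus_{\ui,\uj,\bfk}$ with $\bfk=0$. But $\Zplus_{\ui,\uj,0}=\Znaivtw_{\ui,\uj,0}$ (the Grassmannian--Steinberg conditions \eqref{eq:cond-bfZ'-gen} are vacuous when $W=0$, and there is only one Grassmannian of dimension $0$), so these classes already live in $H_{*}^{\rmG_\bfn}(\Znaivtw)$, hence their images in $H_{*}^{\rmT}(\Ztw)_{\rm loc}$ factor through $H_{*}^{\rmT}(\Znaivtw)_{\rm loc}$. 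This is essentially the same observation used in the proof of \Cref{lem:gen-mixed}, that the twist changes nothing for these three generators.

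The only generator requiring work is $\Omega1_\uj$, which is sent to $[\mathbb{\Omega}_\uj]$ with $\mathbb{\Omega}_\uj=\{(\alpha,\beta,\bV,\tV,W)\mid \bV=\tV,\ W=\bV^1\}\subset\Zplus_{\uj,\uj,1}$. Here the point is exactly the one already observed in the proof of \Cref{lem:gen-mixed}: although $\Zplus_{\uj,\uj,1}\neq\Znaivtw_{\uj,\uj,1}$ in general, the \emph{subvariety} cut out by the extra conditions $\bV=\tV$ and $W=\bV^1$ is contained in both. Indeed, on this subvariety the Grassmannian space $W=\bV^1$ is a component of the flag $\bV$, so the flag $\VWp$ coincides with $\bV$ itself (up to repetitions) and the condition \eqref{eq:cond-bfZ'-gen} that $\VWp$ be $\alpha$-preserved is implied by $(\alpha,\beta,\bV)$ already satisfying the twisted Springer conditions \eqref{twistSprcond} on $\Ytw$; thus $\mathbb{\Omega}_\uj$ sits inside $\Znaivtw$. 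Hence $[\mathbb{\Omega}_\uj]$ lies in the image of $H_{*}^{\rmG_\bfn}(\Znaivtw)\to H_{*}^{\rmG_\bfn}(\Ztw)$, and therefore $\Phi(\Omega1_\uj)$ lies in the image of $H_{*}^{\rmT}(\Znaivtw)_{\rm loc}$.

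I expect the main (minor) obstacle to be bookkeeping: one must be careful that ``image'' throughout refers to the composite $H_{*}^{\rmG_\bfn}(\Znaivtw)\to H_{*}^{\rmG_\bfn}(\Ztw)\to H_{*}^{\rmT}(\Ztw)_{\rm loc}$ and that this composite does land in $H_{*}^{\rmT}(\Znaivtw)_{\rm loc}$ compatibly with the maps in diagram \eqref{eq:CD0-Zplus-loc}, i.e.\ that the square relating $H_{*}^{\rmG_\bfn}(\Znaivtw)$, $H_{*}^{\rmG_\bfn}(\Ztw)$ and their localisations commutes. This is automatic from functoriality of the pull-back and push-forward maps defining $H_{*}^{\rmG_\bfn}(\Znaivtw)\to H_{*}^{\rmG_\bfn}(\Ztw)$. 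With that in hand, the lemma follows because $\Phi$ is generated, as an algebra map, by the images of the four generators, each of which has just been shown to factor through $H_{*}^{\rmT}(\Znaivtw)_{\rm loc}$, and the latter is a subalgebra.
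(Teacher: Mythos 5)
Your proposal is correct and follows essentially the same route as the paper: the paper's proof simply observes that by \Cref{lem:gen-mixed} every generator of $\hR_\bfn$ is realised by a class coming from $H_{*}^{\rmG_\bfn}(\Znaivtw)$, hence lands in the image of $H_{*}^{\rmT}(\Znaivtw)_{\rm loc}$, and concludes by the subalgebra property established just before the lemma. Your generator-by-generator unpacking (the $\bfk=0$ components being untouched by the twist, and $\mathbb{\Omega}_\uj$ sitting inside $\Znaivtw$) is exactly the content of the proof of \Cref{lem:gen-mixed}, so nothing is missing.
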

\begin{proof}
	It is clear from \Cref{lem:gen-mixed} that the image of every generator of $\hR_\bfn$ is in $H_{*}^{\rmT}(\Znaiv)_{\rm loc}$. Since $H_{*}^{\rmT}(\Znaiv)_{\rm loc}$ is a subalgebra of $H_{*}^\rmT(\Ztw)_{\rm loc}$, the claim follows.
	\end{proof}

An argument similar to \Cref{coromult} shows that the image of $\overline{H}_{*}^{\rmG_\bfn}(\Znaivtw)$ in $H_{*}^{\rmT}(\Ztw)_{\rm loc}$ is a subalgebra. This yields in particular an algebra structure on $\overline{H}_{*}^{\rmG_\bfn}(\Znaivtw)$. 
\begin{lem}
	\label{lem:image-Hbardot}
	The image of $\Phi$ is in the image of $\overline{H}_{*}^{\rmG_\bfn}(\Znaivtw)$.
\end{lem}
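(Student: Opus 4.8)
The statement to establish is \Cref{lem:image-Hbardot}: the image of $\Phi\colon\hR_\bfn\to H_*^{\rmT}(\Ztw)_{\rm loc}$ lands inside the image of $\overline{H}_*^{\rmG_\bfn}(\Znaivtw)$. The natural strategy is to reduce the claim, as in the source and sink cases, to checking it on a generating set of $\hR_\bfn$ and then using multiplicativity. First I would recall from \Cref{lem:gen-mixed} that for each generator $x\in\{1_\uj,X_r1_\uj,\tau_r1_\uj,\Omega1_\uj\}$ there is a chosen element $x\in H_*^{\rmG_\bfn}(\Znaivtw)$ (the fundamental classes $[\dot\Delta_{\uj,\uj,0}]$, $[\mathbb{T}_{r,\uj}]$, $[\mathbb{\Omega}_\uj]$ and the Chern class $X_r$, respectively) whose image in $H_*^{\rmG_\bfn}(\Ztw)$ acts on $\hPol_\bfn$ by the operator $\Phi(x)$. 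Composing with the map $H_*^{\rmG_\bfn}(\Znaivtw)\to \overline{H}_*^{\rmG_\bfn}(\Znaivtw)$ and using that $\overline{H}_*^{\rmG_\bfn}(\Znaivtw)$ embeds as a subalgebra of $H_*^{\rmT}(\Ztw)_{\rm loc}$ (the remark preceding the lemma, whose proof mirrors \Cref{coromult} and \Cref{starintertwines}), we see that each $\Phi(x)$ lies in the image of $\overline{H}_*^{\rmG_\bfn}(\Znaivtw)$.

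The second step is to promote this from generators to all of $\hR_\bfn$. Since $\Phi$ is an algebra homomorphism (by \Cref{Cormono}) and the elements $x\in\overline{H}_*^{\rmG_\bfn}(\Znaivtw)$ lift the generators $\Phi(x)$, the image of $\Phi$ is contained in the subalgebra of $H_*^{\rmT}(\Ztw)_{\rm loc}$ generated by $\{\Phi(x)\}$; but that subalgebra is contained in the image of $\overline{H}_*^{\rmG_\bfn}(\Znaivtw)$ precisely because the latter is a \emph{subalgebra} of $H_*^{\rmT}(\Ztw)_{\rm loc}$ (not merely a subspace) that contains each $\Phi(x)$. Hence $\Phi(\hR_\bfn)\subseteq\operatorname{im}\bigl(\overline{H}_*^{\rmG_\bfn}(\Znaivtw)\to H_*^{\rmT}(\Ztw)_{\rm loc}\bigr)$, which is the assertion. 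One subtlety worth spelling out: the multiplication on $\overline{H}_*^{\rmG_\bfn}(\Znaivtw)$ is \emph{defined} via its identification with a subalgebra of $H_*^{\rmT}(\Ztw)_{\rm loc}$, using the maps $H_*^{\rmG_\bfn}(\Znaivtw)\to H_*^{\rmG_\bfn}(\Ztw)\to H_*^{\rmT}(\Ztw)_{\rm loc}$ together with the twisting map $\iota$ of \Cref{imap} (adapted to the twisted varieties). One must check that this composite is compatible with the convolution product on $H_*^{\rmT}(\Ztw)_{\rm loc}$, which is exactly the content of the $\star$-computations of \Cref{starintertwines} carried out with $\GGZ,\Znaiv$ replaced by $\Ztw,\Znaivtw$; the three structural ingredients listed in the remark after \Cref{closedmultistrong} (injectivity into the localisation after passing to the quotient, the Grassmannian-bundle structure of $\Ztw_{\bfk_1\supset\bfk_2}\to\Znaivtw_{\bfk_1-\bfk_2}$ with relative Euler class $\catP_{\mu_2,\mu_1^c}$, and the factorisation of $\op{eu}(\Ytw,(w,\mu))$) all persist after the twist, since the twist changes only the $E_\bfn$-component and not the Grassmannian data nor the $\rmT$-fixed points.

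The main obstacle, and the reason this lemma needs care rather than being immediate, is that we do not have faithfulness of $H_*^{\rmG_\bfn}(\Ztw)$ on $H_*^{\rmG_\bfn}(\Ytw)$, so we genuinely must work in the quotients $\overline{H}_*^{\rmG_\bfn}(\bullet)$ throughout and cannot simply lift $\Phi$ to a map into $H_*^{\rmG_\bfn}(\Znaivtw)$ itself. Concretely, the delicate point is to verify that $\overline{H}_*^{\rmG_\bfn}(\Znaivtw)$ really is a subalgebra of $H_*^{\rmT}(\Ztw)_{\rm loc}$ (equivalently, that the product on the localisation restricts to it), which requires re-running the proof of \Cref{starintertwines} in the twisted setting and checking that the twisted Grassmannian inclusion map $\Ztw_{\bfk_1\supset\bfk_2}\cong\Ztw_{\bfk_1\supset\bfk_1-\bfk_2}$ of \eqref{geninclvar} (built from the flip map $\phi_U$) still induces the right intertwiner $\iota$ on localised homology. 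Once that bookkeeping is in place, the reduction to generators is formal.
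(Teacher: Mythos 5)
Your proof is correct and follows exactly the route the paper intends (the paper leaves this lemma's proof implicit, relying on \Cref{lem:gen-mixed} to place the generators in the image of $\overline{H}_{*}^{\rmG_\bfn}(\Znaivtw)$, on the preceding remark that this image is a subalgebra of $H_{*}^{\rmT}(\Ztw)_{\rm loc}$ via an argument analogous to \Cref{coromult}, and on $\Phi$ being an algebra homomorphism). Your explicit discussion of why the \Cref{starintertwines}-type computation survives the twist is exactly the bookkeeping the paper waves at with "an argument similar to \Cref{coromult}".
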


As explained above, the element $\Omega 1_\uj$ is given in $H_*^{\rmG_\bfn}(\Ztw_{k+1,k})$  (see \Cref{lem:gen-mixed}) by the fundamental class of the subvariety described in the proof of \Cref{lem:gen-mixed}. For any point $(\alpha,\beta,\bV,\tV,W,\tW)$ of this subvariety we have the following.
\begin{lem}
	\label{lem:corr-Omega-image}
	 Let $h\colon i\to j$ be an arrow in the quiver $\Gamma$.  Then it holds
	\begin{equation}
	\label{eq:corr-Omega-image}
	\alpha_h(W_i\cap \tW_i^\perp)\subset \tW_j. 
	\end{equation}
\end{lem}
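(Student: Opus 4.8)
The plan is to analyze the subvariety described in the proof of \Cref{lem:gen-mixed}, whose fundamental class represents $\Omega 1_\uj$ in $H_*^{\rmG_\bfn}(\Ztw_{k+1,k})$. Recall that this subvariety is obtained from the subvariety of $\Znaivtw_1$ cut out by $\bV=\tV$ and $W=\bV^1$ by applying the construction that produces the $\supset$-variety; concretely, after the twist and the map $\gamma$, its points $(\alpha,\beta,\bV,\tV,W,\tW)$ satisfy $\bV=\tV$ and $W=\tW\oplus\bV^1$ with the sum orthogonal. The goal is to read off condition \eqref{eq:corr-Omega-image} directly from these defining equations together with the twisted Springer conditions \eqref{twistSprcond} imposed by membership in $\Ytw$.

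First I would spell out the relevant containments. Since $W=\tW\oplus\bV^1$ orthogonally, the space $W_i\cap\tW_i^\perp$ is, for each vertex $i$, the part of $W_i$ orthogonal to $\tW_i$; because $\bV^1$ is homogeneous it contributes to at most one vertex, and in all cases $W_i\cap\tW_i^\perp$ is contained in $\bV^1_i$ (it equals $\bV^1_i$ at the vertex $i_1$ and is zero elsewhere). So it suffices to show $\alpha_h(\bV^1_i)\subset\tW_j$ for an arrow $h\colon i\to j$. Now apply the twisted Springer condition \eqref{twistSprcond} at level $r=1$: it gives $\alpha_h(\bV^1_i)\subset\bV^1_j+W_j$. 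Since $\bV=\tV$ lies in $\Ytw$ with $W=\tW\oplus\bV^1$, one has $\bV^1_j\subset W_j=\tW_j\oplus\bV^1_j$, hence $\bV^1_j+W_j=W_j=\tW_j\oplus\bV^1_j$; this is not yet quite \eqref{eq:corr-Omega-image}, so I would use the \emph{second} twisted Springer condition on the $\beta$-component together with the fact that on this subvariety the twist has been applied, which forces the $\alpha$-part of the preserved data to land inside the $\tW$-summand. More precisely, the subvariety is the image under $\gamma_{\bfk_1,\bfk_2}$ (and the flip $\phi_U$ with $U=\bigoplus\Hom(V_i,\tW_j)$) of points of $\Zplus$; unwinding the flip map \eqref{Wtwist}, the condition that the original (pre-flip) $\alpha$ preserve $\VWp$—equivalently that $\alpha_h((\bV^1\cap W_i)^\perp\cap W_i)\subset(\bV^1\cap W_j)^\perp\cap W_j$, i.e.\ $\alpha_h$ preserves the $\tW$-part of $W$—translates after the flip precisely into $\alpha_h(\bV^1_i)\subset\tW_j$.

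Thus the key step is to carefully track what the flip map $\phi_U$ does to the Springer condition \eqref{KLR}, using \Cref{Wtwistlemma}: with $E'$ the $\alpha$'s preserving $\bV^1$ inside $W$ and $U=\bigoplus\Hom(V_i,\tW_j)$, the formula $\phi_U(E'\oplus E)=(E'+U)\oplus(E'\cap U\oplus U')$ shows that membership of $(\alpha,\beta)$ in the twisted bundle over this particular subvariety decomposes the $\alpha$-constraint as "$\alpha$ preserves $\bV^1$ modulo $U$" while the genuine preservation (the $E'\cap U$ part) is pushed into the $\beta$-slot; since on our subvariety $W=\tW\oplus\bV^1$ and $\tW^\perp\cap W=\bV^1$, chasing this through yields exactly $\alpha_h(W_i\cap\tW_i^\perp)=\alpha_h(\bV^1_i\cap\ldots)\subset\tW_j$. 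I expect the main obstacle to be purely bookkeeping: disentangling which of the two $E_\bfn$-copies carries which half of the original preservation condition after the $\phi_U$-twist and the Grassmannian diffeomorphism $\gamma$, and checking that the homogeneity of $\bV^1$ (supported at the single vertex $i_1$) makes all the vertex-wise case distinctions collapse to the single clean statement \eqref{eq:corr-Omega-image}. Once the flip is unwound correctly, the containment is immediate from \eqref{twistSprcond} and $\tW^\perp\cap W=\bV^1$.
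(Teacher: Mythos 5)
Your opening reduction is exactly the paper's first step: on the subvariety one has $W=\tW\oplus\bV^1$ orthogonally, so $W_i\cap\tW_i^\perp=\bV^1\cap V_i$, and the claim reduces to $\alpha_h(\bV^1_i)\subset\tW_j$. But from there you go astray. You correctly note that $\bV^1$ is homogeneous of colour $j_1$, yet you fail to use this at the \emph{target} vertex: for an arrow $h\colon i\to j$ with $\bV^1_i\neq 0$ one has $i=j_1$, and since $\Gamma$ has no loops, $j\neq j_1$ and hence $\bV^1_j=0$. With that observation the twisted Springer condition of the \emph{second} factor $\Ytw_{\uj,k}$ (the one carrying $\tW$, which holds because the point lies in $\Ztw_{k+1,k}=\Ytw_{k+1}\times_{E_\bfn\oplus E_\bfn}\Ytw_k$) gives immediately $\alpha_h(\bV^1_i)\subset\bV^1_j+\tW_j=\tW_j$ — this is the paper's one-line proof. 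Even the first factor's condition that you did write down already suffices, since $\bV^1_j+W_j=W_j=\tW_j\oplus\bV^1_j=\tW_j$; your assertion that it "is not yet quite" the claim is simply the missed vanishing $\bV^1_j=0$.

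Because of this, you embark on a flip-unwinding detour that is both unnecessary and, as written, not a proof. You describe the subvariety as "the image under $\gamma_{\bfk_1,\bfk_2}$ \dots of points of $\Zplus$" and invoke the condition that the pre-flip $\alpha$ preserve $\VWp$; but the correspondence is defined as a \emph{preimage} of a subvariety of $\Znaivtw_1$, not of $\Zplus_1$, and assuming membership in $\Zplus$ at this stage is dangerously close to circular, since \Cref{lem:corr-Omega-image} is precisely an ingredient in proving (\Cref{lem:fact-thr-bfZ'}) that the composed correspondences land in $\Zplus$. The flip computation can in fact be salvaged (membership in $\Ytw_1$ over the locus $W'=\bV^1$ forces the flipped $\alpha'$ to kill $\bV^1$, and unwinding $\phi_U$ then yields $\alpha_h(\bV^1)\subset\tW_j$), but you do not carry it out — you explicitly defer the "bookkeeping" that would constitute the actual argument. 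So the proposal contains a genuine gap: the decisive step is either the unused observation $\bV^1_j=0$ or the uncompleted flip analysis.
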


\begin{proof}
	We have $W_i\cap \tW_i^\perp=\bV^1\cap V_i$ by the definition of the subvariety. Now  \eqref{eq:corr-Omega-image} follows from the twisted Springer condition in $\Ytw_{\uj,k}$, see \Cref{crucialdef}.
\end{proof}

\begin{prop}
	\label{lem:fact-thr-bfZ'}
The image of $\Phi$
is contained in the image of $H_*^{\rmG_\bfn}(\Zplus)$. 
\end{prop}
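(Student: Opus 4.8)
The statement is that the image of $\Phi\colon\hR_\bfn\to H_*^\rmT(\Ztw)_{\rm loc}$ is contained in the image of $H_*^{\rmG_\bfn}(\Zplus)$. Since $\hR_\bfn$ is generated by $1_\uj$, $X_r1_\uj$, $\tau_r1_\uj$ and $\Omega1_\uj$, and since $H_*^{\rmG_\bfn}(\Zplus)$ is closed under the product induced from $H_*^\rmT(\Ztw)_{\rm loc}$ (which is exactly the content of \Cref{mainprop}, whose proof we may invoke or run in parallel), it suffices to show that each of these generators lies in the image of $H_*^{\rmG_\bfn}(\Zplus)\to H_*^{\rmG_\bfn}(\Znaivtw)\to \overline H_*^{\rmG_\bfn}(\Ztw)\hookrightarrow H_*^\rmT(\Ztw)_{\rm loc}$. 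For $1_\uj$, $X_r1_\uj$ and $\tau_r1_\uj$ this is essentially immediate: the relevant classes are $[\dot\Delta_{\uj,\uj,0}]$, a Chern class on it, and $(-1)^{\delta_{j_r,j_{r+1}}}[\mathbb T_{r,\uj}]$, and the underlying subvarieties $\dot\Delta_{\uj,\uj,0}$, $\mathbb T_{r,\uj}$ of $\Znaivtw$ carry Grassmannian dimension vector $0$, so the Grassmannian--Steinberg conditions \eqref{eq:cond-bfZ'-gen} are vacuous there (with $W=0$, the flag $\VWp$ is just $\bV$, which $\alpha$ preserves by the twisted Springer condition at $\bfk=0$). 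Hence these classes are pushed forward from $\Zplus$. So the whole content of the proposition sits in the generator $\Omega1_\uj$.

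First I would recall, as established in the proof of \Cref{lem:gen-mixed}, that $\Phi(\Omega1_\uj)$ is represented in $H_*^{\rmG_\bfn}(\Ztw_{k+1,k})$ (with $k=0$, so $\Ztw_{1,0}$) by the fundamental class of the subvariety $N$ of $\Znaivtw_1$ cut out by $\bV=\tV$ and $W=\bV^1$. The task is to check that $N\subseteq\Zplus$, i.e.\ that every point $(\alpha,\beta,\bV,\bV,W)\in N$ satisfies the Grassmannian--Steinberg conditions \eqref{eq:cond-bfZ'-gen}: for each arrow $h\colon i\to j$ and each $r$, $\alpha_h\bigl((\bV^r\cap W_i)^\perp\cap W_i\bigr)\subseteq (\bV^r\cap W_j)^\perp\cap W_j$. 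Because $W=\bV^1$ is one-dimensional (supported at a single vertex, say $j_0$), $W_i$ is either $0$ or a line. If $W_i=0$ the left side is $0$ and there is nothing to prove; if $W_i\ne 0$ then $i=j_0$ and $W_j=0$ for all $j\ne i$, so the only potentially nonzero target is for an arrow $h\colon j_0\to j_0$ — but $\Gamma$ has no loops, so in fact the right side is always $0$ as well, and the condition reads $\alpha_h\bigl((\bV^r\cap W_i)^\perp\cap W_i\bigr)=0$. Now $(\bV^r\cap W_i)^\perp\cap W_i$ is $0$ unless $\bV^r\cap W_i=0$, i.e.\ unless $r=0$ (since $W=\bV^1\subseteq\bV^r$ for $r\ge1$), in which case it equals $W_i=W\cap V_i=\bV^1\cap V_i$. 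Thus the one nontrivial instance of \eqref{eq:cond-bfZ'-gen} on $N$ is exactly the assertion $\alpha_h(\bV^1\cap V_i)=0$ for every arrow $h\colon i\to j$ with $i=j_0$, which for $W=\bV^1$, $\tW=0$ is precisely $\alpha_h(W_i\cap\tW_i^\perp)\subseteq\tW_j=0$. This is the statement of \Cref{lem:corr-Omega-image}.

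Therefore $N\subseteq\Zplus$, and $[\Omega1_\uj]=[N]$ is the push-forward along $N\hookrightarrow\Znaivtw$ of a class that is already the push-forward of $[N]\in H_*^{\rmG_\bfn}(\Zplus_{\uj,\uj,1})$ along $\Zplus\hookrightarrow\Znaivtw$ — indeed $N=\mathbb\Omega_\uj$ in the notation of \Cref{mainthm}. Composing with the maps $H_*^{\rmG_\bfn}(\Zplus)\to H_*^{\rmG_\bfn}(\Znaivtw)\to\overline H_*^{\rmG_\bfn}(\Ztw)\hookrightarrow H_*^\rmT(\Ztw)_{\rm loc}$ shows $\Phi(\Omega1_\uj)$ lies in the image of $H_*^{\rmG_\bfn}(\Zplus)$. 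Since the image of $H_*^{\rmG_\bfn}(\Zplus)$ in $H_*^\rmT(\Ztw)_{\rm loc}$ is a subalgebra (by \Cref{mainprop}) and contains the images of all generators of $\hR_\bfn$, it contains $\Phi(\hR_\bfn)$, which is the claim. The only delicate point — and the one I would write out carefully — is the reduction showing that the Grassmannian--Steinberg conditions on $N$ collapse to exactly \eqref{eq:corr-Omega-image}; the no-loops hypothesis on $\Gamma$ and the dimension count $\dim W=1$ are what make this work, and it is worth flagging that for general $\bfk$ (not just the $\bfk$ of a single floating dot) this kind of collapse is false, which is the whole reason the twist was needed in the first place.
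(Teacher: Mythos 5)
Your check that the individual generator correspondences land in $\Zplus$ is correct, and the careful collapse of the Grassmannian--Steinberg conditions for $\Omega 1_\uj$ (using $\dim W = 1$, no loops, and \Cref{lem:corr-Omega-image}) is exactly right. But the reduction to generators is circular. You invoke \Cref{mainprop} to get that the image of $H_*^{\rmG_\bfn}(\Zplus)$ in $H_*^\rmT(\Ztw)_{\rm loc}$ is closed under multiplication, and then conclude that the subalgebra generated by $\Phi(1_\uj)$, $\Phi(X_r1_\uj)$, $\Phi(\tau_r1_\uj)$, $\Phi(\Omega1_\uj)$ lies in that image. However, \Cref{mainprop} is proved \emph{from} \Cref{lem:fact-thr-bfZ'}: the paper establishes it only after, via the chain ``$\Phi$ is an algebra map, its image is a subalgebra, it is contained in the image of $H_*^{\rmG_\bfn}(\Zplus)$ by this proposition, and equals it by the dimension count of \Cref{lem:dim-deg-gen}.'' Your phrase ``run in parallel'' gestures at the difficulty but does not resolve it --- there is no independent prior proof of the subalgebra statement to borrow, precisely because $\Znaivtw$ and $\Ztw$ need not inject into their localisations, so one cannot argue as in \S\ref{subs:Z'-subalg-Z}.

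The paper circumvents this by proving the containment directly for each \emph{basis element} $\tau_x\Omega_{k,n}\tau_y\tau_z1_\uj$, not just for generators, so no closure-under-multiplication is ever assumed. Concretely, one writes such an element as a word $G_1\cdots G_a$ in the elementary generators, lifts each $G_p$ to a correspondence $C_p\subset\Ztw$, and shows the \emph{set-theoretic} composition $C_1\circ C_2\circ\cdots\circ C_a$ lies inside $\Zplus$. That is a pointwise geometric verification: one tracks the chain $W=W^0\supset W^1\supset\cdots\supset W^k=\{0\}$ of Grassmannian spaces occurring at the floating dots in the word, picks orthogonal generators $w_r$ for the successive quotients, applies \Cref{lem:corr-Omega-image} at each floating dot to get $\alpha_h(\langle w_r\rangle\cap W_i)\subset W^r\cap W_j$, and deduces that $\alpha$ preserves the filtration $(W^r)$, which is exactly what \eqref{eq:cond-bfZ'-gen} asserts. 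Your verification for $\Omega1_\uj$ is the case $a=1$; the whole proof is that same check carried along an arbitrary word, which is what makes the proposition usable as the \emph{input} to the subalgebra statement rather than a consequence of it.
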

\begin{proof}
By \Cref{lem:image-Hbardot}, we have a monomorphism of algebras $\hR_\bfn\hookrightarrow \overline{H}_*^{\rmG_\bfn}(\Znaivtw)$. We would like to check that its image is in the image of $H_*^{\rmG_\bfn}(\Zplus)$. It is enough to check this for every basis element $\tau_x\Omega_{k,n}\tau_y\tau_z1_\uj$ of $\hR_\bfn$. Let us write this basis element as a product $G_1G_2\cdots G_a$ of elementary generators of the algebra (crossings, polynomials, floating dots). Each $G_p$ is sent to some homology class in $\overline{H}_*^{\rmG_\bfn}(\Znaivtw)$ defined as a push-forward of some homology class on some elementary correspondence $C_p$.
\color{black}
More precisely,
\begin{itemize}[leftmargin=5mm]
    \item if $G_p=P1_\ui$ with $P\in\Bbbk[X_1,\ldots,X_n]$, then $C'_p$ is the diagonal ($\bV=\tV$) in $\Znaivtw_{\ui,\ui,0}$,
    \item if $G_p=\tau_r1_\ui$, then $C'_p$ is "almost diagonal" ($\bV^q=\tV{}^q$ for $q\ne r$) in  $\Znaivtw_{s_r(\ui),\ui,0}$,
    \item if $G_p=\Omega 1_\ui$, then $C'_p \subset \Znaivtw_{\ui,\ui,1}$ is given by $\bV=\tV$ and $W=\bV^1$.
\end{itemize}
We lift now $C'_1,\ldots,C'_a$ to correspondences $C_1,\ldots,C_a$ in $\Ztw$. For $p\in[1;a]$ let  $f(p)$ be the number of floating dots among the $G_{p+1}, G_{p+2},\ldots, G_a$. 
We define next $C_p\subset \Ztw_{f(p)+1,f(p)}$ if $G_p$ is a floating dot, and $C_p\subset \Ztw_{f(p),f(p)}$ otherwise. Namely,  let $C_p$ be the preimage of $C'_p$ under the map $\Ztw_{f(p)+1\supset f(p)}\to \Znaivtw_1$ respectively $\Ztw_{f(p)\supset f(p)}\to \Znaivtw_0$. Then the basis element $G_1G_2\cdots G_a$ in $\overline{H}_*^{\rmG_\bfn}(\Znaivtw_\bfk)= \overline{H}_*^{\rmG_\bfn}(\Ztw_{\bfk,0})$ is the push-forward of some homology class on $C_1\circ C_2\circ\cdots\circ C_a$. (We use the notation of \S\ref{subs:conv}.) We claim that  
$C_1\circ C_2\circ\ldots\circ C_a\subset \Zplus$. We need to show that if $(y_0,y_{1},\ldots,y_a)\in (\Ytw)^{a+1}$ such that $(y_{p-1}, y_p)\in C_p$ for $p\in[1;a]$, then $y_0$ satisfies Grassmannian--Steinberg conditions \eqref{eq:cond-bfZ'-gen}.

To see this we need some notation. Let $y_0=(\alpha,\beta, \bV,\tV,W=\bigoplus_{i\in I}W_i)$. Moreover, let $p_1<p_2<\ldots<p_k$ be the indices such that $G_{p_r}$ are floating dots. Then denote by $W^r$ the Grassmanian space from $y_{p_r}$. Then $W=:W^0\supset W^1\supset W^2\supset\cdots\supset W^k=\{0\}$ is a full flag in $W$. For $r\in[1;k]$, pick some nonzero $w_r\in W^{r-1}\cap (W^r)^\perp$. The vectors $w_1,w_2,\ldots,w_k$ form then an orthogonal basis of $W$ and we have $W^r=\langle w_{r+1},w_{r+2},\ldots,w_k\rangle$ for  $r\in[0;k]$.

In case $\Gamma$ has no arrow, there is nothing to do, otherwise pick any arrow $h\colon i\to j$. 
Then, since we have $w_r\in W^{r-1}\cap (W^r)^\perp$, \Cref{lem:corr-Omega-image} applied to $C_{p_r}$ yields
$$
\alpha_h(\langle w_r\rangle \cap W_i)\subset W^r \cap W_j=\langle w_{r+1},w_{r+2},\ldots,w_k\rangle\cap W_j.
$$
We see from the formula above that $\alpha$ preserves each $W^r=\langle w_{r+1},w_{r+2},\ldots,w_k\rangle$.

To verify the Grassmannian--Steinberg conditions \eqref{eq:cond-bfZ'-gen} for $y_0$, let $r\in [1;n]$. By definition of the correspondences, there is some index $b$ such that $w_1,\ldots,w_b\in\bV^r\cap W$ and $w_{b+1},w_{b+2},\ldots,w_k$ are orthogonal to $\bV^r\cap W$. Thus, $\bV^r\cap W_i=(W^b)^\perp\cap W_i$ and $(\bV^r\cap W_i)^\perp \cap W_i=W^b\cap W_i$. Now, \eqref{eq:cond-bfZ'-gen} follows, since $\alpha$ preserves $W^r$.
\end{proof}

\begin{rk}
	The proof above becomes more natural if we think of it in terms of \emph{diagram varieties}, a notion we introduce in \Cref{app:diag-var}. In fact, this proof explains that for each basis diagram $D$, the image of the map $\dvp_D\colon V(D)\to \Znaivtw$ (we use the notation of \S\ref{subsubs:diag-twisted}) satisfies \eqref{eq:cond-bfZ'-gen}. In this diagram varieties, the vectors $w_1,\ldots,w_k$ span the spaces $\langle w_1\rangle,\ldots, \langle w_k\rangle$  corresponding to the regions of the diagram $D$ with floating dots (counted from top to bottom), see also \Cref{ex:diag-twisted}.
\end{rk}

\begin{proof}[Proof of \Cref{mainprop} and \Cref{mainthm}]	
	
	By \Cref{lem:fact-thr-bfZ'}, the monomorphism $\Phi$ factors through $H_*^{\rmG_\bfn}(\Zplus)$. On the other hand, the graded dimensions of $\hR_\bfn$ and $H_*^{\rmG_\bfn}(\Zplus)$ agree by \Cref{lem:dim-deg-gen}. Thus the image of $\Phi$ is exactly (the image of) $H_*^{\rmG_\bfn}(\Zplus)$. In particular, $H_*^{\rmG_\bfn}(\Zplus)\subset {H}_*^{\rmT}(\Ztw)_{\rm loc}$ is a subalgebra.
	

The explicit formulas for the image of the generators of $\hR_\bfn$ in $H_*^{\rmG_\bfn}(\Zplus)$ are given by the choice of images of the generators in \Cref{lem:gen-mixed}, see \Cref{mainthm}.
\end{proof}

\subsection{Positive characteristics}
\label{subs:pos-char}
We finish by showing that our main theorem (\Cref{mainthm}) still holds in positive characteristics. When going through the arguments of the paper, one can observe that the characteristic zero assumption on  $\Bbbk$ was only relevant for the calculations in the convolution algebra. More precisely, the theory of torus fixed points localisation requires in general the characteristic zero assumption. The purpose of this section is to verify that in our specific setup the arguments can, at least with some extra care,  still be applied.  

We allow now the field $\Bbbk$ to be of arbitrary characteristic. Let still $\rmU_\bfn$ be the compact subgroup of $\rmG_\bfn$ given by unitary matrices. Set $\compT=\rmU_\bfn\cap \rmT$. If not specified otherwise, $\abY$ denotes a topological $\rmU_\bfn$-space.

\begin{prop}\label{cond:loc1} 
 The map $H_*^{\rmU_\bfn}(\abY)\to H_*^{\compT}(\abY)$ is always injective.
\end{prop}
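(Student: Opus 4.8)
The statement asserts that for a topological $\rmU_\bfn$-space $\abY$, the natural map $H_*^{\rmU_\bfn}(\abY)\to H_*^{\compT}(\abY)$ induced by the inclusion $\compT\hookrightarrow\rmU_\bfn$ is injective, without any hypothesis on the characteristic of $\Bbbk$. The key point is that $\rmU_\bfn$ is \emph{compact} and $\compT$ is a maximal torus of it, so the quotient $\rmU_\bfn/\compT$ is a compact manifold (the full flag manifold of $\rmU_\bfn$, or rather a product of such), and crucially its cohomology has the right shape: the Leray--Hirsch / Borel picture applies to the fibration $\compT\backslash E\rmU_\bfn\to \rmU_\bfn\backslash E\rmU_\bfn$ with fibre $\rmU_\bfn/\compT$. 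First I would reduce to the case of a single $\mathrm{GL}$-factor, since $\rmU_\bfn=\prod_i\mathrm{U}(V_i)$ and $\compT$ is the product of the corresponding maximal tori, and equivariant homology for a product of groups factors through a Künneth argument (or one simply iterates the one-factor statement). So assume $\rmU_\bfn=\mathrm{U}(m)$, $\compT$ its diagonal maximal torus, with Weyl group $W=\frakS_m$.

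\textbf{Main step.} The fibration $B\compT=\compT\backslash E\rmU_\bfn \to B\rmU_\bfn$ has fibre $\rmU_\bfn/\compT$, a compact complex flag manifold whose rational—indeed integral—cohomology is a free module over $\Bbbk$ of rank $|W|$, concentrated in even degrees, with no torsion and no differentials issues: the Leray--Serre spectral sequence of this fibration degenerates (this is the classical fact that $H^*(B\compT)$ is free over $H^*(B\rmU_\bfn)$ of rank $|W|$, valid over any coefficient ring since $H^*(B\mathrm{U}(m);\bbZ)\cong\bbZ[c_1,\dots,c_m]$ is the ring of symmetric polynomials inside $H^*(B\compT;\bbZ)\cong\bbZ[t_1,\dots,t_m]$, which is a free module of rank $m!$ by the standard theory of symmetric functions — this holds integrally hence over any field $\Bbbk$). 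Pulling this fibration back along the Borel construction $\abY_{\rmU_\bfn}=\rmU_\bfn\backslash(E\rmU_\bfn\times\abY)\to B\rmU_\bfn$ gives the fibration $\abY_{\compT}\to\abY_{\rmU_\bfn}$, again with fibre $\rmU_\bfn/\compT$. I would then invoke the Leray--Hirsch theorem: since the cohomology classes $\tA^{-1}_w[f_w]$-type generators (equivalently, the Schubert classes) of the fibre extend to $H^*(\abY_\compT)$ — they are restrictions of the universal classes in $H^*(B\compT)$, which map to $H^*(\abY_\compT)$ via pull-back along $\abY_\compT\to B\compT$ — the map $H^*(\abY_{\rmU_\bfn})\otimes_{\Bbbk} H^*(\rmU_\bfn/\compT)\to H^*(\abY_\compT)$ is an isomorphism. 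In particular $H^*(\abY_{\rmU_\bfn})\to H^*(\abY_\compT)$ is a \emph{split injection}. Dualizing (equivariant Borel--Moore homology being, at least in the cases we use where $\abY$ admits a reasonable CW or ind-variety structure, the dual of equivariant cohomology, or else by a direct Leray--Hirsch argument in homology), the map $H_*^{\rmU_\bfn}(\abY)\to H_*^{\compT}(\abY)$ is then (split) injective.

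\textbf{The main obstacle.} The delicate point is precisely that one cannot pass through the Weyl-group-averaging isomorphism $H_*^{\rmU_\bfn}(\abY)\cong (H_*^{\compT}(\abY))^W$ as in \S\ref{subs:conv} — that argument genuinely needs $|W|$ invertible, i.e. characteristic zero (or at least coprime to $|W|$). So the injectivity here must be proved by the more robust Leray--Hirsch/spectral sequence mechanism, which only uses that $H^*(\rmU_\bfn/\compT)$ is a free $\Bbbk$-module whose classes lift globally — a purely topological fact about compact Lie groups that is characteristic-free. The one technical care needed is to make sure the Leray--Hirsch hypothesis (the fibre classes lift to the total space) really holds: this is automatic here because the lifts come from the universal situation over $B\compT$, and every $\abY_\compT$ maps canonically to $B\compT$. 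I would also remark that the reduction to a single factor must be done so as not to reintroduce a Künneth obstruction; since all the relevant cohomology is free over $\Bbbk$ and concentrated in even degrees this causes no trouble. Thus the proof is: (1) reduce to one $\mathrm{U}(m)$-factor; (2) observe $H^*(B\compT)$ is free of finite rank over $H^*(B\rmU_\bfn)$ integrally, hence over $\Bbbk$; (3) Leray--Hirsch for $\abY_\compT\to\abY_{\rmU_\bfn}$ gives a split injection on equivariant cohomology; (4) dualize to get the claimed split injection on equivariant Borel--Moore homology.
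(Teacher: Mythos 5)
Your argument is correct, but it is not the paper's: the paper disposes of this proposition in one line by citing Holm--Sjamaar (Thm.~2.10), which asserts injectivity of restriction to a maximal torus in arbitrary characteristic for subgroups of reductive groups of type $A$, whereas you reprove the relevant special case from scratch. Your route --- reduce to one unitary factor, note that $H^*(B\compT;\bbZ)$ is free of rank $|\frakS_m|$ over $H^*(B\rmU_\bfn;\bbZ)$ (Artin's theorem on symmetric polynomials, hence valid over any $\Bbbk$), and apply Leray--Hirsch to the pulled-back fibration $\abY_\compT\to\abY_{\rmU_\bfn}$ with fibre $\rmU_\bfn/\compT$ --- is exactly the mechanism underlying the cited theorem, and it correctly avoids the Weyl-group-averaging argument that genuinely requires $|W|$ invertible. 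What your write-up buys is self-containedness and an explicit splitting; what the citation buys is brevity and greater generality. The one step you should tighten is the passage from equivariant cohomology to equivariant Borel--Moore homology: rather than ``dualizing,'' the clean argument is to work with finite-dimensional approximations of the Borel construction, where $p\colon\abY_\compT\to\abY_{\rmU_\bfn}$ is proper with compact oriented fibre, pull back the top Schubert class $\sigma_{w_0}\in H^*(B\compT)$ (which restricts integrally to the generator of the top cohomology of the fibre), and use the projection formula $p_*\bigl(\sigma_{w_0}\cap p^*(c)\bigr)=c$ to split $p^*$ on Borel--Moore homology directly; this also makes precise your parenthetical caveat about which spaces $\abY$ are allowed (locally compact with a suitable filtration, which covers all the real algebraic varieties used in the paper).
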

\begin{proof}
By \cite[Thm.~2.10]{HolmSjamaar} this holds independently of the characteristic of $\Bbbk$, since $\rmU_\bfn$ is a subgroup of a reductive group of type $A$.
\end{proof}

Recall the varieties $\Zplus,\Ztw,\Znaivtw,\Ytw$ from \S\ref{111}.
The decomposition of $\Zplus$ into preimages of Gells, see \Cref{DefGellgl}, directly implies the injectivity of the localisation map from \Cref{locmap}: 
\begin{prop}\label{cond:loc2} 
If $\abY\in\{\Zplus,\Ytw\}$ then the localisation map $H_*^{\compT}(\abY)\to H_*^{\compT}(\abY)_{\rm loc}$ is injective (in any characteristic).
\end{prop}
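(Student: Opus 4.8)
The statement to be proved is \Cref{cond:loc2}: for $\abY\in\{\Zplus,\Ytw\}$ the localisation map $H_*^{\compT}(\abY)\to H_*^{\compT}(\abY)_{\rm loc}$ is injective, in arbitrary characteristic. The natural route, and the one already signposted in the surrounding text (\emph{``The decomposition of $\Zplus$ into preimages of Gells\ldots directly implies''}), is to produce a $\compT$-equivariant paving of each space by (real) affine cells, each of which is moreover an iterated affine bundle over a $\compT$-fixed point or at least carries free $H^*_{\compT}(\mathrm{pt})$-cohomology. Once $H_*^{\compT}(\abY)$ is free over $\tR_{\compT}$, the localisation map is automatically injective by \Cref{locmap}, exactly as recorded there; the only work is to establish freeness without using characteristic-zero torus localisation as an input.

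First I would treat $\abY=\Ytw$. Recall $\Ytw=\coprod_{\ui,\bfk}\Ytw_{\ui,\bfk}$, and $\Ytw_{\ui,\bfk}$ is the $\calU$-twist of a vector bundle with fibres the Springer conditions \eqref{KLR} over the base $\calF_\ui\times\calG_\bfk$, so (as a real bundle, via $\phi_\calU$) it is a real vector bundle over $\calF_\ui\times\calG_\bfk\cong\prod_{i}(\calF(V_i)\times\mathrm{Gr}_{k_i}(V_i))$. Since a vector-bundle projection induces an isomorphism on equivariant Borel--Moore homology, it suffices to know that $H_*^{\compT}(\calF_\ui\times\calG_\bfk)$ is $\tR_{\compT}$-free. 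This in turn follows by taking the product (Künneth over $\tR_{\compT}$, valid because each factor is free) of the Schubert-cell pavings of $\calF(V_i)$ and the Schubert-cell pavings of $\mathrm{Gr}_{k_i}(V_i)$: each Schubert cell is a $\compT$-invariant affine space that fibres (trivially) over a $\compT$-fixed point, the closure-order condition \ref{b1} holds, and so the usual long-exact-sequence argument of \S\ref{subs:B-strat} (which is purely topological and characteristic-free) produces a filtration of $H_*^{\compT}(\calF_\ui\times\calG_\bfk)$ with free subquotients, hence freeness. Then \Cref{locmap} gives injectivity of the localisation map for $\Ytw$.

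Next, for $\abY=\Zplus$, I would use precisely the paving of $\Zplus$ into preimages $p^{-1}(\calO^{\rm Gell}_{x,y,z,\uj})$ of Gells under $p\colon\Zplus\to\calF\times\calF\times\calG$, which has already been shown (in the proofs of \Cref{lem:check-dim-target} and \Cref{lem:dim-deg-gen}, and the discussion of \Cref{blacktea}) to be a vector bundle over each Gell, with the Gells themselves forming a basic paving of $\calF\times\calF\times\calG_\bfk$ with affine-space fibres over $\calF$ (\Cref{lem:cond-B-Gells} and its coloured version in \S\ref{subs:col-cells}). Concretely: each Gell $\calO^{\rm Gell}_{x,y,z,\uj}$ is a $\compT$-invariant locally trivial fibration over $\calF_\ui$ with complex affine fibres, hence itself an iterated affine bundle, and $p^{-1}(\calO^{\rm Gell})$ is a further (real, via the flip map) vector bundle on top; composing, $p^{-1}(\calO^{\rm Gell}_{x,y,z,\uj})$ is an iterated affine bundle over $\calF_\ui$, so $H_*^{\compT}(p^{-1}(\calO^{\rm Gell}_{x,y,z,\uj}))\cong H_*^{\compT}(\calF_\ui)$, which is $\tR_{\compT}$-free by Schubert cells. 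The ordering $\Theta_{\calG_k}$ used in \Cref{naivGstrat} makes the partial unions closed (condition \ref{b1}), so again the filtration argument of \S\ref{subs:B-strat} applies verbatim over $\tR_{\compT}$ and shows $H_*^{\compT}(\Zplus_{\ui,\uj,\bfk})$ is free; summing over $\ui,\uj,\bfk$ gives freeness of $H_*^{\compT}(\Zplus)$, and \Cref{locmap} finishes the proof.

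\textbf{Main obstacle.} The only genuine subtlety is that Gells are \emph{a priori} only real (not complex) manifolds, because the Grassmannian-twist map $\phi_\calU$ is in general not algebraic; so one cannot invoke a complex cell decomposition directly on $\Zplus$ itself. The fix, already implicit in the paper, is to never paves $\Zplus$ by complex cells but to always push the computation down the bundle projections to $\calF_\ui$ (or to $\calF_\ui\times\calG_\bfk$), where genuine $\compT$-invariant complex (Schubert) cell decompositions exist; the intermediate bundles need only be \emph{real} affine bundles, and real affine bundles still induce isomorphisms on equivariant Borel--Moore homology. One should double-check that the homotopy-invariance / vector-bundle isomorphism for $H_*^{\compT}(-)$ is stated for topological $\compT$-spaces and real bundles (it is, being formal consequences of the Thom isomorphism), and that the excision long exact sequences underlying \S\ref{subs:B-strat} are likewise purely topological — which they are, so nothing about characteristic $0$ intervenes. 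Hence the proof is short and the claim that the Gell decomposition ``directly implies'' the injectivity is justified.
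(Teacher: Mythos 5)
Your proposal is correct and follows exactly the route the paper takes: the paper's proof is the one-line observation that the paving of $\Zplus$ (and $\Ytw$) by preimages of Gells, which are real affine bundles over spaces with $\compT$-invariant Schubert-cell decompositions, yields freeness of $H_*^{\compT}(\abY)$ over $\tR_{\compT}$ and hence injectivity of the localisation map via \Cref{locmap}. You have merely spelled out the details (including the correct handling of the real, non-algebraic nature of the Gell bundles) that the paper leaves implicit.
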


\begin{rk}
We like to stress that it is not clear (to us) whether \Cref{cond:loc2}  holds also for $\Ztw$ and $\Znaivtw$.
\end{rk}

The existence of an  isomorphism of ${\rm Frac}(\tR_\compT)$-modules $H_*^{\compT}(\abY)_{\rm loc}\cong H_*^{\compT}(\abY^\compT)_{\rm loc}$ requires extra assumptions. To formulate this we call  $\compT_1\subset \compT$ a \emph{standard subtorus} if $\compT_1$ is given by imposing equalities between some coordinates after identifying $T\cong (S^1)^n$. For example $\{(a,b,a,c,b,c,c)\mid a,b,c\in S^1\}\subset (S^1)^7$ is a standard subtorus. Let $\St(\compT)$ be the set of standard subtori of $\compT$, equipped with the partial ordering given by inclusion, that is $\compT_1\leq \compT_2$ if $\compT_1\subset \compT_2$. The following holds for any characteristic of $\Bbbk$.

\begin{lem}
	\label{lem:loc-Y-YT-gen}
If the  $\compT$-stabiliser of any point in $\abY$ is a standard subtorus, then the push-forward induces an isomorphism of ${\rm Frac}(\tR_\compT)$-modules $H_*^{\compT}(\abY)_{\rm loc}\cong H_*^{\compT}(\abY^\compT)_{\rm loc}$.
\end{lem}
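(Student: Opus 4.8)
The statement is a standard localisation theorem for torus actions on topological spaces, adapted to positive characteristic via a stratification argument; the key point is that we must avoid the usual assumption that the acting torus is a torus over a characteristic-zero field. The plan is to reduce the general case to the case of a one-point-stabiliser action by induction on the poset $\St(\compT)$ of standard subtori.

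\textbf{First}, I would set up the decomposition of $\abY$ by stabiliser type. For each standard subtorus $\compT_1 \in \St(\compT)$, let $\abY_{\compT_1} = \{y \in \abY \mid \Stab_\compT(y) = \compT_1\}$ be the locus of points with stabiliser exactly $\compT_1$; by hypothesis this covers $\abY$, i.e.\ $\abY = \coprod_{\compT_1 \in \St(\compT)} \abY_{\compT_1}$, and the union of the $\abY_{\compT_1}$ with $\compT_1$ ranging over an upward-closed subset of $\St(\compT)$ is open (larger stabilisers are a closed condition). In particular $\abY^\compT = \abY_\compT$ is closed and $\abY \setminus \abY^\compT = \coprod_{\compT_1 \subsetneq \compT} \abY_{\compT_1}$ is open. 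I would work with the long exact sequence in $\compT$-equivariant Borel--Moore homology for the pair $(\abY, \abY \setminus \abY^\compT)$, after localising at the multiplicative set generated by the characters of $\compT$.

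\textbf{Second}, I would show that the localisation kills the open complement: for each $\compT_1 \subsetneq \compT$, the stratum $\abY_{\compT_1}$ has a free action of the quotient torus $\compT/\compT_1$ (of positive dimension), and hence $H_*^\compT(\abY_{\compT_1})$ is a module over $\tR_\compT$ on which some character (one trivial on $\compT_1$ but not on $\compT$) acts invertibly after localisation in the usual way — wait, more carefully, one uses that $H_*^\compT(\abY_{\compT_1}) \cong H_*^{\compT_1}(\abY_{\compT_1}/(\compT/\compT_1))$ is annihilated after inverting a suitable character, by the standard argument that an equivariant class for a free circle action dies after inverting the Euler class of the circle. By assembling these strata (using an induction on the number of strata, or a colimit/filtration argument with the long exact sequences for the open filtration) one gets $H_*^\compT(\abY \setminus \abY^\compT)_{\rm loc} = 0$. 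Then the long exact sequence of the pair forces $H_*^\compT(\abY^\compT)_{\rm loc} \xrightarrow{\sim} H_*^\compT(\abY)_{\rm loc}$, the map being the push-forward along the closed inclusion. Since $\compT$ acts trivially on $\abY^\compT$ we have $H_*^\compT(\abY^\compT) = H_*(\abY^\compT) \otimes_\Bbbk \tR_\compT$, so its localisation is $H_*^\compT((\abY^\compT)^\compT)_{\rm loc}$ trivially, completing the claim.

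\textbf{The main obstacle} is making the stratification argument rigorous in the topological (non-algebraic, possibly infinite or non-paracompact-looking) setting that the varieties $\Ztw, \Znaivtw$ live in, and in particular checking that Borel--Moore homology behaves well with respect to the open--closed decompositions and that the colimit over the filtration by strata commutes with localisation; the hypothesis that every stabiliser is a \emph{standard} subtorus is exactly what makes $\St(\compT)$ a finite poset and the strata well-behaved, so the induction terminates. One also has to be slightly careful that the vanishing of $H_*^\compT(\abY_{\compT_1})_{\rm loc}$ uses inverting only characters of $\compT$ (not a bigger ring), which is fine since a character of $\compT/\compT_1$ pulls back to a character of $\compT$; this is where \Cref{cond:loc1} feeds in to guarantee we are working inside an honestly defined localised module. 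Given all this, the argument is the classical Atiyah--Bott/Quillen localisation proof, and the only genuinely new input over the characteristic-zero treatment is \Cref{cond:loc1} together with the explicit control of stabilisers.
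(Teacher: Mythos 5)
Your argument is correct and follows essentially the same route as the paper's proof: stratify $\abY$ by stabiliser type, use a splitting $\compT=\compT_1\times\compT'_1$ and the free action of the complementary torus to see that each stratum with $\compT_1\ne\compT$ contributes a torsion $\tR_\compT$-module, assemble the strata via long exact sequences, and conclude from the long exact sequence of the pair $(\abY,\abY^\compT)$ after localisation. One small slip: the union of strata over an \emph{upward}-closed set of stabilisers is closed, not open (fixed-point sets $\abY^{\compT_1}$ are closed), but the specific instances you actually use ($\abY^\compT$ closed, its complement open) are stated correctly, so nothing in the argument breaks.
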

\begin{proof}
Consider  $\compT_1\in\St(\compT)$.  Denoting by $\abY^{(\compT_1)}\subset \abY$ the subset of points whose stabiliser is $\compT_1$ we have
	$$
	\abY^{(\geq \compT_1)}:=\bigcup_{\compT_2\geq \compT_1}\abY^{(\compT_2)}=\abY^{\compT_1}.
	$$
	This is a closed subset of $\abY$.  This still holds for any completion of the partial order $\geq$ on $\compT$ to a total order. 
		
	Given now  $\compT_1\in\St(\compT)$, we can find some (not necessarily standard) subtorus $\compT'_1\subset \compT$ such that $\compT=\compT_1\times \compT'_1$. Consequently, $H_*^{\compT}(Y^{(\compT_1)})\cong H_*^{\compT_1}(Y^{(\compT_1)}/\compT'_1)$, on which the action of $\compT_1$ is trivial. In particular, $H_*^{\compT}(Y^{(\compT_1)})$ is a $\tR_\compT$-torsion module in case $\compT_1\ne \compT$. 
	Now, using the long exact sequences 
	$$
	\ldots \to H_r^\compT(\abY^{(> \compT_1)})\to H_r^\compT(\abY^{(\geq\compT_1)})\to H_r^\compT(\abY^{(\compT_1)})\to\ldots,
	$$
	we prove by induction that $H_r^\compT(\abY\backslash \abY^\compT)$ is also a torsion $\tR_\compT$-module for any $r$. Thus, the corresponding modules in the long exact sequence 
	$$
	\ldots \to H_{r+1}^\compT(\abY\backslash \abY^\compT)\to H_r^\compT(\abY^\compT)\to H_r^\compT(\abY)\to  H_r^\compT(\abY\backslash \abY^\compT)\to\ldots,
	$$
	vanish after localisation inducing isomorphisms $H_r^{\compT}(\abY)_{\rm loc}\cong H_r^{\compT}(\abY^\compT)_{\rm loc}$ for any $r$. We get the desired  isomorphism $H_*^{\compT}(\abY)_{\rm loc}\cong H_*^{\compT}(\abY^\compT)_{\rm loc}$ of ${\rm Frac}(\tR_\compT)$-modules.
\end{proof}
\begin{prop}\label{cond:loc3} 
The assumptions of  \Cref{lem:loc-Y-YT-gen} hold for $\abY\in \{\Zplus,\Ztw,\Znaivtw,\Ytw\}$.
\end{prop}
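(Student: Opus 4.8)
The statement to prove is \Cref{cond:loc3}: the assumptions of \Cref{lem:loc-Y-YT-gen} hold for each of the four varieties $\abY\in\{\Zplus,\Ztw,\Znaivtw,\Ytw\}$, i.e.\ the $\compT$-stabiliser of any point is a \emph{standard} subtorus. The plan is to work through the four cases, reducing them to a single core observation about the base $\calF\times\calG$, plus a statement that $\compT$-stabilisers of linear subspaces (the fibres of the relevant vector bundles) are automatically standard whenever the point of the base has standard stabiliser and the subspace is cut out by $\compT$-equivariant (hence ``coordinate'') conditions.

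\textbf{Step 1: stabilisers in $\calF\times\calG$.} First I would show that for a point $(\bV,W)\in\calF\times\calG$, the $\compT$-stabiliser is a standard subtorus. Fix the basis $e_1,\dots,e_n$ of $\rmT$-eigenvectors from \S\ref{sec:fixedpoints}. The torus $\compT\cong(S^1)^n$ acts diagonally, with $t=(t_1,\dots,t_n)$ acting on $e_r$ by $t_r$. An element $t$ stabilises $\bV$ iff it stabilises each $\bV^p$, and it stabilises a subspace iff that subspace is a sum of eigenspaces for $t$; one reads off that $\mathrm{Stab}_\compT(\bV)$ is the diagonal torus consisting of those $t$ which are constant on each ``block'' determined by which eigenvalues $\bV$ and $W$ cannot separate. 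Concretely, $\mathrm{Stab}_\compT(\bV,W)=\{t\mid t_r=t_{r'} \text{ whenever } r\sim r'\}$ for a suitable equivalence relation $\sim$ on $[1;n]$ depending on $(\bV,W)$; this is exactly the definition of a standard subtorus. (For $\bV$ alone the relation is generated by the steps of the flag; for $W$ one adds the relation identifying indices that $W$ fails to distinguish. This is the classical fact that stabilisers of flags in $GL_n$ are of this form.)

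\textbf{Step 2: passing to the bundle varieties.} Each of $\Ytw$, $\Ztw$, $\Znaivtw$, $\Zplus$ is a ($\rmU_\bfn$-equivariant) fibre bundle over $\calF\times\calG$ (for $\Ytw,\Zplus$) or over $(\calF\times\calG)\times_{\cdot}(\calF\times\calG)$ (for $\Ztw,\Znaivtw$), with fibre a \emph{linear subspace} of $E_\bfn\oplus E_\bfn$ cut out by the twisted Springer conditions \eqref{twistSprcond} and the Grassmannian--Steinberg conditions \eqref{eq:cond-bfZ'-gen}. The key point is that although the twist map $\phi_U$ is only $\rmU_\bfn$-equivariant and not algebraic, it \emph{is} $\compT$-equivariant, and the subspaces $(\bV^r\cap W_i)^\perp\cap W_i$, $W_j$, $\bV^r_j+W_j$ etc.\ are all sums of $\compT$-eigenlines whenever $(\bV,W)$ is fixed, because taking orthogonal complements, sums and intersections of $\compT$-stable subspaces preserves $\compT$-stability. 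Hence for a point $y=(\alpha,\beta,\bV,\tV,W,\dots)$ in any of these varieties, $\mathrm{Stab}_\compT(y)=\mathrm{Stab}_\compT(\bV,\tV,W)\cap\mathrm{Stab}_\compT(\alpha,\beta)$, and $\mathrm{Stab}_\compT(\alpha,\beta)$ is cut out inside $\mathrm{Stab}_\compT(\bV,\tV,W)$ by further equalities $t_r=t_{r'}$ coming from the matrix entries of $(\alpha,\beta)$ which are nonzero. An intersection of standard subtori is standard, so $\mathrm{Stab}_\compT(y)$ is standard. I would write this out once for $\Ytw$ and then note the argument is verbatim the same for the other three, using that $\Ztw$, $\Znaivtw\subset\Ytw\times\Ytw$ and $\Zplus\subset\Znaivtw$, so their points' stabilisers are intersections of stabilisers already shown to be standard.

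\textbf{Expected main obstacle.} The only genuinely delicate point is Step 2's claim that $\mathrm{Stab}_\compT$ of a point of $\Ytw$ (or $\Ztw$, which is a \emph{real} algebraic variety since the twist is not holomorphic) really is computed fibrewise and is standard: one must be careful that the non-algebraic flip $\phi_U$ does not spoil the eigenspace bookkeeping. This is handled by the observation above that $\phi_U$ commutes with the $\compT$-action and sends $\compT$-eigenlines to $\compT$-eigenlines (it merely permutes coordinates within the decomposition $E_\bfn=U\oplus U'$, which is itself $\compT$-stable since $U=\bigoplus_{i\to j}\Hom(V_i,W_j)$ with $W_j$ $\compT$-stable). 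Once that is pinned down, everything else is the elementary fact that stabilisers of configurations of eigenspace-sums are ``coordinate'' subtori. With \Cref{cond:loc1}, \Cref{cond:loc2} and \Cref{cond:loc3} in hand, one then reruns the entire proof of \Cref{mainthm} in positive characteristic: \Cref{lem:loc-Y-YT-gen} supplies the localisation isomorphism for $\Ytw$ and $\Zplus$ (and for $\Ztw,\Znaivtw$ as $\mathrm{Frac}(\tR_\compT)$-modules, which is all that was used), and Propositions \ref{cond:loc1}--\ref{cond:loc2} give the needed injectivity; the rest of the argument was characteristic-free.
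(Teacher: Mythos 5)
Your proof is correct and rests on the same idea as the paper's: stabilisers of points in $E_\bfn$ and in Grassmannians are standard subtori, an intersection of standard subtori is standard, and the property is preserved under products and under passage to $\compT$-stable subsets. You over-engineer the last step, though: once one notes that all four varieties are $\compT$-stable subsets of a product of copies of $E_\bfn$ and of Grassmannians (a flag variety sits inside a product of Grassmannians, and $\Ytw\subset E_\bfn\times E_\bfn\times\calF\times\calG$ is $\compT$-stable since $U$ and $U'$ are $\compT$-stable), the $\compT$-stabiliser of a point is just its stabiliser in the ambient product, so your Step 2 fibre-bundle analysis and the entire ``expected main obstacle'' paragraph about the flip $\phi_U$ are superfluous --- the paper dispatches all four cases in a single sentence precisely because the flip never enters.
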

\begin{proof}
It is clear that that the $\compT$-spaces $E_\bfn$ and $\op{Gr}_t(V_i)$ (for $t\in [1;n_i]$) satisfy the assumption of \Cref{lem:loc-Y-YT-gen}. Then their products and $\compT$-stable subsets therein also satisfy this assumption, and so does in particular $\abY\in \{\Zplus,\Ztw,\Znaivtw,\Ytw\}$.
\end{proof}

In summary, the push-forward map $H_*^{\compT}(\abY^\compT)\to H_*^{\compT}(\abY)$ induces an isomorphism after localisation for $\abY\in \{\Zplus,\Ztw,\Znaivtw,\Ytw\}$. Note that the Euler classes at the $\compT$-fixed points of $\Ytw$ are products of polynomials of the form $\ccT_r-\ccT_t$. This implies, similarly to \cite[Prop. 2.4]{MakMin}, that for these varieties the pull-backs $H_*^{\compT}(\abY)\to H_*^{\compT}(\abY^\compT)$ with respect to the inclusions  $\Zplus\subset \Ytw\times\Ytw_0$,  $\Ztw\subset \Ytw\times\Ytw$,  $\Znaivtw\subset \Ytw\times\Ytw_0$ are also isomorphisms after localisation. This allows us in particular, to use the argument in \cite[Prop. 2.10]{MakMin} to reduce the computation to $\compT$-fixed points. 
The arguments of \S\ref{sec:isom-thm-GqHecke-gen} can now be applied to prove the main theorem in any characteristic: 
\begin{thm}
Theorem \ref{mainthm} holds over arbitrary fields $\Bbbk$. 
\end{thm}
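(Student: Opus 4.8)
The plan is to revisit the proof of \Cref{mainthm} carried out in \S\ref{sec:isom-thm-GqHecke-gen} and to check that every step survives when $\Bbbk$ has positive characteristic. The characteristic-zero hypothesis entered that proof in exactly two places: the identification $H_*^{\rmG_\bfn}(\abX)\cong(H_*^\rmT(\abX))^W$ from \S\ref{subs:conv}, and the torus fixed-point localisation theorem used for computations. First I would observe that all the algebraic and combinatorial input is characteristic-free: the faithful polynomial representation of $\hR_\bfn$ from \Cref{prop:faithrep-KLR-Gr}, the basis of \Cref{prop:NVbasis-gen}, the Gell combinatorics of \S\ref{subs:col-cells}, and the dimension formula \Cref{lem:dim-deg-gen} all hold over any field. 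Moreover, the paving of $\Zplus$ by preimages of Gells exhibits $H_*^{\rmG_\bfn}(\Zplus)$ as a free graded $\Pol_n$-module of the expected rank over any $\Bbbk$, so the final equality of graded dimensions of $\hR_\bfn$ and $H_*^{\rmG_\bfn}(\Zplus)$ is unaffected.

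The substitute for the two missing pieces is assembled from \Cref{cond:loc1}, \Cref{cond:loc2}, \Cref{cond:loc3} and \Cref{lem:loc-Y-YT-gen}. Using \Cref{cond:loc1} in place of the $W$-invariants statement, one embeds $H_*^{\rmU_\bfn}(\abY)$ into $H_*^{\compT}(\abY)$ for every $\abY$ in sight; by \Cref{cond:loc2} the localisation map is injective on $H_*^{\compT}(\Zplus)$ and $H_*^{\compT}(\Ytw)$; and by \Cref{cond:loc3} together with \Cref{lem:loc-Y-YT-gen} the push-forward induces an isomorphism $H_*^{\compT}(\abY)_{\rm loc}\cong H_*^{\compT}(\abY^{\compT})_{\rm loc}$ for $\abY\in\{\Zplus,\Ytw,\Ztw,\Znaivtw\}$. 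Since all relevant Euler classes are products of linear forms $\ccT_r-\ccT_t$, one then obtains, exactly as in \cite[Prop. 2.4, Prop. 2.10]{MakMin}, that the pull-backs to fixed points along $\Zplus\subset\Ytw\times\Ytw_0$, $\Ztw\subset\Ytw\times\Ytw$ and $\Znaivtw\subset\Ytw\times\Ytw_0$ are also isomorphisms after localisation, hence the fixed-point analogues of \Cref{lem:conv} and \Cref{lem:comp-in-loc} (convolution of fixed-point classes with Euler-class coefficients) are valid. With this toolkit in hand, every fixed-point computation of \S\ref{sec:gen-qv}--\S\ref{sec:isom-thm-GqHecke-gen} goes through verbatim, since those are identities in ${\rm Frac}(\tR_{\compT})$ with no reference to the characteristic: the coloured Demazure operators, the creation operators via \Cref{lem:S-lambda+n} and \Cref{lem:S-lambda+r}, the $\star$-product intertwining of \Cref{starintertwines}, and the construction of the generators in \Cref{lem:gen-mixed}.

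Finally, the diagram-chasing part of the argument --- the commutative square \eqref{eq:CD1-Zplus-loc}, the monomorphism $\Phi\colon\hR_\bfn\to H_*^{\compT}(\Ztw)_{\rm loc}$ of \Cref{Cormono}, its factorisation through $\overline{H}_*^{\rmG_\bfn}(\Znaivtw)$ and then through (the image of) $H_*^{\rmG_\bfn}(\Zplus)$ via \Cref{lem:image-Hbardot} and \Cref{lem:fact-thr-bfZ'}, and the concluding degree count against \Cref{lem:dim-deg-gen} --- uses only formal properties of these maps together with \Cref{cond:loc2}, all now available in any characteristic. The hard part, and the reason the work of \S\ref{subs:pos-char} is genuinely needed, is that \Cref{cond:loc2} is \emph{not} known for $\Ztw$ and $\Znaivtw$, so one cannot embed $H_*^{\rmG_\bfn}(\Ztw)$ into its localisation and is forced to keep working with the quotients $\overline{H}_*^{\rmG_\bfn}(-)$ by the kernel of the action, just as over $\bbQ$; what rescues the argument is that the localisation isomorphism $H_*^{\compT}(\abY)_{\rm loc}\cong H_*^{\compT}(\abY^{\compT})_{\rm loc}$ of \Cref{lem:loc-Y-YT-gen} still holds for $\Ztw$ and $\Znaivtw$, precisely because their $\compT$-point stabilisers are standard subtori, which is the hypothesis that lets the torsion argument run without characteristic zero. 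Putting these observations together yields the theorem over arbitrary $\Bbbk$.
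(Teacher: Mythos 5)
Your proposal is correct and follows essentially the same approach as the paper: you identify the same two places where characteristic zero was used, invoke the same three substitute propositions (\Cref{cond:loc1}, \Cref{cond:loc2}, \Cref{cond:loc3}) together with \Cref{lem:loc-Y-YT-gen} and the Euler-class reduction argument from \cite{MakMin}, and observe that the same diagram \eqref{eq:CD1-Zplus-loc} with the $\overline{H}$-quotients carries the argument through. The paper's own proof is a one-sentence summary of exactly this; your version just spells out the dependencies more explicitly.
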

\begin{proof}
Thanks to Propositions \ref{cond:loc1} ,  \ref{cond:loc2},   \ref{cond:loc3}, and the reduction argument, this follows from \eqref{eq:CD1-Zplus-loc} and the usual $\rmT$-fixed points computations.
\end{proof}

\appendix
\setlength{\multicolsep}{5pt}
\section{Presentations in terms of generators and relations}

\label{app:gen-rel}
We present here $\NH_n$, $\hNH_n$ and $\hhNH_n$  as algebras with generators and relations. The isomorphism to the definitions in \S\ref{sec:hNH} is canonically given by identifying the abstract generators with the concrete operator with the same name.
\begin{lem}
\label{lem:gen-rel-NH}
The nil-Hecke algebra $\NH_n$ is isomorphic to the algebra  generated by elements $X_i$ for $i\in[1;n]$ and $T_r$  for $r\in [1;n-1]$ modulo the \emph{nil-Hecke relations}
\begin{multicols}{2}
\begin{enumerate}
    \item $T_rT_t=T_tT_r$, if $|r-t|>1$,
    \item $T_rT_{r+1}T_r=T_{r+1}T_rT_{r+1}$,
    \item$T_r^2=0$,
    \item $X_iX_j=X_jX_i$,
    \item $T_rX_{r}=X_{r+1}T_r+1$,
    \item $X_{r}T_r=T_rX_{r+1}+1$,
    \item $T_rX_i=X_iT_r$, if $t\ne i,i+1$,
\end{enumerate}
where $i,j\in [1;n]$ and $r,t\in [1;n-1]$.
\end{multicols}
\end{lem}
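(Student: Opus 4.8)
\textbf{Proof plan for Lemma~\ref{lem:gen-rel-NH}.}
The statement asserts that the nil-Hecke algebra $\NH_n$, defined in \S\ref{sec:Pol} as a concrete subalgebra of $\End(\Pol_n)$, is presented by the generators $X_i$, $T_r$ subject to relations (1)--(7). The standard two-step approach is: first verify that the abstract algebra $A$ defined by generators and relations surjects onto $\NH_n$ via the assignment $X_i\mapsto$ (multiplication by $X_i$), $T_r\mapsto\partial_r$; then show this surjection is injective by a dimension (or rank) count over $\Pol_n$.

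First I would check that the operators of multiplication by $X_i$ and the Demazure operators $\partial_r$ actually satisfy all seven relations in $\End(\Pol_n)$. Relations (1), (2), (4), (7) are either classical braid/commutation identities for Demazure operators or trivially true for commuting multiplication operators. Relation (3), $\partial_r^2=0$, is immediate from $\partial_r=\frac{1-s_r}{X_r-X_{r+1}}$ since $s_r(X_r-X_{r+1})=-(X_r-X_{r+1})$. Relations (5) and (6) are the Leibniz-type identities $\partial_r\circ X_r=X_{r+1}\circ\partial_r+\mathrm{id}$ and $X_r\circ\partial_r=\partial_r\circ X_{r+1}+\mathrm{id}$, which follow from a direct computation of $\partial_r(X_r f)$ and $\partial_r(X_{r+1}f)$. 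These checks are routine, so I would present them compactly. This gives a well-defined algebra homomorphism $\varphi\colon A\to\NH_n$, which is surjective since $X_i$ and $T_r$ generate $\NH_n$ by definition.

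For injectivity, the key structural fact is that relations (1)--(3) alone make the subalgebra of $A$ generated by the $T_r$ into (a quotient of) the nil-Hecke part, and relations (4)--(7) let one move all $X$'s to one side. Concretely, I would show that $A$ is spanned over $\Bbbk[X_1,\dots,X_n]$ (acting on the left) by the elements $T_w$ for $w\in\frakS_n$, where $T_w=T_{r_1}\cdots T_{r_\ell}$ for a fixed reduced expression: using (5)--(7) any monomial can be rewritten with polynomials on the left, and using (1)--(3) any word in the $T_r$ reduces to a $\Bbbk$-linear combination of $T_w$'s with polynomial coefficients (the exchange condition for $\frakS_n$, together with $T_r^2=0$, kills non-reduced words and makes $T_w$ independent of the chosen reduced expression up to lower terms). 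Hence $\dim_{\Bbbk[X]}A\le n!$. On the other hand, it is classical (e.g.\ \cite[\S4]{KostKum}, \cite[\S2.2]{KL}) that $\NH_n$ is free as a left $\Pol_n$-module of rank $n!$ with basis $\{\partial_w\}_{w\in\frakS_n}$ --- this can be seen by evaluating $\partial_w$ on a suitable family of polynomials (e.g.\ applying $\partial_w$ to Schubert-type polynomials detects linear independence). Therefore $\varphi$ is a surjection between free $\Pol_n$-modules of the same finite rank that sends a spanning set to a basis, so it is an isomorphism.

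The main obstacle is the rank bound $\dim_{\Bbbk[X]}A\le n!$: one must be careful that the rewriting procedure using the braid relation (2), the nilpotency (3), and the commutation (1) genuinely terminates with a $\Pol_n$-spanning set indexed by $\frakS_n$ and not something larger. This is where the combinatorics of reduced words in $\frakS_n$ enters; the cleanest way is to invoke that $\NH_n$ acts faithfully on $\Pol_n$ (true by definition here) so that the images $\partial_w$ are $\Pol_n$-linearly independent, and to match this against the upper bound coming from the presentation. Once both inequalities $\mathrm{rank}\,A\le n!\le\mathrm{rank}\,\NH_n$ are in place, the isomorphism follows formally. The analogous presentations of $\hNH_n$ and $\hhNH_n$ (Lemmas~\ref{lem:gen-rel-hNH}, \ref{lem:gen-rel-hhNH}) will then be proved by the same template, adjoining the $\omega_i$ or $\omega_i^{\pm}$ generators and their relations and redoing the rank count with the extra exterior factor $\mywedge_n$ (resp.\ $\mywedge_n\otimes\mywedge_n$), which is exactly the content of \Cref{coro:basis-hNH} and \Cref{lem:basis-hhNH}.
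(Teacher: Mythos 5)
Your proof is correct and is the standard argument; the paper itself does not prove Lemma~\ref{lem:gen-rel-NH} but simply cites it as well known (referring to~\cite[Ex.~2.2(3)]{KL1}), so you have essentially spelled out the content of that citation. The two-step structure you use (verify the relations hold for multiplication and Demazure operators to get a surjection $A\twoheadrightarrow\NH_n$, then match the upper bound $\operatorname{rank}_{\Pol_n}A\le n!$ from rewriting against the lower bound from $\Pol_n$-linear independence of the $\partial_w$, which follows from faithfulness of the action on $\Pol_n$) is exactly the classical proof, and you correctly flag the only delicate point, namely that the rewriting to the spanning set $\{T_w\}_{w\in\frakS_n}$ terminates.
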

\begin{lem}
\label{lem:gen-rel-hNH}
The extended nil-Hecke algebra $\hNH_n$ is isomorphic to the algebra generated by $X_i$, $\omega_i$, $T_r$, $i\in[1;n]$, $r\in [1;n-1]$, modulo the nil-Hecke relations and 
\begin{multicols}{2}
\begin{enumerate} [start=8]
    \item $\omega_i\omega_j=-\omega_j\omega_i$,
    \item $X_i\omega_j=\omega_jX_i$,
    \item $T_r\omega_i=\omega_i T_r$, if $r\ne i$,
    \item $[T_r,\omega_r+X_{r}\omega_{r+1}]=0$,
    \item $\omega_i^2=0$ \rm{(}in case $\op{char}(\Bbbk)=2$\rm{)},
\end{enumerate}
where $i,j\in [1;n]$ and $r,t\in [1;n-1]$.
\end{multicols}
\end{lem}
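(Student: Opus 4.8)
\textbf{Proof plan for \Cref{lem:gen-rel-hNH}.}

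The statement asserts that $\hNH_n$, defined in \Cref{df:hNH} as a subalgebra of $\End(\hPol_n)$, has the presentation with generators $X_i,\omega_i,T_r$ and the nil-Hecke relations together with relations $(8)$--$(12)$. My plan is the standard two-step argument: first verify that the listed relations hold among the concrete operators, producing a surjective algebra homomorphism from the abstractly presented algebra $A$ onto $\hNH_n$; then show this map is injective by a dimension count via a spanning set.

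First I would check each relation directly on $\hPol_n = \Pol_n \otimes \mywedge_n$. Relations $(1)$--$(7)$ are exactly the nil-Hecke relations, which hold because the $\NH_n$-action extends to $\hPol_n$ (this is asserted in \S\ref{subs:hNH}, following from \Cref{lem:gen-rel-NH}). For $(8)$ and $(12)$: left multiplication by $\omega_i$ and $\omega_j$ on the exterior algebra anticommutes, and $\omega_i^2 = 0$ holds automatically except that in characteristic $2$ it must be imposed separately since anticommutativity becomes commutativity there. For $(9)$: $X_i$ acts by multiplication on the $\Pol_n$-factor and $\omega_j$ on the $\mywedge_n$-factor, so these commute. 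For $(10)$: $\partial_r(\omega_i) = 0$ for $i \neq r$ and $s_r(\omega_i)=\omega_i$ by \eqref{Esr}, so $T_r$ commutes with multiplication by $\omega_i$ when $r \neq i$; here one uses that $\partial_r$ is a twisted derivation, i.e. $\partial_r(\omega_i P) = s_r(\omega_i)\partial_r(P) + \partial_r(\omega_i)P$. For $(11)$: one computes $s_r(\omega_r + X_r\omega_{r+1})$ using \eqref{Esr}, getting $\omega_r + (X_r-X_{r+1})\omega_{r+1} + X_{r+1}\omega_{r+1} = \omega_r + X_r\omega_{r+1}$, so this element is $s_r$-invariant; consequently $\partial_r$ annihilates it and it commutes with $\partial_r = \frac{1-s_r}{X_r-X_{r+1}}$. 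This gives a surjection $\pi\colon A \twoheadrightarrow \hNH_n$.

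For injectivity I would exhibit a spanning set of $A$ of the right size. Using relations $(5),(6),(7)$ one moves polynomials past the $T_r$, and using $(1),(2),(3)$ one reduces products of $T_r$'s; using $(8),(9),(10),(11)$ one moves the $\omega_i$'s to the right past everything and reduces them (via $(8)$ and $(12)$) to square-free monomials $\omega_\lambda$. More precisely I would use that relation $(11)$ combined with $(10)$ lets one rewrite $T_r\omega_i$ for all $i$ in terms of $\omega_j T_r$ plus lower terms, so that every element of $A$ is a $\Pol_n$-linear combination (polynomials on the left) of elements $T_w \omega_\lambda$ with $w \in \frakS_n$, $\lambda \in \Lambda(n)$. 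Hence $A$ is spanned over $\Pol_n$ by at most $|\frakS_n| \cdot 2^n = n! \, 2^n$ elements. On the other hand, by \Cref{coro:basis-hNH} we have $\hNH_n \cong \NH_n \otimes \mywedge_n$ as vector spaces, and $\NH_n$ is free of rank $n!$ over $\Pol_n$ with basis $\{T_w\}_{w\in\frakS_n}$, so $\hNH_n$ is free over $\Pol_n$ of rank $n!\,2^n$ with basis $\{T_w\omega_\lambda\}$. Since $\pi$ sends the spanning set $\{T_w\omega_\lambda\}$ of $A$ (viewed with the $\Pol_n$-action, which is respected by $\pi$) onto this basis, $\pi$ is an isomorphism.

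The main obstacle I anticipate is the bookkeeping in the reduction step: carefully establishing that relations $(10)$ and $(11)$ suffice to move \emph{every} $\omega_i$ to the right of \emph{every} $T_r$, including the delicate case $r = i$ handled by $(11)$, and tracking the polynomial coefficients (which may involve the $X$'s via relations $(5),(6)$) so that the final spanning set is genuinely $\Pol_n \cdot \{T_w\omega_\lambda\}$ and not something larger. Once the spanning set has the expected cardinality $n!\,2^n$ over $\Pol_n$, the rank computation from \Cref{coro:basis-hNH} closes the argument. Note one should invoke \Cref{coro:basis-hNH} as already proven (the excerpt says its proof is deferred to this appendix, so strictly one should make sure the argument is not circular — but \Cref{coro:basis-hNH} is proved from the faithful representation directly, not from the presentation, so there is no circularity).
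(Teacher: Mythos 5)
Your proposal is correct in outline, but it reaches injectivity by a genuinely different route than the paper. The paper verifies the relations (same as you, with the same key observation that $\omega_r+X_r\omega_{r+1}$ is $s_r$-invariant), obtains the surjection $\hNH_n'\twoheadrightarrow\hNH_n$, and then \emph{cites} \cite[Prop.~8.1]{NVapproach} for the full PBW statement $\hNH_n'\cong\mywedge_n\otimes\NH_n$ for the abstractly presented algebra; from this it deduces that $\hNH_n'$ already acts faithfully on $\hPol_n$ (writing elements as $\sum_\lambda\omega_\lambda h_\lambda$ and evaluating on $\Pol_n$), so the surjection onto the operator algebra is injective. You instead prove only the easy half of PBW for the presented algebra (a spanning set of size $n!\,2^n$ over $\Pol_n$, by explicit rewriting) and put the burden of linear independence on the \emph{target}, via \Cref{coro:basis-hNH}. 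What your route buys is self-containedness — no external PBW citation — at the cost of the rewriting bookkeeping you already flag. What the paper's route buys is brevity, at the cost of outsourcing the hardest step.

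Two wrinkles to fix. First, your circularity check is not quite as you state it: in the paper, \Cref{coro:basis-hNH} is \emph{derived from} \Cref{lem:gen-rel-hNH} (via the cited PBW result and the antiautomorphism of $\hNH_n'$), so you cannot simply invoke it; you must supply the independent argument. That argument does exist and is short, but it is cleanest in the order $\mywedge_n\otimes\NH_n$: if $\sum_{\lambda,w}\omega_\lambda P_{\lambda,w}T_w=0$ in $\End(\hPol_n)$, evaluating on $Q\in\Pol_n$ gives $\sum_\lambda\omega_\lambda\bigl(\sum_w P_{\lambda,w}\partial_w(Q)\bigr)=0$, whence all $P_{\lambda,w}=0$ by faithfulness of $\NH_n$ on $\Pol_n$. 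For your chosen order $T_w\omega_\lambda$ the evaluation is muddied because $\partial_w$ acts on $\omega_\lambda$. Second, and relatedly, your rewriting as described ("rewrite $T_r\omega_i$ in terms of $\omega_jT_r$") pushes the $\omega$'s \emph{leftward}, which produces the normal form $\omega_\lambda P T_w$, not $P T_w\omega_\lambda$. Both normal forms are attainable from the relations, but you should make the spanning set and the independent set in the target match; the path of least resistance is to use $\{\omega_\lambda T_w\}$ throughout, which is exactly the decomposition the paper uses when proving \Cref{pol-rep-plus-hNH}.
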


It is not difficult to verify in $\hNH_n$ the formula  
\begin{equation*}
\omega_{i+1}=T_i\omega_iT_iX_{i+1}-X_iT_i\omega_iT_i, \mbox{ for }0\leq i\leq n-1.	
\end{equation*}
This allows a smaller presentation using only $\omega_1$ instead of $\omega_1,\omega_2,\ldots,\omega_n$ as follows. 
\renewcommand{\labelenumi}{\theenumi'.\rm{)}}
\begin{lem}
	\label{lem:gen-rel-hNH-omega1}
	The extended nil-Hecke algebra $\hNH_n$ is isomorphic to the algebra  generated by elements $\omega_1$, $X_i$, $T_r$, $i\in[1;n]$, $r\in [1;n-1]$ modulo the nil-Hecke relations and
	\begin{multicols}{2}
	\begin{enumerate}[start=8]
	\item $\omega_1^2=0$,
	\item $X_i\omega_1=\omega_1X_i$,
	\item $T_r\omega_1=\omega_1 T_r$, if $r\ne 1$,
	\item $T_1\omega_1T_1\omega_1+\omega_1T_1\omega_1T_1=0$.
	\end{enumerate} 
	\end{multicols}
\end{lem}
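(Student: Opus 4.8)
The statement to be proved is \Cref{lem:gen-rel-hNH-omega1}: that $\hNH_n$ is isomorphic to the abstract algebra $A$ presented by the generators $\omega_1, X_i, T_r$ subject to the nil-Hecke relations together with the four relations $8'$--$11'$. First I would set up the obvious comparison: the concrete operators on $\hPol_n$ satisfy all the listed relations (the nil-Hecke relations are \Cref{lem:gen-rel-NH}, relations $8'$--$10'$ are immediate from the definition of the $\omega_1$-multiplication and the fact that it commutes with polynomials and with all Demazure operators $\partial_r$ for $r\ne 1$, and relation $11'$ is a short direct check using $\partial_1(\omega_1) = -\omega_2$ and $\partial_1(\omega_2)=0$). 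Hence there is a well-defined surjective algebra homomorphism $\varphi\colon A \twoheadrightarrow \hNH_n$, since the listed operators generate $\hNH_n$ by \Cref{df:hNH} once one knows that $\omega_1$ together with $\NH_n$ suffices — which follows from the displayed formula $\omega_{i+1}=T_i\omega_iT_iX_{i+1}-X_iT_i\omega_iT_i$ stated just before the lemma.

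\textbf{Surjectivity and the counting argument.} Surjectivity of $\varphi$ is therefore the easy half; the content is injectivity, and the cleanest route is a dimension count. I would produce a spanning set of $A$ of size $n!\cdot 2^n\cdot(\text{rank over }\Pol_n)$ matching \Cref{coro:basis-hNH}, i.e.\ show that $A$ is spanned over the image of $\Bbbk[X_1,\dots,X_n]$ by the elements $T_w\,\omega_\lambda$ with $w\in\frakS_n$ and $\lambda\in\Lambda(n)$, where $\omega_\lambda$ is built from the $\omega_i$ which in turn are expressed via $\omega_1$, $T_r$, $X_i$ by the displayed recursion. Concretely: using relations $9'$ and $10'$ one moves all $X_i$'s to the left; using relations $8'$--$10'$, $11'$, and the nil-Hecke relations one rewrites any word in the generators so that at most one ``$\omega_1$'' appears in each of the $n$ possible ``slots'' between consecutive strands (this is the abstract shadow of the minimalistic presentation \Cref{onlyomega1}), and reduces the $T$-part of each monomial to a fixed reduced word $T_w$. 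This shows $\dim_{\Bbbk} A \le \dim_\Bbbk \hNH_n$ after keeping track of the $\Pol_n$-module structure. Combined with the surjection $\varphi$, this forces $\varphi$ to be an isomorphism.

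\textbf{The main obstacle.} The genuinely delicate point is the rewriting step producing the spanning set: one must verify that the relations $8'$--$11'$, which only involve $\omega_1$ and $T_1$, are strong enough to recover the full set of relations $8$--$12$ of \Cref{lem:gen-rel-hNH} in $A$ — in particular, one needs to derive in $A$ the defining property of all the higher $\omega_i$ (defined in $A$ via the recursion), including $\omega_i^2=0$, $\omega_i\omega_j = -\omega_j\omega_i$, $T_r\omega_i=\omega_iT_r$ for $r\ne i$, and the twisted commutation $[T_r,\omega_r + X_r\omega_{r+1}]=0$. The honest way to do this is to show that the assignment sending the abstract generators of the algebra $B$ from \Cref{lem:gen-rel-hNH} to the corresponding elements of $A$ (with $\omega_i\mapsto$ the recursively-defined element) is a well-defined homomorphism $B\to A$, and that the composite $B\to A\xrightarrow{\varphi}\hNH_n$ is the isomorphism of \Cref{lem:gen-rel-hNH}; then $\varphi$ is injective because $B\to A$ is surjective (its image contains all generators of $A$) and $B\to\hNH_n$ is an isomorphism. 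So the skeleton is: (i) $\varphi\colon A\to\hNH_n$ surjective via the $\omega_{i+1}$-recursion; (ii) construct $\psi\colon B\to A$ by checking relations $8$--$12$ hold for the recursively-defined $\omega_i\in A$, using only $8'$--$11'$ and nil-Hecke relations — this is the computational heart; (iii) observe $\psi$ is surjective and $\varphi\circ\psi$ is the known isomorphism $B\xrightarrow{\sim}\hNH_n$, whence $\varphi$ is an isomorphism. Step (ii) is where all the real work lives; everything else is bookkeeping.
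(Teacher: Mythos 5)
Your overall architecture is sound, but note first that the paper does not actually prove this lemma: its entire argument is the sentence ``\Cref{lem:gen-rel-hNH-omega1} holds by [NaisseVaz, Cor.~3.17]'', so you are supplying an argument where the authors supply only a citation. Within your outline, step (i) is fine, and the logic of step (iii) is correct: if $\psi\colon B\to A$ is a well-defined homomorphism whose image contains $\omega_1$, the $X_i$ and the $T_r$, then $\psi$ is surjective, and since $\varphi\circ\psi$ is the isomorphism of \Cref{lem:gen-rel-hNH}, injectivity of $\varphi$ follows. (Your second paragraph, the spanning-set/rank count, is redundant once you have (i)--(iii), and it hinges on exactly the same rewriting computations as step (ii); commit to one route. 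Also, both algebras are infinite-dimensional, so any counting must be of graded dimensions or of ranks over $\Pol_n$, not of $\dim_\Bbbk$.)

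The genuine gap is that step (ii) --- which you yourself flag as ``the computational heart'' --- \emph{is} the content of the lemma, and you do not carry it out. One must derive, inside $A$, using only the nil-Hecke relations and $8'$--$11'$, all of the relations $8.)$--$12.)$ of \Cref{lem:gen-rel-hNH} for the elements $\omega_{i+1}:=T_i\omega_iT_iX_{i+1}-X_iT_i\omega_iT_i$. A few of these are quick (for instance, $\omega_1\omega_2+\omega_2\omega_1=0$ drops out of $11'$ after commuting the $X$'s past $\omega_1$ via $9'$), but the rest require genuine inductions: the full anticommutativity $\omega_i\omega_j=-\omega_j\omega_i$ and $\omega_i^2=0$ for $i\geqslant 2$; the commutation $T_r\omega_i=\omega_iT_r$ for all $r\ne i$, including the delicate case $r=i-1$ where $T_{i-1}$ collides with the $T_{i-1}$'s inside the recursion; and the invariance relation $[T_r,\omega_r+X_r\omega_{r+1}]=0$ with $\omega_{r+1}$ replaced by its recursive expression. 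Asserting that these all follow from the four relations $8'$--$11'$ is precisely what the lemma claims, so as written your text is a correct proof outline with the decisive verification deferred rather than a proof. To close it, either perform the computations of step (ii) in full, or do what the paper does and invoke [NaisseVaz, Cor.~3.17] for them.
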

\begin{proof}[Proofs of Lemmas~\ref{lem:gen-rel-NH}, \ref{lem:gen-rel-hNH}, \ref{lem:gen-rel-hNH-omega1}, \ref{pol-rep-plus-hNH}, \ref{coro:basis-hNH}]
The statement of \Cref{lem:gen-rel-NH}  is well-known, see e.g. \cite[Ex.~2.2(3)]{KL1}. 
Let $\hNH_n'$ denote the asserted presentation of $\hNH_n$.  It is easy to verify that the relations $1.)-12.)$ hold in $\hNH_n$ (note $\omega_r+X_r\omega_{r+1}$ is $s_r$-invariant, see \eqref{Esr}). We get an algebra morphism $\hNH_n'\rightarrow \hNH_n$ which is surjective by \Cref{df:hNH}. We see from \cite[Prop. 8.1]{NVapproach} that there is an isomorphism of vector spaces $\hNH_n'\cong \mywedge_n\otimes \NH_n$, using the identification from \Cref{lem:gen-rel-NH}. This implies that the $\hNH_n'$-action on $\hPol_n$ is faithful because of the faithfulness of the $\NH_n$-action on $\Pol_n$. Then the map $\hNH_n'\rightarrow \hNH_n$ is also injective. Thus we proved \Cref{lem:gen-rel-hNH}. 
\Cref{lem:gen-rel-hNH-omega1} holds by \cite[Cor. 3.17]{NaisseVaz}.
To see \Cref{pol-rep-plus-hNH}, we can write any $h\in \hNH_n$ uniquely as $h=\sum_{\lambda\in\Lambda(n)}\omega_\la h_\la$ with $h_\la\in\NH_n$.  \Cref{pol-rep-plus-hNH}  then follows by definition of $\hNH_n$ and the fact that $\NH_n$ acts faithfully on $\Pol_n$. Note that there is an antiautomorphism of $\hNH_n'$ sending each generator to itself, this implies \Cref{coro:basis-hNH}. 
\end{proof}
\renewcommand{\labelenumi}{\theenumi''.\rm{)}}
\begin{lem}
\label{lem:gen-rel-hhNH}
The algebra $\hhNH_n$ is isomorphic to the algebra  generated by $X_i$, $\omega^+_i$, $\omega^-_i$, $T_r$, $i\in[1;n]$, $r\in [1;n-1]$ modulo the nil-Hecke relations and additionally 

\begin{multicols}{2}
\begin{enumerate}[start=8]
     \item \label{Estart}$\omega^+_i\omega^-_j+\omega^-_j\omega^+_i=\delta_{i,j}$,
     \item $\omega^\pm_i\omega^\pm_j=-\omega^\pm_j\omega^\pm_i$,
       \item $X_i\omega^\pm_j=\omega^\pm_jX_i$,
    \item $T_r\omega^+_i=\omega^+_i T_r$, if $r\ne i$,
    \item $T_r\omega^-_i=\omega^-_i T_r$, if $r\ne i+1$,
    \item $[T_r,\omega^+_r+X_{r}\omega^+_{r+1}]=0$,
        \item \label{Eend} $[T_r,\omega^-_{r+1}-X_r\omega^-_r]=0$.
    \end{enumerate}

\end{multicols}
where $i,j\in [1;n]$ and $r,t\in [1;n-1]$.
\end{lem}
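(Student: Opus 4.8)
The final statement in the excerpt is \Cref{lem:gen-rel-hhNH}: the doubly extended nil-Hecke algebra $\hhNH_n$ is isomorphic to the algebra $A$ presented by generators $X_i,\omega_i^+,\omega_i^-,T_r$ modulo the nil-Hecke relations together with relations $8''.)$--$14''.)$.

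\textbf{Overall strategy.} I would follow exactly the template already used for \Cref{lem:gen-rel-hNH} in the appendix: first check that the displayed relations genuinely hold among the named operators on $\hPol_n$, which yields a surjective algebra homomorphism $\pi\colon A\twoheadrightarrow\hhNH_n$; then produce a spanning set of $A$ of the right size; and finally invoke the faithfulness of the $\hhNH_n$-action on $\hPol_n$ to conclude that $\pi$ is injective. Concretely: (1) Verify relations $8''.)$--$14''.)$ directly from \Cref{df:hhNH}. The only slightly delicate ones are $8''.)$ ($\omega_i^+\omega_j^-+\omega_j^-\omega_i^+=\delta_{i,j}$), which for $i=j$ is the defining property $\omega_i^-(\omega_i Q)=Q$, $\omega_i^-(Q)=0$, and for $i\ne j$ is a sign-bookkeeping check on monomials $P\omega_{i_1}\cdots\omega_{i_k}$; and $13''.)$, $14''.)$, which say $T_r$ commutes with the $s_r$-invariant combinations — for $13''.)$ this is literally relation $11.)$ from \Cref{lem:gen-rel-hNH} applied to $\omega^+$, and for $14''.)$ one uses the identity $\omega_{r+1}^-=X_r\omega_r^-$ recorded in the proof of \Cref{lem:dg} (the line invoking \Cref{lem:gen-rel-hhNH}) to see that $\omega_{r+1}^--X_r\omega_r^-=0$ is in fact an even stronger relation, hence certainly commutes with $T_r$; alternatively, directly check $T_r$-invariance of the annihilation side. (2) Show that $A$ is spanned by the set in \Cref{lem:basis-hhNH}, namely $\{T_wX_1^{a_1}\cdots X_n^{a_n}\omega_\lambda^+\omega_\mu^-\}$. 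This is done by a straightening argument: relations $9''.)$--$12''.)$ let one move all $X_i$ and $T_r$ past the $\omega^\pm$'s into the prescribed positions (up to lower-order corrections from $5''.),6''.)$), relation $8''.)$ normally orders all creation operators to the left of all annihilation operators, relation $9''.)$ normal-orders within each block, and the nil-Hecke relations normal-order the $T_w X^{\bfa}$ part; and relations $13''.),14''.)$ are only needed to make the straightening well-defined (they express the compatibility forced by $5''.),6''.)$). (3) Since $\hhNH_n=\pi(A)$ has a basis (by \Cref{lem:basis-hhNH}) indexed by precisely the spanning set of $A$ from step (2), the surjection $\pi$ carries a spanning set to a basis, so $\pi$ is an isomorphism.

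\textbf{The main obstacle.} The genuinely non-formal part is step (2): proving that the asserted set \emph{spans} $A$, i.e. that the given relations suffice to bring every word in the generators into normal form. The subtlety is the interaction between the Demazure-type relations $5''.),6''.)$ (which produce a correction term ``$+1$'' when $X_r$ passes $T_r$) and the fact that the $\omega^\pm$ relations $13''.),14''.)$ only assert commutativity with the \emph{$s_r$-symmetrized} combination $\omega_r^{\pm}\pm(\cdots)\omega_{r+1}^{\pm}$ rather than with $\omega_r^{\pm}$ itself. One must check that, modulo the ideal, $T_r$ does in fact commute past an arbitrary $\omega_i^{\pm}$ up to terms that are already lower in the chosen term order (in the $T$-length and/or the polynomial degree), so that the straightening procedure terminates. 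This is formally identical to the argument underlying \Cref{lem:gen-rel-hNH}, and I would import it: one can cite \cite[Prop.~8.1]{NVapproach} or replicate the computation there, noting that $\hhNH_n$ is obtained from $\hNH_n$ by adjoining a second ``opposite'' exterior algebra $\mywedge_n$ of annihilation operators, which by the antiautomorphism fixing all generators behaves symmetrically to the creation side. In fact, the cleanest route is: observe that $A$ has an antiautomorphism fixing every generator (relations $8''.)$--$14''.)$ are manifestly stable under it, reading $\omega^+\leftrightarrow$ itself, $\omega^-\leftrightarrow$ itself, and $5''.)\leftrightarrow 6''.)$), reducing the two-sided straightening to a single-sided one already handled for $\hNH_n$; then conclude by dimension count against \Cref{coro:decomp-hhNH}.

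\textbf{Remark on logical dependencies.} Note that \Cref{lem:basis-hhNH} (the basis of $\hhNH_n$) is stated in the main text with ``a proof is given in \Cref{app:gen-rel}'', so strictly the appendix proves \Cref{lem:basis-hhNH} and \Cref{lem:gen-rel-hhNH} together. In practice I would prove them in the order: first establish the spanning set of the abstract algebra $A$ (step 2 above, purely algebraic), then transport it via $\pi$ to get that $\hhNH_n$ is spanned by $\{T_wX^{\bfa}\omega_\lambda^+\omega_\mu^-\}$, then prove linear independence of these operators in $\End(\hPol_n)$ by evaluating on suitable elements of $\Pol_n\otimes\mywedge_n$ (using faithfulness and an explicit pairing against the ``dual'' monomials $\omega_\mu$), which simultaneously gives \Cref{lem:basis-hhNH} and the injectivity of $\pi$, hence \Cref{lem:gen-rel-hhNH}.
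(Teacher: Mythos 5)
Your proposal follows essentially the same route as the paper: verify the listed relations hold for the operators of \Cref{df:hhNH} to get a surjection $\pi\colon A\twoheadrightarrow\hhNH_n$, straighten to show $\{T_wX^{\bfa}\omega_\lambda^+\omega_\mu^-\}$ spans $A$ (the paper treats this as immediate from the relations, whereas you flag it as the nontrivial step — a difference of emphasis only), and conclude by showing those operators are linearly independent on $\hPol_n$, which proves \Cref{lem:basis-hhNH} and the injectivity of $\pi$ in one stroke. Where you are vaguer is exactly the last step: the paper makes your ``pairing against dual monomials'' idea precise by reducing to the claim that $\hhNH_n$ is free as a left $\hNH_n$-module on $\{\omega_\lambda^-\}_{\lambda\in\Lambda(n)}$. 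Writing $x=\sum_\lambda x_\lambda\omega_\lambda^-$ with $x_\lambda\in\hNH_n$, one picks $\lambda_0$ of \emph{minimal} cardinality with $x_{\lambda_0}\neq 0$, uses \Cref{pol-rep-plus-hNH} to find $P\in\Pol_n$ with $x_{\lambda_0}P\neq 0$, and observes $x\cdot(P\omega_{\lambda_0})=x_{\lambda_0}P\neq 0$ since for $\lambda$ with $x_\lambda\neq 0$ one has $|\lambda|\geq|\lambda_0|$, hence $\omega_\lambda^-(P\omega_{\lambda_0})=0$ unless $\lambda=\lambda_0$. Filling in your sketch would essentially reproduce this.

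One genuine slip: in verifying $14''.)$ you invoke ``the identity $\omega_{r+1}^-=X_r\omega_r^-$ recorded in the proof of \Cref{lem:dg}.'' No such identity holds — on $\hPol_n$, $\omega_{r+1}^-(\omega_{r+1})=1$ while $X_r\omega_r^-(\omega_{r+1})=0$, so the two operators differ. What the paper's $\stackrel{\ref{lem:gen-rel-hhNH}}=$ in the proof of \Cref{lem:dg} records is the commutator identity $[\omega_{r+1}^-,T_r]=[X_r\omega_r^-,T_r]$, which \emph{is} relation $14''.)$; citing it to prove $14''.)$ would be circular. Your fallback — directly checking $T_r$-invariance of the $s_r$-symmetrized annihilation combination $\omega_{r+1}^--X_r\omega_r^-$ on $\hPol_n$ — is the correct way to verify it, so the slip is harmless provided you drop the first suggestion.
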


\begin{proof}[Proof of \Cref{lem:gen-rel-hhNH} and \Cref{lem:basis-hhNH}]
It is easy to check that the operators from \Cref{df:hhNH} satisfy the relations \ref{Estart}''.)-\ref{Eend}''.) and of course the nil-Hecke relations. To prove \Cref{lem:gen-rel-hhNH}, it suffices to verify that the algebra $\hhNH_n'$ given by generators and relations in \Cref{lem:gen-rel-hhNH} acts faithfully on $\hPol_n$. 

The relations directly imply that the  set $S$ of elements analogous to \Cref{lem:basis-hhNH} span the algebra $\hhNH_n'$. To show $\hhNH_n'\cong \hhNH_n$, it is enough to check that the elements of $S$ act by linearly independent operators. 
For this, we verify that $\hhNH_n$ is a free left $\hNH_n$-module with a basis $\{\omega^-_\lambda;\mbox{ } \lambda\in\Lambda(n)\}$. Note that each element $x\in\hhNH_n$ can be written as $x=\sum_{\lambda\in\Lambda(n)}x_\lambda\omega^-_\lambda$ where $x_\lambda\in \hNH_n$. We claim that a nonzero coefficient $x_\lambda$ implies that such a linear combination is nonzero, since it acts on $\hPol_n$ by a nonzero operator. Indeed, pick $\lambda_0$ with $|\lambda_0|$ as small as possible such that $x_{\lambda_0}\ne 0$. By \Cref{pol-rep-plus-hNH}, we can find a polynomial $P\in \Pol_n$ such that $x_{\lambda_0}P\ne 0$, and thus $x\cdot P\omega_{\lambda_0}=x_{\lambda_0}P\ne 0$ in $\hPol_n$. Therefore, $\hhNH_n'\cong \hhNH_n$. We implicitly also showed \Cref{lem:basis-hhNH}.
\end{proof}

\section{Diagram varieties}
\label{app:diag-var}
In this section we explain some of the motivation and intuition behind the constructions of the paper. 
The main theme hereby is that, up to technical details, a lot of the geometry can be read off the diagrams defining the nil-Hecke algebra, its extended version and their coloured version from \Cref{sec:KLR}. More explicitly, we will define what we call diagram varieties, which are varieties assigned to the diagrams and basis elements from the algebras we like to describe geometrically. In some sense we \emph{build} varieties and (co)homology classes from diagrams and in this way construct step by step the geometry described in the paper.  We think that the underlying combinatorics is interesting on its own. It lead us naturally to the definition of upper/lower cells, $\calG$-cells, Gells (see \S\ref{subs:col-cells}) and the definitions of the modifications of the variety $\Zplus$ given in \S\ref{subs:source}, \S\ref{subs:target} and \S\ref{sec:isom-thm-GqHecke-gen}.

Since the purpose of this appendix is to give some intuition, we will not get into technical details. As a result we formulate instead several \emph{beliefs} which we believe are true and most likely not too difficult to prove. These statements are not used in the main part of the paper. Assuming them however 
allows a more conceptual explanation of why the overall construction given in the paper really works. In the main part of the paper we do proofs in a more economical (but less conceptual way) based on computations of dimensions.

\subsection{The general idea}
\label{subs:diag-var-gen}

Let  $L,\abX,\abY,\abZ$ be as in \S\ref{subs:conv}. Assume that $C\subset \abZ$ is a (smooth) $L$-stable closed submanifold. Abusing the notation, we denote by $[C]$ the push-forward to $H_*^{L}(\abZ)$ of the fundamental class of $C$. Imagine now that we want to multiply fundamental classes in the convolution algebra $H_*^L(\abZ)$: 
\begin{equation}
\label{eq:prod-fund-cl}
[C_1]\cdot [C_2]\cdot \ldots \cdot [C_a].
\end{equation}
For $1\leqslant i<j\leqslant a+1$, denote by $p_{i,j}$ the projection $p_{i,j}\colon \abY^{a+1}\to \abY^2$ to the $i$th and $j$th components. By the definition of the convolution product (see also the proof of \cite[(2.7.19)]{CG97}), the product \eqref{eq:prod-fund-cl}  is equal to
\begin{equation}\label{CG}
(p_{1,a+1})_*([(p_{1,2})^{-1}(C_1)]\cap [(p_{2,3})^{-1}(C_2)]\cap\ldots\cap [(p_{a,a+1})^{-1}(C_a)]),
\end{equation}
where the $\cap$-product is taken inside the variety $\abY^{a+1}$. \emph{If these intersections \eqref{CG} are transversal}, they can be replaced by the corresponding set-theory intersections
$$
(p_{1,a+1})_*([(p_{1,2})^{-1}(C_1)\cap (p_{2,3})^{-1}(C_2)\cap\ldots\cap (p_{a,a+1})^{-1}(C_a)]).
$$
Moreover, we have 
$$
(p_{1,2})^{-1}(C_1)\cap (p_{2,3})^{-1}(C_2)\cap\ldots\cap (p_{a,a+1})^{-1}(C_a)\cong C_1\times_{\abY}C_2\times_{\abY}\ldots\times_{\abY}C_a.
$$
Thus, if the intersection is transversal, the product \eqref{eq:prod-fund-cl} is equal to 
\begin{equation}
(p_{1,a+1})_*([C_1\times_{\abY}C_2\times_{\abY}\ldots\times_{\abY}C_a]).
\end{equation}
In general, if there is no transversality, the product  \eqref{eq:prod-fund-cl}  is the push-forward $(p_{1,a+1})_*(c)$ of a more complicated class $c\in H_*^L(C_1\times_{\abY}C_2\times_{\abY}\ldots\times_{\abY}C_a)$ which is not always a fundamental class.

We will now connect such calculations with the diagrams from \Cref{sec:KLR}. We will see in concrete examples how the varieties 
$C_1\times_{\abY}C_2\times_{\abY}\ldots\times_{\abY}C_a$ can explicitly be described in terms of a corresponding diagram $D$. We will formalise this observation in the notion \emph{the diagram variety $V(D)$ attached to $D$}.
\subsection{The case of $\mathfrak{sl}_2$}
\subsubsection{Nil-Hecke case}\label{AppNH}
We first consider the nil-Hecke case from \S\ref{sec:hNH}, \S\ref{sec:sl2-geom}.

Consider a word $h_D:=T_{r_1}T_{r_2}\ldots T_{r_p}\in \NH_n$ with corresponding diagram $D$ (it has $n$ strands built from $p$ crossings and no dots, see \S\ref{subs:des-KLR-NV}). 
 We now explain how to see from the diagram which element of $H_*^G(\calF\times\calF)$ corresponds to $h_D$. For this we consider $D$ as a diagram drawn in a horizontal strip $\Xi$ in the plane connecting $n$ dots in the bottom boundary of the strip with $n$ dots at the top boundary. 
 \begin{df}
For a diagram  $D\in \NH_n$ without dots we denote by $\calA=\calA(D)$ the set of connected components in the complement of $D$ in $\Xi$. 
For $a\in\calA$ let $d(a)$ be the distance\footnote{The line segments given by the connected components of the intersection of the closure of $a\in\calA$ with $D$ are the \emph{walls} of $a$. 
Moreover, $a,b\in\calA$ are neighbours if they share a common wall. The \emph{distance} between $a\in\calA$ and the leftmost component $a_0$ is the minimal $d$ such that there exists a sequence $a=a_0,a_1,\ldots ,a_d=a$ in $\calA$ such that successive elements are neighbours.
} from $a$ to the unique leftmost component. (This number is $0$ for the leftmost region and it is $n$ for the unique rightmost region).
\end{df}
For example, the diagrams for $h_{D}=T_2T_1T_3T_2\in \NH_4$ and $h_{D'}=T_1 T_1\in \NH_2$ are drawn in  \Cref{VD}  in black. The additional blue numbers in the regions, i.e. the elements in  $a\in\calA$, are the distances $d(a)$.
\begin{figure}[htb]
\begin{equation*}
	\tikz[very thick,scale=.75,baseline={([yshift=1.0ex]current bounding box.center)}]{	
	\draw (0,3)-- (2,0); 	
	\draw (1,3)-- (3,0); 	
	\draw (2,3)-- (0,0); 	
	\draw (3,3)-- (1,0); 		
   	\node at (0.75,2.5){$\color{blue}{1}$};	
	\node at (1.5,3){$\color{blue}{2}$};	
	\node at (2.25,2.5){$\color{blue}{3}$};	
	\node at (0.75,0.5){$\color{blue}{1}$};	
	\node at (1.5,0){$\color{blue}{2}$};	
	\node at (2.25,0.5){$\color{blue}{3}$};	
	\node at (1.5,1.5){$\color{blue}{2}$};	
	\node at (0,1.5){$\color{blue}{0}$};	
	\node at (3,1.5){$\color{blue}{4}$};
	   \node at (-1,1.5){$D:=$};	
} 
\quad\quad 
	\tikz[very thick,scale=.75,baseline={([yshift=1.0ex]current bounding box.center)}]{	
	\draw (0,3)-- (2,0); 	
	\draw (1,3)-- (3,0); 	
	\draw (2,3)-- (0,0); 	
	\draw (3,3)-- (1,0); 		
	\node at (0.5,3){$\bV^1$};	
	\node at (1.5,3){$\bV^2$};	
	\node at (2.5,3){$\bV^3$};	
	\node at (0.5,0){$\tV{}^1$};	
	\node at (1.5,0){$\tV{}^2$};	
	\node at (2.5,0){$\tV{}^3$};	
	\node at (1.5,1.5){$U$};	
	\node at (0,1.5){$0$};	
	\node at (3,1.5){$V$};	
} 
\quad \quad \quad
	\tikz[very thick,scale=.75,baseline={([yshift=1.0ex]current bounding box.center)}]{	
	\draw (0,0) -- (1,1) -- (0,2); 	
	\draw (1,0) -- (0,1) -- (1,2); 	
	       \node at (-1,1){$D':=$};	
	       \node at (0.5,1){$\color{blue}{1}$};	
	\node at (0.5,2){$\color{blue}{1}$};	
	\node at (0,1.5){$\color{blue}{0}$};	
	\node at (1,1.5){$\color{blue}{2}$};
	\node at (0.5,0){$\color{blue}{1}$};	
} 
\quad \quad 
	\tikz[very thick,scale=.75,baseline={([yshift=1.0ex]current bounding box.center)}]{	
	\draw (0,0) -- (1,1) -- (0,2); 	
	\draw (1,0) -- (0,1) -- (1,2); 	
	\node at (0.5,2){$U_1$};	
	\node at (0.5,1){$U_2$};	
	\node at (0.5,0){$U_3$};	
} 
\end{equation*}
\caption{The distance function in $\calA$ and points in $\prod_{a\in\calA}\op{Gr}_{d(a)}(V)$.}
\label{VD}
\end{figure}
 \begin{df}
For a diagram  $D\in \NH_n$ without dots, the \emph{diagram variety}  $V(D)$ is the subvariety of $\prod_{a\in\calA}\op{Gr}_{d(a)}(V)$ given by all tuples $(U_a)_{a\in\calA}$ satisfying the \emph{diagram conditions}: $U_{a_1}\subset U_{a_2}$ whenever $a_1$ and $a_2$ are neighboured and $d(a_1)<d(a_2)$.
\end{df}
\begin{df}
To $U_\bullet\in V(D)$, we can associate two full flags in $V$: the flag $U^{\rm top}$ given by reading the vector spaces in the regions touching the top boundary of the strip, and the flag $U^{\rm bot}$ given by reading the vector spaces in the bottom regions. We get a projective morphism
$
\dvp_D\colon V(D)\to \calF\times \calF, U\mapsto (U^{\rm top},U^{\rm bot}).
$
\end{df}

\begin{ex}\label{exD}
For $D$ as in \Cref{VD}, the diagram variety $V(D)$ is given by all tuples 
$(0,\bV^1, \bV^2,\bV^3,U,\tV{}^1,\tV{}^2,\tV{}^3,V)$ 
 in $\op{Gr}_0(V)\times\op{Gr}_1(V)\times \op{Gr}_2(V)\times \op{Gr}_3(V)\times \op{Gr}_2(V)\times \op{Gr}_1(V)\times \op{Gr}_2(V)\times \op{Gr}_3(V)\times \op{Gr}_4(V)$
such that, see the second diagram of \Cref{VD},
$$
\bV^1\subset  \bV^2\subset \bV^3, \quad\tV{}^1\subset \tV{}^2\subset\tV{}^3,\quad \bV^1\subset U \subset \bV^3,\quad \tV{}^1\subset U\subset\tV{}^3.
$$
the morphism $\dvp_D$ is 
$
(\bV^1, \bV^2,\bV^3,U,\tV{}^1,\tV{}^2,\tV{}^3)\mapsto (\bV^1\subset \bV^2\subset \bV^3,\tV{}^1\subset\tV{}^2\subset\tV{}^3).
$
\end{ex}
Note that the subspace corresponding to the leftmost region is always $0$ and it is $V$ for the rightmost region. The inclusions forced by the diagram conditions on these vector spaces are automatic. In the following we will mostly omit these components which do not change the variety. 
\begin{ex}\label{exD'}
For $D'$ as in \Cref{VD}, we have $V(D')\cong \op{Gr}_1(V)\times \op{Gr}_1(V)\times \op{Gr}_1(V)$ and  
$
h_{D'}\colon V(D')\cong \op{Gr}_1(V)\times \op{Gr}_1(V)\times \op{Gr}_1(V)\to \calF\times \calF=\op{Gr}_1(V)\times \op{Gr}_1(V)
$
is given by $(U_1,U_2,U_3)\mapsto (U_1,U_3)$.
The push-forward of this map sends the fundamental class to zero by dimension reasons. So, we have $(\dvp_{D'})_*([V(D')])=0=T_1T_1$.
\end{ex}

It is clear from the definition of the convolution algebra (see \S\ref{subs:diag-var-gen}), that for any diagram $D$ from $\NH_n$, we have $h_D=(\dvp_D)_*(c)$  for some $c\in H_*^G(V(D))$. We believe that it is a feature of $\mathfrak{sl}_2$ that $c=[V(D)]$ works (see Examples~\ref{exD}  and \ref{exD'}).

\begin{belief}
We believe that $h_D=(\dvp_D)_*([V(D)])$ for any diagram $D$ in $\NH_n$.
\end{belief}
For general $\Gamma$ this may however fail, see \Cref{ex:contrex-transv}.

\subsubsection{The version for $\hNH_n$}

Consider an element $h_D\in \hNH_n$ given by a word with $T$'s and $\omega$'s, but no $X$'s. The corresponding diagram $D$ is allowed to have intersections and floating dots, but no ordinary dots. 

Fix a (hermitian) scalar product on $V$. Let $\calA=\calA(D)$ be as before and let $\calB=\calB(D)$ be the set of floating dots in $D$. 
\begin{df}
For a diagram $D\in\hNH_n$ without dots, the associated \emph{diagram variety}\footnote{Since the third condition uses the scalar product, this condition is not algebraic over $\bbC$, but it is algebraic over $\bbR$. So, the diagram varieties are in general real algebraic varieties.}  $V(D)$ is the subvariety in $\prod_{a\in\calA}\op{Gr}_{d(a)}(V)\times \prod_{b\in\calB}\op{Gr}_{1}(V)$ given by all points satisfying the following \emph{diagram conditions}
\begin{itemize}
\item if  $a_1,a_2\in\calA$ are neighboured with $d(a_1)<d(a_2)$, then $U_{a_1}\subset U_{a_2}$, 
\item if the floating dot $b$ is in the region $a$, then $W_b\subset U_a$,
\item all $W_b$ are orthogonal with respect to the scalar product.
\end{itemize}
Here, $U_a\in\op{Gr}_{d(a)}(V)$, $W_b\in \op{Gr}_{1}(V)$ are the elements labelled by $a$ respectively $b$.
\end{df}
As in \S\ref{AppNH}, there is a projective morphism 
\begin{equation*}
\dvp_D\colon V(D)\to \ccZplus=\calF\times\calF\times \calG,\qquad (U_\bullet,W_\bullet)\mapsto (U^{\rm top},U^{\rm bot}, \oplus_{b\in\calB}W_b).
\end{equation*}
Clearly $h_D=(\dvp_D)_*(c)$ for some $c\in H_*^G(V(D))$. We expect nicer behaviour:
\begin{belief}\label{belief2}
We believe that $h_D=(\dvp_D)_*([V(D)])$.
\end{belief}
\begin{ex}
Consider the diagram $D$ representing the element $\Omega_{3,6}$:
\begin{equation}\label{flagF}
	\tikz[very thick,scale=.75,baseline={([yshift=1.0ex]current bounding box.center)}]{	
	\draw (0,2) -- (2,0); 	
	\draw (1,2)  .. controls (-0.5,1) ..  (1,0);
	\draw (2,2) -- (0,0); 
    \draw (3,2)-- (3,0); 
    \draw (4,2)-- (4,0); 
    \draw (5,2)-- (5,0);     
        \node at (-1,1){$D:=$}; 
    \fdot{}{0.3,2};
    \fdot{}{0.3,1};
    \fdot{}{0.3,0}; 
} \quad\quad\quad\quad
	\tikz[very thick,scale=.75,baseline={([yshift=1.0ex]current bounding box.center)}]{	
	\draw (0,2) -- (2,0); 	
	\draw (1,2)  .. controls (-0.5,1) ..  (1,0);
	\draw (2,2) -- (0,0); 
    \draw (3,2)-- (3,0); 
    \draw (4,2)-- (4,0); 
    \draw (5,2)-- (5,0);     
    \fdot{}{0.3,2};
    \fdot{}{0.3,1};
    \fdot{}{0.3,0};
    \node at (0.7,2){$\bV^1$};
    \node at (1,1.5){$\bV^2$};    
    \node at (2,1){$\bV^3$};    
      \node at (1,0.5){$U$};    
    \node at (3.5,1){$\bV^4$};   
    \node at (4.5,1){$\bV^5$};   
    }
\end{equation}
We claim that picking out the top flag defines an isomorphism $V(D)\cong\calF$. We show that any full flag $\bV^1\subset \bV^2\subset\cdots\subset \bV^5$ can be extended in a unique way to a point in $V(D)$ by adding $1$-dimensional vector spaces  $W_1,W_2,W_3$  associated to the floating dots (counted from top to bottom) and a $2$-dimensional space $U$ as indicated in \eqref{flagF}. Clearly,  $(W_1,W_2,W_3)=(\bV^1,\bV^2\cap W_1^\perp,\bV^3\cap W_2^\perp)$ is the only possibility and then $U=W_2\oplus W_3$. Then we also see that the map $\dvp_D$ in this case is an inclusion of a closed subvariety with image isomorphic to the triples $(\bV,\tV,W)$ satisfying 
$$
\bV^5=\tV{}^5, \quad \bV^4=\tV{}^4,\quad \bV^3=W=\tV{}^3, \quad \bV^2\perp \tV{}^1,\quad \bV^1\perp \tV{}^2.
$$
These conditions are exactly the conditions defining the minimal Gell $\calO^{\op{Gell}}_{\Id,\Id,\Id}$ from \Cref{minGell} associated to the basis element $\Omega_{3,6}$. With some extra work one can show that \Cref{belief2} holds, but we omit the arguments.
\end{ex}

Next we describe a general strategy how to assign a stratum of a basic paving (more precisely  an upper or lower cell, a $\calG$-cell or a Gell) to a basis element \S\ref{subs:B-strat}: 

We take the diagram $D$ of the basis element and consider its diagram variety $V(D)$ with the morphism $\dvp_D\colon V(D)\to \ccZplus$. Then we construct a certain open subset $\colJ_D$ in the image of $\dvp_D$ such that $\dvp_D$ restricts to an isomorphism  $\dvp_D^{-1}(\colJ_D)\to \colJ_D$. If we manage to get such a $\colJ_D$ for each basis diagram $D$ such that the $\colJ_D$ give rise to a basic paving in the sense of \Cref{def:b-paving} and $h_D=(\dvp_D)_*([V(D)])$ for each $D$, then the resulting basic paving is automatically strongly adapted to the basis. We believe, that this strategy works for every type of bases introduced in \S\ref{subs:cells} and that the obtained pavings are respectively upper cells, lower cells, $\calG$-cells and Gells. We give some examples.

\begin{ex} Let $n=3$ and $k=2$. Consider now the diagrams $D$, $D'$ as follows
\begin{equation}\label{DD'}
\tikz[very thick,scale=.75,baseline={([yshift=1.0ex]current bounding box.center)}]{	
	\draw (0,0) -- (2,1); 
	\draw (1,0) -- (0,1); 
	\draw (2,0) -- (1,1); 	
	\draw (0,1) -- (1,2); 
	\draw (1,1) -- (0,2);
	\draw (2,1) -- (2,2);  
	\draw (0,2) -- (0,3);  
	\draw (1,2) -- (2,3);  
	\draw (2,2) -- (1,3);  
    \fdot{}{0.5,1};
    \fdot{}{0.5,2};
    \node at (-1,1.5){$D:=$};
} \quad\quad
\tikz[very thick,scale=.75,baseline={([yshift=1.0ex]current bounding box.center)}]{	
	\draw (0,0) -- (2,1); 
	\draw (1,0) -- (0,1); 
	\draw (2,0) -- (1,1); 	
	\draw (0,1) -- (1,2); 
	\draw (1,1) -- (0,2);
	\draw (2,1) -- (2,2);  
	\draw (0,2) -- (0,3);  
	\draw (1,2) -- (2,3);  
	\draw (2,2) -- (1,3);  
    \fdot{}{0.5,1};
    \fdot{}{0.5,2};
    \node at (0.5,3){$\bV^1$};
    \node at (1.5,3){$\bV^2$};
    \node at (0.5,-0.2){$\tV{}^1$};
    \node at (1.5,-0.2){$\tV{}^2$};
    \node at (1.5,1.5){$W$};
} 
\quad\quad\quad
	\tikz[very thick,scale=.75,baseline={([yshift=1.0ex]current bounding box.center)}]{	
	\draw (0,0) -- (2,1); 
	\draw (1,0) -- (0,1); 
	\draw (2,0) -- (1,1); 	
	
	\draw (0,1) -- (1,2); 
	\draw (1,1) -- (0,2);
	\draw (0,2) -- (1,3); 
	\draw (1,2) -- (0,3);
	\draw (2,1) -- (2,3);  
	
	\draw (0,3) -- (0,4);  
	\draw (1,3) -- (2,4);  
	\draw (2,3) -- (1,4);

    \fdot{}{0.5,2};
    \fdot{}{0.5,3};
   
    \node at (-1,2){$D':=$};

} 
\quad\quad\quad
	\tikz[very thick,scale=.75,baseline={([yshift=1.0ex]current bounding box.center)}]{	
	\draw (0,0) -- (2,1); 
	\draw (1,0) -- (0,1); 
	\draw (2,0) -- (1,1); 	
	
	\draw (0,1) -- (1,2); 
	\draw (1,1) -- (0,2);
	\draw (0,2) -- (1,3); 
	\draw (1,2) -- (0,3);
	\draw (2,1) -- (2,3);  
	
	\draw (0,3) -- (0,4);  
	\draw (1,3) -- (2,4);  
	\draw (2,3) -- (1,4);

    \fdot{}{0.5,2};
    \fdot{}{0.5,3};
    
    \node at (0.5,4){$\bV^1$};
    \node at (1.5,4){$\bV^2$};
    \node at (0.5,-0.2){$\tV{}^1$};
    \node at (1.5,-0.2){$\tV{}^2$};
    \node at (1.5,2){$W$};

} 
\end{equation}
That is $(x,y,z)=(s_2,\Id,s_2s_1)$ for $D$ and $(x,y,z)=(s_2, s_1, s_2s_1)$ for $D'$. 
Consider first the diagram $D$. It is clear that the elements in the image of $\dvp_D$ satisfy 
$
\bV^1\subset W, (\bV^1)^\perp\cap W\subset \tV{}^2
$
in the notation indicated in \eqref{DD'}. Inside of the closed subvariety of $\ccZplus$ defined by these conditions, consider the open subvariety $\colJ_D$ given by the additional open conditions 
$
\tV{}^1\not\subset W, \quad \bV^2\ne W\ne\tV{}^2.
$
Then we also automatically have  $W\cap (\bV^1)^\perp=W\cap \tV{}^2$. Note that $\colJ_D$  is exactly the Gell $\calO^{\op{Gell}}_{x,y,z}$ defined by the conditions $\rel(\bV,W)=s_2$, $\rel(W,\tV)=s_2s_1$ and $\VWp=\tV{}^W$. It is not hard to see that $\dvp_D$ restricts to an isomorphism $\dvp_D^{-1}(\colJ_D)\to \colJ_D$. 

Consider now the diagram $D'$. In the notation indicated in \eqref{DD'},  it is clear that the elements in the image of $\dvp_{D'}$ satisfy the condition $\bV^1\subset W$. Inside of the closed subvariety of $\ccZplus$ determined by this condition, consider the open subvariety $\colJ_{D'}$ given by the additional open conditions 
$$
\tV{}^1\not\subset W, \quad\bV^2\ne W\ne \tV{}^2,\quad \bV^1\not\perp (W\cap \tV{}^2).
$$ 
It is not hard to see that $\dvp_{D'}$ restricts to an isomorphism $\dvp_D^{-1}(\colJ_{D'})\to \colJ_{D'}$. Note that $\colJ_{D'}$ given by these conditions is exactly the Gell $\calO^{\op{Gell}}_{x,y,z}$ defined by the conditions
$\rel(\bV,W)=s_2$, $\rel(W,\tV)=s_2s_1$, $\rel(\VWp,\tV{}^W)=s_1$.
\end{ex} 

In fact, is was not really necessary to add the first and the third open condition to get an isomorphism. However, it is necessary if we want the different pieces to be disjoint. For example, if we do not put the third open condition here, then we get a nonzero intersection of $\colJ_{D'}$ with $\colJ_D$. 

\subsection{The case of general quivers}
Now, we assume that $\Gamma$ is an arbitrary quiver without loops. This subsection should motivate the definitions of the modifications of the variety ${\Zplus}$ given in \S\ref{subs:source}, \S\ref{subs:target} and \S\ref{sec:isom-thm-GqHecke-gen}.
 We use notation of  \S\ref{sec:coloured} -- \S\ref{sec:isom-thm-GqHecke-gen}.
 
\subsubsection{The version for $R_\bfn$}
Consider a word of the form $h_D=\tau_{r_1}\tau_{r_2}\ldots \tau_{r_p}1_\uj\in R_\bfn$. Diagrammatically, it is  a diagram  $D$ with $n$ strands coloured by elements of $I$, with $p$ crossings of these strands but no dots. We would like to see geometrically from the diagram which element of $H_*^G(\tF\times_{E_\bfn}\tF)$ corresponds to $h_D$. 

\begin{df}
Let $\calA=\calA(D)$ as be before the set of connected components of the complement of $D$. For each $a\in\calA$ and $i\in I$, let $d_i(a)$ be the distance to the leftmost component in the part coloured $i$, i.e. number of strands of colour $i$ to the left of $a$. Let $\bfd(a)\in I_\bfn$ be the resulting dimension vector, that is at vertex $i$ we have  dimension $d_i(a)$.
\end{df}
\begin{df}
Let $D\in R_\bfn$ be a diagram without dots. Consider the variety $E_\bfn\times\prod_{a\in\calA}\calG_{\bfd(a)}$. The \emph{diagram variety}  $V(D)$ is the subvariety in $E_\bfn\times\prod_{a\in\calA}\calG_{\bfd(a)}$ given by the points $(\alpha, (U_a)_{a\in\calA})$ satisfying
\begin{itemize}[leftmargin=5mm]
\item if $a_1,a_2\in\calA$ are neighbours with $d_i(a_1)\leq d_i(a_2)$ for any $i\in I$, then  $U_{a_1}\subset U_{a_2}$,
\item each $U_a$ is a subrepresentation of the representation $V$ given by $\alpha\in E_\bfn$.
\end{itemize}
To each element $(\alpha,U_\bullet)\in V(D)$, we can associate two full flags in $V$, namely the flag $U^{\rm top}$ given by reading the vector spaces for the top regions of the diagram, the flag $U^{\rm bot}$ given by reading the vector spaces for the bottom regions of the diagram. We get a projective morphism
$$
\dvp_D\colon V(D)\to \tF\times_{E_\bfn} \tF,\qquad (\alpha,U)\mapsto (\alpha,U^{\rm top},U^{\rm bot}).
$$
\end{df}
It is clear from the definition of the convolution algebra, that $h_D=(\dvp_D)_*(c)$  for some $c\in H_*^G(V(D))$. If some transversality conditions hold, then $c=[V(D)]$. 
\begin{belief}
We believe that $h_D=(\dvp_D)_*([V(D)])$ holds if the diagram has no pair of strands intersecting twice (see however also \Cref{ex:contrex-transv}).
\end{belief}

\begin{ex}
	\label{ex:contrex-transv}
Let $\Gamma$ be of the form $r\to b$. Set $\bfn=r+b$, $\ui=(r,b)$, $\uj=(b,r)$. 

The varieties $\calF_\ui$ and $\calF_\uj$ are single points. We identify $\tF_\ui$ and $\tF_\uj$ with subvarieties of $E_\bfn\cong \bbC$, namely $\tF_\uj=E_\bfn=\bbC$ and $\tF_\ui\cong \{0\}\subset \bbC\cong E_\bfn$.

The elements $\tau_1 1_\ui$ and $\tau_1 1_\uj$ are represented by the fundamental classes $[C_1]$ and  $[C_2]$ respectively,  where $C_1=\tF_\uj\times_{E_\bfn}\tF_\ui$ and $C_2=\tF_\ui\times_{E_\bfn}\tF_\uj$. The varieties $C_1$ and $C_2$ are again single points and can be identified with $\{0\}\subset \bbC\cong E_\bfn$. We compare now $h_D=\tau_1\tau_11_\uj$ and $h_{D'}=\tau_1\tau_11_\ui$ given by the diagrams
\begin{equation}
\tikz[very thick,scale=.75,baseline={([yshift=1.0ex]current bounding box.center)}]{	
	\draw[blue] (0,0) -- (1,1) -- (0,2); 	
	\draw[red] (1,0) -- (0,1) -- (1,2); 		
	\node at (-1,1){$D:=$};			
        	\node at (0,-0.3){$b$};	
	\node at (1,-0.3){$r$};
}\quad\quad
\tikz[very thick,scale=.75,baseline={([yshift=1.0ex]current bounding box.center)}]{	
	\draw[blue] (0,0) -- (1,1) -- (0,2); 	
	\draw[red] (1,0) -- (0,1) -- (1,2); 	
	\node at (0.5,2){$\bV^1$};	
	\node at (0.5,1){$U$};	
	\node at (0.5,0){$\tV{}^1$};	
	\node at (0,-0.3){$b$};	
	\node at (1,-0.3){$r$};		
},
\quad\quad\quad
\tikz[very thick,scale=.75,baseline={([yshift=1.0ex]current bounding box.center)}]{	
	\draw[red] (0,0) -- (1,1) -- (0,2); 	
	\draw[blue] (1,0) -- (0,1) -- (1,2); 		
	\node at (-1,1){$D':=$};			
        	\node at (0,-0.3){$r$};	
	\node at (1,-0.3){$b$};
}\quad\quad
\tikz[very thick,scale=.75,baseline={([yshift=1.0ex]current bounding box.center)}]{	
	\draw[red] (0,0) -- (1,1) -- (0,2); 	
	\draw[blue] (1,0) -- (0,1) -- (1,2); 	
	\node at (0.5,2){$\bV^1$};	
	\node at (0.5,1){$U$};	
	\node at (0.5,0){$\tV{}^1$};	
	\node at (0,-0.3){$r$};	
	\node at (1,-0.3){$b$};		
}.
\end{equation}
The element $h_D=\tau_1\tau_1 1_\uj$ corresponds geometrically to the product $[C_1]\cdot [C_2]$ in the convolution algebra. We multiply the fundamental classes using the formula in \Cref{lem:conv}\ref{1}. When we do the $\cap$-product $[p_{1,2}^{-1}(C_1)]\cap [p_{2,3}^{-1}(C_2)]$ inside of $\tF_\uj\times \tF_\ui\times \tF_\uj\cong \bbC\times \{0\}\times \bbC$, we intersect $\{0\}\times \{0\}\times \bbC$ with $\bbC\times \{0\}\times \{0\}$. This intersection is obviously transversal. Thus we have $[p_{1,2}^{-1}(C_1)]\cap [p_{2,3}^{-1}(C_2)]=[V(D)]$ and then $h_D=(\dvp_D)_*([V(D)])$. We have the relation $\tau_1\tau_1 1_\uj=(X_2-X_1)1_\uj$ in the KLR algebra. The element $(X_2-X_1)1_\uj$ is geometrically just the fundamental class of $\{0\}\subset \bbC\cong \tF_\uj\times_{E_\bfn}\tF_{\uj}$. This fundamental class is equal to $h_D=(\dvp_D)_*([V(D)])$.
\smallskip

The element $h_{D'}=\tau_1\tau_11_\ui$ corresponds to the opposite product $[C_2]\cdot [C_1]$. This time, we do the $\cap$-product $[p_{1,2}^{-1}(C_2)]\cap [p_{2,3}^{-1}(C_1)]$ inside of $\tF_\ui\times \tF_\uj\times \tF_\ui\cong \{0\} \times \bbC\times \{0\}$. We intersect $\{0\}\times \{0\}\times \{0\}$ with $\{0\}\times \{0\}\times \{0\}$, this intersection is \emph{not} transversal. We get $c=[p_{1,2}^{-1}(C_1)]\cap [p_{2,3}^{-1}(C_2)]\ne [V(D)]$. Now we have $\tau_1\tau_11_\ui=(X_1-X_2)1_\ui\ne 1_\ui=(\dvp_D)_*[V(D)]$. Moreover, we see that $(X_1-X_2)1_\ui$ is however  \emph{not a fundamental class of a subvariety} of $\tF_\ui\times_{E_\bfn}\tF_\ui$, because $\tF_\ui\times_{E_\bfn}\tF_\ui$ is already a singleton.
\end{ex}

\subsubsection{The naive version for $\hR_\bfn$}
\label{subssubs:diag-naive}
In this section we deal with the geometric construction of \S\ref{subs:Z'-subalg-Z}, where the algebra $\hR_\bfn$ is realized inside of $H_*^{G_\bfn}(\Znaiv)$. Consider a word $h_D$ in $\hR_\bfn$ on the generators $\tau_r 1_\ui$ or $\Omega 1_\ui$. It is represented by a diagram $D$ which has crossings and floating dots, but the floating dots are to the right of the leftmost strand and have the same colour that the leftmost strand (and no twist).
\begin{df}
	Let $\calA=\calA(D)$ be the same as before and let $\calB=\calB(D)$ be the set of floating dots in the diagram. For $b\in\calB$, denote by $c(b)\in I$ the colour of $b$.\
	\end{df}
	Fix a (hermitian) scalar product on each $V_i$. It induces a scalar product on $V$ such that different $V_i$ are orthogonal. Consider now the variety $E_\bfn\times\prod_{a\in\calA}\calG_{\bfd(a)}\times \prod_{b\in\calB}\op{Gr}_{1}(V_{c(b)})$. For a point in this variety denote by $\alpha$ the element of $E_\bfn$, by $U_a$ the element of $\calG_{\bfd(a)}$ and by $W_b$ the element of $\op{Gr}_{1}(V_{c(b)})$.
\begin{df}	\label{VDsource}
	The \emph{diagram variety}  $V(D)$ attached to $D$ is the subvariety in $E_\bfn\times\prod_{a\in\calA}\op{Gr}_{\bfd(a)}\times \prod_{b\in\calB}\op{Gr}_{1}(V_{c(b)})$ given by the following conditions:
	\begin{itemize}[leftmargin=5mm]
		\item if $a_1,a_2\in\calA$ are neighbours with $d_i(a_1)\leq d_i(a_2)$ for any $i\in I$, then  $U_{a_1}\subset U_{a_2}$.		
		\item if the floating dot $b$ is in the region $a$, then we have\footnote{This implies $W_b=U_a$ because here we allow currently only floating dots of the form $\Omega1_\ui$.
} $W_b\subset U_a$, 	
		\item all $W_b$ are orthogonal with respect to the scalar product,
		\item each $U_a$ is a subrepresentation of the representation $V$ given by $\alpha\in E_\bfn$.
	\end{itemize}
As before, there is a projective morphism 
$$
\dvp_D\colon V(D)\to \Znaiv\subset E_\bfn\times\calF\times\calF\times \calG,\qquad (\alpha,U_\bullet,W_\bullet)\mapsto (\alpha,U^{\rm top},U^{\rm bot}, \oplus_{b\in\calB}W_b).
$$
\end{df}
\begin{belief}
We believe that $h_D=(\dvp_D)_*([V(D)])$ if $D$ is a basis element from \Cref{prop:NVbasis-gen} containing only floating dots coloured by sources of the quiver.
\end{belief}
For a counterexample where the source assumption does not hold see \Cref{ex:contrex-transv}. Take $D'$ with an additional floating dot inside the double-crossing.

Now, we explain the conceptual reason of why the construction of \S\ref{subs:source} works. Points in the  image of $\dvp_D$  always satisfy the condition $\alpha(W)=0$ in the source case. That is why we define $\vec{\Zplus}\subset \vec{\Znaiv}$ by the Grassmannian--Steinberg conditions, see \Cref{sinkcondition}. 

Let $D$ be a basis diagram as in \Cref{prop:NVbasis-gen} allowing floating dots only for source vertices. Let $\colJ_D\subset\calF\times\calF\times\vec\calG$ be the corresponding (coloured) Gell. Let $\naivJ_D$ be the preimage of $\colJ_D$ in $\vec{\Znaiv}$. We believe that the following happens. 

Both $\dvp_D^{-1}(\naivJ_D)\to\colJ_D$ and $\naivJ_D\to\colJ_D$ are vector bundles. The first one is a subbundle of the second one. 
If we consider $\vec{\Zplus}$ instead of $\vec{\Znaiv}$, let $\plusJ_D$ be the preimage of $\colJ_D$ in $\vec\Zplus$. 
We believe that in this case, the subbundle $\plusJ_D\subset \naivJ_D$ coincides with the subbundle $\dvp_D^{-1}(\naivJ_D)=\dvp_D^{-1}(\plusJ_D)\subset \naivJ_D$. This is the main reason why the paving by Gells should be strongly adapted to the basis. (Under the assumption that floating dots are coloured by source vertices only.)

\subsubsection{The twisted version for $\hR_\bfn$}
\label{subsubs:diag-twisted}

In this section we give a version corresponding to the geometric construction of \S\ref{sec:isom-thm-GqHecke-gen}, where the algebra $\hR_\bfn$ is realized inside of $\overline{H}_*^{G_\bfn}(\Znaivtw)$.  The twist $\phi$ involved in the construction is crucial and necessary to solve the non-transversality problem appearing for floating dots coloured with vertices which are not sources.

Let $\calA$ and $\calB$ be as before. Fix a (hermitian) scalar product on each $V_i$. It induces a scalar product on $V$ such that different $V_i$ are orthogonal. 
 
Consider the variety $E_\bfn\times E_\bfn\times\prod_{a\in\calA}\calG_{\bfd(a)}\times \prod_{b\in\calB}\op{Gr}_{1}(V_{c(b)})$. Concerning points in this variety, we denote by $(\alpha,\beta)$ the element of $E_\bfn\times E_\bfn$, by $U_a$ the element of $\calG_{\bfd(a)}$ and by $W_b$ the element of $\op{Gr}_{1}(V_{c(b)})$. A point $\star$ in some $a\in\cA$ is \emph{generic}, if there is no floating dot in $\cA$ centred on the same height. For generic $\star\in a\in\calA$  let $W_{\downarrow \star}=\oplus_b W_b\subset V$, where the sum runs over all floating dots $b\in\calB(D)$ below the chosen point $\star$.
	
\begin{df}\label{VDtwist}
Consider\footnote{One might want to allow more general floating dots, but to get a basis this is sufficient.} a word $h_D$ in $\hR_\bfn$ on the generators $\tau_r 1_\ui$ or $\Omega 1_\ui$ with corresponding diagram $D$. Then the \emph{diagram variety}\footnote{The word "variety" here is slightly delicate,  since it is not a complex algebraic variety (the conditions using the scalar product are not algebraic over $\bbC$), but rather a real manifold.}  $V(D)$ is the subvariety in $E_\bfn\times E_\bfn\times\prod_{a\in\calA}\op{Gr}_{d(a)}(V)\times \prod_{b\in\calB}\op{Gr}_{1}(V_{c(b)})$ given by the following conditions:
	\begin{itemize}[leftmargin=5mm]
		\item if $a_1,a_2\in\calA$ are neighbours with $d_i(a_1)\leq d_i(a_2)$ for any $i\in I$, then  $U_{a_1}\subset U_{a_2}$.		
		\item if the floating dot $b$ is in the region $a$, then we have $W_b\subset U_a$, 	
		\item all $W_b$ are orthogonal with respect to the scalar product,
		\item the following conditions are satisfied for each arrow $h\colon i\to j$, $a\in\calA$  and for any generic point in $a$: 
		\begin{equation}\label{VDtwisteq}
	       \alpha_h((U_a)_i)\subset (U_a)_j+(W_{\downarrow \star})_j,\qquad \beta_h((U_a)_i)\subset ((U_a)_j\cap (W_{\downarrow \star})_j)+(W_{\downarrow \star})_j^\perp. 
		\end{equation}
	\end{itemize}
\end{df}
\begin{rk}
The condition \eqref{VDtwisteq} should be compared with \Cref{eq:twisted-preserve-cond}.  It is the twist of the condition $\alpha_{h}(U_a)_i\subset (U_a)_j$ with respect to the vector space $W_{\downarrow \star}$.
\end{rk}
\begin{rk}
If in \Cref{VDtwist} the Grassmannian dimension vectors are supported on sources only, we recover the conditions from \Cref{VDsource}. In particular,  
 \eqref{VDtwisteq} boils down to the subrepresentation condition there. 
 \end{rk}
 As before we get a projective morphism to $\Znaivtw\subset E_\bfn\times E_\bfn\times\calF\times\calF\times \calG$
$$
\dvp_D\colon V(D)\to \Znaivtw,\qquad (\alpha,\beta,U_\bullet,W_\bullet)\mapsto (\alpha,\beta,U^{\rm top},U^{\rm bot}, \oplus_{b\in\calB}W_b).
$$
\begin{belief}\label{Beliefmain}
Using \Cref{VDtwist}, we believe that $h_D=(\dvp_D)_*([V(D)])$ for every diagram representing a basis element from \eqref{eq:NV-basis-canon}. 
\end{belief}
The twisted conditions \eqref{VDtwisteq} in the definition of diagram varieties should be seen as providing a resolution of non-transversalities.
However, we should not expect an analogue of \Cref{Beliefmain} for all (non-basis) diagrams. Counterexamples, similar to $D'$ in \Cref{ex:contrex-transv} still exist. \Cref{Beliefmain} gives a new hint on the importance of the basis \eqref{eq:NV-basis-canon} which is algebraically not visible.

\begin{ex}
	\label{ex:diag-twisted}
	Let $\Gamma$ be a quiver with two vertices and one arrow $b\to r$. Set $k_b=k_r=2$ and $n_b=n_r=3$. Take $x=y=\Id$ and let $z\in (\frakS_4\times\frakS_2)\backslash\frakS_6$ be the longest possible (among shortest coset representatives). We have the following basis diagram $D$ corresponding to the basis element $h_D=\Omega_{4,6}\tau_z1_\uj$ with $\uj=(b,r,b,r,b,r)$.   
\begin{equation}\label{bigdiagram}
\tiny
\tikz[very thick,scale=0.9,baseline={([yshift=1.0ex]current bounding box.center)}]{		
	\draw[red] (0,3)-- (3,0); 	
	\draw[blue] (1,3)  .. controls (-0.5,2) ..  (2,0);
	\draw[red] (2,3)  .. controls (-0.5,1) ..  (1,0);
	\draw[blue] (3,3)-- (0,0); 
	\draw[blue] (4,3)-- (4,0); 
	\draw[red] (5,3)-- (5,0); 
	
	\fdot{}{0.4,2.9};
	\fdot{}{0.3,2};
	\fdot{}{0.3,1.0};
	\fdot{}{0.4,0.1};
	\node at (0,-1.3){$b$};
	\node at (1,-1.3){$r$};
	\node at (2,-1.3){$b$};
	\node at (3,-1.3){$r$};
	\node at (4,-1.3){$b$};
	\node at (5,-1.3){$r$};

	\draw[blue] (0,0) -- (2,-1);
	\draw[red] (1,0) -- (3,-1);
	\draw[blue] (2,0) -- (4,-1);
	\draw[red] (3,0) -- (5,-1); 
	\draw[blue] (4,0)-- (0,-1); 
	\draw[red] (5,0)-- (1,-1); 
}
\quad\quad\quad
\tiny
\tikz[very thick,scale=0.9,baseline={([yshift=1.0ex]current bounding box.center)}]{		
	\draw[red] (0,3)-- (3,0); 	
	\draw[blue] (1,3)  .. controls (-0.5,2) ..  (2,0);
	\draw[red] (2,3)  .. controls (-0.5,1) ..  (1,0);
	\draw[blue] (3,3)-- (0,0); 
	\draw[blue] (4,3)-- (4,0); 
	\draw[red] (5,3)-- (5,0); 
	
	\fdot{}{0.4,2.9};
	\fdot{}{0.3,1.9};
	\fdot{}{0.3,1.1};
	\fdot{}{0.4,0.2};
	\node at (0,-1.3){$b$};
	\node at (1,-1.3){$r$};
	\node at (2,-1.3){$b$};
	\node at (3,-1.3){$r$};
	\node at (4,-1.3){$b$};
	\node at (5,-1.3){$r$};
	
	\node at (0.5,3.2){$W_{r,1}=\bV^1$};
	\node at (0.4,2.2){$W_{b,1}$};
	\node at (0.4,0.8){$W_{r,2}$};
	\node at (0.6,-0.1){$W_{b,2}$};
	
	\node at (1.5,3){$\bV^2$};
	\node at (2.5,3){$\bV^3$};
	\node at (3.5,3){$\bV^4$};
	\node at (4.5,3){$\bV^5$};
	
	\node at (0.6,-1.1){$\tV{}^1$};
	\node at (1.6,-1.1){$\tV{}^2$};
	\node at (2.5,-1.1){$\tV{}^3$};
	\node at (3.5,-1.1){$\tV{}^4$};
	\node at (4.5,-1.1){$\tV{}^5$};
	
	\draw[blue] (0,0) -- (2,-1);
	\draw[red] (1,0) -- (3,-1);
	\draw[blue] (2,0) -- (4,-1);
	\draw[red] (3,0) -- (5,-1); 
	\draw[blue] (4,0)-- (0,-1); 
	\draw[red] (5,0)-- (1,-1); 
}
\normalsize
\end{equation}
Denote the $1$-dimensional vector spaces that we associate to the floating dots (from top to bottom) by $W_{r,1}$, $W_{b,1}$, $W_{r,2}$, $W_{b,2}$. In $V(D)$, we then obviously have
\begin{equation*}
\bV^1=W_{r,1}, \bV^2=W_{r,1}\oplus W_{b,1},\bV^3=W_{r,1}\oplus W_{b,1}\oplus W_{r,2},  \bV^4=W_{r,1}\oplus W_{b,1}\oplus W_{r,2}\oplus W_{b,2}.
\end{equation*}
Also the remaining subspaces are then completely determined, see \eqref{bigdiagram}.
By condition \eqref{VDtwisteq} of $V(D)$ we have $\alpha(W_{b,2})=0$ and $\alpha(W_{b,1})\subset W_{r,2}$. Using
$$
W_{b,2}=(\bV^3\cap W_b)^\perp\cap W_b,\quad W_{b,1}\oplus W_{b,2}=(\bV^1\cap W_b)^\perp\cap W_b, \quad W_{r,2}=(\bV^1\cap W_r)^\perp\cap W_r,
$$
these conditions can be rewritten as
\begin{equation}
\label{eq:ex-cond}
\alpha((\bV^3\cap W_b)^\perp\cap W_b)=0,\quad \alpha((\bV^1\cap W_b)^\perp\cap W_b)\subset (\bV^1\cap W_r)^\perp\cap W_r.
\end{equation}

Let $\colJ_D\subset\calF\times\calF\times\calG$ be the corresponding (coloured) Gell. Let $\naivJ_D$ be the preimage of $\colJ_D$ in $\Znaivtw$. There is no reason why this conditions \eqref{eq:ex-cond} should be satisfied for $\naivJ_D$, so the map $\dvp_D^{-1}(\naivJ_D)\to \naivJ_D$ is not surjective. 

Now let us see what happens when we replace $\Znaivtw$ by $\Zplus$ and we consider the preimage $\plusJ_D$ of $\colJ_D$ in $\Zplus$. The subvariety $\Zplus\subset \Znaivtw$ is defined by \eqref{eq:cond-bfZ'-gen} (compare this with \eqref{eq:ex-cond}!). Now, the map $\dvp_D^{-1}(\naivJ_D)=\dvp_D^{-1}(\plusJ_D)\to \plusJ_D$ becomes surjective. It is even an isomorphism!
\end{ex}

 We believe that the same thing happens for any basis diagram $D$ and that for this reason we get a basic paving by Gells strongly adapted to the basis.
 
 In fact, Grassmannian--Steinberg conditions \eqref{eq:cond-bfZ'-gen} defining $\Zplus$ inside of $\Znaivtw$ are in fact the conditions coming from the image of $\dvp_D$.

\bibliographystyle{abbrv}
\bibliography{RefgeometricNaisseVaz}

\address{R. M.: Laboratoire Analyse, G\'eom\'etrie et Mod\'elisation, CY Cergy Paris Universit\'e, 95302  Cergy-Pontoise (France),}
\email{ruslmax@gmail.com}

\address{C. S.: Mathematisches Institut, Universit\"at Bonn, 53115 Bonn (Germany),\\}
\email{stroppel@math.uni-bonn.de}
\end{document}